\documentclass[10pt,oneside]{amsart}
\usepackage[a4paper, textwidth=15.6cm]{geometry}
\usepackage[T1]{fontenc}
\usepackage[english]{babel}
\usepackage{aliascnt}

\usepackage{xcolor} 
\usepackage{tikz}
\usepackage[draft=false]{hyperref} 
\usepackage{mathtools}
\usepackage{amsmath,amsthm,amssymb}
\usepackage[all,cmtip]{xy}

\usepackage[capitalise]{cleveref}

\newtheorem{theorem}{Theorem}[subsection]

\newaliascnt{lemma}{theorem}
\newtheorem{lemma}[lemma]{Lemma}
\aliascntresetthe{lemma}
\crefname{Lemma}{Lemma}{Lemmas}
\Crefname{Lemma}{Lemma}{Lemmas}

\newaliascnt{question}{theorem}

\aliascntresetthe{question}
\crefname{Question}{Question}{Questions}
\Crefname{Question}{Question}{Questions}

\newaliascnt{proposition}{theorem}
\newtheorem{proposition}[proposition]{Proposition}
\aliascntresetthe{proposition}
\crefname{Proposition}{Proposition}{Propositions}
\Crefname{Proposition}{Proposition}{Propositions}

\newaliascnt{corollary}{theorem}
\newtheorem{corollary}[corollary]{Corollary}
\aliascntresetthe{corollary}
\crefname{Corollary}{Corollary}{Corollaries}
\Crefname{Corollary}{Corollary}{Corollaries}

\theoremstyle{definition}
\newaliascnt{definition}{theorem}
\newtheorem{definition}[definition]{Definition}
\aliascntresetthe{definition}
\crefname{Definition}{Definition}{Definitions}
\Crefname{Definition}{Definition}{Definitions}

\newaliascnt{notation}{theorem}
\newtheorem{notation}[notation]{Notation}
\aliascntresetthe{notation}
\crefname{Notation}{notation}{Notations}
\Crefname{Notation}{notation}{Notations}

\newaliascnt{construction}{theorem}
\newtheorem{construction}[construction]{Construction}
\aliascntresetthe{construction}
\crefname{Construction}{construction}{Constructions}
\Crefname{Construction}{construction}{Constructions}

\newaliascnt{example}{theorem}
\newtheorem{example}[example]{Example}
\aliascntresetthe{example}
\crefname{Example}{example}{Examples}
\Crefname{Example}{example}{Examples}

\newaliascnt{remark}{theorem}
\newtheorem{remark}[remark]{Remark}
\aliascntresetthe{remark}
\crefname{Remark}{Remark}{Remarks}
\Crefname{Remark}{Remark}{Remarks}

\usepackage{amsmath,amsthm,amssymb,amsfonts,extarrows}
\usepackage{enumerate,amscd,amsxtra,MnSymbol}
\usepackage{mathrsfs}
\usepackage{color}
\usepackage[cmtip,all]{xy}
\usepackage{eucal}
\usepackage{bbold}
\usepackage{upgreek}
\usepackage{paralist}
\usepackage{tikz-cd}
\usepackage{dsfont}
\usepackage{bbm}
\usepackage[bbgreekl]{mathbbol}

\DeclareMathSymbol\bbDelta \mathord{bbold}{"01}
\DeclareMathSymbol\bDelta \mathord{bbold}{"01}

\newcommand{\bA}{{\mathbb A}}

\newcommand{\bD}{{\mathbb D}}
\newcommand{\bE}{{\mathbb E}}

\renewcommand{\P}{{\mathbb P}}

\newcommand{\bR}{{\mathbb R}}

\newcommand{\bN}{{\mathbb N}}

\newcommand{\mA}{{\mathcal A}}
\newcommand{\mB}{{\mathcal B}}
\newcommand{\mC}{{\mathcal C}}
\newcommand{\mD}{{\mathcal D}}
\newcommand{\mE}{{\mathcal E}}
\newcommand{\mF}{{\mathcal F}}

\newcommand{\mH}{{\mathcal H}}

\newcommand{\mJ}{{\mathcal J}}

\newcommand{\mM}{{\mathcal M}}
\newcommand{\mN}{{\mathcal N}}
\newcommand{\mO}{{\mathcal O}}
\newcommand{\mP}{{\mathcal P}}

\newcommand{\mU}{{\mathcal U}}
\newcommand{\mV}{{\mathcal V}}
\newcommand{\mW}{{\mathcal W}}
\newcommand{\mX}{{\mathcal X}}

\newcommand{\C}{{\mathrm C}}

\newcommand{\E}{{\mathrm E}}
\newcommand{\F}{{\mathrm F}}
\newcommand{\G}{{\mathrm G}}

\renewcommand{\L}{{\mathrm L}}

\renewcommand{\P}{{\mathrm P}}
\newcommand{\Q}{{\mathrm Q}}
\newcommand{\R}{{\mathrm R}}
\newcommand{\rS}{{\mathrm S}}

\newcommand{\W}{{\mathrm W}}

\newcommand{\Y}{{\mathrm Y}}

\newcommand{\bj}{\mathrm{j}}
\newcommand{\bi}{\mathrm{i}}
\newcommand{\m}{\mathrm{m}}
\newcommand{\bk}{\mathrm{k}}

\newcommand{\n}{\mathrm{n}}

\newcommand{\op}{\mathrm{op}}

\newcommand{\colim}{\mathrm{colim}}

\newcommand{\Mod}{{\mathrm{Mod}}}

\newcommand{\rev}{{\mathrm{rev}}}

\newcommand{\ot}{\otimes}

\newcommand{\co}{\mathrm{co}}

\newcommand{\Bi}{\mathrm{Bi}}

\newcommand{\Pre}{\mathrm{Pre}}

\newcommand{\id}{\mathrm{id}}

\newcommand{\Cat}{\mathrm{Cat}}

\newcommand{\Set}{\mathrm{Set}}

\renewcommand{\Pr}{\mathrm{Pr}}

\newcommand{\Alg}{\mathrm{Alg}}

\newcommand{\Mon}{\mathrm{Mon}}
\newcommand{\Fun}{\mathrm{Fun}}

\newcommand{\lax}{{\mathrm{lax}}}
\newcommand{\oplax}{{\mathrm{oplax}}}

\newcommand{\tu}{{\mathbb 1}}

\newcommand{\ev}{{\mathrm{ev}}}
\newcommand{\Ind}{{\mathrm{Ind}}}

\newcommand{\Map}{{\mathrm{Map}}}

\newcommand{\LinFun}{{\mathrm{LinFun}}} 

\newcommand{\bZ}{{\mathbb{Z}}}

\newcommand{\Rig}{{\mathrm{Rig}}} 
\newcommand{\Mor}{{\mathrm{Mor}}}

\newcommand{\fin}{\mathrm{fin}}
\newcommand{\Free}{\mathrm{Free}}

\newcommand{\cop}{\mathrm{cop}}

\newcommand{\Sp}{{\mathrm{Sp}}}

\newcommand{\Fil}{{\mathrm{Fil}}}

\newcommand{\Exc}{{\mathrm{Exc}}} 
\newcommand{\mon}{{\mathrm{mon}}}

\newcommand{\Grp}{{\mathrm{Grp}}} 
\newcommand{\Pic}{{\mathrm{Pic}}}

\newcommand{\Poset}{{\mathrm{Poset}}}

\newcommand{\cube}{{\,\vline\negmedspace\square}}

\begin{document}

\title{Stable homotopy theory of higher categories}


\author{Hadrian Heine, \\ Max Planck Institute Bonn, Germany, \\ heine@mpim-bonn.mpg.de}
\maketitle


\begin{abstract}

Stable homotopy theory is governed by the principle that inverting the operation of forming loop spaces produces representing objects for homology theories. 
We show that this principle is not specific to topology but reflects an intrinsic structural feature of higher category theory: inverting the operation of forming endomorphism $(\infty,\infty)$-categories leads to a stable homotopy theory of higher categories, in which higher categories play the role of spaces and categorical spectra represent homology theories of higher categories.

This theory necessarily requires enrichment in the Gray tensor product, reflecting its genuinely higher-categorical nature.
Classical stable homotopy theory is recovered by passing to classifying spaces. Its fundamental mechanisms arise as shadows of richer categorical phenomena: stabilization is governed by a categorical Freudenthal suspension theorem, whose classical counterpart arises by passing to classifying spaces.
Our main result is a categorical Brown representability theorem classifying homology theories of higher categories by categorical spectra. As a consequence, categorical homology theories give rise to categorical analogues of long exact sequences and to homological algebra of higher categories.

As guiding example we study categorical homology, the categorical homology theory whose coeffients are the natural numbers, and prove a Hurewicz theorem and Eilenberg-Zilber theorem.

\end{abstract}

\tableofcontents

\section{Introduction}

\subsubsection*{Motivation}

Grothendieck’s homotopy hypothesis \cite{grothendieck2022pursuing, quillen2006homotopical} suggests that higher category theory refines homotopy theory by encoding genuinely non-invertible phenomena.
This perspective raises a natural question: does stable homotopy theory itself admit an intrinsic higher-categorical form?

Stable homotopy theory is one of the fundamental organizing structures of homotopy theory. Its central objects, spectra, arise by stabilizing spaces, and this stabilization underlies some of the deepest structural and computational methods of algebraic topology. We show that this mechanism is not intrinsically tied to spaces. Stable homotopy theory itself has an intrinsic higher-categorical form in the world of $(\infty,\infty)$-categories.

The mechanism that produces spectra from spaces extends naturally to higher categories, where stabilization is obtained by inverting endomorphism $(\infty,\infty)$-categories rather than loop spaces. However, the resulting theory is not merely a formal categorification of classical stable homotopy theory. The presence of higher categorical dimensions fundamentally changes the mechanism of stabilization and forces the theory to be formulated using enrichment in the Gray tensor product of higher categories. Unlike the cartesian product, the Gray tensor product is sensitive to higher categorical dimension and is capable of encoding genuinely lax phenomena. This leads to stable phenomena absent from classical stable homotopy theory.
We reveal this intrinsic stable homotopy theory of higher categories, in which categorical spectra play the role of spectra and stability is obtained by inverting endomorphism $(\infty,\infty)$-categories.

The genuinely new features of this theory arise from categorical dimension, the smallest number $n$ for which an $(\infty,\infty)$-category is an $(\infty,n)$-category. 
A fundamental difference from the classical situation is that delooping increases categorical dimension. As a consequence, delooping is not enriched in the cartesian product of $\infty$-categories, which fixes categorical dimension, but in the Gray tensor product, which behaves additively with respect to categorical dimension.
This forces the stable homotopy theory of higher categories to take place in the setting of categories enriched in the Gray tensor product, which we call oriented categories \cite{gepner2025oriented}.

Classically, spectra admit several complementary characterizations: they provide the universal stabilization of spaces, encode stable homotopy controlled within a stable range, are the derived counterparts of abelian groups, and represent homology theories. 
We show that these features persist in higher category theory:

\begin{enumerate}

\item Categorical spectra provide the universal stabilization of higher categories (\cref{specunive}).

\item They are the higher-categorical counterparts of commutative monoids (\cref{symsp}).

\item They encode stabilized higher categorical information controlled within a stable range by a categorical Freudenthal suspension theorem (\cref{F}).

\item They represent homology theories of higher categories via a categorical Brown representability theorem (\cref{brown}).

\end{enumerate}

These properties characterize categorical spectra as the natural higher-categorical counterpart of spectra in topology. 
Conceptually, this realizes a fundamental principle of stable homotopy theory: stabilization produces representing objects for homology theories. We show that this principle is not specific to topology but reflects an intrinsic structural feature of higher category theory:
inverting the operation of forming endomorphism categories leads to spectra of higher categories that represent homology theories of higher categories.

A fundamental consequence of this theory is the emergence of homological invariants of higher categories.
Homological invariants are among the central organizing and computational tools of algebraic topology
and provide much of the computational power.
By contrast, higher category theory lacks an analogous intrinsic theory of homological invariants that extract information from higher categories while remaining in the world of higher categories, in the form of higher-categorical algebraic objects rather than their decategorifications.
We show that such categorical homological invariants arise naturally from the intrinsic stable homotopy theory of higher categories. 
In this way, the computational power of stable homotopy theory becomes available at the higher-categorical level, leading to a systematic theory of homological invariants of higher categories.

\subsubsection*{Stability in higher category theory}

Stable homotopy theory is governed by the principle that stabilization arises by inverting loop spaces.
In higher category theory, stabilization arises from inverting endomorphism categories, leading to the oriented category $\Cat\Sp$ of categorical spectra.  
Concretely, a categorical spectrum is a sequence of based $\infty$-categories $\{X_n\}_{n \geq 0}$ equipped with bonding equivalences $$ X_n \simeq \Omega(X_{n+1}).$$

The transition from higher categories to categorical spectra is governed by a categorical Freudenthal suspension theorem, which ensures that iterated suspension stabilizes in a controlled range. A key structural input is that endomorphisms are corepresented by a categorical analogue of spheres. These arise as iterated suspensions of the two-point space, in analogy with classical homotopy theory.
In topology, the $n$-sphere is the $n$-fold delooping of the free grouplike $\bE_n$-space on one generator. This principle persists in higher category theory: the categorical $n$-sphere is the $n$-fold delooping of the free $\bE_n$-space on one generator, which is the space of finite unordered configurations in $\mathbb{R}^n$.

These phenomena reflect a notion of stability governed jointly by suspension and endomorphisms. 
Stable oriented categories provide the axiomatization of this stability.
Oriented categories admit intrinsic notions of fiber and cofiber sequences, called oriented fiber and cofiber sequences, which play the role of homotopy fiber and cofiber sequences in topology. These lead to a natural notion of stability: following the principle that stabilization is obtained by inverting endomorphisms, an oriented category is stable if every morphism admits oriented fibers and cofibers and the operation of endomorphisms becomes invertible.

Stable oriented categories exhibit several characteristic features of classical stable categories in the higher-categorical setting.
For instance, categorical spectra carry a canonical monoidal structure \cite{masuda2026algebra} analogous to the classical smash product, which is the stabilization of the Gray tensor product (\cref{stten}),
and stable oriented categories are uniquely enriched in categorical spectra (\cref{spectral0}, \cref{specenr}).

\subsubsection*{Categorical homology theories}

Classically, reduced homology theories assign to each pointed homotopy type a graded abelian group and satisfy the Eilenberg–Steenrod axioms. From a modern derived perspective, they can instead be characterized as reduced functors from pointed homotopy types to pointed homotopy types that preserve filtered colimits and are excisive: they send homotopy pushout squares of based homotopy types
\[
\begin{tikzcd}
X \ar{r}{} \ar{d}{} & 0 \ar{d}{} \\
0 \ar{r}{} & Y
\end{tikzcd}
\]
to homotopy pullback squares.

We prove a Brown representability theorem characterizing the oriented endofunctors of based $\infty$-categories represented by categorical spectra. 
The resulting representable functors provide the natural higher-categorical counterparts of classical homology theories, which we call categorical homology theories.
These are precisely the reduced oriented functors from based $\infty$-categories to based $\infty$-categories that preserve filtered colimits and send oriented pushout squares of based $\infty$-categories 
\begin{equation}\label{equkb}
\begin{tikzcd}
X \ar{r}{} \ar{d}[swap]{} & 0 \ar[double]{dl}{} \ar{d}{} \\
0 \ar{r}[swap]{} & Y
\end{tikzcd}
\end{equation}
to oriented pullback squares.
In addition, they satisfy a compatibility condition with categorical spheres. 
This condition reflects a fundamental difference from classical homotopy theory: based $\infty$-categories are not generated under small colimits by the zero sphere.

A fundamental property of classical homology is that cofiber sequences give rise to long exact sequences on homology. This property is a key principle in computations: spaces are built by inductively attaching cells, and homology transforms these cell attachments into long exact sequences.
We establish an analogous mechanism for categorical homology theories, in which long exact sequences are replaced by a higher-categorical counterpart, called oriented exact sequences. This provides a systematic method for computing categorical homology theories through mechanisms analogous to those of classical stable homotopy theory (\cref{basiccomp}).

\vspace{1mm}

The present work establishes the structural foundations of a stable homotopy theory of higher categories and, in particular, its associated theory of homological invariants. It is part of a broader program \cite{heine2026categorification}, \cite{gepner2026homotopy}, based on oriented category theory \cite{gepner2025oriented}, \cite{gepner2026colimits}, \cite{gepner2026fibrations}, to develop an intrinsic homotopy theory of higher categories. The present paper supplies the stable homotopy-theoretic foundation of this program.

We apply this framework to the categorical homology theory represented by the categorical Eilenberg--MacLane spectrum of any rig $R$, which we call categorical $R$-homology. We show that categorical $R$-homology exhibits higher-categorical analogues of fundamental structural features of classical homology: it satisfies a Hurewicz theorem and lifts multiplicatively to $(R,R)$-bimodule categorical spectra.

In \cite{heine2026categorification} we provide a combinatorial presentation of categorical homology in the spirit of singular homology
and prove higher-categorical versions of the Dold--Kan and Dold--Thom theorems leading to explicit computations of categorical homology.
In particular, categorical homology distinguishes globes of different dimensions whose classifying spaces are contractible, thereby detecting genuinely higher-categorical information invisible to classical homology.

\section{Main results}

We prove the following higher-categorical Brown representability theorem characterizing the canonical homological invariants of higher categories by categorical spectra:

\begin{theorem}\label{3} (\cref{brown}) 
The assignment $$ E \mapsto \Omega^\infty(E \wedge(-))$$ 
induces an oriented equivalence between categorical spectra and
oriented functors $H: \infty\Cat_* \to \infty\Cat_* $ satisfying the following properties:

\begin{enumerate}
\item $H$ is reduced and preserves small filtered colimits.

\item $H$ sends oriented pushout squares of the form (\ref{equkb}) to oriented pullback squares.

\item For every $X \in \infty\Cat_*$ and even $\ell \geq 0$ the following canonical based functor is an equivalence:
$$ \Omega^\ell(H(S^\ell)\wedge X) \to \Omega^\ell(H(S^\ell \wedge X)) \simeq \Omega^\ell(H(X \wedge S^\ell)) \simeq H(X) $$

\end{enumerate}
\end{theorem}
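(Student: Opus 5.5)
The plan is to imitate the classical proof of Brown representability through the identification of excisive functors with spectrum-valued linear functors, carried out in the oriented setting. In the first step I check that $E \mapsto \Omega^\infty(E\wedge(-))$ lands in functors satisfying (1)--(3). The functor $E\wedge(-)\colon \infty\Cat_*\to\Cat\Sp$ is the oriented left adjoint to $\Sigma^\infty$-tensoring, so it preserves colimits and sends the oriented pushout squares (\ref{equkb}) to oriented pushout squares in $\Cat\Sp$. In a stable oriented category these agree with oriented pullback squares (oriented fiber and cofiber sequences coincide by stability). The functor $\Omega^\infty$ preserves oriented pullbacks and filtered colimits, because categorical spheres are compact, and this gives (1) and (2). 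Condition (3) follows from the identification $\Omega^\ell\Omega^\infty(\Sigma^\ell E\wedge X)\simeq \Omega^\infty(E\wedge X)$ together with the fact that $H(S^\ell)\wedge X\to H(S^\ell\wedge X)$ is the assembly map. After looping it becomes the stabilization equivalence, using $\Omega^\ell\Sigma^\ell\simeq\id$ in $\Cat\Sp$.

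For the inverse, to a functor $H$ satisfying (1)--(3) I assign the categorical spectrum $E_H$ with $(E_H)_n := H(S^n)$. The bonding maps come from applying $H$ to the oriented pushout $S^n\to 0,0\to S^{n+1}$, which by (2) gives $H(S^n)\simeq\Omega H(S^{n+1})$. One point needs care here. In the oriented setting the square (\ref{equkb}) has a direction, so $\Sigma$ and $\Omega$ are defined with respect to compatible orientations. For this reason I restrict to even $\ell$ in (3): for even $\ell$ the two possible orientations of the suspension agree, which lets me swap $S^\ell\wedge X\simeq X\wedge S^\ell$. Next I build a natural transformation $\Omega^\infty(E_H\wedge X)\to H(X)$. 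Its level-$\ell$ component is the assembly $H(S^\ell)\wedge X\to H(S^\ell\wedge X)$, which exists because $H$ is an oriented, hence Gray-enriched, functor and so carries a canonical lax structure. I then loop $\ell$ times and pass to the colimit over $\ell$. Condition (3) says exactly that each stage is already an equivalence, so the map is an equivalence. This identification of $H$ as $\Omega^\infty(E_H\wedge -)$ is where (3) does the work that classical proofs get from generation by $S^0$.

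To finish, I check that $E\mapsto E_{\Omega^\infty(E\wedge -)}$ returns $E$. This holds because $(E\wedge S^n)$ has $\Omega^\infty$ equal to $E_n$, by the universal property of $\Cat\Sp$ (\cref{specunive}). The enrichment of the equivalence, making it oriented, follows from the functoriality of both constructions in $\Cat\Sp$-enriched maps via \cref{specenr}.

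I expect the main obstacle to be the comparison transformation. Specifically, I have to show that the Gray-lax assembly maps are coherently compatible with the bonding maps, so that they define a map of spectra rather than a levelwise family. I would first try to obtain these maps formally: identify reduced, filtered-colimit-preserving, oriented-excisive $H$ with oriented functors $\infty\Cat_*\to\Cat\Sp$ by stabilizing the target via the universal property. I would then use (3) to show that this spectrum-valued lift is $\infty\Cat_*$-linear, and conclude by the enriched Yoneda lemma that it is $E\wedge(-)$ with $E$ its value on $S^0$.
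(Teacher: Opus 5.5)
Your verification of condition (3) in the first step is wrong, and it has to be. Read stagewise, as displayed, (3) fails for representable theories. For $\ell=0$ it asks that $\Omega^\infty(E)\wedge X\to\Omega^\infty(E\wedge X)$ be an equivalence. For $E=H(\bN)$ and $X=S^1$ this map is $\Sigma(\bN)\to B\bN$ (\cref{homosph}), and after looping it is a map from the free monoid on $\bN_{>0}$ to $\bN$, which is not an equivalence.

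The mistake is identifying $\Omega^\ell(\Omega^\infty(\Sigma^\ell E)\wedge X)$, where the smash is formed in $\infty\Cat_*$ after $\Omega^\infty$, with $\Omega^\ell\Omega^\infty(\Sigma^\ell E\wedge X)$, where it is formed in $\Cat\Sp$. Looping does not bridge the two; classically, too, $\Omega^\ell(E_\ell\wedge X)\to\Omega^\infty(E\wedge X)$ is only highly connected. What the theorem actually uses is the spherical axiom (\ref{eqzz}): the colimit over even $\ell$ of these maps is an equivalence. This holds for $\Omega^\infty(E\wedge(-))$ because $E\wedge X$ is the spectrification of the levelwise prespectrum $(E_n\wedge X)_n$, so $\Omega^\infty(E\wedge X)\simeq\colim_\ell\Omega^\ell(E_\ell\wedge X)$ by \cref{spectri}. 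In your second step, ``each stage is an equivalence'' must likewise become ``the colimit map is an equivalence'', which is then just the axiom.

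The more serious gap is the one you flag yourself. Write $\Psi(E)=\Omega^\infty(E\wedge(-))$ and $\nu(H)=E_H$. An objectwise equivalence $\Psi\nu(H)\simeq H$ together with $\nu\Psi\simeq\id$ does not make $\Psi$ fully faithful. You need a comparison natural in $H$, and also a construction of $E_H$ functorial in $H$; the paper gets the latter for free by defining $\nu$ as precomposition along $\alpha$ (\cref{Geh}, \cref{forgetu}).

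The tool you invoke for the comparison, a lift of $H$ to an excisive oriented functor $\infty\Cat_*\to\Cat\Sp$ ``via the universal property'', is not available. \cref{qqq} and \cref{specunive} concern left adjoints out of $\mC$. \cref{lifti}, like \cref{impol}, is proved only for bioriented functors. For oriented $H$ the candidate $H\circ\Sigma^n$ carries a $\cop$-twist, since $\Sigma$ is oriented only as $\infty\Cat_*^{\cop}\to\infty\Cat_*$ (\cref{reduc}). With an oriented version of \cref{lifti} your route would close: the comparison $\bar H(S^0)\wedge(-)\to\bar H$ is canonical by Yoneda and natural in $H$, and it is an equivalence exactly when $H$ is spherical.

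The paper instead proves directly that $\Psi$ is a fully faithful left adjoint of $\nu$:
\begin{enumerate}
\item It shows $\Psi$ preserves colimits and left tensors (\cref{rioko}, conservativity of $\nu$ on $\mH(\mD)$).
\item It checks the adjunction on the generators $(Y\wedge(-))_!(\Omega^n(S^0))$, $n$ even (\cref{spgen}). Here \cref{appro} and \cref{excend} identify $\Psi$ of a generator with the excisive approximation of $(Y\wedge(-))\circ\Omega^n$, so maps into an excisive $H$ reduce by Yoneda to $\R\Mor_\mD(Y,H(S^n))$.
\item Conservativity of $\nu$ on $\mH(\mD)$ then upgrades this to an equivalence.
\end{enumerate}
Finally, the enriched structure does not follow from \cref{specenr}. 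The paper obtains it separately: it shows $\mH(\mD)$ is closed under right tensors and stable, and identifies $\nu$ with the lift, provided by \cref{lifti}, of the bioriented evaluation at $S^0$.
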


The third condition expresses a compatibility with categorical spheres. 
It reflects a fundamental difference from classical homotopy theory: based $\infty$-categories are not generated under small colimits by categorical spheres.
Its formulation involves a non-formal canonical twist relating the smash product with even-dimensional categorical spheres.

\vspace{1mm}

The proof proceeds via a theory of categorical excisive approximations, which identifies categorical homology theories with their associated categorical spectra of coefficients.

\vspace{1mm}

A fundamental feature of homology is that cofiber sequences give rise to long exact sequences on homology.
The categorical Brown representability theorem implies 
a categorical analogue:

\begin{corollary}\label{coroui} (\cref{lifta}, \cref{homologic})
Every categorical homology theory $E$
admits a unique lift to categorical spectra.
Every oriented cofiber sequence $X \to Y \to Z$ of based $\infty$-categories gives rise to an oriented exact sequence of categorical spectra
$$ ... \to E(Z)[-1] \to E(X) \to E(Y) \to E(Z) \to E(X)[1] \to E(Y)[1] \to ... $$
on categorical homology.

\end{corollary}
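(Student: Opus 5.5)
The plan is to deduce both assertions from \cref{3}, via the excisive-approximation description of categorical homology theories behind it.

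\emph{Existence of the lift.} By \cref{3}, a categorical homology theory is equivalent to $H = \Omega^\infty(E \wedge (-))$ for an essentially unique categorical spectrum $E$. The lift I propose is the oriented functor $\widetilde{H} = E \wedge \Sigma^\infty(-) \colon \infty\Cat_* \to \Cat\Sp$. It satisfies $\Omega^\infty \circ \widetilde{H} \simeq H$, since for a based $\infty$-category $X$ the stabilization of the Gray tensor product identifies $E \wedge X$ with $E \wedge \Sigma^\infty X$.

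\emph{Uniqueness of the lift.} Let $\widetilde{H}'$ be any lift that is reduced, preserves filtered colimits and sends oriented pushouts of the form (\ref{equkb}) to oriented pullbacks in $\Cat\Sp$. Its levelwise components $\Omega^{\infty - n}\widetilde{H}'$ are then categorical homology theories. They are related by the bonding equivalences $\Omega^{\infty-n} \simeq \Omega\,\Omega^{\infty-n-1}$, and the sphere condition (3) of \cref{3} transfers along these equivalences. Applying \cref{3} levelwise and using its full faithfulness, one gets that $\widetilde{H}'$ is determined by $H$, which is the uniqueness statement.

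\emph{Stability inputs.} The exact sequence rests on the stability of $\Cat\Sp$, namely that $\Omega$ and $\Sigma = [1]$ are inverse equivalences and that oriented fiber and oriented cofiber sequences coincide. These facts are already required for the statement of \cref{3}, and I take them as established.

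\emph{The exact sequence.} Start from an oriented cofiber sequence $X \to Y \to Z$, that is, an oriented pushout square with two corners $0$ as in (\ref{equkb}). The functor $\Sigma^\infty$ is a left adjoint compatible with the oriented structure, so it sends this square to an oriented pushout in $\Cat\Sp$; so does $E \wedge (-)$, because the smash product is oriented-cocontinuous in each variable. Hence $\widetilde{H}(X) \to \widetilde{H}(Y) \to \widetilde{H}(Z)$ is an oriented cofiber sequence. By stability it is also an oriented fiber sequence.
\begin{itemize}
\item To extend to the right, take iterated oriented cofibers. Let $\widetilde{H}(Z) \to C$ be the cofiber of $\widetilde{H}(Y) \to \widetilde{H}(Z)$. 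Pasting oriented pushout squares identifies $C$ with $\Sigma\widetilde{H}(X) = E(X)[1]$, and the same argument gives the subsequent terms $E(Y)[1]$, and so on.
\item To extend to the left, take iterated fibers in the same way, which produces $E(Z)[-1]$ and its predecessors.
\end{itemize}
Gluing the two extensions gives the displayed sequence, in which every consecutive triple is simultaneously an oriented fiber and cofiber sequence. This is what the paper calls an oriented exact sequence.

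\emph{Main obstacle.} The hardest step is the pasting lemma for oriented pushout squares used to identify the cofiber of $\widetilde{H}(Y) \to \widetilde{H}(Z)$ with $\Sigma\widetilde{H}(X)$. Because the squares carry 2-cells oriented in a fixed direction, the pasted composite naturally carries a twist: the shift may appear as a conjugate or opposite form of $\Sigma$, rather than $\Sigma$ itself. I would first check the orientation conventions on the two-step pasting directly; if the twist does appear, I would absorb it into the definition of $[1]$ on $\Cat\Sp$ so that the pattern repeats periodically.
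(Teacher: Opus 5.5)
Your lift $E\wedge\Sigma^\infty(-)$ and your first step agree with the paper: the lift is an oriented left adjoint, so it sends an oriented cofiber sequence to an oriented cofiber sequence. The gap is the stability input you take as established, namely that in $\Cat\Sp$ oriented fiber and oriented cofiber sequences coincide, together with the plan of extending by iterated oriented cofibers and fibers. That input is false. Put $S=\Sigma^\infty(S^0)$. The sequence $0\to S\xrightarrow{\id}S$ is an oriented cofiber sequence, since $0\overset{\to}{+}_0S\simeq S$. However, the oriented left fiber of $\id_S$ is not $0$. By \cref{laxfib} applied to $\id_{\Omega S}$, it is $0\overset{\to}{+}_{\Omega S}\Omega S\simeq\Omega(S)\wedge\Sigma^\infty(\bD^1)$. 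This is nonzero because $\Omega(S)$ is $\wedge$-invertible and $\Omega^\infty\Sigma^\infty(\bD^1)$ is the free symmetric monoidal $\infty$-category on a morphism out of the unit. For the same reason, the oriented cofiber of $\id_S$ is $\Sigma^\infty(\bD^1)$ rather than $\Sigma(0)=0$, so iterating oriented cofibers does not produce the shifts. You also cannot fall back on the underlying category, which is only preadditive: for instance $\pi_0\Map_{\Cat\Sp}(S,H(\bN))\cong\bN$ is not a group.

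The paper's mechanism (\cref{oplaxexacto}) is different. Take $\phi\colon A\to B$ and $C=0\overset{\to}{+}_AB$.
\begin{itemize}
\item Paste the oriented pushout square with an \emph{ordinary} square. The dual of \cref{pasting}, with outer rectangle $0\overset{\to}{+}_A0=\Sigma A$, shows that $C\to\Sigma A$ is the ordinary cofiber of $B\to C$.
\item The non-formal input is \cref{laxfib}: $C\simeq 0\overset{\to}{\times}_{\Sigma B}\Sigma A$. This says that $C\to\Sigma A\to\Sigma B$ is an oriented fiber sequence.
\item \cref{laxfib} does not follow from invertibility of $\Omega$ alone. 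Its proof combines the left inverse from \cref{repp} with an explicit argument producing a section of the component at $\id_S$, which involves $\Sigma^\infty(\bD^1)$.
\end{itemize}
So the correct pattern is: oriented cofiber square, then ordinary bicartesian square, then oriented fiber square. The sequence continues by applying the bioriented functor $\Sigma\colon\mC^\cop\to\mC$ of \cref{reduc}, which exchanges left and right oriented (co)fibers. This exchange is the parity alternation built into \cref{oplaxexact}; it is not a twist to be absorbed into $[1]$. Note also that an oriented cofiber sequence is an oriented pushout square with one zero corner, whereas (\ref{equkb}) is the suspension square.

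Your levelwise uniqueness argument is a workable alternative to the paper's. The paper applies \cref{brown} with target $\Sp(\mD)$ and uses $\Sp(\Sp(\mD))\simeq\Sp(\mD)$. Two points need fixing in your version. First, for odd $n$ the component $\Omega^{\infty-n}\widetilde H'$ lands in $\infty\Cat_*^{\cop}$, so it needs a $(-)^{\co\op}$ twist. Second, sphericity of the components should be deduced from $\Omega^{\infty-n}\widetilde H'\simeq H\circ\Sigma^n$ via excision, not transported along the bonding maps, which go the wrong way.
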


An important feature of long exact sequences is that equivalences between long exact sequences can be detected inductively.
This is a key ingredient in inductive computations of homology.

\cref{coroui} and \cref{homolog} imply a higher-categorical analogue:

\begin{corollary}
Let $E, E'$ be categorical homology theories and $\alpha: E \to E'$ a morphism of categorical homology theories.
Let
$$	
\begin{tikzcd}
X \ar{r} \ar{d}{f}  & Y \ar{r} \ar{d}{g}  & Z \ar{d}{h} \\ 
X' \ar{r}  & Y' \ar{r} & Z'
\end{tikzcd}
$$	
be a commutative diagram of based $\infty$-categories, where the horizontal sequences are oriented cofiber sequences. 

We consider the following commutative diagram of categorical spectra on categorical homology:
$$	
\begin{tikzcd}
... \ar{r}  & E(X) \ar{r} \ar{d}{\alpha_f}  & E(Y) \ar{d}{\alpha_g}  \ar{r}  & E(Z) \ar{d}{\alpha_h} \ar{r}  & E(X)[1] \ar{d}{\alpha_f[1]}  \ar{r}  & E(Y)[1] \ar{d}{\alpha_g[1]}  \ar{r}  & ... \\ 
... \ar{r}  & E'(X') \ar{r}  & E'(Y') \ar{r}  & E'(Z') \ar{r}  & E'(X')[1] \ar{r}  & E'(Y')[1] \ar{r}  & ...
\end{tikzcd}
$$	

If any two of $$ \alpha_f[\bullet], \alpha_g[\bullet], \alpha_h[\bullet]$$ are equivalences, then so is the third.

\end{corollary}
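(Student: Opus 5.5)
We derive the statement from \cref{coroui} together with a two-out-of-three property for oriented exact sequences in a stable oriented category (\cref{homolog}). The idea mirrors the classical five-lemma argument in a triangulated category. There, a map of cofiber sequences in which two components are equivalences has its third component an equivalence, because the cofiber of a map of cofiber sequences is again a cofiber sequence.

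First, by \cref{coroui} each categorical homology theory lifts uniquely to categorical spectra, and the morphism $\alpha$ lifts with it. So we may regard $E, E'$ as oriented functors $\infty\Cat_* \to \Cat\Sp$ and $\alpha$ as an oriented natural transformation between them. Applying $E$ to the top row and $E'$ to the bottom row gives two oriented cofiber sequences of categorical spectra $E(X) \to E(Y) \to E(Z)$ and $E'(X') \to E'(Y') \to E'(Z')$. Here we use that the lift sends oriented cofiber sequences to oriented cofiber sequences, by the excisiveness in the Brown representability theorem (condition (2)).

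The vertical maps $\alpha_f, \alpha_g, \alpha_h$ are the composites $E(X) \to E'(X) \to E'(X')$, and similarly for the others. Since both $\alpha$ and the diagram of based $\infty$-categories commute, these assemble into a morphism of oriented cofiber sequences in $\Cat\Sp$. Functoriality of the oriented cofiber construction then produces the shifted squares: the rotation isomorphisms $E(Z) \to E(X)[1]$ are natural, so the maps $\alpha_f[n], \alpha_g[n], \alpha_h[n]$ fit into the displayed ladder and the whole ladder commutes.

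Second, we reduce to a statement purely inside the stable oriented category $\Cat\Sp$. A morphism of oriented cofiber sequences $A \to B \to C$ into $A' \to B' \to C'$ is equivalently a morphism in the oriented category of oriented cofiber sequences. Since $\Cat\Sp$ is stable, this category is equivalent to the arrow category via $A \to B$; rotation identifies it equally with the arrow category via $B \to C$, and via $C \to A[1]$. Hence a map of cofiber sequences is determined, up to equivalence, by any two consecutive components, which is the content of \cref{homolog}.

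Concretely, suppose $\alpha_f$ and $\alpha_g$ are equivalences. Then the induced map on oriented cofibers is an equivalence, and this map is identified with $\alpha_h$. If instead $\alpha_g$ and $\alpha_h$ are equivalences, we use that oriented fibers exist and $[1]$ is invertible in a stable oriented category: $E(X)$ is the oriented fiber of $E(Y) \to E(Z)$, so $\alpha_f$ is an equivalence. If $\alpha_f$ and $\alpha_h$ are equivalences, we rotate to $E(Z)[-1] \to E(X) \to E(Y)$ and apply the cofiber case, using that $\alpha_h[-1]$ is an equivalence. Since $[n]$ is an equivalence of $\Cat\Sp$, the conclusion holds for all shifts $\alpha_\bullet[n]$ at once.

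The main obstacle is the orientation. In $\Cat\Sp$, oriented cofiber squares carry a non-invertible $2$-cell, as in (\ref{equkb}), so one must check that the comparison map of cofibers really is the one induced by $\alpha$ compatibly with these $2$-cells. The same care is needed to ensure that the rotation used in the third case does not introduce a twist that reverses the orientation; such a twist would replace $E(X)[1]$ by a conjugate shift. I would handle this first by verifying that the oriented cofiber functor from the oriented arrow category is an oriented equivalence onto cofiber sequences, natural in oriented transformations, and then invoke \cref{homolog} for the rotation compatibility.
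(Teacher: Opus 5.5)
Your top-level reduction is the paper's: lift to categorical spectra via \cref{coroui}, then apply the two-out-of-three property \cref{homolog}. Cited as a black box, that is exactly the paper's one-line proof, and your first case ($\alpha_f,\alpha_g\Rightarrow\alpha_h$ by functoriality of oriented cofibers) is how the paper handles it. (Minor point: the lift preserves oriented cofiber sequences not because of excision, which only concerns squares with two zero corners, but because by \cref{brown} it is smashing with the representing spectrum, hence an oriented left adjoint, as in \cref{homologic}.)

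The gap is your justification of the other two cases. It rests on the classical rotation principle: that an oriented cofiber sequence is determined by any two consecutive terms. Concretely, you use that $E(X)$ is the oriented fiber of $E(Y)\to E(Z)$, and that $E(Z)[-1]\to E(X)\to E(Y)$ is again an oriented cofiber sequence. Both fail in $\Cat\Sp$, whose underlying category is only preadditive ($\pi_0\Map_{\Cat\Sp}(S^0,S^0)\cong\bN$), so oriented fibers and oriented cofibers are not inverse operations on arrows. This is why \cref{oplaxexact} records only forward relations: $\beta_n$ is the oriented cofiber of $\alpha_n$, $\gamma_n$ is the oriented fiber of $\alpha_{n+1}$, and $\beta_n,\gamma_n$ are ordinary fiber and cofiber of each other.

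For example, take the oriented cofiber sequence $S^0\to 0\to S^1$ and let $E$ be represented by the sphere, so the lift is $\Sigma^\infty$. Then $E(Z)[-1]\to E(X)$ is $\id_{S^0}$. By \cref{redfib} its oriented left and right cofibers are $\Sigma^\infty\bD^1$ and $\Sigma^\infty(\bD^1)^{\op}$. Both are nonzero, since their $\Omega^\infty$ are the free symmetric monoidal $\infty$-categories on a morphism $\tu\to x$, resp. $x\to\tu$; but $E(Y)=0$. Likewise $E(X)$ is in general not the oriented fiber of $E(Y)\to E(Z)$ (try $X=Y=S^0$, $Z=\bD^1$). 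So this is not an orientation twist that careful bookkeeping could repair: the backward identifications are simply unavailable.

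The missing idea is \cref{Fibb}. In a reduced spectral oriented category, a morphism over a fixed object is an equivalence if and only if the induced morphism on oriented fibers is; dually for oriented cofibers under a fixed object. This is a conservativity statement, not a functoriality statement, and it carries the real content. The paper reduces it via $\R\Mor_\mC(T,-)$ and the enriched Yoneda lemma to $\Cat\Sp$, and then to based $\infty$-categories. There it is proved by induction on categorical dimension (\cref{riol}, \cref{riok}, \cref{opost}) together with \cref{homt}.

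With it, the remaining cases go as in \cref{homolog}:
\begin{itemize}
\item If $\alpha_g,\alpha_h$ are equivalences, regard $\alpha_f[1]$ as a morphism over $E'(Y')[1]$ (using $\alpha_g[1]$). Since $E(Z)\to E(X)[1]$ is the oriented fiber of $E(X)[1]\to E(Y)[1]$, the induced map on oriented fibers is $\alpha_h$. So \cref{Fibb} gives that $\alpha_f[1]$, hence $\alpha_f$, is an equivalence.
\item If $\alpha_f,\alpha_h$ are equivalences, regard $\alpha_g$ as a morphism under $E'(X')$ (using $\alpha_f$). Its induced map on oriented cofibers is $\alpha_h$, and the dual of \cref{Fibb} gives that $\alpha_g$ is an equivalence.
\end{itemize}
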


This gives a systematic inductive method for computing categorical homology theories, analogous to the role played by long exact sequences in classical algebraic topology.

The representability theorem is the culmination of a broader stable homotopy theory of higher categories developed in Sections 4 and 5. In this theory, categorical spectra arise as the universal stabilization of higher categories (\ref{specunive}). The passage from unstable to stable
homotopy theory of higher categories is controlled by the following categorical Freudenthal suspension theorem:

\begin{theorem}(\cref{F}) Let $n \geq 0$ and $X \in \infty\Cat_*$ be $n$-connected.
The canonical functor $$ X \to \Omega\Sigma(X)$$
is $2 n+1$-connective.

\end{theorem}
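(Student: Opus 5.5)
The plan is to reduce the categorical statement to a statement about mapping spaces out of categorical spheres, and then run a Blakers--Massey type argument in the oriented setting. Here ``$n$-connected'' for a based $\infty$-category $X$ means that $X$ is equivalent to a based $\infty$-category whose $k$-morphisms are trivial for $k\le n$. Concretely, $X$ is an $(n+1)$-fold delooping $B^{n+1}Y$ of an $\bE_{n+1}$-monoid $Y$ in $\infty\Cat$, obtained as the iterated endomorphism category at the base point. A functor $f\colon A\to B$ is $m$-connective if it induces an equivalence on $k$-morphisms for $k<m$ and an essential surjection (on objects up to equivalence) on $m$-morphisms. Equivalently, for every $k\le m$ the induced map on iterated endomorphism categories $\Omega^k$ is essentially surjective on objects, and for $k<m$ it is an equivalence after truncation at the appropriate level.

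The first step is to rewrite $\Sigma X$ for $n$-connected $X$. Since $\Sigma$ is the oriented cofiber of $X\to 0$ (the oriented pushout square (\ref{equkb}) with $X$ in the corner), we have $\Sigma B^{n+1}Y\simeq B^{n+2}Y'$, where $Y'$ is the free $\bE_{n+2}$-monoid on the $\bE_{n+1}$-monoid $Y$ in the Gray-enriched sense. This uses that delooping is left adjoint to $\Omega$ in the oriented setting, and that $\Sigma$ and $\Omega$ are compatible with the Gray tensor product via categorical spheres, as discussed in the introduction. Thus $X\to\Omega\Sigma X$ identifies with the unit $B^{n+1}Y\to B^{n+1}\Omega B Y'$. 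By the $\bE_k$ recognition principle in $\infty\Cat$, this reduces to understanding the comparison map $Y\to \mathrm{Free}_{\bE_{n+2}}^{\bE_{n+1}}(Y)$ in low degrees. Note that $B^{n+1}$ raises connectivity by exactly $n+1$, so it suffices to show that this map of $\infty$-categories is $n$-connective.

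The second step is the combinatorial core. The free $\bE_{n+2}$-algebra on an $\bE_{n+1}$-algebra decomposes, via the configuration-space description of the categorical spheres (free $\bE_k$-spaces on one generator are unordered configuration spaces of $\mathbb{R}^k$), into a filtration. Its first layer is $Y$ itself, and its higher layers are built from Gray tensor powers $Y^{\otimes j}$ for $j\ge 2$, twisted by configuration data. Because $Y$ is $(n-(n+1))$-connected in the shifted sense only, I will instead argue directly at the level of $X$. The cofibers of the filtration are built from $X^{\wedge j}$ (Gray smash powers), and the Gray tensor product is additive on connectivity. Hence $X\wedge X$ is $(2n+1)$-connected, so the deviation of the comparison map lies in degrees $\ge 2n+2$. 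This gives the claimed $2n+1$-connectivity.

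The main obstacle is establishing this filtration together with the connectivity estimate: that the Gray smash of an $n$-connected and an $m$-connected based $\infty$-category is $(n+m+1)$-connected, and that oriented pushouts and pullbacks preserve connectivity in the expected range. Classically this is a James-type splitting. Here the orientation of the square (\ref{equkb}) means that $\Omega\Sigma X$ is a free $\bE_1$-monoid with respect to the Gray tensor product (a categorical James construction $J(X)=\coprod_j X^{\otimes j}/\!\sim$). So I will first try to prove an oriented James theorem $\Omega\Sigma X\simeq J(X)$ for connected $X$. Freudenthal then follows, since $X\to J(X)$ is the inclusion of the first filtration stage, and the subsequent stages attach $X^{\wedge j}$, $j\ge2$, which are $(2n+1)$-connected.
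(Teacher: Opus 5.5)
Your first step is correct and coincides with the paper's reduction. By \cref{deloop}, for $n$-connected $X$ the map $X\to\Omega\Sigma X$ is $B^{n+1}$ applied to the unit $Y\to\Free_{\bE_{n+1},\bE_{n+2}}(Y)$, with $Y=\Omega^{n+1}X$, so it suffices to show this unit is $n$-connective. The monoids here are for the cartesian product, not ``Gray-enriched''.

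The gap is in what follows. You abandon this map because $Y$ has no connectivity and switch to a James-filtration argument, which fails at two points.

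\emph{First}, $\Omega\Sigma X$ is not a free monoid for the Gray tensor product.
\begin{itemize}
\item Composition in an $\infty$-category is a functor out of a cartesian product of morphism $\infty$-categories, so $\Omega$ lands in monoids for $\times$ (\cref{hhnl}).
\item Since $\Sigma$ factors as the free reduced monoid followed by $B$, $\Omega\Sigma X$ is the free reduced monoid on $X$ for the cartesian product, for every based $X$.
\item Compare the unit $\bD^1_+\to\Omega\Sigma(\bD^1_+)$, which is $\coprod_{k\le 1}(\bD^1)^{\times k}\to\coprod_{k\ge 0}(\bD^1)^{\times k}$ (proof of \cref{repp}): these are cartesian powers, not Gray powers.
\end{itemize}
So the filtration quotients would be cartesian smash powers. \cref{conesto}, which concerns the Gray smash product, is not the relevant estimate.

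\emph{Second}, even with the correct filtration, the step ``the layers are $(2n+1)$-connected, hence the map is $(2n+1)$-connective'' is unjustified. There is no Hurewicz/Whitehead principle in $\infty\Cat$ converting connectivity of a cofiber into connectivity of a functor. For example, take $\bD^1$ based at $0$ and consider $\Sigma^2\bD^1\to *$.
\begin{itemize}
\item It goes between $1$-connected $\infty$-categories.
\item Its cofiber is contractible, because $*\sqcup_{\bD^1}*\simeq *$.
\item It is not $3$-connective: a monoidal functor $\Omega^2\Sigma^2\bD^1\to(\bN,\le)$ shows there is no morphism from the generator to the unit.
\end{itemize}
What you would actually need is that each stage $J_{k-1}X\to J_kX$, a pushout of the fat-wedge inclusion into $X^{\times k}$, is $2n$-connected. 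That is a connectivity estimate for cartesian pushout-products, which you do not supply.

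The missing idea is that the bound comes from the operads, not from connectivity of $Y$.
\begin{itemize}
\item Since $(n-1)$-connected functors form the left class of a factorization system, the $Y\in\Mon_{\bE_{n+1}}(\infty\Cat)$ for which $Y\to\Free_{\bE_{n+1},\bE_{n+2}}(Y)$ is $(n-1)$-connected form a class closed under sifted colimits. So you may take $Y=\Free_{\bE_{n+1}}(Z)$.
\item The unit then becomes $\coprod_k(\mathrm{Conf}_k(\bR^{n+1})\times Z^{\times k})_{\Sigma_k}\to\coprod_k(\mathrm{Conf}_k(\bR^{n+2})\times Z^{\times k})_{\Sigma_k}$.
\item This is $(n-1)$-connected by \cref{cateqst}, because $\mathrm{Conf}_k(\bR^{n+1})$ is $(n-1)$-connected and $\mathrm{Conf}_k(\bR^{n+2})$ is $n$-connected (\cref{confi}).
\item Delooping $n+1$ times then gives $n+(n+1)=2n+1$.
\end{itemize}

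Note also that your definition of $m$-connective, which demands equivalences on $k$-morphisms for $k<m$, is stronger than the paper's. The paper only asks for essential surjectivity on $k$-morphisms for all $k\le m$. It is the paper's weaker notion that is closed under colimits, which is exactly what this reduction uses.
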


Here, an $\infty$-category is $n$-connected if its maximal subspace is $n$-connected and all its morphism $\infty$-categories are $n-1$-connected. Similarly, a functor is $n$-connective if it is
essentially surjective and induces 
$n-1$-connective functors on morphism $\infty$-categories.

As a consequence, iterated suspension and endomorphisms converge to categorical stabilization in a controlled range:

\begin{corollary} Let $n \geq 0$ and $X \in \infty\Cat_*$.
The canonical functor $$\Omega^{\n+1} \Sigma^{\n+1}(X) \to \Omega^\infty\Sigma^\infty(X) $$ is $\n$-connective.

\end{corollary}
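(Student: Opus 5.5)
The plan is to write $\Omega^\infty\Sigma^\infty(X)$ as the sequential colimit $\colim_k \Omega^k\Sigma^k(X)$ in $\infty\Cat_*$ and to prove that every transition map
$$\Omega^{k}\Sigma^{k}(X) \to \Omega^{k+1}\Sigma^{k+1}(X)$$
with $k \geq n+1$ is $n$-connective. Since $n$-connective functors are closed under composition and filtered colimits, the canonical functor $\Omega^{n+1}\Sigma^{n+1}(X) \to \Omega^\infty\Sigma^\infty(X)$ is then $n$-connective. The closure under filtered colimits holds because essential surjectivity and the induced maps on morphism $\infty$-categories are computed levelwise along filtered colimits, since $\Omega$ and mapping $\infty$-categories commute with filtered colimits in $\infty\Cat_*$.

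The transition map is $\Omega^k$ applied to the unit $Y \to \Omega\Sigma Y$ with $Y = \Sigma^k X$. The first input is a connectivity estimate for suspensions: if $X$ is $(-1)$-connected, i.e. arbitrary based, then $\Sigma^k X$ is $(k-1)$-connected. This is an induction: $\Sigma$ raises connectedness by one, since $\Sigma X$ has a single object up to equivalence and its endomorphism category receives $X$ via the unit. I would prove this directly from the description of $\Sigma$ as the oriented pushout of $0 \leftarrow X \to 0$, in which the mapping $\infty$-category between the two points is generated by $X$. The categorical Freudenthal suspension theorem (\cref{F}) with $n = k-1$ then shows that $\Sigma^k X \to \Omega\Sigma^{k+1}X$ is $2(k-1)+1 = (2k-1)$-connective.

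The second input is that $\Omega$ lowers connectivity of functors by one: a $m$-connective based functor $f$ induces an $(m-1)$-connective functor on $\Omega$. This follows from the definition, because $\Omega f$ is the induced functor on endomorphism categories of the base point, and these are $(m-1)$-connective by definition. Applying this $k$ times shows the transition map is $(2k-1-k) = (k-1)$-connective. For $k \geq n+1$ this is at least $n$-connective, which is what the first step requires.

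I expect the main obstacle to be the bookkeeping at the edges. One issue is the convention for small $n$, since $\Omega$ of an essentially surjective functor need not be surjective; I will treat connectivity with the convention that $(-1)$-connective is vacuous, which I need to check. The other is justifying that connectivity of functors passes to sequential colimits in $\infty\Cat_*$. For the latter I would use that $\infty\Cat$ is compactly generated by globes, so that both objects and morphism $\infty$-categories of a filtered colimit are filtered colimits of the corresponding data.
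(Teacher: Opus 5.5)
Your proposal is correct and is essentially the paper's proof: write $\Omega^\infty\Sigma^\infty(X)$ as $\colim_k \Omega^k\Sigma^k(X)$, apply \cref{F} to the $(k-1)$-connected $\Sigma^k(X)$, and lose one degree per $\Omega$ to see that the $k$-th transition map is $(k-1)$-connective. You then conclude by closure of connected functors under composition and filtered colimits, a step whose bookkeeping the paper leaves implicit. Two small remarks: the fact that $\Sigma^k(X)$ is $(k-1)$-connected is best taken from \cref{deloop}, since the proof of \cref{connel} notes that $\Sigma^k$ factors through $B^k$, because your heuristic about the unit does not by itself show that $\Omega\Sigma Y$ is highly connected; and your edge-case worry is moot, since for $k \geq n+1$ you only ever apply $\Omega$ to functors that are at least $k$-connective with $k \geq 1$.
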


Finally, we apply the theory to categorical $R$-homology for any rig $R$, proving categorical Hurewicz and Eilenberg--Zilber theorems.

\subsubsection*{Relation to other work}

Christ-Dyckerhoff-Walde \cite{christ2023complexes} and   
Dyckerhoff \cite{dyckerhoff2021categorified} study chain complexes of stable $(\infty,1)$-categories, Lurie \cite{lurie2008classification} introduces categorical chain complexes and Kern \cite{kern2024categoricalspectrapointedinftymathbbzcategories}, Masuda \cite{masuda2026algebra}, Stefanich \cite[\S 13.3]{HigherQuasi}, and Scholze \cite{Scholze2025} study categorical spectra and related structures.
The present work identifies the ambient stable homotopy theory: categorical spectra arise as the universal stabilization of higher categories, satisfy a categorical Freudenthal suspension theorem, represent categorical homology theories, and give rise to oriented exact sequences.






\subsubsection*{Acknowledgements}

We thank Thorger Geiß, David Gepner, Markus Spitzweck and David White for helpful discussions.

\subsubsection*{Notation and terminology}

We fix a hierarchy of Grothendieck universes whose objects we call small, large, very large, etc.
We call a space small, large, etc. if its set of path components and its homotopy groups are small, large, etc. for any choice of base point. 

We refer to weak $(\infty,n)$-categories for $0 \leq \n \leq \infty$ as $\n$-categories and refer to weak $(\n,\n)$-categories as $(\n,\n)$-categories.
In particular, we refer to $(\infty,1)$-categories as 1-categories or simply categories.

\vspace{1mm}
We write 
\begin{itemize}
\item $\Set$ for the category of small sets.
\item $\Delta$ for (a skeleton of) the category of finite, non-empty, partially ordered sets and order preserving maps, whose objects we denote by $[n] = \{0 < ... < n\}$ for $\n \geq 0$.
\item $\infty\Grp$ for the $\infty$-category of small homotopy types.
\item $ \Cat$ for the $\infty$-category of small $\infty$-categories.	
\item $\Map_\mC(A,B)$ for the space of maps $A \to B$ in $\mC$ for any category $\mC$ containing objects $A,B.$
\item $\Fun(\mC,\mD)$ for the category of functors $\mC \to \mD$ between two categories $\mC,\mD$, the internal hom of $\Cat$ for the cartesian product.

\item We write $\ast$ for the final space.

\item We write $\bD^1$ for the walking arrow, the category with two objects and a unique non-identity arrow.

\item We write $\partial\bD^1$ and $S^0$ for the maximal subspace in $\bD^1$, the set with two elements.

\end{itemize}

\vspace{1mm}

We indicate $\infty$-categories of large objects by $\widehat{(-)}$, for example we write $\widehat{\infty\Grp}, \widehat{\Cat}$ for the $\infty$-categories of large spaces, large categories.

We often call a fully faithful functor $\mC \to \mD$ an embedding.
We call a functor $\mC \to \mD$ an inclusion if it induces an embedding on maximal subspaces and on all mapping spaces. The latter is equivalent to asking that for any category $\mB$ the induced map
$\Map_{\Cat}(\mB,\mC) \to \Map_{\Cat}(\mB,\mD)$ is an embedding.

\section{Higher categorical foundations}

In this section we recall the basic higher category theory needed for our work.


\subsection{Enriched categories}

We recall the theory of (homotopy coherent) enriched categories \cite{GEPNER2015575}, \cite{Heine2023AnEB}, \cite{heine2025equivalence}, \cite{HINICH2020107129}.


For every presentably monoidal category $\mV$ there is a presentable 2-category $ \mV\mathrm{-}\Cat$ of $\mV$-enriched categories and $\mV$-enriched functors and a forgetful functor $ \iota:\mV\mathrm{-}\Cat \to \Cat$ to the presentable 2-category $\Cat$ of categories.
For $\mV= \infty\Grp$ the category of homotopy types, the forgetful functor $ \iota:\infty\Grp\mathrm{-}\Cat \to \Cat$ is an equivalence by \cite[Corollary 3.23.]{heine2024bienrichedinftycategories}.

\begin{notation}Let $\mV$ be a presentably monoidal category.
A $\mV$-enriched category $\mC$ provides an underlying category $\iota(\mC)$, for all objects $X,Y \in \iota(\mC)$ a morphism object 
$$\Mor_\mC(X, Y) \in \mV$$
and for all objects $X,Y,Z \in \iota(\mC)$ a composition morphism in $\mV:$
$$\Mor_\mC(Y, Z) \ot \Mor_\mC(X, Y) \to \Mor_\mC(X, Z).$$

We write $ X \in \mC$ for $ X \in \iota(\mC)$ and usually notationally identify $\mC$ with $\iota(\mC).$





	
\end{notation}


\begin{notation}
Let $\mV$ be a presentably monoidal category and $\mC, \mD$ be $\mV$-enriched categories. Let $ \mV \mathrm{-}\Fun(\mC,\mD)$ denote the category of $\mV$-enriched functors $\mC \to \mD$ using that $\mV \mathrm{-}\Cat$ is a 2-category.

\end{notation}

\subsubsection{Opposite enrichment}

For every enriched category there is an opposite one:

The following is \cite[Proposition 4.60.]{heine2024bienrichedinftycategories}

\begin{proposition}
There is an involution $$(-)^\circ: \mV\mathrm{-}\Cat \simeq \mV^\rev\mathrm{-}\Cat$$ forming the opposite enriched category.

For every $\mC \in \mV\mathrm{-}\Cat$ and $X,Y \in \mC$
there are canonical equivalences
$$ \iota(\mC^\circ) \simeq \iota(\mC)^\op$$
and $$ \Mor_{\mC^\circ}(X,Y) \simeq \Mor_{\mC}(Y,X).$$
\end{proposition}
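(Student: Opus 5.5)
The plan is to construct the opposite enriched category by reversing the monoidal structure on the enriching category, and to realize this as an involution at the level of the model of enriched categories used in \cite{heine2024bienrichedinftycategories}. I would work in the model where a $\mV$-enriched category is a category $\mM$ left tensored over $\mV$ (more precisely a weakly $\mV$-enriched category, or $\mV$-module with morphism objects) that is generated appropriately, so that $\mV\mathrm{-}\Cat$ is identified with a full subcategory of categories weakly left enriched in $\mV$. Equivalently, one may use the description via $\mV$-enriched precategories, i.e.\ algebras over the nonsymmetric operad $\Delta^\op_X$ with values in $\mV$, for a space of objects $X$. The key observation is that $\Delta$ carries the involution $[n]\mapsto[n]$ sending $i \mapsto n-i$, which reverses the order of composition.

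The steps, in order, are as follows. First, define the involution on precategories. A $\mV$-enriched precategory with object space $X$ is a monoid in the $\infty$-category of $X$-graphs $\Fun(X\times X,\mV)$, equipped with the convolution product. Precomposing with the swap $X\times X\to X\times X$ gives a monoidal equivalence
\[
\Fun(X\times X,\mV)^{\rev}\simeq \Fun(X\times X,\mV^\rev),
\]
since convolution $(F\ot G)(x,z)=\colim_y F(y,z)\ot G(x,y)$ becomes convolution with reversed tensor after swapping. Monoids are preserved under reversal, because $\Alg(\mA^\rev)\simeq\Alg(\mA)$ via the reversal involution of the associative operad. This yields an equivalence of precategories, naturally in $X$, by making the construction functorial for pullback along maps of spaces. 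Second, check that the Segal completeness condition is preserved: the underlying category of the reversed precategory is the opposite of the original one, since the underlying category is computed from the mapping spaces $\Map_\mV(\tu,\Mor(x,y))$ and $\tu$ is the same unit in $\mV^\rev$. Completeness is invariant under passing to the opposite, so the equivalence restricts to $\mV\mathrm{-}\Cat\simeq\mV^\rev\mathrm{-}\Cat$. Third, verify that the construction is an involution, because applying the swap twice is the identity and $(\mV^\rev)^\rev=\mV$ canonically. The formulas $\iota(\mC^\circ)\simeq\iota(\mC)^\op$ and $\Mor_{\mC^\circ}(X,Y)\simeq\Mor_\mC(Y,X)$ then hold by construction.

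The main obstacle is the coherence in the first step: making the identification of reversed convolution monoidal structures natural in $X$ as a functor of $\infty$-categories, rather than only objectwise. I would handle this by constructing both sides as cocartesian fibrations over $\infty\Grp$ classifying $X\mapsto\Alg(\Fun(X\times X,\mV))$, using the generalized nonsymmetric $\infty$-operad $\Delta^\op_X$. I would then exhibit the involution as the map induced by the canonical reversal automorphism of $\Delta^\op_X$ covering $\Delta^\op$. With this, everything else is formal.
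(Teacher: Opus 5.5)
Your argument is correct, but it does not follow the paper's route, because the paper gives no proof of its own. It imports the statement from \cite[Proposition 4.60]{heine2024bienrichedinftycategories}. That result is formulated for (bi)enriched categories modeled as categories with weak (bi)action \cite{Heine2023AnEB}. This is the same framework the paper uses for all its other enriched input: tensors, enriched functor categories, presheaves and bienrichment.

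You work instead in the Gepner--Haugseng model \cite{GEPNER2015575}, where the opposite is essentially tautological:
\begin{itemize}
\item The reversal $i\mapsto n-i$ of $\Delta$ lifts to an involution of $\Delta^\op_X$, given on objects by $(x_0,\dots,x_n)\mapsto(x_n,\dots,x_0)$ and natural in $X$.
\item Taking $\mV^\rev$ to be the pullback of $\mV^\otimes\to\Delta^\op$ along the reversal, precomposition gives $\Alg_{\Delta^\op_X}(\mV)\simeq\Alg_{\Delta^\op_X}(\mV^\rev)$ naturally in $X$.
\item This equivalence preserves fully faithful essentially surjective functors and completeness, and both formulas hold on the nose.
\end{itemize}

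I would lead with this and drop the convolution picture. For a space $X$, identifying $\Delta^\op_X$-algebras with monoids in $\Fun(X\times X,\mV)$ under convolution is itself a nontrivial comparison theorem, and you do not need it. A minor point: $X\mapsto\Alg_{\Delta^\op_X}(\mV)$ is contravariant via restriction, so the fibrations over $\infty\Grp$ you compare are naturally cartesian; cocartesianness plays no role.

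The trade-off is as follows. Your construction is elementary and self-contained. However, to get the involution on $\mV\mathrm{-}\Cat$ as used in the paper, you must transport it along the comparison of models \cite{Heine2023AnEB,heine2025equivalence}. The cited version instead lives natively where the rest of the paper's enriched machinery is set up.

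The involution is also used 2-categorically later, for example in the fact that $\phi$ has a left adjoint iff $\phi^\circ$ has a right adjoint. So you should record that your construction refines with 2-cells reversed. One way is via $(\mC\boxtimes K)^\circ\simeq\mC^\circ\boxtimes K^\op$ for $K\in\Cat$, which gives $\mV\mathrm{-}\Fun(\mC,\mD)^\op\simeq\mV^\rev\mathrm{-}\Fun(\mC^\circ,\mD^\circ)$.
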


\subsubsection{Transfer of enrichment}

The following is \cite[Theorem 3.67., Corollary 3.71., Corollary 3.74.]{heine2024bienrichedinftycategories}:

\begin{proposition}

Let $\mV, \mW$ be presentably monoidal categories and $\phi: \mV \to \mW$ a lax monoidal functor.

\begin{enumerate}

\item There is a canonical functor $\phi_!: \mV\mathrm{-}\Cat \to \mW\mathrm{-}\Cat$
such that for every $\mV$-enriched category $\mC$ there is a canonical essentially surjective map $\kappa: \iota(\mC) \to \iota(\phi_!(\mC)) $ and for every $X, Y \in \mC$ there is a canonical equivalence
$$ \Mor_{\phi_!(\mC)}(\kappa(X), \kappa(Y)) \simeq \Mor_\mC(X,Y).$$

\item Let $\phi: \mV \to \mW$ be a monoidal functor that admits a right adjoint $\gamma.$ There is an adjunction $\phi_!: \mV\mathrm{-}\Cat \rightleftarrows \mW\mathrm{-}\Cat: \gamma_!.$
For every presentably left $\mW$-tensored category $\mD$ the restriction of the left $\mW$-action along $\phi$ identifies with $\gamma_!(\mD).$

\vspace{1mm}

\item If $\phi: \mV \to \mW$ is a lax monoidal embedding, the functor
$\phi_!: \mV\mathrm{-}\Cat \to \mW\mathrm{-}\Cat$ is fully faithful.

\end{enumerate}

\end{proposition}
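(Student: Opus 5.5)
The plan is to work first at the level of enriched \emph{precategories} and only then pass to (univalent) enriched categories. Concretely, I will model a $\mV$-enriched precategory with space of objects $S$ as an algebra in $\mV$ for the quiver (generalized non-symmetric) operad $\mathcal{O}_S$ whose colours are pairs of objects of $S$. I will model $\mV\mathrm{-}\Cat$ as the full subcategory of the total category of such algebras, over varying $S$, spanned by the univalent (complete) ones. This is the reflective localization at the fully faithful, essentially surjective enriched functors, following \cite{GEPNER2015575, Heine2023AnEB, HINICH2020107129}.

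For (1), a lax monoidal functor $\phi$ is a map of planar operads, so postcomposition with $\phi$ sends $\mathcal{O}_S$-algebras in $\mV$ to $\mathcal{O}_S$-algebras in $\mW$. This gives a functor $\phi^{\mathrm{pre}}_!$ on precategories that is the identity on objects and applies $\phi$ to each morphism object. I then set $\phi_! := L\circ \phi^{\mathrm{pre}}_!$, where $L$ is the univalent completion. The remaining input is the standard fact that the completion map $P \to L(P)$ is fully faithful and essentially surjective. This produces the map $\kappa$ and the identification of $\Mor_{\phi_!(\mC)}(\kappa X,\kappa Y)$ with $\phi\,\Mor_\mC(X,Y)$. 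I read the displayed equivalence in the statement as this identification, with $\phi$ suppressed.

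For (2), the right adjoint $\gamma$ of a monoidal functor is canonically lax monoidal, so $\gamma^{\mathrm{pre}}_!$ is defined by the same recipe. On a fixed object space $S$, the adjunction $\phi\dashv\gamma$ of operad maps induces an adjunction between $\mathcal{O}_S$-algebras in $\mV$ and in $\mW$. These fibrewise adjunctions assemble into a relative adjunction $\phi^{\mathrm{pre}}_!\dashv\gamma^{\mathrm{pre}}_!$ over spaces. The key point is then that $\gamma^{\mathrm{pre}}_!$ preserves univalent objects. My first approach is to check that an equivalence in $\gamma^{\mathrm{pre}}_!(\mC)$ is a pair of morphisms $\tu_\mV\to\gamma\Mor_\mC(X,Y)$ and back whose composites are identities; by adjunction this is exactly an equivalence in $\mC$. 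Granting this, composing with $L$ gives $\phi_!\dashv\gamma_!$ on univalent categories.

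For the statement about tensored categories, recall that $\mD$ is enriched via morphism objects characterized by
$$\Map_\mW(W,\Mor_\mD(X,Y))\simeq\Map_\mD(W\ot X,Y).$$
Restricting the action along $\phi$ gives a $\mV$-tensored category, whose morphism objects satisfy
$$\Map_\mV(V,\gamma\Mor_\mD(X,Y))\simeq\Map_\mW(\phi V,\Mor_\mD(X,Y))\simeq\Map_\mD(\phi V\ot X,Y).$$
Hence these morphism objects are $\gamma\Mor_\mD(X,Y)$. The main care needed is to make this identification coherent, which I would do by using the operadic description of the enrichment associated to an action.

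For (3), I will show that $\phi_!$ is fully faithful by computing mapping spaces. A $\mV$-enriched functor $\mC\to\mC'$ is a map of object spaces $f$ together with a morphism of $\mathcal{O}$-algebras, that is, compatible maps $\Mor_\mC(X,Y)\to\Mor_{\mC'}(fX,fY)$. Its mapping space is therefore a limit, over the operadic data, of mapping spaces in $\mV$. Since $\phi$ is a lax monoidal embedding, these limits are carried to the corresponding ones in $\mW$, and the identification is equivalence-preserving. Moreover $\phi^{\mathrm{pre}}_!$ already preserves univalence, because $\phi$ is fully faithful and $\phi(\tu)\to\tu$ is an equivalence up to the lax unit map, so equivalences are detected identically. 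Consequently $\phi_!=\phi^{\mathrm{pre}}_!$ on univalent categories, and full faithfulness follows.

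The main obstacle is the behaviour of univalence under transfer. In (2) it is needed to know that $\gamma_!$ lands in complete objects. In (3) the lax unit $\tu_\mW\to\phi(\tu_\mV)$ need not be an equivalence, so detecting equivalences requires care. If $\phi^{\mathrm{pre}}_!$ fails to preserve completeness there, I will instead argue through the completion: fully faithful, essentially surjective functors are detected on morphism objects and on essential surjectivity, and both are preserved and reflected by an embedding $\phi$.
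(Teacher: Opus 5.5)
The paper gives no proof of this proposition. It imports it from Heine's bienriched theory \cite[Theorem 3.67., Corollary 3.71., Corollary 3.74.]{heine2024bienrichedinftycategories}, where enriched $\infty$-categories are treated through $\infty$-categories with a closed weak action. In that setting the underlying $\infty$-category is part of the data, and the tensored-category clause of (2) is close to tautological. You take a genuinely different route: the Gepner--Haugseng model \cite{GEPNER2015575} of $\mathcal{O}_S$-algebras, localized at the complete ones.

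What your route buys:
\begin{itemize}
\item (1) becomes nearly formal.
\item The adjunction in (2) reduces to fibrewise monoidal adjunctions, together with the check that $\gamma^{\mathrm{pre}}_!$ preserves completeness. Your check of this is correct, and it uses $\phi(\tu_\mV)\simeq\tu_\mW$.
\end{itemize}
What it costs: the real work moves into the tensored-category clause of (2), which you leave at ``make this coherent''. In your model this can be done with endomorphism objects. For a set $S$ and $M\colon S\to\mD$, the enriched structure is the endomorphism algebra of $M$ for the action of the quiver category $\Fun(S\times S,\mW)$ on $\Fun(S,\mD)$. Modules over $A$ for the action restricted along the monoidal functor $\phi_*$ are the same as modules over $\phi_*A$. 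Hence algebra maps $A\to\mathrm{End}_\mV(M)$ correspond to algebra maps $\phi_*A\to\mathrm{End}_\mW(M)$, and so to algebra maps $A\to\gamma_*\mathrm{End}_\mW(M)$. This gives $\mathrm{End}_\mV(M)\simeq\gamma_*\mathrm{End}_\mW(M)$ coherently, not just objectwise. Note also that in (1) the functor $\kappa$ on underlying categories needs the lax unit $\tu_\mW\to\phi(\tu_\mV)$ before you compose with the completion.

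The place that needs fixing is (3). Your argument actually uses that $\phi$ is fully faithful as a map of $\infty$-operads: for all $n\geq 0$ the maps
\[
\Map_\mV(V_1\ot\cdots\ot V_n,V)\to\Map_\mW(\phi V_1\ot\cdots\ot\phi V_n,\phi V),
\]
given by applying $\phi$ and then precomposing with the lax structure map, must be equivalences. Under this reading of ``lax monoidal embedding'' your argument goes through, and it holds for the strong monoidal embeddings the paper uses. Full faithfulness of the underlying functor alone does not justify your sentence that ``these limits are carried to the corresponding ones in $\mW$''. Spaces of enriched functors involve $\Map_\mV(\Mor_\mC(Y,Z)\ot\Mor_\mC(X,Y),\Mor_{\mC'}(fX,fZ))$, and these are compared with $\mW$ only through precomposition with the lax structure map.

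With the operadic reading, everything you need follows:
\begin{itemize}
\item The cases $n\geq1$ give full faithfulness of $\phi_*$ on $\mathcal{O}_S$-algebras.
\item The case $n=0$ says exactly that $\Map_\mV(\tu_\mV,V)\to\Map_\mW(\tu_\mW,\phi V)$ is an equivalence. So $\iota(\mC)\to\iota(\phi^{\mathrm{pre}}_!\mC)$ is an equivalence, and completeness is preserved whether or not the lax unit is invertible.
\end{itemize}
This should replace your garbled remark about ``$\phi(\tu)\to\tu$''. Also drop the fallback via Dwyer--Kan equivalences. A fully faithful functor that preserves and reflects the maps being inverted need not induce a fully faithful functor on localizations, so that route would not prove (3).
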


\begin{remark}\label{indubi} Let $\mU, \mV, \mW $ be presentably monoidal categories and $F: \mV \rightleftarrows \mW :G $ a left adjoint monoidal functor.
The induced left adjoint monoidal functor
$\mU \ot \mV \to \mU \ot \mW $ identifies with the left adjoint
of the induced functor
\begin{equation}\label{homok}
\Fun^R(\mU^\op, \mW) \to \Fun^R(\mU^\op,\mV).\end{equation}

So if $G: \mW \to \mV$ is fully faithful, the right adjoint functor
(\ref{homok}) is fully faithful. 
If the functor $F: \mV \to \mW$ preserves small limits, the left adjoint of (\ref{homok}) identifies with the induced functor $ \Fun^R(\mU^\op, \mV) \to \Fun^R(\mU^\op,\mW)$, which preserves small limits.
The latter is fully faithful if $F$ is fully faithful.

\end{remark}

\subsubsection{Tensors and cotensors}

\begin{definition}Let $\mV$ be a presentably monoidal category, $\mC$ a $\mV$-enriched category and $X \in \mC, V \in \mV.$	 
	
\begin{enumerate}
	
\item The tensor of $V$ and $X$ in $\mC$ is the object $V \ot X \in \mC $ such that there is a morphism
$V \to \Mor_\mC(X, V \ot X) $ in $\mV$ that induces for every $Y \in \mC$ an equivalence
$$ \Mor_\mC(V \ot X,Y) \to \Mor_\mV(V, \Mor_\mC(X,Y)). $$ 

\item The cotensor of $V$ and $X$ in $\mC$ is the object ${^V X} \in \mC $ that is the tensor of $V $ and $X$ in the opposite $\mV^\rev$-enriched category $\mC^\circ.$

\end{enumerate}
	
\end{definition}

\subsubsection{Initial and final objects}

\begin{definition}\label{initi}
Let $\mV$ be a presentably monoidal category and $\mC$ a $\mV$-enriched category.

\begin{enumerate}
\item An object $X $ of $\mC$ is initial if for every object $Y \in \mC$
the morphism object $\Mor_\mC(X,Y)$ is a final object in $\mV.$

\vspace{1mm}

\item An object $X $ of $\mC$ is final if it is initial in $\mC^\circ$ or equivalently if for every object $Y \in \mC$
the morphism object $\Mor_\mC(Y,X)$ is a final object in $\mV.$

\vspace{1mm}

\item An object $X $ of $\mC$ is a zero object if it is initial and final.

\end{enumerate}

\end{definition}

\subsubsection{Enriched adjunctions}

\begin{definition}Let $\mV$ be a presentably monoidal category.
A $\mV$-enriched functor $\mC \to \mD$ admits a left (right) adjoint if
it admits a left (right) adjoint in the 2-category $\mV \mathrm{-}\Cat.$

\end{definition}

\begin{notation}
Let $\mV$ be a presentably monoidal category and $\mC, \mD$ be $\mV$-enriched categories. Let $$ \mV \mathrm{-}\L\Fun(\mC,\mD) \subset \mV \mathrm{-}\Fun(\mC,\mD)$$ be the full subcategory of $\mV$-enriched functors $\mC \to \mD$ that admit a $\mV$-enriched right adjoint.

\end{notation}

The following is \cite[Remark 2.75.]{heine2024bienrichedinftycategories}:

\begin{proposition}

Let $\mV$ be a presentably monoidal category.

\begin{enumerate}

\item A $\mV$-enriched functor $\phi: \mC \to \mD$ admits a $\mV$-enriched right adjoint if and only if for every $\Y \in \mD$ the $\mV^\rev$-enriched functor
$\Mor_\mD(\phi(-),\Y): \mC^\circ \to \mV$ is representable.

\vspace{1mm}

\item A $\mV$-enriched functor $\phi: \mC \to \mD$ admits a $\mV$-enriched left adjoint if and only if the opposite $\mV^\rev$-enriched functor $\phi^\circ: \mC^\circ \to \mD^\circ$ admits a $\mV^\rev$-enriched right adjoint. By (1) this holds if and only if for every $\Y \in \mD$ the $\mV$-enriched functor
$\Mor_\mD(\Y,\phi(-)): \mC \to \mV$ is representable.

\end{enumerate}
\end{proposition}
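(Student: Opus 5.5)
The plan is to reduce both statements to the description of enriched morphism objects via representable functors, using the opposite-enrichment involution $(-)^\circ$.

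For (1), suppose first that $\phi$ admits a $\mV$-enriched right adjoint $\psi$ in the 2-category $\mV\mathrm{-}\Cat$. I would take the unit $\eta: \id \to \psi\phi$ and the counit $\epsilon: \phi\psi \to \id$, and form the composite
$$\Mor_\mD(\phi(X),Y) \to \Mor_\mC(\psi\phi(X),\psi(Y)) \to \Mor_\mC(X,\psi(Y)).$$
Here the first map is the enriched action of $\psi$ and the second is precomposition with $\eta_X$. The triangle identities should show that this composite is an equivalence, natural in $X$ as a $\mV^\rev$-enriched functor of $X \in \mC^\circ$. Hence $\Mor_\mD(\phi(-),Y)$ is represented by $\psi(Y)$.

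Conversely, suppose each $\Mor_\mD(\phi(-),Y)$ is represented by some object $\psi(Y)$. I would apply the enriched Yoneda lemma, in the form of a fully faithful $\mV$-enriched embedding of $\mC$ into the $\mV$-enriched presheaves on $\mC$. This lets me assemble the representing objects into a $\mV$-enriched functor $\psi$. Concretely, $\psi$ is the factorization of $\mD \to \mathrm{P}_\mV(\mC)$, $Y \mapsto \Mor_\mD(\phi(-),Y)$, through the essential image of the Yoneda embedding. The identity $\id_{\phi(X)}$ then corresponds under the representation to the unit $\eta_X$, and the identity $\id_{\psi(Y)}$ to the counit $\epsilon_Y$. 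The triangle identities follow from Yoneda.

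The main obstacle is making this construction homotopy coherent inside the $(\infty,2)$-category $\mV\mathrm{-}\Cat$, rather than object-by-object. I would handle it by working with $\mV$-enriched bimodules (correspondences), as in the bienriched framework cited. The functor $\phi$ gives a bimodule $\Mor_\mD(\phi(-),-)$, and the adjunction is equivalent to this bimodule being right representable. That equivalence is the content of the cited Remark.

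For (2), I would first observe that $(-)^\circ$ is an equivalence of 2-categories. It reverses the direction of 2-morphisms, since a natural transformation $\alpha:F \to G$ has components $\tu \to \Mor_\mD(F X,GX) \simeq \Mor_{\mD^\circ}(GX,FX)$. It therefore carries left adjunctions in $\mV\mathrm{-}\Cat$ to right adjunctions in $\mV^\rev\mathrm{-}\Cat$, which gives the first equivalence. Applying (1) to $\phi^\circ$ over $\mV^\rev$, and using $\Mor_{\mD^\circ}(\phi(-),Y) \simeq \Mor_\mD(Y,\phi(-))$ and $(\mC^\circ)^\circ \simeq \mC$, yields the stated representability criterion.
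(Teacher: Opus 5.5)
The paper gives no argument of its own for this statement; it simply quotes \cite[Remark 2.75]{heine2024bienrichedinftycategories}. Your sketch (unit/counit plus Yoneda for (1), the $2$-cell-reversing involution $(-)^\circ$ for (2)) is the standard argument, and its one delicate step, producing $\psi$, $\eta$, $\epsilon$ coherently, you hand off to that same source, so in substance you follow the paper.

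If you want that step self-contained without bimodules, restrict the presheaf adjunction $\phi_!\dashv\phi^*$ along the Yoneda embeddings (enlarging the universe if needed). By \cref{Yonedaext}(2) and \cref{enryo} one has $\phi_!\circ\iota_\mC\simeq\iota_\mD\circ\phi$ and $\phi^*\iota_\mD(Y)\simeq\Mor_\mD(\phi(-),Y)$. Your hypothesis then says exactly that $\phi^*$ carries representables into representables, and the unit, counit and triangle identities descend along the fully faithful $\iota_\mC$, $\iota_\mD$.
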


The following is \cite[Lemma 2.76.]{heine2024bienrichedinftycategories}:

\begin{proposition}\label{adj}\label{adj2}Let $\mV$ be a presentably monoidal category.

\begin{enumerate}
 
\item A $\mV$-enriched functor $\mC \to \mD$ admits a right adjoint if and only if it preserves tensors and the underlying functor admits a right adjoint.

\item A $\mV$-enriched functor $\mC \to \mD$ admits a left adjoint if and only if it preserves cotensors and the underlying functor admits a left adjoint.

\end{enumerate}

\end{proposition}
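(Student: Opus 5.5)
We prove (1) first and then obtain (2) from it by passing to opposite enriched categories.

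For (1), suppose first that the $\mV$-enriched functor $\phi: \mC \to \mD$ admits a $\mV$-enriched right adjoint $\psi$. A $\mV$-enriched right adjoint is in particular a right adjoint in the 2-category $\mV\mathrm{-}\Cat$. Applying the forgetful 2-functor $\iota$ therefore shows that $\iota(\phi)$ admits the right adjoint $\iota(\psi)$.

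The adjunction also gives, for every $Y \in \mD$, an equivalence of $\mV^\rev$-enriched functors
$$\Mor_\mD(\phi(-),Y) \simeq \Mor_\mC(-,\psi(Y)).$$
Hence for $V \in \mV$ and $X \in \mC$ admitting a tensor $V \ot X$, we compute
$$\Mor_\mD(\phi(V \ot X),Y) \simeq \Mor_\mC(V \ot X,\psi(Y)) \simeq \Mor_\mV(V,\Mor_\mC(X,\psi(Y))) \simeq \Mor_\mV(V,\Mor_\mD(\phi(X),Y)).$$
One checks that this composite is induced by the canonical map $V \to \Mor_\mC(X, V\ot X) \to \Mor_\mD(\phi(X),\phi(V\ot X))$. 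So $\phi(V\ot X)$ is a tensor of $V$ and $\phi(X)$, i.e. $\phi$ preserves tensors.

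Conversely, assume $\phi$ preserves tensors and $\iota(\phi)$ has a right adjoint $G$. By the preceding proposition (1), it suffices to show that for each $Y \in \mD$ the functor $\Mor_\mD(\phi(-),Y)$ is represented by $G(Y)$. The counit $\phi G(Y) \to Y$ gives a morphism in $\mV$
$$\alpha_X:\Mor_\mC(X,G(Y)) \to \Mor_\mD(\phi(X),\phi G(Y)) \to \Mor_\mD(\phi(X),Y),$$
natural in $X$ as a $\mV^\rev$-enriched transformation. We show $\alpha_X$ is an equivalence by testing against every $V \in \mV$, applying $\Map_\mV(V,-)$. Using tensors in $\mC$, the source becomes
$$\Map_{\iota\mC}(V\ot X, G(Y)) \simeq \Map_{\iota\mD}(\phi(V\ot X),Y).$$
Using that $\phi$ preserves tensors, the target becomes
$$\Map_{\iota\mD}(V\ot\phi(X),Y) \simeq \Map_{\iota\mD}(\phi(V\ot X),Y).$$
Under these identifications $\Map_\mV(V,\alpha_X)$ is the identity, so Yoneda in $\mV$ shows $\alpha_X$ is an equivalence.

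The main obstacle is this last compatibility check: verifying that the identification $\Map_\mV(V,\alpha_X)$ really is the one coming from the tensor-preservation map. It requires unwinding how the tensor equivalence interacts with the enriched functoriality of $\phi$ and with the counit. Here I assume, as the statement implicitly does, that the relevant tensors exist, for instance because $\mC$ is tensored.

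Part (2) follows by applying (1) to $\phi^\circ:\mC^\circ\to\mD^\circ$, which is $\mV^\rev$-enriched. Tensors in the opposite enriched categories are cotensors of the original ones. A left adjoint of $\iota(\phi)$ is the same as a right adjoint of $\iota(\phi)^\op = \iota(\phi^\circ)$. And by the preceding proposition (2), $\phi$ has a $\mV$-enriched left adjoint exactly when $\phi^\circ$ has a $\mV^\rev$-enriched right adjoint.
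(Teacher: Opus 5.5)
Your proposal is correct. The paper gives no proof of its own and simply imports the result from Heine's Lemma~2.76; your argument is the standard one: for (1), use the preceding proposition to reduce to showing $\Mor_\mD(\phi(-),Y)$ is represented by $G(Y)$, and detect the counit-induced comparison map by applying $\Map_\mV(V,-)$, using tensors in $\mC$ and tensor preservation; then obtain (2) by passing to $(-)^\circ$. You were right to assume that $\mC$ admits tensors (for (2), cotensors), because the ``if'' direction fails without it: take $\mV=(\Cat,\times)$ and let $\mC$ be the one-object enriched category with endomorphism monoid $B\bN$. The functor $\mC\to\ast$ is then an equivalence on underlying categories, since the core of $B\bN$ is contractible, and it preserves every tensor that exists in $\mC$, yet it has no enriched right adjoint because $B\bN\not\simeq\ast$.
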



Next we introduce the enriched category of enriched presheaves:

\begin{notation}Let $\mV, \mW$ be presentably monoidal categories, $\mM$ a $\mV$-enriched category and $\mN$ a $\mW$-enriched category.
Let $\mM \boxtimes \mN$ be the pushforward of the product $\mM \times \mN$
along the universal monoidal functor $\mV \times \mW \to \mV \ot \mW$
preserving small colimits componentwise.
    
\end{notation}

The next is \cite[Proposition 4.11, Theorem 4.86]{heine2024bienrichedinftycategories}:

\begin{theorem}\label{psinho} Let $\mV, \mW$ be presentably monoidal categories and $\mN$ a $\mV \ot \mW$-enriched category. 

\begin{enumerate}[\normalfont(1)]\setlength{\itemsep}{-2pt}
\item Let $\mM$ be a small $\mV$-enriched category. The category 
$\mV \mathrm{-}\Fun(\mM, \mN)$
refines to a $\mW$-enriched category characterized by an 
equivalence
$$ \mW \mathrm{-}\Fun(\mO,\mV \mathrm{-} \Fun(\mM, \mN)) \to {\mV \ot \mW} \mathrm{-}\Fun(\mM \boxtimes \mO,\mN)$$
natural in any $\mW$-enriched category $\mO$.

\item Let $\mO$ be a small $\mW$-enriched category. 
The category $\mW \mathrm{-} \Fun(\mO, \mN)$ refines to a $\mV$-enriched category characterized by an equivalence
$$ \mV \mathrm{-} \Fun(\mM, \mW \mathrm{-}\Fun(\mO, \mN)) \to {\mV \ot \mW} \mathrm{-}\Fun(\mM \boxtimes \mO,\mN) $$
natural in any $\mV$-enriched category $\mM$.

\item If $\mN$ is a category presentably bitensored over $\mV, \mW^\rev$, then $\mV \mathrm{-} \Fun(\mM, \mN) $ is a category presentably right tensored over $\mW$ and $\mW \mathrm{-} \Fun(\mO, \mN)$ is a category presentably left tensored over $\mV$.

\end{enumerate}

\end{theorem}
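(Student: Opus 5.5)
The plan is to construct the enrichment on $\mV\mathrm{-}\Fun(\mM,\mN)$ by a representability argument and then deduce (2) by symmetry and (3) from presentability. For (1), fix $\mM$ small and consider the presheaf on $\mW\mathrm{-}\Cat$ given by
$$\mO \mapsto \iota\bigl({\mV \ot \mW}\mathrm{-}\Fun(\mM \boxtimes \mO,\mN)\bigr).$$
I will show that this functor is representable by a $\mW$-enriched category whose underlying category is $\mV\mathrm{-}\Fun(\mM,\mN)$. The first step is to check that $\mO \mapsto \mM \boxtimes \mO$ preserves small colimits in $\mW\mathrm{-}\Cat$. Since $\mW\mathrm{-}\Cat$ is presentable, the adjoint functor theorem then yields a right adjoint $\mN \mapsto \mV\mathrm{-}\Fun(\mM,\mN)$. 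This right adjoint gives the natural equivalence in (1) on maximal subspaces. Upgrading it to an equivalence of categories uses the 2-categorical structure: one tests against $\mO \boxtimes [1]$, or equivalently uses the cotensor by $\bD^1$ in $\mW\mathrm{-}\Cat$.

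The second step is to identify the underlying category and the morphism objects. Take $\mO$ to be the unit $\mW$-enriched category, meaning one object with endomorphism object $\tu_\mW$. Then $\mM \boxtimes \mO$ is the pushforward of $\mM$ along $\mV \to \mV \ot \mW$, $V \mapsto V \ot \tu$. By the transfer-of-enrichment adjunction applied to the monoidal left adjoint $\mV \to \mV \ot \mW$, its $\mV\ot\mW$-functors into $\mN$ are $\mV$-functors into the restricted enrichment of $\mN$. So the underlying category is $\mV\mathrm{-}\Fun(\mM,\mN)$, as desired. Next, taking $\mO$ to be the free $\mW$-enriched category on a morphism with morphism object $W$ identifies $\Mor(F,G)$ as the object of $\mW$ corepresenting $W \mapsto$ transformations $F \to G$ twisted by $W$. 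One can then write it as an enriched end
$$\int_{X\in\mM}\Mor_\mN(F X, G X),$$
with $\mW$ extracted by the right adjoint of $\mV \to \mV\ot\mW$ appearing in \cref{indubi}.

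Part (2) follows from (1) applied to the reversed situation. Passing to opposites via $(-)^\circ$ and using the swap equivalence $\mV \ot \mW \simeq \mW \ot \mV$ exchanges the roles of $\mM$ and $\mO$. For (3), assume $\mN$ is presentably bitensored. The $\mW$-action on $\mV\mathrm{-}\Fun(\mM,\mN)$ is defined pointwise, $(F \cdot W)(X) = F(X)\ot W$, using the right $\mW$-tensoring of $\mN$ and its compatibility with the left $\mV$-action. Presentability then follows because $\mV\mathrm{-}\Fun(\mM,\mN)$ is a localization of functors out of a small category, with colimits computed pointwise.

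The main obstacle is the first step: showing that $\mM \boxtimes (-)$ preserves colimits of $\mW$-enriched categories. Colimits in $\mW\mathrm{-}\Cat$ are not computed on morphism objects. I would handle this by presenting enriched categories as $\mV\ot\mW$-enriched categorical algebras over spaces of objects (flagged or precategory models). In that model $\boxtimes$ is given levelwise by the componentwise colimit-preserving functor $\mV\times\mW \to \mV\ot\mW$, and it commutes with the Segal/completeness localization.
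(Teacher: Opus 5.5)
Your reduction is sound as far as it goes. Suppose $\mM\boxtimes(-)\colon \mW\mathrm{-}\Cat\to(\mV\ot\mW)\mathrm{-}\Cat$ is known to preserve small colimits. Then the adjoint functor theorem, applied in a larger universe if $\mN$ is large, produces the $\mW$-enriched category. Testing against $\mO\boxtimes K$ for categories $K$ upgrades the equivalence of mapping spaces to one of functor categories. Testing against free $\mW$-categories on categories, together with the transfer adjunction along $\mV\to\mV\ot\mW$, identifies the underlying category with $\mV\mathrm{-}\Fun(\mM,\mN)$.

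The paper gives no argument of its own; it quotes \cite[Proposition 4.11, Theorem 4.86]{heine2024bienrichedinftycategories}. There the enriched functor category is built directly in the bienriched framework and its universal property is then checked, so colimit preservation of $\mM\boxtimes(-)$ is an output, not an input. In your route, by contrast, that step carries the entire content of (1). For $\mV=\mW=\infty\Grp$ it is exactly the cartesian closedness of $\Cat$. Your justification for it does not work.

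In the precategory model (an object space $T$ with a $\Delta^{op}_T$-algebra), only the formula $\Mor_{\mM\boxtimes\mO}((s,t),(s',t'))\simeq\Mor_\mM(s,s')\ot\Mor_\mO(t,t')$ is levelwise. Colimits are not levelwise: they are formed by pushing forward along maps of object spaces and then taking colimits of algebras in quivers, and both steps involve free constructions.

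In fact $\mM\boxtimes(-)$ does not even preserve colimits fibrewise over a fixed object space. The initial algebra over $T$, the discrete precategory on $T$, goes to $\mM\times T$, which is not initial over $S\times T$. Colimits are preserved only globally, through the interaction of change of objects with free algebras, and that interaction is exactly what needs proof.

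If you instead pass to a Segal-presheaf model, colimits become levelwise and $\boxtimes$ can be defined levelwise. But then compatibility of $\boxtimes$ with the Segal and completeness localizations is a genuine theorem; its unenriched case is Rezk's result that the complete Segal space localization is cartesian. You would have to show, for instance, that $\boxtimes$ with a representable sends the generating local equivalences to local equivalences. So the heart of (1) is asserted rather than proved.

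Three smaller points:
\begin{itemize}
\item (2) follows from (1) and the swap $\mV\ot\mW\simeq\mW\ot\mV$ alone. Passing to $(-)^\circ$ would only compute opposite functor categories.
\item The $\mW$-valued end is extracted with the $\mW$-valued hom of $\mV\ot\mW$ (the right adjoint of $\mW\to\mV\ot\mW$), not with the right adjoint of $\mV\to\mV\ot\mW$.
\item In (3), $\mV\mathrm{-}\Fun(\mM,\mN)$ is not a localization of $\Fun(\iota\mM,\mN)$, since enriched functors carry structure. Presentability follows instead from, e.g., $\mV\mathrm{-}\Fun(\mM,\mN)\simeq\mV\mathrm{-}\L\Fun(\mP_\mV(\mM),\mN)$.
\end{itemize}
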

    
\begin{definition}

Let $\mV$ be a presentably monoidal category and $\mC$ a small $\mV$-enriched category.
The presentably left $\mV$-tensored category of $\mV$-enriched presheaves on $\mC$ is
$$ \mP_\mV(\mC):= \mV^\rev \mathrm{-}\Fun(\mC^\op,\mV). $$
\end{definition}

The next theorem, which follows from \cite[Theorem 3.41, Theorem 4.70]{heine2024bienrichedinftycategories}, is an enriched version of the universal property of the category of presheaves as the free cocompletion under small colimits \cite[Theorem 5.1.5.6]{lurie.HTT}:

\begin{theorem}\label{Yonedaext}

Let $\mV$ be a presentably monoidal category, $\mC$ a small $\mV$-enriched category and $\mD$ a presentably left $\mV$-tensored category.

\begin{enumerate}[\normalfont(1)]\setlength{\itemsep}{-2pt}
\item There is a $\mV$-enriched embedding $\iota_\mC : \mC \to \mP_\mV(\mC)$ that sends $X$ to $\Mor_\mC(-,X)$ and induces
for every $\mV$-enriched category $\mD$ an equivalence
$$ {\mV\mathrm{-}\L\Fun}(\mP_\mV(\mC),\mD)\to {\mV\mathrm{-}\Fun}(\mC,\mD).$$


\item Let $F: \mC \to \mD$ be a $\mV$-enriched functor and
$\bar{F}: \mP_\mV(\mC) \to \mD $ the unique $\mV$-enriched left adjoint extension of $F$.
The $\mV$-enriched right adjoint $\mD \to \mP_\mV(\mC)$ of $\bar{F}$ sends $Y$ to $\Mor_\mD(-,Y) \circ F. $

\end{enumerate}

\end{theorem}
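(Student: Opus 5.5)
The statement is the enriched free cocompletion theorem (\cref{Yonedaext}). I would reduce it to the ordinary universal property of presheaves \cite[Theorem 5.1.5.6]{lurie.HTT}, using the enriched Yoneda lemma and the adjoint functor criteria \cref{adj}.

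\textbf{The embedding.} Consider the $\mV \ot \mV^\rev$-enriched, that is bienriched, Hom functor $\Mor_\mC(-,-): \mC^\op \boxtimes \mC \to \mV$. By \cref{psinho}~(2), its adjoint is a $\mV$-enriched functor
$$\iota_\mC: \mC \to \mV^\rev\mathrm{-}\Fun(\mC^\op,\mV) = \mP_\mV(\mC), \qquad X \mapsto \Mor_\mC(-,X).$$
Full faithfulness is the enriched Yoneda lemma. One first checks that for $F \in \mP_\mV(\mC)$ there is an equivalence $\Mor_{\mP_\mV(\mC)}(\iota_\mC(X),F) \simeq F(X)$ in $\mV$. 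This follows by identifying the morphism object of the enriched functor category as an end, or by the characterizing equivalence of \cref{psinho} tested against free $\mV$-enriched categories on one morphism. Specializing to $F = \iota_\mC(Y)$ gives full faithfulness.

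\textbf{Density.} Next I would show that every $F$ is a $\mV$-weighted colimit of representables, namely the enriched coend $\int^{X} F(X) \ot \iota_\mC(X)$. Equivalently, $\mP_\mV(\mC)$ is generated under tensors with $\mV$ and small colimits by the image of $\iota_\mC$. The $\mV$-tensoring of $\mP_\mV(\mC)$ is presentable by \cref{psinho}~(3), and it is computed pointwise.

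\textbf{The extension equivalence and part (2).} Given $F: \mC \to \mD$ with $\mD$ presentably left $\mV$-tensored, define $G: \mD \to \mP_\mV(\mC)$ by $Y \mapsto \Mor_\mD(-,Y) \circ F$. This is again obtained from \cref{psinho}~(2), applied to the composite $\mC^\op \boxtimes \mD \to \mD^\op \boxtimes \mD \to \mV$. Then:
\begin{enumerate}
\item $G$ preserves cotensors and small limits, because limits and cotensors in $\mP_\mV(\mC)$ are computed pointwise and $\Mor_\mD(Z,-)$ preserves them.
\item Hence the underlying functor of $G$ is an accessible limit-preserving functor between presentable categories. By the adjoint functor theorem it has a left adjoint, and by \cref{adj} this left adjoint is $\mV$-enriched.
\item Call this left adjoint $\bar F$. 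Yoneda gives $\bar F \circ \iota_\mC \simeq F$, since
$$\Mor_\mD(\bar F\iota_\mC X, Y) \simeq \Mor_{\mP_\mV(\mC)}(\iota_\mC X, GY) \simeq \Mor_\mD(FX,Y).$$
\end{enumerate}
This shows that restriction is essentially surjective and proves part (2). For full faithfulness of restriction, any left adjoint preserves tensors and colimits. By density, a left adjoint $\mP_\mV(\mC) \to \mD$ is therefore determined by its restriction, and natural transformations are determined likewise. This gives the equivalence ${\mV\mathrm{-}\L\Fun}(\mP_\mV(\mC),\mD) \simeq {\mV\mathrm{-}\Fun}(\mC,\mD)$.

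\textbf{Main obstacle.} The main obstacle is the enriched Yoneda lemma in the homotopy coherent setting, together with density. Identifying $\Mor_{\mP_\mV(\mC)}(\iota_\mC X, F)$ with $F(X)$ coherently requires an explicit model of enriched functor categories, for instance via weighted colimits or the bitensored formalism of \cite{heine2024bienrichedinftycategories}. I would try to obtain it by realizing $\mP_\mV(\mC)$ as a category of modules over the enriched category $\mC$ viewed as an algebra, where representables are free modules.
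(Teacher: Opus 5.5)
The paper gives no argument of its own for this statement. It imports it from \cite[Theorem 3.41, Theorem 4.70]{heine2024bienrichedinftycategories}, so the construction of $\iota_\mC$, the free-cocompletion property and all homotopy coherence come as a black box from the bienriched machinery. Your route is a genuine alternative. Starting from the functor-category definition of $\mP_\mV(\mC)$, you reduce the statement to a short list of inputs: the hom bimodule, whose existence you take as given; the enriched Yoneda lemma; pointwise limits and cotensors (\cref{colimenrfun}); \cref{psinho}; the ordinary adjoint functor theorem; and \cref{adj}. This is more transparent, for instance about where presentability of $\mD$ enters. You are right to use that hypothesis: the phrase ``for every $\mV$-enriched category $\mD$'' in (1) must be read with $\mD$ presentably left $\mV$-tensored, since otherwise extensions need not exist. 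Your announced main obstacle is not one in this context, because the enriched Yoneda lemma is available as \cref{enryo}. The construction of $G$, its enriched left adjoint via \cref{adj}, and hence essential surjectivity of restriction are fine. One condition: produce $\bar F\circ\iota_\mC\simeq F$ as an enriched transformation, namely the adjoint of the canonical $\iota_\mC\to G\circ F$, and only then check it objectwise by your Yoneda computation.

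The step that does not work as written is full faithfulness of restriction. ``By density a left adjoint is determined by its restriction, and natural transformations likewise'' is a $1$-categorical heuristic. In the $\infty$-setting it does not show that $\Map(L_1,L_2)\to\Map(L_1\iota_\mC,L_2\iota_\mC)$ is an equivalence. Also, generation under colimits and tensors, which you call equivalent to density, is strictly weaker and would not suffice even $1$-categorically. Even with the density formula $F\simeq\colim^F(\iota_\mC)$, you would still need the universal property of enriched left Kan extensions.

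You can bypass density altogether with ingredients you already have:
\begin{enumerate}
\item By \cref{adjunc}, left adjoints $\mP_\mV(\mC)\to\mD$ correspond contravariantly to their right adjoints.
\item By \cref{psinho}, a $\mV$-enriched functor $R:\mD\to\mP_\mV(\mC)$ is the same as a $\mV\ot\mV^\rev$-enriched functor $\mD\boxtimes\mC^\op\to\mV$, namely $(Y,X)\mapsto R(Y)(X)$.
\item If $R$ is right adjoint to $L$, then \cref{enryo} and the adjunction give $R(Y)(X)\simeq\Mor_{\mP_\mV(\mC)}(\iota_\mC X,RY)\simeq\Mor_\mD(L\iota_\mC X,Y)$, naturally in both variables.
\item Conversely, your adjoint functor theorem argument shows that each $\Mor_\mD(F(-),-)$ arises from a right adjoint.
\item The assignment $F\mapsto\Mor_\mD(F(-),-)$ is fully faithful, by the Yoneda lemma for $\mD^\circ$ in a larger universe.
\end{enumerate}
Hence the right adjoints form a category equivalent to ${\mV\mathrm{-}\Fun}(\mC,\mD)^\op$. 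The composite equivalence ${\mV\mathrm{-}\L\Fun}(\mP_\mV(\mC),\mD)\simeq{\mV\mathrm{-}\Fun}(\mC,\mD)$ is restriction along $\iota_\mC$, and part (2) comes out of the same computation.
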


The following is the enriched Yoneda lemma proven in 
\cite[Corollary 4.44]{heine2024bienrichedinftycategories}:

\begin{lemma}\label{enryo}
Let $\mV$ be a presentably monoidal category and $\mC$ a small $\mV$- enriched $\infty$-category. For every object $X \in \mC$ and $F \in \mP_\mV(\mC)$ the induced morphism $$\Mor_{\mP_\mV(\mC)}(\Mor_\mC(-,X),F) \to F(X) $$ is an equivalence.

\end{lemma}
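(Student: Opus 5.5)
The plan is to reduce to the universal property of $\mP_\mV(\mC)$ from \cref{Yonedaext}, applied to the $\mV$-enriched functor $F:=\Mor_{\mP_\mV(\mC)}(-,F)$ viewed through the embedding $\iota_\mC$. Concretely, write $\iota_\mC(X)=\Mor_\mC(-,X)$. The morphism in the statement comes from evaluation: the $\mV^\rev$-enriched structure of $\mP_\mV(\mC)=\mV^\rev\mathrm{-}\Fun(\mC^\op,\mV)$ from \cref{psinho} provides an action map
$$\Mor_{\mP_\mV(\mC)}(\Mor_\mC(-,X),F)\ot \Mor_\mC(X,X)\to F(X),$$
and precomposing with the unit $\tu\to\Mor_\mC(X,X)$ gives the map. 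I would first make this precise by identifying evaluation at $X$ as the $\mV$-enriched functor $\ev_X:\mP_\mV(\mC)\to\mV$ and the map as the mate of the identity of $\iota_\mC(X)$.

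The key step is to show that $\ev_X$ is corepresented by $\iota_\mC(X)$. By \cref{psinho}(2), with $\mM$ the unit enriched category (one object, endomorphisms $\tu$), objects of $\mP_\mV(\mC)$ tensored with $V\in\mV$ are computed pointwise, so $\ev_X$ preserves tensors and small colimits. Hence $\ev_X$ is a $\mV$-enriched left adjoint, or rather, by \cref{adj} its underlying functor preserves limits and cotensors as well. I would instead argue via the left adjoint: the $\mV$-enriched functor $\mV\to\mP_\mV(\mC)$, $V\mapsto V\ot\iota_\mC(X)$, is the unique left adjoint extension (along the Yoneda embedding of the unit category, using \cref{Yonedaext}(1) with $\mC$ replaced by the unit category, whose presheaf category is $\mV$) of the functor picking out $\iota_\mC(X)$. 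By \cref{Yonedaext}(2) its $\mV$-enriched right adjoint sends $F$ to $\Mor_{\mP_\mV(\mC)}(\iota_\mC(X),F)$. So it remains to identify this right adjoint with $\ev_X$.

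For that, I would use \cref{Yonedaext}(2) again, now for $\mC$ itself and $\mD=\mP_\mV(\mC)$ with $F=\iota_\mC$: the left adjoint extension of $\iota_\mC$ is the identity of $\mP_\mV(\mC)$ (by uniqueness in (1), since the identity is a left adjoint restricting to $\iota_\mC$), so its right adjoint, the identity, sends $F$ to $\Mor_{\mP_\mV(\mC)}(\iota_\mC(-),F)$. Evaluating at $X$ gives $F(X)\simeq\Mor_{\mP_\mV(\mC)}(\iota_\mC(X),F)$, naturally in $F$. The main obstacle is checking that this abstract equivalence agrees with the canonical evaluation morphism of the statement, rather than merely yielding some equivalence. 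I would handle it by tracing the unit of the adjunction at $\iota_\mC(X)$: both maps are natural in $F$ and send $\id_{\iota_\mC(X)}$ to the unit element $\tu\to\Mor_\mC(X,X)=\iota_\mC(X)(X)$. An enriched Yoneda-style uniqueness argument, applied to the corepresented functor $\Mor_{\mP_\mV(\mC)}(\iota_\mC(X),-)$ at the underlying level, then shows the two natural transformations coincide.
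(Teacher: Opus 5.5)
Your basic move is sound and differs from the paper, which gives no argument and simply cites \cite[Corollary 4.44]{heine2024bienrichedinftycategories}. The identity of $\mP_\mV(\mC)$ is a left adjoint restricting to $\iota_\mC$, so it is the extension from \cref{Yonedaext}(1). \cref{Yonedaext}(2) then gives \emph{some} equivalence $\phi_F\colon \Mor_{\mP_\mV(\mC)}(\iota_\mC(X),F)\simeq F(X)$, natural in $F$; the detour through $\ev_X$ and the unit category is unnecessary. But \cref{Yonedaext}(2) applied to $\iota_\mC$ already is the Yoneda lemma up to identifying the map. So all the content lies in your last step, and there the argument has a gap.

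Nothing in \cref{Yonedaext}(2) says what $\phi$ does to $\id_{\iota_\mC(X)}$, so your claim that it is sent to the unit of $\Mor_\mC(X,X)$ is unsupported. A uniqueness argument ``at the underlying level'' (the ordinary Yoneda lemma for $\Map_{\mP_\mV(\mC)}(\iota_\mC(X),-)$) only compares $\Map_\mV(\tu,c_F)$ with $\Map_\mV(\tu,\phi_F)$, where $c$ is the canonical map. However, $\Map_\mV(\tu,-)$ does not detect equivalences in general: for $\mV=(\infty\Cat,\boxtimes)$ it is $\iota_0$, which inverts $\partial\bD^1\subset\bD^1$. An \emph{enriched} uniqueness statement for transformations out of $\Mor_{\mP_\mV(\mC)}(\iota_\mC(X),-)$, on the other hand, is the enriched Yoneda lemma itself, for the large category $\mP_\mV(\mC)$, so invoking it would be circular.

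The gap can be closed with what you already have. Put $\theta:=\phi^{-1}\circ c$. By the ordinary Yoneda lemma, $\Map_\mV(\tu,\theta_F)$ is precomposition with $h:=\Map_\mV(\tu,\theta_{\iota_\mC(X)})(\id_{\iota_\mC(X)})$. For $F=\iota_\mC(X)$, the map $\Map_\mV(\tu,c_F)$ sends $\iota_\mC(g)$ to $g$. It is therefore inverse to the equivalence $\Map_\mC(X,X)\simeq\Map_{\mP_\mV(\mC)}(\iota_\mC(X),\iota_\mC(X))$ induced by the fully faithful $\iota_\mC$ of \cref{Yonedaext}(1). Since $\Map_\mV(\tu,c_F)=\Map_\mV(\tu,\phi_F)\circ h^*$, the map $h^*$ is an equivalence. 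Hence $h$ is invertible and $\Map_\mV(\tu,c_F)$ is an equivalence for \emph{every} $F$. Finally, cotensors in $\mP_\mV(\mC)$ exist and are computed pointwise, and $c$ is $\mV$-enriched natural in $F$. So $\Map_\mV(V,c_F)$ identifies with $\Map_\mV(\tu,c_{F'})$ for $F'$ the cotensor of $F$ by $V$. Thus $\Map_\mV(V,c_F)$ is an equivalence for all $V\in\mV$, i.e.\ $c_F$ is an equivalence.
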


\subsubsection{Weighted colimits}

In the following we define weighted colimits following \cite{heine2024higheralgebraweightedcolimits}.

\begin{definition}
Let $\mV$ be a presentably monoidal category and $\mC$ a small $\mV$-enriched category.

\begin{itemize}
\item 
A $\mV$-enriched weight on $\mC$ is an object of $ \mP_\mV(\mC)$.

\item A small $\mV$-enriched weight is a $\mV$-enriched weight on some small $\mV$-enriched category.

\end{itemize}

\end{definition}

\begin{definition}
Let $\mV$ be a presentably monoidal category, $\mC$ a small $\mV$-enriched category, $\phi: \mC \to \mD$ a $\mV$-enriched functor, $X \in \mD$ and $H \in \mP_\mV(\mC).$

\begin{itemize}

\item A $H$-weighted cocone on $X$ is a morphism in $\mP_\mV(\mC):$
$$ H \to \Mor_\mD(-,X) \circ \phi. $$

\item A $\mV$-enriched cocone on $X$ is a $H$-weighted cocone on $X$ for some $H \in \mP_\mV(\mC).$

\item A $\mV$-enriched cocone in $\mD$ is a $\mV$-enriched cocone on $X$ for some $X \in \mD.$

\end{itemize}

\end{definition}

\begin{definition}
Let $\mV$ be a presentably monoidal category, $\mC$ a small $\mV$-enriched category and $H \in \mP_\mV(\mC).$ 
Let $\phi: \mC \to \mD$ be a $\mV$-enriched functor.
The $H$-weighted colimit of $\phi$ if it exists, is the object 
$\colim^H(\phi) \in \mD$ such that there is a $H$-weighted cocone on $\colim^H(\phi)$ that induces for every $Y \in \mD$ an equivalence
$$ \Mor_{\mD}(\colim^H(\phi),Y) \to \Mor_{\mP_\mV(\mC)}(H, \Mor_\mD(-,Y) \circ \phi).$$

\end{definition}

\begin{definition}
Let $\mV$ be a presentably monoidal category, $\mC$ a small $\mV$-enriched category and $H \in \mP_{\mV^\rev}(\mC^\circ).$ 
Let $\phi: \mC \to \mD $ be a $\mV$-enriched functor.
The $H$-weighted limit of $\phi$ is the $H$-weighted colimit of the opposite $\mV^\rev$-enriched functor $\phi^\circ : \mC^\circ \to \mD^\circ. $

\end{definition}

\begin{definition}
Let $\mV$ be a presentably monoidal category.

\begin{itemize}

\item Let $\mC$ be a small $\mV$-enriched category and $H \in \mP_\mV(\mC).$ A $\mV$-enriched category $\mD$ admits $H$-weighted colimits if it admits the $H$-weighted colimit of every $\mV$-enriched functor $\mC \to \mD.$

\item Let $\mH$ be a set of small $\mV$-enriched weights. 
A $\mV$-enriched category $\mD$ admits $\mH$-weighted colimits if it admits $H$-weighted colimits for every $H \in \mH.$ 
  
\end{itemize}

\end{definition}

We make a similar definition for weighted limits.

\vspace{1mm}

For presentably monoidal categories $\mV, \mW$ let 
$\alpha_\mV: \mV \to \mV \ot \mW, \alpha_\mW: \mW \to \mV \ot \mW$ be the canonical left adjoint functors.
The next is \cite[Theorem 3.102.]{heine2024higheralgebraweightedcolimits}:

\begin{proposition}\label{colimenrfun}
Let $\mV, \mW $ be presentably monoidal categories, $\mC$ a small $\mV$-enriched category, $\mD$ a $\mV \ot \mW$-enriched category
and $\mH$ a collection of $\mW$-enriched weights such that $\mD$ admits
$(\alpha_\mW)_!(\mH)$-weighted (co)limits.
The $\mW$-enriched category ${\mV\mathrm{-}\Fun}(\mC,\mD) $
admits $\mH$-weighted (co)limits and for every $X \in \mC$
the $\mW$-enriched functor ${\mV\mathrm{-}\Fun}(\mC,\mD) \to \alpha^*_\mW(\mD) $ evaluating at $X$ preserves $\mH$-weighted (co)limits.
    
\end{proposition}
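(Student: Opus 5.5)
Since \cref{colimenrfun} is stated as a citation of \cite[Theorem 3.102.]{heine2024higheralgebraweightedcolimits}, the plan is to reconstruct its proof by reducing the $\mW$-enriched weighted (co)limit in ${\mV\mathrm{-}\Fun}(\mC,\mD)$ to pointwise weighted (co)limits in $\mD$. By passing to opposite enriched categories via the involution $(-)^\circ$, it suffices to treat colimits. Fix a small $\mW$-enriched category $\mO$, a weight $H \in \mP_\mW(\mO)$ in $\mH$, and a $\mW$-enriched functor $\phi: \mO \to {\mV\mathrm{-}\Fun}(\mC,\mD)$.

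\textbf{Step 1: pass to the adjoint.} By \cref{psinho}(1), $\phi$ corresponds to a $\mV\ot\mW$-enriched functor $\Phi: \mC \boxtimes \mO \to \mD$. For each $X \in \mC$, restricting $\Phi$ along the $\mW$-enriched functor $\{X\}\to \mC$ gives a $\mW$-enriched functor $\phi_X: \mO \to \alpha_\mW^*(\mD)$, which is the evaluation of $\phi$ at $X$. Equivalently, this is a $\mV\ot\mW$-enriched functor $(\alpha_\mW)_!(\mO) \to \mD$.

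\textbf{Step 2: form the colimit pointwise.} By hypothesis $\mD$ admits the $(\alpha_\mW)_!(H)$-weighted colimit of this functor. Using that $\mW$-enriched weighted cocones on $\alpha_\mW^*(\mD)$ correspond to $(\alpha_\mW)_!$-weighted cocones in $\mD$ (adjunction $(\alpha_\mW)_!\dashv \alpha_\mW^*$ on enriched categories), this gives the $H$-weighted colimit of $\phi_X$ in $\alpha_\mW^*(\mD)$. Next assemble $X \mapsto \colim^H(\phi_X)$ into a $\mV$-enriched functor $F: \mC \to \mD$. To do so, regard $\mD$ as the enriched functor category and use the functoriality of weighted colimits in the diagram: the functoriality in $X$ is encoded by the $\mV$-variable of $\Phi$. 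Concretely, the weighted colimit is formed ``in the $\mO$-variable'' of $\mC\boxtimes\mO$, which is a left Kan extension along $\mC\boxtimes\mO \to \mC\boxtimes *$ weighted by $\mC\boxtimes H$.

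\textbf{Step 3: verify the universal property.} For $G \in {\mV\mathrm{-}\Fun}(\mC,\mD)$, compare $\Mor(F,G)$ with $\Mor_{\mP_\mW(\mO)}(H,\Mor(\phi(-),G))$. Morphism objects in the functor category are ends over $\mC$ of $\Mor_\mD(F(X),G(X))$. Pointwise, these identify with $\Mor(H,\Mor_\mD(\phi_X(-),G(X)))$, and ends commute with the limit $\Mor(H,-)$. This also shows that evaluation at each $X$ preserves the colimit, since by construction $F(X)=\colim^H(\phi_X)$.

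\textbf{Main obstacle.} I expect the hardest part to be Step 2, specifically assembling the pointwise colimits coherently into a $\mV$-enriched functor. In the homotopy-coherent setting one cannot just choose the values $\colim^H(\phi_X)$ pointwise. I would first try embedding $\mD$ into $\mP_{\mV\ot\mW}(\mD)$, where the colimit exists and is computed pointwise and functorially by \cref{Yonedaext}. I would then show that the resulting presheaf-valued functor lands in the representables, because each value is representable by the pointwise colimit in $\mD$ (enriched Yoneda, \cref{enryo}).
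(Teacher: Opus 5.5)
The paper gives no argument of its own here; it quotes \cite[Theorem 3.102.]{heine2024higheralgebraweightedcolimits}, so your proposal has to stand on its own. Steps 1 and 3 are reasonable in outline. So is your observation that an $(\alpha_\mW)_!(H)$-weighted colimit in $\mD$ is in particular an $H$-weighted colimit in $\alpha^*_\mW(\mD)$.

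The genuine gap is exactly at the point you flag as the main obstacle. Your way of assembling $X\mapsto\colim^H(\phi_X)$ into a $\mV$-enriched functor fails for a variance reason. The Yoneda embedding $y\colon\mD\hookrightarrow\mP_{\mV\ot\mW}(\mD)$, $d\mapsto\Mor_\mD(-,d)$, preserves weighted \emph{limits}, not weighted colimits. The pointwise $H$-weighted colimit of $y\circ\phi_X$ in presheaves is the presheaf $Y\mapsto\colim^H\Mor_\mD(Y,\phi_X(-))$. This presheaf only maps to $y(\colim^H\phi_X)$, and the comparison is essentially never an equivalence. Already for $\mV=\mW=\infty\Grp$, $\mD=\Set$ and a binary coproduct, the map $\Map(-,*)\sqcup\Map(-,*)\to\Map(-,*\sqcup *)$ is not one. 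So the presheaf-valued functor does not land in representables, and there is nothing for \cref{enryo} to apply to.

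Rephrasing Step 2 as a weighted left Kan extension in the $\mO$-variable does not help. Producing that Kan extension from pointwise colimits is the same coherence problem. Also, $\mD$ is not small, so $\mP_{\mV\ot\mW}(\mD)$ needs a larger universe.

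There are two clean repairs.

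\textbf{Repair 1: fix the variance.} Since you dualized to colimits, use instead the co-Yoneda embedding $d\mapsto\Mor_\mD(d,-)$ of $\mD$ into the opposite of the (large) category of enriched copresheaves on $\mD$. This embedding sends the weighted colimits existing in $\mD$ to weighted limits of copresheaves, which exist and are computed pointwise. By your hypothesis, the full subcategory of $\mV$-enriched functors from $\mC$ into (opposite) copresheaves that land in corepresentables is then closed under pointwise $H$-weighted colimits. This gives existence, the universal cocone, the universal property and preservation by evaluation in one go. Equivalently, dualize to limits rather than colimits; then your $\mP(\mD)$ argument does work.

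\textbf{Repair 2: use a result the paper already quotes.} Apply \cref{weifunc} with the roles of $\mV$ and $\mW$ exchanged.
\begin{enumerate}
\item Transpose $\phi$ via \cref{psinho}(2) to a $\mV$-enriched functor $\mC\to\mW\mathrm{-}\Fun(\mO,\mD)$.
\item Compose with the $\mV$-enriched functor $\mW\mathrm{-}\Fun(\mO,\mD)\to\alpha^*_\mV(\mD)$ taking $H$-weighted colimits.
\item By the adjunction $(\alpha_\mV)_!\dashv\alpha^*_\mV$, the composite is an object $F$ of $\mV\mathrm{-}\Fun(\mC,\mD)$ with $F(X)\simeq\colim^{(\alpha_\mW)_!(H)}(\phi_X)$.
\end{enumerate}
This is precisely the coherent functoriality your Step 2 asserts but does not supply. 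Your Step 3 then checks the universal property, with two provisos. First, you must also produce the cocone $H\to\Mor(\phi(-),F)$ compatibly. Second, you must use the pointwise universal property in $\mV\ot\mW$, not only in $\alpha^*_\mW(\mD)$, since the $\mW$-valued morphism objects of the functor category involve $\Mor_\mD(F(X),G(X'))$ paired with $\Mor_\mC(X,X')$.
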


\begin{notation}
    
Let $\mV$ be a presentably monoidal category and $\mH$ a set of small $\mV$-enriched weights. Let $\mV \mathrm{-}\Cat(\mH) \subset \mV \mathrm{-}\Cat$ be the subcategory of $\mV$-enriched categories that admit $\mH$-weighted colimits and $\mV$-enriched functors preserving $\mH$-weighted colimits.
    
\end{notation}

The following is \cite[Proposition 4.47.]{heine2024higheralgebraweightedcolimits}:

\begin{proposition}\label{weiadj} Let $\mV$ be a presentably monoidal category and $\mH$ a set of small $\mV$-enriched weights.
The inclusion $\mV \mathrm{-}\Cat(\mH) \subset \mV \mathrm{-}\Cat$ admits a left adjoint.

\end{proposition}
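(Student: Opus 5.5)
The plan is to build the left adjoint to $\mV\mathrm{-}\Cat(\mH) \subset \mV\mathrm{-}\Cat$ by a free-cocompletion-then-localize argument. The functor will send a $\mV$-enriched category $\mC$ to the smallest full subcategory of enriched presheaves containing the representables and closed under $\mH$-weighted colimits. Two issues must be handled along the way: $\mC$ may be large, and the inclusion is not full, since morphisms must preserve $\mH$-weighted colimits.

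\textbf{Step 1.} Let $\mC$ be a $\mV$-enriched category. Pass to a universe in which $\mC$ is small. Consider the Yoneda embedding $\iota_\mC: \mC \to \mP_\mV(\mC)$ of \cref{Yonedaext}. The category $\mP_\mV(\mC)$ is presentably left $\mV$-tensored, so it admits all small weighted colimits; these are computed pointwise via \cref{colimenrfun}.

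\textbf{Step 2.} Define $\mC^\mH \subset \mP_\mV(\mC)$ as the smallest full $\mV$-enriched subcategory containing the image of $\iota_\mC$ and closed under $\mH$-weighted colimits. It is obtained as a transfinite union of closure steps. Because $\mH$ is a set of small weights, this closure is bounded and $\mC^\mH$ stays in the same size class as $\mC$. Full subcategories inherit enrichment, and $\mH$-weighted colimits in $\mC^\mH$ are computed in $\mP_\mV(\mC)$, so $\mC^\mH$ lies in $\mV\mathrm{-}\Cat(\mH)$.

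\textbf{Step 3: the universal property.} Let $\mD \in \mV\mathrm{-}\Cat(\mH)$. The goal is to show that restriction along $\mC \to \mC^\mH$ induces an equivalence from $\mH$-colimit-preserving enriched functors $\mC^\mH \to \mD$ to all enriched functors $\mC \to \mD$. For this, embed $\mD$ by Yoneda into $\mP_\mV(\mD)$, or into a presentable enlargement, and then restrict to the subcategory $\widehat{\mD} \subset \mP_\mV(\mD)$. Here $\widehat{\mD}$ consists of the presheaves that send the $\mH$-weighted colimits existing in $\mD$ to the corresponding weighted limits. $\widehat{\mD}$ is a presentable enriched localization of $\mP_\mV(\mD)$, and its Yoneda functor $\mD \to \widehat{\mD}$ preserves $\mH$-weighted colimits.

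Given $F: \mC \to \mD$, extend it by \cref{Yonedaext} to a left adjoint $\bar F: \mP_\mV(\mC) \to \widehat{\mD}$. Consider the full subcategory of $\mP_\mV(\mC)$ on which $\bar F$ lands in $\mD$. It contains the representables. It is also closed under $\mH$-weighted colimits, because $\bar F$ preserves them and $\mD \subset \widehat{\mD}$ is closed under them. Hence it contains $\mC^\mH$. Uniqueness of the extension follows the same way: two $\mH$-colimit-preserving extensions agree on a subcategory containing the representables and closed under $\mH$-colimits.

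\textbf{Main obstacle.} The hardest step is constructing $\widehat{\mD}$ and proving that $\mD \to \widehat{\mD}$ preserves $\mH$-weighted colimits. The subtle point is that, for a left adjoint out of $\mP_\mV(\mC)$, preservation of weighted colimits must be verified against enriched weighted cocones; the underlying conical colimits are not enough. I would argue as follows. The local objects of $\widehat{\mD}$ are defined by the enriched condition on weighted colimits, and $\mV$ is presentable, so $\widehat{\mD}$ is an accessible reflective localization. The colimit comparison maps then become local equivalences essentially by definition, together with the enriched Yoneda lemma \cref{enryo}.
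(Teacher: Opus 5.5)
The paper gives no proof of this statement; it quotes it from \cite[Proposition 4.47]{heine2024higheralgebraweightedcolimits}, alongside \cref{weiloc} from the same source. Your argument is the enriched version of the standard free-cocompletion proof \cite[Proposition 5.3.6.2]{lurie.HTT}, and it is correct in outline: you close the representables in $\mP_\mV(\mC)$ under $\mH$-weighted colimits, then test the universal property after embedding $\mD$ into presheaves that turn its $\mH$-weighted colimits into limits.

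Your ``main obstacle'' is already covered by the paper's toolkit. $\widehat{\mD}$ is $\mP^\Lambda_\mV(\mD)$, where $\Lambda$ is the set of $\mH$-weighted colimit cocones in $\mD$; this set is small because $\mD$ is small and $\mH$ is a set of small weights. \cref{weiloc} then gives what you need. First, $\widehat{\mD}$ is an enriched reflective subcategory of $\mP_\mV(\mD)$, so it admits all small weighted colimits, which is all you use to form $\bar F$. Second, $\mD \to \widehat{\mD}$ is an enriched embedding preserving $\mH$-weighted colimits. The universe change in Step 1 is unnecessary, since objects of $\mV\mathrm{-}\Cat$ are already small.

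The step to tighten is uniqueness. In the $\infty$-setting, ``two extensions agree on a subcategory'' has no meaning without a comparison map. What you actually need is that restriction along $j\colon \mC \to \mC^\mH$ is fully faithful on $\mH$-colimit-preserving functors, so that it induces an equivalence on mapping spaces. You can get this as follows.

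By \cref{enryo}, $\Mor_{\mC^\mH}(j(-),P) \simeq P$. Hence $\bar F|_{\mC^\mH}$ is the enriched left Kan extension $\mathrm{Lan}_j(y_\mD F)$, where $y_\mD\colon \mD \to \widehat{\mD}$. Since $j$ is fully faithful, $\mathrm{Lan}_j$ is a fully faithful left adjoint of $j^*$ on functors into $\widehat{\mD}$.

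Now take an $\mH$-colimit-preserving $G\colon \mC^\mH \to \mD$ and consider the counit $\mathrm{Lan}_j(y_\mD G j) \to y_\mD G$. It is an equivalence on representables. The objects where it is an equivalence are closed under $\mH$-weighted colimits, because both sides preserve them: the source because $\mH$-colimits in $\mC^\mH$ are computed in $\mP_\mV(\mC)$, and the target because $G$ and $y_\mD$ do. Hence the counit is an equivalence on all of $\mC^\mH$.

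So every such $y_\mD G$ lies in the essential image of $\mathrm{Lan}_j$. Since $y_\mD \circ (-)$ is fully faithful on functor categories, $j^*$ restricts to an equivalence between $\mH$-colimit-preserving functors $\mC^\mH \to \mD$ and all functors $\mC \to \mD$. Its inverse is $F \mapsto \bar F|_{\mC^\mH}$ from your existence step.
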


The following is \cite[Corollary 3.68.]{heine2024higheralgebraweightedcolimits}:

\begin{proposition}\label{weiadj} Let $\mV$ be a presentably monoidal category and $\mC$ a small $\mV$-enriched category.
The $\mV$-enriched Yoneda embedding $\mC \to \mP_\mV(\mC)$ preserves weighted limits.

\end{proposition}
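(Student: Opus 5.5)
The statement is that the $\mV$-enriched Yoneda embedding $\iota_\mC\colon \mC \to \mP_\mV(\mC)$ preserves weighted limits. The plan is to reduce this to the enriched Yoneda lemma (\cref{enryo}) together with the definition of weighted limits in $\mP_\mV(\mC)$ as a functor category.

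Fix a small $\mV$-enriched category $\mB$, a weight $H \in \mP_{\mV^\rev}(\mB^\circ)$, and a $\mV$-enriched functor $\phi\colon \mB \to \mC$. Assume the $H$-weighted limit $L := \lim^H(\phi)$ exists in $\mC$, and let $\lambda$ be the universal weighted cone. By definition, applied in $\mC^\circ$, this means that for every $Y \in \mC$ the induced morphism
$$\Mor_\mC(Y, L) \to \Mor_{\mP_{\mV^\rev}(\mB^\circ)}(H, \Mor_\mC(Y,\phi(-)))$$
is an equivalence in $\mV$.

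We must show that $\iota_\mC(\lambda)$ exhibits $\iota_\mC(L)$ as the $H$-weighted limit of $\iota_\mC \circ \phi$. Equivalently, for every $F \in \mP_\mV(\mC)$ the morphism
$$\Mor_{\mP_\mV(\mC)}(F, \iota_\mC L) \to \Mor(H, \Mor_{\mP_\mV(\mC)}(F, \iota_\mC\phi(-)))$$
should be an equivalence. We do this in two steps.

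\textbf{Step 1: representables.} First treat $F = \iota_\mC(Y)$ representable. By \cref{enryo}, $\Mor_{\mP_\mV(\mC)}(\iota_\mC Y, G) \simeq G(Y)$ naturally in $G$. Applying this to $G = \iota_\mC L$ and to $G = \iota_\mC\phi(b)$ identifies the displayed map with the defining equivalence above. The only subtle point is naturality: the Yoneda equivalence must be natural in $G$, so that it is compatible with postcomposition along $\lambda$ and so that the comparison is an equivalence of $\mV^\rev$-enriched presheaves in $b$.

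\textbf{Step 2: general $F$.} Every $F$ is a colimit of representables in the enriched sense: $F$ is the $F$-weighted colimit of $\iota_\mC$, by \cref{enryo} and \cref{Yonedaext}. Both sides of the displayed map send weighted colimits in the variable $F$ to the corresponding weighted limits in $\mV$, since $\Mor(H,-)$ commutes with limits. Hence the equivalence for representables propagates to all $F$.

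A cleaner alternative to Step 2 avoids density altogether. By \cref{colimenrfun}, $\mP_\mV(\mC)$ admits weighted limits computed objectwise, because $\mV$ admits all weighted limits, being presentably monoidal and hence closed. The evaluations at $Y$ are jointly conservative. So it suffices to check that evaluating at each $Y$ sends the cone $\iota_\mC(\lambda)$ to a limit cone in $\mV$. That is exactly the defining property of $L$ composed with the Yoneda identification $\iota_\mC(X)(Y) = \Mor_\mC(Y,X)$.

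\textbf{Main obstacle.} The hard part is the enriched bookkeeping. One must keep track of $\mV$ versus $\mV^\rev$ and of $\mC^\circ$ versus $\mC^\op$. In particular, one must make precise that the weighted limit in $\mP_\mV(\mC) = \mV^\rev\mathrm{-}\Fun(\mC^\op,\mV)$ is the pointwise one given by \cref{colimenrfun}, applied with the roles of the two enriching categories arranged appropriately.
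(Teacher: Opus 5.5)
Your argument is correct, but it does not follow a proof in the paper, because the paper gives none: the statement is imported verbatim from \cite[Corollary 3.68.]{heine2024higheralgebraweightedcolimits}. The comparison is therefore between a black-box citation and your self-contained argument.

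Of your two versions, keep the pointwise one. By \cref{colimenrfun}, applied to $\mP_\mV(\mC)=\mV^\rev\mathrm{-}\Fun(\mC^\circ,\mV)$ with $\mV$ bitensored over itself, the $H$-weighted limit $M$ of $\iota_\mC\circ\phi$ exists and is preserved by every $\ev_Y$. Evaluations are jointly conservative, so the comparison $\iota_\mC(L)\to M$ is an equivalence as soon as each $\ev_Y(\iota_\mC(\lambda))$ is a limit cone. That is exactly the defining property of $L$, once you make explicit an identification you currently use silently: the $H$-weighted limit in $\mV$ of $\Mor_\mC(Y,\phi(-))$ is $\Mor_{\mP_{\mV^\rev}(\mB^\circ)}(H,\Mor_\mC(Y,\phi(-)))$.

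The density route of Step 2 is sound in principle, but as written it only asserts two things: that the right-hand side is enriched-natural in $F$, and that it converts weighted colimits in $F$ into weighted limits. That is an interchange of weighted limits which itself needs the pointwise description of limits in $\mP_{\mV^\rev}(\mB^\circ)$, so it is not more elementary than the pointwise route.

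What the citation buys is that all the $\mV$ versus $\mV^\rev$ bookkeeping you flag is done once in the reference. What your argument buys is that, relative to this paper, the statement follows from \cref{enryo} and \cref{colimenrfun} alone.

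One caveat applies if you want a foundational proof rather than one relative to the paper. \cref{colimenrfun} is Theorem 3.102 of the same reference, numbered after Corollary 3.68. You would need to check that its proof does not already use the fact that the Yoneda embedding preserves weighted limits. Reducing to presentable targets via a Yoneda embedding is a common way to prove such pointwise statements, so this is a real possibility.
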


\begin{notation}Let $\mV$ be a presentably monoidal category, $\mC$ a small $\mV$-enriched category and $\Lambda$ a set of small $\mV$-enriched cocones in $\mC$. Let $\mP^\Lambda_\mV(\mC) \subset \mP_\mV(\mC)$ be the full $\mV$-enriched subcategory of $\mV$-enriched presheaves sending $\mV$-enriched cocones in $\mC$ of $\Lambda$ to weighted colimits in $\mV^\circ$.
    
\end{notation}

The following is \cite[Lemma 3.80., Remark 3.85.]{heine2024higheralgebraweightedcolimits}:

\begin{proposition}\label{weiloc} Let $\mV$ be a presentably monoidal category, $\mC$ a small $\mV$-enriched category and $\Lambda$ a set of small $\mV$-enriched cocones in $\mC$.

\begin{enumerate}

\item The $\mV$-enriched embedding $\mP^\Lambda_\mV(\mC) \subset \mP_\mV(\mC)$ admits a $\mV$-enriched left adjoint.

\item The restricted $\mV$-enriched localization $\mC \to \mP_\mV(\mC) \to \mP^\Lambda_\mV(\mC)$ sends $\mV$-enriched cocones of $\Lambda$ to weighted colimits.

\item If $\Lambda$ consists of weighted colimit cocones, the 
$\mV$-enriched Yoneda embedding of $\mC$ lands in $\mP^\Lambda_\mV(\mC)$
and so the restricted $\mV$-enriched localization $\mC \to \mP_\mV(\mC) \to \mP^\Lambda_\mV(\mC)$ is a $\mV$-enriched embedding and preserves 
weighted limits.

\end{enumerate}

\end{proposition}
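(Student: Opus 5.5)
The plan is to reduce all three parts to the presentable (unenriched) theory of localizations. The step that makes this possible is to rephrase the defining condition of $\mP^\Lambda_\mV(\mC)$ as \emph{enriched locality} with respect to a set of morphisms in $\mP_\mV(\mC)$.

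Fix a cocone $\lambda \in \Lambda$, given by a $\mV$-enriched functor $\phi: \mB \to \mC$ with $\mB$ small, a weight $H \in \mP_\mV(\mB)$, and a morphism $H \to \Mor_\mC(-,X)\circ\phi$. Let $\iota_\mC$ be the Yoneda embedding of \cref{Yonedaext}. Since $\mP_\mV(\mC)$ is presentably left $\mV$-tensored, the weighted colimit $\colim^H(\iota_\mC \circ \phi)$ exists in $\mP_\mV(\mC)$, and the cocone induces a canonical morphism
$$ f_\lambda: \colim^H(\iota_\mC\circ \phi) \to \iota_\mC(X). $$
Using the enriched Yoneda lemma \cref{enryo} together with the universal property of weighted colimits, one obtains for every $F \in \mP_\mV(\mC)$ an identification of
$$\Mor_{\mP_\mV(\mC)}(f_\lambda,F)$$
with the canonical morphism $F(X) \to \Mor_{\mP_\mV(\mB)}(H, F\circ\phi)$ in $\mV$. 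Hence $F$ lies in $\mP^\Lambda_\mV(\mC)$ if and only if $\Mor_{\mP_\mV(\mC)}(f_\lambda,F)$ is an equivalence in $\mV$ for every $\lambda\in\Lambda$. I expect this translation to be the main technical point: one has to check carefully that the two maps in $\mV$ agree, which requires the naturality of the Yoneda equivalence in the weight.

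For (1), choose a small set $G$ of generators of $\mV$ under colimits. The condition that $\Mor(f_\lambda,F)$ is an equivalence in $\mV$ is equivalent to $F$ being local in the underlying category with respect to the small set $S = \{V \otimes f_\lambda : V \in G,\ \lambda\in\Lambda\}$. By presentable localization theory, $\mP^\Lambda_\mV(\mC)$ is therefore an accessible localization of $\iota(\mP_\mV(\mC))$, and the underlying inclusion has a left adjoint. Local objects are closed under cotensors, because $\Mor(f_\lambda, {}^W F) \simeq \Mor_\mV(W,\Mor(f_\lambda,F))$. So the inclusion preserves cotensors, and by \cref{adj2}(2) it admits a $\mV$-enriched left adjoint $L$.

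For (2), $L$ is an enriched left adjoint, so it preserves weighted colimits. It also inverts each $f_\lambda$: for every local $F$, $\Mor(Lf_\lambda,F)\simeq\Mor(f_\lambda,F)$ is an equivalence, so $Lf_\lambda$ is an equivalence by enriched Yoneda. Therefore
$$ L\iota_\mC(X) \simeq L\colim^H(\iota_\mC\phi) \simeq \colim^H(L\iota_\mC\phi), $$
which says precisely that $L\iota_\mC$ sends $\lambda$ to a weighted colimit.

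For (3), suppose each $\lambda$ is a weighted colimit cocone. Then for every $Y\in\mC$ the map $\Mor_\mC(X,Y) \to \Mor_{\mP_\mV(\mB)}(H,\Mor_\mC(\phi(-),Y))$ is an equivalence by the definition of weighted colimit. This means that $\iota_\mC(Y)$ is local, so the Yoneda embedding lands in $\mP^\Lambda_\mV(\mC)$. On this full subcategory $L$ restricts to the identity, so $L\circ\iota_\mC$ is the corestricted Yoneda embedding and is fully faithful. It preserves weighted limits because $\iota_\mC$ does (\cref{weiadj}), and the full subcategory $\mP^\Lambda_\mV(\mC)$ is closed under weighted limits in $\mP_\mV(\mC)$: the functor $\Mor(f_\lambda,-)$ sends weighted limits to weighted limits in $\mV$, and weighted limits of equivalences are equivalences.
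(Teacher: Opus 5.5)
The paper gives no argument for this statement; it imports it from \cite[Lemma 3.80, Remark 3.85]{heine2024higheralgebraweightedcolimits}. Your proposal is a correct, self-contained proof of all three parts.

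Your key move has three steps:
\begin{enumerate}
\item Encode the condition defining $\mP^\Lambda_\mV(\mC)$ as enriched locality against the maps $f_\lambda\colon \colim^H(\iota_\mC\circ\phi)\to\iota_\mC(X)$.
\item Reduce this to unenriched locality against the small set $\{V\ot f_\lambda\}$, with $V$ running over a generating set of $\mV$.
\item Upgrade the resulting accessible reflection to an enriched one, using closure of local objects under cotensors and \cref{adj2}.
\end{enumerate}

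This is exactly the pattern the paper uses elsewhere. In the proof of \cref{excloc}, excisive functors are shown to be the $\Theta$-local objects for $\Theta=\{(Y\wedge(-))\circ\zeta_X\}$, with $Y$ ranging over $\kappa$-compact generators. The enriched Yoneda lemma then turns the locality condition into $\F(X)\simeq\Omega(\F(\Sigma(X)))$.

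Compared with the citation, your route has a concrete advantage: it produces an explicit set of generating local equivalences. That is what arguments like \cref{locpre} need, where one checks that a functor preserves local equivalences by testing it on generators. The citation buys brevity and the general machinery of enriched localizations.

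Two minor remarks, neither a gap:
\begin{itemize}
\item In (1) you implicitly use that $\mP_\mV(\mC)$ has all small weighted colimits and all cotensors. Both hold because it is presentably left $\mV$-tensored, and it is worth saying so.
\item In (3) you do not actually need closure of $\mP^\Lambda_\mV(\mC)$ under weighted limits. A weighted limit cone in $\mP_\mV(\mC)$ whose objects all lie in a full enriched subcategory is automatically a weighted limit cone there. Your closure argument is nonetheless also correct.
\end{itemize}
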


The following is \cite[Proposition 3.19.]{heine2024higheralgebraweightedcolimits}:

\begin{proposition}\label{weifunc}

Let $\mV, \mW$ be presentably monoidal categories, $\mC$ a small $\mV$-enriched category, $H$ a $\mV$-enriched weight on $\mC$ and $\mD$ a $\mV \ot \mW$-enriched category that admits $(\alpha_\mV)_!(H)$-weighted colimits.

There is a $\mW$-enriched functor 
$$ \mV\mathrm{-}\Fun(\mC,\mD) \to \alpha^*_\mW(\mD)$$
that sends a $\mV$-enriched functor to its $H$-weighted colimit.

\end{proposition}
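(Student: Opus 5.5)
The plan is to build the functor by enriched Yoneda: first produce a $\mW$-enriched functor into a category of $\mW$-enriched copresheaves on $\alpha^*_\mW(\mD)$. Then show that it lands in the representable copresheaves, using the hypothesis that $\mD$ admits $(\alpha_\mV)_!(H)$-weighted colimits. Finally, factor it through the Yoneda embedding, which is fully faithful by \cref{Yonedaext} and \cref{enryo}.

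The first step is to construct a pairing. By \cref{psinho}(1), $\mV\mathrm{-}\Fun(\mC,\mD)$ is $\mW$-enriched, and the identity corresponds to an evaluation $\mV\ot\mW$-enriched functor
$$\ev: \mC \boxtimes \mV\mathrm{-}\Fun(\mC,\mD) \to \mD.$$
Composing with the enriched hom $\Mor_\mD(-,-)$ of $\mD$ gives a $\mV\ot\mW$-enriched functor in three variables. It is contravariant in $\mC$ and in $\mV\mathrm{-}\Fun(\mC,\mD)$, and covariant in $Y\in\mD$. It sends $(c,F,Y)$ to $\Mor_\mD(F(c),Y)$.

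Currying the $\mC$-variable via \cref{psinho}(2) lands in $\mV\ot\mW$-valued presheaves on $\mC$, viewed as right $\mW$-modules in $\mV$-presheaves. Taking the enriched hom out of $H$, as in the definition of $H$-weighted cocones, then yields a $\mW$-valued pairing
$$ (F,Y) \mapsto \Mor_{\mP_\mV(\mC)}\big(H, \Mor_\mD(F(-),Y)\big). $$
This pairing is $\mW$-enriched in $Y\in\alpha^*_\mW(\mD)$ and opposite-enriched in $F$. Currying once more with \cref{psinho}, I obtain a $\mW$-enriched functor
$$\Theta: \mV\mathrm{-}\Fun(\mC,\mD) \to \mP_{\mW^\rev}(\alpha^*_\mW(\mD)^\circ)^\circ,$$
that is, into copresheaves, with the variance arranged so that $\Theta$ is covariant in $F$.

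The second step is representability. For fixed $F$, I need the copresheaf $\Theta(F)$ to be corepresented by $\colim^H(F)$. The defining property of the $(\alpha_\mV)_!(H)$-weighted colimit of $F$ in $\mD$ gives a $\mV\ot\mW$-equivalence
$$\Mor_\mD(\colim F, Y) \simeq \Mor_{\mP_{\mV\ot\mW}}\big((\alpha_\mV)_!H, \Mor_\mD(F(-),Y)\big).$$
Passing to the underlying $\mW$-object, I need the right-hand side to identify with $\Mor_{\mP_\mV(\mC)}(H,\Mor_\mD(F(-),Y))$ as $\mW$-objects. This compatibility is the main obstacle. It is a statement that pushing the weight forward along $\alpha_\mV$ is adjoint to the restriction relating $\mV\ot\mW$-presheaves to $\mW$-enriched $\mV$-presheaves. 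My first attempt would be to deduce it from the universal property in \cref{psinho}, using the transfer-of-enrichment adjunction $(\alpha_\mV)_!\dashv(\gamma)_!$ together with the identification of restricted actions. One must check that $\alpha_\mV$ is monoidal and left adjoint, which holds since it is the unit inclusion of the tensor product of presentably monoidal categories.

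The third step is to factor. Let $\mathcal R$ be the full $\mW$-enriched subcategory of corepresentable copresheaves. By the enriched Yoneda embedding (\cref{Yonedaext}, \cref{enryo}), applied to $\alpha^*_\mW(\mD)^\circ$ and passed through opposites, $\alpha^*_\mW(\mD)\simeq\mathcal R$. Since $\Theta$ lands in $\mathcal R$ by the second step, it factors through a $\mW$-enriched functor
$$\mV\mathrm{-}\Fun(\mC,\mD)\to\alpha^*_\mW(\mD).$$
By construction this functor sends $F$ to $\colim^H(F)$. A size issue arises because $\alpha^*_\mW(\mD)$ need not be small. I would handle it by passing to a larger universe, which does not affect the full subcategory of representables.
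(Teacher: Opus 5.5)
The paper gives no argument of its own here: the statement is imported from \cite[Proposition 3.19.]{heine2024higheralgebraweightedcolimits}, so there is no internal proof to set yours against. Your route is a sound partial-adjoint argument in outline. You build a $\mW$-valued pairing $(F,Y)\mapsto$ ``maps from $H$ to $\Mor_\mD(F(-),Y)$'', curry it into (opposite) copresheaves on $\alpha^*_\mW(\mD)$, show each value is corepresented by $\colim^H(F)$, and factor through the co-Yoneda embedding. You also read ``$H$-weighted colimit of $F$'' correctly, as the $(\alpha_\mV)_!(H)$-weighted colimit of the adjoint functor $(\alpha_\mV)_!\mC\to\mD$. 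Compared with quoting the general result, your argument is self-contained and makes the enrichment explicit. The price is that you must do the bi-enriched bookkeeping yourself, and two steps of it still need to be made precise.

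First, the compatibility you call the main obstacle should be stated as follows. Write $\gamma_\mV,\gamma_\mW$ for the right adjoints of $\alpha_\mV,\alpha_\mW$. The adjunction $(\alpha_\mV)_!\dashv(\gamma_\mV)_!$ (for the reversed enrichments) identifies $\mP_{\mV\ot\mW}((\alpha_\mV)_!\mC)$ with $\mV^{\rev}\mathrm{-}\Fun(\mC^{\op},\mV\ot\mW)$. This identification is compatible with the left $\mV\ot\mW$-tensoring, which commutes with the right $\mV$-action used for the enrichment, and it sends $(\alpha_\mV)_!(H)$ to $\alpha_\mV\circ H$. Now apply part (2) of the transfer-of-enrichment proposition to $\alpha_\mW$ and this presentably left $\mV\ot\mW$-tensored category. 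It says that $\gamma_\mW$ of its $\mV\ot\mW$-morphism objects are the $\mW$-morphism objects for the tensoring restricted along $\alpha_\mW$. So your ``hom out of $H$'' must be taken for that tensoring, which is the one through which post-composition in $Y$ acts; the functoriality in $F$ comes from the commuting action on the other side.

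Second, enlarging the universe alone does not make the Yoneda lemma available for the possibly large $\alpha^*_\mW(\mD)$, because $\mW$ is not presentable relative to the larger universe. You must push the enrichment forward along the monoidal embedding $\mW\hookrightarrow\widehat{\mP}(\mW)$ with Day convolution, as the paper does for generalized oriented categories, and run the argument there. You then descend the factored functor to a $\mW$-enriched one, using that this pushforward is fully faithful on enriched categories (part (3) of the transfer proposition).

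An alternative closer to the weighted-colimit results the paper quotes is to embed $\mD$ via \cref{weiloc} into a presentable category preserving these weighted colimits. There $\colim^H(F)\simeq\bar F((\alpha_\mV)_!H)$ by \cref{Yonedaext}(2), so you can evaluate the left adjoint extension at the weight. This replaces your hom comparison by a $\mW$-enriched refinement of \cref{Yonedaext}(1).
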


\subsubsection{Presentable enriched categories}

\begin{notation}
Let $\Pr^L \subset \widehat{\Cat}$ be the subcategory of presentable categories and left adjoint functors.

\end{notation}

\begin{remark}
The category $\Pr^L$ carries a canonical closed symmetric monoidal structure such that the inclusion $$\Pr^L \subset \widehat{\Cat}$$ is lax symmetric monoidal, where $\widehat{\Cat}$ carries the cartesian structure \cite[Proposition 4.8.1.15.]{lurie.higheralgebra}.

We write $\ot $ for the tensor product of $\Pr^L.$

\end{remark}

\begin{definition}\label{present} Let $\mV$ be a presentably monoidal category.
A $\mV$-enriched category $\mC$ is presentable if it admits tensors
and the underlying category $\iota(\mC)$ is presentable.
\end{definition}




\begin{notation}Let $\mV$ be a presentably monoidal category.
Let $$\mV \mathrm{-}\Pr^L \subset \mV \mathrm{-}\Cat$$ 
be the subcategory of presentable $\mV$-enriched categories and left adjoint $\mV$-enriched functors.

\end{notation}

The following is \cite[Theorem 1.2.]{Heine2023AnEB}:

\begin{theorem}Let $\mV$ be a presentably monoidal category.
There is a canonical equivalence $${_\mV\Mod(\Pr^L)} \simeq \mV \mathrm{-}\Pr^L,$$ where the left hand side is the 2-category of left $\mV$-modules in $\Pr^L$ and $\mV$-linear left adjoint functors.

\end{theorem}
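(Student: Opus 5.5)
The plan is to build the equivalence in two stages. First, identify presentable $\mV$-enriched categories with closed left $\mV$-tensored categories. Second, match the morphisms on both sides using \cref{adj}.

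For objects, I would start from a left $\mV$-module $\mM$ in $\Pr^L$, with action $\ot: \mV \times \mM \to \mM$ preserving small colimits in each variable. Fix $X, Y \in \mM$. The functor $\mV^\op \to \widehat{\infty\Grp}$ given by $V \mapsto \Map_\mM(V \ot X, Y)$ sends small colimits to limits, so by presentability of $\mV$ it is representable by an object $\Mor_\mM(X,Y) \in \mV$. Conversely, let $\mC$ be a presentable $\mV$-enriched category in the sense of \cref{present}. Tensors give a functor $\mV \times \iota(\mC) \to \iota(\mC)$, and the defining equivalence $\Mor_\mC(V \ot X,Y) \simeq \Mor_\mV(V, \Mor_\mC(X,Y))$ shows:
\begin{itemize}
\item the action preserves colimits in $V$;
\item $X \mapsto V \ot X$ is left adjoint to the cotensor-like functor built from $\Mor_\mC(X,-)$, so it preserves colimits in $X$;
\item associativity and unitality of the action come from the coherent structure of composition in $\mC$.
\end{itemize}

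To make both constructions homotopy coherent, I would not build them by hand. Instead I would work in the model of enriched $\infty$-categories as (weakly) left $\mV$-enriched $\infty$-operads over $\mathrm{LM}^\otimes$, following \cite{Heine2023AnEB} and \cite{heine2024bienrichedinftycategories}. There, a left $\mV$-tensored category is exactly a weakly enriched category whose multimorphism spaces $\Mul(V_1, \ldots, V_n, X; Y)$ are corepresented by $V_1 \ot \cdots \ot V_n \ot X$. A $\mV$-enriched category with tensors is one where these multimorphism spaces are also represented in $\mV$ by $\Mor(X,Y)$. The key step is to show that both conditions carve out the same full subcategory of weakly $\mV$-enriched categories when the action is closed, meaning $V \ot -$ has a right adjoint and $- \ot X$ preserves colimits. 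This is the step I expect to be the main obstacle. The passage between the operadic (weakly enriched) model and the $\mV$-enriched category model requires a localization or completeness comparison: enriched categories form a localization of weakly enriched ones at the fully faithful, essentially surjective maps. One then has to check that this localization does not change tensored objects. I would first try to prove that closed tensored weakly enriched categories are automatically local, because their morphism objects are determined by corepresentability.

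For morphisms, a $\mV$-linear left adjoint $F: \mM \to \mN$ induces a map of weakly enriched categories preserving tensors, hence a $\mV$-enriched functor preserving tensors. Since its underlying functor is a left adjoint between presentable categories, \cref{adj} (1) shows it has a $\mV$-enriched right adjoint. Conversely, a left adjoint $\mV$-enriched functor preserves tensors. This can be read off from the representability criterion for enriched adjoints, since $\Mor_\mD(F(V \ot X), Y) \simeq \Mor_\mC(V \ot X, GY)$. Its underlying functor is a left adjoint, hence colimit preserving. So it gives a morphism of $\mV$-modules in $\Pr^L$.

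Finally, functoriality on 2-morphisms: natural transformations correspond on both sides to $\mV$-enriched transformations, again via the operadic model. Together with the object-level equivalence, this promotes the bijection on morphisms to the claimed equivalence of 2-categories ${_\mV\Mod(\Pr^L)} \simeq \mV\mathrm{-}\Pr^L$.
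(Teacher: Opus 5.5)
The paper gives no argument of its own here; it quotes \cite[Theorem 1.2.]{Heine2023AnEB}. Your overall plan is in the spirit of that source: pass through the operadic model of enrichment, identify tensored enriched categories with closed left-tensored ones, and match morphisms via \cref{adj}. The concrete problem is your second bullet in the converse direction. The identity $\Map_{\mC}(V\ot X,Y)\simeq\Map_\mV(V,\Mor_\mC(X,Y))$ exhibits $\Mor_\mC(X,-)$ as right adjoint to $(-)\ot X\colon\mV\to\iota(\mC)$, so it only gives cocontinuity in the $\mV$-variable. A right adjoint of $V\ot(-)$ would be the cotensor $Y\mapsto{^VY}$. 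By presentability of $\iota(\mC)$ it exists if and only if $X\mapsto\Map_\mV(V,\Mor_\mC(X,Y))$ sends colimits of $\iota(\mC)$ to limits, i.e.\ iff these colimits are conical. That in turn is equivalent to all $V\ot(-)$ preserving them. So the step is circular. With \cref{present} read literally (tensors plus presentable underlying category), the conclusion is in fact false.

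Here is a counterexample. Let $\mV=\Fun(\bN,\Set)$ with Day convolution for $(\bN,+)$. Let $T$ be the endofunctor $X\mapsto\{x\in X\mid 2x=0\}$ of abelian groups. Let $\mC$ have abelian groups as objects, $\Mor_\mC(X,Y)_n=\mathrm{Hom}(T^nX,Y)$, and composition $(g,f)\mapsto g\circ T^i f$. Because $T$ preserves direct sums, $V\ot X=\bigoplus_n V_n\cdot T^nX$ is a tensor in the paper's strong sense (an equivalence on morphism objects, checked degreewise). Moreover $\iota(\mC)$ is the presentable category of abelian groups. Yet tensoring with the representable in degree $1$ is $T$, which does not preserve the cokernel of $2\colon\bZ\to\bZ$. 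Hence this $\mC$ cannot come from a left $\mV$-module in $\Pr^L$.

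To repair your argument you must include cotensors (equivalently, conicality of colimits) in the notion of presentable $\mV$-enriched category, and invoke it precisely at this point. With that hypothesis your morphism-level argument via \cref{adj} and the representability criterion is fine. What remains is the comparison between the enriched and the weakly enriched (operadic) models. You only sketch it, and it is exactly what the cited theorem supplies.
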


The following is \cite[Corollary 3.95.]{heine2024bienrichedinftycategories}:

\begin{proposition}\label{adjuiti}Let $\mV, \mW$ be presentably monoidal categories.
There is a canonical equivalence $${_\mV\Mod(\Pr^L)_\mW} \simeq {_{\mV \ot \mW^\rev}\Mod(\Pr^L)}.$$

\end{proposition}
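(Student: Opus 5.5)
The plan is to deduce the statement from the preceding theorem, ${_\mV\Mod(\Pr^L)} \simeq \mV\mathrm{-}\Pr^L$, applied to the presentably monoidal category $\mV \ot \mW^\rev$, together with the standard identification of bimodules with left modules over a tensor product in a symmetric monoidal 2-category. Note that the statement is an equivalence of categories of modules in $\Pr^L$, so the enriched theory plays essentially no role; it is a purely algebraic statement about modules in the symmetric monoidal category $\Pr^L$.

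\textbf{Step 1: objects.} A $(\mV,\mW)$-bimodule in $\Pr^L$ is a presentable category $\mM$ with a left $\mV$-action and a right $\mW$-action, both preserving small colimits in each variable, together with coherent compatibility equivalences $(V \ot M) \ot W \simeq V \ot (M \ot W)$.
\begin{itemize}
\item A right $\mW$-action is the same as a left $\mW^\rev$-action.
\item By the universal property of the tensor product in $\Pr^L$, functors out of $\mV \ot \mW^\rev$ correspond to functors $\mV \times \mW^\rev \to \mM$ preserving colimits separately in each variable. This is what lets the two commuting actions assemble into a single left $\mV\ot\mW^\rev$-action.
\end{itemize}
To make this coherent, I would invoke the general fact (Lurie, Higher Algebra, \S 4.3 and 4.8) that for associative algebras $A,B$ in a symmetric monoidal category whose tensor product preserves colimits in each variable, there is an equivalence ${_A\Mod_B} \simeq {_{A \ot B^\rev}\Mod}$. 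This holds since $\Pr^L$ is closed symmetric monoidal by \cite[Proposition 4.8.1.15.]{lurie.higheralgebra}. At the level of $\infty$-operads, the relevant operad for bimodules is $\mathcal{BM}$, and the equivalence comes from the map of operads exhibiting a $\mathcal{BM}$-algebra as an $\mathcal{LM}$-algebra over the tensor product of the two algebra objects.

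\textbf{Step 2: morphisms and 2-morphisms.} Both sides are taken with $\mV$-linear (resp.\ bilinear) left adjoint functors, and these correspond under the same identification. Moreover, both sides are 2-categories enriched over $\Pr^L$ via $\Fun^L$, and the equivalence is compatible with this enrichment because it is natural in the module. To upgrade from the underlying categories to 2-categories, I would use that the mapping categories are computed as internal homs of module categories, which the identification preserves.

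\textbf{Expected obstacle.} The main difficulty is the coherence in Step 1: producing the equivalence of $\infty$-categories of modules rather than just a bijection on objects. This rests on Lurie's identification of $\mathcal{BM}^\otimes$-algebras with $\mathcal{LM}^\otimes$-algebras over $A \ot B^\rev$, which requires the symmetric monoidal structure to be compatible with colimits; this is exactly the closedness of $\Pr^L$. If that citation is not directly applicable to $\Pr^L$ (a very large category), I would instead pass to universes, enlarging the universe so that $\widehat{\Cat}$ is presentable, and run the argument there. Alternatively, I would deduce the statement from \cite[Corollary 3.95.]{heine2024bienrichedinftycategories} via bienriched categories, which is the source the paper cites.
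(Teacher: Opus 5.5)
Your argument is correct, but it is not what the paper does. The paper gives no proof here and simply imports the statement as \cite[Corollary 3.95]{heine2024bienrichedinftycategories}, i.e.\ from the work on bienriched $\infty$-categories.

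Your route is the direct algebraic one. You view $\mV,\mW$ as associative algebras in the symmetric monoidal $\infty$-category $\Pr^L$ (after enlarging the universe) and use the general identification ${}_A\mathrm{BMod}_B(\mathcal{C}) \simeq \mathrm{LMod}_{A\ot B^\rev}(\mathcal{C})$. That identification comes from $\mathrm{RMod}_B(\mathcal{C}) \simeq \mathrm{LMod}_{B^\rev}(\mathcal{C})$ together with $\mathrm{LMod}_A(\mathrm{LMod}_{B^\rev}(\mathcal{C}))\simeq \mathrm{LMod}_{A\ot B^\rev}(\mathcal{C})$. This is self-contained and shows the statement is pure algebra in $\Pr^L$, with no enriched category theory involved.

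Two small corrections. First, that identification is operadic and holds in any symmetric monoidal $\infty$-category. Compatibility of $\ot$ with colimits (closedness of $\Pr^L$) is not what makes your Step 1 work. It only enters in Step 2, where the categories of linear left adjoint functors $\Fun^L_{\mV}(\mathcal{M},\mathcal{N})$ are formed as objects of $\Pr^L$.

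Second, Step 2 is cleanest if you verify that the equivalence commutes with the tensoring $\mathcal{K}\ot(-)$ by $\mathcal{K}\in\Pr^L$, rather than appealing to ``naturality in the module''. On each side the mapping categories are characterized by $\Map_{\Pr^L}(\mathcal{K},\Fun^L_{\mV}(\mathcal{M},\mathcal{N}))\simeq \Map_{{}_\mV\Mod(\Pr^L)}(\mathcal{K}\ot\mathcal{M},\mathcal{N})$, and likewise for bimodules. So a $\Pr^L$-linear equivalence matches them by Yoneda, which upgrades your equivalence of $\infty$-categories to one of 2-categories.

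What the citation buys instead is that the equivalence arrives already packaged, with its 2-categorical structure, inside the bienriched framework. The paper uses that framework immediately afterwards to identify bitensored presentable categories with presentable $\mV\ot\mW^\rev$-enriched ones.
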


\begin{corollary}

Let $\mV, \mW$ be presentably monoidal categories.
There is a canonical equivalence $${_\mV\Mod(\Pr^L)_\mW} \simeq {\mV \ot \mW^\rev} \mathrm{-}\Pr^L.$$

\end{corollary}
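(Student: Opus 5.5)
The plan is to compose the equivalence of the previous theorem with the equivalence of Proposition~\ref{adjuiti}:
\[
{_\mV\Mod(\Pr^L)_\mW} \simeq {_{\mV \ot \mW^\rev}\Mod(\Pr^L)} \simeq {\mV \ot \mW^\rev}\mathrm{-}\Pr^L .
\]
Everything then reduces to checking that the hypotheses of the theorem apply to $\mV \ot \mW^\rev$ and that the two equivalences can be composed.

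\textbf{Step 1.} Show that $\mV \ot \mW^\rev$ is presentably monoidal, so that the theorem applies to it.

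1. Since $\mW$ is a monoid in $\Pr^L$, so is $\mW^\rev$: reversing the multiplication preserves algebra objects in the symmetric monoidal category $\Pr^L$.
2. The tensor product of two algebras in a symmetric monoidal category is again an algebra.
3. Hence $\mV \ot \mW^\rev$ is an algebra in $\Pr^L$, that is, a presentably monoidal category.

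\textbf{Step 2.} Apply the theorem to this presentably monoidal category. It gives the canonical equivalence ${_{\mV \ot \mW^\rev}\Mod(\Pr^L)} \simeq {\mV \ot \mW^\rev}\mathrm{-}\Pr^L$ between left modules in $\Pr^L$ and presentable enriched categories.

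\textbf{Step 3.} Precompose with Proposition~\ref{adjuiti}. This identifies $\mV$-$\mW$-bimodules in $\Pr^L$ with left $\mV \ot \mW^\rev$-modules.

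Both equivalences are canonical and are equivalences of 2-categories of modules with linear left adjoint functors, so their composite is the claimed canonical equivalence.

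The only point needing care is that the two equivalences are compatible at the level of morphisms. Bimodule maps in Proposition~\ref{adjuiti} must go to $\mV \ot \mW^\rev$-linear left adjoints, and these are exactly the left adjoint enriched functors of the theorem. This holds by the way both statements are formulated (the morphism 2-categories are the linear left adjoint functors in each case), so there is no genuine obstacle: the corollary is a formal composite.
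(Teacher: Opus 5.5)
Your proposal is correct and matches the paper: the corollary is obtained exactly by composing Proposition~\ref{adjuiti}, ${_\mV\Mod(\Pr^L)_\mW} \simeq {_{\mV \ot \mW^\rev}\Mod(\Pr^L)}$, with the preceding theorem applied to the presentably monoidal category $\mV \ot \mW^\rev$. Your check that $\mV \ot \mW^\rev$ is again an algebra in $\Pr^L$ is the only hypothesis that needs verifying, and you verify it.
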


\vspace{1mm}

The following is \cite[Lemma 2.79.]{heine2024bienrichedinftycategories}:

\begin{proposition}\label{adjunc}Let $\mV$ be a presentably monoidal category.
There is a canonical equivalence $$\mV \mathrm{-}\Pr^L \simeq (\mV \mathrm{-}\Pr^R)^\op$$ sending left to right adjoints.

\end{proposition}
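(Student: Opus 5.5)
The plan is to derive the equivalence $\mV\mathrm{-}\Pr^L \simeq (\mV\mathrm{-}\Pr^R)^\op$ from the unenriched statement $\Pr^L \simeq (\Pr^R)^\op$ of \cite[Corollary 5.5.3.4]{lurie.HTT}, with \cref{adj} supplying the enriched input. Here $\mV\mathrm{-}\Pr^R$ is the subcategory of $\mV\mathrm{-}\Cat$ on presentable $\mV$-enriched categories and $\mV$-enriched functors admitting a $\mV$-enriched left adjoint. The construction is the usual one in the 2-category $\mV\mathrm{-}\Cat$: since enriched adjunctions are adjunctions in this 2-category, the standard formalism of adjoint morphisms applies. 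The subcategory of $\mV\mathrm{-}\Cat$ whose morphisms are left adjoints is therefore canonically antiequivalent, identically on objects, to the subcategory whose morphisms are right adjoints, and each left adjoint is sent to its right adjoint.

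The key steps are as follows.

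First, I realize the passage to adjoints coherently. Consider the $(\infty,2)$-category $\mV\mathrm{-}\Cat$. Its underlying category of left adjoint morphisms is equivalent to the opposite of its category of right adjoint morphisms, via the "mate" or passage to adjoints. This is a general fact about $(\infty,2)$-categories, for example through the Grothendieck construction: one identifies cocartesian and cartesian fibrations classified by the same functor, as in the proof of \cite[Corollary 5.5.3.4]{lurie.HTT}.

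Second, I check that the objects match, which is immediate since both sides consist of the presentable $\mV$-enriched categories of \cref{present}.

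Third, I check that morphisms correspond. This means showing that a $\mV$-enriched functor between presentable $\mV$-enriched categories admits a $\mV$-enriched right adjoint precisely when it is a $\mV$-enriched right adjoint's partner, which holds by definition. The content to verify is that the morphisms of $\mV\mathrm{-}\Pr^L$, namely left adjoint $\mV$-enriched functors, are exactly those admitting a $\mV$-enriched right adjoint. By \cref{adj}(1) this amounts to preserving tensors and having a left adjoint underlying functor. Dually, by \cref{adj}(2), the morphisms of $\mV\mathrm{-}\Pr^R$ are those preserving cotensors whose underlying functor is a right adjoint.

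If one defines $\mV\mathrm{-}\Pr^R$ intrinsically rather than via enriched adjoints, an extra step is needed. One must show that presentable enriched categories admit cotensors, and that an accessible functor preserving limits and cotensors has an enriched left adjoint. For the existence of cotensors, note that ${^V X}$ represents $\Mor_\mV(V,\Mor_\mC(-,X))$, and the tensor $V \ot (-)$ is a left adjoint between presentable categories, so it has a right adjoint, which provides the cotensor. For the left adjoint, the underlying adjoint functor theorem gives the underlying left adjoint, and \cref{adj}(2) upgrades it to an enriched one.

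I expect the main obstacle to be the coherence in the first step. One needs a genuinely functorial passage to adjoints for the $(\infty,2)$-category $\mV\mathrm{-}\Cat$, not only on objects and morphisms. The first approach I would try is to transport along ${_\mV\Mod(\Pr^L)} \simeq \mV\mathrm{-}\Pr^L$ (the theorem of \cite{Heine2023AnEB}) and then use Lurie's antiequivalence $\Pr^L \simeq (\Pr^R)^\op$ relative to $\mV$-module structures. In that setting, lax $\mV$-linear right adjoints correspond to strong $\mV$-linear left adjoints by doctrinal adjunction.
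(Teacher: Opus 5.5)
The paper gives no argument of its own here. It imports the statement as \cite[Lemma 2.79.]{heine2024bienrichedinftycategories}, so the only question is whether your argument stands on its own, and it does.

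Your main line is already the whole proof. With the paper's definitions (cf.\ \cref{presol}), $\mV\mathrm{-}\Pr^L$ and $\mV\mathrm{-}\Pr^R$ are the subcategories of $\mV\mathrm{-}\Cat$ on the presentable objects whose morphisms are the left, resp.\ right, adjoints in the $(\infty,2)$-category $\mV\mathrm{-}\Cat$. So the proposition is the restriction to these objects of the identity-on-objects equivalence $\mathcal{K}^{L}\simeq(\mathcal{K}^{R})^{\op}$, $f\mapsto f^{R}$, which holds for any $(\infty,2)$-category $\mathcal{K}$.

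Consequently your third step is a tautology. \cref{adj} and the adjoint functor theorem only matter if you want the intrinsic description of $\mV\mathrm{-}\Pr^R$ as accessible functors preserving limits and cotensors. That part is fine, with one correction: the colimit-preservation of $V\ot(-)$, which you use to build cotensors, comes from ${_\mV\Mod(\Pr^L)}\simeq\mV\mathrm{-}\Pr^L$, not from \cref{present} alone. What this formal route buys is generality, since presentability plays no role in the equivalence itself.

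Two corrections concern the coherence, which you rightly single out as the only real content.

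First, the straightening argument behind \cite[Corollary 5.5.3.4]{lurie.HTT} (presentable fibrations are both cartesian and cocartesian) is special to $\Cat_\infty$. It does not prove the statement for an arbitrary $(\infty,2)$-category such as $\mV\mathrm{-}\Cat$. Quote instead the general passage-to-adjoints (mate) equivalence for $(\infty,2)$-categories, e.g.\ from Haugseng's work on lax transformations and adjunctions or from Gaitsgory--Rozenblyum. After that nothing remains to be done.

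Second, your fallback via $\mV$-modules is harder, not easier.
\begin{itemize}
\item ${_\mV\Mod(\Pr^L)}\simeq\mV\mathrm{-}\Pr^L$ only controls strongly linear left adjoints. To even recognize $\mV\mathrm{-}\Pr^R$ you need the identification of enriched functors between tensored $\mV$-categories with lax $\mV$-linear functors.
\item You then need a coherent, not merely objectwise, doctrinal adjunction.
\item Doctrinal adjunction pairs strong linear left adjoints only with those lax linear functors that are right adjoints inside the lax-linear 2-category (by \cref{adj}(2), the cotensor-preserving ones). It does not pair them with every lax linear functor whose underlying functor has a left adjoint.
\end{itemize}
In exchange, that route gives an explicit description of $\mV\mathrm{-}\Pr^R$ as presentable $\mV$-modules with lax linear right adjoints, the enriched analogue of Lurie's statement.
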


\subsubsection{Enriched slice categories}

Next we introduce enriched slice categories.


For the next notation we use that the presentable 2-category $\mV \mathrm{-}\Cat$ admits cotensors.

\begin{notation}\label{slices} Let $\mV$ be a presentably monoidal category
whose tensor unit is final.
Let $\mC$ be a $\mV$-enriched category and $X \in \mC.$

Let $\mC_{/X} $ be the fiber over $X$ of the $\mV$-enriched functor $\mC^{\bD^1} \to \mC^{\{1\}}$ evaluating at $1$
in the presentable 2-category $\mV \mathrm{-}\Cat.$
There is a forgetful $\mV$-enriched functor $\mC_{X/} \to \mC^{\bD^1} \to \mC^{\{0\}}$ evaluating at $0.$	
	
\end{notation}

The following is \cite[Remark 2.4.25.]{gepner2025oriented}:

\begin{remark}\label{bien} Let $\mV$ be a presentably monoidal category
whose tensor unit is final.
Let $\mC$ be a $\mV$-enriched category and $X \in \mC.$
Let $\alpha: X \to Y, \beta: X \to Z$ be morphisms in $\mC.$
There is a canonical equivalence 	
$$ \Mor_{\mC_{X/}}(Y,Z) \simeq \{\beta\} \times_{\Mor_{\mC}(X,Z)} \Mor_{\mC}(Y,Z).$$

\end{remark}

The following is \cite[Proposition 3.7.1.]{gepner2025oriented}:

\begin{lemma}\label{left adjoint slice} Let $\mV$ be a presentably monoidal category whose tensor unit is final.
Let $\mC$ be a $\mV$-enriched category and $X \in \mC.$
Every morphism $X \to Y$ in $\mC$ gives rise to a $\mV$-enriched functor
$ \mC_{Y/}  \to \mC_{X/} $ over $\mC.$
If $\mC$ admits conical pushouts, the latter admits a left adjoint.

\end{lemma}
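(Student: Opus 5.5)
We construct the induced functor first, and then the left adjoint. For the functor, we regard $\mC_{Y/}$ and $\mC_{X/}$ as fibers of the evaluation functors $\ev_0: \mC^{\bD^1} \to \mC$ over $Y$ and over $X$, as in \cref{slices}. Precomposition with $\alpha: X \to Y$ then defines a $\mV$-enriched functor $\alpha^*: \mC_{Y/} \to \mC_{X/}$ over $\mC$. One way to obtain it is as a fiber of a composite: the cotensor $\mC^{\bD^2}$ maps to $\mC^{\bD^1}$ by restriction along $\{0<2\}$ and by restriction along $\{0<1\}$. Taking the fiber over $\alpha$ of the restriction along $\{0<1\}$ gives $\mC_{Y/}$, because the tensor unit is final and so $\bD^2 \simeq \bD^1 \cup_{\{1\}} \bD^1$ in $\mV\mathrm{-}\Cat$. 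The restriction along $\{0<2\}$ then lands in $\mC_{X/}$.

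For the left adjoint we use \cref{adj}(2). Since $\mC_{Y/}$ and $\mC_{X/}$ are presentable enriched categories only in special cases, we instead verify the representability criterion directly. For $\beta: X \to Z$ in $\mC_{X/}$, set $L(\beta) := (Y \to Y \coprod_X Z)$, using the conical pushout of $\alpha$ and $\beta$. For any $\gamma: Y \to W$ in $\mC_{Y/}$, \cref{bien} gives
$$ \Mor_{\mC_{Y/}}(Y\coprod_X Z, W) \simeq \{\gamma\} \times_{\Mor_\mC(Y,W)} \Mor_\mC(Y \coprod_X Z, W).$$
The conical pushout property says
$$\Mor_\mC(Y\coprod_X Z,W) \simeq \Mor_\mC(Y,W)\times_{\Mor_\mC(X,W)} \Mor_\mC(Z,W)$$
in $\mV$. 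Substituting, the right-hand side becomes $\{\gamma\alpha\}\times_{\Mor_\mC(X,W)} \Mor_\mC(Z,W)$, and by \cref{bien} again this is $\Mor_{\mC_{X/}}(Z, \alpha^*W)$.

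It then remains to check naturality in $W$, which shows that $\Mor_{\mC_{X/}}(\beta,\alpha^*(-))$ is corepresented by $L(\beta)$. By the representability criterion for enriched left adjoints (the proposition preceding \cref{adj}), this yields the left adjoint. The main obstacle is this naturality. The identifications above come from \cref{bien} pointwise, so we need them to assemble into an equivalence of $\mV$-enriched functors $\mC_{Y/} \to \mV$. To get this, we express both sides as weighted limits of a single diagram, namely the cotensor description of slices applied to the pushout square, so that the equivalence is induced by a map of enriched functors. The equivalence can then be checked objectwise.
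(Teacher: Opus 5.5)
The paper gives no argument of its own here; it quotes \cite[Proposition 3.7.1.]{gepner2025oriented}. Your proof therefore has to stand on its own, and it does.

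The construction is right. Realizing $\alpha^*$ as the fiber over $\alpha$ of restriction along $\{0<1\}$, followed by restriction along $\{0<2\}$, gives precomposition with $\alpha$ over $\mC$. Your chain of identifications via \cref{bien} and the conical pushout property correctly yields $\Mor_{\mC_{Y/}}(Y\coprod_X Z,W)\simeq\Mor_{\mC_{X/}}(Z,\alpha^*W)$ objectwise.

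Three small corrections:
\begin{enumerate}
\item In this paper $\bD^2$ is the walking $2$-morphism, so write $[2]$ for the $2$-simplex.
\item The decomposition $\mC^{[2]}\simeq\mC^{[1]}\times_{\mC}\mC^{[1]}$ comes from $[2]\simeq[1]\cup_{[0]}[1]$ in $\Cat$ and holds for any $\mV$. Finality of the unit is what makes ``fiber over an object'' and \cref{bien} available, not what produces this pushout.
\item Drop the mention of \cref{adj}(2). That criterion presupposes cotensors, which are not assumed, and you never use it.
\end{enumerate}

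The naturality you single out as the main obstacle needs no weighted-limit bookkeeping; the enriched Yoneda lemma handles it. The pushout inclusion $\iota_Z\colon Z\to Y\coprod_X Z$ is a morphism $\eta\colon(Z,\beta)\to(Y\coprod_X Z,\iota_Y\alpha)=\alpha^*L(\beta)$ in $\mC_{X/}$. By \cref{enryo}, applied to the opposite enriched category so that it covers copresheaves, $\eta$ determines a map of $\mV$-enriched functors $\Mor_{\mC_{Y/}}(L(\beta),-)\to\Mor_{\mC_{X/}}(\beta,\alpha^*(-))$. Its component at $(W,\gamma)$ applies $\alpha^*$ and then precomposes with $\eta$.

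The identifications of \cref{bien} are compatible with the forgetful functors to $\mC$, and $\alpha^*$ lies over $\mC$. So this component is the map of fiber products induced by $\iota_Z^*\colon\Mor_\mC(Y\coprod_X Z,W)\to\Mor_\mC(Z,W)$, which is exactly your pasting equivalence. Hence the transformation is an objectwise, and therefore global, equivalence, and the representability criterion applies. You never need to assemble the pointwise equivalences into a natural one by hand.

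For comparison, the pattern the paper uses elsewhere would be different. One would embed $\mC$ via \cref{weiloc} into a presentable enriched category preserving conical pushouts, apply \cref{adj}(2) there using the underlying adjunction $Y\coprod_X(-)\dashv\alpha^*$ and preservation of cotensors, and then restrict. Your direct route avoids presentability, cotensors and size issues, and it identifies the left adjoint explicitly as pushout along $\alpha$.
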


\begin{lemma}\label{susk}Let $\mV, \mW$ be presentably monoidal categories, $K\in\mW$ and $\mC$ a $\mV \ot \mW$-enriched category that admits cotensors
with $\tu_\mV \ot K.$
Let $$\mC[K] \in \mV\mathrm{-}\Cat$$ be the pullback of evaluation at the target $\mC^{\bD^1}\to \mC$ along the
$\mV$-enriched functor $(-)^K: \mC \to \mC$ taking the cotensor with $\tu_\mV \ot K.$	
There is a $\mV$-enriched equivalence over $\mC \times \mC$
$$\mC[K] \simeq \mW\mathrm{-}\Fun(S(K),\mC)$$	
sending $(Y,X \to Y^K)$ to $(X,Y, K \to \Mor_{\alpha^*_\mW(\mC)}(X,Y)).$	
	
\end{lemma}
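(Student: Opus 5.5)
The plan is to prove the equivalence representably. Both sides are $\mV$-enriched categories over $\mC \times \mC$, so it suffices to produce, for every small $\mV$-enriched category $\mO$, an equivalence
$$ \mV\mathrm{-}\Fun(\mO,\mC[K]) \simeq \mV\mathrm{-}\Fun(\mO,\mW\mathrm{-}\Fun(S(K),\mC)) $$
natural in $\mO$ and compatible with the projections to $\mV\mathrm{-}\Fun(\mO,\mC)\times\mV\mathrm{-}\Fun(\mO,\mC)$. The enriched Yoneda lemma (\cref{enryo}) in the 2-category $\mV\mathrm{-}\Cat$ then gives the $\mV$-enriched equivalence over $\mC \times \mC$. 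Here $S(K)$ is the $\mW$-enriched suspension: objects $0,1$, $\Mor(0,1)=K$, both endomorphism objects equal to $\tu_\mW$, and $\Mor(1,0)$ initial. The explicit description of the equivalence in the statement comes from taking $\mO$ to be the unit $\mV$-enriched category with one object.

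\textbf{The left side.} Since $\mV\mathrm{-}\Cat$ is a presentable 2-category and $\mC[K]$ is defined as a pullback, we get
$$\mV\mathrm{-}\Fun(\mO,\mC[K]) \simeq \mV\mathrm{-}\Fun(\mO,\mC^{\bD^1}) \times_{\mV\mathrm{-}\Fun(\mO,\mC)} \mV\mathrm{-}\Fun(\mO,\mC).$$
The fibre product is taken over evaluation at the target and over postcomposition with $(-)^K$. Using the cotensor $\mC^{\bD^1}$, an object of this category is a pair of $\mV$-functors $F,G:\mO\to\mC$ together with a $\mV$-natural transformation $F \to G^K$.

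\textbf{The right side.} By \cref{psinho}(2),
$$\mV\mathrm{-}\Fun(\mO,\mW\mathrm{-}\Fun(S(K),\mC)) \simeq \mV\ot\mW\mathrm{-}\Fun(\mO\boxtimes S(K),\mC).$$
The category $\mO\boxtimes S(K)$ is the collage of two copies of $\mO$ along the bimodule $\Mor_\mO(-,-)\ot K$. Its objects are $\mO\times\{0,1\}$, with
$$\Mor((o,0),(o',1)) = \Mor_\mO(o,o')\ot K$$
and no morphisms from level $1$ to level $0$. I expect the following universal property of such a collage: an enriched functor out of it amounts to two functors $F,G:\mO\to\mC$ together with a map of $\mO$-bimodules
$$\Mor_\mO(-,-)\ot K \to \Mor_\mC(F-,G-).$$
Concretely, one can present $\mO\boxtimes S(K)$ as a pushout in $\mV\ot\mW\mathrm{-}\Cat$ assembled from $\mO\boxtimes S(\emptyset)$ and free cells, and then use that $(-)\boxtimes\mO$ preserves colimits.

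\textbf{Matching the two sides.} The cotensor with $\tu_\mV\ot K$ gives natural equivalences
$$\Mor_\mC(X,Y^K)\simeq \Mor_{\mV\ot\mW}(\tu_\mV\ot K,\Mor_\mC(X,Y)).$$
Combined with the tensor/hom adjunction in $\mV\ot\mW$, this turns a bimodule map $\Mor_\mO(-,-)\ot K\to \Mor_\mC(F-,G-)$ into a $\mV$-natural family $\Mor_\mO(-,-)\to\Mor_\mC(F-,G^K-)$, which is exactly a $\mV$-natural transformation $F\to G^K$. Every identification used is natural in $\mO$, so the two sides agree naturally in $\mO$, and we can conclude by Yoneda.

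\textbf{Main obstacle.} The hard step is making the collage universal property rigorous homotopy-coherently. I would try first to reduce it to the case $K=\tu_\mW$, where $S(\tu_\mW)=\bD^1$ and the statement is just the definition of the cotensor $\mC^{\bD^1}$. To get there, I would write the $K$-weighted data as $\bD^1$-data valued in the $\mV$-enriched category $\mC$ with the twisted target $(-)^K$, using that $\mC$ admits the cotensors with $\tu_\mV\ot K$.
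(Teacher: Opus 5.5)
\paragraph{Gap: the key step is only conjectured.}
Your outer strategy agrees with the paper's: apply Yoneda in the 2-category $\mV\mathrm{-}\Cat$ and identify $\mV\mathrm{-}\Fun(\mO,-)$ of both sides naturally in $\mO$. (The Yoneda step there is the ordinary Yoneda lemma in $\mV\mathrm{-}\Cat$, not \cref{enryo}.) However, the step that carries all the content is stated as an expectation, not proved. That step is the universal property of the collage $\mO\boxtimes S(K)$. It asserts that $\mV\ot\mW$-functors $\mO\boxtimes S(K)\to\mC$ are pairs $F,G$ together with a homotopy-coherent ``bimodule map'' $\Mor_\mO(-,-)\ot K\to\Mor_\mC(F-,G-)$. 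It further asserts that such a bimodule map is the same as a morphism $F\to G^K$ in $\mV\mathrm{-}\Fun(\mO,\mC)$, which is an end-type description of natural transformations that you also do not establish. Your fallback does not close this. Rewriting $K$-data as $\bD^1$-data with target twisted by $(-)^K$ is exactly the assertion $\mW\mathrm{-}\Fun(S(K),\mC)\simeq \mC^{\bD^1}\times_{\mC}\mC$, now with parameters $\mO$. So the proposed ``reduction to $K=\tu_\mW$'' restates the lemma rather than reducing it.

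\paragraph{The missing idea: move $\mO$ into the target.}
Use \cref{psinho} in the other direction, putting $\mO$ into the target rather than the source. Combining parts (1) and (2) gives $\mV\mathrm{-}\Fun(\mO,\mW\mathrm{-}\Fun(S(K),\mC))\simeq \mW\mathrm{-}\Fun(S(K),\mV\mathrm{-}\Fun(\mO,\mC))$. Similarly, $\mV\mathrm{-}\Fun(\mO,\mC[K])\simeq \mV\mathrm{-}\Fun(\mO,\mC)[K]$. This uses that the $\mW$-enriched category $\mD:=\mV\mathrm{-}\Fun(\mO,\mC)$ admits cotensors with $K$, computed pointwise. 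That follows from \cref{colimenrfun}, since $(\alpha_\mW)_!$ of the cotensor-with-$K$ weight is the cotensor with $\tu_\mV\ot K$ that $\mC$ is assumed to have.

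This is exactly the paper's argument. Both sides become the same construction applied to the single enriched category $\mD$, naturally in $\mO$, so one reduces to matching objects. An object of $\mD[K]$ is $(G, F\to G^K)$. By the defining equivalence of the cotensor this is $(F,G,K\to\Mor(F,G))$. By the universal property of $S(K)$, the free $\mW$-category on an edge object $K$ between two points (left adjoint to $\Mor$), this is precisely an object of $\mW\mathrm{-}\Fun(S(K),\mD)$. No description of $\mO\boxtimes S(K)$ as a collage is ever needed. Naturality in $\mO$ comes for free from the naturality in \cref{psinho}, rather than having to be checked on bimodule data.
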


\begin{proof}
	
Let $\mB$ be a $\mV$-enriched category.
By the Yoneda lemma it suffices to see that equivalence classes of $\mV$-enriched functors $\mB \to \mC[K]$ naturally correspond to equivalence classes of $\mV$-enriched functors $$\mB \to \mW\mathrm{-}\Fun(S(K),\mC).$$
There are canonical equivalences of categories $$ \mV\mathrm{-}\Fun(\mB, \mC[K]) \simeq \mV\mathrm{-}\Fun(\mB, \mC)[K], $$
$$\mV\mathrm{-}\Fun(\mB, \mW\mathrm{-}\Fun(S(K),\mC)) \simeq \mW\mathrm{-}\Fun(S(K), \mV\mathrm{-}\Fun(\mB,\mC)).$$
So it is enough to show that there is a natural bijection of equivalence classes of objects of
$\mC[K] $ and $ \mW\mathrm{-}\Fun(S(K),\mC).$
An object of $\mC[K]$ is a pair $(Y,X \to Y^K)$ that corresponds to a triple $(X,Y,K \to \Mor_{\alpha^*_\mW(\mC)}(X,Y)),$ which is precisely an object of $\mW\mathrm{-}\Fun(S(K),\mC).$
	
\end{proof}

\subsection{$\infty$-categories}

Next we recall the basic theory of $\infty$-categories following \cite[\S 3]{gepner2025oriented}. 


	


\begin{definition}
For every $\n \geq 0$ we inductively define the presentable cartesian closed category $\n\Cat$ of small (univalent) $\n$-categories by setting:
$$0\Cat :=\infty\Grp,\ (\n+1)\Cat:= \n\Cat\mathrm{-}\Cat.$$
	
\end{definition}

\begin{notation}
For every $\n \geq 0$ we inductively define colocalizations
$\n\Cat \rightleftarrows (\n+1)\Cat: \iota_\n,$
where both adjoints  preserve finite products and filtered colimits.
Let
$$0\Cat= \infty\Grp \rightleftarrows 1\Cat = \infty\Grp\mathrm{-}\Cat : \iota_0 $$ be the unique colocalization whose left adjoint preserves finite products. Let $$(\n+1)\Cat= \n\Cat\mathrm{-}\Cat \rightleftarrows (\n+2)\Cat= (\n+1)\Cat \mathrm{-}\Cat:\iota_{\n+1}:= (\iota_\n)_!. $$

\end{notation}

\begin{definition}The presentable cartesian closed category $\infty\Cat$ of small (univalent) $\infty$-categories is the limit
$$\infty\Cat:= \lim(...\xrightarrow{\iota_{\n}} \n\Cat \xrightarrow{\iota_{\n-1}} ... \xrightarrow{\iota_0} 0 \Cat) $$
of presentable cartesian closed categories and right adjoint functors. 
	
\end{definition}

Since forming category objects preserves limits, we obtain the following:

\begin{remark}\label{fix}
	
There is a canonical equivalence
$ \infty\Cat \simeq \infty\Cat\mathrm{-}\Cat. $
	
\end{remark}

\begin{notation}\label{mormoro}

Let $\Mor: \infty\Cat_{\partial\bD^1/} \to \infty\Cat$
be the functor $$\infty\Cat_{\partial\bD^1/} \simeq \Cat_{\partial\bD^1/} \times_{\Cat} \infty\Cat\mathrm{-}\Cat \to \infty\Cat $$ sending $(\mC,X,Y)$ to $\Mor_\mC(X,Y).$

\end{notation}

\begin{remark}\label{homfil}
The functor $\Mor: \infty\Cat_{\partial\bD^1/} \to \infty\Cat$ preserves small limits and small filtered colimits.

\end{remark}

\begin{notation}
For every $0 \leq \n \leq \m$ the left adjoint embeddings $\n\Cat \leftrightarrows \m\Cat$ preserve small limits and thus induce a left adjoint embedding $\n\Cat \leftrightarrows \infty\Cat: \iota_\n$ that preserves small limits and so admits a left adjoint $$\tau_\n: \infty\Cat \to \n\Cat$$ by presentability.

\end{notation}

By the limit definition of $\infty\Cat$ we have the following filtration \cite[Lemma 2.3.10.]{gepner2025oriented}:

\begin{lemma}\label{decom}
Let $\mC$ be an $\infty$-category.
The sequential diagram $$\iota_0(\mC) \to ... \to \iota_\n(\mC) \to \iota_{\n+1}(\mC) \to ... \to \mC$$ exhibits $\mC$ as the colimit in $\infty\Cat$ of the diagram $\iota_0(\mC) \to ... \to \iota_\n(\mC) \to \iota_{\n+1}(\mC) \to ....$
\end{lemma}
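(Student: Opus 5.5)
The plan is to reduce the statement to the fact that $\infty\Cat$ is defined as a limit of the $\n\Cat$ along the right adjoints $\iota_\n$. Colimits in $\infty\Cat$ are therefore governed levelwise by the truncations $\tau_\n$ together with the embeddings. Concretely, I would show that for every $\infty$-category $\mB$ the induced map
$$\colim_\n \Map_{\infty\Cat}(\iota_\n(\mC),\mB) \leftarrow \Map_{\infty\Cat}(\mC,\mB)$$
is an equivalence, where the left side means the limit over $\n$ of mapping spaces, since maps out of a colimit form a limit. Since $\iota_\n \dashv$ (restriction to $\n$-dimensional part) is a colocalization, $\Map(\iota_\n(\mC),\mB) \simeq \Map_{\n\Cat}(\iota_\n\mC,\iota_\n\mB)$, with $\iota_\n$ now also denoting the right adjoint. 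So the claim becomes
$$\Map_{\infty\Cat}(\mC,\mB) \simeq \lim_\n \Map_{\n\Cat}(\iota_\n\mC,\iota_\n\mB).$$
This is exactly the formula for mapping spaces in a limit of categories $\infty\Cat=\lim_\n \n\Cat$: an object is a compatible family $(\mC_\n)$ with $\mC_\n=\iota_\n\mC$.

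The key steps, in order, are as follows.

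First, identify the projection $\infty\Cat\to \n\Cat$ of the limit with the right adjoint of the embedding $\n\Cat\hookrightarrow\infty\Cat$. By construction, the embedding is induced from the compatible left adjoint embeddings $\n\Cat\to\m\Cat$, and the limit projections are its right adjoints.

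Second, check that under this identification the counit $\iota_\n(\mC)\to \mC$ corresponds to the structure maps of the sequential diagram. Compatibility of the counits for $\n\le\m$ comes from $\iota_{\n}\iota_{\m}\simeq\iota_\n$.

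Third, apply the mapping-space formula in a limit of $\infty$-categories, which is standard (Lurie, HTT 5.5.3.13, or directly from the limit description).

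The main obstacle is the first step: making sure that the objectwise restriction $\iota_\n=(\iota_{\n-1})_!$ on enriched categories agrees with the right adjoint of the left adjoint embedding, coherently in $\n$. I would handle this inductively. $(\iota_{\n-1})_!$ is right adjoint to the pushforward along the embedding by the transfer-of-enrichment proposition (part (2), since the embedding is monoidal, being product preserving). The pushforward is fully faithful by part (3). Naturality of these adjunctions in $\n$ gives the required coherence. Once this is settled, the colimit statement follows formally, since a cocone under the diagram into $\mB$ is the same as a point of $\lim_\n\Map(\iota_\n\mC,\iota_\n\mB)$, which is $\Map(\mC,\mB)$.
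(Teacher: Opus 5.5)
Your proposal is correct and uses the same route as the paper. The paper deduces the lemma, citing Gepner--Heine, directly from the definition of $\infty\Cat$ as the limit of the $\n\Cat$ along the right adjoints $\iota_\n$. Your chain $\Map_{\infty\Cat}(\mC,\mB)\simeq\lim_\n\Map_{\n\Cat}(\iota_\n\mC,\iota_\n\mB)\simeq\lim_\n\Map_{\infty\Cat}(\iota_\n\mC,\mB)$, with the restriction maps matched via the triangle identities, is the standard way to unpack that.

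Two small remarks. The $\colim_\n$ in your first display should read $\lim_\n$. The step you single out as the main obstacle needs no separate argument, since the paper defines $\iota_{\n+1}:=(\iota_\n)_!$ precisely as the right adjoint of the fully faithful pushforward along $\n\Cat\subset(\n+1)\Cat$.
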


\begin{definition}\label{ruik} 

Let $\n \geq 0.$
We inductively define involutions $(-)^\op_\n, (-)^\co_\n$ of $ \n\Cat$ by setting
$(-)^\co_0, (-)^\op_0$ are the identities and
$$(-)^\co_{\n+1}: {(\n+1)}\Cat \xrightarrow{((-)^\op_\n)_!}{(\n+1)}\Cat, $$$$
(-)^\op_{\n+1}: {(\n+1)}\Cat \xrightarrow{(-)^\circ} {(\n+1)}\Cat \xrightarrow{((-)^\co_\n)_!}{(\n+1)}\Cat. $$

There are commutative squares
$$\begin{xy}
\xymatrix{
{(\n+1)}\Cat \ar[d]^{\iota_\n}  \ar[rr]^{(-)_{\n+1}^\op} && {(\n+1)}\Cat  \ar[d]^{\iota_\n}
\\ 
 \n\Cat \ar[rr]^{(-)_{n}^\op} &&  \n\Cat
}
\end{xy}
\begin{xy}
\xymatrix{
{(\n+1)}\Cat \ar[d]^{\iota_\n}  \ar[rr]^{(-)_{\n+1}^\co} && {(\n+1)}\Cat  \ar[d]^{\iota_\n}
\\ 
 \n\Cat \ar[rr]^{(-)_{n}^\co} &&  \n\Cat
}
\end{xy}
$$
Hence the latter involutions give rise to involutions 
\begin{equation}\label{eqinvo}
(-)^\op, (-)^\co: \infty\Cat \to \infty\Cat
\end{equation}
on the limit.
	
\end{definition}

\begin{notation}
	
Let $\partial\bD^1:=S^0:=*\coprod*$ be the set with two elements.	
	
\end{notation}

\begin{remark}\label{oppo} By definition \ref{ruik} there are commutative squares, where $\sigma$ permutes the two objects:
	
$$\begin{xy}
\xymatrix{
\infty\Cat_{\partial\bD^1/} \ar[d]^{\Mor}  \ar[r]^{(-)^\co} & \infty\Cat_{\partial\bD^1/}  \ar[d]^{\Mor}
\\ 
\infty\Cat \ar[r]^{(-)^\op} & \infty\Cat
}
\end{xy}
\begin{xy}
\xymatrix{
\infty\Cat_{\partial\bD^1/} \ar[d]^{\Mor}  \ar[r]^{\sigma \circ (-)^\op} & \infty\Cat_{\partial\bD^1/}  \ar[d]^{\Mor}
\\ 
\infty\Cat \ar[r]^{(-)^\co} & \infty\Cat.
}
\end{xy}
$$
	
\end{remark}


	






Next we define $\n$-fullness for every $\n \geq -1.$

\begin{definition}Let $\n \geq 1.$
\begin{itemize}
		
\item A functor is $0$-full if it is essentially surjective.
		
\item A functor is $\n$-full if it is essentially surjective and induces $\n$-1-full functors on morphism $\infty$-categories.
		
\end{itemize}
	
\end{definition}

\begin{example}\label{exfull} For every $\n \geq 0$ and $\infty$-category $\mC$ the induced functor
$$\iota_{\n}(\mC) \to \iota_{\n+1}(\mC) $$ is $\n$-full.
This follows by induction on $\n \geq 0 $
since $\Mor_{\iota_{\n+1}(\mC)}(A,B) \simeq \iota_{\n}(\Mor_{\mC}(A,B))$ for any $A,B \in \mC.$
    
\end{example}

\begin{definition}Let $\n \geq 0.$

\begin{itemize}
\item A functor $\F: \mC \to \mD$ is a $0$-equivalence
if it induces a bijection on equivalence classes of objects.

\item A functor $\F: \mC \to \mD$ is an $\n$-equivalence
if it induces an $\n-1$-equivalence on morphism $\infty$-categories. 

\end{itemize}

\end{definition}



\begin{lemma}\label{homt} A functor $\F: \mC \to \mD$ is an equivalence if and only if it is an $\n$-equivalence for every $\n \geq 0.$
\end{lemma}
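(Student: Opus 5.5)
One direction is formal. An equivalence $\F$ induces equivalences on morphism $\infty$-categories, and hence a bijection on equivalence classes of objects. So I will show by induction on $\n$, simultaneously for all functors, that an equivalence is an $\n$-equivalence.

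For the converse, suppose $\F: \mC \to \mD$ is an $\n$-equivalence for every $\n \geq 0$. The plan is to use \cref{decom}, which exhibits $\mC$ and $\mD$ as sequential colimits of $\iota_\m(\mC)$ and $\iota_\m(\mD)$ compatibly with $\F$. It then suffices to prove that $\iota_\m(\F): \iota_\m(\mC) \to \iota_\m(\mD)$ is an equivalence in $\m\Cat$ for every $\m \geq 0$. I prove this by induction on $\m$, for all functors at once.

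\emph{Case $\m = 0$.} The functor $\iota_0(\F)$ is a map of spaces. It is a bijection on $\pi_0$ because $\F$ is a $0$-equivalence. For the higher homotopy groups, use that loop spaces of $\iota_0(\mC)$ at $X$ are the maximal subspaces of $\Mor_\mC(X,X)$, iterated. Thus $\pi_k(\iota_0(\mC),X)$ is $\pi_0$ of the space $\iota_0$ of an iterated morphism $\infty$-category. The hypothesis that $\F$ is a $k$-equivalence for all $k$ gives that $\F$ induces, on the relevant iterated morphism $\infty$-categories, functors that are $0$-equivalences, i.e.\ bijections on equivalence classes of objects. Hence $\iota_0(\F)$ is a weak equivalence of spaces.

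\emph{Inductive step.} Recall $\iota_{\m+1}(\mC) = \iota_\m$ applied on morphism objects, so $\iota_{\m+1}(\mC)$ is an $\m\Cat$-enriched category with morphism objects $\iota_\m(\Mor_\mC(A,B))$. A functor of enriched categories is an equivalence if it is essentially surjective and fully faithful. Essential surjectivity follows from $\F$ being a $0$-equivalence. Fully faithfulness asks that $\iota_\m(\Mor_\mC(A,B)) \to \iota_\m(\Mor_\mD(\F A,\F B))$ be an equivalence. Since $\F$ is an $\n$-equivalence for all $\n$, the functor $\Mor_\mC(A,B) \to \Mor_\mD(\F A, \F B)$ is an $(\n-1)$-equivalence for all $\n \geq 1$, i.e.\ a $k$-equivalence for every $k$. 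The induction hypothesis at level $\m$, applied to this functor, gives the claim.

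The main obstacle is the passage to the colimit, and there are two points to check. First, the levelwise statements must be phrased for $\iota_\m$ rather than the truncation $\tau_\m$, so that they are compatible with \cref{decom}: $\iota_\m(\F)$ being an equivalence for all $\m$ gives an equivalence of the sequential diagrams and hence of the colimits $\mC \simeq \mD$. Second, in the base case the identification of higher homotopy groups with $\pi_0$ of iterated morphism $\infty$-categories must be checked from the definitions $1\Cat = \infty\Grp\mathrm{-}\Cat$ and $\iota_0$. Alternatively, since $\infty\Cat$ is a limit of the $\m\Cat$ along the $\iota$'s, an equivalence at every finite level is directly an equivalence in the limit.
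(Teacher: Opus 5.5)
Your architecture is the paper's. You pass to the finite stages $\iota_\m$, assemble via the limit description of $\infty\Cat$ (or \cref{decom}), and induct on $\m$ using essential surjectivity plus full faithfulness on $\iota_\m$ of the induced functors on morphism $\infty$-categories. Your form of the induction hypothesis, that $\iota_\m(\G)$ is an equivalence for \emph{every} functor $\G$ that is a $k$-equivalence for all $k$, avoids the paper's preliminary claim that $\iota_\bk$ preserves $\n$-equivalences for $\bk \geq 1$. However, the content of the case $\bk = 0$ of that claim reappears in your base case, and there your argument has a false step. The loop space of $\iota_0(\mC)$ at $X$ is not $\iota_0(\Mor_\mC(X,X))$. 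It is only the union of the components consisting of self-equivalences. For $\mC = B\bN$ one has $\iota_0(\mC) \simeq *$, while $\iota_0(\Mor_\mC(X,X)) = \bN$. Iterating, $\pi_k(\iota_0(\mC),X)$ is the set of equivalence classes of those objects of the $k$-fold iterated endomorphism $\infty$-category of $X$ that are invertible as morphisms in the $(k-1)$-fold one. So a bijection on \emph{all} equivalence classes of objects of these $\infty$-categories does not by itself give a bijection on $\pi_k$. You also need it to restrict to a bijection on the invertible classes, i.e.\ you need the induced functors to reflect invertibility.

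This is the missing idea, and it follows from your hypotheses. Let $\G: \mA \to \mB$ be a $1$-equivalence and $f: a \to b$ a morphism such that $\G(f)$ has an inverse $h$. Choose $g: b \to a$ with $\G(g) \simeq h$. Then $\G(gf) \simeq \id$ and $\G(fg) \simeq \id$, and injectivity on equivalence classes of objects of $\Mor_\mA(a,a)$ and $\Mor_\mA(b,b)$ gives $gf \simeq \id$ and $fg \simeq \id$. Apply this to the functor induced by $\F$ on the $(k-1)$-fold iterated endomorphism $\infty$-categories, which is a $1$-equivalence because $\F$ is a $k$-equivalence. This yields bijections on all $\pi_k$ at all base points, and Whitehead's theorem completes the base case. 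In the paper, this same fact is what is needed for $\iota_0\F$ to be an $\n$-equivalence for all $\n$. Once that is known, the paper's base case is Whitehead's theorem for a map of spaces, whose morphism $\infty$-categories are path spaces.
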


\begin{proof}
The only if-direction is clear. We prove the other direction.	
We first reduce to the case that $\mC,\mD$ are $\bk$-categories for some $\bk \geq 0.$
Let $\n \geq 0.$ 
If $\F$ is an $\n$-equivalence, the functor $\iota_\bk\F$ is an $\n$-equivalence for every $\bk \geq 0$ as one sees by induction on $k \geq 0.$
So if $\F$ is an $\n$-equivalence for every $\n \geq 0,$ then $\iota_\bk\F$ is an $\n$-equivalence for every $\n,\bk \geq 0.$
Thus $\iota_\bk\F$ is an equivalence for every $\bk \geq 0$ if we have proven the result in the case where $\mC,\mD$ are $\bk$-categories.
So we can assume that $\mC,\mD$ are $\bk$-categories.
We proceed by induction on $\bk \geq 0.$
For $\n=0$ the result follows from the fact that a map of spaces is an equivalence if it induces isomorphisms on all homotopy groups.
We assume the statement for $\bk$ and let $\F:\mC \to \mD$ be a functor of $\bk+1$-categories that is an $\n$-equivalence for every $\n \geq 0.$
Then $\F$ is a $0$-equivalence, i.e. induces an equivalence on the set of equivalence classes.
In particular, $\F$ is essentially surjective. So $\F$ is an equivalence if
$\F$ is fully faithful. For every $X,Y \in \mC$ the induced functor $$\Mor_\mC(X,Y)\to \Mor_\mD(\F(X),\F(Y))$$ of $\bk$-categories is an $\n$-equivalence for every $\n \geq 0.$ So by induction hypothesis the functor $\Mor_\mC(X,Y)\to \Mor_\mD(\F(X),\F(Y))$ is an equivalence. 	

\end{proof}	
 

Next we construct the Gray tensor product.


\begin{notation}

By \cite[Definition 3.5.19.]{gepner2025oriented} for every $\n \geq 0 $ there is an $\n$-category $\cube^\n$,
the oriented $\n$-cube, whose 1-truncation is the $\n$-fold product $(\bD^1)^{\times \n}.$
	
Let $$\cube \subset \infty\Cat $$ be the full subcategory of oriented cubes.
	
\end{notation}

\vspace{1mm}
By \cite[Remark 3.5.18.]{gepner2025oriented} the category $\cube$ carries a monoidal structure $\boxtimes$ whose tensor unit is $\cube^0$ and such that for every $\n,\m \geq 0$ there is an equivalence $\cube^\n \boxtimes \cube^\m \simeq \cube^{\n+\m}.$

\vspace{1mm}

The next is \cite[Theorem 3.14., Example 3.16.]{campion2023gray}
(see also \cite[Corollary 3.6.2.]{gepner2025oriented}):

\begin{theorem}\label{dense}
	
There is a unique presentably monoidal structure on $\infty\Cat$ such that the embedding
$$\cube \subset \infty\Cat$$ is monoidal.

\end{theorem}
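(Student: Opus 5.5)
The natural route is Day convolution along the dense monoidal subcategory of oriented cubes. Write $j:\cube \subset \infty\Cat$ for the full embedding. The first step is to show that $\infty\Cat$ is a localization of $\mP(\cube)$: the restricted Yoneda functor $\infty\Cat \to \mP(\cube)$, $X \mapsto \Map_{\infty\Cat}(j(-),X)$, should be fully faithful and preserve small limits and filtered colimits. Equivalently, every $\infty$-category is canonically the colimit of the cubes mapping to it.

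For density I would argue inductively via the filtration of \cref{decom}. It suffices to treat $\n$-categories uniformly in $\n$ and pass to the limit. For $\n$-categories, the cubes $\cube^k$ with $k \le \n$ generate: their globular boundaries and the globes can be built from cubes by colimits. A map $X \to Y$ inducing equivalences on $\Map(\cube^k,-)$ for all $k$ then induces equivalences on all iterated morphism spaces, hence is an $\n$-equivalence for all $\n$, and \cref{homt} makes it an equivalence. Since the cubes are small and compact (they are finite), the left adjoint $L:\mP(\cube) \to \infty\Cat$ of the restricted Yoneda functor exists by presentability. So $\infty\Cat \simeq S^{-1}\mP(\cube)$ for a small set $S$ of maps, namely those witnessing the Segal and completeness conditions.

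Next, I would equip $\mP(\cube)$ with the Day convolution structure induced by $\boxtimes$. This is the unique presentably monoidal structure making the Yoneda embedding monoidal. Any presentably monoidal structure on $\infty\Cat$ with $j$ monoidal makes $L$ monoidal, because both tensor products commute with colimits in each variable. By the universal property of Day convolution, such a structure therefore corresponds to a monoidal localization. Consequently uniqueness is automatic once existence holds, since the tensor of $X = \colim \cube^{a}$ and $Y=\colim \cube^b$ is forced to be $\colim \cube^{a+b}$.

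The main obstacle is existence, i.e. that $S$-equivalences are stable under $\cube^k \boxtimes_{\mathrm{Day}} (-)$ for every $k$. This reduces to checking that for each generating map $s\in S$ and each $k$, the map $\cube^k \boxtimes s$ is an $S$-local equivalence. I would first choose generators $S$ consisting of cubical Segal maps, say unions of faces $\to \cube^m$ decomposed along one coordinate, together with completeness maps. The product with $\cube^k$ of a face-gluing of $\cube^m$ is again a face-gluing of $\cube^{m+k}$, so it is in the saturation. The completeness maps are the delicate part: one must show that tensoring with $\cube^k$ preserves invertibility of the walking equivalence, which is a direct combinatorial check in low dimension. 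This is essentially the content of the cited Campion / Gepner–Heine constructions.
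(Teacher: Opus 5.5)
The paper gives no proof of its own; it imports the result from \cite[Theorem 3.14, Example 3.16]{campion2023gray}. Your architecture is the standard one and matches the shape of that result: $\cube$ dense in $\infty\Cat$, Day convolution on $\mP(\cube)$, a monoidal localization $L: \mP(\cube)\to\infty\Cat$, and uniqueness from the universal property of Day convolution. However, both substantive inputs are missing, and the first is argued incorrectly.

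Your density step only shows that the functors $\Map_{\infty\Cat}(\cube^k,-)$ jointly detect equivalences (via \cref{homt}). That is, the restricted Yoneda functor $N: \infty\Cat\to\mP(\cube)$ is conservative, or equivalently cubes generate $\infty\Cat$ under colimits. This does not make $N$ fully faithful. By the triangle identity, conservativity only reduces the counit $LNX\to X$ being an equivalence to the unit $NX\to NLNX$ being one, and that is the actual content.

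Your argument applies verbatim to the globes $\bD^k$, since maps out of globes already see all iterated morphism spaces. Yet globes are not dense. For a $1$-category $C$ one has $\Map(\bD^k,C)\simeq\Map(\bD^1,C)$ for all $k\geq 1$, because $\tau_1\bD^k\simeq\bD^1$. So the globular nerve of $C$ only records its underlying reflexive graph, and graph maps between categories need not be functors. Without density you get neither $\infty\Cat\simeq S^{-1}\mP(\cube)$ nor your uniqueness argument, which needs $L$ to be a localization.

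The existence step is the real content of the theorem, and you defer it rather than prove it; the proposed generators are also wrong. Let $U\subset\cube^m$ be a union of proper faces with $m\geq 1$. Then $U\to\cube^m$ is never an $L$-equivalence: $L(U)$ is glued from cubes of dimension $<m$, so it lies in $(m-1)\Cat$, which is closed under colimits in $\infty\Cat$, whereas the top cell of $\cube^m$ is not invertible.

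Composition is instead encoded by shapes such as $\bD^1\cup_{\bD^0}\bD^1\simeq[2]$, which are only retracts of cubes. Exhibiting an explicit set $S$ and proving $\infty\Cat\simeq S^{-1}\mP(\cube)$, for $\infty\Cat$ as defined here by iterated enrichment, is a comparison theorem in its own right. Three further points remain open:
\begin{enumerate}
\item You must check both $\cube^k\boxtimes_{\mathrm{Day}}(-)$ and $(-)\boxtimes_{\mathrm{Day}}\cube^k$, since $\boxtimes$ is not symmetric. The duality of \cref{dua} only helps if $S$ is self-dual.
\item Completeness is not a low-dimensional check, because univalence is imposed in every dimension.
\item Gray-tensoring a walking $n$-equivalence with $\cube^k$ yields a map that is neither a Segal nor a completeness generator, so showing it is a local equivalence needs a genuine argument.
\end{enumerate}
These are exactly the inputs supplied by the cited works.
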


We call the monoidal structure on $\infty\Cat$ of Theorem \ref{dense}
the Gray monoidal structure and write $\boxtimes$ for the Gray tensor product.

\begin{notation}

Since the Gray tensor product defines a presentably monoidal structure on $\infty\Cat$, it is biclosed: for every $\infty$-category $\mC$ the functor $$ \mC \boxtimes (-): \infty\Cat \to \infty\Cat$$ admits a right adjoint $\Fun^\lax(\mC,-): \infty\Cat \to \infty\Cat$ and the functor $$ (-) \boxtimes \mC : \infty\Cat \to \infty\Cat$$ admits a right adjoint $ \Fun^\oplax(\mC,-): \infty\Cat \to \infty\Cat.$
\end{notation}

\begin{definition}Let $\F,\G:\mC \to \mD $ be functors of $\infty$-categories.
An (op)lax natural transformation $\F \to \G$ is a morphism in $\Fun^{(\op)\lax}(\mC,\mD)$.

\end{definition}

\begin{remark}\label{lao}

If $\mC$ is an $\n$-category and $\mD$ an $\m$-category for $\n,\m \geq 0$,
then $\mC \boxtimes \mD$ is an $\n+\m$-category.
This holds since $\n\Cat$ is closed under small colimits in $\infty\Cat$ and $\n\Cat$ is generated under small colimits by the oriented $\ell$-cubes for $1 \leq \ell \leq \n$. 
\end{remark}

The following is \cite[Remark 3.12.7.]{gepner2025oriented}:

\begin{remark}\label{grayspace}

The full subcategory $\infty\Grp \subset \infty\Cat$ is closed under the Gray tensor product and the restricted Gray monoidal structure on $\infty\Grp$ is the cartesian structure.
The left adjoint $\tau_0: \infty\Cat \to \infty\Grp$ of the resulting monoidal embedding $\infty\Grp \subset \infty\Cat$ is monoidal.

\end{remark}


The next is \cite[Proposition 3.8.8.]{gepner2025oriented}:

\begin{proposition}\label{dua}
	
The involutions (\ref{eqinvo}) underlie monoidal involutions
$$(-)^\op, (-)^\co: (\infty\Cat, \boxtimes^\rev) \simeq (\infty\Cat, \boxtimes).$$
	
\end{proposition}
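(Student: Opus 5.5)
The plan is to reduce to oriented cubes and then apply the uniqueness in \cref{dense}. I treat $(-)^\op$ in detail; $(-)^\co$ is handled the same way.

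\emph{Transport.} Since $(-)^\op:\infty\Cat\to\infty\Cat$ is an involution, it is an autoequivalence of $\infty\Cat$ and in particular preserves small colimits. Transporting $\boxtimes^\rev$ along it defines a presentably monoidal structure $\boxtimes'$ on $\infty\Cat$, given on objects by
$$\mC\boxtimes'\mD := (\mD^\op\boxtimes\mC^\op)^\op .$$
By construction, $(-)^\op$ becomes a monoidal equivalence $(\infty\Cat,\boxtimes^\rev)\simeq(\infty\Cat,\boxtimes')$. It therefore suffices to identify $\boxtimes'$ with $\boxtimes$ as monoidal structures. By the uniqueness in \cref{dense}, this follows once we show two things: the full subcategory $\cube\subset\infty\Cat$ is closed under $\boxtimes'$, and the restricted structure $(\cube,\boxtimes')$ agrees with $(\cube,\boxtimes)$ compatibly with the embedding.

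\emph{Cubes.} First I would show that $(-)^\op$ preserves $\cube$ by exhibiting equivalences $(\cube^\n)^\op\simeq\cube^\n$. On the $1$-truncation $(\bD^1)^{\times\n}$ this is coordinatewise reversal $\epsilon\mapsto 1-\epsilon$. I would verify it on the explicit model of oriented cubes from \cite[Definition 3.5.19.]{gepner2025oriented}, for instance through Steiner's description by augmented directed chain complexes. In that model, $(-)^\op$ dualizes the chain complex of $\bD^1$ by swapping its two vertices, and the Gray product $\cube^\n\boxtimes\cube^\m\simeq\cube^{\n+\m}$ is the tensor product of chain complexes with its Koszul-type orientation. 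Swapping the vertices of every factor turns the orientation convention for the tensor of cells $x\otimes y$ into that of $y\otimes x$. This gives natural isomorphisms
$$(\cube^\n\boxtimes\cube^\m)^\op\simeq(\cube^\m)^\op\boxtimes(\cube^\n)^\op,$$
that is, a monoidal functor $(-)^\op:(\cube,\boxtimes^\rev)\to(\cube,\boxtimes)$. For $(-)^\co$ the argument is the same, except that only the $2$-cells are reversed; by \cref{oppo} this amounts to applying $(-)^\op$ on morphism $\infty$-categories, and it again reverses the order of the Gray factors.

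\emph{Main obstacle.} The hardest step is making the cube-level identification coherent as a monoidal functor, not merely compatible on objects and binary products. I would handle this by using that $\cube$ is essentially a $1$-category: the mapping spaces between oriented cubes in $\infty\Cat$ are discrete, since the cubes are gaunt strict $\omega$-categories. Monoidal structures and monoidal functors on $\cube$ are then determined by ordinary $1$-categorical data. Coherence reduces to checking that the isomorphisms above satisfy the associativity and unit conditions, which can be done directly on Steiner complexes, where the identifications are canonical bijections of bases.

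\emph{Conclusion.} With this in hand, $\cube\subset(\infty\Cat,\boxtimes')$ is a monoidal embedding whose induced structure on $\cube$ is $\boxtimes$. Uniqueness in \cref{dense} then gives a monoidal equivalence $\boxtimes'\simeq\boxtimes$ extending the identity of $\infty\Cat$, and hence $(-)^\op:(\infty\Cat,\boxtimes^\rev)\simeq(\infty\Cat,\boxtimes)$ as claimed. The same argument gives the statement for $(-)^\co$.
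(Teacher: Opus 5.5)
The paper does not prove this statement. It imports it verbatim from \cite[Proposition 3.8.8.]{gepner2025oriented}, so there is no in-paper argument to compare with, and your proof stands as a self-contained alternative to the citation.

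Your reduction is correct. You transport $\boxtimes^{\rev}$ along the autoequivalence $(-)^\op$ to get $\mC\boxtimes'\mD=(\mD^\op\boxtimes\mC^\op)^\op$, check that $\cube\subset(\infty\Cat,\boxtimes')$ is monoidal for the given structure on $\cube$, and conclude by the uniqueness in \cref{dense}. The bookkeeping is right: a monoidal functor $(\cube,\boxtimes^\rev)\to(\cube,\boxtimes)$ with underlying functor $(-)^\op$ is exactly what makes the transported structure restrict to $(\cube,\boxtimes)$, and uniqueness then yields a monoidal structure on the identity $(\infty\Cat,\boxtimes')\to(\infty\Cat,\boxtimes)$.

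Compared with the citation, your route shows the statement is a formal consequence of the density of cubes plus a finite sign check on Steiner complexes. If you negate $\partial_n$ in all odd degrees, or in all even degrees, the plain swap $x\otimes y\mapsto y\otimes x$ is a basis-preserving chain isomorphism $(K\otimes L)^{J}\cong L^{J}\otimes K^{J}$. It is strictly compatible with associators. Since $\cube$ is a $1$-category and Steiner's $\nu$ is fully faithful on these loop-free unital complexes, coherence is automatic.

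There are three corrections.
\begin{enumerate}
\item The equivalence $(\cube^n)^\op\simeq\cube^n$ is not coordinatewise reversal on $1$-truncations. Already for $n=2$, the map $\epsilon\mapsto 1-\epsilon$ sends the source path of the $2$-cell of $(\cube^2)^\op$ to the target path of the $2$-cell of $\cube^2$, so it does not extend. You must also reverse the order of the coordinates, and this reordering is precisely the anti-monoidality you derive later.
\item By \cref{ruik} and \cref{oppo}, $(-)^\co$ reverses cells of every even dimension $\geq 2$, not only $2$-cells. The swap is a chain map only when the negated degrees are exactly the odd ones or exactly the even ones; if only degree $2$ is negated, it fails already for a tensor of two $2$-cells. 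So drop the phrase ``only the $2$-cells are reversed'' and keep the description via \cref{oppo}.
\item The real work is not the coherence you single out but two identifications you assert without argument.
\begin{itemize}
\item The involutions of \cref{ruik}, defined by induction through enriched categories, must be shown to restrict on gaunt strict $n$-categories to the strict odd and even duals. This follows by induction on $n$ from two facts: the fully faithful embedding of gaunt strict $n$-categories into $n\Cat$ is compatible with the description $(n+1)\Cat=n\Cat\mathrm{-}\Cat$, and duals commute with applying that embedding on hom-objects. It still has to be stated.
\item The cubes of \cite[Definition 3.5.19.]{gepner2025oriented}, with the monoidal structure of \cite[Remark 3.5.18.]{gepner2025oriented}, must be identified with Steiner's cubes and tensor product. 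Getting Steiner's tensor or its reverse makes no difference here.
\end{itemize}
\end{enumerate}
With these supplied, your argument is complete.
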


\begin{corollary}\label{grayhoms}
Let $\mC,\mD \in \infty\Cat$.
There are canonical equivalences $$ \Fun^\oplax(\mC,\mD)^\op \simeq \Fun^\lax(\mC^\op,\mD^\op), \ \Fun^\oplax(\mC,\mD)^\co \simeq \Fun^\lax(\mC^\co,\mD^\co).$$

\end{corollary}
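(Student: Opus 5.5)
The plan is to derive both equivalences from the monoidal involutions of \cref{dua} together with the Yoneda lemma, by comparing the corepresenting adjunctions. We treat $(-)^\op$; the case of $(-)^\co$ is identical, since \cref{dua} gives it the same structure. The key input is that $(-)^\op$ is an involutive equivalence of $\infty\Cat$. By \cref{dua} it is monoidal as a functor $(\infty\Cat,\boxtimes^\rev) \to (\infty\Cat,\boxtimes)$. This yields natural equivalences
$$(\mB \boxtimes \mC)^\op \simeq \mC^\op \boxtimes \mB^\op$$
for all $\infty$-categories $\mB,\mC$.

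Fix $\mC,\mD$ and let $\mB$ be an arbitrary $\infty$-category. We compute, naturally in $\mB$:
$$\Map_{\infty\Cat}(\mB, \Fun^\oplax(\mC,\mD)^\op) \simeq \Map_{\infty\Cat}(\mB^\op, \Fun^\oplax(\mC,\mD)) \simeq \Map_{\infty\Cat}(\mB^\op \boxtimes \mC, \mD).$$
The first step uses that $(-)^\op$ is an involutive autoequivalence, and the second uses the defining adjunction $(-)\boxtimes\mC \dashv \Fun^\oplax(\mC,-)$.

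Applying $(-)^\op$ once more and using the monoidality equivalence above, we get
$$\Map_{\infty\Cat}(\mB^\op \boxtimes \mC, \mD) \simeq \Map_{\infty\Cat}((\mB^\op \boxtimes \mC)^\op, \mD^\op) \simeq \Map_{\infty\Cat}(\mC^\op \boxtimes \mB, \mD^\op) \simeq \Map_{\infty\Cat}(\mB, \Fun^\lax(\mC^\op,\mD^\op)).$$
The last step is the adjunction $\mC^\op\boxtimes(-) \dashv \Fun^\lax(\mC^\op,-)$. By the Yoneda lemma in $\infty\Cat$ we conclude that $\Fun^\oplax(\mC,\mD)^\op \simeq \Fun^\lax(\mC^\op,\mD^\op)$.

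The only point needing care is naturality in $\mB$. The equivalences above come from an equivalence of functors $\infty\Cat \times \infty\Cat \to \infty\Cat$, namely $(\mB,\mC) \mapsto (\mB\boxtimes\mC)^\op$ and $(\mB,\mC)\mapsto \mC^\op\boxtimes\mB^\op$. This equivalence is part of the monoidal structure of the involution in \cref{dua}, so naturality holds and no further obstacle arises. The $(-)^\co$ statement follows verbatim with $\co$ in place of $\op$.
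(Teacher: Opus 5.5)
Your proof is correct and is exactly the deduction the paper intends. The paper states the corollary without a separate proof, as an immediate consequence of \cref{dua}: a monoidal involution that reverses $\boxtimes$ interchanges the right adjoints of $(-)\boxtimes\mC$ and $\mC^{\op}\boxtimes(-)$ (resp.\ $\mC^{\co}\boxtimes(-)$), and your Yoneda computation, with naturality in $\mB$ coming from the monoidal structure of the involution, makes this precise.
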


Next we consider a variant of the Gray tensor product for $\infty$-categories with distinguished object that plays the role of the smash product in homotopy theory.

\begin{definition}
The category $\infty\Cat_*$ of small $\infty$-categories with distinguished object is the full subcategory of $\Fun(\bD^1, \infty\Cat)$ spanned by the functors whose source is final.
	
\end{definition}

\begin{definition}

The Gray smash monoidal structure on $\infty\Cat_{*}$,
denoted by $\wedge$, is the smash monoidal structure of \cite[Lemma 3.69.]{gepner2025oriented}
applied to $(\infty\Cat, \boxtimes).$
For every $X, Y \in \infty\Cat_*$ the Gray smash product $X \wedge Y$ is the cofiber of the canonical functor $X \vee Y \to X \boxtimes Y. $

\end{definition}

\begin{notation}
Let $\mC,\mD \in \infty\Cat_*.$
Let $ \Fun_*^{(\op)\lax}(\mC,\mD) $ be the fiber of the functor $$ \Fun^{(\op)\lax}(\mC,\mD) \to \Fun^{(\op)\lax}(\ast,\mD)\simeq \mD $$ over the base point.

\end{notation}

\cite[Lemma 2.6.11.]{gepner2025oriented} gives the following:

\begin{lemma}
Let $\mC \in \infty\Cat_*$. 
\begin{enumerate}
\item The functor $$ \mC \wedge (-): \infty\Cat_* \to \infty\Cat_*$$ is left adjoint to the functor $\Fun_*^\lax(\mC,-): \infty\Cat_* \to \infty\Cat_*$. 

\vspace{1mm}
\item The functor $$ (-) \wedge \mC : \infty\Cat_* \to \infty\Cat_*$$ is left adjoint to the functor $\Fun_*^\oplax(\mC,-): \infty\Cat_* \to \infty\Cat_*.$

\end{enumerate}

\end{lemma}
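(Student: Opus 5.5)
This is a formal consequence of the unpointed Gray adjunctions together with the description of $\infty\Cat_*$ as $\infty\Cat_{*/}$ (the functors $\bD^1 \to \infty\Cat$ with final source). The plan is to prove (1) and then deduce (2) by the duality of \cref{dua}.

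\emph{Step 1: reduce to mapping spaces.} For (1) it suffices to give, for $\mD, \mE \in \infty\Cat_*$, an equivalence
$$\Map_{\infty\Cat_*}(\mC \wedge \mD, \mE) \simeq \Map_{\infty\Cat_*}(\mD, \Fun^\lax_*(\mC,\mE))$$
natural in $\mD$ and $\mE$.

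\emph{Step 2: unfold the smash product.} The smash product is by definition the cofiber of $\mC \vee \mD \to \mC \boxtimes \mD$, that is, the pushout of $\ast \leftarrow \mC \vee \mD \to \mC\boxtimes \mD$. Hence the left side is the space of functors $\mC \boxtimes \mD \to \mE$ whose restrictions to $\mC \boxtimes \{\ast\}$ and $\{\ast\}\boxtimes \mD$ are constant at the base point of $\mE$. Here we use that the tensor unit of $\boxtimes$ is $\ast=\cube^0$, so $\mC\boxtimes\ast\simeq\mC$.

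\emph{Step 3: apply the unpointed adjunction.} By the adjunction $\mC\boxtimes(-) \dashv \Fun^\lax(\mC,-)$, a functor $\mC\boxtimes\mD\to\mE$ is the same as a functor $\mD \to \Fun^\lax(\mC,\mE)$. Both restriction conditions then translate as follows.

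1. Constancy on $\{\ast\}\boxtimes\mD$ means the composite $\mD \to \Fun^\lax(\mC,\mE)\to\Fun^\lax(\ast,\mE)\simeq \mE$, obtained by restricting along the base point $\ast\to\mC$, is constant at the base point. This holds by naturality of the adjunction in the first variable. Equivalently, $\mD$ maps into the fiber $\Fun^\lax_*(\mC,\mE)$.

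2. Constancy on $\mC\boxtimes\{\ast\}$ means the base point of $\mD$ is sent to the constant functor at the base point. That is precisely the base point of $\Fun_*^\lax(\mC,\mE)$, so the map is a based map.

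Since all the constructions involved are pullbacks and pushouts of mapping spaces, the identification holds as an equivalence of spaces, not just on components.

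\emph{Main obstacle.} The only nonformal point is the naturality used in item 1. We need the adjunction equivalence to be compatible with restriction along $\ast\to\mC$ in the first variable, i.e. that $\Fun^\lax(-,\mE)$ is functorial in $\mC$ compatibly with the unit $\ast\boxtimes\mD\simeq\mD$. This follows because the adjunction comes from the biclosed presentably monoidal structure of \cref{dense}, where the internal hom is a bifunctor. Alternatively, one may cite \cite[Lemma 3.69]{gepner2025oriented} on smash monoidal structures, which packages exactly this argument for any presentably monoidal category whose unit is final.

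\emph{Step 4: deduce (2).} Statement (2) follows either by the same argument with $\Fun^\oplax$, or by applying the monoidal involution $(-)^\op$ of \cref{dua}, which exchanges $\boxtimes$ and $\boxtimes^\rev$, together with \cref{grayhoms}.
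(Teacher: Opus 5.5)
Your argument is correct, but it differs from the paper's treatment, which has no argument of its own. The paper simply imports the statement from \cite[Lemma 2.6.11]{gepner2025oriented}. That is a different lemma from \cite[Lemma 3.69]{gepner2025oriented}, which you propose citing and which the paper uses only to define the smash monoidal structure.

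Your route does the work directly in the Gray case. You write the two sides as
\[
\Map(\mC\boxtimes\mD,\mE)\times_{\Map(\mC\vee\mD,\mE)}\{\mathrm{const}\}
\quad\text{and}\quad
\Map_{\infty\Cat_*}(\mD,\Fun^\lax_*(\mC,\mE)),
\]
and unfold the second one into the same iterated fiber product. The matching uses the adjunction $\mC\boxtimes(-)\dashv\Fun^\lax(\mC,-)$, naturally in both $\mC$ and $\mD$, together with $\mC\boxtimes\ast\simeq\mC$ and $\Fun^\lax(\ast,\mE)\simeq\mE$.

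The citation buys brevity. Your version is self-contained and makes visible the only non-formal input: the naturality of the Gray adjunction in the first variable, i.e.\ bifunctoriality of $\Fun^\lax$.

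One point deserves more care if you write it up. The ``constancy conditions'' in Step 2 are data, not properties. They amount to a single identification of the restriction along $\mC\vee\mD\to\mC\boxtimes\mD$ with the constant functor, so the two identifications must agree at the common base point. On the other side this corresponds exactly to the null-homotopy of $\mD\to\Fun^\lax(\mC,\mE)\to\mE$ together with the basing datum of the induced map into the fiber $\Fun^\lax_*(\mC,\mE)$. Your closing remark about fiber products of mapping spaces covers this, but that is where the actual bookkeeping happens.

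Part (2) follows either by the identical argument with $(-)\boxtimes\mC\dashv\Fun^\oplax(\mC,-)$ or via the involution $(-)^\op$, as you say.
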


By functoriality of the smash monoidal structure, \cref{dua} gives the following:

\begin{corollary}\label{dua2}
	
The involutions $$(-)^\op, (-)^\co: \infty\Cat_* \to \infty\Cat_* $$
induced by the involutions (\ref{eqinvo}) underlie monoidal involutions
$$(-)^\op, (-)^\co: (\infty\Cat_*, \wedge^\rev) \simeq (\infty\Cat_*, \wedge).$$
	
\end{corollary}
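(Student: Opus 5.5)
The corollary follows formally from \cref{dua}, which gives monoidal involutions $(-)^\op,(-)^\co:(\infty\Cat,\boxtimes^\rev)\simeq(\infty\Cat,\boxtimes)$, together with the functoriality of the smash monoidal construction of \cite[Lemma 3.69.]{gepner2025oriented}. The plan is to show that this construction, which produces $(\infty\Cat_*,\wedge)$ from $(\infty\Cat,\boxtimes)$, is natural in monoidal functors preserving the relevant structure. Applying it to the monoidal equivalences of \cref{dua} then yields monoidal equivalences on pointed objects, and these turn out to be the claimed involutions.

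First I would check that $(-)^\op$ and $(-)^\co$ preserve the final object $\ast$. This holds because they are equivalences of $\infty\Cat$, and equivalences preserve final objects. Consequently they restrict to involutions of $\infty\Cat_*\subset\Fun(\bD^1,\infty\Cat)$, acting objectwise on arrows $\ast\to\mC$. These restrictions are the involutions named in the statement.

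Next I would recall that the smash monoidal structure is characterized universally. It is the unique monoidal structure on $\infty\Cat_*$ for which the left adjoint $(-)_+=(-)\coprod\ast:\infty\Cat\to\infty\Cat_*$ is monoidal and the tensor preserves small colimits in each variable. Equivalently, $\infty\Cat_*\simeq\infty\Cat\otimes\infty\Grp_*$ in $\Pr^L$, and the smash structure is induced from $\boxtimes$. For any presentably monoidal category $\mV$ whose unit is not required to be final, the assignment $\mV\mapsto\mV_*$ is functorial in monoidal left adjoint functors preserving the final object. One can see this either from the description $\mV_*\simeq\mV\otimes\infty\Grp_*$ or from the formula $X\wedge Y=\mathrm{cofib}(X\vee Y\to X\boxtimes Y)$. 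The second route works because a colimit-preserving monoidal equivalence preserves wedges, cofibers and the canonical map $X\vee Y\to X\boxtimes Y$.

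Applying this functoriality to the monoidal equivalence $(-)^\op:(\infty\Cat,\boxtimes^\rev)\to(\infty\Cat,\boxtimes)$ gives a monoidal equivalence $((\infty\Cat)_*,\wedge_{\boxtimes^\rev})\to(\infty\Cat_*,\wedge)$, and likewise for $(-)^\co$. The remaining point, which is the only real, though minor, obstacle, is to identify the smash structure built from $\boxtimes^\rev$ with $\wedge^\rev$. I would verify this via the universal characterization. The reversed structure $\wedge^\rev$ makes $(-)_+:(\infty\Cat,\boxtimes^\rev)\to(\infty\Cat_*,\wedge^\rev)$ monoidal, since it is the reverse of a monoidal functor, and it preserves colimits in each variable. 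Uniqueness then gives the identification. Finally, the involution property, that is, the fact that the composite with itself is the identity, is inherited objectwise from \cref{dua}.
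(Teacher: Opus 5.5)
Your proposal is correct and follows the same route as the paper, which deduces the corollary in one line from \cref{dua} by functoriality of the smash monoidal construction. Your extra checks just make explicit what the paper leaves implicit: that the involutions preserve the final object, and that the smash structure built from $\boxtimes^\rev$ is $\wedge^\rev$ via the universal property of $(-)_+$. One small point: the involution identity should be transported by applying that same functoriality to the monoidal equivalence from \cref{dua}, not merely checked objectwise.
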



\section{The geometry of higher categories}

In this section we define the basic unstable homotopy theory of higher categories needed to develop stable homotopy theory of higher categories
in the next section.
We introduce categorical suspension and its reduced variant, reduced categorical suspension,
to study categorical spheres, the categorical analogue of spheres,
which are deeply connected to the geometry of configuration spaces. We use this connection to show a Freudenthal suspension theorem in the setting of higher categories.

We further introduce oriented categories, a natural framework to define oriented pullbacks,
which will become essential to formulate stability in the next section.


\subsection{Categorical spheres}

In this subsection we define (reduced) categorical suspension and categorical spheres.

\vspace{1mm}

By \cite[Definition 3.4.2.]{gepner2025oriented} there is an $\infty$-category satisfying the following:  

\begin{notation}\label{suspi}

The functor $\Mor: \infty\Cat_{\partial\bD^1/} \to \infty\Cat$
of (\cref{mormoro}) admits a left adjoint $S: \infty\Cat \to \infty\Cat_{\partial\bD^1/}$, the categorical suspension.

Let $\mC$ be an $\infty$-category. The $\infty$-category $S(\mC)$ has two objects $0,1$ and morphism $\infty$-categories: $$\Mor_{S(\mC)}(1,0)\simeq \emptyset, \ \Mor_{S(\mC)}(0,1)\simeq \mC, \ \Mor_{S(\mC)}(0,0)\simeq \Mor_{S(\mC)}(1,1) \simeq *. $$ 

	
\end{notation}

	




\begin{definition}\label{susa}
For every $\infty$-category $\mC$ the categorical antisuspension of $\mC$ is $$\bar{S}(\mC):=S(\mC^\co)^\co.$$
	
\end{definition}

Remark \ref{oppo} implies the following:

\begin{lemma}\label{ahoitos}
	
Let $\mC\in \infty\Cat_*$.
There is a canonical equivalence
$\bar{S}(\mC) \simeq S(\mC^{\co\op}).$
	
\end{lemma}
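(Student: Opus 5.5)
We want $\bar{S}(\mC)\simeq S(\mC^{\co\op})$. By \cref{susa}, $\bar{S}(\mC)=S(\mC^\co)^\co$, so it suffices to show that for every $\infty$-category $\mD$ there is a natural equivalence
$$S(\mD)^\co \simeq S(\mD^\op)$$
of $\infty$-categories, compatible with the two objects. Applying this to $\mD=\mC^\co$ gives $S(\mC^\co)^\co\simeq S((\mC^\co)^\op)=S(\mC^{\co\op})$.

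The plan is to obtain this by passing to left adjoints in the first commutative square of \cref{oppo}, which says $\Mor\circ(-)^\co\simeq (-)^\op\circ\Mor$ as functors $\infty\Cat_{\partial\bD^1/}\to\infty\Cat$. Both $(-)^\co$ on $\infty\Cat_{\partial\bD^1/}$ (it fixes $\partial\bD^1$, since the involution is the identity on spaces) and $(-)^\op$ on $\infty\Cat$ are involutions, so each is its own inverse and hence self-adjoint on both sides. Taking left adjoints of the square therefore gives
$$(-)^\co\circ S \simeq S\circ(-)^\op,$$
because the left adjoint of $\Mor\circ(-)^\co$ is $(-)^\co\circ S$ and the left adjoint of $(-)^\op\circ\Mor$ is $S\circ(-)^\op$. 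Evaluating at $\mD$ yields the required equivalence $S(\mD)^\co\simeq S(\mD^\op)$ in $\infty\Cat_{\partial\bD^1/}$.

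Alternatively, one can check the equivalence directly on the description in \cref{suspi}: by \cref{oppo}, $\Mor_{S(\mD)^\co}(0,1)\simeq \Mor_{S(\mD)}(0,1)^\op\simeq\mD^\op$, the other morphism objects are trivial or empty, and the objects are not permuted. This matches the description of $S(\mD^\op)$, although the adjunction argument is cleaner because it also supplies the comparison map.

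The only point needing care is the claim that $(-)^\co$ on $\infty\Cat_{\partial\bD^1/}$ does not swap the two objects, unlike $\sigma\circ(-)^\op$ in the second square of \cref{oppo}. This holds because $(-)^\co$ on $(\n+1)\Cat$ is defined as $((-)^\op_\n)_!$, which changes only the enrichment and leaves the underlying space of objects unchanged. Hence no permutation $\sigma$ appears, and the equivalence is one of $\infty$-categories under $\partial\bD^1$ (so also of based $\infty$-categories once a base point is chosen).
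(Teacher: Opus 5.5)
Your proposal is correct and matches the paper's argument. The paper obtains the lemma directly from \cref{oppo}, and that is exactly your route: pass to left adjoints in its first square to get $(-)^\co\circ S\simeq S\circ(-)^\op$ (with no swap of the two objects), then evaluate at $\mC^\co$.
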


The next proposition is \cite[Proposition 3.16.3.]{gepner2025oriented}:
\begin{proposition}\label{thas2}

For every $\infty$-category $\mC$ there is a canonical equivalence
$$ S(\mC) \simeq  \mC \boxtimes \bD^1 +_{ \mC \boxtimes \partial\bD^1} \partial\bD^1.$$

\end{proposition}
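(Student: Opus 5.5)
To prove the equivalence $S(\mC) \simeq \mC \boxtimes \bD^1 +_{\mC \boxtimes \partial\bD^1} \partial\bD^1$, I will compare universal properties. By \cref{suspi}, $S$ is left adjoint to $\Mor: \infty\Cat_{\partial\bD^1/} \to \infty\Cat$. So it is enough to show that the right-hand side $P(\mC)$ corepresents the same functor on $\infty\Cat_{\partial\bD^1/}$: for every $\infty$-category $\mD$ with two chosen objects $X,Y$ we want a natural equivalence
$$\Map_{\infty\Cat_{\partial\bD^1/}}(P(\mC),(\mD,X,Y)) \simeq \Map_{\infty\Cat}(\mC, \Mor_\mD(X,Y)).$$
Here $P(\mC)$ is pointed by the map $\partial\bD^1 \to P(\mC)$ coming from the pushout.

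By the pushout, the left side is the fiber over $(X,Y)$ of
$$\Map(\mC \boxtimes \bD^1,\mD) \to \Map(\mC \boxtimes \partial\bD^1,\mD)=\Map(\mC,\mD)^{2},$$
restricted to the component where both $\mC \to \mD$ are constant at $X$, respectively $Y$. Using the oplax adjunction, this fiber is the space of functors $\mC \to \Fun^\oplax(\bD^1,\mD)$ whose composites with the two evaluations are the constant functors at $X$ and $Y$. In other words it is the space of maps from $\mC$ into the fiber of $\Fun^\oplax(\bD^1,\mD) \to \mD \times \mD$ over $(X,Y)$. Fibers in $\infty\Cat$ are limits, and $\Map(\mC,-)$ preserves them, so this is legitimate.

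The key step is therefore the identification
$$\{(X,Y)\} \times_{\mD\times\mD} \Fun^\oplax(\bD^1,\mD) \simeq \Mor_\mD(X,Y),$$
naturally in $(\mD,X,Y)$. The plan for it is as follows. First, the objects agree: they are morphisms $X \to Y$. Next, a $k$-cell of the fiber is a functor $\cube^k \boxtimes \bD^1 = \cube^{k+1} \to \mD$ that is constant on the two faces $\cube^k \times\{0\}$ and $\cube^k \times \{1\}$. I will then use that $\cube^{k+1}$, with these faces collapsed, is the suspension of $\cube^k$; this holds because the oriented cube is defined inductively as a Gray product with $\bD^1$. Since the cubes are dense, by \cref{dense}, matching cells gives the equivalence, and this again yields the adjunction with $\Mor$.

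I expect the main obstacle to be this density/cube computation. The fact needed is exactly the statement that collapsing the end faces of $\cube^{k}\boxtimes\bD^1$ gives $S(\cube^k)$, i.e. the proposition itself for the cubes $\mC=\cube^k$. I would try to verify this for cubes directly, using the explicit description of oriented cubes from \cite{gepner2025oriented}. Then I would extend to all $\mC$. Both $S$ and $P$ preserve colimits (pushouts commute with colimits, and $\boxtimes$ is cocontinuous in each variable), and the comparison map is natural. So the equivalence propagates from cubes to every $\infty$-category. Note that $S$ preserves colimits as a functor into $\infty\Cat_{\partial\bD^1/}$ rather than into $\infty\Cat$, so the colimit extension has to be carried out in $\infty\Cat_{\partial\bD^1/}$.
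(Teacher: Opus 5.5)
Your first step is correct: computing maps out of the pushout shows that $P(\mC)=\mC\boxtimes\bD^1+_{\mC\boxtimes\partial\bD^1}\partial\bD^1$ is left adjoint to $R(\mD,X,Y)=\{(X,Y)\}\times_{\mD\times\mD}\Fun^\oplax(\bD^1,\mD)$. Hence the proposition is equivalent to $R\simeq\Mor$. However, this equivalence is exactly \cref{hom}, which the paper deduces \emph{from} the proposition; the proposition itself is imported from \cite[Proposition 3.16.3.]{gepner2025oriented}. So the reformulation runs the paper's deduction backwards and does not lower the difficulty.

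Everything then rests on the cube case, and there the proposal gives no argument. The sentence "this holds because the oriented cube is defined inductively as a Gray product with $\bD^1$" is circular. Knowing $\cube^{k+1}\simeq\cube^k\boxtimes\bD^1$ says nothing about the quotient by $\cube^k\boxtimes\partial\bD^1$. Identifying that quotient with $S(\cube^k)$ is precisely the proposition for $\mC=\cube^k$, as you concede a few lines later.

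The cube case is also not a routine check on the explicit description of the cubes. The pushout is formed in $\infty\Cat$, and collapsing a subobject to a point there is in general not computed as in strict $\omega$-categories. The paper's own \cref{freesp} shows this: $\bD^2/\partial\bD^2=S^2\simeq B^2\Free_{\bE_2}(*)$, so $\Omega^2(S^2)\simeq\coprod_{n\geq 0}\mathrm{Conf}_n(\bR^2)_{\Sigma_n}$ is a non-discrete space. The strict quotient, by contrast, is $B^2\bN$. So even if you know the strict quotient of the cube is the strict suspension, you still need an extra input: a pasting-type theorem guaranteeing that this particular collapse of $\cube^{k+1}$ is computed strictly in $\infty\Cat$. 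The proposal names none.

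The passage from cubes to all $\mC$ also needs more than objectwise equivalences $S(\cube^k)\simeq P(\cube^k)$. Two colimit-preserving functors out of $\infty\Cat$ agree only if their restrictions to $\cube$ are equivalent \emph{as functors}, that is, compatibly with all functors between cubes. You assert that "the comparison map is natural" but never construct a comparison transformation $S\to P$ or $P\to S$. Equivalently, you would need a natural functor between $R(\mD,X,Y)$ and $\Mor_\mD(X,Y)$ in one direction or the other, and producing one is not a formal matter.
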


\begin{notation}
Let $\mC, \mD$ be $\infty$-categories and $X,Y \in \mC, X',Y' \in \mD.$
Let $$ \Fun_{\partial\bD^1/}^\oplax((\mC,X,Y), (\mD,X',Y'))$$ be the fiber over $(X',Y') \in \mD \times \mD$ of the following functor evaluating at $(X,Y) \in \mC \times \mC$:
$$ \Fun^\oplax(\mC,\mD) \to \mD \times \mD.$$ 

\end{notation}




We obtain the following immediate corollary:

\begin{corollary}\label{hom}
	
For every $\infty$-category $\mC$ and $X,Y \in \mC$ there is a canonical 
equivalence
$$ \Mor_\mC(X,Y) \simeq \{X \} \times_\mC \Fun^\oplax(\bD^1,\mC) \times_\mC \{Y\}.$$

Moreover for every $\infty$-category $\mB$ there is a canonical equivalence
$$ \Fun^\oplax(\mB, \Mor_\mC(X,Y)) \simeq \Fun^\oplax_{\partial\bD^1/}(S(\mB),\mC).$$
	
\end{corollary}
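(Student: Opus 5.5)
The first equivalence follows by taking $\mB = *$ in the second. So the plan is to prove the second equivalence and then specialize.

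For the second statement I would compute both sides by Yoneda. Let $\mA$ be an $\infty$-category and compute maps out of $\mA$. By the Gray adjunction, a functor $\mA \to \Fun^\oplax(\mB,\Mor_\mC(X,Y))$ corresponds to a functor $\mA \boxtimes \mB \to \Mor_\mC(X,Y)$. Since $\Mor$ is right adjoint to $S$ (\cref{suspi}), this in turn corresponds to a functor $S(\mA\boxtimes\mB) \to \mC$ under $\partial\bD^1$ sending $0 \mapsto X$ and $1 \mapsto Y$.

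On the other side, a functor $\mA \to \Fun^\oplax_{\partial\bD^1/}(S(\mB),\mC)$ is by definition a functor $\mA \to \Fun^\oplax(S(\mB),\mC)$ whose composite to $\mC\times\mC$ is constant at $(X,Y)$. By adjunction this is a functor $\mA \boxtimes S(\mB) \to \mC$ whose restriction to $\mA \boxtimes \partial\bD^1 \simeq \mA \coprod \mA$ factors through $\partial\bD^1$ via $(X,Y)$. Equivalently, it is a functor out of the pushout
\[
P := \mA \boxtimes S(\mB) +_{\mA \boxtimes \partial\bD^1} \partial\bD^1
\]
under $\partial\bD^1$.

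The key step is therefore a natural equivalence $P \simeq S(\mA \boxtimes \mB)$ under $\partial\bD^1$. I would obtain it from \cref{thas2}. Since $\mA\boxtimes(-)$ preserves colimits,
\[
\mA \boxtimes S(\mB) \simeq \mA\boxtimes\mB\boxtimes\bD^1 +_{\mA\boxtimes\mB\boxtimes\partial\bD^1} \mA\boxtimes\partial\bD^1 .
\]
Pushing out further along $\mA\boxtimes\partial\bD^1 \to \partial\bD^1$ and composing pushouts gives
\[
P \simeq (\mA\boxtimes\mB)\boxtimes\bD^1 +_{(\mA\boxtimes\mB)\boxtimes\partial\bD^1} \partial\bD^1 ,
\]
which is $S(\mA\boxtimes\mB)$ by \cref{thas2} again. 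One has to check that the two maps from $\partial\bD^1$ agree under these identifications; both come from the endpoint inclusion $\partial\bD^1 \subset \bD^1$, so they match by construction.

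Combining the two computations, maps out of $\mA$ into either side correspond naturally to functors $S(\mA\boxtimes\mB)\to\mC$ under $\partial\bD^1$ with endpoints $(X,Y)$. The Yoneda lemma then yields the second equivalence.

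I expect the main obstacle to be making this naturality precise. One must work with the fibers over $\{(X,Y)\}$ coherently, rather than merely matching objects. The way to handle this is to phrase both sides as fibers of maps of mapping spaces, taken over $\Map(\mA\boxtimes\partial\bD^1,\mC)$ and $\Map(\partial\bD^1,\mC)$ respectively. The pushout description of $P$ identifies these fibers compatibly.

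The specialization $\mB=*$ uses $S(*)\simeq\bD^1$, which is \cref{thas2} applied to $\mC = *$, together with $\Fun^\oplax(*,\mD)\simeq\mD$. With these, the second equivalence becomes $\Mor_\mC(X,Y)\simeq \{X\}\times_\mC\Fun^\oplax(\bD^1,\mC)\times_\mC\{Y\}$, which is the first statement.
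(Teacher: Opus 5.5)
Your proof is correct and uses the same ingredients as the paper's: \cref{thas2}, the currying adjunction $(-)\boxtimes\mB \dashv \Fun^\oplax(\mB,-)$, the adjunction $S \dashv \Mor$, and the Yoneda lemma. The only difference is the order of the argument. The paper first identifies $\Fun^\oplax(\mB,\{X\}\times_\mC\Fun^\oplax(\bD^1,\mC)\times_\mC\{Y\})$ with $\Fun^\oplax_{\partial\bD^1/}(S(\mB),\mC)$ internally, so it needs \cref{thas2} only for $\mB$ and no auxiliary test object $\mA$; it then passes to maximal subspaces to get the first equivalence and reads off the second. You instead prove the second equivalence directly and specialize to $\mB=*$.
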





\begin{proof}

By Proposition \ref{thas2} for every $\infty$-category $\mB$ there is a canonical equivalence
$$ \Fun^\oplax(\mB, \ast \times_{(\mC \times \mC)} \Fun^\oplax(\bD^1,\mC)) \simeq $$$$
\ast \times_{\Fun^\oplax(\mB,\mC) \times \Fun^\oplax(\mB,\mC)} \Fun^\oplax(\mB,\Fun^\oplax(\bD^1,\mC)) \simeq $$$$
* \times_{(\mC \times \mC)}(\mC \times \mC) \times_{\Fun^\oplax(\mB \boxtimes \partial \bD^1, \mC)} \Fun^\oplax(\mB \boxtimes \bD^1,\mC) \simeq $$$$ * \times_{\Fun_{}^\oplax(\partial \bD^1,\mC)}\Fun^\oplax(\partial \bD^1 +_{\mB \boxtimes \partial\bD^1} \mB \boxtimes \bD^1,\mC)$$
$$ \simeq \Fun_{\partial\bD^1/}^\oplax(\partial \bD^1 +_{\mB \boxtimes \partial\bD^1} \mB \boxtimes \bD^1,\mC) \simeq \Fun_{\partial\bD^1}^\oplax(S(\mB),\mC).$$
Passing to maximal subspaces the result follows from the universal property of $S(\mC)$
of Definition \ref{suspi}.	
\end{proof}

\begin{definition}Let $\n \geq 0$.
The $\n$-disk (or walking $\n$-morphism) is $\bD^\n:= S^{n}(*).$
	
\end{definition}

\begin{definition}Let $\n \geq 0$.
The boundary of the $\n$-disk is $\partial\bD^\n:= S^{n}(\emptyset).$
	
\end{definition}

\begin{remark}
The functor $\emptyset \subset *$ induces inclusions $\partial\bD^\n \subset \bD^\n$ for every $\n \geq 0.$
	
\end{remark}

\begin{example}
Then $\partial\bD^0=\ast, \partial\bD^1=S(\emptyset)=*\coprod*$ is the set with two elements.	
\end{example}
	
\begin{definition}
Let $\infty\Cat^\fin \subset \infty\Cat$ be the full subcategory generated by the disks under finite colimits. We call objects of $\infty\Cat^\fin$ finite $\infty$-categories.
	
\end{definition}	

	
	

	








\begin{notation}Let $\mC\in \infty\Cat_*$.
	
\begin{enumerate}
\item The reduced categorical suspension of $\mC$ is
$$\Sigma(\mC):= S(\mC) +_{\bD^1} \bD^0.$$ 	
		
\item The reduced categorical antisuspension of $\mC$ is
$$\bar{\Sigma}(\mC):= \bar{S}(\mC) +_{\bD^1} \bD^0.$$ 		
\end{enumerate}
	
\end{notation}

We often call the reduced categorical (anti)suspension simply (anti)suspension.

\begin{remark}\label{tttj}
	
Let $\mC\in \infty\Cat_*$. By Definition \ref{susa} there is a canonical equivalence
$$\bar{\Sigma}(\mC) \simeq \Sigma(\mC^\co)^\co.$$ 			
	
\end{remark}

\cref{ahoitos} implies the following:

\begin{corollary}\label{ahoit}
	
Let $\mC\in \infty\Cat_*$.
There is a canonical equivalence
$$\bar{\Sigma}(\mC) \simeq \Sigma(\mC^{\co\op}).$$
	
\end{corollary}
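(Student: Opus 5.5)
Combining \cref{ahoitos} with the definitions of the reduced suspensions gives the equivalence directly. By definition $\bar{\Sigma}(\mC) = \bar{S}(\mC) +_{\bD^1} \bD^0$, and \cref{ahoitos} gives $\bar{S}(\mC) \simeq S(\mC^{\co\op})$. The plan is to substitute this into the pushout, which yields
$$\bar{\Sigma}(\mC) \simeq S(\mC^{\co\op}) +_{\bD^1} \bD^0 = \Sigma(\mC^{\co\op}).$$
The only thing that needs checking is that the two pushouts are formed along the same functor $\bD^1 \to S(\mC^{\co\op})$, after transport along the equivalence of \cref{ahoitos}.

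The functor $\bD^1 \to S(\mC)$ used in the reduced suspension is $S$ applied to the base point $* \to \mC$, since $\bD^1 = S(*)$. For the antisuspension, the functor $\bD^1 \to \bar{S}(\mC) = S(\mC^\co)^\co$ is $\bar{S}$ applied to $* \to \mC$. That is, one applies $(-)^\co$ to $S(*) \to S(\mC^\co)$, which is $S$ applied to the base point of $\mC^\co$, and then identifies $\bar{S}(*) = S(*)^\co$ with $\bD^1$. This identification holds because $S(*)^\co \simeq S(*^\op) = S(*)$ by \cref{oppo}. Since the equivalence $\bar{S}(\mC) \simeq S(\mC^{\co\op})$ of \cref{ahoitos} is natural in $\mC$, one obtains a commutative square relating $\bar{S}(*) \to \bar{S}(\mC)$ and $S(*) \to S(\mC^{\co\op})$. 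The map $S(*) \to S(\mC^{\co\op})$ is induced by the base point of $\mC^{\co\op}$, which is the image of the base point of $\mC$. Pushouts along equivalent spans are equivalent, which gives the claim.

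The main point requiring care is naturality. I would need the equivalence of \cref{ahoitos} to be natural in $\mC$ and to restrict on $\mC = *$ to the identification $\bD^1 \simeq \bar{S}(*)$ used to define $\bar{\Sigma}$. Equivalently, it must respect the two marked objects $0,1$. The equivalence comes from \cref{oppo}, where $\sigma$ swaps the two objects under $(-)^\op$. So I would check that the composite of the $\co$ and $\op$ identifications of \cref{oppo} yields the identity on $\partial\bD^1$ rather than the swap. Even if a swap occurred, it would be harmless: the $\bD^1 \to \bD^0$ collapse is symmetric in the endpoints, and the resulting pushout only depends on the image of $\bD^1$.
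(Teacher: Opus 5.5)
Your proof is correct and follows the paper's approach. The paper deduces the corollary directly from \cref{ahoitos} by substituting $\bar{S}(\mC)\simeq S(\mC^{\co\op})$ into the pushout defining $\bar{\Sigma}$, and your naturality check fills in what it leaves implicit. That equivalence comes from passing to left adjoints in the first square of \cref{oppo}, so it is natural in $\mC$ and compatible with the base-point maps from $\bD^1\simeq S(*)$. Your worry about a swap is unfounded, and the fallback argument for it is not needed: only the $(-)^\co$-square of \cref{oppo} is used, it involves no $\sigma$, and so the equivalence is one under $\partial\bD^1$ preserving the objects $0,1$.
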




\begin{definition}
Let $\Omega: \infty\Cat_{*} \to \infty\Cat$
be the composition $$ \infty\Cat_{*} \to \infty\Cat_{\partial\bD^1/} \xrightarrow{\Mor}\infty\Cat, \ (\mC,X) \mapsto \Mor_\mC(X,X), $$
where the first functor is induced by the functor $\partial\bD^1 \to *.$

The functor $\Omega$ preserves the final object and so lifts to a functor $\Omega: \infty\Cat_{*} \to \infty\Cat_*.$

\end{definition}

\begin{remark}

There is an adjunction $$ \Sigma : \infty\Cat_* \rightleftarrows \infty\Cat_*: \Omega.$$

\end{remark}

\cref{homfil} implies the following:

\begin{remark}\label{endfil}
The functor $\Omega: \infty\Cat_{*} \to \infty\Cat_*$
preserves small filtered colimits.

\end{remark}

\begin{definition}\label{conn} Let $\n \geq 0$.
An $\infty$-category $\mC$ is $\n$-connected if the underlying space of $\mC$ is $\n$-connected and all morphism $\infty$-categories are $\n$-1-connected.
Every $\infty$-category is -1-connected. 

\end{definition}

\begin{remark}
Let $\n \geq 0$. By induction an $\infty$-category is $\n$-connected if it is $\n$+1-connected.

\end{remark}

\cite[Theorem 6.3.2.]{GEPNER2015575} implies the following proposition:

\begin{proposition}\label{hhnl}
The functor $\Omega: \infty\Cat_* \to \infty\Cat_*$ lifts to a
functor $$\Omega: \infty\Cat_* \to \Mon(\infty\Cat),$$ 
which admits a fully faithful left adjoint $$B:\Mon(\infty\Cat) \to \infty\Cat_* $$ that preserves finite products.
The essential image of $B$ precisely consists of the connected $\infty$-categories with distinguished object.

\end{proposition}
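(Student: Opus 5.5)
The plan is to deduce \cref{hhnl} from the general theory of enriched delooping in \cite[Theorem 6.3.2.]{GEPNER2015575}, using the self-enrichment $\infty\Cat \simeq \infty\Cat\mathrm{-}\Cat$ of \cref{fix}. Under this equivalence, an object of $\infty\Cat_*$ is an $\infty\Cat$-enriched category $\mC$ with a distinguished object $X$. Since $\infty\Cat$ carries the cartesian structure, the endomorphism object $\Mor_\mC(X,X)$ is an associative monoid in $(\infty\Cat,\times)$ via composition. This composition structure is what lifts $\Omega$ to a functor $\infty\Cat_* \to \Mon(\infty\Cat)$. Concretely, $\Omega$ is the composite of the restriction of $\mC$ to the full enriched subcategory on $X$ with the identification of one-object $\infty\Cat$-enriched categories with $\Mon(\infty\Cat)$.

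The key steps are as follows.

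\begin{enumerate}
\item Invoke \cite[Theorem 6.3.2.]{GEPNER2015575} for $\mV=\infty\Cat$ with the cartesian structure. It identifies $\Mon(\mV)$ with the category of pointed $\mV$-enriched categories whose unique object is essentially surjectively hit by the point. This gives the fully faithful left adjoint $B$: the enriched category with one object and endomorphism object $M$, pointed by that object.
\item The unit $M \to \Omega B M$ is an equivalence, since $\Mor_{BM}(*,*)\simeq M$. This gives full faithfulness.
\item The counit $B\Omega\mC \to \mC$ is the inclusion of the full subcategory on the base point. It is an equivalence exactly when every object of $\mC$ is equivalent to the base point.
\item Identify this condition with $0$-connectedness in the sense of \cref{conn}. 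The underlying space of $\mC$ is connected precisely when all objects are equivalent. The condition on morphism $\infty$-categories is vacuous, because every $\infty$-category is $-1$-connected.
\end{enumerate}

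For preservation of finite products, $B(M\times N)$ has one object with endomorphisms $M\times N$. Products in $\infty\Cat\mathrm{-}\Cat$ are computed objectwise on objects and by products on morphism objects. Hence $BM\times BN$ also has a single object up to equivalence and endomorphism object $M\times N$, so $B(M\times N)\simeq BM\times BN$. The final object is likewise preserved, since $B(*)\simeq *$.

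The main obstacle is the first point: matching the pointed-category side of \cite[Theorem 6.3.2.]{GEPNER2015575}, which is phrased for enriched categories with a map from the unit enriched category, with $\infty\Cat_*$ as defined here. This requires checking that the equivalence $\infty\Cat\simeq\infty\Cat\mathrm{-}\Cat$ of \cref{fix} is compatible with taking underlying spaces of objects. That compatibility holds by construction of the limit definition, since $\iota_0$ is preserved in each stage.
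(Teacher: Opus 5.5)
Your proposal is correct and is essentially the paper's argument. The paper obtains the statement directly from \cite[Theorem 6.3.2.]{GEPNER2015575} applied to the cartesian self-enrichment $\infty\Cat\simeq\infty\Cat\mathrm{-}\Cat$, and your identification of the essential image with the $0$-connected objects of \cref{conn} and your finite-product check (most cleanly: $B$ is fully faithful and its essential image is closed under finite products, since $\iota_0$ and $\Mor$ preserve limits) make explicit what that citation leaves implicit.
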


\begin{notation}
For every $\n \geq 0$ let $ \Mon_{\bE_\n}(\infty\Cat)$
be the category of $\bE_\n$-monoids \cite[Definition 5.1.0.4.]{lurie.higheralgebra} in $\infty\Cat$.

\end{notation}

\begin{remark}

By \cite[Theorem 5.1.2.2]{lurie.higheralgebra} for every $\n \geq 1$ there is a canonical equivalence $$ \Mon_{\bE_\n}(\infty\Cat) \simeq \Mon_{\bE_{\n-1}}(\Mon(\infty\Cat)).$$	

\end{remark}

\begin{corollary}\label{deloop} Let $\n \geq 1.$ There is an adjunction
$$B^\n: \Mon_{\bE_\n}(\infty\Cat) \rightleftarrows \infty\Cat_*: \Omega^\n$$
whose right adjoint lifts $\Omega^\n: \infty\Cat_* \to \infty\Cat_*$ and whose left adjoint preserves finite products and induces an equivalence to the full subcategory of $\n-1$-connected $\infty$-categories with distinguished object.

\end{corollary}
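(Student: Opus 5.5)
The plan is to induct on $\n \geq 1$. The base case $\n=1$ is \cref{hhnl}, and each step uses the equivalence $\Mon_{\bE_\n}(\infty\Cat) \simeq \Mon_{\bE_{\n-1}}(\Mon(\infty\Cat))$.

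\textbf{Construction of the adjunction.} Suppose we have already built
$$B^{\n-1}: \Mon_{\bE_{\n-1}}(\infty\Cat) \rightleftarrows \infty\Cat_*: \Omega^{\n-1},$$
with the left adjoint preserving finite products. Both functors in \cref{hhnl} preserve finite products: $\Omega$ because it is a right adjoint, and $B$ by that proposition. Hence they induce an adjunction on $\bE_{\n-1}$-monoid objects,
$$\Mon_{\bE_{\n-1}}(\Mon(\infty\Cat)) \rightleftarrows \Mon_{\bE_{\n-1}}(\infty\Cat_*).$$
Since the final object of $\infty\Cat_*$ is $\ast$ and finite products are computed in $\infty\Cat$, the forgetful functor $\Mon_{\bE_{\n-1}}(\infty\Cat_*) \to \Mon_{\bE_{\n-1}}(\infty\Cat)$ is an equivalence. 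We then compose with the adjunction for $\n-1$. The right adjoint is $\Omega^{\n-1}\circ\Omega = \Omega^\n$ on underlying objects, and the left adjoint, a composite of finite-product-preserving functors, preserves finite products.

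\textbf{Full faithfulness.} Applying $\Mon_{\bE_{\n-1}}(-)$ to a fully faithful finite-product-preserving functor keeps it fully faithful. Thus $B^\n$ is fully faithful, given the inductive hypothesis that $B^{\n-1}$ is fully faithful.

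\textbf{Essential image.} We show that $X \in \infty\Cat_*$ lies in the essential image if and only if $X$ is $(\n-1)$-connected.

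For the forward direction, write $B^\n(M) = B^{\n-1}(BM)$, where $BM$ denotes the $\bE_{\n-1}$-monoid in $\infty\Cat_*$ obtained from $M$ by $B$ objectwise. By \cref{hhnl}, the underlying object of $BM$ is connected. We need that $B^{\n-1}$ applied to an $\bE_{\n-1}$-monoid with connected underlying object is $(\n-1)$-connected. It suffices to prove the more general claim that $B^{\n-1}$ raises connectivity by $\n-1$: if the underlying object $N$ is $k$-connected, then $B^{\n-1}N$ is $(k+\n-1)$-connected. By induction this reduces to showing that $B$ of a $k$-connected monoid is $(k+1)$-connected. This holds by \cref{conn}: $BN$ has a single object up to equivalence, its maximal subspace is $B$ of the maximal subspace of $N$, and its only endomorphism $\infty$-category is $N$. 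The one point to check is that the maximal subspace of $BN$ is the classical delooping of the maximal subspace of $N$ (a grouplike issue). I expect this to follow from the explicit description of $B$ in \cite{GEPNER2015575} as the one-object enriched category.

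For the converse, let $X$ be $(\n-1)$-connected. In particular it is connected, so $X \simeq B\Omega X$ by \cref{hhnl}, and $\Omega X$ is $(\n-2)$-connected. By induction, $\Omega X$, together with its $\bE_{\n-1}$-structure obtained from $\Omega^{\n-1}$-iteration, lies in the image. More cleanly, the counit $B^\n\Omega^\n X \to X$ factors as
$$B(B^{\n-1}\Omega^{\n-1}\Omega X) \to B\Omega X \to X,$$
and both maps are equivalences by the inductive hypothesis and \cref{hhnl}. Here one uses that the counits of the induced adjunction on monoid objects are computed on underlying objects. I expect this compatibility of units and counits with the passage to $\Mon_{\bE_{\n-1}}$ to be the main technical obstacle, though it is formal.
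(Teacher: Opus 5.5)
Your proof is correct and follows the paper's induction via Dunn additivity and \cref{hhnl}. The difference is the order in which you compose the two adjunctions. The paper applies $\Mon(-)$ to the inductively given adjunction and puts the single delooping on the outside, $B^{n}=B\circ\Mon(B^{n-1})$. You apply $\Mon_{\bE_{n-1}}(-)$ to the base adjunction and put $B^{n-1}$ outside, $B^n=B^{n-1}\circ\Mon_{\bE_{n-1}}(B)$.

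The paper simply asserts the essential image. Your connectivity bookkeeping, that each $B$ raises connectivity by one, supplies the missing argument. The grouplike point you flag is harmless. You only apply $B$ to monoids $N$ that are at least $0$-connected, so $\iota_0(N)$ is connected and hence grouplike. Therefore the maximal subspace of $BN$, which is $B$ of the units of $\iota_0(N)$, is $B\iota_0(N)$.

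With the paper's ordering this step is a single check. You only need that $X$ is $(n-1)$-connected iff $X$ is connected and $\Omega X$ is $(n-2)$-connected. You also need that $\Mon(B^{n-1})$ has essential image the monoids with $(n-2)$-connected underlying object.

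One slip: the factorization $B(B^{n-1}\Omega^{n-1}\Omega X)\to B\Omega X\to X$ you write belongs to the paper's ordering, not yours. With your construction the counit factors in two steps. First comes $B^{n-1}$ applied to the counit of $\Mon_{\bE_{n-1}}(B)\dashv\Mon_{\bE_{n-1}}(\Omega)$ at $\Omega^{n-1}X$. Then comes the counit of $B^{n-1}\dashv\Omega^{n-1}$ at $X$. The first map is an equivalence because counits on monoids are computed underlying and $\Omega^{n-1}X$ is connected. The second is an equivalence by the inductive hypothesis, since $X$ is in particular $(n-2)$-connected. So your argument goes through unchanged.
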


\begin{proof}We proceed by induction on $\n \geq 1.$
The case $\n=1$ is \cref{hhnl}.
The statement for $n$ gives rise to an adjunction
$$B^\n: \Mon_{\bE_{\n+1}}(\infty\Cat)\simeq \Mon(\Mon_{\bE_\n}(\infty\Cat)) \rightleftarrows \Mon(\infty\Cat_*) \simeq \Mon(\infty\Cat): \Omega^\n$$
whose right adjoint lifts $\Omega^\n: \infty\Cat_* \to \infty\Cat_*$.
The left adjoint of the composed adjunction
$$B^{\n+1}: \Mon_{\bE_{\n+1}}(\infty\Cat)\simeq \Mon(\Mon_{\bE_\n}(\infty\Cat)) \rightleftarrows \Mon(\infty\Cat) \rightleftarrows \infty\Cat_*: \Omega^{\n+1}$$
preserves finite products and induces an equivalence to the full subcategory of $\n$-connected $\infty$-categories with distinguished object.

\end{proof}

\begin{corollary}\label{connel} Let $\n \geq 0.$
The full subcategory of $\infty\Cat_*$ of $\n$-1-connected $\infty$-categories with distinguished object is generated under small colimits by the essential image of $\Sigma^\n: \infty\Cat_* \to \infty\Cat_*.$
	
\end{corollary}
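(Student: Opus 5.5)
The plan is to deduce the statement from \cref{deloop}, which for $\n \geq 1$ identifies the full subcategory $\mathcal{C}_{\n-1} \subset \infty\Cat_*$ of $\n$-1-connected $\infty$-categories with distinguished object with the essential image of $B^\n: \Mon_{\bE_\n}(\infty\Cat) \to \infty\Cat_*$. The case $\n=0$ is trivial: every $\infty$-category is $-1$-connected, and $\Sigma^0$ is the identity. So fix $\n \geq 1$. First I check that $\Sigma^\n$ lands in $\mathcal{C}_{\n-1}$. For this I would note that $\Sigma(\mC) = S(\mC) +_{\bD^1} \bD^0$ has a single object, and its morphism $\infty$-category is the free monoid on $\mC$ (relative to the base point). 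Inductively, $\Sigma^{\n}$ then raises connectivity by one at each stage. A cleaner way to phrase this is via adjunction: $\Sigma^\n \simeq B^\n \circ \Free_{\bE_\n}$, where $\Free_{\bE_\n}$ denotes the free $\bE_\n$-monoid in based $\infty$-categories. This identification holds because $\Omega^\n$ factors as $\Omega^\n : \infty\Cat_* \to \Mon_{\bE_\n}(\infty\Cat)$ followed by the forgetful functor, and $\Sigma^\n$ is the left adjoint of $\Omega^\n$. In particular the essential image of $\Sigma^\n$ lies in the essential image of $B^\n$, which is $\mathcal{C}_{\n-1}$.

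Next I show that $\mathcal{C}_{\n-1}$ is closed under small colimits in $\infty\Cat_*$, and that every object of it is a colimit of objects in the image of $\Sigma^\n$. Since $B^\n$ is a fully faithful left adjoint, its essential image is closed under all small colimits that exist in $\infty\Cat_*$: a colimit of objects $B^\n(M_i)$ is $B^\n(\colim M_i)$, because $B^\n$ preserves colimits. So it suffices to show that $\Mon_{\bE_\n}(\infty\Cat)$ is generated under small colimits by free $\bE_\n$-monoids on based $\infty$-categories; applying $B^\n$ then gives the claim. That generation statement follows from the monadicity of $\Mon_{\bE_\n}(\infty\Cat) \to \infty\Cat_*$ (over based objects, as in \cite{lurie.higheralgebra}). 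Concretely, every algebra $M$ is the geometric realization of its bar resolution $\Free^{\bullet+1}(M)$, which is a colimit of free algebras. Transporting along $B^\n$, every $X \in \mathcal{C}_{\n-1}$ is the colimit of the simplicial diagram $\Sigma^\n(\Omega^\n \Sigma^\n)^{\bullet}\Omega^\n X$.

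The main obstacle is justifying the identification $\Sigma^\n \simeq B^\n \circ \Free_{\bE_\n}$ with the correct base-pointedness. Here one must match unital $\bE_\n$-monoids in $(\infty\Cat,\times)$ with the algebras for the monad $\Omega^\n\Sigma^\n$ restricted appropriately. I would handle this by writing the adjunction $\Sigma^\n \dashv \Omega^\n$ as the composite of $B^\n \dashv \Omega^\n$ from \cref{deloop} with the free–forgetful adjunction $\infty\Cat_* \rightleftarrows \Mon_{\bE_\n}(\infty\Cat)$. That free–forgetful adjunction exists by presentability, since the forgetful functor preserves limits and filtered colimits. Uniqueness of adjoints then gives the identification. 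Once it is in place, the resolution argument goes through, and no further input is needed: the resolution only uses that $B^\n$ preserves colimits and that the comparison map from the geometric realization is an equivalence on algebras.
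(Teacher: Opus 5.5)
Your proposal is correct and follows essentially the same route as the paper. The paper combines \cref{deloop} with the factorization of $\Sigma^\n$ as the free $\bE_\n$-monoid functor on based $\infty$-categories followed by $B^\n$, and with the fact that $\Mon_{\bE_\n}(\infty\Cat)$ is generated under small colimits by free objects. You spell out details the paper leaves implicit: uniqueness of adjoints for the factorization, closure of the essential image of $B^\n$ under colimits, the bar resolution, and the trivial case $\n=0$.
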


\begin{proof}
The category $ \Mon_{\bE_\n}(\infty\Cat)$ is generated under small colimits by the free functor
$\infty\Cat_* \to  \Mon_{\bE_\n}(\infty\Cat)$. So the result follows from \cref{deloop} and that the functor $\Sigma^\n: \infty\Cat_* \to \infty\Cat_*$ factors as the free functor followed by $B^\n.$	
	
\end{proof}


Next we define categorical spheres.

\begin{definition} 
Let $\n \geq 0$. The categorical $\n$-sphere $S^\n$ is the cofiber of the functor $\partial\bD^\n \subset \bD^\n$ in $\infty\Cat$.
\end{definition}

\begin{remark}By definition the categorical $\n$-sphere is an $\n$-category with distinguished object.

\end{remark}

\begin{notation}Let $\n \geq 0$. Let $$ \Free_{\bE_\n}: \infty\Cat \to \Mon_{\bE_\n}(\infty\Cat)$$ be the free functor.
 
\end{notation}

\begin{proposition}\label{freesp} Let $\n \geq 0$. There is an equivalence of $\n$-categories with distinguished object $$S^\n \simeq B^\n \Free_{\bE_\n}(*).$$ In particular, there is an equivalence $S^1 \simeq B\bN$ of categories with distinguished object. 

\end{proposition}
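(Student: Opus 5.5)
We argue by induction on $\n$, exhibiting $S^\n$ as the $\n$-fold reduced suspension of $S^0$ and combining this with \cref{deloop}. For $\n=0$ both sides are the two-point space $S^0 = \bD^0 \coprod \bD^0$ pointed at one of its points. Indeed $\Free_{\bE_0}(*)$ is the free pointed object on a point, namely $* \coprod *$, and $B^0$ is the identity.

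The first key step is a canonical equivalence $S^{\n+1} \simeq \Sigma(S^\n)$ of based $\infty$-categories. Since $S$ is a left adjoint $\infty\Cat \to \infty\Cat_{\partial\bD^1/}$, it preserves pushouts computed in $\infty\Cat_{\partial\bD^1/}$. Applying $S$ to the pushout $S^\n = \bD^\n +_{\partial\bD^\n} \bD^0$ gives
$$S(S^\n) \simeq \bD^{\n+1} +_{\partial\bD^{\n+1}} S(\bD^0) = \bD^{\n+1} +_{\partial\bD^{\n+1}} \bD^1.$$
Here we use $S(\bD^\n)=\bD^{\n+1}$ and $S(\partial\bD^\n)=\partial\bD^{\n+1}$, and that pushouts under $\partial\bD^1$ are computed in $\infty\Cat$ because the diagram is connected. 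Collapsing $\bD^1$ to $\bD^0$ then yields
$$\Sigma(S^\n) \simeq \bD^{\n+1} +_{\partial\bD^{\n+1}} \bD^0 = S^{\n+1}.$$
The pointings must be tracked so that this is an equivalence in $\infty\Cat_*$, but that is routine.

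The second key step uses the adjunctions. The reduced suspension $\Sigma$ is left adjoint to $\Omega$, and by \cref{deloop} $\Omega^\n$ lifts to $\Mon_{\bE_\n}(\infty\Cat)$ with left adjoint $B^\n$. Hence $\Sigma^\n \simeq B^\n \circ \Free$, where $\Free$ is left adjoint to the forgetful functor $\Mon_{\bE_\n}(\infty\Cat) \to \infty\Cat_*$. This factorization is the one already invoked in the proof of \cref{connel}, and it follows because the right adjoints agree: the forgetful functor composed with the lift of $\Omega^\n$ is $\Omega^\n$.

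Combining the two steps, $S^\n \simeq \Sigma^\n(S^0) \simeq B^\n(\Free(S^0))$. The free $\bE_\n$-monoid on the based object $S^0 = *_+$ is the free $\bE_\n$-monoid on the unbased object $*$, since freely adding a basepoint is left adjoint to forgetting it. This gives $B^\n \Free_{\bE_\n}(*)$.

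The result is an $\n$-category by \cref{lao}-type closure, since $\bD^\n$ and $\partial\bD^\n$ are $\n$-categories and $\n\Cat$ is closed under colimits. For $\n=1$ we have $\Free_{\bE_1}(*) = \coprod_k *^{\times k} = \bN$, so $S^1 \simeq B\bN$.

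The main obstacle is the first step, namely checking that $S$ sends the pointed cofiber to the right pushout. The issue is bookkeeping in $\infty\Cat_{\partial\bD^1/}$ versus $\infty\Cat_*$ and identifying $S(\bD^0)=\bD^1$ compatibly with the distinguished points. I would handle this by writing everything as colimits in $\infty\Cat_{\partial\bD^1/}$ and using that the forgetful functor to $\infty\Cat$ preserves contractible colimits.
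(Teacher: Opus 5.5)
Your proof is correct and is essentially the paper's argument stated on the level of objects rather than corepresented functors. The paper shows $\Map_{\infty\Cat_*}(S^n,(\mC,X))\simeq\Map_{\infty\Cat_*}(S^{n-1},\Omega(\mC,X))$ from the cofiber description of $S^n$ and the adjunction $S\dashv\Mor$; this is your $S^n\simeq\Sigma S^{n-1}$. It then shows that $B^n\Free_{\bE_n}(*)$ corepresents $\iota_0\Omega^n$ via \cref{deloop}, which is your $\Sigma^n\circ(-)_+\simeq B^n\circ\Free_{\bE_n}$, recorded later in the paper as \cref{freespar}.
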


\begin{proof}

(1): Let $(\mC,X) \in \infty\Cat_*.$ There is a canonical equivalence $$
\zeta: \Map_{\infty\Cat_*}(S^\n,(\mC,X)) \simeq $$$$ \{X \}\times_{\iota_0(\mC)} \Map_{\infty\Cat}(S^\n,\mC) \simeq $$$$\{X\} \times_{\iota_0(\mC)} (\iota_0(\mC) \times_{ \Map_{\infty\Cat}(\partial\bD^\n,\mC)} \Map_{\infty\Cat}(\bD^\n,\mC)) \simeq$$$$
\{X\} \times_{ \Map_{\infty\Cat}(\partial\bD^\n,\mC)}\Map_{\infty\Cat}(\bD^\n,\mC)).$$

There is a canonical equivalence
$$\{X\} \times_{ \Map_{\infty\Cat}(\partial\bD^\n,\mC)}\Map_{\infty\Cat}(\bD^\n,\mC))\simeq $$$$ \{X\} \times_{ \Map_{\infty\Cat_{\partial\bD^1/}}(\partial\bD^\n,(\mC,X,X))} \Map_{\infty\Cat_{\partial\bD^1/}}(\bD^\n,(\mC,X,X)) \simeq $$
$$ \{X\} \times_{ \Map_{\infty\Cat}(\partial\bD^{\n-1},\Mor_\mC(X,X))} \Map_{\infty\Cat}(\bD^{\n-1}, \Mor_\mC(X,X)). $$

Equivalence $\zeta$ gives rise to a canonical equivalence
$$ \{X\} \times_{ \Map_{\infty\Cat}(\partial\bD^{\n-1},\Mor_\mC(X,X))} \Map_{\infty\Cat}(\bD^{\n-1}, \Mor_\mC(X,X)) \simeq$$$$ \Map_{\infty\Cat_*}(S^{\n-1},( \Mor_\mC(X,X),\id_X)). $$

We obtain an equivalence $$ \Map_{\infty\Cat_*}(S^\n,(\mC,X)) \simeq \Map_{\infty\Cat_*}(S^{\n-1},( \Mor_\mC(X,X),\id_X)). $$

There is a canonical equivalence $$ \Map_{\infty\Cat_*}(B^\n \Free_{\bE_\n}(*),(\mC,X)) \simeq
\Map_{\Mon_{\bE_\n}(\infty\Cat)}(\Free_{\bE_\n}(*),\Omega^\n(\mC,X)) \simeq \iota_0(\Omega^\n(\mC,X)).$$
This proves the case $\n=1.$ The latter equivalence specializes to the following one:
$$\Map_{\infty\Cat_*}(B^{\n-1} \Free_{\bE_{\n-1}}(*),(\Mor_\mC(X,X),\id_X)) \simeq \iota_0(\Omega^{\n-1}(\Mor_\mC(X,X),\id_X)) \simeq \iota_0(\Omega^\n(\mC,X)).$$
Hence there is the following equivalence and the statement follows by induction on $\n \geq 1: $ $$\Map_{\infty\Cat_*}(B^\n \Free_{\bE_\n}(*),(\mC,X)) \simeq
\Map_{\infty\Cat_*}(B^{\n-1} \Free_{\bE_{\n-1}}(*),(\Mor_\mC(X,X),\id_X)).$$

\end{proof}

\begin{corollary}\label{autoeq} Let $\n \geq \m \geq 0.$

\begin{enumerate}
\item There is a canonical isomorphism of monoids $$\pi_0(\Map_{\infty\Cat_*}(S^\n,S^\n)) \cong \bN.$$

\item There is a canonical isomorphism $$\pi_0(\Map_{\infty\Cat_*}(S^\n,S^\m)) \cong *.$$

\item There is a canonical isomorphism $$\pi_0(\Map_{\infty\Cat_*}(S^\m,S^\n)) \cong \pi_{\n-\m}(\Free_{\bE_\m}(*)).$$

\item There is a canonical isomorphism of monoids $$\pi_0(\Map_{\iota_0\infty\Cat_*}(S^\n,S^\n)) \cong \bN^\times \cong 0.$$
\end{enumerate}	

\end{corollary}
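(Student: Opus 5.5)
The plan is to reduce every item to a computation of $\pi_0$ of an iterated endomorphism $\infty$-category, using \cref{freesp} and \cref{deloop}. For $a,b \geq 0$, \cref{freesp} gives $S^a \simeq B^a\Free_{\bE_a}(*)$. The adjunction $B^a \dashv \Omega^a$ of \cref{deloop}, combined with the free--forgetful adjunction for $\bE_a$-monoids, then yields
$$\Map_{\infty\Cat_*}(S^a,S^b)\simeq \Map_{\Mon_{\bE_a}(\infty\Cat)}(\Free_{\bE_a}(*),\Omega^a S^b)\simeq \iota_0(\Omega^a S^b).$$
This is exactly the equivalence used in the proof of \cref{freesp}. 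So in each case we must compute $\pi_0\iota_0(\Omega^a S^b)$.

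For (1), take $a=b=\n$. By \cref{deloop}, $B^\n$ is fully faithful, so $\Omega^\n S^\n\simeq \Omega^\n B^\n\Free_{\bE_\n}(*)\simeq \Free_{\bE_\n}(*)$. This is the space of finite unordered configurations in $\bR^\n$, whose components are indexed by cardinality, so $\pi_0\cong\bN$. For the monoid structure, let $f_k$ be the endomorphism corresponding to the component of $k$ points, i.e. the $\bE_\n$-map sending the generator to a $k$-point configuration. Then $f_k\circ f_l$ sends the generator to $f_k$ applied to an $l$-fold product of the generator, namely an $lk$-point configuration. Hence composition is multiplication in $\bN$. I would check this through the adjunction: precomposing with $f_l$ corresponds to the $\bE_\n$-map out of the free object picking $l$ copies, and $f_k$ is multiplicative.

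Items (2) and (3) both come from the general formula, with $\n \geq \m$.

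\emph{Maps from the lower-dimensional sphere.} Take $a=\m$, $b=\n$. Since $S^\n=B^\n(\ldots)$ is $(\n-1)$-connected by \cref{deloop}, we get
$$\Omega^\m S^\n\simeq B^{\n-\m}\Free_{\bE_\n}(*),$$
computing $\Omega^\m B^\m$ on $B^{\n-\m}(-)$ viewed as an $\bE_\m$-monoid. If $\n>\m$ this is connected, and its $\iota_0$ is a connected space, so $\pi_0$ is trivial. If $\n=\m$ we are back in (1).

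\emph{Maps from the higher-dimensional sphere.} Take $a=\n$, $b=\m$. Then $\Omega^\n S^\m\simeq\Omega^{\n-\m}\Omega^\m S^\m\simeq\Omega^{\n-\m}\Free_{\bE_\m}(*)$. Since $\Free_{\bE_\m}(*)$ is a space, this is an ordinary iterated loop space at the unit configuration (the empty configuration). Its $\pi_0$ is $\pi_{\n-\m}(\Free_{\bE_\m}(*))$, and the unit component is contractible.

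So one mapping set is a point and the other is $\pi_{\n-\m}(\Free_{\bE_\m}(*))$. I would match these to (2) and (3) carefully: the point set is $\pi_0\Map(S^\m,S^\n)$ and the homotopy group is $\pi_0\Map(S^\n,S^\m)$. As worded, items (2) and (3) have source and target interchanged relative to this computation. I expect to present them in the corrected form above, flagging the discrepancy explicitly rather than glossing over it. This bookkeeping of connectivity, i.e. which direction is forced trivial, is the main point needing care.

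For (4), read $\Map_{\iota_0\infty\Cat_*}(S^\n,S^\n)$ as the invertible endomorphisms, i.e. units of the monoid in (1). The units of $(\bN,\times)$ form $\bN^\times=\{1\}$, a trivial monoid.
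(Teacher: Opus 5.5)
Your proposal is correct and is essentially the paper's own proof. The paper likewise identifies $\Map_{\infty\Cat_*}(S^a,S^b)$ with $\iota_0(\Omega^a S^b)$ via \cref{freesp} and $B^a\dashv\Omega^a$. Its proofs of (2) and (3) in fact compute $\pi_0\Map_{\infty\Cat_*}(S^\m,S^\n)$ and $\pi_0\Map_{\infty\Cat_*}(S^\n,S^\m)$ respectively, which is exactly your assignment. Your check that composition corresponds to multiplication also pins down the monoid structure in (1) more precisely than the paper's appeal to the universal property.

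Your remark that the unit component of $\Free_{\bE_\m}(*)$ is contractible shows that both mapping sets are points when $\n>\m$, so the interchange is actually harmless. The real defects of the statement are only that (2) needs $\n>\m$, and that (1) fails for $\n=0$, where $\Free_{\bE_0}(*)\simeq S^0$ has just two components.
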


\begin{proof}

(1): By \cref{freesp} there is a canonical equivalence of $\bA_\infty$-spaces $$\Map_{\infty\Cat_*}(S^\n,S^\n) \simeq \Map_{\Mon_{\bE_\n}(\infty\Cat)}(\Free_{\bE_\n}(*),\Free_{\bE_\n}(*)) \simeq \Map_{\infty\Cat}(*,\Free_{\bE_\n}(*)) \simeq \Free_{\bE_\n}(*).$$
This equivalence induces an isomorphism of monoids $$\pi_0(\Map_{\infty\Cat_*}(S^\n,S^\n)) \cong \pi_0(\Free_{\bE_\n}(*)).$$
By the universal property there is a canonical isomorphism of monoids
$\pi_0(\Free_{\bE_\n}(*))\cong \bN.$

(2): By \cref{freesp} there is a canonical equivalence of spaces  $$\Map_{\infty\Cat_*}(S^\m,S^\n) \simeq \Map_{\Mon_{\bE_\m}(\infty\Cat)}(\Free_{\bE_\m}(*),B^{\n-\m}\Free_{\bE_\n}(*)) \simeq \Map_{\infty\Cat}(*,B^{\n-\m}\Free_{\bE_\n}(*)).$$
This equivalence induces an isomorphism $$ \pi_0(\Map_{\infty\Cat_*}(S^\m,S^\n)) \cong \pi_0(\Map_{\infty\Cat}(*,B^{\n-\m}\Free_{\bE_\n}(*))) \cong *.$$

(3): By \cref{freesp} there is a canonical equivalence of spaces  $$\Map_{\infty\Cat_*}(S^\n,S^\m) \simeq \Map_{\Mon_{\bE_\n}(\infty\Cat)}(\Free_{\bE_\n}(*),\Omega^{\n-\m}\Free_{\bE_\m}(*)) \simeq \Map_{\infty\Cat}(*,\Omega^{\n-\m}\Free_{\bE_\m}(*)).$$
This equivalence induces an isomorphism $$\pi_0(\Map_{\infty\Cat_*}(S^\n,S^\m)) \cong \pi_0(\Map_{\infty\Cat}(*,\Omega^{\n-\m}\Free_{\bE_\m}(*))) \cong \pi_{\n-\m}(\Free_{\bE_\m}(*)).$$
(4) follows immediately from (1).
\end{proof}

\begin{lemma}\label{smarem}\label{uuupo}
Let $\mC\in \infty\Cat_*$.
There are canonical equivalences:
$$\Sigma(\mC) \simeq \mC \wedge S^1,  \ \bar{\Sigma}(\mC) \simeq S^1\wedge \mC. $$ 		

\end{lemma}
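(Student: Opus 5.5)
The plan is to reduce both equivalences to a computation with pushouts. Two inputs do the work: \cref{thas2}, which writes the categorical suspension as $S(\mC) \simeq \mC \boxtimes \bD^1 +_{\mC \boxtimes \partial\bD^1} \partial\bD^1$, and the defining formula $X \wedge Y = \mathrm{cofib}(X \vee Y \to X \boxtimes Y)$ for the Gray smash product. The first equivalence will be proved directly. The second will then follow by applying the involution $(-)^\co$ together with the monoidal duality of \cref{dua2}.

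\emph{Identifying $S^1$.} Since $\partial\bD^1 = S(\emptyset)$ and $\bD^1 = S(*)$, the sphere $S^1$ is the cofiber of $\partial\bD^1 \subset \bD^1$. Concretely, $S^1 \simeq \bD^1 +_{\partial\bD^1} \ast$, based at the image of the two endpoints. Because $\mC \boxtimes (-)$ preserves colimits, we get
$$\mC \boxtimes S^1 \simeq \mC \boxtimes \bD^1 +_{\mC \boxtimes \partial\bD^1} \mC .$$
The wedge $\mC \vee S^1$ is $\mC +_{\ast} S^1$, and it maps into this via the base point $* \to \mC$ and the inclusion $\mC \simeq \mC\boxtimes\{0\}$.

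\emph{Computing the cofiber.} We collapse $\mC \vee S^1$ in $\mC \boxtimes S^1$ by pasting pushouts. Collapsing the copy of $\mC$ (the image of $\mC \boxtimes \partial\bD^1$) to a point gives $\mC \boxtimes \bD^1 +_{\mC \boxtimes \partial\bD^1} \ast$. Comparing with \cref{thas2}, this is $S(\mC) +_{\partial\bD^1} \ast$. Collapsing the remaining $S^1 = \ast\boxtimes S^1$, which is the image of $\bD^1 = \{x_0\}\boxtimes\bD^1$ through the base point, then gives
$$S(\mC) +_{\partial\bD^1} \ast +_{\bD^1 +_{\partial\bD^1}\ast} \ast \simeq S(\mC) +_{\bD^1} \ast = \Sigma(\mC).$$
Here $\bD^1 \to S(\mC)$ is the arrow given by the base point of $\mC = \Mor_{S(\mC)}(0,1)$. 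The steps are formal, using that pushouts commute with pushouts; I would organise them as one cube-shaped diagram of pushouts.

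\emph{The antisuspension.} For $\bar\Sigma(\mC) \simeq S^1 \wedge \mC$, \cref{tttj} gives $\bar\Sigma(\mC) \simeq \Sigma(\mC^\co)^\co \simeq (\mC^\co \wedge S^1)^\co$. By \cref{dua2}, $(-)^\co$ is monoidal from $\wedge^\rev$ to $\wedge$, so this is $(S^1)^\co \wedge \mC$. It remains to check $(S^1)^\co \simeq S^1$. This holds because $S^1 \simeq B\bN$ is a $1$-category by \cref{freesp}, and $(-)^\co$ acts trivially on $1$-categories: it reverses only $2$-cells, via $(-)^\op$ on morphism spaces, which is the identity on spaces.

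\emph{Main obstacle.} The delicate point is making the cofiber computation canonical. One has to check that the map $\mC \vee S^1 \to \mC \boxtimes S^1$ really corresponds to the inclusions $\mC\boxtimes\{0\}$ and $\{x_0\}\boxtimes\bD^1$ under the identifications above. One also has to check that the orientation matches, so that we get $S$ rather than $\bar S$ for $\mC \wedge S^1$, with $\mC$ on the left. The latter needs the convention in \cref{thas2}, which the statement fixes as $\mC \boxtimes \bD^1$, with the factor $\bD^1$ on the right.
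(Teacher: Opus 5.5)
Your proposal is correct and follows essentially the same route as the paper. The paper also proves $\Sigma(\mC)\simeq\mC\wedge S^1$ by pasting pushout squares built from \cref{thas2}: it first collapses $\mC\boxtimes\partial\bD^1$, then collapses the arrow $\{x_0\}\boxtimes\bD^1$ through the base point. It then deduces $\bar{\Sigma}(\mC)\simeq S^1\wedge\mC$ from \cref{tttj} and the monoidal duality of \cref{dua}. Your explicit verification that $(S^1)^{\co}\simeq S^1$ supplies a step the paper leaves implicit.
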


\begin{proof}

The second equivalence follows from the first equivalence via \cref{tttj} and \cref{dua}.
By \cref{thas2} and the pasting law all squares in the next diagram are pushouts:
$$\begin{xy}
\xymatrix{
\mC \boxtimes \partial\bD^1 \ar[d] \ar[r] & \partial\bD^1 \ar[r] \ar[d]
& \ast \ar[d]
\\ 	
\mC \boxtimes \partial\bD^1+_{\partial\bD^1} \bD^1 \ar[d] \ar[r] & \partial\bD^1+_{\partial\bD^1} \bD^1=\bD^1 \ar[r] \ar[d]
& S^1 \ar[d] \ar[r] & \ast \ar[d]
\\ 
\mC \boxtimes \bD^1 \ar[r] & S(\mC) \ar[r] & S(\mC)/\partial\bD^1 \ar[r] & \mC \wedge S^1.
}
\end{xy}$$

\end{proof}

\begin{corollary}\label{freespar} Let $\n \geq 0$.	
There is a canonical equivalence of $\n$-categories with distinguished object $$(-)_+\wedge (S^1)^{\wedge n} \simeq B^\n \circ \Free_{\bE_\n}.$$

\end{corollary}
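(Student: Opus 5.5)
We prove the equivalence $(-)_+\wedge (S^1)^{\wedge \n} \simeq B^\n \circ \Free_{\bE_\n}$ as functors $\infty\Cat \to \infty\Cat_*$ by comparing right adjoints. Both sides are left adjoints. The left hand side is the composite of $(-)_+: \infty\Cat \to \infty\Cat_*$ with $n$-fold reduced suspension, using \cref{smarem} and associativity of the Gray smash product to identify $\mC_+ \wedge (S^1)^{\wedge \n} \simeq \Sigma^\n(\mC_+)$. Its right adjoint is therefore $\Omega^\n$ followed by the forgetful functor $\infty\Cat_* \to \infty\Cat$. The right hand side is a composite of left adjoints, $\Free_{\bE_\n} \dashv$ forget and $B^\n \dashv \Omega^\n$ of \cref{deloop}. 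Its right adjoint is the forgetful functor $\Mon_{\bE_\n}(\infty\Cat)\to\infty\Cat$ composed with the lift $\Omega^\n: \infty\Cat_* \to \Mon_{\bE_\n}(\infty\Cat)$.

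By \cref{deloop}, this lift of $\Omega^\n$ has underlying based $\infty$-category $\Omega^\n$. Hence both right adjoints agree with the functor $(\mC,X) \mapsto \Omega^\n(\mC,X)$, viewed as an $\infty$-category after forgetting the base point. Uniqueness of left adjoints then gives a natural equivalence of the left adjoints.

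The step needing care is the identification of right adjoints, since both are defined as composites. We need to know that $(-)_+ \dashv$ forget, and that the forgetful functor from $\bE_\n$-monoids in $\infty\Cat$ to $\infty\Cat$, precomposed with the lifted $\Omega^\n$, is literally iterated $\Omega$. The latter holds by construction in the inductive proof of \cref{deloop}: at each stage $\Omega$ lands in $\Mon(\infty\Cat)$ and its underlying object is $\Omega$ (\cref{hhnl}). The former is standard for pointed objects.

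Finally we check that the values are $\n$-categories when the input is. For $\mC=*$ this recovers \cref{freesp}, since $*_+ = S^0$ and $S^0\wedge (S^1)^{\wedge \n}\simeq S^\n$. Dimension counting in general follows from \cref{lao}.
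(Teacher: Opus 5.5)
Your proposal is correct and follows the paper's proof. Both arguments identify the two right adjoints with the forgetful functor composed with $\Omega^\n$ (using the lift from \cref{deloop}), conclude $\Sigma^\n\circ(-)_+\simeq B^\n\circ\Free_{\bE_\n}$ by uniqueness of left adjoints, and rewrite $\Sigma^\n$ as $(-)\wedge(S^1)^{\wedge \n}$ via \cref{smarem}.

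Your final dimension remark is unnecessary and not quite right. By \cref{lao}, an $m$-category input yields an $(m+\n)$-category. For example, $\Sigma(\bD^1_+)$ is a genuine $2$-category, since its endomorphisms form the free monoidal $1$-category on $\bD^1$. So the values are $\n$-categories only when the input is a space.
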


\begin{proof}
The functor $$\infty\Cat_* \xrightarrow{\Omega^\n} \Mon_{\bE_\n}(\infty\Cat) \to \infty\Cat$$
factors as $$\infty\Cat_* \xrightarrow{\Omega^\n} \infty\Cat_* \to \infty\Cat.$$
Hence by uniqueness of left adjoints the functor $\Sigma^\n \circ (-)_+$ factors as
$B^\n \circ \Free_{\bE_\n}.$
By \cref{uuupo} there is a canonical equivalence $$\Sigma^\n \simeq (-) \wedge (S^1)^{\wedge n}.$$

\end{proof}

\begin{corollary}\label{freespart} Let $\n \geq 0$.	
There is a canonical equivalence of $\n$-categories with distinguished object $$(S^1)^{\wedge n} \simeq B^\n \Free_{\bE_\n}(*).$$

\end{corollary}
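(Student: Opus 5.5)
The plan is to specialize \cref{freespar} to the final $\infty$-category $*$, and then to identify $*_+$ with the zero sphere $S^0$ as a based $\infty$-category. Concretely, \cref{freespar} provides an equivalence of functors
$$(-)_+ \wedge (S^1)^{\wedge \n} \simeq B^\n \circ \Free_{\bE_\n}: \infty\Cat \to \infty\Cat_*.$$
Evaluating at $* \in \infty\Cat$ gives $*_+ \wedge (S^1)^{\wedge \n} \simeq B^\n\Free_{\bE_\n}(*)$.

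It then remains to simplify the left-hand side. The based $\infty$-category $*_+ = * \coprod *$, based at one of the two points, is the tensor unit of the Gray smash monoidal structure. This holds because the smash monoidal structure of \cite[Lemma 3.69.]{gepner2025oriented} on $\infty\Cat_*$ has unit $\tu_+$, where $\tu = \cube^0 = *$ is the unit of $\boxtimes$. Alternatively, one can check it directly from the cofiber description: $X \wedge *_+$ is the cofiber of $X \vee *_+ \to X \boxtimes (*\coprod *) \simeq X \coprod X$, and this cofiber is $X$. Hence $*_+ \wedge (S^1)^{\wedge\n} \simeq (S^1)^{\wedge \n}$, which gives the claim.

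For $\n = 0$ both sides are $*_+ = S^0$, using $B^0 = \id$ and $\Free_{\bE_0}(*) = *_+$, so the statement is consistent with that convention as well. There is no real obstacle here: the only point needing care is the identification of the unit of $\wedge$ with $*_+$, which is part of the construction of the smash monoidal structure. Combining this with \cref{freesp} additionally identifies $(S^1)^{\wedge\n}$ with $S^\n$, although this is not needed for the statement.
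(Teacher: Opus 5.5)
Your proposal is correct and follows the paper's route. The paper gives no separate proof and places the statement directly after \cref{freespar}, so the intended argument is exactly yours: evaluate that equivalence at $*$, then use that $*_+ = S^0$ is the unit of the Gray smash product.
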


\cref{freesp} implies the following:

\begin{corollary}Let $\n \geq 0$. There is an  equivalence of $\n$-categories $(S^1)^{\wedge \n} \simeq S^\n$.

\end{corollary}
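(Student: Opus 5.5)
The equivalence $(S^1)^{\wedge \n} \simeq S^\n$ follows by combining two identifications already established, each of which exhibits one side as $B^\n \Free_{\bE_\n}(*)$. By \cref{freesp} there is an equivalence of $\n$-categories with distinguished object $S^\n \simeq B^\n \Free_{\bE_\n}(*)$. By \cref{freespart} there is an equivalence of $\n$-categories with distinguished object $(S^1)^{\wedge \n} \simeq B^\n \Free_{\bE_\n}(*)$. Composing the first with the inverse of the second gives a based equivalence $(S^1)^{\wedge \n} \simeq S^\n$. Forgetting the distinguished object then gives the asserted equivalence of $\n$-categories.

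The case $\n = 0$ needs a convention, and we handle it directly. Here $(S^1)^{\wedge 0}$ is the unit $S^0$ of the Gray smash product. The categorical $0$-sphere is the cofiber of $\partial\bD^0 = \ast \subset \bD^0 = \ast$, i.e. of $\emptyset_+ \to \ast_+$ after adjoining base points. So the cofiber is again the based two-point set $S^0$, and both sides agree by definition. This matches $B^0\Free_{\bE_0}(*) = \ast_+$.

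The only step requiring care is that the two identifications with $B^\n\Free_{\bE_\n}(*)$ are equivalences of the same kind of object, namely based $\infty$-categories, rather than, say, one of them only an equivalence of corepresented functors on $\iota_0$. \Cref{freesp} was proved by identifying the functors $\Map_{\infty\Cat_*}(-,(\mC,X))$ corepresented by both sides, naturally in $(\mC,X)$. By the Yoneda lemma in $\infty\Cat_*$ this is an equivalence in $\infty\Cat_*$. \Cref{freespart} comes from the uniqueness of left adjoints in \cref{freespar}, which is likewise an equivalence of functors into $\infty\Cat_*$, evaluated at $\ast$. Neither point presents a genuine obstacle, so the proof is just this composition of equivalences.
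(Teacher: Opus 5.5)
Your proposal is correct and is exactly the paper's argument: the corollary is obtained by composing the equivalence $S^\n \simeq B^\n\Free_{\bE_\n}(*)$ of \cref{freesp} with the inverse of the equivalence $(S^1)^{\wedge\n} \simeq B^\n\Free_{\bE_\n}(*)$ of \cref{freespart}. One small slip in your $\n=0$ aside: $\partial\bD^0$ is the zero-fold suspension of $\emptyset$, so it is $\emptyset$ rather than $\ast$ (the cofiber of $\ast \to \ast$ would be a point); your description via $\emptyset_+ \to \ast_+$ is the correct one, and it gives $S^0$ as you claim.
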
	

\begin{corollary}\label{conesto} Let $\n,\m \geq 0$ and $X \in \infty\Cat_*$ be $\n$-connected 
and $Y \in \infty\Cat_*$ be $\m$-connected.  Then $X \wedge Y$ is $\n+\m+1$-connected.
	
\end{corollary}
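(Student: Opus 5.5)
The plan is to reduce to smash products of iterated suspensions, using \cref{connel} to generate connected objects under colimits and \cref{smarem} to rewrite suspensions as smash products with $S^1$.

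\emph{Setting up generators.} By \cref{connel}, applied with $\n+1$ in place of $\n$, the full subcategory $\mathcal{C}_{\n} \subset \infty\Cat_*$ of $\n$-connected based $\infty$-categories is generated under small colimits by the essential image of $\Sigma^{\n+1}$. Likewise $\mathcal{C}_{\m}$ is generated by the image of $\Sigma^{\m+1}$, and $\mathcal{C}_{\n+\m+1}$ by the image of $\Sigma^{\n+\m+2}$.

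I will also use that each $\mathcal{C}_k$ is closed under small colimits in $\infty\Cat_*$. By \cref{deloop}, $\mathcal{C}_k$ is the essential image of the fully faithful left adjoint $B^{k+1}$. A colimit of objects $B^{k+1}(M_i)$ is therefore $B^{k+1}(\colim M_i)$, so it stays in $\mathcal{C}_k$.

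\emph{Reduction to generators.} The Gray smash product preserves small colimits separately in each variable, since $X\wedge(-)$ and $(-)\wedge Y$ are left adjoints to $\Fun_*^\lax$ and $\Fun_*^\oplax$. Write $X \simeq \colim_i \Sigma^{\n+1}(X_i)$ and $Y \simeq \colim_j \Sigma^{\m+1}(Y_j)$. Then
$$X\wedge Y \simeq \colim_{i,j} \Sigma^{\n+1}(X_i)\wedge \Sigma^{\m+1}(Y_j).$$
By closure of $\mathcal{C}_{\n+\m+1}$ under colimits, it suffices to show that each term $\Sigma^{\n+1}(A)\wedge\Sigma^{\m+1}(C)$ lies in the image of $\Sigma^{\n+\m+2}$.

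\emph{Commuting spheres past objects.} By \cref{smarem}, $\Sigma^{k}(C)\simeq C\wedge (S^1)^{\wedge k}$, so
$$\Sigma^{\n+1}(A)\wedge\Sigma^{\m+1}(C)\simeq A\wedge (S^1)^{\wedge(\n+1)}\wedge C \wedge (S^1)^{\wedge(\m+1)}.$$
The key step, and the only non-formal one, is to move the middle spheres past $C$. The smash product is not symmetric, so this cannot be done naively.

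Instead I use \cref{smarem} together with \cref{ahoit}: for any $Z$,
$$S^1\wedge Z\simeq \bar\Sigma(Z)\simeq \Sigma(Z^{\co\op})\simeq Z^{\co\op}\wedge S^1.$$
Iterating $\n+1$ times, each step applying $(-)^{\co\op}$ to the running object, gives
$$(S^1)^{\wedge(\n+1)}\wedge C\simeq C'\wedge (S^1)^{\wedge(\n+1)},$$
where $C'$ is $C$ or $C^{\co\op}$ according to the parity of $\n+1$. Only the existence of some such $C'$ matters, not this twist.

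\emph{Conclusion.} Substituting, the term becomes
$$A\wedge C'\wedge (S^1)^{\wedge(\n+\m+2)}\simeq \Sigma^{\n+\m+2}(A\wedge C'),$$
using associativity of $\wedge$ and \cref{smarem} again. This lies in $\mathcal{C}_{\n+\m+1}$, which finishes the argument.

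The main obstacle I expect is making the iterated use of \cref{ahoit} precise, namely the identification $S^1\wedge Z\simeq Z^{\co\op}\wedge S^1$ and its compatibility with associativity. Since we only need membership in the essential image, equivalences of objects suffice and no coherence is required.
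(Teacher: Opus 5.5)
Your proof is correct and follows essentially the same route as the paper. Both reduce via \cref{connel} to showing that $\Sigma^{n+1}(A)\wedge\Sigma^{m+1}(C)$ lies in the essential image of $\Sigma^{n+m+2}$, and then move the suspensions past the second factor using \cref{smarem} and \cref{ahoit}: the paper writes this as $X\wedge\Sigma(Y)\simeq\Sigma(X\wedge Y)$ and $\Sigma(X)\wedge Y\simeq\Sigma(X\wedge Y^{\co\op})$, which is your identity $S^1\wedge Z\simeq Z^{\co\op}\wedge S^1$. Your explicit argument via \cref{deloop} that $k$-connected objects are closed under colimits fills in a step the paper leaves implicit.
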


\begin{proof}
By 	\cref{connel} the full subcategory $\infty\Cat_*$ of $\n$-connected $\infty$-categories with distinguished object is generated under small colimits by the essential image of $\Sigma^{\n+1}.$
So we need to see that for any $\n,\m \geq 0$ and $X,Y \in \infty\Cat_*$ the $\infty$-category $\Sigma^{\n+1}(X) \wedge \Sigma^{\m+1}(Y) $ is in the essential image of the functor
$\Sigma^{\n+\m+2}.$
By \cref{smarem}, \cref{ahoit}, \cref{dua} there are canonical equivalences
$$ X \wedge \Sigma(Y) \simeq \Sigma(X \wedge Y)$$ and $$ \Sigma(X) \wedge Y \simeq \bar{\Sigma}(X^{\co\op}) \wedge Y \simeq \bar{\Sigma}(X^{\co\op} \wedge Y) \simeq \Sigma((X^{\co\op} \wedge Y)^{\co\op}) \simeq  \Sigma(X \wedge Y^{\co\op}).$$
	
\end{proof}

\subsection{Oriented categories}

In this section we consider categories enriched in the Gray tensor product, which are also studied in \cite{gepner2025oriented}. 









\begin{definition}\emph{}
\begin{enumerate}
\item An oriented category is a category enriched in $(\infty\Cat,\boxtimes^\rev).$

\item An oriented functor is a functor enriched in $(\infty\Cat,\boxtimes^\rev).$

\item An antioriented category is a category enriched in $(\infty\Cat,\boxtimes).$

\item An antioriented functor is a functor enriched in $(\infty\Cat,\boxtimes).$

\item A bioriented category is a category enriched in $(\infty\Cat,\boxtimes) \ot (\infty\Cat,\boxtimes^\rev).$

\item A bioriented functor is a functor enriched in $(\infty\Cat,\boxtimes) \ot (\infty\Cat,\boxtimes^\rev).$

\end{enumerate}

\end{definition}

\begin{example}

The Gray monoidal structure on $\infty\Cat$ is biclosed and so exhibits $\infty\Cat$ as enriched in $(\infty\Cat,\boxtimes) \ot (\infty\Cat,\boxtimes^\rev).$ This way we see
$\infty\Cat$ as a large bioriented category, which we denote by the same name.

\end{example}

\begin{notation}\emph{}

\begin{itemize}

\item Let $$\Cat\boxtimes:= (\infty\Cat,\boxtimes^\rev)\mathrm{-}\Cat $$
be the 2-category of oriented categories.

\item Let $$\boxtimes\Cat:= (\infty\Cat,\boxtimes)\mathrm{-}\Cat$$
be the 2-category of antioriented categories.

\item Let $$\boxtimes\Cat\boxtimes:= (\infty\Cat,\boxtimes) \ot (\infty\Cat,\boxtimes^\rev) \mathrm{-}\Cat$$
be the 2-category of bioriented categories.

\end{itemize}
\end{notation}

\begin{remark}
By \cref{indubi} the unique left adjoint monoidal embedding
$(\infty\Grp, \times) \to (\infty\Cat, \boxtimes), $ which preserves small limits, induces left adjoint monoidal embeddings
$$ (\infty\Cat, \boxtimes) \ot (\infty\Grp, \times) \to (\infty\Cat, \boxtimes) \ot (\infty\Cat, \boxtimes^\rev),$$$$ (\infty\Grp, \times) \ot (\infty\Cat, \boxtimes^\rev) \to (\infty\Cat, \boxtimes) \ot (\infty\Cat, \boxtimes^\rev),$$
which preserve small limits.
The latter induce left adjoint embeddings of 2-categories
$$ \Cat\boxtimes \subset {\boxtimes\Cat\boxtimes}, \boxtimes\Cat \subset {\boxtimes\Cat\boxtimes}, $$
which preserve small limits.

\end{remark}

\begin{notation}\emph{}

\begin{itemize}
\item Let $\mC$ be an oriented category and $X,Y \in \mC$.
We write $\R\Mor_\mC(X,Y)$ for the $\infty$-category of morphisms $X \to Y$ in $\mC$.

\item Let $\mC$ be an antioriented category and $X,Y \in \mC$.
We write $\L\Mor_\mC(X,Y)$ for the $\infty$-category of morphisms $X \to Y$ in $\mC$.

\item Let $\mC$ be a bioriented category and $X,Y \in \mC$.
We write $\L\Mor_\mC(X,Y)$ for the $\infty$-category of morphisms $X \to Y$ in the underlying antioriented category of $\mC$ and 
$\R\Mor_\mC(X,Y)$ for the $\infty$-category of morphisms $X \to Y$ in the underlying oriented category of $\mC$.
    
\end{itemize}
    
\end{notation}

We refer to adjunctions of oriented, antioriented, bioriented categories as oriented, antioriented, bioriented adjunctions.

\begin{notation}\emph{}
\begin{itemize}

\item Let $\mC,\mD \in \Cat\boxtimes$. Let $${\Fun\boxtimes}(\mC,\mD)$$ be the category of oriented functors $\mC \to \mD.$

\item Let $\mC,\mD \in \boxtimes\Cat$. Let $${\boxtimes\Fun}(\mC,\mD)$$ be the category of antioriented functors $\mC \to \mD.$	

\item Let $\mC,\mD \in \boxtimes\Cat\boxtimes $. Let $${\boxtimes\Fun\boxtimes}(\mC,\mD)$$ be the category of bioriented functors $\mC \to \mD$.	

\item Let $\mC,\mD \in \Cat\boxtimes$. Let $${\L\Fun\boxtimes}(\mC,\mD)$$ be the category of oriented functors $\mC \to \mD$ that admit an oriented right adjoint.

\item Let $\mC,\mD \in \boxtimes\Cat$. Let $${\boxtimes\L\Fun}(\mC,\mD)$$ be the category of antioriented functors $\mC \to \mD$ that admit an antioriented right adjoint.	

\item Let $\mC,\mD \in \boxtimes\Cat\boxtimes $. Let $${\boxtimes\L\Fun\boxtimes}(\mC,\mD)$$ be the category of bioriented functors $\mC \to \mD$ that admit a bioriented right adjoint.		

\end{itemize}
\end{notation}

We are mainly interested in the reduced version:

\begin{definition}\emph{}
\begin{enumerate}

\item A weakly reduced oriented category is a category enriched in $(\infty\Cat_*,\wedge^\rev).$

\item A weakly reduced oriented functor is a functor enriched in $(\infty\Cat_*,\wedge^\rev).$

\item A weakly reduced antioriented category is a category enriched in $(\infty\Cat_*,\wedge).$

\item A weakly reduced antioriented functor is a functor enriched in $(\infty\Cat_*,\wedge).$

\item A weakly reduced bioriented category is a category enriched in $(\infty\Cat_*,\wedge) \ot (\infty\Cat_*,\wedge^\rev).$

\item A weakly reduced bioriented functor is a functor enriched in $(\infty\Cat_*,\wedge) \ot (\infty\Cat_*,\wedge^\rev).$

\end{enumerate}

\end{definition}










		 

		
			

		
		
	

\begin{example}

The Gray smash monoidal structure on $\infty\Cat_*$ is biclosed and so exhibits $\infty\Cat_*$ as enriched in $(\infty\Cat_*,\wedge) \ot (\infty\Cat_*,\wedge^\rev).$ This way we see
$\infty\Cat_*$ as a large weakly reduced bioriented category, which we denote by the same name.

\end{example}	

\begin{notation}\emph{}

\begin{itemize}

\item Let $$\Cat\wedge:= (\infty\Cat,\wedge^\rev)\mathrm{-}\Cat$$
be the 2-category of weakly reduced oriented categories.	

\item Let $$\wedge\Cat:= (\infty\Cat,\wedge)\mathrm{-}\Cat$$
be the 2-category of weakly reduced antioriented categories.

\item Let $$\wedge\Cat\wedge:= (\infty\Cat,\smash) \ot (\infty\Cat,\wedge^\rev) \mathrm{-}\Cat$$
be the 2-category of weakly reduced bioriented categories.

\end{itemize}
\end{notation}

\begin{remark}\label{ember}

By \cref{indubi} the unique left adjoint monoidal embedding
$(\infty\Grp, \times) \to (\infty\Cat, \wedge), $ which preserves small limits, induces left adjoint monoidal embeddings
$$ (\infty\Cat, \wedge) \ot (\infty\Grp, \times) \to (\infty\Cat, \wedge) \ot (\infty\Cat, \wedge^\rev),$$$$ (\infty\Grp, \times) \ot (\infty\Cat, \wedge^\rev) \to (\infty\Cat, \wedge) \ot (\infty\Cat, \wedge^\rev),$$
which preserve small limits.
The latter induce left adjoint embeddings of 2-categories
$$ \Cat\wedge \subset {\wedge\Cat\wedge}, \wedge\Cat \subset {\wedge\Cat\wedge}, $$
which preserve small limits.

\end{remark}

We refer to adjunctions of weakly reduced oriented, antioriented, bioriented categories as weakly reduced oriented, antioriented, bioriented adjunctions.

\begin{notation} \emph{}
\begin{itemize}

\item Let $\mC,\mD \in \Cat\wedge$. Let $${\Fun\wedge}(\mC,\mD) $$ be the category of weakly reduced oriented functors $\mC \to \mD.$	

\item Let $\mC,\mD \in \wedge\Cat$. Let $${\wedge\Fun}(\mC,\mD)$$ be the category of weakly reduced antioriented functors $\mC \to \mD.$	

\item Let $\mC,\mD \in \wedge\Cat\wedge $. Let $${\wedge\Fun\wedge}(\mC,\mD)$$ be the category of weakly reduced bioriented functors $\mC \to \mD.$	

\item Let $\mC,\mD \in \Cat\wedge$. Let $${\L\Fun\wedge}(\mC,\mD) $$ be the category of weakly reduced oriented functors $\mC \to \mD$ that admit a weakly reduced oriented right adjoint.		

\item Let $\mC,\mD \in \wedge\Cat$. Let $${\wedge\L\Fun}(\mC,\mD)$$ be the category of weakly reduced antioriented functors $\mC \to \mD$
that admit a weakly reduced antioriented right adjoint.	

\item Let $\mC,\mD \in \wedge\Cat\wedge $. Let $${\wedge\L\Fun\wedge}(\mC,\mD)$$ be the category of weakly reduced bioriented functors $\mC \to \mD$ that admit a weakly reduced bioriented right adjoint.	

\end{itemize}
\end{notation}




\cref{psinho} specializes to the following applied to enrichment in the Gray smash product:

\begin{proposition}\label{funor} Let $\mD$ be a reduced bioriented category. 

\begin{enumerate}

\item Let $\mC$ be a weakly reduced oriented category.
The category ${\Fun\wedge}(\mC,\mD)$ of weakly reduced oriented functors is a weakly reduced antioriented category.

\item Let $\mC$ be a weakly reduced antioriented category.
The category ${\wedge\Fun}(\mC,\mD)$ of weakly reduced antioriented functors is a weakly reduced oriented category.

\end{enumerate}

\end{proposition}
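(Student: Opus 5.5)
Both parts are direct specializations of \cref{psinho}. The only bookkeeping is which factor of the tensor product of enriching categories plays the role of $\mV$ and which of $\mW$.

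Set $\mV:=(\infty\Cat_*,\wedge^\rev)$ and $\mW:=(\infty\Cat_*,\wedge)$. Both are presentably monoidal. The smash monoidal structure of \cite[Lemma 3.69.]{gepner2025oriented} on the presentable category $\infty\Cat_*$ preserves small colimits in each variable, because $\wedge$ is left adjoint in each variable to $\Fun^\lax_*$ and $\Fun^\oplax_*$ respectively. Its reverse is then presentably monoidal as well.

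By definition, a weakly reduced bioriented category $\mD$ is enriched in $\mW\ot\mV$. The symmetric monoidal structure of $\Pr^L$ gives a monoidal equivalence $\mW\ot\mV\simeq\mV\ot\mW$ exchanging the factors. Transferring along it, $\mD$ becomes a $\mV\ot\mW$-enriched category. Under this identification the canonical maps $\alpha_\mV,\alpha_\mW$ correspond to the two inclusions of \cref{ember}. In particular, the morphism objects $\R\Mor_\mD$ and $\L\Mor_\mD$ are recovered as the pullbacks $\alpha_\mV^*(\mD)$ and $\alpha_\mW^*(\mD)$.

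\textbf{Part (1).} Let $\mC$ be a weakly reduced oriented category, that is, a $\mV$-enriched category, which we take to be small.
\cref{psinho}~(1) with $\mM=\mC$ and $\mN=\mD$ refines $\mV\mathrm{-}\Fun(\mC,\mD)={\Fun\wedge}(\mC,\mD)$ to a $\mW$-enriched category. By definition this is a weakly reduced antioriented category. It is characterized by
$$\mW\mathrm{-}\Fun(\mO,{\Fun\wedge}(\mC,\mD))\simeq (\mV\ot\mW)\mathrm{-}\Fun(\mC\boxtimes\mO,\mD),$$
natural in $\mO$.

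\textbf{Part (2).} This is symmetric. For a weakly reduced antioriented (that is, $\mW$-enriched) small $\mC$, apply \cref{psinho}~(2) with $\mO=\mC$. This makes $\mW\mathrm{-}\Fun(\mC,\mD)={\wedge\Fun}(\mC,\mD)$ a $\mV$-enriched category, i.e.\ a weakly reduced oriented category.

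\textbf{Expected obstacle.} The only point needing care is size and the swap of factors. If $\mC$ is not small (as for $\infty\Cat_*$ itself), pass to a larger universe. There $\mC$ is small and $\mV,\mW$ are replaced by their large versions, which are again presentably monoidal; one then checks that the resulting enrichment does not depend on the universe. For the swap, one must confirm that \cref{psinho} is insensitive to the order of the tensor factors. This follows because the braiding of $\Pr^L$ induces an equivalence of enriched categories compatible with the maps from $\mV$ and from $\mW$.
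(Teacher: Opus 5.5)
Your proposal is correct and is the paper's argument: the proposition is stated there as a direct specialization of \cref{psinho} to the two Gray smash monoidal structures on $\infty\Cat_*$. The swap of tensor factors is harmless but can be avoided. With $\mV=(\infty\Cat_*,\wedge)$ and $\mW=(\infty\Cat_*,\wedge^\rev)$ the bioriented enrichment is already $\mV\ot\mW$, and then part (1) is \cref{psinho}~(2) while part (2) is \cref{psinho}~(1).
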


\begin{notation}We fix the following notation:

\begin{itemize}
\item Let $\mC$ be an antioriented category, $X \in \mC$ and $K \in \infty\Cat.$
We refer to tensors of $K$ and $X$ by left tensors and write $K \ot X$
for the left tensor of $K$ and $X$.
We refer to cotensors of $K$ and $X$ by left cotensors and write ${^K X}$
for the left cotensor of $K$ and $X$.

\item Let $\mC$ be an oriented category, $X \in \mC$ and $K \in \infty\Cat.$
We refer to tensors of $K$ and $X$ by right tensors and write $X \ot K$
for the right tensor of $K$ and $X$.
We refer to cotensors of $K$ and $X$ by right cotensors and write ${X^K}$
for the right cotensor of $K$ and $X$.

\item Let $\mC$ be a bioriented category, $X \in \mC$ and $K \in \infty\Cat.$
We refer to tensors of $K \ot * $ and $X$ by left tensors and write $K \ot X$
for the left tensor of $K$ and $X$.
We refer to tensors of $* \ot K$ and $X$ by right tensors and write $X \ot K$
for the right tensor of $K$ and $X$.
We refer to cotensors of $K \ot *$ and $X$ by left cotensors and write ${^K X}$
for the left cotensor of $K$ and $X$.
We refer to cotensors of $* \ot K$ and $X$ by right cotensors and write ${X^K}$
for the right cotensor of $K$ and $X$.


\item Let $\mC$ be a weakly reduced antioriented category, $X \in \mC$ and $K \in \infty\Cat_*.$
We refer to tensors of $K$ and $X$ by left tensors and write $K \wedge X$
for the left tensor of $K$ and $X$.
We refer to cotensors of $K$ and $X$ by left cotensors and write ${^K X_*}$
for the left cotensor of $K$ and $X$.

\item Let $\mC$ be a weakly reduced oriented category, $X \in \mC$ and $K \in \infty\Cat_*.$
We refer to tensors of $K$ and $X$ by right tensors and write $X \wedge K$
for the right tensor of $K$ and $X$.
We refer to cotensors of $K$ and $X$ by right cotensors and write ${X^K_*}$ for the right cotensor of $K$ and $X$.

\item Let $\mC$ be a weakly reduced bioriented category, $X \in \mC$ and $K \in \infty\Cat_*.$
We refer to tensors of $K \ot * $ and $X$ by left tensors and write $K \wedge X$
for the left tensor of $K$ and $X$.
We refer to tensors of $* \ot K$ and $X$ by right tensors and write $X \wedge K$
for the right tensor of $K$ and $X$.
We refer to cotensors of $K \ot * $ and $X$ by left cotensors and write ${^K X_*}$
for the left cotensor of $K$ and $X$.
We refer to cotensors of $* \ot K$ and $X$ by right cotensors and write ${X^K_*}$ for the right cotensor of $K$ and $X$.




\end{itemize}
	
\end{notation}

An initial object, final object, zero object of a (weakly reduced) oriented, antioriented, bioriented category is an initial object, final object, zero object in the sense of \cref{initi}.

\begin{definition}
An oriented, antioriented, bioriented category is reduced if it admits a zero object.
An oriented, antioriented, bioriented functor is reduced if it preserves the zero object.
\end{definition}

\vspace{1mm}

The next is \cite[Proposition 3.8.]{gepner2025oriented}:

\begin{proposition}\label{reduu}
The forgetful functors $${\wedge\Cat} \to \boxtimes\Cat, \ {\Cat\wedge} \to \Cat\boxtimes, \ {\wedge\Cat \wedge} \to {\boxtimes\Cat \boxtimes}$$
restrict to equivalences between the full subcategories of weakly reduced oriented, antioriented, bioriented categories 
that admit an initial or final object and the subcategories of reduced oriented, antioriented, bioriented categories and reduced oriented, antioriented, bioriented functors, respectively.

\end{proposition}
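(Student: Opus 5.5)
The statement to prove is \cref{reduu}: the forgetful functors from weakly reduced to unreduced (anti/bi)oriented categories restrict to equivalences onto reduced categories and reduced functors. I treat the oriented case; the antioriented case follows by passing to $\mV^\rev$ via opposite enrichment, and the bioriented case uses the same argument for the tensor product of the two monoidal structures.

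\textbf{Step 1: reduce to a general statement about enrichment.} The forgetful functor comes from transfer of enrichment along the lax monoidal forgetful functor $\phi:(\infty\Cat_*,\wedge^\rev)\to(\infty\Cat,\boxtimes^\rev)$. This $\phi$ is the right adjoint of the monoidal functor $(-)_+$. So I would prove a general principle. Let $\mV$ be presentably monoidal with final unit, and $\mV_*$ its smash monoidal structure (in the sense of \cite[Lemma 3.69.]{gepner2025oriented}). Then $\phi_!:\mV_*\mathrm{-}\Cat\to\mV\mathrm{-}\Cat$ should restrict to an equivalence between weakly reduced categories with an initial (or final) object and $\mV$-categories with a zero object and zero-preserving functors.

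\textbf{Step 2: an initial object is already a zero object.} For a $\mV_*$-enriched $\mC$ with initial object $0$, each morphism object $\Mor_\mC(X,0)$ is pointed by its base point. This gives a morphism $X\to0$. Composing with the identity of $\Mor_\mC(0,0)\simeq*$ and using that composition is a pointed map, I expect $\Mor_\mC(X,0)$ to be a retract of $\Mor_\mC(0,0)\simeq *$, hence final. Dually, a final object is initial. So $0$ is a zero object of $\phi_!\mC$. Any weakly reduced functor preserves base points of morphism objects and hence sends $0$ to a zero object.

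\textbf{Step 3: the inverse construction.} For a $\mV$-category $\mC$ with zero object $0$, each $\Mor_\mC(X,Y)$ receives a canonical point: the composite $X\to0\to Y$, using $\Mor(X,0)\simeq*\simeq\Mor(0,Y)$. I must show these points lift $\mC$ coherently to a $\mV_*$-enrichment, with composition factoring through $\wedge$. Composition with a zero morphism on either side is zero, because it factors through $\Mor(0,-)$ or $\Mor(-,0)$, which are final. Hence the composition map $\Mor(Y,Z)\ot\Mor(X,Y)\to\Mor(X,Z)$ sends $\Mor(Y,Z)\vee\Mor(X,Y)$, more precisely the two partial tensors with the base points, to the base point. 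It therefore descends to the cofiber, which is the smash product.

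To make this coherent rather than pointwise, I would realize the lift as a full subcategory of an enriched slice: $\mC\simeq\mC_{0/}$ by \cref{bien}. Here \cref{bien} gives $\Mor_{\mC_{0/}}(Y,Z)\simeq\Mor_\mC(Y,Z)$, since $\Mor(0,Z)\simeq*$. Alternatively I would use the fact that $\mV_*\simeq{}_{*}\Mod$-type descriptions identify $\mV_*$-categories with $\mV$-categories tensored under $\mV_{*/}$ data. The cleanest route is to observe that $\phi_!$ admits the left adjoint $((-)_+)_!$ (transfer of enrichment, part (2)). I would then check that unit and counit are equivalences on the respective subcategories, using the pointwise analysis above and the fact that equivalences of enriched categories are detected on morphism objects and essential surjectivity.

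\textbf{Step 4: full faithfulness on functors.} The remaining task is to show that a zero-preserving $\mV$-functor lifts uniquely to a $\mV_*$-functor. A pointed structure on $\Mor(X,Y)\to\Mor(FX,FY)$ is the property that $F$ sends zero morphisms to zero morphisms, which holds since $F(0)$ is zero. Because this is a property, the space of lifts is contractible.

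\textbf{Main obstacle.} I expect the hardest point to be the coherence in Step 3. The pointwise factorization through $\wedge$ must assemble into a homotopy-coherent enriched category. I would address it via the adjunction $((-)_+)_!\dashv\phi_!$ and a unit/counit check, rather than by direct construction.
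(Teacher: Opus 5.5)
The paper does not prove this statement itself; it quotes it from \cite[Proposition 3.8.]{gepner2025oriented}. Judged on its own, your proposal handles the easy half correctly. It has a genuine gap at exactly the point you flag as the main obstacle.

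Your Step 2 is fine in substance. The precise reason is that the identity of $\Mor_\mC(X,0)$ factors through the smash product of $\Mor_\mC(0,0)\simeq *$ with $\Mor_\mC(X,0)$, which is $*$. For a weakly reduced functor $F$, the unit of $F(0)$ factors through $*$, and the same argument makes $F(0)$ a zero object. The forgetful functor is also conservative.

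What the proposition really asserts is essential surjectivity and full faithfulness: every $\mV$-category with a zero object, and every zero-preserving $\mV$-functor, lifts essentially uniquely and coherently along $\phi_!$. Your mechanism for this does not work.

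The left adjoint $((-)_+)_!$ of $\phi_!$ does not preserve reducedness. For $\mC$ with zero object $0$ one gets $\Mor_{((-)_+)_!\mC}(0,Y)\simeq *_+=S^0$, which is not the zero object of $\mV_*$. The unit is given on morphism objects by $\Mor_\mC(X,Y)\to\Mor_\mC(X,Y)_+$, which is not an equivalence. Dually, the counit is $(\phi\Mor_\mD(X,Y))_+\to\Mor_\mD(X,Y)$, also not an equivalence. So there is no unit or counit on the reduced subcategories to check.

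The left adjoint of the restricted forgetful functor must also collapse the freely added base points onto the zero morphisms. It is the pushout in $\mV_*\mathrm{-}\Cat$ of $((-)_+)_!\mC$ along the functor from the one-object category with endomorphism object $S^0$ (picking $0$) to the one-object category with endomorphism object $*$. Its morphism objects are not accessible from your pointwise observation that composites with zero morphisms are zero. Identifying them with $\Mor_\mC(X,Y)$ is the whole content of the statement.

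The slice remark does not help either: $\mC\simeq\mC_{0/}$ via \cref{bien} is an equivalence of $\mV$-categories and produces no $\mV_*$-enrichment. Step 4 has the same defect. That a zero-preserving functor preserves base points is a pointwise fact, whereas you need the map on spaces of enriched functors to be an equivalence onto the zero-preserving ones. That is again the coherence problem.

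A route that works goes through presentable categories, parallel to how the paper treats the spectral analogue in \cref{unistab} and \cref{specenr}.
\begin{enumerate}
\item We have $\mV_*\simeq\mV\ot\infty\Grp_*$ with the induced smash structure, and $\infty\Grp_*$ is an idempotent algebra in $\Pr^L$. Hence left $\mV_*$-modules in $\Pr^L$ are exactly the left $\mV$-modules whose underlying category is pointed, and every $\mV$-linear left adjoint between such modules is uniquely $\mV_*$-linear. Via ${}_\mV\Mod(\Pr^L)\simeq\mV\mathrm{-}\Pr^L$, this is the presentable case of the proposition.
\item For a small $\mV$-category $\mC$ with zero object, the presheaves $F$ with $F(0)\simeq *$ form a $\mV$-enriched localization of $\mP_\mV(\mC)$ containing the representables. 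This is \cref{weiloc}, with $\Lambda$ the empty cocone at the initial object $0$.
\item This localization is pointed: $\Mor_\mC(-,0)$ is the final presheaf, and by \cref{enryo} it is initial among reduced presheaves.
\item Hence the localization is canonically $\mV_*$-enriched, and $\mC$ inherits its lift as a full subcategory.
\item The statement about functors follows by Yoneda extension together with the presentable case.
\end{enumerate}
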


In view of \cref{reduu} we identify reduced oriented, antioriented, bioriented categories with weakly reduced 
oriented, antioriented, bioriented categories that admit an initial or final object. 
Similarly, we identify reduced oriented, antioriented, bioriented functors between reduced oriented, antioriented, bioriented categories with weakly reduced oriented, antioriented, bioriented functors.

\begin{example}\label{Grayspaces} By \cref{grayspace} there is a monoidal localization
$\tau_0: \infty\Cat \rightleftarrows \infty\Grp$ that by \cref{adj} induces a bioriented localization
$$  \tau_0: \infty\Cat \rightleftarrows \tau_0^*(\infty\Grp).$$

\end{example}

\begin{definition}\emph{}
A weakly (reduced) oriented, antioriented, bioriented category is presentable if it is presentable in the sense of \cref{present}.

\end{definition}

\begin{notation}\label{presol}
Let $${\wedge\Pr^L}, \ {\wedge\Pr^R} \subset \wedge\widehat{\Cat},$$$$ {\Pr^L\wedge}, \ {\Pr^R\wedge} \subset \widehat{\Cat}\wedge, $$$$ {\wedge\Pr^L\wedge}, \ {\wedge\Pr^R\wedge} \subset \wedge\widehat{\Cat}\wedge$$ be the respective subcategories of presentable reduced antioriented, oriented, bioriented categories and left (right) adjoint reduced antioriented, oriented, bioriented functors.

\end{notation}

\cref{adjunc} gives the following:

\begin{proposition}

There are canonical equivalences sending left to right adjoints: $$	(\wedge\Pr^L)^\op \simeq \wedge\Pr^R, $$$$ (\Pr^L\wedge)^\op \simeq \Pr^R\wedge, $$$$ (\wedge\Pr^L\wedge)^\op \simeq {\wedge\Pr^R\wedge}.$$
\end{proposition}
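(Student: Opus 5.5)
The statement is the oriented version of \cref{adjunc}. It asserts three equivalences $(\wedge\Pr^L)^\op \simeq \wedge\Pr^R$, $(\Pr^L\wedge)^\op \simeq \Pr^R\wedge$ and $(\wedge\Pr^L\wedge)^\op \simeq \wedge\Pr^R\wedge$, each sending a left adjoint functor to its right adjoint. The plan is to specialize \cref{adjunc} to three presentably monoidal categories:
\begin{itemize}
\item $\mV=(\infty\Cat_*,\wedge)$;
\item $\mV=(\infty\Cat_*,\wedge^\rev)$;
\item $\mV=(\infty\Cat_*,\wedge)\ot(\infty\Cat_*,\wedge^\rev)$.
\end{itemize}
Then we check that the subcategories of \cref{presol} match the subcategories $\mV\mathrm{-}\Pr^L$ and $\mV\mathrm{-}\Pr^R$ under the identification of \cref{reduu}. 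First we verify that each $\mV$ is presentably monoidal. This holds because the Gray smash product is biclosed, so it preserves small colimits in each variable, and $\infty\Cat_*$ is presentable. The third case follows since $\Pr^L$ is symmetric monoidal.

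Next we compare objects. \cref{presol} takes presentable \emph{reduced} categories, that is, weakly reduced ones with a zero object. A presentable $\mV$-enriched category has an initial object, since its underlying category $\iota(\mC)$ is presentable. The step to check is that this initial object of $\iota(\mC)$ is initial in the enriched sense. For this we use tensors: for every $Y$ the morphism object $\Mor_\mC(\emptyset,Y)$ corepresents, via tensoring, maps out of $V\wedge\emptyset\simeq\emptyset$. The tensor $V\wedge(-)$ is a left adjoint, so it preserves the initial object. Hence $\Map_\mV(V,\Mor_\mC(\emptyset,Y))\simeq *$ for all $V$, and $\Mor_\mC(\emptyset,Y)$ is final. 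By \cref{reduu}, a weakly reduced category admitting an initial object is reduced. So presentable weakly reduced categories are automatically reduced, and the objects of the subcategories coincide.

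Then we compare morphisms. \cref{reduu} says that every weakly reduced functor between reduced categories is reduced, with no extra condition. Hence left (respectively right) adjoint reduced oriented, antioriented and bioriented functors are exactly the left (respectively right) adjoint $\mV$-enriched functors. So $\wedge\Pr^L=\mV\mathrm{-}\Pr^L$ and $\wedge\Pr^R=\mV\mathrm{-}\Pr^R$ for $\mV=(\infty\Cat_*,\wedge)$, and the same holds in the other two cases. Applying \cref{adjunc} then gives the three equivalences, and they send left adjoints to right adjoints.

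The only non-formal point is the identification of presentable weakly reduced categories with reduced ones, that is, that the underlying initial object is enriched-initial. This is the tensor argument above. By symmetry, the enriched-final object is the final object of $\iota(\mC)$, obtained using cotensors, which exist by \cref{adj2} since $\iota(\mC)$ is presentable. We do not need it, because \cref{reduu} only requires an initial or a final object.
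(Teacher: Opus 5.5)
Your proposal is correct and follows the paper's argument: the paper obtains the statement directly by specializing \cref{adjunc} to $\mV=(\infty\Cat_*,\wedge)$, $(\infty\Cat_*,\wedge^\rev)$ and $(\infty\Cat_*,\wedge)\ot(\infty\Cat_*,\wedge^\rev)$. Your extra check, that presentable weakly reduced categories automatically have an enriched initial object so that \cref{presol} really describes $\mV\mathrm{-}\Pr^L$ and $\mV\mathrm{-}\Pr^R$, is left implicit in the paper; the fact you use there, that $V\wedge(-)$ preserves the initial object, is best justified by the identification ${_\mV\Mod(\Pr^L)}\simeq\mV\mathrm{-}\Pr^L$.
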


We often use (weakly) (reduced) oriented, antioriented and bioriented slice categories, which we define as enriched slice categories in the sense of \cref{slices}.

	

 






\subsubsection{Opposite and conjugate oriented categories}

\begin{notation}
The equivalences of \cref{dua} give rise to the following equivalences
$$(-)^\co:= (-)^\op_!: {\boxtimes\Cat} \simeq {\Cat\boxtimes}, $$$$ (-)^\co:= (-)^\op_!: {\Cat\boxtimes} \simeq {\boxtimes\Cat}, $$
$$ (-)^\co:= ((-)^\op, (-)^\op)_! : {\boxtimes\Cat\boxtimes} \simeq {\boxtimes\Cat\boxtimes},$$
where the first two equivalences are inverse to each other and the third equivalence is an involution.
\end{notation}

\begin{notation}
Let $$(-)^\circ: {\Cat\boxtimes} \simeq {\boxtimes\Cat}, $$$$ (-)^\circ: {\boxtimes\Cat} \simeq {\Cat\boxtimes} $$$$ (-)^\circ:  {\boxtimes\Cat\boxtimes} \simeq {\boxtimes\Cat\boxtimes}$$ be the opposite enriched category involutions.	

\end{notation}

\begin{notation}
The equivalences of \cref{dua} give rise to the following equivalences
$$ (-)^\op:= (-)^\circ\circ (-)^\co_!: {\boxtimes\Cat} \simeq {\boxtimes\Cat}, $$$$ (-)^\op:= (-)^\circ\circ (-)^\co_!: {\Cat\boxtimes} \simeq {\Cat\boxtimes},$$
$$  (-)^\op :=(-)^\circ \circ ((-)^\co, (-)^\co)_!: {\boxtimes\Cat\boxtimes} \simeq {\boxtimes\Cat\boxtimes}, $$
where the first two equivalences are inverse to each other and the third equivalence is an involution.
\end{notation}

\begin{notation}

Moreover we set
$$ {(-)^{\co\op}}:=  {(-)^{\co}} \circ {(-)^{\op}} \simeq {(-)^{\op}} \circ {(-)^{\co}}:{\boxtimes\Cat} \simeq {\Cat\boxtimes}, {\Cat\boxtimes} \simeq {\boxtimes\Cat}, {\boxtimes\Cat\boxtimes} \simeq {\boxtimes\Cat\boxtimes},$$
$$ {(-)^{\cop}}:=  {(-)^{\co\op}} \circ {(-)^{\circ}} \simeq {(-)^{\circ}} \circ {(-)^{\co\op}} \simeq (-)^{\co\op}_!: {\boxtimes\Cat} \simeq {\boxtimes\Cat}, {\Cat\boxtimes} \simeq {\Cat\boxtimes},$$
$${^{\cop}(-)}:= ((-)^{\co\op}, \id)_!, \ (-)^\cop:= (\id, (-)^{\co\op})_!: {\boxtimes\Cat\boxtimes} \simeq {\boxtimes\Cat\boxtimes},$$
$${^{\cop}(-)}^\cop:=(-)^\cop \circ  {^{\cop}(-)} \simeq $$$$ {^{\cop}(-)}\circ (-)^\cop \simeq ((-)^{\co\op}, (-)^{\co\op})_! \simeq$$$$ {(-)^{\co\op}} \circ {(-)^{\circ}} \simeq {(-)^{\circ}} \circ {(-)^{\co\op}}: {\boxtimes\Cat\boxtimes} \simeq {\boxtimes\Cat\boxtimes}.$$

\end{notation}

The equivalences of \cref{dua} gives rise to the following equivalences of bioriented categories:
\begin{corollary}\label{duae}There are canonical equivalences of bioriented categories:
$$(-)^\op: \infty\Cat^\co \simeq \infty\Cat, $$$$ (-)^\co: \infty\Cat^\op \simeq \infty\Cat^\circ, $$$$ (-)^{\co\op} : {^\cop\infty\Cat^{\cop}} \simeq \infty\Cat. $$	

\end{corollary}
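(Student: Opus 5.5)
All three equivalences come from one mechanism: a monoidal equivalence of $\infty\Cat$ transports the bienrichment of $\infty\Cat$ over itself. By the example preceding the corollary, the bioriented structure on $\infty\Cat$ is the enrichment underlying the presentable $(\infty\Cat,\boxtimes)$-$(\infty\Cat,\boxtimes)$-bimodule $\infty\Cat$. Under \cref{adjuiti} and the equivalence ${_\mV\Mod(\Pr^L)} \simeq \mV\mathrm{-}\Pr^L$, this bimodule corresponds to a presentable $(\infty\Cat,\boxtimes)\ot(\infty\Cat,\boxtimes^\rev)$-enriched category. The general claim is: if $\phi,\psi$ are monoidal autoequivalences (possibly reversing the monoidal structure) and $F\colon\infty\Cat\to\infty\Cat$ is an equivalence with $F(K\boxtimes X\boxtimes L)\simeq \phi(K)\boxtimes F(X)\boxtimes\psi(L)$ coherently, then $F$ is an equivalence of bioriented categories $(\phi,\psi)_!(\infty\Cat)\simeq\infty\Cat$, with the sides of the two actions swapped when $\phi,\psi$ reverse the tensor product. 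To get this I apply part (2) of the transfer proposition with $\phi=\gamma^{-1}$: pushing forward along $\phi$ is the same as restricting the action along $\phi^{-1}$, which is its right adjoint. The enriched statement then reduces to the corresponding equivalence of bimodules in $\Pr^L$.

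For the first equivalence I take $F=(-)^\op$. By \cref{dua}, $(-)^\op\colon(\infty\Cat,\boxtimes^\rev)\simeq(\infty\Cat,\boxtimes)$ is monoidal, so $(K\boxtimes X\boxtimes L)^\op\simeq L^\op\boxtimes X^\op\boxtimes K^\op$. Thus $(-)^\op$ carries the left action to the right action twisted by $(-)^\op$, and vice versa. By definition $\infty\Cat^\co=((-)^\op,(-)^\op)_!(\infty\Cat)$, and the side swap is exactly the effect of the order reversal in $(\infty\Cat,\boxtimes)\ot(\infty\Cat,\boxtimes^\rev)$. So $(-)^\op$ is a bioriented equivalence $\infty\Cat^\co\simeq\infty\Cat$. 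As a sanity check on morphism objects, \cref{grayhoms} gives $\Fun^\oplax(\mC,\mD)^\op\simeq\Fun^\lax(\mC^\op,\mD^\op)$, which says $\L\Mor$ and $\R\Mor$ are exchanged and opped, as required.

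For the second equivalence I run the same argument with $F=(-)^\co$, which is also monoidal $(\infty\Cat,\boxtimes^\rev)\simeq(\infty\Cat,\boxtimes)$ by \cref{dua}. This gives $((-)^\co,(-)^\co)_!(\infty\Cat)\simeq\infty\Cat$ via $(-)^\co$. Applying the involution $(-)^\circ$ to both sides, and using $\infty\Cat^\op=(-)^\circ\circ((-)^\co,(-)^\co)_!(\infty\Cat)$, yields $(-)^\co\colon\infty\Cat^\op\simeq\infty\Cat^\circ$. For the third equivalence, $(-)^{\co\op}$ is the composite of two order-reversing monoidal equivalences, hence a monoidal autoequivalence of $(\infty\Cat,\boxtimes)$ with no side swap. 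By the general claim it is an equivalence $((-)^{\co\op},(-)^{\co\op})_!(\infty\Cat)\simeq\infty\Cat$, and the left-hand side is ${^\cop\infty\Cat^\cop}$ by the notation. Alternatively, I can compose the first two equivalences and use $(-)^{\co\op}\simeq(-)^\co\circ(-)^\op$ together with $(-)^\circ\circ(-)^\circ\simeq\id$.

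The main obstacle is the coherence bookkeeping in the general claim. One must check that the order reversal under $\phi$ matches the swap of the two tensor factors of $(\infty\Cat,\boxtimes)\ot(\infty\Cat,\boxtimes^\rev)$ at the level of bimodules, not just on objects. I would handle this by writing the self-bimodule $\infty\Cat$ as the module over $\infty\Cat\ot\infty\Cat^\rev$ given by the monoidal structure. The monoidal equivalence $(-)^\op\colon\infty\Cat^\rev\simeq\infty\Cat$ then induces $\infty\Cat\ot\infty\Cat^\rev\simeq(\infty\Cat\ot\infty\Cat^\rev)^{\mathrm{swap}}$, which is compatible with the module structure because it is functorial in the monoidal category.
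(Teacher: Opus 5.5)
Your proposal is correct and follows the paper's route: the paper obtains all three equivalences directly from the monoidal equivalences $(-)^\op,(-)^\co\colon(\infty\Cat,\boxtimes^\rev)\simeq(\infty\Cat,\boxtimes)$ of \cref{dua}, by transporting the canonical self-bienrichment of $\infty\Cat$ along them, which is exactly your mechanism. Your treatment of the side swap, the use of $(-)^\circ$ for the second equivalence, and the reduction to bimodules in $\Pr^L$ make explicit what the paper leaves to a one-line remark.
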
	

We will often use the following notation:

\begin{notation}
Let $\mC$ be a presentable reduced antioriented category and $\mD$ a presentable reduced oriented category. The category $\mC \ot \mD$ is canonically a category presentably bitensored over
$\infty\Cat_*$ and so a presentable reduced bioriented category that we denote by $\mC \boxplus \mD.$

\end{notation}

\begin{definition}
Let $\mC, \mD$ be presentable reduced bioriented categories. The category $\mC \ot \mD$ is canonically a category presentably bitensored over $\infty\Cat_* \ot \infty\Cat_*$ and so $\mC \boxplus \mD$
carries extra structure that guarantees by \cite[Theorem 5.51., Theorem 4.89.]{heine2024bienrichedinftycategories} that for every small reduced bioriented category $\mB$ the category $ {\wedge\Fun\wedge}(\mB, \mC\boxplus \mD)$ refines to a presentable bioriented category.
\end{definition}

\begin{remark}\label{tencot} By \cite[Proposition 5.15.]{heine2024bienrichedinftycategories} the left tensor with $X\in\infty\Cat_*$ is the reduced oriented functor $$(\mC \boxplus (X \wedge (-)))_*:  {\wedge\Fun\wedge}(\mB, \mC\boxplus \mD) \to {\wedge\Fun\wedge}(\mB, \mC\boxplus \mD)$$
and the right tensor with $X\in\infty\Cat_*$ is the reduced antioriented functor $$(((-) \wedge X)\boxplus\mD)_*:  {\wedge\Fun\wedge}(\mB, \mC\boxplus \mD) \to {\wedge\Fun\wedge}(\mB, \mC\boxplus \mD).$$	

\end{remark}

We will also use sometimes use the following generalization of oriented, antioriented and bioriented categories, which behave formally similar.
For every monoidal category $\mC$ let $\widehat{\mP}(\mC)$ be the monoidal category of presheaves of large spaces on $\mC$ endowed with Day convolution.

\begin{definition}\emph{}
\begin{enumerate}
		 
\item A generalized oriented category is a category enriched in $(\widehat{\mP}(\infty\Cat),\boxtimes^\rev).$

\item A generalized antioriented category is a category enriched in $(\widehat{\mP}(\infty\Cat),\boxtimes).$
		
\item A generalized bioriented category is a category enriched in $(\widehat{\mP}(\infty\Cat),\boxtimes) \ot (\widehat{\mP}(\infty\Cat),\boxtimes^\rev).$
			
\item A weakly reduced generalized oriented category is a category enriched in $(\widehat{\mP}(\infty\Cat_*),\wedge^\rev).$

\item A weakly reduced generalized antioriented category is a category enriched in $(\widehat{\mP}(\infty\Cat_*),\wedge).$
		
\item A weakly reduced generalized bioriented category is a category enriched in $$(\widehat{\mP}(\infty\Cat_*),\wedge) \ot (\widehat{\mP}(\infty\Cat_*),\wedge^\rev).$$
		
\end{enumerate}
	
\end{definition}

The following is \cite[Notation 5.27., Proposition 5.107.]{heine2024bienrichedinftycategories} and \cite[Proposition 5.30.]{heine2024bienrichedinftycategories}:

\begin{proposition}Let $\mD$ be a weakly reduced bioriented category. 

\begin{enumerate}
\item Let $\mC$ be a weakly reduced bioriented category that admits left tensors.
The weakly reduced antioriented category ${\Fun\wedge}(\mC,\mD)$ of weakly reduced oriented functors of \cref{funor} underlies a weakly reduced generalized bioriented category that admits right tensors.
If moreover $\mD$ admits left tensors, also ${\Fun\wedge}(\mC,\mD)$ admits left tensors, which are formed objectwise.

\vspace{1mm}
\item Let $\mC$ be a weakly reduced bioriented category that admits right tensors. The weakly reduced oriented category ${\wedge\Fun}(\mC,\mD)$ of weakly reduced antioriented functors of \cref{funor} refines to a weakly reduced generalized bioriented category that admits left tensors.
If moreover $\mD$ admits right tensors, also ${\wedge\Fun}(\mC,\mD)$ admits right tensors, which are formed objectwise.

\end{enumerate}

\end{proposition}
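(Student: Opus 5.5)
The statement splits into two halves, (1) and (2), that are mirror images of one another. I will prove (1) in detail and obtain (2) from it by applying the opposite-enrichment involution $(-)^\circ$.

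\emph{Structure for (1).} \cref{funor} already exhibits ${\Fun\wedge}(\mC,\mD)$ as a weakly reduced antioriented category. It remains to refine this to a weakly reduced generalized bioriented category. The only freedom is to allow enrichment in large presheaves, since morphism objects of ${\Fun\wedge}(\mC,\mD)$ in the oriented direction need not be representable. I would do this by embedding $\mD$ into $\widehat{\mP}$-enriched categories along the lax monoidal Yoneda embedding $\infty\Cat_* \to \widehat{\mP}(\infty\Cat_*)$, which is monoidal for Day convolution. I would then apply the bienriched functor category construction of \cref{psinho}, with $\mV = (\widehat{\mP}(\infty\Cat_*),\wedge^\rev)$ acting on the source and the antioriented enrichment of $\mD$ acting on the target. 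This is exactly the content of the cited \cite[Notation 5.27., Proposition 5.107.]{heine2024bienrichedinftycategories}, so the work is to check that its hypotheses hold: the enrichment must be weakly reduced, and $\wedge$ must be presentably monoidal on $\infty\Cat_*$.

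\emph{Right tensors.} For a weakly reduced oriented functor $F$ and $K \in \infty\Cat_*$, the right tensor should be $F \circ ((-)\wedge K)$, that is, precomposition with the right tensor functor of $\mC$. This makes sense because $\mC$ admits left tensors, and in the bienriched setting left tensoring in $\mC$ is an oriented endofunctor compatible with the right action. I would verify the universal property by the enriched Yoneda lemma (\cref{enryo}): maps out of $F\circ(K\wedge -)$ correspond to maps $K \to$ the generalized morphism object. This precomposition argument is the key non-formal step. One has to keep track of how the orientation reverses, using $\wedge^\rev$ against $\wedge$.

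\emph{Left tensors.} If $\mD$ also admits left tensors, I would apply \cref{colimenrfun} with $\mH$ the tensor weights, which are a special case of weighted colimits. This shows that ${\Fun\wedge}(\mC,\mD)$ admits left tensors and that evaluation at each $X \in \mC$ preserves them, so they are computed objectwise. Concretely, $(K\wedge F)(X) = K \wedge F(X)$.

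Part (2) follows by applying $(-)^\circ$ together with \cref{dua2}, which exchange oriented with antioriented and left with right. The main obstacle I expect is the identification of right tensors by precomposition, since this is where the bienrichment interacts with the Gray smash product's non-symmetry. For that step I would rely directly on \cite[Proposition 5.30.]{heine2024bienrichedinftycategories}.
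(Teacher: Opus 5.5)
In substance you follow the paper, which gives no argument beyond citing \cite[Notation 5.27., Propositions 5.30., 5.107.]{heine2024bienrichedinftycategories}, and you also defer the actual construction to these results. However, two of the steps you add around the citation are wrong.

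First, (2) does not follow from (1) via $(-)^\circ$. Passing to the opposite enriched category reverses morphisms, and by the paper's definition of cotensors, tensors in $\mC^\circ$ are cotensors in $\mC$. Since ${\Fun\wedge}(\mC^\circ,\mD^\circ)\simeq{\wedge\Fun}(\mC,\mD)^\circ$, applying (1) to $(\mC^\circ,\mD^\circ)$ only gives a cotensor statement: if $\mC$ admits right cotensors, then ${\wedge\Fun}(\mC,\mD)$ admits left cotensors. Composing with the transport along \cref{dua2} does not help, because any such composite still contains one $(-)^\circ$ and has the same defect.

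The correct duality is the conjugation $(-)^\co=((-)^\op,(-)^\op)_!$ on its own. It transports both enrichments along the monoidal equivalences of \cref{dua2} and swaps their sides without reversing morphisms. So it exchanges oriented with antioriented functors and left with right tensors, while keeping tensors as tensors. Then ${\wedge\Fun}(\mC,\mD)$ is the conjugate of ${\Fun\wedge}(\mC^\co,\mD^\co)$, and (1) for $(\mC^\co,\mD^\co)$ yields (2).

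Second, your right-tensor formula is inconsistent. $\mC$ is only assumed to have left tensors. Moreover, a right tensor $(-)\wedge K$ on $\mC$ is not an oriented functor in general, since the Gray smash product is not symmetric. So precomposing with it does not even preserve ${\Fun\wedge}(\mC,\mD)$. The right tensor must be $F\circ(K\wedge(-))$, precomposition with the left tensor. This is oriented because left tensoring is compatible with the right enrichment, and it is the formula the paper uses in \cref{cloten}.

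Relatedly, the new oriented enrichment comes from this left action on the \emph{source}. Applied as you describe, \cref{psinho} only recovers the antioriented enrichment of \cref{funor}, and pushing $\mD$ forward along the Yoneda embedding does not create the second direction. The new morphism objects are the presheaves $K\mapsto\Map(F\circ(K\wedge(-)),G)$. These need not be representable, because $F$ need not preserve the relevant colimits in $K$; this is why one only obtains a generalized bioriented category. With this description the tensor property is tautological. The real content is the coherent construction, which you, like the paper, take from the citation.

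Your treatment of the objectwise left tensors via \cref{colimenrfun} is fine (for small $\mC$).
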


\subsection{Oriented pullbacks}

Next we define oriented pushouts and oriented pullbacks following \cite[4.8.]{gepner2025oriented}.

\begin{definition}
Let $\mC$ be an oriented category.
The oriented pullback of a diagram $X \to Z\leftarrow Y$ in $\mC$ is a diagram
\[
\xymatrix{& W\ar[rd]\ar[ld] &\\
X\ar[rd] & \Longrightarrow & Y\ar[ld]\\
& Z &}
\]
in $\mC$ such that for all objects $T$ of $\mC$ the induced functor
\begin{equation}\label{indmapl}
\R\Mor_{\mC}(T,W)\to \R\Mor_{\mC}(T,X) \underset{\R\Mor_{\mC}(T,Z)}{\times}
\Fun^\oplax(\bD^1,\R\Mor_{\mC}(T,Z)) \underset{\R\Mor_{\mC}(T,Z)}{\times} \R\Mor_{\mC}(T,Y)
\end{equation}
is an equivalence in $\infty\Cat$.
We write $X\overset{\to}{\times}_Z Y$ or $ Y\overset{\leftarrow}{\times}_Z X$ for $W$ if it exists.

\end{definition}

\begin{definition}Let $\mC$ be an oriented category.
The oriented pushout of a diagram $X \leftarrow Z\to Y$ in $\mC$
is the oriented pullback in the oriented category $\mC^\op$ of the corresponding diagram.
We write $ X\overset{\to}{+}_Z Y $ or $Y\overset{\leftarrow}{+}_Z X$ for the oriented pushout.

\end{definition}

\begin{definition}Let $\mC$ be an antioriented category. 
\begin{enumerate}
\item The antioriented pullback of a diagram $X \to Z \leftarrow Y $ in $\mC$ denoted by $  X \overset{\bar{\to}}{\times}_Z Y $ or
$ Y \overset{\bar{\leftarrow}}{\times}_Z X $ is the oriented pullback of the corresponding diagram in the oriented category $\mC^{\co}$.	

\item The antioriented pushout of a diagram $X \leftarrow Z \to Y $ in $\mC$
denoted by $  X \overset{\bar{\to}}{+}_Z Y $ or $ Y \overset{\bar{\leftarrow}}{+}_Z X $ is the oriented pushout of the corresponding diagram in the oriented category $\mC^\co$.

\end{enumerate}	

\end{definition}

\begin{definition}
The (anti)oriented pushout and pullback in any bioriented category is the (anti)oriented pushout and pullback in the underlying (anti)oriented category.

\end{definition}

The next is \cite[Lemma 4.4.6.]{gepner2025oriented}:

\begin{lemma}\label{0desc}\label{adesc}
Let $\mC$ be an oriented category and $Z \to X, Z \to Y, X \to Z, Y \to Z$ morphisms in $\mC$.

\begin{enumerate}

\item If $\mC$ admits pushouts and right tensors with $\bD^1$, 
there is a canonical equivalence $$ X \overset{\to}{+}_Z Y \simeq X +_{\{0\}\otimes Z} (Z \ot \bD^1) +_{\{1\}\otimes Z} Y.$$	

\item If $\mC$ admits pullbacks and right cotensors with $\bD^1$, 
there is a canonical equivalence $$ X \overset{\to}{\times}_Z Y \simeq X \times_{Z^{\{0\}}} {Z^{\bD^1}} \times_{Z^{\{1\}}} Y.$$	
\end{enumerate}

\end{lemma}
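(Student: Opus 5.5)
Both parts come down to the same Yoneda-style argument; I will prove (2) directly and deduce (1) from it by duality.

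\emph{Part (2).} Put $W:= X \times_{Z^{\{0\}}} Z^{\bD^1} \times_{Z^{\{1\}}} Y$. The maps $X\to Z$, $Y \to Z$ and the two endpoint projections $Z^{\bD^1}\to Z$ are induced by $\{0\},\{1\}\subset \bD^1$. I read "admits pullbacks" as admitting conical pullbacks in the enriched sense, i.e. pullbacks that are preserved by every $\R\Mor_\mC(T,-)$. Fix $T \in \mC$. Applying $\R\Mor_\mC(T,-)$ then gives an equivalence
$$\R\Mor_\mC(T,W)\simeq \R\Mor_\mC(T,X)\times_{\R\Mor_\mC(T,Z^{\{0\}})}\R\Mor_\mC(T,Z^{\bD^1})\times_{\R\Mor_\mC(T,Z^{\{1\}})}\R\Mor_\mC(T,Y).$$

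Next I identify the cotensor term. By definition, the right cotensor in an oriented category (enriched in $(\infty\Cat,\boxtimes^\rev)$) represents the internal hom of $(\infty\Cat,\boxtimes^\rev)$ out of $K$. That internal hom is right adjoint to $K\boxtimes^\rev(-)=(-)\boxtimes K$, so it is $\Fun^\oplax(K,-)$. Hence there are equivalences $\R\Mor_\mC(T,Z^{K})\simeq \Fun^\oplax(K,\R\Mor_\mC(T,Z))$, natural in $K$. For $K=\{i\}$ this is just $\R\Mor_\mC(T,Z)$, and under these identifications the endpoint maps become evaluation at $0$ and $1$. So the displayed fibre product is exactly the target of (\ref{indmapl}), naturally in $T$.

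It remains to produce the oriented square with vertex $W$. Its lax $2$-cell is the image of $\id$ under $\R\Mor_\mC(Z^{\bD^1},Z^{\bD^1})\simeq \Fun^\oplax(\bD^1,\R\Mor_\mC(Z^{\bD^1},Z))$, restricted along $W\to Z^{\bD^1}$. By construction the induced functor (\ref{indmapl}) is the equivalence just computed, so $W$ is the oriented pullback.

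\emph{Part (1).} By definition, the oriented pushout in $\mC$ is the oriented pullback in $\mC^\op=(\mC^{\co})^{\circ}$. Pushouts in $\mC$ become pullbacks in $\mC^\op$. A right tensor $Z\ot K$ in $\mC$ becomes a right cotensor $Z^{K'}$ in $\mC^\op$, where $K'$ is $K$ transported by the involution of \cref{dua} used to define $(-)^\op$ (a $(-)^\co$ or $(-)^\op$ on $\infty\Cat$). For $K=\bD^1$ this is harmless: $\bD^1$ is a $1$-category, so $(\bD^1)^\co=\bD^1$. If the involution reverses $\bD^1$, the roles of $\{0\}$ and $\{1\}$ swap, and this swap is matched by the swap of $X$ and $Y$ in the notation $X\overset{\to}{+}_Z Y$. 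Applying (2) in $\mC^\op$ then yields (1).

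\emph{Main obstacle.} The one delicate step is bookkeeping of orientations. In (2) one must check that cotensoring in $\boxtimes^\rev$-enrichment yields $\Fun^\oplax$ rather than $\Fun^\lax$, so that it matches (\ref{indmapl}). In (1) one must track how $(-)^\op$ acts on $\bD^1$ and its endpoints. I would settle both by unwinding the adjunction $(-)\boxtimes K \dashv \Fun^\oplax(K,-)$ and \cref{grayhoms}.
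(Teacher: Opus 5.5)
The paper does not prove this lemma. It quotes it from \cite[Lemma 4.4.6.]{gepner2025oriented}, so your Yoneda-style argument is a self-contained derivation from the definitions rather than a competitor to an argument given here, and it is correct in substance.

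Part (2) is fine as written. The paper defines a cotensor as a tensor in $\mC^\circ$, which is enriched over $(\infty\Cat,\boxtimes)$. The relevant internal hom is therefore the right adjoint of $(-)\boxtimes K$, namely $\Fun^\oplax(K,-)$. The cotensor equivalence is by definition induced by composing with the evaluation cell $\bD^1\to\R\Mor_\mC(Z^{\bD^1},Z)$. Hence the functor (\ref{indmapl}) for your square is literally the composite equivalence you computed. Reading ``pullbacks'' as conical pullbacks is the right interpretation.

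In (1) the reduction to (2) works, but your orientation bookkeeping is misstated and your fallback is wrong.

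\emph{The misidentification.} In the paper's conventions, parallel to Definition \ref{ruik}, one has $\R\Mor_{\mC^\op}(X,Y)=\R\Mor_\mC(Y,X)^\co$. This is not $(\mC^\co)^\circ$. For oriented categories $\mC^\co$ means $(-)^\op_!\mC$, so $(\mC^\co)^\circ$ has hom $\infty$-categories $\R\Mor_\mC(Y,X)^\op$.

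\emph{The correct computation.} For a right tensor one uses the other internal hom, the right adjoint of $K\boxtimes(-)$, so $\R\Mor_\mC(Z\ot K,T)\simeq\Fun^\lax(K,\R\Mor_\mC(Z,T))$. By \cref{grayhoms}, $\Fun^\lax(K,M)^\co\simeq\Fun^\oplax(K^\co,M^\co)$. Hence $\R\Mor_{\mC^\op}(T,Z\ot K)\simeq\Fun^\oplax(K^\co,\R\Mor_{\mC^\op}(T,Z))$, i.e.\ $Z\ot K$ is the right cotensor $Z^{K^\co}$ in $\mC^\op$. Since $(\bD^1)^\co=\bD^1$ with both endpoints fixed, applying (2) in $\mC^\op$ gives exactly the stated formula.

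\emph{The fallback.} You claim that a reversal of $\bD^1$ would be absorbed by swapping $X$ and $Y$ in the notation; drop this. $X\overset{\to}{+}_Z Y$ is fixed by definition as $X\overset{\to}{\times}_Z Y$ in $\mC^\op$. A reversed $\bD^1$ would instead produce $X+_{\{1\}\ot Z}(Z\ot\bD^1)+_{\{0\}\ot Z}Y$, which is $X\overset{\leftarrow}{+}_Z Y$, in general a different object. That wrong answer is exactly what you get by computing in $(\mC^\co)^\circ$.
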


\begin{corollary}\label{bdesc}
Let $\mC$ be an antioriented category and $Z \to X, Z \to Y, X \to Z, Y \to Z$ morphisms in $\mC$.

\begin{enumerate}

\item If $\mC$ admits pushouts and left tensors with $\bD^1$, there is a canonical equivalence $$ X \overset{\bar{\to}}{+}_Z Y \simeq X +_{\{0\}\ot Z} (\bD^1\ot Z) +_{ \{1\}\ot Z} Y.$$

\item If $\mC$ admits pullbacks and left cotensors with $\bD^1$, there is a canonical equivalence $$ X \overset{\bar{\to}}{\times}_Z Y \simeq X \times_{^{\{0\}}Z} {^{\bD^1}Z} \times_{^{\{1\}}Z} Y.$$

\end{enumerate}

\end{corollary}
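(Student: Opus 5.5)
The plan is to deduce the antioriented statements from \cref{0desc} by passing to the conjugate oriented category $\mC^\co$. By definition, the antioriented pushout (pullback) of a diagram in $\mC$ is the oriented pushout (pullback) of the corresponding diagram in the oriented category $\mC^\co = (-)^\op_!(\mC)$. The category $\mC^\co$ has the same underlying category as $\mC$. Its morphism $\infty$-categories are $\L\Mor_\mC(X,Y)^\op$, enriched in $(\infty\Cat,\boxtimes^\rev)$ via the monoidal involution $(-)^\op$ of \cref{dua}.

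The first step is to check that the hypotheses transfer. Conical pushouts and pullbacks are detected on the underlying category together with morphism objects, and $(-)^\op$ is an equivalence preserving limits. Hence $\mC^\co$ admits pushouts (pullbacks) if $\mC$ does, and they are computed as in $\mC$.

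The second step, and the only real content, is to identify tensors and cotensors in $\mC^\co$. A right tensor $Z \ot K$ in $\mC^\co$ is characterized by
$$\R\Mor_{\mC^\co}(Z \ot K, Y) \simeq \Fun^\oplax(K, \R\Mor_{\mC^\co}(Z,Y)),$$
up to the appropriate variance. Rewriting $\R\Mor_{\mC^\co} = \L\Mor_\mC(-,-)^\op$ and using \cref{grayhoms}, namely $\Fun^\oplax(K,\mD)^\op \simeq \Fun^\lax(K^\op,\mD^\op)$, this becomes the defining property of the left tensor $K^\op \ot Z$ in $\mC$. So right tensors with $K$ in $\mC^\co$ are left tensors with $K^\op$ in $\mC$, and dually for cotensors.

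Since $(\bD^1)^\op \simeq \bD^1$, the cylinder $Z \ot \bD^1$ in $\mC^\co$ is $\bD^1 \ot Z$ in $\mC$. The main point to check is that the endpoint inclusions $\{0\},\{1\}$ are preserved rather than swapped. The involution $(-)^\op$ of \cref{ruik} acts as the identity on $1$-categories: $(-)^\op_1$ is $(-)^\circ$ composed with $((-)^\co_0)_!$. Because of the opposite-enrichment twist $(-)^\circ$, I must track that the conventions $\overset{\bar\to}{+}$ and "the corresponding diagram" make the orientation match. If a swap does appear, it is absorbed by the notational convention $X\overset{\to}{+}_Z Y = Y\overset{\leftarrow}{+}_Z X$.

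Substituting these identifications into \cref{0desc}(1), applied in $\mC^\co$, gives
$$X +_{\{0\}\ot Z} (\bD^1\ot Z) +_{\{1\}\ot Z} Y.$$
Part (2) follows the same way from \cref{0desc}(2), with cotensors ${}^{\bD^1}Z$ in place of tensors.
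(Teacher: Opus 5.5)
Your strategy of passing to $\mC^\co=(-)^\op_!(\mC)$ and applying \cref{0desc} there is the deduction the paper intends; the corollary is stated without proof right after \cref{0desc}. Your identification of right (co)tensors with $K$ in $\mC^\co$ with left (co)tensors with $K^\op$ in $\mC$ is also correct. It is just transport of tensors along the monoidal equivalence $(-)^\op$. (Minor point: in an oriented category the right tensor is characterized via $\Fun^\lax(K,-)$, not $\Fun^\oplax(K,-)$; with $\Fun^\lax$ your use of \cref{grayhoms} goes through.)

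The gap is the endpoint step, which you rightly call the only real content. Your claim that $(-)^\op$ is the identity on $1$-categories is false. By \cref{ruik}, $(-)^\op_1=((-)^\co_0)_!\circ(-)^\circ=(-)^\circ$ is the ordinary opposite; it is $(-)^\co_1=((-)^\op_0)_!$ that fixes $1$-categories. \cref{oppo} confirms this: $\Mor_{(\bD^1)^\op}(1,0)\simeq\Mor_{\bD^1}(0,1)^\co\simeq *$. So $(\bD^1)^\op$ is the arrow $1\to 0$, and its only identification with $\bD^1$ exchanges the endpoints. Concretely, the arrow $\iota_0\to\iota_1$ in $\L\Mor_\mC(Z,\bD^1\ot Z)$ becomes an arrow $\iota_1\to\iota_0$ in $\R\Mor_{\mC^\co}(Z,\bD^1\ot Z)=\L\Mor_\mC(Z,\bD^1\ot Z)^\op$. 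Thus $\{0\}\ot Z$ and $\{1\}\ot Z$ in $\mC$ play the roles of $Z\ot\{1\}$ and $Z\ot\{0\}$ in $\mC^\co$. Feeding this into \cref{0desc}(1) for $\mC^\co$ gives
\[ X\overset{\bar{\to}}{+}_Z Y\simeq X+_{\{1\}\ot Z}(\bD^1\ot Z)+_{\{0\}\ot Z}Y, \]
and likewise in (2) with ${}^{\{0\}}Z$ and ${}^{\{1\}}Z$ interchanged.

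Your fallback does not repair this. The convention $X\overset{\to}{+}_ZY=Y\overset{\leftarrow}{+}_ZX$ gives two names to one object; it is not an equivalence $X\overset{\to}{+}_ZY\simeq Y\overset{\to}{+}_ZX$. The two gluings genuinely differ. Take $\infty\Cat$ with $Z=*$, $X=\bD^1$ receiving $Z$ at $0$, and $Y=*$. The displayed formula gives the span $x_1\leftarrow x_0\to y$. The other gluing gives the composable pair $y\to x_0\to x_1$, which is what the universal property of the oriented pushout in $\mC^\co$ actually produces.

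So, reading the definition of $\overset{\bar{\to}}{+}$ literally, the displayed right-hand side computes $Y\overset{\bar{\to}}{+}_Z X=X\overset{\bar{\leftarrow}}{+}_Z Y$. To get the statement exactly as written you must make a convention explicit. Either the diagram in $\mC^\co$ ``corresponding'' to $X\leftarrow Z\to Y$ has $X$ and $Y$ exchanged, or one conjugates along $(-)^\co_!$ instead of $(-)^\op_!$. The latter is the functor that fixes $1$-categories, and is presumably what you had in mind. The paper's one-line deduction also leaves this point implicit. Your proposal, however, settles it with an incorrect claim at exactly the step that carries the content.
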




\begin{definition}Let $\mC$ be a reduced oriented category.

\begin{enumerate}
\item Let $\phi: A \to B$ be a morphism in $\mC.$ The oriented left cofiber of $\phi$ is $ 0 \overset{\to}{+}_A B $.

\item Let $\phi: A \to B$ be a morphism in $\mC.$ The oriented right cofiber of $\phi$ is $ B \overset{\to}{+}_A 0 $.

\item Let $\phi: B \to A$ be a morphism in $\mC.$ 
The oriented left fiber of $\phi$ is $ 0 \overset{\to}{\times}_A B  $.

\item Let $\phi: B \to A$ be a morphism in $\mC.$ 
The oriented right fiber of $\phi$ is $ B \overset{\to}{\times}_A 0 $.

\end{enumerate} 
\end{definition}

\begin{definition}Let $\mC$ be a reduced antioriented category.

\begin{enumerate}
\item Let $\phi: A \to B$ be a morphism in $\mC.$ The antioriented left cofiber of $\phi$ is $ 0 \overset{\bar{\to}}{+}_A B $.

\item Let $\phi: A \to B$ be a morphism in $\mC.$ The antioriented right cofiber of $\phi$ is $B \overset{\bar{\to}}{+}_A 0$.

\item Let $\phi: B \to A$ be a morphism in $\mC.$ 
The antioriented left fiber of $\phi$ is $ 0 \overset{\bar{\to}}{\times}_A B $.

\item Let $\phi: B \to A$ be a morphism in $\mC.$ 
The antioriented right fiber of $\phi$ is $ B \overset{\bar{\to}}{\times}_A 0 $.

\end{enumerate} 
\end{definition}

\begin{definition}

A reduced (anti)oriented category admits (anti)oriented (co)fibers if it admits the (anti)oriented left and right (co)fiber of any morphism.

\end{definition}

\begin{notation}
Let $\bD^1$ be always equipped with the initial object.
Let $(\bD^1)^\op$ denote $\bD^1$ equipped with the final object.

\end{notation}

\begin{corollary}\label{redfib}

\begin{enumerate}

\item Let $\mC$ be a reduced oriented category that admits pushouts and reduced right tensors with $\bD^1$ and $C \to A$ a morphism in $\mC$.
There are canonical equivalences $$ 0 \overset{\to}{+}_C A  \simeq  C \wedge \bD^1 +_{C \wedge S^0}A, \ A \overset{\to}{+}_C 0 \simeq A +_{C \wedge S^0} C \wedge (\bD^1)^\op .$$	

\item Let $\mC$ be a reduced oriented category that admits pullbacks and reduced right cotensors with $\bD^1$ and $A \to C$ a morphism in $\mC$.
There are canonical equivalences $$0 \overset{\to}{\times}_C A \simeq C^{\bD^1}_*\times_{C^{S^0}_*} A, \ A \overset{\to}{\times}_C 0  \simeq A \times_{C^{S^0}_*} C^{(\bD^1)^\op}_*. $$	

\item Let $\mC$ be a reduced antioriented category that admits pushouts and reduced left tensors with $\bD^1$ and $C \to B$ a morphism in $\mC$.
There are canonical equivalences $$ 0 \overset{\bar{\to}}{+}_C B \simeq \bD^1 \wedge C   +_{S^0 \wedge C} B, \ A \overset{\bar{\to}}{+}_C 0 \simeq A  +_{S^0 \wedge C} (\bD^1)^\op \wedge C .$$	

\item Let $\mC$ be a reduced antioriented category that admits pullbacks and reduced left cotensors with $\bD^1$ and $B \to C$ a morphism in $\mC$.
There are canonical equivalences $$0 \overset{\bar{\to}}{\times}_C B \simeq {^{\bD^1}C}_* \times_{^{S^0}C_*} B, \ A \overset{\bar{\to}}{\times}_C 0 \simeq A \times_{^{S^0}C_*} {^{(\bD^1)^\op}C}_* .$$	

\end{enumerate}

\end{corollary}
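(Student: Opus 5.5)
We derive each of the four parts from \cref{0desc} and \cref{bdesc}, combined with the identification of reduced tensors and cotensors with unreduced ones. Throughout, the unreduced right tensor $Z \ot K$ and the reduced right tensor $Z \wedge K_+$ agree in a reduced oriented category: the enrichment in $\infty\Cat_*$ restricts along $(-)_+$ to the $\infty\Cat$-enrichment, since $\R\Mor(X,Y)$ is pointed at the zero morphism.

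\emph{Part (1), first equivalence.} By \cref{0desc}(1) we have
$$0 \overset{\to}{+}_C A \simeq 0 +_{\{0\} \ot C} (C \ot \bD^1) +_{\{1\} \ot C} A.$$
Here $\{0\} \ot C \simeq C$ and $\{1\} \ot C \simeq C$, and $C \ot \bD^1 \simeq C \wedge \bD^1_+$. We then collapse the copy of $C$ indexed by $0$.

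The reduced tensor $C \wedge \bD^1$, with $\bD^1$ based at $0$, is the cofiber of $C \simeq C \wedge \{0\}_+ \to C \wedge \bD^1_+$. This holds because $X \wedge K$ is the cofiber of $X \wedge \ast_+ \to X \wedge K_+$ for based $K$, as $K_+ \to K$ is the cofiber of $\ast_+ \to K_+$ in $\infty\Cat_*$ and tensoring is a left adjoint. Similarly $S^0 = \{0,1\}$ based at $0$ gives $C \wedge S^0 \simeq C$, the summand indexed by $1$.

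So pushing out along $0$ at the source end first produces $C \wedge \bD^1$, which receives $C \wedge S^0 \simeq C$ via the endpoint $1$. Gluing $A$ along this map then gives $C \wedge \bD^1 +_{C \wedge S^0} A$. This is exactly the pasting of pushouts, rearranging the iterated pushout $0 +_C (C \ot \bD^1) +_C A$.

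\emph{Part (1), second equivalence.} This is symmetric: we collapse the end $1$ instead. Basing $\bD^1$ at its final object gives $(\bD^1)^\op$, and $S^0$ is now identified with $\{0\}$.

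\emph{Part (2).} This is the dual statement. It follows by applying part (1) in $\mC^\op$, or directly from \cref{0desc}(2) with cotensors. The relevant facts are $C^{K_+} \simeq C^K$ (unreduced), and that $C^K_*$ is the fiber of $C^{K_+} \to C^{\ast_+} = C$. Pasting pullbacks then yields $C^{\bD^1}_* \times_{C^{S^0}_*} A$, and likewise for $(\bD^1)^\op$.

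\emph{Parts (3) and (4).} These follow identically from \cref{bdesc}, using left tensors and left cotensors, or by passing to $\mC^\co$.

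The main obstacle is bookkeeping. We must check which endpoint ($0$ or $1$) carries the base point in each case, so that left versus right (co)fibers match $\bD^1$ versus $(\bD^1)^\op$. We must also confirm the identification $C \wedge S^0 \simeq C$ with the correct inclusion $C \to C \wedge \bD^1$. The pasting-law argument itself is formal.
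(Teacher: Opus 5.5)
Your proposal is correct and is essentially the paper's proof, which likewise combines \cref{0desc} and \cref{bdesc} with the description of reduced (co)tensors as (co)fibers of unreduced ones over the base point (e.g.\ $C^X_*\simeq C^X\times_{C^*}0$) and pastes, leaving implicit the endpoint bookkeeping you spell out; note only that, like the paper, you tacitly use the unreduced tensor $C\ot\bD^1\simeq C\wedge\bD^1_+$ (resp.\ cotensor $C^{\bD^1}$), which the stated hypotheses grant only in reduced form. One caution about your optional shortcut for (3)--(4): a left (co)tensor with $K$ in $\mC$ is a right (co)tensor with $K^\op$ in $\mC^\co$, so going through $\mC^\co$ flips the basing of $\bD^1$ and this must be tracked, whereas citing \cref{bdesc} directly, as the paper does, avoids the issue.
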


\begin{proof}
We use \cref{adesc} and \cref{bdesc} and
that for every $X \in \infty\Cat_*$ the reduced left cotensor ${^XC}_*$ is the pullback $0 \times_{^*C}{^XC}$ and the reduced right cotensor $C^X_*$ is the pullback ${C^X}\times_{C^*}0.$

\end{proof}

\begin{corollary}Let $\mC$ be a reduced oriented category and $X \in \mC$.
The oriented pushout $ {\overset{\overset{\rightarrow}{}}{0 +_C} 0}$ and the right tensor $X \wedge S^1$
both satisfy the same universal property.

\end{corollary}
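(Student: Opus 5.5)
The plan is to compare the two universal properties directly, computing the oriented pushout with \cref{redfib} and the tensor with \cref{smarem}. Presumably $X$ stands for $C$ here, i.e. the diagram is $0 \leftarrow X \to 0$.

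\textbf{Oriented pushout side.} By definition, $0 \overset{\to}{+}_X 0$ is the oriented pullback of $0 \to X \leftarrow 0$ in $\mC^\op$. So for every $T \in \mC$, maps out of it are described by
$$\R\Mor_\mC(0 \overset{\to}{+}_X 0, T) \simeq \R\Mor_\mC(0,T)\times_{\R\Mor_\mC(X,T)} \Fun^\oplax(\bD^1,\R\Mor_\mC(X,T)) \times_{\R\Mor_\mC(X,T)} \R\Mor_\mC(0,T),$$
possibly with $\bD^1$ replaced by its appropriate dual, which I will track through the op-conventions. Since $0$ is a zero object, $\R\Mor_\mC(0,T) \simeq *$, and both maps $* \to \R\Mor_\mC(X,T)$ pick out the zero morphism, which is the base point. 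The right-hand side is therefore
$$\{0\}\times_{\R\Mor_\mC(X,T)}\Fun^\oplax(\bD^1,\R\Mor_\mC(X,T))\times_{\R\Mor_\mC(X,T)}\{0\}.$$
By \cref{hom} this is $\Mor_{\R\Mor_\mC(X,T)}(0,0) = \Omega(\R\Mor_\mC(X,T))$, based at the zero map.

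\textbf{Tensor side.} The right tensor $X \wedge S^1$ is characterized by
$$\R\Mor_\mC(X\wedge S^1,T)\simeq \Fun^{(\op)\lax}_*(S^1,\R\Mor_\mC(X,T)),$$
where the exact variance is dictated by the $\wedge^\rev$-enrichment. The remaining task is to identify $\Fun_*^{(\op)\lax}(S^1,\mD)$ with $\Omega\mD$ for a based $\infty$-category $\mD$, naturally in $\mD$. By the adjunction between $\wedge$ and $\Fun_*$, for $K \in \infty\Cat_*$ a based map $K \to \Fun_*^{(\op)\lax}(S^1,\mD)$ corresponds to a based map $K\wedge S^1 \to \mD$ (or $S^1\wedge K \to \mD$). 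By \cref{smarem} these are $\Sigma(K)\to\mD$ (respectively $\bar\Sigma(K)\to\mD$), which by $\Sigma\dashv\Omega$ correspond to maps $K \to \Omega\mD$. In the $\bar\Sigma$ case one gets $K^{\co\op}\to\Omega\mD$ instead, using \cref{ahoit}.

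\textbf{Matching the two.} Yoneda then identifies both objects with the representing object of $T\mapsto \Omega\R\Mor_\mC(X,T)$, up to the same twist. More directly, \cref{redfib}(1) gives $0\overset{\to}{+}_X 0 \simeq X\wedge\bD^1 +_{X\wedge S^0} 0$ whenever those colimits exist. Since $\wedge$ commutes with cofibers, this becomes $X\wedge(\bD^1/\partial\bD^1)=X\wedge S^1$, via the pushout square in the proof of \cref{smarem}. The statement only claims "the same universal property", so the enriched-mapping computation is the primary argument and this colimit description is a consistency check.

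\textbf{Main obstacle.} The hard step is the bookkeeping of variances. One has to confirm that the orientation of $\Fun^\oplax(\bD^1,-)$ in the oriented pullback in $\mC^\op$, with $\mC^\op$ carrying the reversed enrichment, matches the side on which $S^1$ is tensored, so that one gets $\Sigma$ and not $\bar\Sigma$. I would resolve this by testing the case $\mC=\infty\Cat_*$, where both objects can be computed explicitly.
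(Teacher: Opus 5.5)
Your fallback is sound where it applies. \cref{redfib}(1) with $A=0$, together with $\bD^1/\partial\bD^1\simeq S^1$ and the fact that $X\wedge(-)$ preserves pushouts, gives $0\overset{\to}{+}_X 0\simeq X\wedge S^1$. This is evidently the paper's intended argument: the corollary is stated right after \cref{redfib} with no separate proof.

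The gap is in your primary argument, exactly at the variance step you postpone, and the one explicit claim you make there is wrong. For an oriented category, $\mC^{\op}=(-)^{\circ}\circ(-)^{\co}_!$, so $\R\Mor_{\mC^{\op}}(A,B)\simeq\R\Mor_\mC(B,A)^{\co}$. Unwinding the definition in $\mC^{\op}$ and using \cref{grayhoms} in the form $\Fun^\oplax(\bD^1,\mE^{\co})^{\co}\simeq\Fun^\lax(\bD^1,\mE)$ gives
\[
\R\Mor_\mC\bigl(0\overset{\to}{+}_X 0,\,T\bigr)\simeq\{0\}\times_{\mE}\Fun^\lax(\bD^1,\mE)\times_{\mE}\{0\},\qquad \mE:=\R\Mor_\mC(X,T),
\]
which is consistent with \cref{0desc}(1). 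By \cref{thas2}, \cref{dua} and \cref{ahoitos}, $\bD^1\boxtimes K+_{\partial\bD^1\boxtimes K}\partial\bD^1\simeq\bar{S}(K)\simeq S(K^{\co\op})$. So this object is $\Omega(\mE)^{\co\op}=\bar{\Omega}(\mE)$, not $\Omega(\mE)$ as you obtain from \cref{hom}. Replacing $\bD^1$ by its dual cannot repair this, because both endpoints go to the zero map; what actually changes is oplax versus lax.

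On the tensor side, the internal hom of $(\infty\Cat_*,\wedge^{\rev})$ is right adjoint to $K\wedge(-)$. Hence $\R\Mor_\mC(X\wedge K,T)\simeq\Fun^\lax_*(K,\R\Mor_\mC(X,T))$, and for $K=S^1$ this is again $\bar{\Omega}(\mE)$, since $S^1\wedge(-)\simeq\bar{\Sigma}$. So the twists do agree, but that agreement is the real content of the corollary. Since $\Omega(\mE)$ and $\Omega(\mE)^{\co\op}$ differ in general, ``up to the same twist'' has to be proved, and testing the case $\mC=\infty\Cat_*$ does not prove it.

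The cleanest repair avoids $\Omega$, \cref{hom} and \cref{smarem} altogether. The functor $\Fun^\lax(-,\mE)$ turns the pushout $S^1=\bD^1+_{\partial\bD^1}*$ into a pullback. Hence $\Fun^\lax_*(S^1,\mE)\simeq\{0\}\times_{\mE}\Fun^\lax(\bD^1,\mE)\times_{\mE}\{0\}$, naturally in the based $\infty$-category $\mE$. So both objects corepresent $T\mapsto\Fun^\lax_*(S^1,\R\Mor_\mC(X,T))$, for an arbitrary reduced oriented $\mC$.

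The \cref{redfib} argument, by contrast, only applies when $\mC$ has pushouts and right tensors with $\bD^1$. To get the statement as formulated from it, you would first have to embed $\mC$ fully faithfully into an oriented category that has these, compatibly with the weighted colimits already existing in $\mC$.
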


\begin{corollary}Let $\mC$ be a reduced antioriented category and $X \in \mC$.
The antioriented pushout $  {\overset{\overline{\rightarrow}}{0 +_C} 0}$ and the left tensor $S^1 \wedge X$ both satisfy the same universal property.
\end{corollary}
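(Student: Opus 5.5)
The plan is to deduce this from the preceding corollary for reduced oriented categories by passing to the conjugate, rather than repeating the computation. By definition, the antioriented pushout $0 \overset{\bar{\to}}{+}_X 0$ in $\mC$ is the oriented pushout $0 \overset{\to}{+}_X 0$ in the reduced oriented category $\mC^\co = (-)^\op_!(\mC)$. The preceding corollary, applied to $\mC^\co$, identifies the universal property of this oriented pushout with that of the right tensor $X \wedge S^1$ formed in $\mC^\co$. It then remains to translate right tensors in $\mC^\co$ into left tensors in $\mC$.

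For that translation, $\mC^\co$ is obtained from $\mC$ by transporting the enrichment along the monoidal equivalence $(-)^\op: (\infty\Cat_*,\wedge) \simeq (\infty\Cat_*,\wedge^\rev)$ of \cref{dua2}. The morphism objects of $\mC^\co$ are therefore $\Mor_\mC(A,B)^\op$. The universal property of the right tensor of $K$ and $X$ in $\mC^\co$ reads
$$\Mor_\mC(X\wedge K,Y)^\op \simeq \Fun^\oplax_*(K,\Mor_\mC(X,Y)^\op).$$
Using \cref{grayhoms} (in its based form), the right-hand side is $\Fun^\lax_*(K^\op,\Mor_\mC(X,Y))^\op$. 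Removing the outer $\op$, this is exactly the universal property of the left tensor $K^\op \wedge X$ in the antioriented category $\mC$. Hence the antioriented pushout satisfies the universal property of $(S^1)^\op \wedge X$.

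The last step is to check that $(S^1)^\op \simeq S^1$ as based $\infty$-categories. By \cref{freesp}, $S^1 \simeq B\bN$, and $(B\bN)^\op \simeq B(\bN^{\rev}) \simeq B\bN$ because $\bN$ is commutative. Alternatively, $S^1$ is the cofiber of $\partial\bD^1 \subset \bD^1$, and $(\bD^1)^\op \simeq \bD^1$ by swapping the endpoints, which preserves the base point of the quotient.

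I expect the main obstacle to be the variance bookkeeping in the middle step. One has to check that the conjugation $(-)^\co$ exchanges right tensors and left tensors exactly in the stated way, and whether $\op$ or $\co$ lands on $K$, using \cref{dua2} and \cref{grayhoms}. As a fallback, the statement can be verified directly. By \cref{redfib}(3), $0 \overset{\bar{\to}}{+}_X 0 \simeq (\bD^1 \wedge X) +_{S^0\wedge X} 0 \simeq (\bD^1/\partial\bD^1)\wedge X = S^1 \wedge X$, using that the left tensor $(-)\wedge X$ preserves colimits in the first variable. This direct route only needs pushouts and left tensors, whereas the conjugation argument works in general.
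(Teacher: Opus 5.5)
Your proposal is correct. The paper gives no written argument here. The corollary is placed as an immediate consequence of \cref{redfib}(3) with $B=0$, namely $0\overset{\bar{\to}}{+}_X 0\simeq \bD^1\wedge X+_{S^0\wedge X}0\simeq S^1\wedge X$, which is exactly your fallback. Without existence assumptions, the same computation can be run on corepresented functors: by \cref{hom} and $S^1=\bD^1/\partial\bD^1$, both objects corepresent $Y\mapsto \Omega\,\L\Mor_\mC(X,Y)\simeq \Fun^\oplax_*(S^1,\L\Mor_\mC(X,Y))$.

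Your main route instead transports the oriented twin along the conjugation $(-)^\co$. What this buys is that nothing needs to exist in $\mC$; \cref{redfib}(3) assumes pushouts and left tensors with $\bD^1$, which the corollary does not. The price is tracking how tensors change under conjugation and checking $(S^1)^\op\simeq S^1$, and you do both correctly.

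There is one slip in that bookkeeping. With the paper's conventions, the tensor in the oriented category $\mC^\co$ (enriched in $\wedge^\rev$) is governed by the right adjoint of $(-)\wedge^\rev K=K\wedge(-)$, i.e.\ by $\Fun^\lax_*(K,-)$. The left tensor in the antioriented category $\mC$ is governed by $\Fun^\oplax_*(K,-)$. Compare $\infty\Cat_*$ itself, where $\Fun^\lax_*(X\wedge K,Y)\simeq\Fun^\lax_*(K,\Fun^\lax_*(X,Y))$. You have lax and oplax interchanged on both sides.

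Since the interchange is symmetric, your conclusion still stands: the right tensor $X\wedge K$ in $\mC^\co$ is the left tensor $K^\op\wedge X$ in $\mC$. Indeed, \cref{grayhoms} equally gives $\Fun^\lax(K,M^\op)\simeq\Fun^\oplax(K^\op,M)^\op$. The cleanest way to get this avoids lax/oplax bookkeeping entirely: transporting an enrichment along a monoidal equivalence $F$ turns the tensor with $V$ into the tensor with $F(V)$, here with $F=(-)^\op$ from \cref{dua2}.
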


By \cite[Lemma 4.4.12.]{gepner2025oriented} there is the following pasting law:

\begin{lemma}\label{pasting}

Consider the following diagram in any oriented category $\mC$, where the left hand square is a commutative square:
\[
\begin{tikzcd}
\Q \ar{d} \ar{r} & \P \ar{r}{} \ar{d}[swap]{} & B  \ar{d}{} \\
\E \ar{r} & A \ar[double]{ur}{}  \ar{r}[swap]{} & C
\end{tikzcd}
\]
If the right hand square is an oriented pullback square, the left hand square is a pullback square if and only if the outer square is an oriented pullback square.

\end{lemma}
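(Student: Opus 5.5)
We reduce the pasting law to a statement about $\infty$-categories by testing against an arbitrary object $T \in \mC$. By definition, the outer square (resp. right square) is an oriented pullback iff for every $T$ the functor from $\R\Mor_\mC(T,\Q)$ (resp. $\R\Mor_\mC(T,\P)$) to the corresponding oriented fiber product is an equivalence. The left square is a pullback in $\mC$ iff for every $T$ the induced square of morphism $\infty$-categories is a pullback in $\infty\Cat$. Here we use that (conical) pullbacks in an enriched category are detected by the representable functors $\R\Mor_\mC(T,-)$. So we fix $T$ and write $q,p,e,a,b,c$ for the morphism $\infty$-categories $\R\Mor_\mC(T,-)$ of $\Q,\P,\E,A,B,C$. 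The oriented square in $\mC$ induces an oriented square in $\infty\Cat$. This uses that $\R\Mor_\mC(T,-)$ is an oriented functor $\mC \to \infty\Cat$ and hence sends the 2-cell $A\Rightarrow B$ over $C$ to a lax-commuting square. Concretely, this amounts to a functor $p \to a \times_c \Fun^\oplax(\bD^1,c) \times_c b$.

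The key step is then the following formal observation in $\infty\Cat$, for the oriented pullback $a \overset{\to}{\times}_c b := a \times_c \Fun^\oplax(\bD^1,c)\times_c b$. For any commutative square with vertices $q,p,e,a$, the outer composite map is identified with the composite
$$ q \to e \times_a p \to e\times_a (a \overset{\to}{\times}_c b) \simeq e \overset{\to}{\times}_c b. $$
The last equivalence is the usual pasting of ordinary iterated fiber products, since $a \overset{\to}{\times}_c b$ is itself an iterated strict pullback over $a$. One must check that the 2-cell of the outer square is the whiskering of the original one, so that the identification is canonical; this is the main technical point. The cleanest way to handle it is to work directly with the description via cotensors as in \cref{adesc}: after passing to $\infty\Cat$ (or to presheaves $\mP(\mC)$ via the Yoneda embedding, which preserves weighted limits, allowing us to assume $\mC$ has pullbacks and cotensors with $\bD^1$), the oriented pullback is $A\times_{C}C^{\bD^1}\times_C B$. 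The statement then becomes the ordinary pasting law for pullbacks applied to the diagram $\Q \to \P \to C^{\bD^1}\times_C B$ over $\E \to A \to C$.

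Given this, the conclusion is immediate. If the right square is an oriented pullback, the middle map $e\times_a p \to e \overset{\to}{\times}_c b$ is an equivalence for every $T$. Hence $q \to e \overset{\to}{\times}_c b$ is an equivalence iff $q \to e\times_a p$ is. Ranging over all $T$, the outer square is an oriented pullback iff the left square is a pullback. I expect the only real obstacle to be the coherence of the identification of the outer 2-cell with the pasted one. Passing to presheaves and the cotensor description of \cref{adesc} reduces this to strict pullback pasting, where it is automatic.
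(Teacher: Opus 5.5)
Your proof is correct and is the standard argument. Testing against the representables $\R\Mor_\mC(T,-)$ reduces the claim to $\infty\Cat$, where $a\overset{\to}{\times}_c b = a\times_c \Fun^\oplax(\bD^1,c)\times_c b$ is an iterated strict pullback, so the statement becomes the ordinary pasting law plus two-out-of-three for equivalences. The paper does not prove the lemma itself; it imports it from \cite[Lemma 4.4.12]{gepner2025oriented}.

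The coherence point you flag is just associativity of enriched composition: whiskering the 2-cell by $Q\to P$ and then by $T\to Q$ agrees with whiskering by the composite. So the detour through presheaves and cotensors is optional. You are also right to read ``pullback'' as the enriched (conical) pullback, i.e.\ $q\simeq e\times_a p$ for every $T$.
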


Next we study the relationship between oriented pullbacks and pushouts
in any reduced oriented category.

\begin{lemma}\label{spli}
Let $\mC$ be a reduced oriented category that admits oriented pushouts.
There is a commutative square 
\begin{equation}\label{sqlo}
\begin{xy}
\xymatrix{
A \overset{\to}{+}_C B \ar[d] \ar[r]
& 0 \overset{\to}{+}_C B
\ar[d]
\\ 
A \overset{\to}{+}_C 0
\ar[r] & 0 \overset{\to}{+}_C 0 \simeq \Sigma(C) 
}
\end{xy}
\end{equation}
natural in $ (A \leftarrow C \to B) \in \mC^{\Lambda_0^2}$ determining a functor $\sigma: \mC^{\Lambda_0^2} \to \mC^{\bD^1 \times \bD^1}$.

If $\mC$ is a reduced bioriented category that admits oriented pullbacks, the functor $\sigma: \mC^{\Lambda_0^2} \to \mC^{\bD^1 \times \bD^1}$ refines to a reduced antioriented functor.

\end{lemma}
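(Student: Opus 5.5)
The square will be obtained from the functoriality of oriented pushouts in the input span, and then promoted to an antioriented functor using the bioriented structure. The oriented pushout $A \overset{\to}{+}_C B$ is the oriented pushout in $\mC$, i.e. the oriented pullback in $\mC^\op$. Hence it is corepresented by the functor
$$T \mapsto \R\Mor_\mC(A,T) \times_{\R\Mor_\mC(C,T)} \Fun^\oplax(\bD^1,\R\Mor_\mC(C,T)) \times_{\R\Mor_\mC(C,T)} \R\Mor_\mC(B,T).$$
This formula is natural in the span $(A \leftarrow C \to B) \in \mC^{\Lambda^2_0}$. Since $\mC$ admits oriented pushouts, we obtain a functor $\mC^{\Lambda^2_0} \to \mC$, $(A \leftarrow C \to B) \mapsto A \overset{\to}{+}_C B$, as the composite of this corepresenting construction with the inverse of the Yoneda embedding on its essential image.

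Next we use that $0$ is initial. This gives natural transformations of spans $(A \leftarrow C \to B) \Rightarrow (0 \leftarrow C \to B)$, $(A \leftarrow C \to B) \Rightarrow (A \leftarrow C \to 0)$ and $(A \leftarrow C \to B) \Rightarrow (0 \leftarrow C \to 0)$. Together they form a commutative square of endofunctors of $\mC^{\Lambda^2_0}$, namely the image of the square $(A,B) \to (0,B),(A,0) \to (0,0)$. Applying the oriented pushout functor produces a functor $\mC^{\Lambda^2_0} \to \mC^{\bD^1 \times \bD^1}$, which is $\sigma$. Concretely, $\sigma$ is the composite of the functor $\mC^{\Lambda^2_0} \to (\mC^{\Lambda^2_0})^{\bD^1\times \bD^1}$ with the pushout functor applied levelwise. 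To identify the corner, note that $0 \overset{\to}{+}_C 0$ corepresents $\Fun^\oplax(\bD^1,\R\Mor(C,T))$ with both endpoints fixed at the zero morphism. By \cref{hom} and the definition of $\Sigma$ this is $\Fun_*^\oplax(S^1,\R\Mor_\mC(C,T))$, which is the universal property of the right tensor $C \wedge S^1$. By the preceding corollary, and by \cref{smarem} in the case $\mC=\infty\Cat_*$, this is $\Sigma(C)$.

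For the enriched refinement, assume $\mC$ is reduced bioriented and admits oriented pullbacks and pushouts. By \cref{funor} and \cref{colimenrfun}, $\mC^{\Lambda^2_0}$ and $\mC^{\bD^1\times\bD^1}$ are antioriented, with the remaining enrichment coming from the left $\boxtimes$-variable. The key point is that oriented pushouts are weighted colimits in the \emph{oriented} (right) variable. So \cref{weifunc}, applied with $\mV$ the right factor $(\infty\Cat_*,\wedge^\rev)$ and $\mW$ the left factor $(\infty\Cat_*,\wedge)$, yields the pushout functor $\mC^{\Lambda^2_0} \to \mC$ as a $\mW$-enriched, i.e. reduced antioriented, functor. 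The cocone maps induced by $0$ are enriched because they are precomposition with an enriched natural transformation of weights/diagrams. Composing gives the antioriented refinement of $\sigma$.

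The main obstacle is this last step: checking that \cref{weifunc} indeed applies. For that one has to express the oriented pushout as an $(\alpha_\mV)_!(H)$-weighted colimit for a weight $H$ on $\Lambda^2_0$ involving $\bD^1$. The statement assumes oriented pullbacks, so I would first try to phrase the construction dually: work in $\mC^\circ$, and use that the existence of the relevant oriented pullbacks of the representables, together with \cref{Yonedaext}, makes the oriented pushouts in the bioriented presheaf category preserve the left enrichment. Reducedness of the resulting functor then follows because the zero span goes to $0 \overset{\to}{+}_0 0 \simeq 0$.
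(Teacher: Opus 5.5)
Your proposal is correct and follows the paper's proof. In both, the square is the oriented pushout functor applied to the square of span maps with corners $(A\leftarrow C\to B)$, $(0\leftarrow C\to B)$, $(A\leftarrow C\to 0)$, $(0\leftarrow C\to 0)$. The paper obtains these span maps as units of the bioriented adjunctions between the restrictions $\mC^{\Lambda_0^2}\to\mC^{\bD^1}\to\mC$ and their fully faithful right adjoints (extension by $0$), so the enriched naturality you only assert comes for free.

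Two small corrections. These span maps use that $0$ is \emph{final}, not initial. The oriented-pullback detour in your last paragraph is also unnecessary, since only oriented pushouts, viewed as weighted colimits via \cref{weifunc}, enter the argument.
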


\begin{proof}
The reduced bioriented functors $$\rho: \mC^{\Lambda_0^2} \to \mC^{\bD^1}, (A \leftarrow C \to B) \mapsto (A \leftarrow C), $$$$ \rho': \mC^{\Lambda_0^2} \to \mC^{\bD^1}, (A \leftarrow C \to B) \mapsto (C \to B), $$$$ \rho'': \mC^{\bD^1} \to \mC, (A \to B) \mapsto A$$ admit fully faithful right adjoints sending
$A \leftarrow C$ to $ A \leftarrow C \to 0$, $C \to B$ to $0 \leftarrow C \to B $ and 
$A \to B$ to $A \to 0$, respectively.
The units give rise to a commutative square of reduced (right) bioriented functors $\mC^{\Lambda_0^2} \to \mC^{\Lambda_0^2}:$
$$\begin{xy}
\xymatrix{
\id  \ar[d] \ar[r]
&\rho' \ar[d]
\\ 
\rho \ar[r] & \rho''\rho \simeq \rho'' \rho' \simeq \ev_0.
}
\end{xy}
$$
The latter is a pullback square by the description of adjoints and induces a commutative square 
$$\begin{xy}
\xymatrix{
\overset{\to}{+} \ar[d] \ar[r]
& \overset{\to}{+}\circ \rho \simeq (\overset{\to}{+}0) \circ \rho \ar[d]
\\ 
\overset{\to}{+} \circ \rho' \simeq (0\overset{\to}{+}) \circ \rho' \ar[r] & \overset{\to}{+} \circ \ev_0 \simeq \Sigma  \ev_0
}
\end{xy}
$$
of (reduced antioriented) functors $\mC^{\Lambda_0^2} \to \mC$ that yields at any $ (A \leftarrow C \to B) \in \mC^{\Lambda_0^2}$ square \ref{sqlo}.
\end{proof}

\begin{lemma}\label{compa}
Let $\mC$ be a reduced oriented category that admits oriented left cofibers and oriented left fibers.	
There is a map $$\xi: 0 \overset{\to}{+} \to (0 \overset{\to}{\times}) \circ \Sigma$$ of functors $ \mC^{\bD^1} \to \mC$.
If $\mC$ is a reduced bioriented category that admits oriented left cofibers and oriented left fibers, then $\xi$ refines to a map of reduced antioriented functors $ \mC^{\bD^1} \to \mC$.

\end{lemma}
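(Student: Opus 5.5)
We construct $\xi$ at each $\phi: C \to A$ as the map $0 \overset{\to}{+}_C A \to 0 \overset{\to}{\times}_{\Sigma(C)} ?$ that is adjoint to a canonical oriented square. We then check that this construction is natural, first as a plain functor and then, in the bioriented case, as a map of reduced antioriented functors.

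\textbf{Step 1: reduce to a square.} By the universal property of the oriented left fiber, a map $W \to 0 \overset{\to}{\times}_{Z} Y$ is the same as an oriented square
$$W \to Y,\quad W \to 0,\quad Y \to Z,\quad 0 \to Z,$$
with a 2-cell going from the composite through $0$ to the composite through $Y$. Here $Z$ must be $\Sigma$ of the appropriate object. Reading $(0\overset{\to}{\times})\circ\Sigma$ as applied to the arrow $\Sigma(\phi): \Sigma(C) \to \Sigma(A)$, we need such a square with $W = 0\overset{\to}{+}_C A$. The natural candidate is the left cofiber square of \cref{spli}, specialized to $B = A$ and the span $0 \leftarrow C \to A$.

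\textbf{Step 2: build the square in families.} We construct everything as composites of oriented functors on $\mC^{\bD^1}$. The functor $0 \overset{\to}{+}_{(-)}(-): \mC^{\bD^1} \to \mC$ is left adjoint to the functor $\mC \to \mC^{\bD^1}$ sending $X \mapsto (0 \to X)$, by the universal property of oriented pushouts. Dually, $0 \overset{\to}{\times}_{(-)}(-)$ is right adjoint to the functor $X \mapsto (X \to 0)$, suitably oriented. Using these adjunctions we do the following:

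1. Apply the natural map $\phi \to \Sigma\phi$ coming from the unit-type maps $C \to 0 \to \Sigma C$ and $A \to \Sigma A$.
2. Observe that the oriented cofiber square of $\phi$ provides a canonical 2-cell from $0 \to \Sigma C$ through $\Sigma C \to \Sigma A$. Explicitly, the map $0\overset{\to}{+}_C A \to 0 \overset{\to}{+}_C 0 \simeq \Sigma C$ induced by $A \to 0$ is the component we need. The resulting square
$$\big(0\overset{\to}{+}_C A \to \Sigma C,\ 0\overset{\to}{+}_C A \to 0\big)$$
over $\Sigma C$, with the tautological 2-cell, then gives $\xi$ by the fiber universal property.

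If the intended target is the fiber of $0 \to \Sigma C$, we instead use the square of \cref{spli} directly, taking $A := 0$ in the left corner. That square exhibits $0\overset{\to}{+}_C B \to \Sigma C$ together with the 2-cell inherited from the oriented pushout. Naturality follows because every step is a unit or counit of an adjunction or a composition of functors $\mC^{\bD^1}\to\mC$. In particular, $\xi$ is the mate of the natural square of \cref{spli} restricted along $\mC^{\bD^1} \to \mC^{\Lambda^2_0}$, $(C\to A)\mapsto (0 \leftarrow C \to A)$.

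\textbf{Step 3: the antioriented refinement.} In the bioriented case, \cref{spli} already gives the square as a square of reduced antioriented functors, and the restriction functor $\mC^{\bD^1}\to\mC^{\Lambda^2_0}$ is reduced bioriented. It therefore remains to show that the oriented left fiber functor is a reduced antioriented functor and that the mate correspondence holds antioriented-enrichedly. Oriented pullbacks are defined by a limit condition on $\R\Mor$. To show they are compatible with the $\L\Mor$-enrichment, we use \cref{adesc} to write them as a pullback involving cotensors, and then use that bioriented functor categories have left tensors formed objectwise. This enriched-naturality step, showing that the fiber functor and the mate are antioriented rather than merely underlying, is where I expect the main obstacle. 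There we would invoke the $\mV$-enriched adjunction criterion \cref{adj}.
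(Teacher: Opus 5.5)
There is a genuine gap: the proposal never constructs the one datum this lemma is about, namely the 2-cell.

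\textbf{What the map must contain.} For $\phi\colon C\to A$ the target is the oriented left fiber of $\Sigma\phi\colon\Sigma C\to\Sigma A$, i.e.\ $0\overset{\to}{\times}_{\Sigma A}\Sigma C$. Its base is $\Sigma A$, not $\Sigma C$. The ``fiber of $0\to\Sigma C$'' you also consider is $\Omega\Sigma C$, which is not the target. A map from $W=0\overset{\to}{+}_C A$ into the target consists of two pieces:
\begin{itemize}
\item the map $W\to\Sigma C$ induced by $A\to 0$, which you have;
\item \emph{together with} a morphism in $\R\Mor_\mC(W,\Sigma A)$ from the zero map to $W\to\Sigma C\xrightarrow{\Sigma\phi}\Sigma A$.
\end{itemize}

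\textbf{Why your sources do not supply the 2-cell.}
\begin{itemize}
\item \cref{spli} gives an ordinary commutative square with no 2-cell. At the span $0\leftarrow C\to A$ it collapses to the single map $0\overset{\to}{+}_C A\to\Sigma C$, and $\Sigma A$ does not appear in it. So there is nothing of which $\xi$ could be the mate.
\item Your ``tautological 2-cell'' is never specified.
\item The 2-cell of the oriented pushout square lives in $\R\Mor_\mC(C,-)$, not in $\R\Mor_\mC(W,\Sigma A)$.
\item The later description of $\xi$ as a restriction of (\ref{fact}) would be circular here, since (\ref{fact}) is built from this lemma.
\end{itemize}

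\textbf{The adjunctions in Step 2 are the wrong ones.} They are those of the \emph{ordinary} cofiber and fiber. The left adjoint of $X\mapsto(0\to X)$ is $0+_C A$, and the right adjoint of $X\mapsto(X\to 0)$ is $0\times_A B$. By \cref{redfib} the oriented left fiber is instead right adjoint to $L=(-)\wedge(S^0\to\bD^1)$, i.e.\ $X\mapsto(X\to X\wedge\bD^1)$. Replacing this by $X\mapsto(X\to 0)$ discards exactly the 2-cell.

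\textbf{The paper's route.}
\begin{itemize}
\item Since $0\overset{\to}{\times}$ is the limit weighted by $S^0\to\bD^1$, \cref{weifunc} gives the adjunction $L\dashv 0\overset{\to}{\times}$. In the bioriented case this is a reduced antioriented adjunction.
\item $\xi$ is the composite of the unit $0\overset{\to}{+}\to(0\overset{\to}{\times})\circ L\circ(0\overset{\to}{+})$ with $(0\overset{\to}{\times})\kappa$. Here $\kappa\colon L\circ(0\overset{\to}{+})\to\Sigma$ is a map of left adjoint reduced antioriented endofunctors of $\mC^{\bD^1}$.
\item For $\mC=\infty\Cat_*$, such a $\kappa$ is determined by a single square in $\infty\Cat_*^{\bD^1\times\bD^1}$. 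It goes from $(S^0\to\bD^1)\to(\bD^1\to\bD^2)$ to $(0\to S^1)\to\id_{S^1}$.
\item The key ingredient of that square is $\bD^1\wedge\bD^1\simeq\bD^2\to\bD^1\to S^1$, namely 1-truncation followed by the quotient. This is where the 2-cell comes from.
\item In general one takes $\mC\otimes_{\infty\Cat_*}\kappa$.
\end{itemize}
Because $\xi$ is assembled from an antioriented adjunction and a map of antioriented functors, the enriched refinement comes for free. Your Step 3 leaves that refinement open as ``the main obstacle''.
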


\begin{proof}By \cref{redfib} the oriented left fiber is the limit weighted with respect to the weight $ S^0 \to \bD^1$ and so by \cref{weifunc} there is an adjunction $$ L:=(-)\wedge (S^0 \to \bD^1) : \mC \rightleftarrows \mC^{\bD^1} : 0 \overset{\to}{\times}.$$
Moreover if $\mC$ is a reduced bioriented category, the latter adjunction is a reduced antioriented adjunction.
By \cite[Proposition 4.40.(3)]{heine2024bienrichedinftycategories} the map $$\Map_{\wedge\Fun}(\infty\Cat_*^{\bD^1},\infty\Cat_*^{\bD^1})(L \circ (0 \overset{\to}{+}), \Sigma) \to \Map_{\infty\Cat_*^{\bD^1\times \bD^1}}((S^0 \to \bD^1) \to (\bD^1 \to \bD^2), (0 \to S^1) \to \id_{S^1})$$
restricting along the bioriented Yoneda embedding $$\infty\Cat_*^{\bD^1\times \bD^1} \simeq {\Fun}(\bD^1,\infty\Cat_*^{\bD^1})  \simeq {\wedge\Fun}(\bD^1,\infty\Cat_*^{\bD^1}) \hookrightarrow {\wedge\Fun}(\infty\Cat_*^{\bD^1},\infty\Cat_*^{\bD^1})$$ is an equivalence. 
Let $\kappa: L \circ (0 \overset{\to}{+}) \to \Sigma$ be the map of left adjoint reduced antioriented functors $ \infty\Cat_*^{\bD^1} \to \infty\Cat_*^{\bD^1}$ corresponding to the following commutative square in $\infty\Cat_*^{\bD^1}:$
$$\begin{xy}
\xymatrix{
S^0 \wedge S^0 \to S^0 \wedge \bD^1 \ar[d] \ar[rr]
&& 0 \to S^1 \ar[d]
\\ 
\bD^1 \wedge S^0 \simeq \bD^1 \to \bD^1 \wedge \bD^1 \simeq \bD^2 \ar[rr] && \id_{S^1},
}
\end{xy}
$$	
where the functor $\bD^2 \to S^1$ in the square is the composition of 1-truncation $\bD^2 \to \bD^1$ and the quotient functor $\bD^1 \to S^1$.
Let $\kappa_\mC:=\mC {\ot_{\infty\Cat_*} \kappa} :  L \circ  (0 \overset{\to}{+}) \to \Sigma$.
Then $\kappa_\mC$ is a map of left adjoint functors $ \mC^{\bD^1} \to \mC^{\bD^1}$,
which is a map of reduced antioriented functors if $\mC$ is a reduced bioriented category.
The map $\kappa_\mC$ gives rise to the desired map $\xi:  0 \overset{\to}{+} \to (0 \overset{\to}{\times}) \circ L \circ (0 \overset{\to}{+}) \to (0 \overset{\to}{\times}) \circ \Sigma$. 
\end{proof}

Replacing $\mC$ by $^\cop\mC^\cop$ we obtain the following dual version:

\begin{corollary}\label{compacor}
Let $\mC$ be a reduced oriented category that admits oriented right cofibers and oriented right fibers.	
There is a map $$\xi: \overset{\to}{+} 0 \to (\overset{\to}{\times} 0) \circ \Sigma$$ of functors $ \mC^{\bD^1} \to \mC$.
If $\mC$ is a reduced bioriented category that admits oriented right cofibers and oriented right fibers, then $\xi$ refines to a map of reduced antioriented functors $ \mC^{\bD^1} \to \mC$.

\end{corollary}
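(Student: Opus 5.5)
The plan is to deduce \cref{compacor} from \cref{compa} by applying it to the conjugate ${^\cop\mC^\cop}$ and translating the data back to $\mC$, as the sentence before the statement indicates.

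\textbf{Step 1: conjugation interchanges left and right (co)fibers.} For a reduced oriented category, conjugation means $\mC^\cop = (-)^{\co\op}_!$. For a reduced bioriented category it means ${^\cop\mC^\cop}$, which applies $(-)^{\co\op}$ to both enrichments. By \cref{dua}, $(-)^{\co\op}$ is a monoidal autoequivalence of $(\infty\Cat,\boxtimes)$, since each of $(-)^\co$ and $(-)^\op$ reverses the tensor product and their composite restores it. Hence the transferred category is again oriented, respectively bioriented, with the same underlying category, and it is reduced because the zero object is preserved.

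Next I compare the universal property (\ref{indmapl}) of an oriented pullback in $\mC^\cop$ with that in $\mC$. Morphism $\infty$-categories are replaced by their $\co\op$, and \cref{grayhoms} gives
$$\Fun^\oplax(\bD^1,\mD^{\co\op}) \simeq \Fun^\oplax(\bD^1,\mD)^{\co\op},$$
up to the involution of $\bD^1$ reversing its two objects, because $(\bD^1)^{\co\op}\simeq(\bD^1)^\op$. This reversal swaps the two legs of the cospan. So oriented pullbacks $X\overset{\to}{\times}_Z Y$ in $\mC^\cop$ correspond to oriented pullbacks $Y\overset{\to}{\times}_Z X$ in $\mC$, and dually for pushouts.

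Specialising to the reduced case, oriented left (co)fibers in $\mC^\cop$ are oriented right (co)fibers in $\mC$. In particular, $\mC$ admits oriented right cofibers and fibers exactly when $\mC^\cop$ admits the left ones. Moreover $\Sigma = 0\overset{\to}{+}_C 0$ is symmetric in its two legs, so it is preserved by this translation.

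\textbf{Step 2: apply \cref{compa} and transport back.} Applying \cref{compa} to $\mC^\cop$ produces a map $0\overset{\to}{+}\to(0\overset{\to}{\times})\circ\Sigma$ of functors $(\mC^\cop)^{\bD^1}\to\mC^\cop$. Because $\mC$ and $\mC^\cop$ have the same underlying category, and the underlying category of the cotensor with $\bD^1$ is unchanged up to this identification, Step 1 lets us read this as the desired map $\overset{\to}{+}0\to(\overset{\to}{\times}0)\circ\Sigma$ of functors $\mC^{\bD^1}\to\mC$.

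In the bioriented case, \cref{compa} gives a map of reduced antioriented functors for ${^\cop\mC^\cop}$. The conjugation also transforms the antioriented enrichment by the monoidal autoequivalence $(-)^{\co\op}$, so such a map is the same as a map of reduced antioriented functors for $\mC$.

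\textbf{Main obstacle.} The main obstacle is the bookkeeping in Step 1. One must verify precisely that the $\co\op$-twist of the oplax arrow $\infty$-category $\Fun^\oplax(\bD^1,-)$ reverses the direction of the arrow, and hence swaps the two legs, rather than turning the construction into a lax arrow category. I would check this directly through \cref{grayhoms} and the equivalence $(\bD^1)^{\co\op}\simeq(\bD^1)^\op$. Alternatively, one can check it through \cref{redfib}, where the roles of $\bD^1$ and $(\bD^1)^\op$ are visibly exchanged between the left and right formulas.
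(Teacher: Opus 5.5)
Your proposal is correct and matches the paper's argument, which proves \cref{compacor} in one line by applying \cref{compa} to ${^\cop\mC^\cop}$. Your Step 1 supplies the bookkeeping the paper leaves implicit: by \cref{grayhoms} and $(\bD^1)^{\co\op}\simeq(\bD^1)^\op$, conjugation swaps the two legs of oriented (co)limits, so it exchanges left and right (co)fibers while fixing $\Sigma$.
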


\begin{notation}
Let $\mC$ be a reduced oriented category that admits oriented pushouts, oriented fibers and pullbacks.
By \cref{spli} and \cref{compa} there is a map of functors $\mC^{\Lambda_0^2} \to \mC$:
\begin{equation}\label{fact}
\kappa:\overset{\to}{+} \to (\overset{\to}{+}0)
\times_{\Sigma\circ \ev_0} (0\overset{\to}{+}) \to (\overset{\to}{\times}0) \circ \Sigma \times_{\Sigma\circ \ev_0} (0\overset{\to}{\times})\circ \Sigma \simeq (\overset{\to}{\times}0) \times_{\ev_0} (0 \overset{\to}{\times})) \circ \Sigma. 
\end{equation}
If $\mC$ is a reduced bioriented category that admits oriented pushouts, oriented fibers and pullbacks, then by 
\cref{spli} and \cref{compa} the map $\kappa$ refines to a map of reduced antioriented functors $\mC^{\Lambda_0^2} \to \mC$.
 
\end{notation}

\begin{lemma}\label{repp}
Let $\mC$ be a reduced oriented category that admits oriented pushouts, oriented fibers and pullbacks.
The canonical map $$\theta:= \Sigma \circ \kappa: \Sigma \circ\overset{\to}{+}\to \Sigma \circ ((\overset{\to}{\times}0) \times_{\ev_0} (0\overset{\to}{\times})) \circ \Sigma$$ admits a left inverse.
If $\mC$ is a reduced bioriented category that admits oriented pushouts, oriented fibers and pullbacks
(so that $\theta$ is a map of reduced antioriented functors $\mC^{\Lambda_0^2} \to \mC$),
the left inverse of $\theta$ refines to a map of reduced antioriented functors $\mC^{\Lambda_0^2} \to \mC$.

\end{lemma}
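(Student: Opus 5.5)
The plan is to construct the left inverse first as a natural transformation between weighted (co)limit functors in the universal case $\mC=\infty\Cat_*$, and then to transport it to a general $\mC$ by tensoring, exactly as in the proof of \cref{compa}.

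\textbf{Step 1: rewrite the source of $\theta$.} Since $\Sigma\simeq 0\overset{\to}{+}_{(-)}0$ is a weighted colimit (by \cref{redfib} and the corollary identifying it with $(-)\wedge S^1$), it commutes with the oriented pushout functor $\overset{\to}{+}$. This gives a canonical equivalence
$$\Sigma\circ\overset{\to}{+}\simeq\overset{\to}{+}\circ\Sigma,$$
where on the right $\Sigma$ acts objectwise on spans. Hence both source and target of $\theta$ are $\Sigma$ composed with a functor $\mC^{\Lambda_0^2}\to\mC$ built from weighted limits and colimits. It then suffices to produce a natural map
$$r\colon \Sigma\circ\bigl((\overset{\to}{\times}0)\times_{\ev_0}(0\overset{\to}{\times})\bigr)\circ\Sigma\longrightarrow \overset{\to}{+}\circ\Sigma$$
with $r\circ\theta\simeq\id$.

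\textbf{Step 2: build $r$ from the adjunction $\Sigma\dashv\Omega$.} By adjunction, $r$ corresponds to a map from $\bigl((\overset{\to}{\times}0)\times_{\ev_0}(0\overset{\to}{\times})\bigr)\circ\Sigma$ into $\Omega\circ\overset{\to}{+}\circ\Sigma$. Both sides are now assembled from the functors $\Sigma$, $\Omega$, oriented pushouts and oriented fibers. I would obtain this map from the unit $\id\to\Omega\Sigma$, applied to the legs $\Sigma A\to\Sigma C$ and $\Sigma B\to\Sigma C$, and combine it with the pullback description $(\overset{\to}{\times}0)\times_{\ev_0}(0\overset{\to}{\times})$ of the target.

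\textbf{Step 3: reduce to the universal case.} To control naturality and the bioriented refinement, I would not build $r$ pointwise. Instead I would work in $\mC=\infty\Cat_*$ and use \cite[Proposition 4.40.(3)]{heine2024bienrichedinftycategories} as in \cref{compa}: maps between left adjoint reduced antioriented functors out of $\infty\Cat_*^{\Lambda_0^2}$ are determined by their values on the bioriented Yoneda image, that is, on the representing spans of weights. On these representables, source and target of $\theta$ are explicit finite $\infty$-categories built from $S^0$, $\bD^1$, $\bD^2$ and $S^1$ (smashed with $S^1$). The desired $r$ is then a single map of finite diagrams, which I would define by $1$-truncation $\bD^2\to\bD^1$ and the quotients $\bD^1\to S^1$. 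For general $\mC$ one sets $r_\mC:=\mC\ot_{\infty\Cat_*}r$, which is automatically a map of reduced antioriented functors when $\mC$ is bioriented.

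\textbf{Main obstacle.} The hard step is verifying $r\circ\theta\simeq\id$. The suspension on the source is essential here, since classically the comparison map only splits after suspending. By the Yoneda reduction of Step 3, this becomes an identity check in $\infty\Cat_*^{\Lambda_0^2}$ on the universal span. I would first compute $\theta$ on the representables through the explicit square of \cref{compa} and the square \eqref{sqlo}. Then I would check that the composite with $r$ is the identity on the generating cells of $\Sigma$ of the weight; this amounts to computing maps between suspensions of small gluings of globes and cubes, using \cref{autoeq} to control endomorphisms of spheres.
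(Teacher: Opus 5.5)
Your outer strategy matches the paper: produce a retraction after one suspension, then check it on small spans in $\infty\Cat_*$. The gap is in Steps 2--3, the construction of the retraction $r$.

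The Yoneda principle \cite[Proposition 4.40.(3)]{heine2024bienrichedinftycategories} that you borrow from \cref{compa} applies only to transformations between left adjoint (colimit preserving, linear) functors. The same holds for the transport $r_\mC:=\mC\ot_{\infty\Cat_*}r$. In \cref{compa} both $L\circ(0\overset{\to}{+})$ and $\Sigma$ are left adjoints. The source of your $r$, namely $\Sigma\circ((\overset{\to}{\times}0)\times_{\ev_0}(0\overset{\to}{\times}))\circ\Sigma$, is built from weighted limits and is not colimit preserving. On spans $X\leftarrow 0\to 0$ it is essentially $\Sigma\Omega\Sigma(X)$, which does not preserve wedges.

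So $r$ can neither be specified by its values on representing spans nor be base changed via $\mC\ot_{\infty\Cat_*}(-)$. Those values are not even finite: at $S^0\leftarrow S^0\to S^0$ the target of $\theta$ involves oriented slices of $S^1\simeq B\bN$.

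Step 2 does not fill the hole. Applying the unit $\id\to\Omega\Sigma$ to the legs does not produce a map into $\Omega$ of the oriented pushout; note also that the legs go $\Sigma C\to\Sigma A$ and $\Sigma C\to\Sigma B$. In Step 1, commuting $\Sigma$ past the oriented pushout reverses orientation: $\Sigma\circ\overset{\to}{+}\simeq\overset{\leftarrow}{+}\circ\Sigma$.

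The missing idea is to build the retraction as a map of weighted limits induced by a map of weights. This is done for every presentable $\mC$, and one reduces to that case by \cref{weiloc}. The target of $\kappa$ is $(-)^\alpha\circ\Sigma$ for the weight $\alpha=((\bD^1)^\op\leftarrow S^0\to\bD^1)$. Its oriented pushout is $\overset{\leftarrow}{+}\alpha\simeq\mO=\bD^1\vee(\bD^1)^\op+_{S^0\vee S^0}\bD^1_+$, and composing the arrows of $\mO$ once around the cycle defines a functor $S^1\to\mO$. This yields a map of reduced (antioriented) functors
\[
\Psi\colon(-)^\alpha\circ\Sigma\to(\overset{\leftarrow}{+})^{\overset{\leftarrow}{+}\alpha}\circ\Sigma\to(\overset{\leftarrow}{+})^{S^1}\circ\Sigma\simeq\Omega\circ\Sigma\circ\overset{\to}{+},
\]
whose adjoint $\Phi$ is the candidate retraction.

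You also do not need $\Phi\circ\theta\simeq\id$. It suffices that $\Phi\circ\theta$ is an equivalence, since then $(\Phi\circ\theta)^{-1}\circ\Phi$ is a left inverse. Only at this point does colimit generation legitimately enter, because $\Phi\circ\theta$ is an endomorphism of the colimit preserving functor $\Sigma\circ\overset{\to}{+}$. The verification then runs as follows:
\begin{enumerate}
\item Check that $\Psi\circ\kappa$ is the unit on the generating spans $X\leftarrow0\to0$, $0\leftarrow0\to X$ and $X\leftarrow X\to X$.
\item Use compatibility of $\Psi$ and $\kappa$ with $(-)\wedge X$ to reduce to spans of $S^0$ in $\infty\Cat_*$.
\item Settle the case $S^0\leftarrow S^0\to S^0$ using that the unit $\bD^1_+\to\Omega\Sigma(\bD^1_+)$ is the fully faithful inclusion into the free monoidal $\infty$-category (\cref{freespar}), so it is detected on $\{0\}_+$ and $\{1\}_+$.
\end{enumerate}
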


\begin{proof}

By \cref{weiloc} there is an oriented (bioriented) embedding of $\mC$ into a presentable reduced oriented (bioriented) category that preserves oriented pushouts, oriented fibers and pullbacks. Consequently, we can assume that $\mC$ is presentable.

We construct a map $\Phi: \Sigma \circ ((\overset{\to}{\times}0) \times_{\ev_0} (0\overset{\to}{\times})) \circ \Sigma \to \Sigma \circ \overset{\to}{+} $ of reduced (antioriented) functors $\mC^{\Lambda_0^2} \to \mC$ and prove that $\Phi \circ \theta: \Sigma \circ \overset{\to}{+} \to\Sigma \circ\overset{\to}{+}$ is an equivalence. 
This proves the result: if $\gamma$ is an inverse of $ \Phi \circ \theta$, then $\gamma \circ \Phi \circ \theta$ is the identity so that $\theta$ admits a left inverse. 
To see that $\Phi \circ \theta $ is an equivalence, it is enough to see that
$\Phi_\varphi \circ \theta_\varphi: \Sigma \circ \overset{\to}{+} \varphi \to \Sigma \circ \overset{\to}{+} \varphi $ is an equivalence for
$\varphi$ the spans $ X \leftarrow 0 \to 0, 0 \leftarrow 0 \to X, X \leftarrow X \to X$ for any $X \in \mC$. This holds since $\mC^{\Lambda_0^2}$ is generated under small colimits by these three spans and $\Sigma \circ \overset{\rightarrow}{+}$ preserves small colimits.
By adjointness a map $\Phi: \Sigma \circ ((\overset{\to}{\times}0) \times_{\ev_0} (0\overset{\to}{\times})) \circ \Sigma \to \Sigma \circ \overset{\to}{+} $ corresponds to a map $\Psi: ((\overset{\to}{\times}0) \times_{\ev_0} (0\overset{\to}{\times})) \circ \Sigma \to \Omega \circ \Sigma \circ \overset{\to}{+} $ and for every $\varphi \in \mC^{\Lambda_0^2}$ the morphism $\Phi_\varphi \circ \theta_\varphi$ is an equivalence
if and only if $\Psi_\varphi \circ \kappa_\varphi$ is a unit $\overset{\to}{+}\varphi \to \Omega\Sigma\overset{\to}{+}\varphi$. 

We will use the following notation:
for every category $K$ and presentable reduced oriented category $\mC$ the right action of $\infty\Cat_*$
gives rise to a left adjoint functor $$\mC \otimes \infty\Cat_*^K \simeq \mC \otimes \infty\Cat_* \ot \infty\Grp^K \to \mC^K \simeq \mC \ot \infty\Grp^K, (X,\beta) \mapsto (X \ot (-)) \circ \beta. $$
Thus for every $\beta \in \infty\Cat_*^K$ the functor 
$\mC \to \mC^K, X  \mapsto (X \ot (-)) \circ \beta $ admits a right adjoint,
which we denote by $(-)^\beta: \mC^K \to \mC.$
If $\mC$ is a presentable reduced bioriented category, the functor $$\mC \otimes \infty\Cat_*^K \to \mC^K, (X,\beta) \mapsto (X \ot (-)) \circ \beta $$ is a left adjoint reduced antioriented functor. Thus for every $\beta \in \infty\Cat_*^K$ the functor 
$$\mC \to \mC^K, X  \mapsto (X \ot (-)) \circ \beta $$ is a left adjoint reduced antioriented functor.

Let $\alpha$ be the span $(\bD^1)^\op \leftarrow S^0 \to \bD^1$ in $\infty\Cat_*.$
There is an equivalence of reduced (antioriented) functors $\mC^{\Lambda_0^2} \to \mC$:
$$ \Sigma \circ ((\overset{\to}{\times}0) \times_{\ev_0} (0\overset{\to}{\times})) \circ \Sigma \simeq ((-)^{S^0 \to (\bD^1)^\op} \times_{(-)^{S^0}} (-)^{S^0 \to \bD^1}) \circ \Sigma \simeq (-)^\alpha \circ \Sigma.$$

By \cref{0desc} there is an equivalence  
$$ \overset{\leftarrow}{+}\alpha \simeq \mO:=\bD^1 \vee(\bD^1)^\op +_{S^0 \vee S^0} \bD^1_+ $$ in $\infty\Cat_*.$ 

The composition of the morphism $ \bD^1 \simeq \{0 < 2\} \subset [2]\simeq \bD^1 \vee (\bD^1)^\op \to\mO$ followed by the morphism $\bD^1 \to \mO $ is an endomorphism in $\mO  $ corresponding to a functor 
$S^1 \to \mO \simeq  \overset{\leftarrow}{+}\alpha .$

Let $\Psi^\mC$ be the natural transformation 
$$(-)^\alpha \circ \Sigma  \to (\overset{\leftarrow}{+})^{\overset{\leftarrow}{+}\alpha} \circ \Sigma \to (\overset{\leftarrow}{+})^{S^1}\circ \Sigma \simeq \Omega \circ \overset{\leftarrow}{+} \circ \Sigma \simeq\Omega\circ \Sigma\circ \overset{\to}{+}$$
of reduced (antioriented) functors $\mC^{\Lambda_0^2} \to \mC$.
For every $\varphi \in \infty\Cat_*^{\Lambda_0^2}$ and $X \in \mC$ the morphism $$(((\overset{\to}{\times}0) \times_{\ev_0} (0\overset{\to}{\times})) \Sigma\varphi) \wedge X \xrightarrow{\Psi_{\varphi}^{\infty\Cat_*} \wedge X}  (\Omega \Sigma \overset{\to}{+}\varphi) \wedge X \xrightarrow{\gamma}  \Omega \Sigma (\overset{\to}{+} \varphi \wedge X)$$ factors as $$ (((\overset{\to}{\times}0) \times_{\ev_0} (0\overset{\to}{\times})) \Sigma\varphi) \wedge X \xrightarrow{\beta} ((\overset{\to}{\times}0) \times_{\ev_0} (0\overset{\to}{\times})) \Sigma(\varphi \wedge X) \xrightarrow{\Psi_{\varphi \wedge X}^\mC} \Omega \Sigma(\overset{\to}{+}\varphi \wedge X).$$
Moreover the morphism 
$\kappa_{\varphi \wedge X}^\mC: \overset{\to}{+}(\varphi \wedge X) \xrightarrow{} ((\overset{\to}{\times}0) \times_{\ev_0} (0\overset{\to}{\times})) \Sigma(\varphi \wedge X)$ factors as $$ \overset{\to}{+}(\varphi \wedge X) \simeq (\overset{\to}{+}\varphi) \wedge X \xrightarrow{\theta_\varphi^{\infty\Cat_*} \wedge X} (((\overset{\to}{\times}0) \times_{\ev_0} (0\overset{\to}{\times})) \Sigma\varphi) \wedge X \xrightarrow{\beta} ((\overset{\to}{\times}0) \times_{\ev_0} (0\overset{\to}{\times})) \Sigma(\varphi \wedge X).$$

Hence we obtain a canonical equivalence 
$$ \Psi_{\varphi \wedge X}^\mC \circ \kappa_{\varphi \wedge X}^\mC \simeq \gamma \circ ((\Psi_\varphi^{\infty\Cat_*} \circ \kappa_\varphi^{\infty\Cat_*}) \wedge X).$$ Thus $\Psi_{\varphi \wedge X}^\mC \circ \kappa_{\varphi \wedge X}^\mC$ is the unit if 
$\Psi_\varphi^{\infty\Cat_*} \circ \kappa_\varphi^{\infty\Cat_*}$ is the unit.
So we can assume that $\mC=\infty\Cat_*$ and $\varphi$ is one of the spans $$ S^0 \leftarrow 0 \to 0, 0 \leftarrow 0 \to S^0, S^0 \leftarrow S^0 \to S^0. $$ We write $\Psi$ for $\Psi^{\infty\Cat_*}.$
The component of $\Psi \circ \kappa$ at $S^0 \leftarrow 0 \to 0$ factors as
$$S^0 \xrightarrow{\sigma}\L\Mor_{\infty\Cat_*^{\Lambda_0^2}}(\alpha ,S^1 \leftarrow 0 \to 0)  \xrightarrow{\rho} \L\Mor_{\infty\Cat_*}(\mO, S^1) \xrightarrow{\tau} \L\Mor_{\infty\Cat_*}(S^1, S^1) .$$
The map $\sigma$ classifies the cofiber sequence $ S^0 \to (\bD^1)^\op \to S^1.$
Hence $\rho \circ \sigma$ classifies the canonical map 
$$\mO= \bD^1 \vee (\bD^1 )^\op +_{S^0 \vee S^0} \bD^1_+ \to  0 \vee S^1 +_{0} 0  \simeq S^1. $$

Thus $\tau \circ \rho \circ \sigma$ classifies the map $S^1 \to \mO \to S^1$
preserving the generator and so is the identity.
Hence $\tau \circ \rho \circ \sigma$ is the identity. 
The case of the component of $\Psi \circ \kappa$ at $0 \leftarrow 0 \to S^0$ is similar.

The component of $\Psi\circ \kappa$ at the span $S^0 \leftarrow S^0 \to S^0$
factors as $$ \hspace{4mm}\bD^1_+ \xrightarrow{\sigma} \L\Mor_{\infty\Cat_*^{\Lambda^2_0}}(\alpha, S^1 \leftarrow S^1 \to S^1) \xrightarrow{\rho}  \L\Mor_{\infty\Cat_*}(\mO, \Sigma(\bD^1_+)) \xrightarrow{\tau} \L\Mor_{\infty\Cat_*}(S^1, \Sigma(\bD^1_+)) \simeq \Omega\Sigma(\bD^1_+). $$

For $\bi=0,1$ let $\alpha_\bi$ be the inclusion $\{\bi\}_+ \subset \bD^1_+$.
The functors $\rho \sigma \alpha_0, \rho \sigma \alpha_1$ classify the respective functors $$\mO=  \bD^1 \vee (\bD^1)^\op+_{S^0 \vee S^0}    
S^0 \wedge \bD^1_+ \to  S^1 \vee S^1 +_{S^1 \vee S^1}    
S^1 \wedge \bD^1_+ \simeq S^1 \wedge \bD^1_+ \simeq \bD^1_+ \wedge S^1 \simeq \Sigma(\bD^1_+). $$
Here $\rho \sigma \alpha_0$ is induced by the zero functor $\bD^1 \to S^1$ and the quotient functor $(\bD^1)^\op \to (S^1)^\op \simeq S^1$  and $\rho \sigma \alpha_1$ is induced by the 
quotient functor $\bD^1 \to S^1$ and zero functor $(\bD^1)^\op \to S^1$. Moreover the last equivalence
$S^1 \wedge \bD^1_+ \simeq \bD^1_+ \wedge S^1 $ is an equivalence under the flip equivalence
$ S^1 \vee S^1 \simeq S^1 \vee S^1. $

Thus the functor $\rho \sigma \alpha_0$ sends the functor $(\bD^1)^\op \to \mO$ to the functor $$(\bD^1)^\op \to S^1 \simeq (\{0\}_+) \wedge S^1 \to \bD^1_+ \wedge S^1$$ and the functor $\rho \sigma \alpha_1$ sends the functor $\bD^1 \to \mO$ to the functor $$\bD^1 \to S^1 \simeq (\{1\}_+) \wedge S^1 \to \bD^1_+ \wedge S^1.$$ Consequently, $\tau \rho \sigma \alpha_\bi$ classifies 
$$\Sigma(\alpha_\bi): S^1\simeq \Sigma(\{\bi \}_+) \to \Sigma(\bD_+^1).$$
Hence $\tau \rho\sigma \alpha_\bi$ is  $\alpha_\bi$ followed by the unit $\bD^1_+ \to \Omega\Sigma(\bD^1_+).$
So $\tau \rho \sigma: \bD^1_+ \to \Omega\Sigma(\bD^1_+)$ is the unit since by \cref{freespar} 
the unit $\bD^1_+ \to \Omega\Sigma(\bD^1_+)$ is the canonical functor
$\coprod_{0 \leq \n \leq 1} (\bD^1)^{\times \n} \to \coprod_{\n \geq 0} (\bD^1)^{\times \n}$ into the free monoidal $\infty$-category, which is fully faithful.

\end{proof}

\subsection{Oriented suspension}

Next we refine (reduced) categorical suspension and (reduced) categorical antisuspension to a bioriented functor (\cref{reduc}) and prove that this refinement is functorial (\cref{impol}).
This is crucial to construct a bioriented category of categorical spectra in the next section.

The next is \cite[Theorem 4.2.8.]{gepner2025oriented}:

\begin{theorem}\label{alfas}
	
\begin{enumerate}
\item The functor $$S: \infty\Cat^\cop \to \infty\Cat_{\partial\bD^1/}$$ refines to a bioriented functor. 
		
\item The functor $$\bar{S}: {^\cop}\infty\Cat \to \infty\Cat_{\partial\bD^1/}$$ refines to a bioriented functor. 
		
\item There is an equivalence of bioriented functors $$S \simeq \bar{S} \circ (-)^{\co\op}: \infty\Cat^{\cop} \to \infty\Cat_{\partial\bD^1/}.$$
		
\item There is an equivalence of bioriented functors $$(-)^{\co\op} \circ S \simeq S \circ (-)^{\co\op}: {^{\cop}\infty\Cat} \to \infty\Cat_{\partial\bD^1/}.$$
		
\end{enumerate}
	
\end{theorem}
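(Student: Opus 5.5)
The plan is to construct the bioriented refinement of $S$ by adjunction, obtaining (1) as the left adjoint of a bioriented refinement of $\Mor$. By \cref{adj} (applied to the enrichment in $(\infty\Cat,\boxtimes)\ot(\infty\Cat,\boxtimes^\rev)$), an enriched functor admits an enriched left adjoint as soon as it preserves the relevant cotensors and its underlying functor has a left adjoint. So I first make $\Mor: \infty\Cat_{\partial\bD^1/} \to \infty\Cat^\cop$ bioriented. The target $\infty\Cat^\cop$ has its left and right cotensors given by the internal homs $\Fun^\lax$ and $\Fun^\oplax$, with the order twisted by $(-)^{\co\op}$. The source, as a slice of the bioriented category $\infty\Cat$, is bioriented by \cref{slices}, and its cotensors are computed by $\Fun^{\lax}(K,-)$ and $\Fun^\oplax(K,-)$ relative to $\partial\bD^1$.

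Next I check that $\Mor$ sends these cotensors to cotensors. For right cotensors this is essentially \cref{hom}: there is an equivalence
$$\Fun^\oplax(\mB,\Mor_\mC(X,Y))\simeq \Fun^\oplax_{\partial\bD^1/}(S(\mB),\mC),$$
and by \cref{thas2}, $S(\mB)=\mB\boxtimes\bD^1+_{\mB\boxtimes\partial\bD^1}\partial\bD^1$. Together these identify $\Mor$ of the relative cotensor with the cotensor of $\Mor$. For the left variant I would use the same computation after applying the involutions of \cref{dua} and \cref{oppo}. This is the point where the twist by $\cop$ in the source enters, because $(-)^\co$ turns $\Mor$ into $(-)^\op\circ\Mor$. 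Since the underlying $S$ is left adjoint to $\Mor$ by \cref{suspi}, \cref{adj} then gives (1).

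For (2) I would write $\bar S=(-)^\co\circ S\circ(-)^\co$ and transport the structure from (1) along the bioriented equivalences of \cref{duae}. Statement (3) is the bioriented refinement of \cref{ahoitos}. Here I would compare the two bioriented left adjoints through their right adjoints: both $\Mor$ and $\Mor\circ\sigma\circ(-)^{\co\op}$-type functors are identified as bioriented functors by \cref{oppo}, and uniqueness of adjoints in the 2-category of bioriented categories upgrades this to an equivalence of left adjoints. Statement (4) follows in the same way from \cref{oppo} applied twice, which gives $\Mor\circ(-)^{\co\op}\simeq(-)^{\co\op}\circ\Mor$ up to swapping the endpoints. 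Composing with the swap $\sigma$, which is an automorphism of $\infty\Cat_{\partial\bD^1/}$, accounts for the exchange of the two objects.

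The main obstacle is the first step: producing $\Mor$ as a genuinely bioriented functor with the correct $\cop$-twist, and checking cotensor preservation coherently rather than objectwise. My first approach would be to realize $\Mor$ as a fiber of the evaluation $\Fun^\oplax(\bD^1,-)\to(-)\times(-)$, as in \cref{hom}. Since $\Fun^\oplax(\bD^1,-)$ is a right adjoint of a bioriented tensor functor, it is canonically bioriented, and fibers are weighted limits, which preserve biorientedness by \cref{colimenrfun}.
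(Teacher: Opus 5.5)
The paper does not prove this statement; it imports it from \cite[Theorem 4.2.8]{gepner2025oriented}. Your proposal has a genuine gap at its central step.

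Reducing (1) to a bioriented refinement of $\Mor\colon\infty\Cat_{\partial\bD^1/}\to\infty\Cat^{\cop}$ that preserves left and right cotensors is sound via \cref{adj}. However, that refinement is the whole content of the theorem, and the mechanism you propose for it fails.

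The functor $(-)\boxtimes\bD^1$ is only left linear. A lax right-linear structure compatible with the endpoint inclusions would include a natural map $\bD^1\boxtimes L\to L\boxtimes\bD^1$ restricting to the identification $\{i\}\boxtimes L\simeq L\boxtimes\{i\}$. For $L=\bD^1$ this is the coordinate swap of the oriented square $\bD^1\boxtimes\bD^1$, which would have to reverse its non-invertible $2$-cell. So $\Fun^\oplax(\bD^1,-)$ is antioriented but not bioriented. Its fiber over the endpoints gives only the antioriented half of $\Mor$, which is the content of \cref{Morenrfun}; the paper states that after this theorem and uses the theorem to get it.

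Nor can the $(-)^{\co\op}$-twist emerge from a fiber of untwisted pieces. Already $(\mC,X,Y)\mapsto\mC$ does not refine to a bioriented functor into $\infty\Cat^{\cop}$. Its right structure map, evaluated at an extra isolated object of $\mC$, would be a natural map $L^{\co\op}\to L$ fixing objects, which is impossible for $L=\bD^1$.

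The twist only appears after the endpoints are collapsed. Objectwise, the compatibility of $S$ with right tensors in the slice reduces, via left linearity, to $\bar{S}(L)\simeq S(L^{\co\op})$ (\cref{ahoitos}), as in the proof of \cref{reduc}(1). What is missing is a construction of these identifications as coherent right-enrichment data.

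This is also why your ``left variant via the involutions'' does not close the gap. A bioriented functor is one functor enriched in $(\infty\Cat,\boxtimes)\ot(\infty\Cat,\boxtimes^\rev)$, not a pair of one-sided enriched functors with the same underlying functor. An antioriented structure from the fiber and an oriented one obtained by duality would still have to be shown compatible, and your proposal gives no way to do that.

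Likewise, \cref{oppo} only provides commutative squares of underlying functors. Your arguments for (3) and (4) via uniqueness of adjoints therefore presuppose bioriented structures on $\Mor$ and on the duality functors of the slice, which you have not constructed. To complete the argument you need an independent, coherent source for the twisted right enrichment of $S$ (or $\Mor$), not the fiber of $\Fun^\oplax(\bD^1,-)$.
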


\begin{corollary}

The functors $$S \circ S,  \bar{S} \circ \bar{S}: \infty\Cat \to \infty\Cat_{\partial\bD^2/}$$ refine to bioriented functors and there is a canonical equivalence of bioriented functors $$S \circ S \simeq \bar{S} \circ \bar{S}: \infty\Cat \to \infty\Cat_{\partial\bD^2/}.$$
\end{corollary}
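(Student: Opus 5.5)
The plan is to derive the corollary from \cref{alfas} by composing the refinements given there. Write $\partial\bD^2 = S(\partial\bD^1)$. The functor $S\circ S$ is the composite of $S: \infty\Cat^\cop \to \infty\Cat_{\partial\bD^1/}$ with a second application of suspension, extended to $\infty\Cat_{\partial\bD^1/} \to \infty\Cat_{S(\partial\bD^1)/}$ by functoriality of $S$ on slices. Since $S$ is a left adjoint preserving the base points, it sends $\partial\bD^1 \to \mC$ to $S(\partial\bD^1) \to S(\mC)$.

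The first issue is the variance decoration. By \cref{alfas}(1), $S$ is bioriented as a functor out of $\infty\Cat^\cop$. The target slice is a slice of $\infty\Cat$ itself, so the second application should be read as a bioriented functor out of $(\infty\Cat_{\partial\bD^1/})^\cop$. Composing twice therefore gives a bioriented functor out of ${(\infty\Cat^\cop)}^\cop \simeq \infty\Cat$, because $(-)^\cop = (\id,(-)^{\co\op})_!$ is an involution, as $(-)^{\co\op}$ is. This explains why the corollary has no decoration on the source.

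The slice extension is the main point to check. I would obtain it by applying \cref{alfas}(1) to enriched slices (\cref{slices}). A bioriented functor induces bioriented functors on cotensors $(-)^{\bD^1}$ and on fibers, so it induces a functor $\infty\Cat^\cop_{\partial\bD^1/} \to (\infty\Cat_{\partial\bD^1/})_{S(\partial\bD^1)/}$. Composing with the left adjoint of \cref{left adjoint slice}, or with the forgetful functor of the iterated slice, yields a functor to $\infty\Cat_{\partial\bD^2/}$. The same argument with \cref{alfas}(2) handles $\bar S\circ\bar S$, whose source is ${^\cop}({^\cop}\infty\Cat)\simeq\infty\Cat$.

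For the equivalence, I would use \cref{alfas}(3) twice together with (4):
\[ S\circ S \simeq S\circ \bar S\circ(-)^{\co\op} \simeq \bar S\circ(-)^{\co\op}\circ\bar S\circ(-)^{\co\op}. \]
Then I would commute $(-)^{\co\op}$ past $\bar S$. Since $\bar S = S\circ(-)^{\co\op}$ by \cref{ahoitos}, item (4) gives $(-)^{\co\op}\circ\bar S\simeq \bar S\circ(-)^{\co\op}$, and $(-)^{\co\op}\circ(-)^{\co\op}\simeq\id$. Hence the right-hand side reduces to $\bar S\circ\bar S$.

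The obstacle is bookkeeping. I must check that the intermediate $(-)^{\co\op}$ acting on $\infty\Cat_{\partial\bD^1/}$ fixes the two base points rather than swapping them, using \cref{oppo}; otherwise the equivalence lands over a twisted copy of $\partial\bD^2$. I must also check that all identifications are equivalences of bioriented functors and not merely of underlying functors. This holds because every step is an instance of \cref{alfas} or of the monoidal involutions of \cref{dua}.
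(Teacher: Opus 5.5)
Your strategy is the intended one. The paper records this corollary as an immediate consequence of \cref{alfas}, and your construction of the bioriented refinements is fine: you slice-extend \cref{alfas}(1),(2) and use that $(-)^\cop$ and ${^\cop}(-)$ are involutions. The gap is in the bookkeeping you postponed, which does not come out as you expect. Let $\sigma$ be the swap of the two points of $\partial\bD^1$, as in \cref{oppo}.

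In your chain, the middle $(-)^{\co\op}$ comes from applying \cref{alfas}(3) to the outer, slice-extended $S$. For that equivalence to live over the identity of $S(\partial\bD^1)=\bar S(\partial\bD^1)$, this $(-)^{\co\op}$ must be the base-point-preserving functor $(\partial\bD^1\to\mD)\mapsto(\partial\bD^1\to\mD^{\co\op})$. For this functor, however, the commutation $(-)^{\co\op}\circ\bar S\simeq\bar S\circ(-)^{\co\op}$ is false. Indeed, $S(\mC)^{\co\op}$ has its non-trivial morphism $\infty$-category from $1$ to $0$. So \cref{alfas}(4) only holds with $\sigma\circ(-)^{\co\op}$ on $\infty\Cat_{\partial\bD^1/}$. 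This is also what \cref{oppo} forces, since there $(-)^\op$ on the slice appears composed with $\sigma$. Inserting the swap, your chain actually proves $S\circ S\simeq S(\sigma)^*\circ\bar S\circ\bar S$. Here $S(\sigma)^*$ is restriction along the automorphism $S(\sigma)$ of $\partial\bD^2=S(\partial\bD^1)$ that exchanges the two parallel $1$-cells.

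This twist is genuine, not an artifact of the argument. Take $\mC=*$.
\begin{itemize}
\item $S(S(*))=S(\bD^1)$ is $\bD^2$, with its $2$-cell going from the image of the first boundary $1$-cell to the image of the second.
\item $\bar S(\bar S(*))=S(\bD^1)^\co$ has the same boundary map from $\bar S(\partial\bD^1)=S(\partial\bD^1)$, but the reversed $2$-cell.
\end{itemize}
There is not even a functor $S(\bD^1)\to S(\bD^1)^\co$ under $\partial\bD^2$. So your proposed check cannot succeed: the intermediate $(-)^{\co\op}$ cannot both fix the base points and commute with $\bar S$. The statement holds only once $\bar S\circ\bar S$ is regarded as a functor to $\infty\Cat_{\partial\bD^2/}$ via the identification $\bar S(\partial\bD^1)\simeq\partial\bD^2$ given by $S(\sigma)$. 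With that convention your argument goes through verbatim. Alternatively, it holds after forgetting down to objects under the two $0$-cells of $\partial\bD^2$. In the reduced setting of \cref{reduc} the swap is invisible, because both base points are collapsed to one, so the later use in \cref{spantisp} is unaffected.
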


\begin{corollary}\label{Morenrfun}
	
The underlying antioriented functor $S: \infty\Cat \to \infty\Cat_{\partial \bD^1 /} $ admits a right adjoint antioriented functor $$ \Mor: \infty\Cat_{\partial \bD^1/}  \to \infty\Cat$$ sending $(\mC,X,Y)$ to $\Mor_\mC(X,Y).$
	
\end{corollary}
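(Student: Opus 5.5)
We will apply \cref{adj} (1) in the setting of antioriented categories, i.e.\ with $\mV = (\infty\Cat,\boxtimes)$. By \cref{alfas} (1), the functor $S$ refines to a bioriented functor $\infty\Cat^\cop \to \infty\Cat_{\partial\bD^1/}$. Its underlying antioriented functor is a functor $S: \infty\Cat \to \infty\Cat_{\partial\bD^1/}$ enriched in $(\infty\Cat,\boxtimes)$. Here we use that $\infty\Cat^\cop$ has the same underlying antioriented structure as $\infty\Cat$, since $(-)^\cop$ only twists the right (oriented) enrichment. \cref{adj} (1) then says that this antioriented functor admits an antioriented right adjoint if and only if two conditions hold: (a) it preserves left tensors, and (b) the underlying functor of categories admits a right adjoint.

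Condition (b) is immediate from \cref{suspi}: the underlying functor $S$ is left adjoint to $\Mor: \infty\Cat_{\partial\bD^1/} \to \infty\Cat$.

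For condition (a), I would argue via representability rather than compute tensors directly. By the criterion preceding \cref{adj}, it suffices that for every $(\mC,X,Y) \in \infty\Cat_{\partial\bD^1/}$ the enriched presheaf $\L\Mor_{\infty\Cat_{\partial\bD^1/}}(S(-),(\mC,X,Y))$ on $\infty\Cat$ is representable, necessarily by $\Mor_\mC(X,Y)$. On underlying categories this is \cref{suspi}. The enriched refinement is the second equivalence of \cref{hom}: for every $\mB$ there is an equivalence
$$\Fun^\oplax(\mB,\Mor_\mC(X,Y)) \simeq \Fun^\oplax_{\partial\bD^1/}(S(\mB),\mC).$$
The left side is the antioriented morphism object $\L\Mor_{\infty\Cat}(\mB,\Mor_\mC(X,Y))$. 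By \cref{bien} applied to the enriched slice, the right side is the antioriented morphism object of $\infty\Cat_{\partial\bD^1/}$ from $S(\mB)$ to $(\mC,X,Y)$. This uses the convention that the antioriented hom of $\infty\Cat$ is $\Fun^\oplax$, the right adjoint of $(-)\boxtimes\mB$, matching $(\infty\Cat,\boxtimes)$-enrichment. Once this is in place, the right adjoint sends $(\mC,X,Y)$ to $\Mor_\mC(X,Y)$ by construction.

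\textbf{Main obstacle.} The delicate point is compatibility: the equivalence of \cref{hom} must be induced by the enriched structure map of $S$ from \cref{alfas}, composed with the counit of the underlying adjunction, and it must be natural in $\mB$. Only then does it exhibit representability in the enriched sense rather than merely giving an abstract equivalence of objects. To check this, I would trace the chain of equivalences in the proof of \cref{hom}. Each step is natural in $\mB$ because it comes from the pushout description $S(\mB) \simeq \mB\boxtimes\bD^1 +_{\mB\boxtimes\partial\bD^1}\partial\bD^1$ of \cref{thas2}. That pushout description is exactly how the bioriented refinement in \cref{alfas} is built, and from there the identification follows.
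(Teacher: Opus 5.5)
Your argument is correct and is essentially the paper's proof. The paper also takes the second equivalence of \cref{hom} and uses \cref{bien} to identify its right side with the antioriented morphism object of the slice; it then concludes by a representability criterion for enriched right adjoints, citing \cite[Remark 2.72.]{heine2024bienrichedinftycategories}, and it takes for granted the same compatibility with the induced map that you flag. Your detour through \cref{adj}~(1) is redundant: representability of $\L\Mor_{\infty\Cat_{\partial\bD^1/}}(S(-),(\mC,X,Y))$ by $\Mor_\mC(X,Y)$ already gives the antioriented right adjoint directly, with no separate check of tensor preservation needed.
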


\begin{proof}
	
By \cref{hom} the induced functor
$$\Fun^\oplax(\mB, \Mor_\mC(X,Y)) \to \Fun_{\partial\bD^1/}^\oplax(S(\mB),\mC) $$
is an equivalence.
This proves the result by \cite[Remark 2.72.]{heine2024bienrichedinftycategories} and \cref{bien}.
	
\end{proof}

\begin{theorem}\label{reduc} Let $\mC$ be a reduced bioriented category.

\begin{enumerate}\item If $\mC$ admits suspensions, the antioriented functor $\Sigma: \mC \to \mC$ refines to a bioriented functor $$\mC^\cop \to \mC.$$

\item If $\mC$ admits endomorphisms, the antioriented functor $\Omega: \mC \to \mC$ refines to a bioriented functor $$\mC \to \mC^\cop.$$

\item If $\mC$ admits suspensions and endomorphisms, there is a bioriented adjunction
$$\Sigma: \mC^\cop \rightleftarrows \mC: \Omega.$$ 

\item If $\mC$ admits antisuspensions, the oriented functor $\bar{\Sigma}: \mC \to \mC$ refines to a bioriented functor $${^\cop\mC} \to \mC.$$

\item If $\mC$ has antiendomorphisms, the oriented functor $\bar{\Omega}: \mC \to \mC$ underlies a bioriented functor $$ \mC \to {^\cop\mC}.$$

\item If $\mC$ admits antisuspensions and antiendomorphisms, there is a bioriented adjunction
$$\bar{\Sigma}: {^\cop\mC} \rightleftarrows \mC: \bar{\Omega}.$$ 

\item If $\mC$ admits small colimits and left and right tensors, then $\Sigma: \mC \to \mC$ factors as
$$\mC \ot_{\infty\Cat_*} \Sigma: \mC \simeq \mC {\ot_{\infty\Cat_*} \infty\Cat_*}  \to \mC {\ot_{\infty\Cat_*} \infty\Cat_*} \simeq \mC$$ and $\bar{\Sigma}: \mC \to \mC$ factors as
$$\bar{\Sigma} \ot_{\infty\Cat_*} \mC : \mC \simeq {\infty\Cat_* \ot_{\infty\Cat_*}} \mC \to \infty\Cat_* {\ot_{\infty\Cat_*} \mC} \simeq \mC.$$

\item There is an equivalence of reduced bioriented functors $$\Sigma \simeq \bar{\Sigma} \circ (-)^{\co\op}: \infty\Cat_*^\cop\to \infty\Cat_*.$$

\item There is an equivalence of reduced bioriented functors $$\Sigma \circ (-)^{\co\op}\simeq (-)^{\co\op} \circ \Sigma: {^\cop\infty\Cat_*} \to \infty\Cat_*.$$

\end{enumerate}	
\end{theorem}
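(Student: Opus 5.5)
The plan is to reduce everything to the universal case $\mC=\infty\Cat_*$ and to the unreduced bioriented refinements of $S$ and $\bar S$ provided by \cref{alfas}. Reduced suspension in a reduced bioriented category is a weighted colimit: $\Sigma(X)\simeq X\wedge S^1$, the right tensor with $S^1$. This follows from the corollary identifying $0\overset{\to}{+}_X 0$ with $X\wedge S^1$ and, in $\infty\Cat_*$, from \cref{smarem}. Dually, $\bar\Sigma(X)\simeq S^1\wedge X$ is the left tensor. Likewise $\Omega(X)\simeq X^{S^1}_*$ and $\bar\Omega(X)\simeq{^{S^1}X}_*$.

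For (1), I will express the right tensor with $S^1$ as a weighted colimit whose weight is the antioriented pushout $0\overset{\to}{+}_{X}0$. I then use \cref{weifunc} together with \cref{colimenrfun} (with $\mV,\mW$ the two copies of $(\infty\Cat_*,\wedge)$, $(\infty\Cat_*,\wedge^\rev)$) to obtain a functor that is enriched in the left variable. The essential point is that the oriented pushout is built from $X\ot\bD^1$ by \cref{0desc}, so the left $\infty\Cat_*$-action passes through the right tensor with $\bD^1$ only after applying $(-)^{\co\op}$. This is exactly the content of \cref{alfas}(1): $S$ is bioriented out of $\infty\Cat^\cop$.

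Concretely, I would first treat $\mC=\infty\Cat_*$. Here $\Sigma=S(-)+_{\bD^1}\bD^0$ is a pushout of the bioriented functor $S:\infty\Cat^\cop\to\infty\Cat_{\partial\bD^1/}$ with constant bioriented functors. It is therefore bioriented $\infty\Cat_*^\cop\to\infty\Cat_*$, since pushouts in functor categories are computed pointwise (\cref{colimenrfun}). For a general presentable $\mC$ with small colimits and left and right tensors, I set $\Sigma_\mC:=\mC\ot_{\infty\Cat_*}\Sigma$, which gives (7). Because $\Sigma$ on $\infty\Cat_*$ is a bimodule map twisted by $\cop$ on the left action, the base change $\mC\ot_{\infty\Cat_*}(-)$ inherits the bioriented structure $\mC^\cop\to\mC$. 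For non-presentable $\mC$ admitting only suspensions, I embed $\mC$ into a presentable reduced bioriented category via \cref{weiloc}, as in the proof of \cref{repp}. This embedding preserves the relevant weighted colimits, so $\Sigma_\mC$ is the restriction.

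Statement (2) follows from the opposite-enriched version, that is, by applying (1) to $\mC^\circ$. Alternatively, I can obtain it directly from \cref{Morenrfun}, which gives that $\Mor$ is antioriented right adjoint, reduced by taking fibers at the base point using \cref{bien}. Statement (3) then follows from \cref{adj}: a bioriented functor admits a right adjoint once the underlying functor does and it preserves tensors. The underlying adjunction is the classical $\Sigma\dashv\Omega$, and tensor preservation holds because $\Sigma(K\wedge X)\simeq K\wedge X\wedge S^1$ by associativity of the tensors. Statements (4)–(6) follow by applying (1)–(3) to ${^\cop\mC^\cop}$, or by conjugating with $(-)^{\co}$ using \cref{tttj}. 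Statements (8) and (9) are the reductions of \cref{alfas}(3),(4): both sides are pushouts along $\bD^1\to\bD^0$, and $(-)^{\co\op}$ fixes $\bD^1$ and $\bD^0$.

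The main obstacle is making the bioriented structure in (1) and (7) genuinely coherent rather than objectwise. One must check that the identification $\mC\ot_{\infty\Cat_*}\Sigma$ really is a left module map for the $\cop$-twisted action, not just a functor commuting with tensors up to equivalence. I would attack this by using \cref{Yonedaext}: $\infty\Cat_*$ is generated by $\cube$-based objects, so the bioriented structure on $\Sigma$ is determined by its restriction along the bioriented Yoneda embedding. On that restriction the twist is the monoidal involution of \cref{dua2}.
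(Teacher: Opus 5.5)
Your overall architecture matches the paper's proof. You produce $\Sigma$ on $\infty\Cat_*$ from \cref{alfas}(1) by pushing out along $\bD^1\to\bD^0$, base change to presentable $\mC$ via $\mC\ot_{\infty\Cat_*}\Sigma$, and reach a general $\mC$ through the Yoneda embedding localized as in \cref{weiloc}. Finally you read off (8), (9) from \cref{alfas}(3), (4). Using pointwise pushouts instead of \cref{left adjoint slice} is a harmless variation. You should still note that passing from the $\infty\Cat$-enriched functor of \cref{alfas} to a reduced one uses \cref{reduu} and $\Sigma(0)\simeq 0$.

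The gap sits at the only step that goes beyond \cref{alfas}. Both the base change $\mC\ot_{\infty\Cat_*}\Sigma$ and your appeal to \cref{adj} in (3) need $\Sigma\colon\infty\Cat_*^\cop\to\infty\Cat_*$ to preserve left \emph{and} right tensors, i.e.\ to be a morphism of $\wedge\Pr^L\wedge$. Contrary to your last paragraph, this is not a coherence problem. \cref{alfas} already supplies a coherent bioriented functor, and by \cref{adj} being a bioriented left adjoint is then a property: the comparison maps $K\wedge\Sigma(X)\to\Sigma(K\wedge X)$ and $\Sigma(X)\wedge K\to\Sigma(X\wedge K^{\co\op})$ must be equivalences. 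You check only the first. You also attach the $\co\op$-twist to the left action, whereas $\mC^\cop=(\id,(-)^{\co\op})_!(\mC)$ twists the right (oriented) enrichment; the left comparison is plain associativity $K\wedge(X\wedge S^1)\simeq(K\wedge X)\wedge S^1$. The missing argument is short. Every $X\simeq X\wedge S^0$ is a left tensor of $S^0$, so by left linearity right linearity reduces to $X=S^0$. There the comparison is $S^1\wedge K\simeq\bar\Sigma(K)\simeq\Sigma(K^{\co\op})$ by \cref{smarem} and \cref{ahoit}. Your suggested route through a Yoneda restriction to cubes does not produce this verification.

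Second, in (3) you apply \cref{adj} to an arbitrary $\mC$ with suspensions and endomorphisms, but \cref{adj} requires the domain to admit the tensors in question. You have to form $\Sigma_\mE\dashv\Omega_\mE$ in the presentable localization $\mE$ (at the suspension cocones of $\mC$) of the bioriented presheaf category $\mD$. Then note that $\Omega_\mE$ is the restriction of $\Omega_\mD$, by adjointness from $\Sigma_\mE\circ L\simeq L\circ\Sigma_\mD$. Since $\mC\subset\mD$ preserves endomorphisms and $\mC\subset\mE$ preserves suspensions, the whole adjunction restricts to $\mC$. This is also what identifies your refinement of $\Omega$ from (2) with the right adjoint of $\Sigma$.

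There are two smaller slips in the dualities. Applying (1) to $\mC^\circ$ produces the antiendomorphisms of $\mC$ rather than $\Omega$; use $\mC^\op$ instead, where $\Sigma_{\mC^\op}\simeq\Omega_\mC^\op$ as in \cref{agos}. Passing to ${^\cop\mC^\cop}$ keeps suspension as suspension because $(S^1)^{\co\op}\simeq S^1$, so only conjugation by $(-)^\co$ yields (4)--(6).
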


\begin{proof}

(1) is dual to (4). (2) is dual to (5). (3) is dual to (6). We prove (1), (2), (3), (7), (8), (9).

(1): First let $\mC:= \infty\Cat_*$. The functor $\Sigma$ factors as functors
$$ \infty\Cat_{*} \xrightarrow{S} \infty\Cat_{\bD^1/} \to \infty\Cat_*$$
preserving small colimits, where the latter functor takes the pushout along the functor $\bD^1 \to *$.
Hence the functor $\Sigma$ preserves the zero object.
By \cref{alfas} (1) the first functor in the composition refines to a bioriented functor
$ \infty\Cat_{*}^\cop \to \infty\Cat_{\bD^1/}.$ By \cref{left adjoint slice} the second functor in the composition refines to a bioriented functor $\infty\Cat_{\bD^1/} \to \infty\Cat_*.$  So the composition underlies a reduced bioriented functor $$\infty\Cat_{*}^\cop \to \infty\Cat_{*}.$$

We prove linearity. We first prove left linearity.
We would like to see that for every $\mC,\mD \in \infty\Cat_*$
the canonical functor $$\mC \wedge \Sigma(\mD)\to \Sigma(\mC\wedge\mD)$$
is an equivalence.
The latter identifies with the equivalence $$\mC \wedge (\mD \wedge S^1) \simeq (\mC \wedge \mD) \wedge S^1.$$
We continue with right linearity. We would like to see that for every $\mC,\mD \in \infty\Cat_*$ the canonical functor $$ \Sigma(\mC) \wedge \mD \to \Sigma(\mC\wedge\mD^{\co\op}) $$ is an equivalence.
Since $\infty\Cat_*$ is generated under left tensors by $S^0$, and $\Sigma$ is left linear as we proved, we can assume that $\mC=S^0.$
In this case the latter functor identifies with the equivalence $$ S^1 \wedge \mD \simeq \bar{\Sigma}(\mD) \simeq \Sigma(\mD^{\co\op})$$ of \cref{smarem} and \cref{ahoit}.

We assume next that $\mC$ is a reduced bioriented category that admits
left and right (co)tensors. 
The reduced left adjoint bioriented functor $ \Sigma: \infty\Cat^\cop_{*} \to \infty\Cat_*$ 
gives rise to a reduced linear bioriented functor $$
\xi: \mC^\cop \simeq \mC {\ot_{\infty\Cat_*} \infty\Cat^\cop_{*} } \to \mC {\ot_{\infty\Cat_*} \infty\Cat_*} \simeq \mC $$
whose underlying antioriented functor factors as
$$ \mC \simeq \mC {\ot_{\infty\Cat_*} \infty\Cat_*} \xrightarrow{\mC \ot {((-)\wedge S^1)}} \mC {\ot_{\infty\Cat_*} \infty\Cat_*} \simeq \mC$$ and so identifies with $\Sigma \simeq (-)\wedge S^1 : \mC \to \mC.$
Since the latter admits a right adjoint, by \cref{adj} the bioriented functor $\xi$ admits a right adjoint $\Omega: \mC^\cop \to \mC$ refining the right adjoint of the reduced antioriented functor
$\Sigma : \mC \to \mC.$ 

Let $\mC$ be a reduced bioriented category that admits endomorphisms.
The bioriented Yoneda embedding gives a reduced bioriented embedding $$\mC \hookrightarrow \mD:={\wedge\Fun\wedge}(\mC^\circ, \infty\Cat_*\boxplus \infty\Cat_*)$$ preserving endomorphisms
into a reduced bioriented category that admits left and right (co)tensors. Since the embedding preserves endomorphisms, 
the reduced bioriented functor $\Omega_\mD: \mD \to \mD^\cop$ restricts to a reduced bioriented functor $\Omega_\mC: \mC \to \mC^\cop$. This proves (2). 

Let $\mC$ be a reduced bioriented category that admits suspensions
and $$\mE \subset {\wedge\Fun\wedge}(\mC^\circ, \infty\Cat_*\boxplus \infty\Cat_*)$$ the full subcategory of reduced bioriented functors preserving antiendomorphisms.
By \cref{weiloc} the bioriented embedding $\mE \subset \mD$ admits a left adjoint $L$. So $\mE$ is a reduced bioriented category that admits left and right (co)tensors.
By \cref{weiloc} the reduced bioriented embedding $\mC \subset \mD$ induces a reduced bioriented embedding $\mC \subset \mE$ preserving suspensions. So the bioriented functor $\Sigma_\mE: \mE^\cop \to \mE$ restricts to a bioriented functor $\Sigma_\mC: \mC^\cop \to \mC$. This proves (1).

The bioriented left adjoint $L: \mD \to \mE$ gives rise to an equivalence $\Sigma_\mE \circ L \simeq L \circ \Sigma_\mD$ 
of reduced bioriented functors $\mD^\cop \to \mE$. By adjointness the bioriented functor 
$\Omega_\mE$ is the restriction of the bioriented functor $\Omega_\mD$.
So if $\mC$ admits suspensions and endomorphisms, the bioriented functor 
$\Omega_\mC$ is the restriction of the bioriented functor $\Omega_\mE$.
The bioriented adjunction $$\Sigma_\mE: \mE^\cop \rightleftarrows \mE: \Omega_\mE$$ restricts to a bioriented adjunction $ \Sigma_\mC: \mC \rightleftarrows \mC^\cop: \Omega_\mC.$
This proves (3).

(7): Let $\mC$ be a reduced bioriented category that admits small colimits and left and right tensors. Let $$\mE \subset \mD:={\wedge\Fun\wedge}(\mC^\circ, \infty\widehat{\Cat}_*\boxplus \infty\widehat{\Cat}_*)$$ be the full subcategory of reduced bioriented functors preserving antiendomorphisms. Let $\mB \subset \mE$ be the full subcategory of reduced bioriented functors preserving small limits and left and right cotensors.
By \cref{weiloc} the bioriented embedding $\bj: \mB \subset \mE$ admits a left adjoint $L$. Thus $\mB$ is a reduced bioriented category that admits left and right (co)tensors.
So there is an equivalence of reduced bioriented functors $L \circ \Sigma_\mE \simeq \Sigma_\mB \circ L$ and so a map of reduced bioriented functors $ \Sigma_\mE \circ \bj \to \bj \circ \Sigma_\mB. $
By \cref{weiloc} the reduced bioriented embedding $\mC \subset \mD$ induces a reduced bioriented embedding $\mC \subset \mE$ preserving suspensions and a reduced bioriented embedding $\bi: \mC \subset \mB$
preserving small colimits and left and right tensors.
So the bioriented functor $\Sigma_\mB:  \mB^\cop \to \mB$ restricts to $\Sigma_\mC: \mC^\cop \to \mC$. Thus the map $ \Sigma_\mE \circ \bj \to \bj \circ \Sigma_\mB $
induces a map $$ \Sigma_\mE \circ \bj \circ \bi \to \bj \circ \Sigma_\mB \circ \bi \simeq \bj \circ \bi \circ \Sigma_\mC$$ of reduced bioriented functors, which induces on underlying antioriented functors an equivalence since $\bj \circ \bi$ preserves suspensions.
In other words $\Sigma_\mE$ restricts to $\Sigma_\mC= \mC \ot_{\infty\Cat_*} \Sigma $. This proves (7). The second part of (7) is proven similar.

Statements (8) and (9) follow immediately from \cref{alfas} (3), (4), where 
we use that there is a canonical commutative square of bioriented categories and bioriented functors, where
$\tau$ is induced by the non-identity duality on $\bD^1$ and the vertical functors take the pushout along the functor $\bD^1 \to *$:
\begin{equation*}
\begin{xy}
\xymatrix{\infty\Cat_{\bD^1/}   \ar[d] \ar[r]^{\tau \circ (-)^{\co\op}} 
& \infty\Cat_{\bD^1/}   \ar[d]
\\ 
\infty\Cat_\ast \ar[r]^{(-)^{\co\op}} & \infty\Cat_\ast.}
\end{xy}
\end{equation*}
This follows from the existence of a commutative square of left adjoints provided by \cref{left adjoint slice}.

\end{proof}

\begin{proposition}\label{impol} Let $\mC, \mD$ be reduced bioriented categories.

\begin{enumerate}
\item If $\mC,\mD$ admit endomorphisms, there is a transformation of endofunctors of ${\wedge\Fun\wedge}(\mC,\mD):$
$$\F \mapsto \F \circ \Omega_\mC \to \Omega_\mD \circ \F.$$

\item  If $\mC,\mD$ admit antiendomorphisms, there is a transformation of endofunctors of ${\wedge\Fun\wedge}(\mC,\mD):$
$$\F \mapsto \F \circ \bar{\Omega}_\mC \to \bar{\Omega}_\mD \circ \F.$$

\item If $\mC,\mD$ admit suspensions, there is a transformation of endofunctors of ${\wedge\Fun\wedge}(\mC,\mD):$
$$\F \mapsto \Sigma_\mD \circ \F \to \F \circ \Sigma_\mC. $$

\item If $\mC,\mD$ admit antisuspensions, there is a transformation of endofunctors of ${\wedge\Fun\wedge}(\mC,\mD):$
$$\F \mapsto \bar{\Sigma}_\mD \circ \F \to \F \circ \bar{\Sigma}_\mC. $$

\item If $\mC$ admits suspensions and $\mD$ admits endomorphisms,
there is a transformation of endofunctors of ${\wedge\Fun\wedge}(\mC,\mD):$
$$\F \mapsto \F \to\Omega_\mD \circ \F \circ \Sigma_\mC. $$

\item If $\mC$ admits cosuspensions and $\mD$ admits antiendomorphisms,
there is a transformation of endofunctors of ${\wedge\Fun\wedge}(\mC,\mD):$
$$\F \mapsto \F \to\bar{\Omega}_\mD \circ \F \circ \bar{\Sigma}_\mC. $$

\end{enumerate}

\end{proposition}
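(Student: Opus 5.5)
The plan is to obtain all six transformations formally from the bioriented refinements of \cref{reduc}, using the enriched (bioriented) nature of $\F$. Items (2), (4), (6) are dual to (1), (3), (5) by replacing $\mC,\mD$ with ${^\cop\mC}^\cop, {^\cop\mD}^\cop$ and using \cref{reduc} (4)--(6) in place of (1)--(3). So we treat (1), (3), (5).

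For (3), let $\F: \mC \to \mD$ be a reduced bioriented functor. Since $\Sigma_\mC(X)$ is the right tensor $X \wedge S^1$ (the oriented pushout $0 \overset{\to}{+}_X 0$), an $(\infty\Cat_*,\wedge^\rev)$-enriched functor carries a canonical comparison $\F(X)\wedge S^1 \to \F(X \wedge S^1)$. We produce it from the enrichment: the unit $S^1 \to \R\Mor_\mC(X, X\wedge S^1)$ composed with $\F$ gives $S^1 \to \R\Mor_\mD(\F X, \F(X\wedge S^1))$, which corresponds by the universal property of the tensor in $\mD$ to the map $\Sigma_\mD\F(X) \to \F\Sigma_\mC(X)$. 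To make this natural in $\F$ (and not only in $X$), we use \cref{psinho} and the Yoneda embedding. First, embed $\mC,\mD$ as in the proof of \cref{reduc} into presentable reduced bioriented categories admitting left and right (co)tensors, compatible with suspensions. Then express the assignment $\F \mapsto \F \circ \Sigma_\mC$ as precomposition with the bioriented functor $\Sigma_\mC: \mC^\cop \to \mC$, and $\F \mapsto \Sigma_\mD \circ \F$ as postcomposition.

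The key input is that the tensor functor $\mC \times \infty\Cat_* \to \mC$ assembles, via \cref{weifunc} (weighted colimits are functorial in the diagram, with weight the span giving $0 \overset{\to}{+}_X 0$), into a functor on ${\wedge\Fun\wedge}(\mC,\mD)$. The comparison map for weighted colimits, $\colim^H(\F\circ\phi) \to \F(\colim^H \phi)$, is natural in $\F$. Specializing to the weight for $\Sigma$ gives (3).

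For (1) the argument is dual: $\Omega_\mC(X) = X^{S^1}_*$ is a weighted limit, and the comparison $\F(\lim^H\phi)\to \lim^H(\F\circ\phi)$ is natural in $\F$. Concretely, it is adjoint through the enrichment to $\F$ applied to the counit $S^1 \to \R\Mor_\mC(X^{S^1}_*, X)$. For (5), compose the unit $\id \to \Omega_\mC\Sigma_\mC$ (from the adjunction of \cref{reduc}(3)) with the transformation of (1):
$$\F \to \F\Omega_\mC\Sigma_\mC \to \Omega_\mD\F\Sigma_\mC.$$
Alternatively, take the adjoint of (3) through the adjunction $\Sigma_\mD \dashv \Omega_\mD$. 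Only $\Sigma_\mC$ and $\Omega_\mD$ need exist, so we use the first description with the weighted-limit comparison performed in $\mD$ only. Concretely, $\F$ applied to the unit $S^1 \to \R\Mor_\mC(X, \Sigma X)$ is adjoint to a map $\F X \to \Omega_\mD \F\Sigma X$.

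The main obstacle is coherence: ensuring these pointwise comparison maps assemble into genuine natural transformations of endofunctors of ${\wedge\Fun\wedge}(\mC,\mD)$. The approach here is to realize both sides as enriched functors out of ${\wedge\Fun\wedge}(\mC,\mD)\boxtimes \infty\Cat_*$ via \cref{psinho}. The transformation is then induced by the canonical map from the enriched action, and only needs to be constructed once, universally, using the enriched Yoneda lemma \cref{enryo}.
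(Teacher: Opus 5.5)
\textbf{There is a genuine gap: naturality in $\F$ is asserted, not constructed.} Your pointwise comparison maps are the right ones. But the content of the proposition is that they assemble into a transformation natural in $\F$, whose components are morphisms in ${\wedge\Fun\wedge}$, i.e.\ maps of bioriented functors. That is the step you assert rather than prove.

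\cref{weifunc} only makes $\phi\mapsto\colim^H\phi$ functorial in the diagram $\phi$ for a fixed target. It supplies neither the map $\colim^H(\F\circ\phi)\to\F(\colim^H\phi)$ nor its naturality in $\F$. Your closing appeal to ${\wedge\Fun\wedge}(\mC,\mD)\boxtimes\infty\Cat_*$, to ``the canonical map from the enriched action'' and to \cref{enryo} does not specify a construction.

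There is a second unaddressed point. The strength $\F(X)\wedge S^1\to\F(X\wedge S^1)$ uses only the oriented enrichment of $\F$. However, $\Sigma_\mC\colon\mC^\cop\to\mC$ is bioriented only via the twisted right linearity $\Sigma(X)\wedge K\simeq\Sigma(X\wedge K^{\co\op})$ of \cref{reduc}, and you do not check that your maps are compatible with it. Completing your route would need an extra input, for instance a description of bioriented functors between bitensored categories as lax bilinear functors whose strengths are natural structure maps.

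\textbf{How the paper avoids this.} It reduces to linear functors. For presentable $\mC,\mD$ with left and right tensors, \cref{reduc}(7) gives $\Sigma_\mC\simeq\mC\ot_{\infty\Cat_*}\Sigma$. Functoriality of the relative tensor product then yields an equivalence $\lambda\colon\Sigma_\mD\circ\F\simeq\F\ot_{\infty\Cat_*}\Sigma\simeq\F\circ\Sigma_\mC$, natural in left adjoint $\F$. From it one gets $\F\Omega_\mC\to\Omega_\mD\Sigma_\mD\F\Omega_\mC\simeq\Omega_\mD\F\Sigma_\mC\Omega_\mC\to\Omega_\mD\F$.

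An arbitrary $\F$ is replaced by its Yoneda extension $\kappa(\F)\colon\gamma(\mC)\to\gamma(\mD)$, which is natural in $\F$ by \cref{Yonedaext}. One then restricts along the Yoneda embeddings:
\begin{itemize}
\item for (1), because these embeddings preserve endomorphisms;
\item for (3), using the comparison $\Sigma_{\gamma(\mC)}\iota_\mC\to\iota_\mC\Sigma_\mC$ obtained from the localization $\beta(\mC)\subset\gamma(\mC)$, into which $\mC$ embeds preserving suspensions;
\item for (5), by composing with the unit of $\Sigma_{\gamma(\mD)}\dashv\Omega_{\gamma(\mD)}$ and using $\Omega_{\gamma(\mD)}\iota_\mD\simeq\iota_\mD\Omega_\mD$.
\end{itemize}

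\textbf{Two further corrections.} The last step above is exactly what your (5) lacks: your first route needs $\Omega_\mC$ and your alternative needs $\Sigma_\mD$, and neither is assumed.

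Also, ${^\cop\mC^\cop}$ is the wrong duality for (2), (4), (6). It leaves $\Omega$ unchanged, since $(S^1)^{\co\op}\simeq S^1$. The exchange $\Sigma\leftrightarrow\bar\Sigma$, $\Omega\leftrightarrow\bar\Omega$ is effected by $(-)^\co$; cf.\ \cref{iwowo}.
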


\begin{proof}
(1) and (2) are similar, (3) and (4) are similar, (5) and (6) are similar.
We prove (1), (3), (5).

We first assume that $\mC, \mD$ are reduced bioriented categories that have suspensions, antisuspensions, endomorphisms and antiendomorphisms.
There is an equivalence $\lambda$ of endofunctors of $\wedge\LinFun{\wedge}(\mC,\mD):$
$$\F \mapsto \Sigma_\mD \circ \F= (\mD \ot_{\infty\Cat_*} \Sigma) \circ \F \simeq (\mD \ot_{\infty\Cat_*} \Sigma) \circ (\F \ot_{\infty\Cat_*} \infty\Cat_*) \simeq $$$$
\F \ot_{\infty\Cat_*} \Sigma \simeq (\F \ot_{\infty\Cat_*} \infty\Cat_*) \circ (\mC \ot_{\infty\Cat_*} \Sigma)\simeq \F \circ (\mC \ot_{\infty\Cat_*} \Sigma)=\F \circ \Sigma_\mC. $$
We obtain a natural transformation $\rho$ of functors $\wedge\LinFun{\wedge}(\mC,\mD) \to \wedge\Fun{\wedge}(\mC,\mD)$
$$\F \mapsto \F \circ \Omega _\mC \to \Omega_\mD \circ \Sigma_\mD \circ \F \circ \Omega _\mC \simeq \Omega_\mD \circ \F \circ  \Sigma_\mC \circ \Omega _\mC \to \Omega_\mD \circ \F.$$

(1): Let $\mC, \mD$ be reduced bioriented categories that admit endomorphisms.
By the universal property of $\gamma(\mC):=\boxtimes\Fun\boxtimes(\mC^\circ, \infty\Cat_* \boxplus \infty\Cat_*)$ of \cite[Proposition 4.40.]{heine2024bienrichedinftycategories} there is a functor $\gamma: {\wedge\Cat \wedge} \to \wedge\widehat{\Cat} \wedge$ and a natural transformation $\id \to \gamma$ of functors $ {\wedge\Cat \wedge} \to {\wedge\widehat{\Cat} \wedge}$
whose component at $\mC$ is the bioriented Yoneda embedding of $\mC$ that preserves endomorphisms.
By definition the reduced bioriented functor $\Omega_\mC$ is the restriction of the reduced bioriented functor
$\Omega_{\gamma(\mC)}$ right adjoint to the reduced bioriented functor
$\Sigma_{\gamma(\mC)}= \gamma(\mC) \ot_{\infty\Cat_*} \Sigma.$

Let $\kappa$ be the induced embedding $${\wedge\Fun\wedge}(\mC, \mD) \subset {\wedge\Fun\wedge}(\mC,\gamma(\mC)) \simeq {{\wedge\L\Fun\wedge}(\gamma(\mC),\gamma(\mD)) }
\subset {{\wedge\Fun\wedge}(\gamma(\mC),\gamma(\mD))}.$$

The transformation $\rho:\F \mapsto \F \circ \Omega_ {\gamma(\mC)} \to  \Omega_ {\gamma(\mD)}  \circ \F$ of functors $$ {\wedge\LinFun\wedge}(\gamma(\mC),\gamma(\mD)) \to {\wedge\Fun\wedge}(\gamma(\mC),\gamma(\mD))$$ gives rise to a transformation $\iota_\mC^* \circ \rho \circ \kappa$ of functors ${\wedge\Fun\wedge}(\mC, \mD) \to {{\wedge\Fun\wedge}(\mC, \gamma(\mD))}:$
$$\F \mapsto \kappa(\F) \circ  \Omega_ {\gamma(\mC)} \circ \iota_\mC \simeq
\kappa(\F) \circ \iota_\mC \circ \Omega_\mC \simeq \iota_\mD \circ \F \circ \Omega_\mC \to \Omega_ {\gamma(\mD)} \circ \kappa(\F) \circ \iota_\mC \simeq  \Omega_ {\gamma(\mD)} \circ \iota_\mD \circ \F \simeq \iota_\mD \circ \Omega_\mD \circ \F $$
that restricts to a natural transformation $\F \mapsto \F \circ \Omega_\mC \to \Omega_\mD \circ \F$ of endofunctors of ${\wedge\Fun\wedge}(\mC, \mD).$

\vspace{1mm}
(3): Let $\mC, \mD$ be reduced bioriented categories that admit suspensions.
Let $\beta(\mC) \subset \gamma(\mC)$ be the full subcategory of bioriented functors $\mC^\circ \to {\infty\Cat_* \boxplus \infty\Cat_*}$ preserving antiendomorphisms.
By \cref{weiloc} the bioriented embedding $\bj_\mC: \beta(\mC) \subset \gamma(\mC)$ admits a left adjoint $L$.
By \cref{weiloc} the bioriented Yoneda embedding $\iota_\mC: \mC \hookrightarrow \gamma(\mC)$ induces a bioriented embedding
$\mC \hookrightarrow \beta(\mC)$ that preserves suspensions and identifies with $L \circ \iota_\mC.$
By definition the reduced bioriented functor $\Sigma_\mC$ is the restriction of the reduced bioriented functor
$\Sigma_{\beta(\mC)}= \beta(\mC) \ot_{\infty\Cat_*} \Sigma$
so that $$ \Sigma_{\beta(\mC)} \circ L \circ \iota_\mC \simeq L \circ \iota_\mC \circ \Sigma_{\mC}.$$
Since $L: \gamma(\mC) \to \beta(\mC)$ is a left adjoint reduced bioriented functor, there is a canonical equivalence
$$ \Sigma_{\beta(\mC)} \circ L \simeq L \circ \Sigma_{\gamma(\mC)}.$$
We obtain a natural transformation $$ \Sigma_{\gamma(\mC)}\circ \bj_\mC \to \bj_\mC \circ L \circ \Sigma_{\gamma(\mC)}\circ \bj_\mC \simeq \bj_\mC \circ \Sigma_{\beta(\mC)} \circ L \circ \bj_\mC \simeq \bj_\mC \circ \Sigma_{\beta(\mC)}$$ of bioriented functors $\beta(\mC)  \to \gamma(\mC)$ 
that gives rise to a natural transformation \begin{equation}\label{huh}
\Sigma_{\gamma(\mC)}\circ \iota_\mC \simeq \Sigma_{\gamma(\mC)}\circ \bj_\mC \circ L \circ \iota_\mC \to \bj_\mC \circ \Sigma_{\beta(\mC)}\circ L \circ \iota_\mC  \simeq \bj_\mC \circ L \circ \iota_\mC \circ \Sigma_{\mC}  \simeq \iota_\mC \circ \Sigma_{\mC} \end{equation} of bioriented functors $\mC  \to \gamma(\mC).$ 
The equivalence $\lambda:\F \mapsto \Sigma_{\gamma(\mD)} \circ \F \simeq \F \circ \Sigma_{\gamma(\mC)} $ of functors $$ {\wedge\LinFun\wedge}(\gamma(\mC),\gamma(\mD)) \to {\wedge\LinFun\wedge}(\gamma(\mC),\gamma(\mD))$$ 
and the transformation \ref{huh} gives a transformation $\zeta $
of functors ${\wedge\Fun\wedge}(\mC, \mD) \to {{\wedge\Fun\wedge}(\mC, \gamma(\mD))}:$
$$\F \mapsto \Sigma_ {\gamma(\mD)} \circ \kappa(\F) \circ \iota_\mC \simeq
\Sigma_ {\gamma(\mD)} \circ \iota_\mD \circ \F \simeq \iota_{\mD} \circ \Sigma_ {\mD} \circ \F
\to \kappa(\F) \circ \Sigma_ {\gamma(\mC)} \circ \iota_\mC \to \kappa(\F) \circ \iota_\mC \circ  \Sigma_ {\mC} \simeq  \iota_\mD \circ \F \circ \Sigma_ {\mC} $$
that restricts to a natural transformation $ \F \mapsto \Sigma_\mD \circ \F \to \F \circ \Sigma_\mC $ of endofunctors of ${\wedge\Fun\wedge}(\mC, \mD).$

\vspace{1mm}
(5): The natural transformation $$\zeta: \F \mapsto \Sigma_ {\gamma(\mD)} \circ \iota_\mD \circ \F \to \iota_\mD \circ \F \circ \Sigma_ {\mC} $$ of functors ${\wedge\Fun\wedge}(\mC, \mD) \to {{\wedge\Fun\wedge}(\mC, \gamma(\mD))}$ gives rise to a natural transformation 
$$\F \mapsto \iota_\mD \circ \F \to \Omega_{\gamma(\mD)} \circ \Sigma_ {\gamma(\mD)} \circ \iota_\mD \circ \F \to \Omega_{\gamma(\mD)} \circ \iota_\mD \circ \F \circ \Sigma_ {\mC}  \simeq \iota_\mD  \circ \Omega_\mD \circ \F \circ \Sigma_ {\mC}$$
of functors ${\wedge\Fun\wedge}(\mC, \mD) \to {{\wedge\Fun\wedge}(\mC, \gamma(\mD))}$
that induces a natural transformation
$$\F \mapsto \F  \to \Omega_\mD \circ \F \circ \Sigma_\mC$$
of functors ${\wedge\Fun\wedge}(\mC, \mD) \to {\wedge\Fun\wedge}(\mC,\mD).$

\end{proof}

\begin{remark}

\cref{reduc} and \cref{impol} generalize with the same proof to generalized reduced bioriented categories.

\end{remark}


\section{Stability of higher categories}

In this section we develop a stable homotopy theory of higher categories.
We introduce stable oriented categories, a higher-categorical analogue of classical stable categories, which provide the natural framework for stability phenomena in higher category theory. Stable oriented categories are dual to antistable antioriented categories.

The universal example is provided by categorical spectra (\cref{catspe}), which form a bistable bioriented category, a compatible stable oriented category and antistable antioriented category. We show that categorical spectra arise as the stabilization of the bioriented category of $\infty$-categories (\cref{qqq}). 
Categorical spectra are an example of spectrum objects in oriented categories and their variants, which we exhibit as stabilizations in this framework (\cref{specunive}).

We construct a monoidal structure on the category of presentable bistable bioriented categories (\cref{stten}), which is a higher-categorical analogue of Lurie’s tensor product of stable presentable categories. The tensor unit is the presentable bistable bioriented category of categorical spectra, which therefore inherits a monoidal structure, the higher-categorical analogue of the smash product, which was also constructed by Masuda \cite[Theorem 4.2.1.]{masuda2026algebra}.

As the main result of this section (\cref{F}), we prove a categorical version of the Freudenthal suspension theorem, which governs the passage from unstable to stable homotopy theory of higher categories.


\subsection{Categorical stability}

In the following we introduce stable oriented categories and their variants, which are higher-categorical analogues of classical stable categories.

\begin{definition}
An oriented category $\mC$ is quasi-stable if the following conditions are satisfied:
	
\begin{enumerate}
\item $\mC$ admits a zero object.
		
\item $\mC$ admits endomorphisms.
 
\item The endomorphisms functor $\Omega: \mC \to \mC $ is an equivalence. 

\end{enumerate}
	
An oriented category is stable if it is quasi-stable and admits oriented fibers and oriented cofibers.

\end{definition}

\begin{remark}
Every quasi-stable oriented category $\mC$ also admits suspensions since $\Omega$ is an equivalence and $\Sigma(X)\simeq \Omega^{-1}(X)$ for any $X \in \mC.$
Hence an oriented category $\mC$ is quasi-stable if and only if the following dual conditions hold:
	
\begin{enumerate}
\item $\mC$ admits a zero object.
	
\item $\mC$ admits suspensions.
	
\item The suspensions functor $\Sigma: \mC \to \mC $ is an equivalence. 
	
\end{enumerate}	
	
\end{remark}

\begin{definition}

An antioriented category $\mC$ is 

\begin{enumerate}
\item quasi-antistable if the oriented category $\mC^\co$ is quasi-stable.

\item antistable if the oriented category $\mC^\co$ is stable.

\end{enumerate}

\end{definition}

\begin{remark}

An antioriented category $\mC$ is quasi-antistable if and only if
it admits a zero object, antiendomorphisms and the antiendomorphisms functor of $\mC$ is an equivalence.
An antioriented category $\mC$ is antistable if and only if it is 
quasi-antistable and admits antioriented fibers and antioriented cofibers.

\end{remark}

\begin{definition}
A bioriented category is
\begin{enumerate}

\item quasi-(anti)stable if it is reduced and its underlying (anti)oriented category is quasi-(anti)stable.	

\item (anti)stable if it is reduced and its underlying (anti)oriented category is (anti)stable.	
	
\item quasi-bistable if it is quasi-stable and quasi-antistable.

\item bistable if it is stable and antistable.

\end{enumerate}
	
\end{definition}

\begin{remark}\label{agos}

\begin{itemize}

\item An (anti)oriented category $\mC$ is quasi-(anti)stable if and only if the (anti)oriented category $\mC^\op$ is quasi-(anti)stable since $\Sigma_{\mC^\op} \simeq \Omega_\mC^\op.$
\item Consequently, a bioriented category $\mC$ is quasi-bistable if and only if $\mC^\op$ is quasi-bistable if and only if $\mC^\co$ is quasi-bistable if and only if $\mC^{\co\op}$ is quasi-bistable.

\item An (anti)oriented category $\mC$ is (anti)stable if and only if $\mC^\op$ is (anti)stable since (anti)oriented cofibers in $\mC$ are (anti)oriented fibers in $\mC^\op$.
\item Consequently, a bioriented category $\mC$ is bistable if and only if $\mC^\op$ is bistable if and only if $\mC^\co$ is bistable if and only if $\mC^{\co\op}$ is bistable.
\end{itemize}
	    
\end{remark}

\begin{example}

Every stable category viewed as a bioriented category is bistable.

\end{example}

For the following notation we use \cref{presol}:

\begin{notation}\emph{}
	
\begin{itemize}

\item Let $$\Pr^L_{\mathrm{st}}\wedge \subset {\Pr^L\wedge}$$ be the full subcategory of stable presentable oriented categories.

\item Let $${\wedge}\Pr^L_\mathrm{anst} \subset \wedge\Pr^L$$ be the full subcategory of antistable presentable antioriented categories.

\item Let $${\wedge\Pr_\mathrm{st}^L\wedge} \subset \wedge\Pr^L\wedge$$ be the full subcategory of stable presentable bioriented categories.

\item Let $$\wedge\Pr^L_{\mathrm{anst}}\wedge \subset \wedge\Pr^L\wedge$$ be the full subcategory of antistable presentable bioriented categories.

\item Let $${\wedge}\Pr^L_{\mathrm{bist}}\wedge \subset \wedge\Pr^L\wedge$$ be the full subcategory of bistable presentable bioriented categories.

\end{itemize}

\end{notation}

We prove next that stability implies preadditivity.

\begin{definition}
An oriented, antioriented, bioriented category is preadditive if it is reduced, admits finite coproducts and finite products and for every $A,B \in \mC$ the canonical morphism $A \coprod B \to A \times B$ induced by the identity of $A,B$ and the zero morphisms, is an equivalence.	
	
\end{definition}

\begin{remark}

An oriented, antioriented, bioriented category is preadditive if and only if it is reduced, admits finite coproducts and finite products and the underlying category is preadditive.
    
\end{remark}


\begin{proposition}\label{preadd}
Every quasi-stable oriented category is preadditive.	
\end{proposition}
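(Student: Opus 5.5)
The plan is to first show that the hom objects in a quasi-stable oriented category $\mC$ are coherently commutative monoids, then get the biproduct comparison from the classical semiadditivity argument, and finally deal with the existence of finite (co)products, which I expect to be the hard part.

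\emph{Step 1: additive structure on homs.} For $T, Y \in \mC$ the weighted-limit description of endomorphisms gives a natural equivalence
$$\R\Mor_\mC(T,\Omega Y)\simeq \Omega\,\R\Mor_\mC(T,Y)$$
of based $\infty$-categories, since $\Omega Y$ is the reduced right cotensor with $S^1$. As $\Omega$ is an equivalence, every object $X$ is, functorially in $X$, equivalent to $\Omega^n(\Omega^{-n}X)$ for every $n\geq 0$. By \cref{deloop}, $\R\Mor_\mC(T,X)\simeq \Omega^n\R\Mor_\mC(T,\Omega^{-n}X)$ therefore lifts to an $\bE_n$-monoid in $\infty\Cat$. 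These lifts are compatible in $n$ and natural in $T$ and $X$, so they assemble into an $\bE_\infty$-monoid structure. Composition is compatible with it, because precomposition and postcomposition are induced by maps of the delooped objects. In particular the homotopy category $h\mC$ is enriched in commutative monoids, with unit given by the zero morphisms.

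\emph{Step 2: the comparison map.} Given $A, B$ for which $A\coprod B$ and $A\times B$ exist, the argument is the standard one. Let $i_A, i_B$ be the coproduct inclusions and $p_A, p_B$ the product projections. Define $m\colon A\times B\to A\coprod B$ by $m = i_Ap_A + i_Bp_B$, using the monoid structure of Step 1. The composites
$$A\coprod B\to A\times B\xrightarrow{m}A\coprod B,\qquad A\times B\xrightarrow{m}A\coprod B\to A\times B$$
reduce to identities on each summand and factor, because addition is bilinear with respect to composition and zero morphisms are neutral. Since equivalences are detected in the underlying category, this shows the canonical map $A\coprod B\to A\times B$ is an equivalence.

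\emph{Step 3: existence of finite coproducts and products.} This is where I expect the main obstacle. Here I would use \cref{weiloc} to embed $\mC$ fully faithfully, preserving endomorphisms and suspensions, into a presentable reduced oriented category $\mE$. Inside $\mE$ I would consider the full subcategory on objects $Z$ with $Z\simeq\Omega\Sigma Z$; this subcategory is again quasi-stable and admits finite (co)products, so Step 2 gives biproducts $A\oplus B$ there. It then remains to show that $A\oplus B$ is representable in $\mC$. The idea I would try first is to write $A\oplus B\simeq\Omega^n(\Omega^{-n}A\oplus\Omega^{-n}B)$ and argue inductively from the degenerate oriented pullback description of \cref{0desc}, taken over the zero object, together with \cref{redfib}. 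If this fails, I would fall back on the weaker conclusion that the biproduct is a retract of a representable object, which would require additionally using idempotent completeness of $\mC$.
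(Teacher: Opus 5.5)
The gap is Step~3, and it cannot be closed. Quasi-stability only gives a zero object, endomorphisms, and $\Omega$ an equivalence; it does not produce finite products or coproducts. So the statement as literally formulated is false without an extra hypothesis.

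Here is a counterexample. Take the full subcategory $\mC$ of the derived $\infty$-category of $\bZ$ spanned by $0$ and the shifts $\bZ[n]$, $n\in\bZ$, and view it as an oriented category via $\infty\Grp\subset\infty\Cat$. Its morphism objects are spaces. Internal homs of $(\infty\Cat,\boxtimes)$ into a space only see $\tau_0$ of the source, since $\tau_0$ is monoidal (\cref{grayspace}). As $\tau_0(S^1)=\tau_0(B\bN)\simeq B\bZ$, the reduced right cotensor with $S^1$ is the ordinary loop object. Hence $\Omega(\bZ[n])\simeq\bZ[n-1]$ and $\mC$ is quasi-stable. But $\bZ\times\bZ$ does not exist in $\mC$, because $\pi_0\Map(\bZ,\bZ[k])$ is $\bZ$ or $0$, never $\bZ^2$.

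Your fallback fails on the same example: $\mC$ is idempotent complete, and $\bZ\oplus\bZ$ is not a retract of any $\bZ[k]$. The paper's proof does not establish existence either. It simply writes $Y\times Y$ and $X\times Y$, i.e.\ it tacitly assumes finite products. This is harmless in its applications (\cref{hhoo}, \cref{symsp0}), where everything is presentable. What is provable is that a quasi-stable oriented category with finite products is preadditive.

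Under that hypothesis you also should not presuppose that $A\coprod B$ exists, as your Step~2 does. The paper uses a single delooping, $\R\Mor_\mC(X,Y)\simeq\Omega\,\R\Mor_\mC(X,\Sigma Y)$, so an $\bE_1$-structure suffices; the $\bE_\infty$-structure of your Step~1 is more than needed. From this it produces fold maps $\mu_Y\colon Y\times Y\to Y$ and checks that they are natural, compatible with products, and unital. It then applies \cite[Proposition 2.4.3.19]{lurie.higheralgebra}, which turns such a family into semiadditivity, so that $A\times B$ is also a coproduct.

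You can get the same conclusion by hand. Put $i_A=(\id,0)$ and $i_B=(0,\id)$. Then $(f,g)\mapsto fp_A+gp_B$ is inverse to the restriction map $\Map(A\times B,T)\to\Map(A,T)\times\Map(B,T)$. This uses only unitality and the fact that pre- and postcomposition are monoid maps.
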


\begin{proof}

By quasi-stability for every $X, Y \in \mC$ there is a canonical equivalence of $\infty$-categories
$$\R\Mor_{\mC}(X,Y) \simeq \R\Mor_{\mC}(X,\Omega(\Sigma(Y))) \simeq \Omega(\R\Mor_{\mC}(X,\Sigma(Y)))$$ 
that endows $\R\Mor_{\mC}(X,Y)$ with the structure of a monoidal $\infty$-category
whose tensor unit is the zero morphism.
The tensor product functor $$\R\Mor_{\mC}(X,Y \times Y) \simeq \R\Mor_{\mC}(X,Y) \times \R\Mor_{\mC}(X,Y) \to \R\Mor_{\mC}(X,Y) $$ sends the identity of $Y \times Y$ to a morphism $\mu_Y: Y \times Y \to Y.$
By construction of $\mu_Y$ for every morphism $\alpha: Y \to Z$ the morphism $\alpha \circ \mu_Y$
is equivalent to $\mu_Z \circ (\alpha \times \alpha)$ and for every $X,Y \in \mC$
the morphism $\mu_{X\times Y}$ factors as $$ (X\times Y) \times (X\times Y) \simeq (X\times X) \times (Y\times Y) \xrightarrow{\mu_X \times \mu_Y} X \times Y.$$
Moreover by unitality of the monoidal structure on $\R\Mor_{\mC}(X,Y)$
the composition $$Y \simeq Y \times * \to Y \times X\xrightarrow{\mu_Y} Y$$ is the identity.
By \cite[Proposition 2.4.3.19.]{lurie.higheralgebra} this implies that $\mC$ is preadditive.

\end{proof}

\begin{corollary}
Every quasi-antistable antioriented category is preadditive.	
\end{corollary}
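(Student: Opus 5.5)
The plan is to deduce the corollary from \cref{preadd} by the conjugation duality $(-)^\co$ between oriented and antioriented categories. By definition, an antioriented category $\mC$ is quasi-antistable exactly when the oriented category $\mC^\co$ is quasi-stable. So \cref{preadd} applies to $\mC^\co$ and shows that $\mC^\co$ is preadditive. It then remains to transport preadditivity back along $(-)^\co$.

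The transport step uses that $(-)^\co = (-)^\op_!$ is the change of enrichment along the monoidal equivalence $(-)^\op : (\infty\Cat,\boxtimes) \simeq (\infty\Cat,\boxtimes^\rev)$ of \cref{dua}. By the description of $\phi_!$ for a monoidal equivalence, this identifies the underlying categories: $\iota(\mC^\co) \simeq \iota(\mC)$. It also identifies the morphism objects: $\R\Mor_{\mC^\co}(X,Y) \simeq \L\Mor_\mC(X,Y)^\op$.

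With these identifications, I would check the three ingredients of preadditivity in turn.
\begin{itemize}
\item Zero objects. Since $(-)^\op$ preserves the final $\infty$-category, an object is initial or final in $\mC$ if and only if it is so in $\mC^\co$ (\cref{initi}).
\item Finite products and coproducts. These are conical limits and colimits, and a monoidal equivalence of enriching categories carries them to conical limits and colimits, because $(-)^\op$ preserves limits in $\infty\Cat$. So $\mC$ admits them and they agree with those of $\mC^\co$.
\item The comparison map. The canonical morphism $A \coprod B \to A \times B$ is built from identities and zero morphisms, which are data of the underlying category. It is therefore the same morphism in $\iota(\mC)$ and in $\iota(\mC^\co)$. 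Alternatively, by the remark following the definition of preadditivity, it suffices to note that the underlying categories coincide.
\end{itemize}
Hence $\mC$ is preadditive.

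The only step needing care, though it is not a serious obstacle, is the justification that enriched conical (co)limits and zero objects are invariant under pushforward along the monoidal equivalence $(-)^\op$. If one prefers to avoid it, the alternative is to rerun the proof of \cref{preadd} verbatim for $\mC$. One uses the antiendomorphisms $\bar\Omega$, so that $\L\Mor_\mC(X,Y) \simeq \bar\Omega(\L\Mor_\mC(X,\bar\Sigma Y))$ is monoidal with unit the zero morphism. This yields natural multiplication maps $\mu_Y$, and one concludes by \cite[Proposition 2.4.3.19.]{lurie.higheralgebra} as before.
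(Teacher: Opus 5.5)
Your proposal is correct and matches the paper's argument. The paper states this corollary without proof immediately after \cref{preadd}, as its dual. The intended argument is yours: apply \cref{preadd} to the quasi-stable oriented category $\mC^\co$, then transport preadditivity back along the change of enrichment $(-)^\co=(-)^\op_!$, which preserves the underlying category, zero objects and conical finite (co)limits.
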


\begin{notation}
Let $\Pr^L \subset \widehat{\Cat}$ be the subcategory of presentable categories and left adjoint functors.	

\end{notation}
\begin{notation}
Let $\Pr^L_{\mathrm{preadd}} \subset \Pr^L$ be the full subcategory of preadditive presentable categories.

\end{notation}

By \cite[Theorem 4.6.]{gepner2016universality} there is a symmetric monoidal localization \begin{equation}\label{loo}\Mon_{\bE_\infty}:\Pr^L \rightleftarrows \Pr^L_{\mathrm{preadd}},\end{equation} where $\Mon_{\bE_\infty}$ assigns to any presentable category $\mD$ the category of commutative monoid objects in $\mD$ and the unit is the free functor.
The localization (\ref{loo}) gives rise to a localization on associative algebras whose  local objects are precisely the preadditive presentably monoidal categories:  \begin{equation}\label{adjass}
\Mon_{\bE_\infty}:\Alg(\Pr^L) \rightleftarrows \Alg(\Pr^L_{\mathrm{preadd}}).\end{equation}
Moreover it gives rise to localizations, where the subscript preadd refers to the full subcategories of preadditive presentable reduced oriented, antioriented, bioriented categories:
$$ \Mon_{\bE_\infty}: \wedge \Pr^L\rightleftarrows \wedge \Pr^L_{\mathrm{preadd}}, \  \Mon_{\bE_\infty}: \Pr^L\wedge \rightleftarrows \Pr^L_{\mathrm{preadd}}\wedge, \  \Mon_{\bE_\infty}: \wedge \Pr^L\wedge \rightleftarrows \wedge {\Pr^L_{\mathrm{preadd}}\wedge} .$$

\begin{corollary}\label{hhoo}\emph{ }
\begin{enumerate}

\item Let $\mC$ be a presentable stable oriented category.
The right adjoint oriented forgetful functor $ \Mon_{\bE_\infty}(\mC) \to \mC $ is an equivalence.
\vspace{1mm}

\item Let $\mC$ be a presentable antistable antioriented category.
The right adjoint antioriented forgetful functor $\Mon_{\bE_\infty}(\mC) \to \mC$ is an equivalence.

\vspace{1mm}
\item Let $\mC$ be a presentable antistable or stable bioriented category.
The right adjoint bioriented forgetful functor $ \Mon_{\bE_\infty}(\mC) \to \mC$ is an equivalence.
	
\end{enumerate}
\end{corollary}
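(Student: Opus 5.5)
The plan is to deduce the corollary from \cref{preadd} together with the description of the localization $\Mon_{\bE_\infty}$ onto preadditive presentable categories. The forgetful functor $\Mon_{\bE_\infty}(\mD) \to \mD$ is an equivalence precisely when $\mD$ is preadditive, since $\Pr^L_{\mathrm{preadd}}$ is the category of local objects of the symmetric monoidal localization (\ref{loo}). Equivalently, every object of a preadditive category admits an essentially unique commutative monoid structure, by \cite[Proposition 2.4.3.19.]{lurie.higheralgebra} and its consequences in \cite{gepner2016universality}. So the task splits into two parts: show that the relevant underlying categories are preadditive, and check that the forgetful functor refines to an enriched functor, so that it is an equivalence of (anti/bi)oriented categories and not merely of underlying categories.

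For (1), let $\mC$ be a presentable stable oriented category. It is in particular quasi-stable, so \cref{preadd} shows that $\mC$ is preadditive, and hence its underlying presentable category is preadditive. The enriched structure is then handled as follows.
\begin{itemize}
\item The localizations $\Mon_{\bE_\infty}: \Pr^L\wedge \rightleftarrows \Pr^L_{\mathrm{preadd}}\wedge$ are induced from (\ref{loo}). This uses that $\Mon_{\bE_\infty}(\mC) \simeq \mC \ot \Mon_{\bE_\infty}(\infty\Grp)$ as a right $\infty\Cat_*$-module, via \cref{adjuiti} and the identification of presentable enriched categories with modules in $\Pr^L$.
\item Under this identification, the right adjoint forgetful functor is the enriched functor obtained by tensoring $\mC$ with the forgetful functor $\Mon_{\bE_\infty}(\infty\Grp) \to \infty\Grp$.
\item A right adjoint enriched functor whose underlying functor is an equivalence is an enriched equivalence. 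Its enriched left adjoint exists by \cref{adj}, and the unit and counit are equivalences because they can be checked on underlying categories.
\end{itemize}

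For (2), note that $\mC^\co$ is a stable oriented category, so \cref{preadd} again gives preadditivity of the underlying category. This is the corollary following \cref{preadd}. The same module-theoretic argument then applies, now with left $\infty\Cat_*$-modules.

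For (3), preadditivity only concerns the underlying category, which is shared by the stable oriented and the antistable antioriented structures. So either hypothesis yields preadditivity via \cref{preadd} or its corollary. The bimodule version of the localization then makes the forgetful functor bioriented, and the same underlying-equivalence argument concludes.

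The main obstacle is the enriched refinement: one must confirm that the localization on $\wedge\Pr^L\wedge$ is computed by the same $\Mon_{\bE_\infty}$ construction on underlying categories. I would use that $\Mon_{\bE_\infty}(\mD) \simeq \mD \ot \Mon_{\bE_\infty}(\infty\Grp)$ and that $\Pr^L_{\mathrm{preadd}}$ is a smashing localization. Preadditivity of $\mD$ then means $\mD$ is already a $\Mon_{\bE_\infty}(\infty\Grp)$-module, and the module structures over $\infty\Cat_*$ on both sides commute.
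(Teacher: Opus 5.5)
Your proposal is correct and follows the paper's route. The corollary is deduced from \cref{preadd} (and its antioriented counterpart) together with the localizations $\Mon_{\bE_\infty}$ onto preadditive presentable oriented, antioriented and bioriented categories induced from (\ref{loo}), under which a local object has invertible unit, so the forgetful functor is an equivalence. Your extra step of checking the enriched equivalence on underlying categories via the enriched adjunction is a harmless variant of the paper's direct appeal to the enriched localizations.
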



\subsection{Categorical spectra}

Next we define spectrum objects in bioriented categories and show that such provide the stabilization (\cref{specunive}). The prime example are spectrum objects in the bioriented category $\infty\Cat$, which are known as categorical spectra (\cref{catspe}).


\begin{definition}\label{spectraa} Let $\mC$ be a reduced bioriented category that admits endomorphisms.
The bioriented category $\Sp(\mC)$ of spectrum objects in $\mC$ is the limit of the following  diagram of reduced bioriented categories: 
$$...\xrightarrow{\Omega} \mC^\cop \xrightarrow{\Omega} \mC  \xrightarrow{\Omega} \mC^\cop \xrightarrow{\Omega} \mC.$$

\end{definition}

\begin{definition}\label{antispectraa}
	
Let $\mC$ be a reduced bioriented category that admits antiendomorphisms.
The bioriented category of antispectrum objects in $\mC$ is 
$$\overline{\Sp}(\mC):= \Sp(\mC^\co)^\co.$$
\end{definition}

\begin{remark}\label{iwowo} 
By definition there is a canonical equivalence $\bar{\Omega}_\mC \simeq \Omega^\co_{\mC^\co}$ of bioriented functors. Thus the bioriented category $\overline{\Sp}(\mC)$ is the sequential limit of the following  diagram of reduced bioriented categories:
$$...\xrightarrow{\bar{\Omega}} {^\cop\mC} \xrightarrow{\bar{\Omega}} \mC  \xrightarrow{\bar{\Omega}} {^\cop\mC} \xrightarrow{\bar{\Omega}} \mC.$$
\end{remark}

\begin{example}
	
The bioriented categories $\Sp(\infty\Grp), \overline{\Sp}(\infty\Grp)$ are the usual category of spectra viewed as a bioriented category.	

\end{example}

\begin{remark}

By \cref{ember} every weakly reduced oriented category gives rise to a weakly reduced bioriented category, which is reduced and 
admits endomorphisms if the original oriented category is reduced and admits endomorphisms (\cref{indubi}).
Hence we can apply \cref{spectraa} to every reduced oriented category that admits endomorphisms to obtain an oriented category of spectrum objects.

Similarly, we can apply \cref{antispectraa} to every reduced antioriented category that admits antiendomorphisms to obtain an antioriented category of antispectrum objects.

\end{remark}

\begin{definition}\label{catspe}
The bioriented category of categorical spectra is $$\Cat\Sp := \Sp(\infty\Cat_*).$$	
\end{definition}
	
\begin{proposition}\label{spantisp}
There is a canonical equivalence of bioriented categories $$ \Cat\Sp=\Sp(\infty\Cat_*) \simeq \overline{\Sp}(\infty\Cat_*).$$

\end{proposition}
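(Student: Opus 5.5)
The idea is to compare the two defining towers directly. $\Sp(\infty\Cat_*)$ is the limit of the tower $\dots \xrightarrow{\Omega} \infty\Cat_*^\cop \xrightarrow{\Omega} \infty\Cat_*$. By \cref{iwowo}, $\overline{\Sp}(\infty\Cat_*)$ is the limit of the tower $\dots \xrightarrow{\bar\Omega} {^\cop\infty\Cat_*} \xrightarrow{\bar\Omega} \infty\Cat_*$. Since both are sequential limits of reduced bioriented categories, it suffices to produce an equivalence of towers. That is, we need bioriented equivalences between the terms, compatible with the bonding functors up to coherent equivalence.

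The key input is the duality of \cref{reduc}(8),(9). Passing to right adjoints, (8) gives an equivalence of reduced bioriented functors
$$\Omega \simeq (-)^{\co\op} \circ \bar{\Omega}: \infty\Cat_* \to \infty\Cat_*^\cop.$$
Adjoints in the 2-category of reduced bioriented categories are unique, and (8) is an equivalence of left adjoints $\Sigma \simeq \bar\Sigma \circ (-)^{\co\op}$. Combining with the bioriented equivalence $(-)^{\co\op}: {^\cop\infty\Cat_*^\cop} \simeq \infty\Cat_*$ (the reduced version of \cref{duae}), we can identify the terms of the two towers alternately. In even degrees we use the identity of $\infty\Cat_*$. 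In odd degrees we use $(-)^{\co\op}: {^\cop\infty\Cat_*} \simeq \infty\Cat_*^\cop$, which is available since $(-)^{\co\op}$ intertwines $(-)^\cop$ and ${^\cop(-)}$.

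With these identifications, each square in the ladder of towers commutes by one of two facts:
\begin{enumerate}
\item the displayed equivalence $\Omega \simeq (-)^{\co\op}\circ\bar\Omega$;
\item its conjugate, obtained from \cref{reduc}(9), namely $\Sigma \circ (-)^{\co\op} \simeq (-)^{\co\op}\circ \Sigma$, which by adjunction gives $\Omega \circ (-)^{\co\op} \simeq (-)^{\co\op} \circ \Omega$.
\end{enumerate}
Together with $(-)^{\co\op}$ being an involution, this gives $\Omega \simeq \bar\Omega \circ (-)^{\co\op}$ on the other parity. Since a sequential diagram has no higher coherences beyond the squares themselves (the indexing category $\bN^\op$ is free on the bonding arrows), a natural equivalence of towers is just a choice of equivalences on terms and 2-cells on squares. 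Taking limits then yields $\Sp(\infty\Cat_*) \simeq \overline{\Sp}(\infty\Cat_*)$.

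The main obstacle is the bookkeeping of the twists $\cop$ versus ${^\cop}$: each term must be an equivalence of \emph{bioriented} categories, not merely of underlying categories, and the correct variance must appear at each parity. To handle this I would first record that $(-)^{\co\op}$ induces bioriented equivalences $\mC^\cop \simeq ({^\cop\mC})^{\co\op}$ and similar, using the identities ${^\cop(-)}^\cop \simeq (-)^{\co\op}\circ(-)^\circ$ from the notation section. I would then check that the transposition of (8) and (9) to right adjoints is legitimate in the bioriented 2-category, via \cref{adj} and uniqueness of adjoints.
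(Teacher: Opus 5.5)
Your argument is correct and is essentially the paper's: both identify the two defining towers using the duality equivalences of \cref{reduc}(8),(9), passed to right adjoints. The paper avoids the parity bookkeeping you flag as the main obstacle by using cofinality to pass to the even-indexed subtowers. There every term is $\infty\Cat_*$, and the only input needed is the single equivalence of bioriented functors $\Omega^2 \simeq \bar{\Omega}^2: \infty\Cat_* \to \infty\Cat_*$, so the odd terms ${^\cop\infty\Cat_*}$ and $\infty\Cat_*^\cop$ never have to be matched.
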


\begin{proof}
By \cref{reduc} there is a canonical equivalence of bioriented functors $$ \Omega^2 \simeq \bar{\Omega}^2: \infty\Cat_* \to \infty\Cat_*.$$ Consequently, there is a canonical equivalence of bioriented categories $$ \Sp(\infty\Cat_*) \simeq \lim(... \xrightarrow{\Omega^2} \mC \xrightarrow{\Omega^2}\mC)  \simeq \lim(... \xrightarrow{\bar{\Omega}^2} \mC \xrightarrow{\bar{\Omega}^2}\mC) \simeq \overline{\Sp}(\infty\Cat_*),$$
where the first and third equivalence are by cofinality.

\end{proof}

\begin{remark}\label{nnm}
Let $\mC$ be a reduced presentable bioriented category. Then the bioriented functor $\Omega$ admits a left adjoint.
This implies that the bioriented category $\Sp(\mC)$ is presentable.
This follows from the fact that the subcategory $\wedge\Pr^R\wedge $ of presentable bioriented categories and right adjoint bioriented functors admits small limits preserved by the inclusion $\wedge\Pr^R\wedge \subset {\wedge\widehat{\Cat}\wedge}.$
\end{remark}

\begin{remark}\label{nnmt}
Via the equivalence $\wedge\Pr^L\wedge \simeq (\wedge\Pr^R\wedge)^\op$ and \cref{nnm} the bioriented category $\Sp(\mC)$ is also the colimit in $\wedge\Pr^L\wedge$ of the sequential diagram $$\mC \xrightarrow{\Sigma} \mC^\cop \xrightarrow{\Sigma} \mC \xrightarrow{\Sigma}....$$
\end{remark}

\begin{corollary}\label{sptens}
Let $\mC$ be a presentable reduced bioriented category.
There are canonical equivalences of reduced bioriented categories $$\Sp(\mC) \simeq \mC \ot_{\infty\Cat_*} \Cat\Sp, \ \overline{\Sp}(\mC)\simeq \Cat\Sp \ot_{\infty\Cat_*} \mC.$$

\end{corollary}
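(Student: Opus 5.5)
Both equivalences are to be read off from the colimit description of $\Sp(\mC)$ in \cref{nnmt}, combined with the fact that the relative tensor product $\mC \ot_{\infty\Cat_*} (-)$ is a left adjoint on $\wedge\Pr^L\wedge$. By \cref{nnmt}, $\Sp(\mC)$ is the colimit in $\wedge\Pr^L\wedge$ of the sequential diagram
$$\mC \xrightarrow{\Sigma_\mC} \mC^\cop \xrightarrow{\Sigma_\mC} \mC \xrightarrow{\Sigma_\mC} \cdots.$$
The same holds for $\Cat\Sp$, as the colimit of
$$\infty\Cat_* \xrightarrow{\Sigma} \infty\Cat_*^\cop \xrightarrow{\Sigma} \cdots.$$

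The first step is to identify $\mC \ot_{\infty\Cat_*} \infty\Cat_*^\cop \simeq \mC^\cop$ as presentable reduced bioriented categories. This follows by applying $(\id,(-)^{\co\op})_!$ and using \cref{dua2}, which says that $(-)^{\co\op}$ is a monoidal self-equivalence of $(\infty\Cat_*,\wedge)$.

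The second step is to invoke \cref{reduc}(7): since $\mC$ is presentable, it admits small colimits and left and right tensors, so $\Sigma_\mC \simeq \mC \ot_{\infty\Cat_*} \Sigma$. The transition maps of the diagram for $\Sp(\mC)$ are therefore obtained by applying $\mC \ot_{\infty\Cat_*}(-)$ to the transition maps of the diagram for $\Cat\Sp$. I also need this identification to be compatible along the whole sequence, including the $(-)^\cop$ twists. For that I would use that (7) is proved by embedding $\mC$ into a presentable bioriented category where $\Sigma$ is literally $\mC\ot_{\infty\Cat_*}\Sigma$, and that all identifications there are natural.

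The third step is to show that $\mC \ot_{\infty\Cat_*}(-) : {\wedge\Pr^L\wedge} \to {\wedge\Pr^L\wedge}$ preserves small colimits. Here I would use the equivalence ${_\mV\Mod(\Pr^L)_\mW} \simeq \mV\ot\mW^\rev\mathrm{-}\Pr^L$ (\cref{adjuiti} and its corollary) to view presentable reduced bioriented categories as $\infty\Cat_*$-bimodules in $\Pr^L$. The relative tensor product of bimodules is computed by a geometric realization of bar constructions, and these commute with colimits in each variable because the tensor product on $\Pr^L$ is closed. Given this, we get
$$\mC\ot_{\infty\Cat_*}\Cat\Sp \simeq \colim(\mC \to \mC^\cop \to \cdots) \simeq \Sp(\mC).$$

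The second equivalence is proved symmetrically. We use $\overline{\Sp}(\mC)$ as the colimit along $\bar\Sigma$, obtained dually from \cref{iwowo} and \cref{nnmt}. We use the second half of \cref{reduc}(7), $\bar\Sigma_\mC \simeq \bar\Sigma\ot_{\infty\Cat_*}\mC$. Finally, \cref{spantisp} gives $\Cat\Sp \simeq \overline{\Sp}(\infty\Cat_*)$, which is needed to write the unit factor as a colimit along $\bar\Sigma$.

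I expect the main obstacle to be bookkeeping rather than concept. One has to check that the bimodule structures match, namely that tensoring with $\infty\Cat_*^\cop$ correctly produces the $(-)^\cop$ twist on the right action only. One also has to check that the colimit is genuinely computed in $\wedge\Pr^L\wedge$ as a category of bimodules, so that the commutation of the relative tensor product with colimits applies.
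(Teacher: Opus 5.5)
Your proposal is correct and follows the paper's argument. The paper likewise uses \cref{reduc}(7) to write $\Sigma_\mC \simeq \mC\ot_{\infty\Cat_*}\Sigma$ and $\bar\Sigma_\mC\simeq\bar\Sigma\ot_{\infty\Cat_*}\mC$, deduces $\Sp(\mC)\simeq\mC\ot_{\infty\Cat_*}\Sp(\infty\Cat_*)$ and $\overline{\Sp}(\mC)\simeq\overline{\Sp}(\infty\Cat_*)\ot_{\infty\Cat_*}\mC$, and concludes with \cref{spantisp}. You make explicit three things the paper's two-line proof leaves implicit: the colimit description of \cref{nnmt}, that the relative tensor product preserves colimits, and the identification $\mC\ot_{\infty\Cat_*}\infty\Cat_*^\cop\simeq\mC^\cop$.
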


\begin{proof}
By \cref{reduc} the reduced bioriented functor $\Sigma: \mC \to \mC$ factors as
$$\mC \ot_{\infty\Cat_*} \Sigma: \mC \simeq \mC \ot_{\infty\Cat_*} \infty\Cat_* \to \mC \ot_{\infty\Cat_*} \infty\Cat_* \simeq \mC$$ and $\bar{\Sigma}: \mC \to \mC$ factors as
$$\bar{\Sigma} \ot_{\infty\Cat_*} \mC : \mC \simeq \infty\Cat_* \ot_{\infty\Cat_*} \mC \to \infty\Cat_* \ot_{\infty\Cat_*} \mC\simeq \mC.$$
Hence there are canonical equivalences of reduced bioriented categories $$\Sp(\mC) \simeq \mC \ot_{\infty\Cat_*} \Sp(\infty\Cat_*), \ \overline{\Sp}(\mC)\simeq \overline{\Sp}(\infty\Cat_*) \ot_{\infty\Cat_*} \mC.$$
We apply \cref{spantisp}.
\end{proof}

\begin{notation}Let $\mC$ be a reduced bioriented category that admits endomorphisms.
The reduced bioriented functor of infinite endomorphisms $$\Omega^\infty: \Sp(\mC) \to \mC$$ is the canonical bioriented functor projecting to the value of the final object of the diagram.

\end{notation}

\begin{notation}\label{infsuspi}
Let $\mC$ be a reduced presentable bioriented category. The bioriented category $\Sp(\mC)$ is presentable by \cref{nnm} and the bioriented functor $\Omega^\infty: \Sp(\mC) \to \mC$ is accessible, preserves small limits, and left and right cotensors. Thus $\Omega^\infty: \Sp(\mC) \to \mC$ admits a bioriented left adjoint $\Sigma^\infty: \mC \to \Sp(\mC)$
by \cref{adj}, which we call infinite suspension.

\end{notation}

\begin{remark}Let $\mC$ be a reduced presentable bioriented category such that endomorphisms preserve small filtered colimits.
The bioriented functor $\Omega^\infty: \Sp(\mC) \to \mC $ preserves small filtered colimits so that the bioriented left adjoint $\Sigma^\infty: \mC \to  \Sp(\mC) $ preserves compact objects. 	
	
\end{remark}

\begin{lemma}Let $\mC$ be a bioriented category that admits a final object and the coproduct of any object with the final object.
The bioriented forgetful functor $\mC_* \to \mC$ admits a bioriented left adjoint $(-)_+$ that sends $X$ to $ X \coprod *.$ 
	
\end{lemma}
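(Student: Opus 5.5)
The plan is to establish the unenriched adjunction first and then upgrade it to a bioriented one using \cref{adj}. The reduced version of this fact is the Gray smash/Gray tensor analogue of \cite[Lemma 3.69.]{gepner2025oriented}, and the argument here follows the same pattern.

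\emph{Underlying adjunction.} Let $X \in \mC$ and let $(Y,y) \in \mC_*$ with $y: * \to Y$. Since $X \coprod *$ is a coproduct, we have
$$\Map_\mC(X \coprod *, Y) \simeq \Map_\mC(X,Y) \times \Map_\mC(*,Y).$$
The object $X \coprod *$ is pointed by the coproduct inclusion of $*$. A pointed map is therefore a point of the fiber over $y$ of the projection to the second factor, and this fiber is $\Map_\mC(X,Y)$. This yields the adjunction of underlying categories $(-)_+ \dashv$ forget, with unit $X \to X \coprod *$.

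\emph{Enriched upgrade.} Let $\mV$ denote the bioriented enriching category $(\infty\Cat,\boxtimes)\ot(\infty\Cat,\boxtimes^\rev)$. The task is to identify the enriched morphism objects, i.e. to show that for every $(Y,y)$ the functor $\Mor_\mC(-,Y)\circ U$ is represented by $(Y,y)$ through $(-)_+$. By \cref{bien}, the morphism object of the slice $\mC_{*/}$ is
$$\Mor_{\mC_*}(X_+,Y) \simeq \{y\}\times_{\Mor_\mC(*,Y)} \Mor_\mC(X\coprod *,Y).$$
The coproduct $X \coprod *$ is a coproduct in the underlying category. The key point is that it must also be a conical (enriched) coproduct, so that
$$\Mor_\mC(X\coprod *,Y)\simeq \Mor_\mC(X,Y)\times\Mor_\mC(*,Y).$$
Granting this, the fiber over $y$ is $\Mor_\mC(X,Y)$, and this equivalence is induced by the unit. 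By the enriched criterion for adjoints (the representability statement of \cite[Remark 2.75.]{heine2024bienrichedinftycategories} recalled above), the forgetful bioriented functor $\mC_* \to \mC$ then has a bioriented left adjoint sending $X$ to $X_+$.

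\emph{Main obstacle.} The main difficulty is the conicality of the coproduct. In the enriched setting, "the coproduct" should be read as the enriched (conical) coproduct. If only the underlying coproduct is given, I would proceed via the enriched Yoneda embedding, as in the proof of \cref{reduc}. One embeds $\mC$ into the bioriented presheaf category, where coproducts are formed objectwise and are therefore conical, and where the final object is preserved because the Yoneda embedding preserves limits by \cref{weiadj}. One then checks that the representing object lies in $\mC$.

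A second, smaller point is that $\mC_*$ must be a bioriented category at all. This follows from \cref{slices}, since the unit of $\mV$ is final.
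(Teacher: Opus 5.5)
Your argument is correct, but it takes a different route from the paper. The paper first uses \cref{weiloc} to embed $\mC$ into a presentable bioriented category $\mD$ by an embedding that preserves the final object and the coproducts $X\coprod *$. It then applies \cref{left adjoint slice} to the morphism $\emptyset\to *$ in $\mD$: the functor $\mD_{*/}\to\mD_{\emptyset/}\simeq\mD$ has as left adjoint the pushout along $\emptyset\to *$, i.e.\ $X\mapsto X\coprod *$, and this restricts to $\mC$ because the embedding preserves these coproducts.

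You instead work directly in $\mC$:
\begin{itemize}
\item \cref{bien} identifies $\Mor_{\mC_*}(X_+,Y)$ with $\{y\}\times_{\Mor_\mC(*,Y)}\Mor_\mC(X\coprod *,Y)$.
\item Conicality of the coproduct splits the last factor as $\Mor_\mC(X,Y)\times\Mor_\mC(*,Y)$.
\item Finality of the unit of $\mV$ then leaves $\Mor_\mC(X,Y)$.
\item The enriched representability criterion produces the adjoint: for each $X\in\mC$, the enriched functor $\Mor_\mC(X,U(-))$ on $\mC_*$ is corepresented by $X_+$ via the unit $X\to X\coprod *$.
\end{itemize}
Your version is self-contained. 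It needs no smallness assumption and no restriction step along an embedding, and it shows exactly where the enriched nature of the coproduct enters. The paper's version hands all enriched bookkeeping to a general slice lemma and to the embedding trick it uses throughout. Both need the same hypothesis. \cref{weiloc}(3) gives an embedding preserving the cocones $X\to X\coprod *\leftarrow *$ only when these are already enriched (conical) colimit cocones, so the paper also tacitly reads ``coproduct'' conically.

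Two corrections. First, your opening sentence announces an upgrade via \cref{adj}. That criterion requires the forgetful functor to preserve cotensors, which $\mC$ need not have. What you actually use, correctly, is the representability criterion. Your sentence about ``$\Mor_\mC(-,Y)\circ U$ being represented by $(Y,y)$'' should be replaced by the corepresentability statement above, which is what your computation verifies.

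Second, the fallback in your last paragraph cannot work.
\begin{itemize}
\item The plain enriched Yoneda embedding preserves weighted limits, not coproducts.
\item An enriched fully faithful functor reflects weighted colimits.
\item Underlying colimits in a presentable bioriented category are automatically conical, because the action preserves colimits.
\end{itemize}
Hence no embedding into $\mP_\mV(\mC)$ or $\mP^\Lambda_\mV(\mC)$ can send a merely underlying coproduct $X\coprod *$ to a coproduct, and ``checking that the representing object lies in $\mC$'' would fail. The right fix is simply to read the hypothesis as a conical coproduct; your direct argument is then complete.
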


\begin{proof}
By \cref{weiloc} there is a bioriented embedding of $\mC$ into a presentable bioriented category preserving the final object and the coproduct of any object with the final object. So we can assume that $\mC$ is a presentable bioriented category. In this case the statement follows from \cref{left adjoint slice}.
	
\end{proof}

\begin{notation}Let $\mC$ be a presentable bioriented category. Then by \cref{infsuspi} the forgetful bioriented functor $\Sp(\mC) \to \mC_*$ admits a bioriented left adjoint $\Sigma^\infty.$
We define the bioriented adjunction $$\Sigma^\infty_+:= \Sigma^\infty \circ (-)_+: \mC \rightleftarrows  \mC_* \rightleftarrows \Sp(\mC):\Omega^\infty.$$
	
\end{notation}

\begin{remark}\label{comprestri}
Let $\mC$ be a reduced compactly generated bioriented category.
Since the bioriented functor $\Omega: \infty\Cat^\cop \to \infty\Cat $ preserves small filtered colimits by \cref{endfil}, the bioriented functor $\Omega: \mC^\cop \to \mC $ preserves small filtered colimits. Thus the bioriented left adjoint $\Sigma: \mC \to \mC^\cop $ preserves compact objects and so restricts to a functor $\Sigma: \mC^\omega \to (\mC^\omega)^\cop $ on compact objects. 

\end{remark}

\subsubsection{The categorical Spanier-Whitehead category}

We have the following categorical version of Spanier-Whitehead category, where we use \cref{comprestri}:

\begin{definition}

Let $\mC$ be a reduced compactly generated bioriented category.
The bioriented Spanier-Whitehead category $ \rS\W(\mC) $ of $\mC$ is the colimit of the following diagram of small reduced bioriented categories:
$$\mC^\omega \xrightarrow{\Sigma} (\mC^\omega)^\cop \xrightarrow{\Sigma} .... $$ 

\end{definition}
	
\begin{proposition}\label{presi} Let $\mC$ be a reduced compactly generated bioriented category.
There is a canonical equivalence of bioriented categories:
$$ \Sp(\mC) \simeq \Ind(\rS\W(\mC)). $$ 

In particular, the bioriented category $\Sp(\mC)$ is compactly generated.


\end{proposition}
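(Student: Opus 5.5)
We identify both sides with the same colimit in $\wedge\Pr^L\wedge$ (equivalently, the same limit in $\wedge\Pr^R\wedge$). By \cref{nnmt}, $\Sp(\mC)$ is the colimit in $\wedge\Pr^L\wedge$ of the sequential diagram
$$\mC \xrightarrow{\Sigma} \mC^\cop \xrightarrow{\Sigma} \mC \xrightarrow{\Sigma} \cdots.$$
Since $\mC$ is compactly generated, $\mC \simeq \Ind(\mC^\omega)$ as bioriented categories. By \cref{comprestri}, each $\Sigma$ in the diagram is $\Ind$ of its restriction $\Sigma: \mC^\omega \to (\mC^\omega)^\cop$. Moreover $(\mC^\omega)^\cop = (\mC^\cop)^\omega$, because $(-)^\cop$ does not change the underlying category up to the involution $(-)^{\co\op}$, which preserves compactness.

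\emph{Step 1: $\Ind$ commutes with the colimit.} We show that $\Ind: \wedge\Cat\wedge \to \wedge\Pr^L\wedge$ sends the sequential colimit $\rS\W(\mC) = \colim(\mC^\omega \xrightarrow{\Sigma} (\mC^\omega)^\cop \to \cdots)$, taken in small reduced bioriented categories, to the colimit in $\wedge\Pr^L\wedge$. The cleanest route is to compare the corresponding limits of right adjoints. For any presentable reduced bioriented category $\mD$, the bioriented analogue of \cref{Yonedaext} gives
$$\wedge\L\Fun\wedge(\Ind(\mB),\mD) \simeq \wedge\Fun\wedge^{\mathrm{filt}}(\mB,\mD)$$
for small $\mB$ with finite colimits, where the right side consists of bioriented functors preserving finite colimits and tensors. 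Concretely, we use the enriched Yoneda embedding together with \cref{weiloc}, localizing at filtered cocones. Since $\wedge\Fun\wedge(-,\mD)$ sends colimits of small bioriented categories to limits, we get
$$\wedge\L\Fun\wedge(\Ind(\rS\W(\mC)),\mD) \simeq \lim_n \wedge\L\Fun\wedge(\Ind(\mC^\omega),\mD).$$
The right-hand side is corepresented by the colimit in $\wedge\Pr^L\wedge$ of $\mC \xrightarrow{\Sigma} \mC^\cop \to \cdots$, hence by $\Sp(\mC)$.

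\emph{Step 2: compactness.} The essential image of $\rS\W(\mC)$ in $\Ind(\rS\W(\mC))$ consists of compact objects, and it generates under filtered colimits. Hence $\Sp(\mC)$ is compactly generated. Concretely, the objects $\Sigma^{-n}\Sigma^\infty X$ with $X \in \mC^\omega$ are compact, because $\Omega^\infty$, and each evaluation $\Sp(\mC) \to \mC$, preserves filtered colimits by \cref{endfil}.

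\emph{Main obstacle.} The delicate point is Step 1 at the enriched level. We need the $\Ind$-construction to be functorial and colimit-preserving as a functor of bioriented categories, compatible with the $\cop$-twists, and we need $\Sigma$ to preserve finite colimits and left and right tensors on compact objects. I would handle this by writing
$$\Ind(\mB) = \mP^\Lambda(\mB),$$
the enriched presheaves sending finite weighted colimit cocones to limits, via \cref{weiloc}. The universal property then reduces colimit-preservation to the (evident) fact that functor categories out of a colimit are limits. The underlying equivalence can be checked after forgetting enrichment, using the classical $\Ind$ of a sequential colimit of small categories together with the colimit-of-presentable-categories formula from \cite{lurie.HTT}.
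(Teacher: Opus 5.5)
Your proposal is correct and follows the paper's argument. Both identify $\Sp(\mC)$ with the colimit of $\mC \xrightarrow{\Sigma} \mC^\cop \xrightarrow{\Sigma} \cdots$ in $\wedge\Pr^L\wedge$ via \cref{nnmt}, and then use that $\Ind$ carries the sequential colimit $\rS\W(\mC)$ to this colimit; the paper simply cites that $\Ind\colon \wedge\Cat^{\mathrm{rex}}\wedge \to \wedge\Pr^L\wedge$ preserves filtered colimits, whereas you reprove it via the universal property. You should state one input explicitly, as the paper does: $\rS\W(\mC)$ is also the colimit in $\wedge\Cat^{\mathrm{rex}}\wedge$, being a filtered colimit along finite-colimit-preserving functors, so a functor out of it preserves finite colimits iff its restrictions to each stage do; also, $\Ind(\mB)$ is the localization at finite colimit cocones, not filtered ones as your Step 1 says (your final paragraph has it right).
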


\begin{proof}\label{pres}


Since $ \mC^\omega$ admits finite colimits and $\Sigma: \mC^\omega \to \mC^\omega$ preserves finite colimits, the filtered colimit $\rS\W(\mC)$ admits finite colimits and is the filtered colimit in the subcategory $\wedge\Cat^\mathrm{rex}\wedge \subset \wedge\Cat\wedge$ of bioriented categories that admit finite colimits and bioriented functors preserving finite colimits.

Since the functor $\Ind: {\wedge \Cat^\mathrm{rex}\wedge} \to \wedge\Pr^L\wedge$ preserves small filtered colimits, the bioriented category $\Ind(\rS\W(\mC))$ is the sequential colimit of the diagram of bioriented categories
$$\mC \simeq \Ind(\mC^\omega) \xrightarrow{\Sigma} \Ind(\mC^\omega)^\cop \simeq \mC^\cop \xrightarrow{\Sigma} ... $$ in $\wedge\Pr^L\wedge$, and so  by \cref{nnmt} agrees with the bioriented category $\Sp(\mC)$.


\end{proof}

\subsubsection{Stability of categorical spectra}

Next we prove that the bioriented category of categorical spectra is bistable.

\begin{proposition}\label{rstab}
Let $\mC$ be a reduced bioriented category that admits endomorphisms.
The functor $\Omega: \Sp(\mC) \to \Sp(\mC)$ is equivalent to the evident induced functor on limits $$ \lim(... \xrightarrow{\Omega}\mC \xrightarrow{\Omega} \mC) \to \lim(... \xrightarrow{\Omega}\mC \xrightarrow{\Omega} \mC), \ (X_0, X_1,X_2, ...) \mapsto (X_{-1}, X_0,X_1, ...),$$
which is inverse to the shift functor $$ \lim(... \xrightarrow{\Omega}\mC \xrightarrow{\Omega} \mC) \to \lim(... \xrightarrow{\Omega}\mC \xrightarrow{\Omega} \mC), \ (X_0, X_1,X_2, ...) \mapsto (X_1, X_2,X_3, ...).$$
In particular, the bioriented category $\Sp(\mC)$ is quasi-stable.
	
\end{proposition}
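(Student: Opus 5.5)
The plan is to compute $\Omega$ on $\Sp(\mC)$ componentwise and then read off quasi-stability. By \cref{spectraa}, $\Sp(\mC)$ is the limit in reduced bioriented categories of the tower
$$\cdots \xrightarrow{\Omega} \mC^\cop \xrightarrow{\Omega} \mC \xrightarrow{\Omega} \mC^\cop \xrightarrow{\Omega} \mC .$$
Here each $\Omega$ is the bioriented refinement $\mC \to \mC^\cop$ from \cref{reduc}(2), and $\mC^\cop$ again admits endomorphisms.

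The first step is to check that $\Sp(\mC)$ admits endomorphisms and that they are computed levelwise. Endomorphisms are the weighted limit $\R\Mor(S^1,-)$, i.e.\ the reduced right cotensor with $S^1$. Weighted limits in a limit of enriched categories along functors preserving them exist and are formed componentwise. Each transition functor $\Omega$ is a right adjoint in the bioriented sense (after embedding into presentable ones via \cref{weiloc} if needed), and on cotensors it satisfies $\Omega(X^{S^1}_*) \simeq \Omega(X)^{S^1}_*$. Hence $\Omega_{\Sp(\mC)}(X_0,X_1,\dots)\simeq(\Omega X_0,\Omega X_1,\dots)$. The $\cop$-twists on alternate levels need care here: $\Omega_{\mC^\cop}$ is identified with $\Omega_\mC$ on underlying antioriented functors, so the levelwise functor is compatible with the tower.

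The second step uses the structure maps $X_n\simeq\Omega X_{n+1}$ to rewrite $(\Omega X_0,\Omega X_1,\dots)$ as $(\Omega X_0,X_0,X_1,\dots)$. This is the functor written in the statement, with $X_{-1}:=\Omega X_0$. I would present it as the functor on limits induced by the map of towers given by $\Omega$ at each level, and then reindex using the equivalence of the tower with its shift. The shift $(X_n)\mapsto(X_{n+1})$ is the functor induced by the cofinal inclusion of the tower into itself. Because of the alternation between $\mC$ and $\mC^\cop$, one shifts by two and composes appropriately. Both composites, shift after $\Omega$ and $\Omega$ after shift, are identified with the identity, since the limit of a tower is invariant under passing to a cofinal subtower. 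So $\Omega$ is an equivalence with inverse the shift. Since $\Sp(\mC)$ has a zero object (the constant sequence at $0$, as all transition functors are reduced), $\Sp(\mC)$ is quasi-stable.

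I expect the main obstacle to be the first step: making precise that the bioriented endomorphism functor on the limit agrees with the levelwise $\Omega$ as bioriented functors, and not merely on underlying categories, given the alternating $\cop$. I would handle this by reducing to the presentable case through the Yoneda embedding $\gamma(\mC)$, as in the proof of \cref{impol}, and then applying the naturality transformation of \cref{impol}(1). That transformation is an equivalence for $\F=\Omega$, since $\Omega$ commutes with itself.
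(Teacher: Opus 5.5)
You have a genuine gap at the step where you identify the levelwise $\Omega$ with the inverse of the shift.

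The endomorphism functor of $\Sp(\mC)$ is the functor on limits induced by a map of towers whose components are $\Omega$. Its squares, however, are filled by a specific equivalence $\Omega\circ\Omega\simeq\Omega\circ\Omega$. This is the equivalence expressing that the transition functor $\Omega\colon\mC\to\mC^\cop$ commutes with the right cotensor by $S^1$. Because of the $\co\op$-twist in the bioriented structure, it passes through an identification $(S^1)^{\co\op}\simeq S^1$.

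Your cofinality argument applies only to the canonical map of towers given by the transition functors themselves, whose squares are filled by identities. Likewise, rewriting $(\Omega X_0,\Omega X_1,\dots)$ as $(\Omega X_0,X_0,X_1,\dots)$ via the structure maps $X_n\simeq\Omega X_{n+1}$ respects the structure maps of the two spectrum objects exactly when this filling 2-cell is the identity. In your last paragraph you only argue that the 2-cell supplied by \cref{impol}(1) for $\F=\Omega$ is an equivalence. That is what is needed for the map of towers to exist, not for it to agree with the canonical one.

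This is not a formality. In topology the corresponding 2-cell is the flip of the two loop coordinates, of degree $-1$ on $S^1\wedge S^1$. There the naive levelwise identification has to be corrected by signs.

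The missing input is the one non-formal step of the paper's proof. The filling equivalence is induced by the self-equivalence
$\tau\colon S^1\wedge S^1\simeq\Sigma(S^1)\simeq\bar{\Sigma}((S^1)^{\co\op})\simeq\bar{\Sigma}(S^1)\simeq S^1\wedge S^1$
coming from \cref{ahoit}. By \cref{autoeq}, every self-equivalence of $S^1\wedge S^1\simeq S^2$ in $\infty\Cat_*$ is homotopic to the identity. The reason is that $\pi_0$ of its self-maps is the monoid $\bN$ under composition (i.e.\ multiplication), which has no nontrivial units. So the squares commute via the identity, and only then does the cofinality argument show that $\Omega$ is inverse to the shift.

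With this added, the rest of your outline (levelwise computation of endomorphisms, the zero object) matches the paper's argument.
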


\begin{proof}
The functor $\Omega :\Sp(\mC) \to \Sp(\mC)$ is the functor on limits induced by the map of diagrams:
$$\begin{xy}
\xymatrix{
 \ar[d]^{\Omega}
& \ar[d]^{\Omega}
\\ 	
\mC \ar[d]^{\Omega} \ar[r]^{\Omega}
& \mC \ar[d]^{\Omega}
\\ 
\mC \ar[d]^{\Omega} \ar[r]^{\Omega}
& \mC \ar[d]^{\Omega}
\\ 
\mC \ar[r]^{\Omega} & \mC,
}
\end{xy}$$
where each square commutes via the equivalence ${\Omega} \circ {\Omega} \simeq {\Omega} \circ {\Omega}$ 
induced by the equivalence $$\tau: S^1 \wedge S^1 \simeq \Sigma(S^1) \simeq \bar{\Sigma}((S^1)^{\co\op})
\simeq \bar{\Sigma}(S^1) \simeq S^1 \wedge S^1 $$ of \cref{ahoit}.
By \cref{autoeq} every auto-equivalence of $S^1 \wedge S^1$ is the identity so that $\tau$ is the identity.
	
\end{proof}

\begin{corollary}\label{crstab} Let $\mC$ be a reduced bioriented category that admits antiendomorphisms.
The bioriented category $\overline{\Sp}(\mC)$ is quasi-antistable.
	
\end{corollary}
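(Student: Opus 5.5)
The plan is to deduce this by duality from \cref{rstab}, using the definition $\overline{\Sp}(\mC):= \Sp(\mC^\co)^\co$ and \cref{agos}. Let $\mC$ be a reduced bioriented category that admits antiendomorphisms. First I check that $\mC^\co$ is a reduced bioriented category that admits endomorphisms. The involution $(-)^\co$ preserves zero objects, since initial and final objects are defined through morphism objects being final and $(-)^\op$ preserves the final $\infty$-category. By \cref{iwowo} there is an equivalence $\bar{\Omega}_\mC \simeq \Omega^\co_{\mC^\co}$. So the antiendomorphisms of $\mC$ are exactly the endomorphisms of $\mC^\co$, and $\mC^\co$ admits endomorphisms.

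Next I apply \cref{rstab} to $\mC^\co$. It shows that $\Sp(\mC^\co)$ is quasi-stable: it is reduced, its endomorphism functor is the inverse of the shift, and hence $\Omega$ is an equivalence. In particular its underlying oriented category is quasi-stable.

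It remains to pass through $(-)^\co$. A bioriented category $\mD$ is quasi-antistable precisely when its underlying antioriented category is quasi-antistable. By definition this means that the oriented category obtained by applying $(-)^\co$ to that antioriented category is quasi-stable. Take $\mD = \Sp(\mC^\co)^\co$. Because $(-)^\co$ on bioriented categories is an involution compatible with the forgetful functors to antioriented and oriented categories, the relevant oriented category is the underlying oriented category of $\Sp(\mC^\co)$, which we just showed is quasi-stable. Reducedness of $\mD$ follows because $(-)^\co$ preserves zero objects. Hence $\overline{\Sp}(\mC)$ is quasi-antistable.

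The main point to verify is this compatibility of $(-)^\co$ on $\boxtimes\Cat\boxtimes$, defined as $((-)^\op,(-)^\op)_!$, with the underlying (anti)oriented categories. Concretely, I must check that the underlying antioriented category of $\mD^\co$, after applying $(-)^\co\colon \boxtimes\Cat \simeq \Cat\boxtimes$, recovers the underlying oriented category of $\mD$ up to the identification of \cref{dua}. This is a bookkeeping check on the definitions of the transfer functors. Alternatively, one can argue directly with the limit description of \cref{iwowo}. The same shift argument as in \cref{rstab}, with $\bar{\Omega}$ in place of $\Omega$, identifies $\bar{\Omega}_{\overline{\Sp}(\mC)}$ with the inverse of the shift, so it is an equivalence. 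Here the commuting squares $\bar{\Omega}\circ\bar{\Omega}\simeq\bar{\Omega}\circ\bar{\Omega}$ are trivial by \cref{autoeq}, since every auto-equivalence of $S^1\wedge S^1$ is the identity.
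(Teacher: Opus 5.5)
Your proposal is correct and follows the paper's route. The paper records this as an immediate corollary of \cref{rstab}: apply it to $\mC^\co$, which admits endomorphisms because $\bar{\Omega}_\mC \simeq \Omega^\co_{\mC^\co}$ (\cref{iwowo}), and then transport quasi-stability of $\Sp(\mC^\co)$ along the involution $(-)^\co$ to quasi-antistability of $\overline{\Sp}(\mC)=\Sp(\mC^\co)^\co$. The compatibility you flag, that $(-)^\co$ exchanges the underlying oriented and antioriented structures, is exactly what the definitions give, so nothing further is needed.
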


\cref{spantisp} and \cref{crstab} give the following:

\begin{corollary}\label{stab}
The presentable bioriented category $\Cat\Sp$ is bistable.

\end{corollary}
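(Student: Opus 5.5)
The plan is to check the four ingredients of bistability separately: reducedness, quasi-stability, quasi-antistability, and existence of (anti)oriented fibers and cofibers.

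\emph{Presentability and reducedness.} By \cref{nnm}, $\Cat\Sp=\Sp(\infty\Cat_*)$ is a presentable bioriented category. It is the limit of reduced bioriented categories along the reduced functors $\Omega$. The sequence of zero objects is therefore a zero object of the limit, because morphism $\infty$-categories in the limit are computed as limits of the componentwise ones, which are final. So $\Cat\Sp$ is reduced.

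\emph{Quasi-stability and quasi-antistability.} Quasi-stability is exactly \cref{rstab} applied to $\mC=\infty\Cat_*$: there $\Omega$ on $\Sp(\infty\Cat_*)$ is identified with the inverse of the shift. For quasi-antistability, \cref{crstab} says $\overline{\Sp}(\infty\Cat_*)$ is quasi-antistable. The equivalence of bioriented categories $\Sp(\infty\Cat_*)\simeq\overline{\Sp}(\infty\Cat_*)$ of \cref{spantisp} then transports this property to $\Cat\Sp$, since quasi-antistability is invariant under bioriented equivalence.

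\emph{Fibers and cofibers.} By \cref{redfib}, oriented (co)fibers exist as soon as $\Cat\Sp$ has pullbacks and pushouts together with reduced right cotensors and right tensors with $\bD^1$ and $S^0$. Antioriented (co)fibers follow in the same way from the left versions. Pullbacks and pushouts exist because the underlying category is presentable. Left and right tensors exist by \cref{present}, since presentable enriched categories admit tensors.

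Cotensors then follow by the adjoint functor theorem, and this is the step I expect to need the most care. For fixed $X$, the tensor $X\wedge(-)\colon\infty\Cat_*\to\Cat\Sp$ preserves small colimits. Hence for each $Y$ the functor $\Map(X\wedge(-),Y)$ is representable, which produces the cotensor $Y^K_*$. The same argument on the other side gives ${^KY}_*$.

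\emph{Conclusion.} Combining these steps, $\Cat\Sp$ is stable and antistable, hence bistable. Alternatively, once stability is established, antistability can be obtained by applying the same argument to $\Cat\Sp^\co\simeq\overline{\Sp}(\infty\Cat_*)^{\co}$ and using \cref{agos}.
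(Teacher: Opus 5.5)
Your argument is correct and follows the paper's route: quasi-stability comes from \cref{rstab}, quasi-antistability from \cref{crstab} transported along \cref{spantisp}, and you make explicit via \cref{redfib} the existence of (anti)oriented fibers and cofibers, which the paper leaves implicit in the word ``presentable''. One slip in that last step: the cotensor $Y^K_*$ comes from fixing $K$ and taking the right adjoint of $(-)\wedge K\colon\Cat\Sp\to\Cat\Sp$, whereas fixing $X$ and representing $\Map(X\wedge(-),Y)$ on $\infty\Cat_*$, as you wrote, produces the morphism object $\R\Mor_{\Cat\Sp}(X,Y)$ rather than a cotensor.
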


\begin{proposition}\label{infsuso}

Let $\mC$ be a compactly generated bioriented category.

For every $Y \in \mC$ there is a canonical equivalence
$$\colim_{n \geq 0} \Omega^n(\Sigma^n(Y)) \to \Omega^\infty(\Sigma^\infty(Y)). $$

\end{proposition}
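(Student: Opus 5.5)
We will prove the claim by exhibiting $\Sigma^\infty$ explicitly as a colimit and then reading off $\Omega^\infty\Sigma^\infty$. The statement should be read for a reduced compactly generated bioriented category $\mC$, so that $\Sp(\mC)$ and $\Sigma^\infty$ are available by \cref{nnm} and \cref{infsuspi}.

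\textbf{Step 1: $\Sp(\mC)$ as a colimit.} By \cref{nnmt}, $\Sp(\mC)$ is the colimit in $\wedge\Pr^L\wedge$ of the sequential diagram
$$\mC \xrightarrow{\Sigma} \mC^\cop \xrightarrow{\Sigma} \mC \xrightarrow{\Sigma} \cdots,$$
whose right adjoint diagram is the limit diagram of \cref{spectraa}. The functor $\Sigma^\infty$ is the inclusion of the zeroth term, and $\Omega^\infty$ is its right adjoint, namely projection to the zeroth term of the limit.

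\textbf{Step 2: the standard formula for a sequential colimit in $\Pr^L$.} For a sequential colimit in $\Pr^L$ with left adjoints $F_n\colon \mC_n\to\mC_{n+1}$ and right adjoints $G_n$, let $\iota_k$ denote the inclusion of the $k$-th term. We will use that
$$\iota_k^R\iota_k(Y)\simeq \colim_{n}\, G_k\cdots G_{k+n-1}F_{k+n-1}\cdots F_k(Y)$$
whenever the right adjoints $G_n$ preserve sequential (filtered) colimits.

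To justify this, we identify the limit of the right adjoints with the colimit of the left adjoints, as in \cite[Corollary 5.5.3.4]{lurie.HTT}. We then check that the object
$$Z=\bigl(\colim_n \Omega^{n+k}\Sigma^{n+k}Y\bigr)_k$$
is a spectrum object, that is, it lies in the limit. This uses that $\Omega$ commutes with sequential colimits, which holds by \cref{comprestri} (ultimately \cref{endfil}) together with the fact that $\Omega$ on $\mC$ preserves filtered colimits because $\mC$ is compactly generated over $\infty\Cat_*$. We then verify that the unit map $Y\to Z_0$ is initial among maps from $Y$ into $\Omega^\infty$ of spectra.

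The universal property follows from an adjunction computation. For a spectrum $E=(E_k)$,
$$\Map_{\Sp}(Z,E)\simeq \lim_n \Map(\Omega^n\Sigma^n Y, \Omega^n E_n).$$
Each term reduces via the adjunctions to maps $\Sigma^nY\to E_n$, and these are compatible with the bonding equivalences, giving $\Map(Y,E_0)$.

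\textbf{Step 3: enrichment and twisting.} The only subtlety beyond the classical argument is that the terms alternate between $\mC$ and $\mC^\cop$. The bioriented structure is irrelevant for the underlying objects and maps. On underlying categories, $\Omega\colon\mC^\cop\to\mC$ is just $\Omega$, so the colimit formula is computed in the underlying category of $\mC$. The canonical map of the statement is the one induced by the units $\Omega^n\Sigma^n Y\to\Omega^\infty\Sigma^\infty Y$, which are compatible by \cref{impol}(5), and it identifies with the comparison above.

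\textbf{Main obstacle.} The step requiring care is the commutation of $\Omega$ with the sequential colimit in Step 2, which is what makes $Z$ a spectrum object. Here we will invoke \cref{endfil} for $\infty\Cat_*$ and transport it to general compactly generated $\mC$ using \cref{comprestri}: $\Sigma$ preserves compact objects, hence its right adjoint $\Omega$ preserves filtered colimits.
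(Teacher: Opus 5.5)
Your route differs from the paper's. You identify $\Sigma^\infty Y$ with an explicit spectrum $Z$ and read off $Z_0=\colim_n\Omega^n\Sigma^nY$. The paper instead reduces via \cref{presi} to compact $X,Y$ and computes $\Map_{\Sp(\mC)}(\Sigma^\infty X,\Sigma^\infty Y)$ in the Spanier--Whitehead category $\rS\W(\mC)$ as a colimit of $\Map_\mC(\Sigma^nX,\Sigma^nY)$. Your route would need only that $\Omega$ commutes with sequential colimits. However, the step you treat as routine, the universal property of $Z$, is the entire content, and as written it fails.

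First, your $Z_k=\colim_n\Omega^{n+k}\Sigma^{n+k}Y$ is just $Z_0$ reindexed. The bonding maps are then $Z_0\to\Omega Z_0$, so $Z$ is not a spectrum. The correct level, matching your own formula $\iota_k^R\iota_0=\iota_k^R\iota_k\Sigma^k$, is $Z_k=\colim_n\Omega^n\Sigma^{n+k}Y$.

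Second, the formula $\Map_{\Sp}(Z,E)\simeq\lim_n\Map(\Omega^n\Sigma^nY,\Omega^nE_n)$ is false. Since $\Omega^nE_n\simeq E_0$, its right side is $\Map_\mC(Z_0,E_0)$. So the formula asserts that $\Omega^\infty$ is fully faithful on maps out of $Z$. This already fails for ordinary spaces: with $Y=S^0$ and $E=Z$, the left side is $QS^0$ and the right side is $\Map_*(QS^0,QS^0)$. Nor does $\Map(\Omega^n\Sigma^nY,\Omega^nE_n)$ reduce ``via the adjunctions'' to $\Map(\Sigma^nY,E_n)$. There is only the restriction map along the unit $Y\to\Omega^n\Sigma^nY$, and it is not an equivalence.

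\textbf{The missing idea.} $Z$ is the spectrification (\cref{hiul}) of the suspension prespectrum $\Sigma^\infty_{\mathrm{pre}}(Y)=(Y,\Sigma Y,\Sigma^2Y,\dots)$. You must show that restriction along $\Sigma^\infty_{\mathrm{pre}}(Y)\to Z$ is an equivalence on maps into spectra.
\begin{itemize}
\item By \cref{compo}, $\Map_{\Sp}(Z,E)\simeq\lim_k\Map(Z_k,E_k)\simeq\lim_{k\le m}\Map(\Omega^{m-k}\Sigma^mY,E_k)$. This is a limit over pairs $k\le m$, ordered componentwise.
\item The diagonal $\{(k,k)\}$ is cofinal in this poset, since for each $(k,m)$ the diagonal elements above it form $\{j\ge m\}$. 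So the limit is $\lim_k\Map(\Sigma^kY,E_k)$.
\item Its transition maps are the adjunction equivalences $\Map(\Sigma^{k+1}Y,E_{k+1})\simeq\Map(\Sigma^kY,\Omega E_{k+1})\simeq\Map(\Sigma^kY,E_k)$.
\end{itemize}
Only at this point does your final sentence apply and give $\Map(Y,E_0)$.

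This argument is exactly \cref{spectri} combined with \cref{infs}, assembled in \cref{infsus}, and neither depends on the proposition. Citing them, together with $\Omega$ preserving sequential colimits (\cref{comprestri}), proves the statement directly. It also proves it in greater generality than the paper's compact-generation argument.
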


\begin{proof}

The canonical morphism $$ \Omega^n(\Sigma^n(Y)) \to \Omega^n(\Omega^\infty(\Sigma^\infty(\Sigma^n(Y)))) \simeq \Omega^\infty(\Omega^n(\Sigma^n(\Sigma^\infty(Y)))) \simeq \Omega^\infty(\Sigma^\infty(Y)),$$
which uses \cref{rstab},
induces a canonical morphism $$\colim_{n \geq 0} \Omega^n(\Sigma^n(Y)) \to \Omega^\infty(\Sigma^\infty(Y)). $$

By Yoneda it suffices to prove that for every $X, Y \in \mC$
the canonical map
$$\Map_{\mC}(X,\colim_{n \geq 0} \Omega^n(\Sigma^n(Y))) \to \Map_{\mC}(X,\Omega^\infty(\Sigma^\infty(Y)))  $$ 
is an equivalence.
Since the functors $\Omega^n, \Omega^\infty$ both preserve small filtered colimits and $\mC$ is generated under small filtered colimits by compact objects, we can assume that $X, Y$ are compact.


The canonical map $$\Map_{\mC}(X,\colim_{n \geq 0} \Omega^n(\Sigma^n(Y))) \to \Map_{\mC}(X,\Omega^\infty(\Sigma^\infty(Y)))  $$ identifies with the following canonical equivalence, which is by \cref{presi}:
$$\Map_{\mC}(X,\Omega^\infty(\Sigma^\infty(Y))) \simeq \Map_{\Sp(\mC)}(\Sigma^\infty(X),\Sigma^\infty(Y)) \simeq $$$$ \lim_{n \geq 0} \Map_{\mC}(\Sigma^n(X),\Sigma^n(Y)) \simeq \lim_{n \geq 0} \Map_{\mC}(X,\Omega^n(\Sigma^n(Y))).$$ 

\end{proof}

\subsubsection{Connective categorical spectra}

Next we consider connective categorical spectra.

\begin{definition}

A categorical spectrum $\mC$ is connective if for every $\n \geq 0$ the $\infty$-category
$\Omega^\infty(\Sigma^\n(\mC)) $ is $\n$-1-connected in the sense of \cref{conn}.

\end{definition}

\begin{notation}
Let $\Cat\Sp_{\geq0} \subset \Cat\Sp $ be the full subcategory of connective categorical spectra.	

\end{notation}

\begin{example}\label{connex}
	
Let $\mC$ be a symmetric monoidal $\infty$-category. There is a  categorical spectrum $$B^\infty(\mC):= (\mC, B\mC, B^2\mC, ...)$$ with the natural bonding maps.
The categorical spectrum $ B^\infty(\mC)$ is connective.
	
\end{example}

The next result is a categorical version of May's recognition principle identifying infinite loop spaces with grouplike $\bE_\infty$-spaces \cite{may1974}:

\begin{proposition}\label{symsp0}

The induced functor \begin{equation}\label{eqojj}
\Cat\Sp \simeq \Mon_{\bE_\infty}(\Cat\Sp) \xrightarrow{\Mon_{\bE_\infty}(\Omega^\infty)} \Mon_{\bE_\infty}(\infty\Cat) \end{equation}
admits a left adjoint that sends a symmetric monoidal $\infty$-category $\mC$ to $B^\infty(\mC)$ and induces an equivalence
$$ \Mon_{\bE_\infty}(\infty\Cat) \simeq \Cat\Sp_{\geq0}.$$

\end{proposition}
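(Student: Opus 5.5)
We follow the classical proof of May's recognition principle, using the delooping machinery of \cref{deloop} and stability of $\Cat\Sp$.

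\emph{Step 1: the left adjoint.} By \cref{hhoo} and \cref{stab}, the forgetful functor $\Mon_{\bE_\infty}(\Cat\Sp) \to \Cat\Sp$ is an equivalence, so (\ref{eqojj}) is defined. It is a right adjoint: $\Omega^\infty$ preserves small limits and is accessible (\cref{infsuspi}), and $\Mon_{\bE_\infty}(-)$ preserves these properties. To compute the left adjoint $L$ on $\mC \in \Mon_{\bE_\infty}(\infty\Cat)$, we use that $\Mon_{\bE_\infty}(\infty\Cat) \simeq \lim_n \Mon_{\bE_n}(\infty\Cat)$, and that \cref{deloop} provides compatible fully faithful $B^n \dashv \Omega^n$. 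For a spectrum $E=(E_0,E_1,\dots)$ with $E_0 \simeq \Omega^n E_n$, we have natural equivalences
\[
\Map_{\Mon_{\bE_n}}(\mC, \Omega^\infty E) \simeq \Map_{\Mon_{\bE_n}}(\mC,\Omega^n E_n) \simeq \Map_{\infty\Cat_*}(B^n\mC, E_n).
\]
Passing to the limit over $n$, the right-hand sides assemble into $\Map_{\Cat\Sp}(B^\infty(\mC),E)$, since $\Cat\Sp$ is the limit of the tower $\Omega$ and $B^\infty(\mC)$ has levels $B^n\mC$ with bonding equivalences $B^n\mC \simeq \Omega B^{n+1}\mC$ by full faithfulness of $B$ (\cref{hhnl}). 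The main technical point is that the level mapping spaces must be compatible with the bonding maps. We handle this by writing $\Map_{\Cat\Sp}(B^\infty\mC,E)$ as $\lim_n \Map_{\infty\Cat_*}(B^n\mC,E_n)$ and identifying the transition maps with $\Omega$ applied through the adjunction $B\dashv\Omega$. This gives $L \simeq B^\infty$.

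\emph{Step 2: the unit is an equivalence.} The unit $\mC \to \Omega^\infty B^\infty\mC = \mC$ is the identity as a map of $\bE_\infty$-monoids, by the level-$0$ description. Hence $B^\infty$ is fully faithful. By \cref{connex} its image lies in $\Cat\Sp_{\geq 0}$.

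\emph{Step 3: essential surjectivity onto connective spectra.} Let $E$ be connective. We must show the counit $B^\infty\Omega^\infty E \to E$ is an equivalence, which can be checked levelwise. At level $n$ the map is $B^n\Omega^n E_n \to E_n$, where $E_n \simeq \Omega^\infty\Sigma^n E$ is $(n-1)$-connected by the connectivity assumption. By \cref{deloop}, $B^n$ is an equivalence onto $(n-1)$-connected based $\infty$-categories, so the counit of $B^n\dashv\Omega^n$ is an equivalence there.

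We expect the main obstacle to be the coherent identification in Step 1: packaging the compatible adjunctions $B^n \dashv \Omega^n$ into a single adjunction at the level of the limit, including the $\cop$-twists in the tower defining $\Sp$. Those twists are harmless here because we only use the underlying categories, and there is no twist ambiguity by the argument of \cref{rstab}.
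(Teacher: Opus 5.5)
Your outline is correct, but Step 1 takes a different route from the paper. The paper first uses preadditivity of $\Cat\Sp$ (\cref{preadd}) and the fact that $\Mon_{\bE_\infty}(-)$ commutes with the limit defining spectrum objects. This gives $\Cat\Sp \simeq \Mon_{\bE_\infty}(\Cat\Sp) \simeq \lim(\cdots \xrightarrow{\Omega} \Mon_{\bE_\infty}(\infty\Cat) \xrightarrow{\Omega} \Mon_{\bE_\infty}(\infty\Cat))$, so (\ref{eqojj}) is literally the projection to level $0$. It then views $B^\infty(\mC)$ as a spectrum object in $\Mon_{\bE_\infty}(\infty\Cat)$, so that $\Map_{\Cat\Sp}(B^\infty(\mC),\mD) \simeq \lim_n \Map_{\Mon_{\bE_\infty}(\infty\Cat)}(B^n\mC,\mD_n)$. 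The one-step delooping adjunction $B \dashv \Omega$ internal to $\Mon_{\bE_\infty}(\infty\Cat)$ (from \cref{hhnl} by applying $\Mon_{\bE_\infty}$, cf.\ \cref{deoo}) turns this into the limit of the constant tower on $\Map_{\Mon_{\bE_\infty}(\infty\Cat)}(\mC,\mD_0)$. The equivalence onto $\Cat\Sp_{\geq 0}$ then follows from conservativity of the right adjoint there; your Step 3 is the levelwise unpacking of that conservativity.

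Your version instead uses the $\bE_n$-deloopings of \cref{deloop} level by level and reassembles via $\Mon_{\bE_\infty}(\infty\Cat) \simeq \lim_n \Mon_{\bE_n}(\infty\Cat)$. This can be made to work, but it relies on an identification you have not justified. The step $\Map_{\Mon_{\bE_n}}(\mC,\Omega^\infty E) \simeq \Map_{\Mon_{\bE_n}}(\mC,\Omega^n E_n)$ needs the $\bE_n$-structure on $\Omega^\infty E$ obtained by restricting the $\bE_\infty$-structure in (\ref{eqojj}) to agree, compatibly in $n$, with the $n$-fold loop structure on $\Omega^n E_n$. The former comes from the preadditive structure of $\Cat\Sp$, not from loops. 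Without this identification you have computed the left adjoint of the loop-space lift $E \mapsto (\Omega^n E_n)_n$ of $\Omega^\infty$, not of (\ref{eqojj}), and Steps 2 and 3 use the same identification.

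It can be supplied by an Eckmann--Hilton argument: a finite-product-preserving functor out of a preadditive category lifts essentially uniquely to $\Mon_{\bE_\infty}$, and both functors are such lifts of $\Omega^\infty$. The paper's route avoids the issue entirely, because all structure lives in $\Mon_{\bE_\infty}(\infty\Cat)$ from the outset. This also removes the packaging problem you name as the main obstacle. What your route buys is a more literal transcription of May's argument, using only the $\bE_n$-delooping statement.
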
	

\begin{proof}
By preadditivity of $\Cat\Sp$ of \cref{preadd} there is a canonical equivalence: $$ \Cat\Sp \simeq \Mon_{\bE_\infty}(\Cat\Sp) \simeq \lim(... \xrightarrow{\Omega} \Mon_{\bE_\infty}(\infty\Cat) \xrightarrow{\Omega} \Mon_{\bE_\infty}(\infty\Cat)).$$

Hence the functor (\ref{eqojj}) identifies with the projection $$\Cat\Sp \simeq \Mon_{\bE_\infty}(\Cat\Sp) \simeq \lim(... \xrightarrow{\Omega} \Mon_{\bE_\infty}(\infty\Cat) \xrightarrow{\Omega} \Mon_{\bE_\infty}(\infty\Cat)) \to \Mon_{\bE_\infty}(\infty\Cat).$$


For every $\mC \in \Mon_{\bE_\infty}(\infty\Cat)$ and $(\mD_0,\mD_1,...) \in \Cat\Sp$ the induced functor
$$ \Map_{\Cat\Sp}(B^\infty(\mC),\mD) \to \Map_{\Mon_{\bE_\infty}(\infty\Cat)}(\mC,\mD_0)$$
factors as $$\lim(... \to \Map_{\Mon_{\bE_\infty}(\infty\Cat)}(B\mC,\mD_1) \to \Map_{\Mon_{\bE_\infty}(\infty\Cat)}(\mC,\mD_0)) \to \Map_{\Mon_{\bE_\infty}(\infty\Cat)}(\mC,\mD_0),$$
which by stability (\cref{stab}) identifies with the canonical equivalence 
$$\lim(... \to \Map_{\Mon_{\bE_\infty}(\infty\Cat)}(\mC,\mD_0) \to \Map_{\Mon_{\bE_\infty}(\infty\Cat)}(\mC,\mD_0)) \to \Map_{\Mon_{\bE_\infty}(\infty\Cat)}(\mC,\mD_0)$$
projecting from the limit of the constant tower. 
By \cref{connex} the resulting adjunction $$ \Mon_{\bE_\infty}(\infty\Cat) \rightleftarrows \Mon_{\bE_\infty}(\Cat\Sp) \simeq \Cat\Sp: \Mon_{\bE_\infty}(\Omega^\infty)$$ restricts to an adjunction
$\Mon_{\bE_\infty}(\infty\Cat) \rightleftarrows \Cat\Sp_{\geq0}.$
The left adjoint of the latter adjunction is an equivalence since the right adjoint is conservative.

\end{proof}

\subsubsection{Examples of categorical spectra}

Next we consider examples of categorical spectra.
The next example is \cite[Example 13.3.12.]{HigherQuasi}:

\begin{example}

Let $\mC$ be a symmetric monoidal category compatible with
geometric realizations.

Haugseng \cite{haugseng2017} constructs for every $\n \geq 0$ an $(\n+ 2)$-category $\mathrm{Morita}^{n}(\mC)$ of $\bE_{\n+1}$-algebras in $\mC$ whose objects are $\bE_{\n+1}$-algebras in $\mC$ and whose morphisms between
$\bE_{\n+1}$-algebras $A,B$ in $\mC$ are $\bE_{\n}$-algebras in $(A,B)$-bimodules in $\mC$, and so on.
The latter are non-unital after taking $\n+1$-fold morphism objects.

So
$\mathrm{Morita}^0(\mC)$
is the usual Morita 2-category of $\mC$.
We view $\mathrm{Morita}^\n(\mC)$ as $\infty$-category with distinguished object, where the distinguished object is the tensor unit of $\mC.$

By \cite[Corollary 5.51]{haugseng2017} 
for every $\n \geq 0$ there is a canonical equivalence $$ \Omega(\mathrm{Morita}^\n(\mC)) \simeq \mathrm{Morita}^{\n-1}(\mC)$$
of $\infty$-categories with distinguished object.
Hence there is a categorical spectrum
$$ \mathrm{morita}(\mC) := \{\mathrm{Morita}^\n(\mC)\}, $$
which we call the Morita spectrum of $\mC$. 

\end{example}

	

\begin{definition}\label{Brau} Let $R$ be an $\bE_\infty$-ring spectrum.
The Morita spectrum $\mathrm{morita}(R) $ of $R$ is the Morita spectrum of $\Mod_R.$	
	
\end{definition}

Every spectrum of $\infty$-categories gives rise to a spectrum of homotopy types.

\begin{notation}For every category $\mD$ that admits finite products let $$\Grp_{\bE_\infty}(\mD) \subset \Mon_{\bE_\infty}(\mD)$$ be the full subcategory spanned by the group objects.
	
\end{notation}

\begin{remark}\label{ahoo}
	
For every additive category $\mD$ the forgetful functor $\Grp_{\bE_\infty}(\mD)\to \mD$ is an equivalence \cite[Proposition 2.8.]{gepner2016universality}.
\end{remark}

\begin{definition}
	
The embedding $$ \Grp_{\bE_\infty}(\infty\Grp)  \subset  \Mon_{\bE_\infty}(\infty\Grp) $$ admits a right adjoint that sends an $\bE_\infty$-space to the full $\bE_\infty$-subspace of invertible objects.
The Picard space functor $\Pic$ is the following composition of the functor induced by $\iota_0$ and the right adjoint: $$\Pic: \Mon_{\bE_\infty}(\infty\Cat) \xrightarrow{\Mon_{\bE_\infty}(\iota_0)} \Mon_{\bE_\infty}(\infty\Grp) \to \Grp_{\bE_\infty}(\infty\Grp).$$	
	
\end{definition}

For the next definition we use \cref{ahoo} and \cref{hhoo}:
\begin{definition}
The Picard spectrum functor $\mathrm{pic}$ is the following composition
$$ \Cat\Sp \simeq \Mon_{\bE_\infty}(\Cat\Sp) \simeq  \Sp(\Mon_{\bE_\infty}(\infty\Cat)) \xrightarrow{\Sp(\Pic)}$$$$ \Sp(\Grp_{\bE_\infty}(\infty\Grp)) \simeq \Grp_{\bE_\infty}(\Sp(\infty\Grp)) \simeq \Sp(\infty\Grp).$$
	
\end{definition}

\begin{lemma}
	
There is a colocalization $ \Sp(\infty\Grp)\rightleftarrows \Cat\Sp: \mathrm{pic},$
where the left adjoint is the embedding.
	
\end{lemma}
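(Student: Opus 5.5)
The plan is to build the adjunction levelwise on the towers defining both sides and then pass to limits, working throughout in $\bE_\infty$-monoids so that the right adjoint is a genuine Picard-type construction.

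\textbf{Step 1: the adjunction before stabilization.} By \cref{hhoo} and preadditivity (\cref{preadd}), $\Cat\Sp \simeq \Mon_{\bE_\infty}(\Cat\Sp) \simeq \Sp(\Mon_{\bE_\infty}(\infty\Cat))$. By \cref{ahoo}, $\Sp(\infty\Grp) \simeq \Sp(\Grp_{\bE_\infty}(\infty\Grp))$. So it suffices to produce a colocalization $\Grp_{\bE_\infty}(\infty\Grp) \rightleftarrows \Mon_{\bE_\infty}(\infty\Cat)$ compatible with the endomorphism functors, and then stabilize. The inclusion $\infty\Grp \subset \infty\Cat$ preserves finite products and has right adjoint $\iota_0$, which also preserves finite products. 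Hence it induces an adjunction $\Mon_{\bE_\infty}(\infty\Grp) \rightleftarrows \Mon_{\bE_\infty}(\infty\Cat)$ with fully faithful left adjoint. Composing with the colocalization $\Grp_{\bE_\infty}(\infty\Grp) \subset \Mon_{\bE_\infty}(\infty\Grp)$ yields a colocalization whose right adjoint is $\Pic$.

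\textbf{Step 2: compatibility with $\Omega$.} The inclusion commutes with $\Omega$: for a grouplike $\bE_\infty$-space regarded as an $\infty$-category with one distinguished object, the endomorphism $\infty$-category of the unit is the loop space. The main obstacle is the mate condition, namely that the induced comparison $\Pic \circ \Omega \to \Omega \circ \Pic$ is an equivalence. For $M \in \Mon_{\bE_\infty}(\infty\Cat)$, $\Omega M = \Mor_M(\tu,\tu)$. The space $\Pic(\Omega M)$ is the space of invertible objects in $\iota_0\Mor_M(\tu,\tu)$, i.e. of endomorphisms of $\tu$ that are invertible up to invertible $2$-morphisms. Equivalently, it is the space of self-equivalences of $\tu$ in $\iota_1 M$, which is $\Omega$ of the groupoid core of $M$ at $\tu$. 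Since $\tu$ lies in the component of invertible objects, this is $\Omega\Pic(M)$. I would make this natural by identifying both sides as right adjoints of the two composites in the commuting square of left adjoints $\Sigma \circ \mathrm{incl} \simeq \mathrm{incl}\circ \Sigma$. Equivalently, I would verify on mapping spaces that $\Map(G,\Pic\,\Omega M)\simeq \Map(G,\Omega\Pic M)$ for grouplike $G$, using $\Map(BG, M)\simeq \Map(BG,\Pic M)$; this holds because $BG$ is a space and any map from a grouplike $\bE_\infty$-space lands in invertible objects.

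\textbf{Step 3: passing to limits.} We now have a map of towers of adjunctions in which both the left and right adjoints commute with the transition functors $\Omega$, and the unit is levelwise an equivalence. Limits in $\widehat{\Cat}$ of such a diagram give an adjunction between the limits. Its left adjoint is the limit of the left adjoints, namely the embedding $\Sp(\infty\Grp) \to \Cat\Sp$. Its right adjoint is $\Sp(\Pic)$, which by definition is $\mathrm{pic}$. The unit is the limit of levelwise equivalences, hence an equivalence. So the left adjoint is fully faithful, which is exactly the asserted colocalization.
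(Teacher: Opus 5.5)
Your proposal is correct and follows the same route as the paper. Both arguments lift the colocalization $\infty\Grp \rightleftarrows \infty\Cat : \iota_0$ to $\bE_\infty$-monoids, compose with the colocalization onto grouplike objects to obtain $\Pic$, and pass to the limit of the $\Omega$-towers. Your Step 2 usefully makes explicit the right-adjointability condition that the paper leaves under ``by the limit definition of spectra'', namely that $\Pic$ commutes with $\Omega$ via the mate. Note that this mate naturally goes $\Omega\circ\Pic \to \Pic\circ\Omega$, not the other way as you wrote; this does not affect the argument.
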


\begin{proof}
The colocalization $\infty\Grp \rightleftarrows \infty\Cat : \iota_0 $ gives rise to a colocalization $$\Mon_{\bE_\infty}(\infty\Grp) \rightleftarrows \Mon_{\bE_\infty}(\infty\Cat) : \iota_0.$$
By the limit definition of spectra the composition of colocalizations $$\Grp_{\bE_\infty}(\infty\Grp) \rightleftarrows \Mon_{\bE_\infty}(\infty\Grp) \rightleftarrows \Mon_{\bE_\infty}(\infty\Cat): \Pic\circ \iota_0$$ 
gives rise to a colocalization $$ \Sp(\Grp_{\bE_\infty}(\infty\Grp)) \simeq \Grp_{\bE_\infty}(\Sp(\infty\Grp)) \simeq \Mon_{\bE_\infty}(\Sp(\infty\Grp)) \simeq \Sp(\infty\Grp)\rightleftarrows $$$$ \Sp(\Mon_{\bE_\infty}(\infty\Cat)) \simeq \Mon_{\bE_\infty}(\Cat\Sp) \simeq \Cat\Sp: \mathrm{pic}.$$

\end{proof}

\begin{example}
The Picard spectrum of the Morita spectrum of $R$ of \cref{Brau} is the Brauer spectrum of \cite[§3]{antieau2014brauer}.
In particular, $\pi_0(\mathrm{pic}(\mathrm{morita}(R)))$ is the Brauer group of $R.$

Concretely, $\pi_0(\mathrm{pic}(\mathrm{morita}(R)))$ is the
set of equivalence classes of tensor-invertible objects in the Morita 2-category of $\Mod_R,$ which is precisely the set of
Morita equivalence classes of Azumaya algebras over $R.$

	
\end{example}







\subsection{Stabilization}

Next we exhibit spectrum objects as the universal stabilization.
We first exhibit the bioriented category of spectrum objects as the stabilization in the setting of bioriented categories.

\begin{proposition}\label{qqq}\label{qqccq} Let $\mC$ be a reduced presentable bioriented category.

\begin{enumerate}
\item The left adjoint bioriented functor $$\Sigma^\infty: \mC \to \Sp(\mC)$$ induces for every stable presentable bioriented category $\mD$ an equivalence:
$${\wedge\L\Fun\wedge}(\Sp(\mC), \mD) \to {\wedge\L\Fun\wedge}(\mC, \mD).$$ 
\item The left adjoint bioriented functor $$\Sigma^\infty: \mC \to \overline{\Sp}(\mC)$$ induces for every antistable presentable bioriented category $\mD$ an equivalence:$${\wedge\L\Fun\wedge}(\overline{\Sp}(\mC), \mD) \to {\wedge\L\Fun\wedge}(\mC, \mD).$$ 
	
\item The left adjoint bioriented functor $$ \mC \to \overline{\Sp}(\Sp(\mC)) \simeq \Sp(\overline{\Sp}(\mC))$$ induces for every bistable presentable bioriented category $\mD$ an equivalence:
$${\wedge\L\Fun\wedge}(\overline{\Sp}(\Sp(\mC)), \mD) \to {\wedge\L\Fun\wedge}(\mC, \mD).$$ 	
\end{enumerate}

\end{proposition}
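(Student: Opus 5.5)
The plan is to use the description of $\Sp(\mC)$ as a sequential colimit in $\wedge\Pr^L\wedge$ (\cref{nnmt}) together with stability of $\mD$.
By \cref{nnmt}, $\Sp(\mC)\simeq\colim(\mC\xrightarrow{\Sigma}\mC^\cop\xrightarrow{\Sigma}\mC\to\cdots)$ in $\wedge\Pr^L\wedge$. Mapping out of a colimit, and using $(\mC^\cop)^\cop\simeq\mC$, gives
$${\wedge\L\Fun\wedge}(\Sp(\mC),\mD)\simeq\lim\big(\cdots\to{\wedge\L\Fun\wedge}(\mC^\cop,\mD)\xrightarrow{\Sigma^*}{\wedge\L\Fun\wedge}(\mC,\mD)\big).$$
Under this identification, restriction along $\Sigma^\infty$ is the projection to the last term. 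So it suffices to show that each transition map $-\circ\Sigma_\mC$ is an equivalence, with inverse essentially $\Sigma_\mD^{-1}\circ -$.

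To compare $\F\circ\Sigma_\mC$ with $\Sigma_\mD\circ\F$, I use \cref{reduc}(7): for presentable $\mC$ we have $\Sigma_\mC\simeq\mC\ot_{\infty\Cat_*}\Sigma$. The equivalence $\lambda$ from the proof of \cref{impol} then gives a natural equivalence $\Sigma_\mD\circ\F\simeq\F\circ\Sigma_\mC$ for left adjoint bioriented $\F$. The twist $\cop$ must be tracked: $\F:\mC\to\mD$ yields $\F:\mC^\cop\to\mD^\cop$, and $\Sigma$ is a bioriented functor $\mC^\cop\to\mC$ by \cref{reduc}(1). So precomposition with $\Sigma_\mC$ corresponds to postcomposition with $\Sigma_\mD:\mD^\cop\to\mD$, naturally in $\F$.

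Since $\mD$ is stable, $\Sigma_\mD:\mD^\cop\to\mD$ is an equivalence of bioriented categories. Its underlying functor is an equivalence with inverse $\Omega_\mD$, and the bioriented adjunction of \cref{reduc}(3) makes the bioriented refinement an equivalence as well. Hence postcomposition with $\Sigma_\mD$ is an equivalence ${\wedge\L\Fun\wedge}(\mC^\cop,\mD^\cop)\simeq{\wedge\L\Fun\wedge}(\mC^\cop,\mD)$. It follows that all transition maps are equivalences, the tower is essentially constant, and (1) follows.

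Part (2) is formally dual. Apply (1) to $\mC^\co$ and $\mD^\co$, using $\overline{\Sp}(\mC)=\Sp(\mC^\co)^\co$, the fact that $(-)^\co$ is an involution of $\wedge\Pr^L\wedge$, and \cref{agos}. Alternatively, run the same argument with $\bar\Sigma$ and the second half of \cref{reduc}(7).

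For (3), the identification $\overline{\Sp}(\Sp(\mC))\simeq\Sp(\overline{\Sp}(\mC))$ comes from \cref{sptens}: both sides are $\Cat\Sp\ot_{\infty\Cat_*}\mC\ot_{\infty\Cat_*}\Cat\Sp$. I then need that $\Sp(\mE)$ is antistable whenever $\mE$ is antistable. This holds because $\bar\Sigma_{\Sp(\mE)}$ is computed levelwise, again by the tensor description $\Sp(\mE)\simeq\mE\ot\Cat\Sp$ with $\bar\Sigma$ acting on the left factor. With that in hand, applying (1) and then (2) gives the universal property.

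The step I expect to be hardest is the naturality and compatibility in the second paragraph. One must check that the identification $\F\circ\Sigma_\mC\simeq\Sigma_\mD\circ\F$ is coherent as bioriented functors with the $\cop$ twists, so that the transition maps in the limit really are postcomposition with the equivalence $\Sigma_\mD$. A second point to verify is that the oriented (co)fiber part of stability is not needed here: quasi-stability of $\mD$ suffices, and stability of $\Sp(\mC)$ itself, needed for the statement to make sense, is handled separately via \cref{rstab}.
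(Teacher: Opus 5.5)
Your proposal is correct and follows essentially the paper's proof. The paper likewise writes ${\wedge\L\Fun\wedge}(\Sp(\mC),\mD)$ as the limit of the tower coming from the sequential colimit description of $\Sp(\mC)$, and observes that the transition maps are equivalences because suspension is invertible on the stable $\mD$. It then gets (2) by replacing $\mC$ with $\mC^\co$, and (3) by composing (2) for $\Sp(\mC)$ with (1) for $\mC$.

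The only differences are cosmetic. The paper passes by cofinality to the even-indexed subtower with transition map $\Sigma^2_*$ on ${\wedge\L\Fun\wedge}(\mC,\mD)$, which removes the $\cop$-bookkeeping you flagged as delicate. In (3) you do not need antistability of $\Sp(\mE)$, since bistability of $\mD$ already lets you apply both (1) and (2).
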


\begin{proof}(1): The functor of the statement factors as $${\wedge\L\Fun\wedge}(\Sp(\mC), \mD) \simeq \lim (... \xrightarrow{\Sigma^2_*} {\wedge\L\Fun\wedge}(\mC, \mD) \xrightarrow{\Sigma^2_*} {\wedge\L\Fun\wedge}(\mC, \mD)) \xrightarrow{\gamma} {\wedge\L\Fun\wedge}(\mC, \mD),$$
where $\gamma$ is the projection to the rightmost object of the diagram.
If $\mD$ is stable, the functor $$\Sigma^2_*: {\wedge\L\Fun\wedge}(\mC, \mD) \to {\wedge\L\Fun\wedge}(\mC, \mD)$$ is an equivalence so that $\gamma$ is an equivalence.	

(2) follows from (1) by replacing $\mC$ by $\mC^\co.$
(3): The functor of the statement factors as equivalences $${\wedge\L\Fun\wedge}(\overline{\Sp}(\Sp(\mC)), \mD) \to {\wedge\L\Fun\wedge}(\Sp(\mC), \mD) \to {\wedge\L\Fun\wedge}(\mC, \mD).$$

\end{proof}

Next we use the stabilization in the setting of bioriented categories
to construct a tensor product of stable presentable bioriented categories (Theorem \ref{stten}).
This tensor product gives rise to a monoidal structure on the bioriented category of categorical spectra
(\cref{unicity}).

\begin{definition}\label{stmon}

A presentably monoidal reduced bioriented category is a presentably monoidal category $X$ equipped with a left adjoint monoidal functor $\phi: \infty\Cat_* \to \mC.$

A left adjoint monoidal reduced bioriented functor $\mC \to \mD$ between presentably monoidal reduced bioriented categories is a left adjoint monoidal functor $\mC \to \mD$ under $ \infty\Cat_*.$
	
\end{definition}

\begin{remark}
Any presentably monoidal reduced bioriented category $(\mC, \phi)$ has an underlying reduced presentable bioriented category by restricting the canonical biaction of $\mC$ over $\mC, \mC$ along $\phi.$

Similarly, every left adjoint monoidal reduced bioriented functor between presentably monoidal reduced bioriented categories has an underlying left adjoint reduced bioriented functor.

\end{remark}

\begin{definition}
	
A presentably monoidal reduced bioriented category is stable, antistable, bistable, respectively, if its underlying presentable reduced bioriented category is
stable, antistable, bistable.
\end{definition}

The category $ \wedge \Pr^L\wedge$ carries a monoidal structure via the relative tensor product of $\infty\Cat_*, \infty\Cat_*$-bimodules in $\Pr^L$ \cite[\S 4.4.]{lurie.higheralgebra}.
By \cite[Corollary 3.4.1.7.]{lurie.higheralgebra} the category of associative algebras in $ \wedge \Pr^L\wedge$ is equivalent to the category of associative algebras in $\Pr^L$ under $\infty\Cat_*$, which is the category of presentably monoidal categories under $\infty\Cat_*$,
i.e. presentably monoidal reduced bioriented categories.

\begin{theorem}\label{stten}

There is a monoidal structure on the $\infty$-category ${\wedge}\Pr^L_{\mathrm{st}}\wedge$ 
of presentable bistable bioriented categories whose tensor unit is $\Sp$
such that the embedding ${\wedge}\Pr^L_{\mathrm{st}}\wedge \subset \wedge \Pr^L\wedge$ refines to a lax monoidal embedding.
\end{theorem}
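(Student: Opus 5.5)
The plan is to follow Lurie's construction of the smash product of stable presentable categories: exhibit ${\wedge}\Pr^L_{\mathrm{st}}\wedge$ as the local objects of a localization of $\wedge\Pr^L\wedge$ that is compatible with the monoidal structure given by the relative tensor product over $\infty\Cat_*$. The statement then follows from \cite[Proposition 2.2.1.9]{lurie.higheralgebra}. That result says that for a localization $L$ of a monoidal category compatible with the tensor product, the local objects inherit a monoidal structure with unit $L(\tu)$, and the inclusion is lax monoidal. Here the unit of $\wedge\Pr^L\wedge$ is $\infty\Cat_*$. So the unit of the localized structure will be $L(\infty\Cat_*)$, which we identify with the stabilization $\overline{\Sp}(\Sp(\infty\Cat_*))$ of \cref{qqq}(3). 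By \cref{spantisp} and \cref{stab}, $\Sp(\infty\Cat_*)=\Cat\Sp$ is already bistable, so this unit is $\Cat\Sp$ (written $\Sp$ in the statement).

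\textbf{Step 1: the localization.} For $\mC\in\wedge\Pr^L\wedge$, set $L(\mC):=\overline{\Sp}(\Sp(\mC))$. By \cref{qqq}(3), the left adjoint $\mC\to L(\mC)$ induces equivalences on ${\wedge\L\Fun\wedge}(-,\mD)$ for every bistable presentable $\mD$. Taking maximal subspaces shows that $L$ is left adjoint to the inclusion, so ${\wedge}\Pr^L_{\mathrm{st}}\wedge\subset\wedge\Pr^L\wedge$ is a reflective full subcategory. We must also check that $L(\mC)$ is itself bistable. \cref{rstab} and \cref{crstab} give quasi-bistability. For the existence of oriented (co)fibers, presentability suffices: by \cref{nnm}, $L(\mC)$ is presentable and admits all weighted (co)limits.

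\textbf{Step 2: compatibility with $\ot_{\infty\Cat_*}$.} We must show that for every $\mC,\mE$ the map $\mC\ot_{\infty\Cat_*}\mE\to L(\mC)\ot_{\infty\Cat_*}\mE$ becomes an equivalence after applying $L$, and likewise in the other variable. \cref{sptens} gives
\[\Sp(\mC)\simeq\mC\ot_{\infty\Cat_*}\Cat\Sp,\qquad \overline{\Sp}(\mC)\simeq\Cat\Sp\ot_{\infty\Cat_*}\mC.\]
Hence
\[L(\mC)\simeq\Cat\Sp\ot_{\infty\Cat_*}\mC\ot_{\infty\Cat_*}\Cat\Sp.\]
In particular $L$ is smashing on both sides, given by tensoring with the idempotent $\infty\Cat_*$-bimodule $\Cat\Sp\ot_{\infty\Cat_*}(-)\ot_{\infty\Cat_*}\Cat\Sp$. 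Associativity of the relative tensor product then gives
\[L(\mC\ot_{\infty\Cat_*}\mE)\simeq L(L(\mC)\ot_{\infty\Cat_*}\mE)\simeq L(\mC\ot_{\infty\Cat_*}L(\mE)),\]
provided the unit $\infty\Cat_*\to\Cat\Sp$ becomes an equivalence after tensoring with $\Cat\Sp$ on the appropriate side. That last fact is idempotence, which follows from Step 1 applied to $\mC=\Cat\Sp$: it is bistable, so $L(\Cat\Sp)\simeq\Cat\Sp$.

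\textbf{Main obstacle.} The delicate point is identifying $\Sp$, $\overline{\Sp}$ and the two one-sided tensor products coherently, so that the factorization in Step 2 is a genuine equivalence rather than just an abstract one. This relies on \cref{reduc}(7): the functor $\Sigma$ is $\mC\ot_{\infty\Cat_*}\Sigma$ acting through the right $\infty\Cat_*$-action, while $\bar\Sigma$ acts through the left action, so stabilizing in one variable commutes with tensoring in the other. One must also confirm that bistability of the tensor product in Step 2 needs both stabilizations, so that left-stabilizing a right-stable object keeps it right-stable. I would first try to verify this using \cref{sptens} together with the observation that $\Sigma$ on $\Cat\Sp\ot_{\infty\Cat_*}\mC$ is induced from $\mC$ and is therefore still invertible. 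With Steps 1 and 2 in place, Lurie's proposition yields the monoidal structure with unit $\Cat\Sp$ and the lax monoidal embedding.
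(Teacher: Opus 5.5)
Your Step 1 is fine and matches how the paper begins: by \cref{qqq}~(3), $L(\mC)=\overline{\Sp}(\Sp(\mC))\simeq\Cat\Sp\ot_{\infty\Cat_*}\mC\ot_{\infty\Cat_*}\Cat\Sp$ is a localization of $\wedge\Pr^L\wedge$ onto the bistable objects. Reducing Lurie's compatibility condition to the unit maps $\mC\to L(\mC)$ is also legitimate.

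The gap is Step 2. The compatibility you need does not follow from associativity and idempotence, and it is false for general $\mC,\mE$. Put $R=\infty\Cat_*$ and $A=\Cat\Sp$, and write $\ot$ for the tensor product of $\Pr^L$, as in the paper. Then
\[
L(L(\mC)\ot_R\mE)\simeq A\ot_R A\ot_R\mC\ot_R A\ot_R\mE\ot_R A\simeq A\ot_R\mC\ot_R A\ot_R\mE\ot_R A .
\]
So the comparison map out of $L(\mC\ot_R\mE)=A\ot_R\mC\ot_R\mE\ot_R A$ inserts a copy of $A$ \emph{between} $\mC$ and $\mE$. Idempotence $A\ot_R A\simeq A$ only merges adjacent copies of $A$. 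Discarding the middle copy would require $\mC$ to be stable or $\mE$ to be antistable.

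Concretely, take the free bimodules $\mC=\mE=R\ot R$. The comparison map becomes $\id\ot\Sigma^\infty\ot\id\colon A\ot R\ot A\to A\ot A\ot A$, and this is not an equivalence. The reason is that the absolute tensor product only sees conical (co)limits, whereas $\Sigma^\infty$ inverts the Gray-type suspension $-\wedge S^1$, which is not a conical colimit. In more detail, set $X=A\ot A$ and use $A\ot X\simeq\mathrm{RFun}(X^{\op},A)$. An equivalence would force $\Omega$ to act invertibly on space-valued functors $\Map_X(-,F)$, i.e.\ conical loops in $X$ would have to be invertible. This already fails in $\Cat\Sp$, where $0\times_{H(\bN)}0\simeq 0$.

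Hence $L$ is not compatible with $\ot_R$ in the sense of \cite[Definition 2.2.1.6]{lurie.higheralgebra}, and \cite[Proposition 2.2.1.9]{lurie.higheralgebra} does not apply. Your ``main obstacle'' paragraph does not address this point.

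The theorem only needs the monoidal constraints on local objects, and that is what the paper verifies. It invokes \cite[Lemma 3.13.]{heine2024local}. For an arbitrary localization, this lemma gives the local objects an oplax monoidal structure with $n$-fold tensor $L(\mC_1\ot_R\dots\ot_R\mC_n)$ and a lax monoidal inclusion. The paper then checks that the oplax comparison maps are equivalences:
\begin{itemize}
\item For bistable $\mC_1,\dots,\mC_n$ with $n\geq1$, the product $\mC_1\ot_R\dots\ot_R\mC_n$ is already bistable, since antistability on the left comes from $\mC_1$ and stability on the right from $\mC_n$. So $L$ acts trivially.
\item Empty blocks contribute $L(R)\simeq A$.
\item Everything then reduces to the maps $\mC\to A\ot_R\mC$ and $\mC\to\mC\ot_R A$ being equivalences for bistable $\mC$; these are the one-sided stabilizations. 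When all blocks are empty, one also uses $A\ot_R A\simeq A$.
\end{itemize}
So the middle copy of $\Cat\Sp$ that breaks your Step 2 is harmless exactly when its neighbours are bistable. That restricted statement is what you should prove in place of global compatibility.
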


\begin{proof}

By \cref{qqq} (3) there is a localization $L$ of $\wedge\Pr^L\wedge$ whose local objects are the bistable bioriented categories, that sends $\mC $ to $$\overline{\Sp}(\Sp(\mC))=\Cat\Sp \ot_{\infty\Cat_*} \mC \ot_{\infty\Cat_*} \Cat\Sp.$$
This implies by \cite[Lemma 3.13.]{heine2024local} that the full subcategory ${\wedge}\Pr^L_{\mathrm{st}}\wedge$ of local objects inherits an oplax monoidal structure such that the embedding ${{\wedge}\Pr^L_{\mathrm{st}}\wedge} \subset \wedge\Pr^L\wedge$ is a lax monoidal embedding and such that the local tensor product of $\mC_1,...,\mC_\n \in {\wedge}\Pr^L_{\mathrm{st}}\wedge$ for $\n \geq 0$ is $L(\mC_1 \ot_{\infty\Cat_*} ... \ot_{\infty\Cat_*} \mC_\n).$
For $\n \geq 1$ we have that $$ L(\mC_1 \ot_{\infty\Cat_*} ... \ot_{\infty\Cat_*} \mC_\n) \simeq \Cat\Sp \ot_{\infty\Cat_*} \mC_1 \ot_{\infty\Cat_*} ... \ot_{\infty\Cat_*} \mC_\n \ot_{\infty\Cat_*} \Cat\Sp \simeq \mC_1 \ot_{\infty\Cat_*} ... \ot_{\infty\Cat_*} \mC_\n.$$
For $\n=0$ we have that $$L(\infty\Cat_*)\simeq \Cat\Sp \ot_{\infty\Cat_*} \Cat\Sp \simeq \Sp(\Cat\Sp)\simeq \Sp.$$
So to see that this oplax monoidal structure on ${\wedge}\Pr^L_{\mathrm{st}}\wedge$ is a monoidal structure it is enough to see that for every $\mC^1_1,...,\mC^1_{\n_1}, \mC^2_1,...,\mC^2_{\n_2},..., \mC^\ell_1,...,\mC^\ell_{\n_\ell} \in {\wedge}\Pr^L_{\mathrm{st}}\wedge$ for $\ell \geq 2, \n_1, ...,\n_\ell \geq 0$ the canonical morphism
$$\theta: L(\mC^1_1 \ot_{\infty\Cat_*} ... \ot_{\infty\Cat_*} \mC^1_{\n_1} \ot_{\infty\Cat_*} ... \ot_{\infty\Cat_*} \mC^\ell_1 \ot_{\infty\Cat_*} ... \ot_{\infty\Cat_*} \mC^\ell_{\n_\ell}) \to $$$$ L(L(\mC^1_1 \ot_{\infty\Cat_*} ... \ot_{\infty\Cat_*} \mC^1_{\n_1}) \ot_{\infty\Cat_*} ... \ot_{\infty\Cat_*} L(\mC^\ell_1 \ot_{\infty\Cat_*} ... \ot_{\infty\Cat_*} \mC^\ell_{\n_\ell}))$$
is an equivalence. 
Assume first that $ \n_\bj=0$ for every $0 \leq \bj \leq \ell.$
In this case $\theta$ factors as $$\Cat\Sp \simeq \Cat\Sp \ot_{\infty\Cat_*} \infty\Cat_*^{\ot_{\infty\Cat_*}\ell} \ot_{\infty\Cat_*} \Cat\Sp \to \Cat\Sp \ot_{\infty\Cat_*} \Cat\Sp^{\ot_{\infty\Cat_*} \ell} \ot_{\infty\Cat_*} \Cat\Sp \simeq \Cat\Sp$$ and so is the identity.
So we assume that $ \n_\bj \geq 1 $ for some $0 \leq \bj \leq \ell.$
In this case $\theta$ identifies with
$$\mC^1_1 \ot_{\infty\Cat_*} ... \ot_{\infty\Cat_*} \mC^1_{\n_1} \ot_{\infty\Cat_*} ... \ot_{\infty\Cat_*} \mC^\ell_1 \ot_{\infty\Cat_*} ... \ot_{\infty\Cat_*} \mC^\ell_{\n_\ell} \to $$$$ L(\mC^1_1 \ot_{\infty\Cat_*} ... \ot_{\infty\Cat_*} \mC^1_{\n_1}) \ot_{\infty\Cat_*} ... \ot_{\infty\Cat_*} L(\mC^\ell_1 \ot_{\infty\Cat_*} ... \ot_{\infty\Cat_*} \mC^\ell_{\n_\ell}).$$
For every $1 \leq \bj \leq \ell$ the reduced bioriented functor $\mC^\bj_1 \ot_{\infty\Cat_*} ... \ot_{\infty\Cat_*} \mC^\bj_{\n_\bj} \to L(\mC^\bj_1 \ot_{\infty\Cat_*} ... \ot_{\infty\Cat_*} \mC^\bj_{\n_\bj})$ is an equivalence if $\n_\bj \geq 1$ as in this case $\mC^\bj_1 \ot_{\infty\Cat_*} ... \ot_{\infty\Cat_*} \mC^\bj_{\n_\bj}$ is local.
If $\n_\bj=0$, the reduced bioriented functor $$\mC^\bj_1 \ot_{\infty\Cat_*} ... \ot_{\infty\Cat_*} \mC^\bj_{\n_\bj} \to L(\mC^\bj_1 \ot_{\infty\Cat_*} ... \ot_{\infty\Cat_*} \mC^\bj_{\n_\bj})$$ is the canonical reduced bioriented functor $\infty\Cat_* \to \Cat\Sp$.
So by induction on $\ell$ it is enough to see that for every bistable presentable bioriented category $\mC$
the canonical reduced bioriented functors $$\mC \simeq \infty\Cat_* \ot_{\infty\Cat_*} \mC \to \Cat\Sp \ot_{\infty\Cat_*} \mC, \mC \simeq \mC \ot_{\infty\Cat_*} \infty\Cat_* \to \mC \ot_{\infty\Cat_*} \Cat\Sp$$ are equivalences. The latter identify with the stabilization functors and so are equivalences.
\end{proof}

\begin{corollary}\label{unicity}
	
The category of bistable presentably monoidal bioriented categories admits an initial object whose underlying bistable bioriented category is $\Cat\Sp$, and the bioriented functor $$\Sigma^\infty: \infty\Cat \to \Cat\Sp$$ is monoidal.

\end{corollary}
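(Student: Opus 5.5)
The corollary should follow formally from \cref{stten} together with the description of algebras in $\wedge\Pr^L\wedge$ recalled just before it.

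\textbf{Step 1: initial object.} In any monoidal $\infty$-category the tensor unit, with its canonical algebra structure, is initial in the category of associative algebras (\cite[\S 3.2.1]{lurie.higheralgebra}). Apply this to the monoidal structure on ${\wedge}\Pr^L_{\mathrm{st}}\wedge$ of \cref{stten}. Its unit is $L(\infty\Cat_*)$, and the proof of \cref{stten} identifies this with $\Cat\Sp \ot_{\infty\Cat_*} \Cat\Sp \simeq \Sp(\Cat\Sp) \simeq \Cat\Sp$ by bistability (\cref{stab}). Hence $\Cat\Sp$ acquires an algebra structure, and as such it is initial in $\Alg({\wedge}\Pr^L_{\mathrm{st}}\wedge)$.

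\textbf{Step 2: identify algebras.} Next identify $\Alg({\wedge}\Pr^L_{\mathrm{st}}\wedge)$ with the category of bistable presentably monoidal reduced bioriented categories. The embedding of \cref{stten} is lax monoidal, so it induces a functor on algebras. By \cite[Corollary 3.4.1.7.]{lurie.higheralgebra}, algebras in $\wedge\Pr^L\wedge$ are presentably monoidal categories under $\infty\Cat_*$. One must check that an algebra in $\wedge\Pr^L\wedge$ with bistable underlying object is the same as an algebra for the localized tensor product. I expect this to follow from the standard fact about compatible localizations (\cite[Lemma 3.13.]{heine2024local}): $L$ is a monoidal localization, since $L(\mC\ot\mD)\simeq \Cat\Sp\ot\mC\ot\mD\ot\Cat\Sp$ is compatible with tensoring. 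It then follows that $\Alg$ of the local objects is the full subcategory of $\Alg(\wedge\Pr^L\wedge)$ on algebras with local underlying object. Under this identification the initial algebra $\Cat\Sp$ corresponds to $\Cat\Sp$ with the monoidal structure receiving $\infty\Cat_* \to \Cat\Sp$.

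\textbf{Step 3: monoidality of $\Sigma^\infty$.} The unit map $\infty\Cat_*\to L(\infty\Cat_*)=\Cat\Sp$ is, by \cref{qqq}(3), the stabilization functor $\infty\Cat_*\to\overline{\Sp}(\Sp(\infty\Cat_*))\simeq\Cat\Sp$. Its restriction along $\Sp(\infty\Cat_*)\simeq\Cat\Sp$ is $\Sigma^\infty$ (via \cref{spantisp}). Since $L$ is a monoidal localization, the localization unit $\infty\Cat_*\to L(\infty\Cat_*)$ is a map of algebras, so $\Sigma^\infty:\infty\Cat_*\to\Cat\Sp$ is monoidal for $\wedge$. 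Precomposing with the monoidal functor $(-)_+:(\infty\Cat,\boxtimes)\to(\infty\Cat_*,\wedge)$ from the smash monoidal structure of \cite[Lemma 3.69.]{gepner2025oriented} then shows that $\Sigma^\infty_+$ (the intended reading of $\Sigma^\infty:\infty\Cat\to\Cat\Sp$) is monoidal.

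\textbf{Main obstacle.} The hardest point is Step 2. One must verify that the localization of \cref{stten} is genuinely compatible with the relative tensor product, and that the identification $L(\infty\Cat_*)\simeq\Cat\Sp$ is compatible with the unit maps. I would approach this by checking that $L(f)$ is an equivalence for every $L$-equivalence $f$ tensored with an arbitrary object. This should follow from the formula $L(-)\simeq\Cat\Sp\ot(-)\ot\Cat\Sp$ and associativity of $\ot_{\infty\Cat_*}$.
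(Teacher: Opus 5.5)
Your Step 1 is exactly the paper's proof: the unit $\Cat\Sp$ of the monoidal structure of \cref{stten} is initial among its algebras, and the lax monoidal embedding into $\wedge\Pr^L\wedge$ makes it a presentably monoidal category under $\infty\Cat_*$.

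The gap is in Steps 2 and 3, which rest on $L\simeq\Cat\Sp\ot(-)\ot\Cat\Sp$ being a \emph{compatible} (monoidal) localization of $\wedge\Pr^L\wedge$. This does not follow from the formula and associativity. For an $L$-equivalence $f$ you know that $\Cat\Sp\ot f\ot\Cat\Sp$ is invertible. But $L(f\ot\mD)\simeq\Cat\Sp\ot f\ot\mD\ot\Cat\Sp$ has no $\Cat\Sp$ next to $f$ on the right. Since the relative tensor product of $\infty\Cat_*$-bimodules is not symmetric, you cannot move one past an arbitrary non-local $\mD$.

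Concretely, take the $L$-equivalence $f\colon\mC\to\mC\ot_{\infty\Cat_*}\Cat\Sp$ with $\mC=\mD$ the free bimodule $\infty\Cat_*\ot\infty\Cat_*$. Compatibility would force $\id\ot\Sigma^\infty\ot\id\colon\Cat\Sp\ot\infty\Cat_*\ot\Cat\Sp\to\Cat\Sp\ot\Cat\Sp\ot\Cat\Sp$ (absolute tensor products in $\Pr^L$) to be an equivalence. That is, tensoring in $\Pr^L$ with $\Cat\Sp$ on both sides would have to invert the Gray suspension of the middle factor. Nothing available gives this, since $\Cat\Sp$ is not stable in the classical sense (e.g. $\Map(\Sigma^\infty S^0,H(\bN))$ is not grouplike). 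This is why \cref{stten} goes through \cite[Lemma 3.13.]{heine2024local}, which yields only an oplax monoidal structure on local objects. The paper then checks by hand, on tensor products of bistable objects only, that it is monoidal.

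What you want follows from that check instead.
\begin{itemize}
\item Step 3 needs no compatibility. The unit constraint $\infty\Cat_*\to\Cat\Sp$ of the lax monoidal embedding is the unique algebra map out of the initial algebra $\infty\Cat_*$. By \cref{qqq} it is $\Sigma^\infty$, so $\Sigma^\infty$ is monoidal.
\item For Step 2, the proof of \cref{stten} shows that the lax structure maps are equivalences in every arity $n\geq 1$. So non-unital algebra structures on a bistable object agree in both settings.
\item Unitality is a property (quasi-unitality, \cite[\S 5.4]{lurie.higheralgebra}). For bistable $\mathcal{A}$, a unit $\infty\Cat_*\to\mathcal{A}$ extends uniquely along the $L$-unit $\infty\Cat_*\to\Cat\Sp$. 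The unit axioms transfer because $\Cat\Sp\ot_{\infty\Cat_*}\mathcal{A}\simeq\mathcal{A}\simeq\mathcal{A}\ot_{\infty\Cat_*}\Cat\Sp$.
\end{itemize}
This identifies the two categories of algebras. The paper's one-line proof leaves that identification implicit; you were right to flag it, but the tool you chose does not work.
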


\begin{proof}

For every monoidal category the category of associative algebras admits an initial object that lies over the tensor unit \cite[Proposition 3.2.1.8., Lemma 3.2.1.10.]{lurie.higheralgebra}. We apply this to the monoidal category ${\wedge}\Pr^L_{\mathrm{st}}\wedge$ of \cref{stten}
and use that the embedding ${\wedge}\Pr^L_{\mathrm{st}}\wedge \subset {\wedge}\Pr^L\wedge$ is lax monoidal.

\end{proof}






Next we exhibit the oriented category of spectrum objects as the stabilization in the setting of oriented categories (\cref{specunive}).
For that we introduce spectral oriented categories.


\begin{definition}\emph{}

\begin{itemize}

\item A spectral oriented category is a category enriched in $(\Cat\Sp,\wedge^\rev).$

\item A spectral oriented functor is a functor enriched in $(\Cat\Sp,\wedge^\rev).$

\item An antispectral antioriented category is a category enriched in $(\Cat\Sp,\wedge).$

\item An antispectral antioriented functor is a functor enriched in $(\Cat\Sp,\wedge).$

\item A spectral bioriented category is a category enriched in $(\infty\Cat_*,\wedge) \ot (\Cat\Sp,\wedge^\rev)$.

\item A spectral bioriented functor is a functor enriched in $(\infty\Cat_*,\wedge), (\Cat\Sp,\wedge^\rev)$.
	
\item An antispectral bioriented category is a category enriched in $(\Cat\Sp,\wedge), (\infty\Cat_*,\wedge^\rev)$.

\item An antispectral bioriented functor is a functor enriched in $(\Cat\Sp,\wedge), (\infty\Cat_*,\wedge^\rev)$.

\item A bispectral bioriented category is a category enriched in $(\Cat\Sp,\wedge), (\Cat\Sp,\wedge^\rev)$.

\item A bispectral bioriented functor is a functor enriched in $(\Cat\Sp,\wedge), (\Cat\Sp,\wedge^\rev)$.
		
\end{itemize}
	
\end{definition}


	

        



\begin{notation} \emph{}

\begin{itemize}
\item Let $\Cat\barwedge$ 
be the 2-category of spectral oriented categories.	
	
\item Let ${\barwedge}\Cat$
be the 2-category of antispectral antioriented categories.

\item Let ${\wedge}\Cat\barwedge$
be the 2-category of spectral bioriented categories.
        
\item Let ${\barwedge}\Cat\wedge$
be the 2-category of antispectral bioriented categories.

\item Let ${\barwedge}\Cat\barwedge$
be the 2-category of bispectral bioriented categories.

\end{itemize}
\end{notation}



A spectral oriented, antispectral antioriented, spectral, antispectral, bispectral bioriented category is presentable if the respective enriched category is presentable. It is reduced if it admits a zero object,
or equivalently if it admits an initial object or final object \cite[Proposition 2.5.2.]{gepner2025oriented}. 
Moreover any spectral oriented functor, antispectral antioriented functor, spectral, antispectral, bispectral
bioriented functor between reduced spectral oriented, antispectral antioriented, spectral, antispectral, bispectral bioriented categories, respectively, preserves the zero object \cite[Proposition 2.5.3.]{gepner2025oriented}.

An oriented pullback in a spectral oriented category is an oriented pullback in the underlying oriented category such that the induced map on categorical morphism spectra (\ref{indmapl}) is an equivalence.
An oriented pushout in a spectral oriented category is an oriented pullback
in the opposite spectral oriented category.
An antioriented pushout (antioriented pullback) in an antispectral antioriented category is an oriented pushout (oriented pullback) in the conjugate spectral oriented category.

We refer to adjunctions between spectral oriented, antispectral antioriented, spectral, antispectral, bispectral bioriented categories as spectral oriented, 
antispectral antioriented, spectral, antispectral, bispectral bioriented adjunctions. There is a similar remark like \cref{adj} for spectral adjunctions.

\begin{notation}
Let $$\Pr^L\barwedge \subset \widehat{\Cat}\barwedge, \ {\barwedge}\Pr^L \subset {\barwedge}\widehat{\Cat} $$
be the subcategories of presentable spectral oriented categories, antispectral antioriented categories, respectively, and left adjoint spectral oriented functors, antispectral antioriented functors, respectively.

Let $$ {\wedge}\Pr^L\barwedge \subset
{\wedge}\widehat{\Cat}\barwedge, \ {\barwedge}\Pr^L\wedge \subset
{\barwedge}\widehat{\Cat}\wedge, \
{\barwedge}\Pr^L\barwedge  \subset
{\barwedge}\widehat{\Cat}\barwedge $$ be the subcategories of presentable spectral, antispectral, bispectral bioriented categories and left adjoint spectral, antispectral, bispectral bioriented functors, respectively.

\end{notation}

\begin{example}
Let $\mC$ be a presentable reduced bioriented category.	
\cref{sptens} implies that $$\Sp(\mC)\simeq \mC {\ot_{\infty\Cat_*} \Cat\Sp}$$ is a presentable spectral bioriented category, $$\overline{\Sp}(\mC)\simeq \Cat\Sp {\ot_{\infty\Cat_*} \mC} $$ is a presentable antispectral bioriented category and $$\Sp(\overline{\Sp}(\mC)) \simeq {\Cat\Sp \ot_{\infty\Cat_*}} \mC  {\ot_{\infty\Cat_*} \Cat\Sp} $$ is a presentable bispectral bioriented category.
	
\end{example}

\begin{lemma}\label{specstab}
	
Let $\mC$ be a reduced (anti)spectral (anti)oriented category that admits (anti)endomorphisms. The underlying reduced (anti)oriented category of $\mC$ is (anti)stable.

\end{lemma}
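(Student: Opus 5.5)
We treat the spectral oriented case; the antispectral antioriented case follows by applying $(-)^\co$. The bioriented variants follow by restricting to the relevant underlying enrichment. The plan is to reduce quasi-stability of the underlying oriented category to the invertibility of $\Omega$ on $\Cat\Sp$, which is \cref{rstab} together with \cref{stab}. Then we check the existence of oriented fibers and cofibers using that $\Cat\Sp$ is bistable.

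First, the zero object. $\mC$ is reduced as a spectral oriented category, so all categorical morphism spectra out of and into the zero object are zero in $\Cat\Sp$. Applying $\Omega^\infty$, which preserves the zero object, shows that it is a zero object of the underlying oriented category, whose morphism $\infty$-categories are $\R\Mor_\mC(X,Y)=\Omega^\infty$ of the morphism spectra.

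Second, endomorphisms. For $X\in\mC$ let $\Omega X$ denote its endomorphism object. In the spectral enrichment it is characterized by a natural equivalence of categorical morphism spectra $\Mor_\mC(T,\Omega X)\simeq \Omega\Mor_\mC(T,X)$, since $\Omega$ is the cotensor with $S^1$. This is the key point: $\Omega$ on $\Cat\Sp$ is an equivalence with inverse $\Sigma$ by \cref{rstab}. Hence $\Mor_\mC(\Omega X,-)$ and $\Mor_\mC(T,X)$ can be recovered by shifting. We construct a candidate inverse $\Sigma X$ for $\Omega$, namely the cotensor with $S^{-1}$, i.e.\ the tensor with $S^1$, as follows. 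Embed $\mC$ via the spectral Yoneda embedding into spectral presheaves, where \cref{weiloc} lets us replace $\mC$ by a presentable spectral category closed under the relevant constructions. In the presentable case $\mC$ is a module over $\Cat\Sp$, so $\Omega_\mC\simeq \mC\ot_{\Cat\Sp}\Omega_{\Cat\Sp}$ (using \cref{reduc} (7) transported along \cref{sptens}), and this is an equivalence because $\Omega_{\Cat\Sp}$ is. For a general $\mC$ admitting endomorphisms, we show that the essential image of $\mC$ in presheaves is closed under $\Omega^{-1}$. This step is the main obstacle, since $\mC$ is only assumed to admit endomorphisms, not suspensions. The approach is as follows. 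A presheaf $\Mor(-,X)$ shifted by $\Sigma$ is represented by the object whose endomorphism object is $X$. Since $\Omega$ is fully faithful on representables (via $\Mor(\Omega A,\Omega B)\simeq\Omega\Mor(A,\Omega B)\simeq\ldots$), we need essential surjectivity. The trick is that $\Omega\Sigma\simeq\id$ on $\Cat\Sp$ gives $\Mor_\mC(T,X)\simeq \Omega^2\Sigma^2\Mor_\mC(T,X)$. So $X\simeq\Omega(\Omega X')$ for a representing object, provided $\Omega^{2}$-shifted representables are represented. If this fails, I expect the lemma intends ``stable'' to mean quasi-stability plus fibers and cofibers computed in the enrichment, and that the proof shows these exist as weighted limits with spectral weights.

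Third, oriented fibers and cofibers. By \cref{redfib} these are weighted (co)limits with weights $S^0\to\bD^1$. In a spectral category, the defining equivalence (\ref{indmapl}) becomes an equivalence of categorical spectra, i.e.\ an oriented pullback in $\Cat\Sp$ of morphism spectra. Since $\Cat\Sp$ is bistable (\cref{stab}), oriented fibers there are shifts of oriented cofibers: \cref{repp} with $\Sigma$ invertible upgrades the comparison $\kappa$ to an equivalence. Thus the oriented fiber of $A\to C$ in $\mC$ can be represented as $\Omega$ applied to the oriented cofiber, and conversely. This reduces the existence of fibers and cofibers to the existence of one of them plus shifts. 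Combined with the second step, this yields stability.
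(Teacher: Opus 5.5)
\textbf{There is a genuine gap in your second and third steps, and it cannot be closed under the stated hypotheses.} Your first step is fine. Your presentable-case reasoning in the second step is exactly what the paper's one-line proof says: $\Omega$ is the right cotensor with $S^1$, which is $\wedge$-invertible in $\Cat\Sp$, so it is inverted by the cotensor with $S^{-1}$. The trouble is the passage to a general $\mC$. Embedding $\mC$ into a presentable envelope only shows that $\Omega_\mC$ is the restriction of an equivalence (the enriched Yoneda embedding preserves weighted limits, hence $\Omega$), so $\Omega_\mC$ is fully faithful. Essential surjectivity needs, for every $X$, an object $Y\in\mC$ with $\Mor_\mC(-,Y)\simeq\Sigma\,\Mor_\mC(-,X)$, i.e.\ a suspension of $X$ inside $\mC$. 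Your trick $\Mor_\mC(T,X)\simeq\Omega^2\Sigma^2\Mor_\mC(T,X)$ presupposes an object representing $\Sigma^2\Mor_\mC(-,X)$, which is the same missing input, so it is circular. No argument can supply it. Take the full spectral subcategory of $\Cat\Sp$ on $0$ and the negative spheres $\Omega^n\Sigma^\infty(S^0)$, $n\geq 0$. It is reduced and admits endomorphisms, computed in $\Cat\Sp$ (quasi-stable by \cref{rstab}). But $\Sigma^\infty(S^0)$ is not of the form $\Omega(Y)$ there, since $\Omega^\infty\Omega^m\Sigma^\infty(S^0)\simeq\Omega^m(\coprod_k B\Sigma_k)\simeq *$ for $m\geq 1$, while $\Omega^\infty\Sigma^\infty(S^0)\simeq\coprod_k B\Sigma_k$. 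So existence of suspensions has to be added as a hypothesis. Compare the definition of quasi-stable spectral oriented categories in the last section, which asks for both suspensions and endomorphisms.

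Your third step has the same defect. \cref{repp} and \cref{laxfib} let you trade an oriented fiber for a shifted oriented cofiber once one of them exists in $\mC$, but nothing in the hypotheses produces either, and nothing can. Ordinary spectra form a spectral oriented category by \cref{unistab}. Their full subcategory on $0$ and the spheres admits endomorphisms and suspensions, yet has no cofiber of the degree-$2$ self-map of the sphere spectrum; compare bottom homotopy groups of mapping spectra into the sphere. The paper's one-line proof is silent on both points as well. What the argument actually establishes is quasi-stability of the underlying oriented category when $\mC$ admits endomorphisms and suspensions. Stability then needs oriented fibers or cofibers in addition, after which \cref{Stab2} applies; this is the situation in which the lemma is invoked in \cref{oplaxexacto}. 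Instead of the hedge at the end of your second step, state these extra hypotheses explicitly and prove the statement under them.
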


\begin{proof}

The (anti)endomorphisms of any $X \in \mC$ is the (anti)oriented fiber of $ X \to 0$, which is the (left) right cotensor of $X$ with $S^1.$ 
Since $S^1$ is tensor-invertible in $\Cat\Sp$, the claim follows.
	
\end{proof}

\cref{repp} and \cref{specstab} imply the following corollary:

\begin{corollary}\label{ttut}
Let $\mC$ be a reduced spectral oriented category that admits pullbacks and oriented pushouts.
The commutative square \ref{sqlo} is a pullback square. In other words, the canonical map
$$ (-)\overset{\to}{+}_{(-)}(-) \to 0\overset{\to}{+}_{(-)}(-)\times_{0 \overset{\to}{+}_{(-)} 0} (-)\overset{\to}{+}_{(-)} 0 $$ of functors $\mC^{\Lambda_0^2} \to \mC$  is an equivalence. 

\end{corollary}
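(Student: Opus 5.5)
The plan is to combine \cref{repp} with the stability provided by \cref{specstab}. First I reduce to the case where the map $\theta = \Sigma\circ\kappa$ of \cref{repp} is defined. That map needs $\mC$ to admit oriented pushouts, oriented fibers and pullbacks. Pullbacks and oriented pushouts are given. Endomorphisms exist because $\Omega X$ is the cotensor of $X$ with $S^1$, and cotensors with $S^1$ exist since $S^1$ becomes tensor-invertible in $\Cat\Sp$: the cotensor is the tensor with $S^{-1}$, which is again a spectral enrichment operation. So by \cref{specstab} the underlying reduced oriented category of $\mC$ is stable. In particular it admits oriented fibers and cofibers, and $\Sigma \dashv \Omega$ are mutually inverse equivalences. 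In the spectral setting, an oriented pushout or pullback is required to be one on categorical morphism spectra. Since $\Omega$ is an equivalence, the universal properties valued in $\infty\Cat_*$ upgrade to $\Cat\Sp$-valued ones.

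Next I would use stability to strip $\Sigma$ off \cref{repp}. The map $\kappa$ of (\ref{fact}) factors as
\[
\overset{\to}{+} \to (\overset{\to}{+}0)\times_{\Sigma\circ\ev_0}(0\overset{\to}{+}) \to \bigl((\overset{\to}{\times}0)\times_{\ev_0}(0\overset{\to}{\times})\bigr)\circ\Sigma .
\]
Since $\Sigma$ is an equivalence, \cref{repp} shows that $\kappa$ itself admits a left inverse. To conclude that the first map (the comparison to the pullback of square \ref{sqlo}) is an equivalence, I would show two things: that the second map is an equivalence, and that $\kappa$ has a right inverse as well. For the second map, I would check that the comparisons $\xi$ of \cref{compa} and \cref{compacor}, namely $0\overset{\to}{+}(-) \to (0\overset{\to}{\times})\circ\Sigma$ and its right-hand analogue, are equivalences in a stable oriented category. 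By the pasting law \cref{pasting}, the oriented left fiber of $\Sigma C \to \Sigma C\overset{\to}{+}\dots$ can be computed from the oriented cofiber square. Since $\Sigma$ and $\Omega$ are inverse, and $\xi$ was built from the unit of $L \dashv 0\overset{\to}{\times}$ composed with $\kappa_\mC$, the square in \cref{compa} becomes a pullback once $S^1$ is invertible. Concretely, I would verify this on the generating weights in $\infty\Cat_*$ and transport it along the tensor $\mC\ot_{\infty\Cat_*}(-)$ as in the proof of \cref{repp}. Pulling back an equivalence along a map of spans then makes the second map of $\kappa$ an equivalence.

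The last step is to show that the first map is an equivalence. It has a left inverse, obtained by composing the left inverse of $\kappa$ with the equivalence above. To get invertibility I would show it also has a right inverse, or that it is a monomorphism with a retraction in a preadditive setting. \cref{preadd} gives preadditivity. Both functors are exact in the sense of sending the generating spans $X\leftarrow 0\to 0$, $0\leftarrow 0\to X$ and $X\leftarrow X\to X$ to equivalences, which is checked by direct computation since oriented pushouts with $0$ reduce to cofibers. Passing to a presentable enlargement via \cref{weiloc} and using that both sides commute with colimits of spans in the stable setting, a split monomorphism that is an equivalence on generators is an equivalence. I expect the main obstacle to be the claim that $\xi$ is an equivalence, that is, the oriented cofiber–fiber comparison: it requires careful handling of the orientation, meaning left versus right (co)fibers and $(\bD^1)^\op$, in the weighted-limit description of \cref{redfib}.
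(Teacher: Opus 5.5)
Your proposal has a genuine gap: the two steps that carry the weight do not work as stated.

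\textbf{Step on $\xi$.} You propose to prove that $\xi$ of \cref{compa} and \cref{compacor} is an equivalence by checking it on generating weights in $\infty\Cat_*$ and transporting. But $\xi$ is not an equivalence in $\infty\Cat_*$. At $\id_{S^0}$ it is a functor from $0\overset{\to}{+}_{S^0}S^0\simeq\bD^1$ to $0\overset{\to}{\times}_{S^1}S^1\simeq(\bN,\leq)$. That $\xi$ becomes an equivalence after stabilization is \cref{laxfib}. Its proof needs a reduction to $\Cat\Sp$ and a separate section argument at $\id_{S^0}$.

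\textbf{Final step.} You assert that the comparison map is an equivalence on the span $X\leftarrow X\to X$ ``by direct computation''. That is the entire content of the corollary. In $\infty\Cat_*$ it is false: $\bD^1\times_{S^1}(\bD^1)^\op$ has an object not hit by $\bD^1_+$. You also assert that both sides commute with colimits of spans. This is unjustified for the pullback side. The underlying category of a stable oriented category is only preadditive, not stable, so pullbacks need not commute with pushouts there. Even granting all this, you only ever exhibit a left inverse of the comparison map, never a right inverse.

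\textbf{Endomorphisms.} Your justification for the existence of endomorphisms is wrong, since nothing provides tensors with $S^{-1}$. The correct move is to embed $\mC$ into a presentable spectral oriented category preserving oriented pushouts and pullbacks via \cref{weiloc}, as in the proof of \cref{repp}.

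\textbf{The missing idea.} You never need to invert $\kappa$. After the reduction, \cref{specstab} makes $\Sigma$ an equivalence. The left inverse from \cref{repp} then exhibits $\overset{\to}{+}$ as a retract of $Q\circ\Sigma$, where $Q=(\overset{\to}{\times}0)\times_{\ev_0}(0\overset{\to}{\times})$.
\begin{itemize}
\item $Q\circ\Sigma$ preserves pullbacks, since $\Sigma$ is an equivalence and $Q$ is assembled from weighted limits.
\item A retract of a pullback-preserving functor preserves pullbacks, because its comparison maps are retracts of equivalences.
\item By the proof of \cref{spli}, the square (\ref{sqlo}) is $\overset{\to}{+}$ applied to the pullback square of spans
\[ (A\leftarrow C\to B)\simeq (A\leftarrow C\to 0)\times_{(0\leftarrow C\to 0)}(0\leftarrow C\to B) \]
in $\mC^{\Lambda_0^2}$, so it is a pullback.
\end{itemize}
This is how the corollary follows from \cref{repp} and \cref{specstab} alone, with no use of \cref{laxfib} and no computation on generators.
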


\begin{notation} \emph{}

\begin{itemize}
		
\item Let $\mC,\mD \in \Cat\barwedge$. Let ${\Fun\barwedge}(\mC,\mD)$ be the category of spectral oriented functors $\mC \to \mD.$	

\item Let $\mC,\mD \in \barwedge\Cat$. Let ${\barwedge\Fun}(\mC,\mD) $ be the category of antispectral antioriented functors $\mC \to \mD.$	
		
\item Let $\mC,\mD \in {\wedge}\Cat\barwedge $. Let ${\wedge}\Fun{\barwedge}(\mC,\mD) $ be the category of spectral bioriented functors $\mC \to \mD.$			

\item Let $\mC,\mD \in {\barwedge}\Cat\wedge $. Let ${\barwedge}\Fun{\wedge}(\mC,\mD)$ be the category of  antispectral bioriented functors $\mC \to \mD.$	

\item Let $\mC,\mD \in {\barwedge}\Cat\barwedge $. Let ${\barwedge}\Fun{\barwedge}(\mC,\mD) $ be the category of bispectral bioriented functors $\mC \to \mD.$			
		
\end{itemize}
\end{notation}

\begin{notation}\label{raras}
	
The monoidal equivalences $(-)^\op, (-)^\co: \infty\Cat^\rev \simeq \infty\Cat$ with respect to the Gray tensor product give rise to monoidal equivalences $(-)^\op, (-)^\co: \infty\Cat_*^\rev \simeq \infty\Cat_*$ with respect to the Gray smash product, which by \cref{unicity} induce unique monoidal equivalences
$$ (-)^\op, (-)^\co: \Cat\Sp^\rev \simeq \Cat\Sp$$ such that $(-)^\co \circ \Sigma^\infty \simeq \Sigma^\infty  \circ (-)^\co$ and $ (-)^\op  \circ \Sigma^\infty \simeq \Sigma^\infty  \circ (-)^\op.$ 
\end{notation}

\begin{notation}
Let $$(-)^\circ: {\Cat\barwedge} \simeq \barwedge\Cat, $$$$ (-)^\circ: \barwedge\Cat \simeq {\Cat\barwedge}, $$$$ (-)^\circ:  \barwedge\Cat\wedge \simeq {\wedge}\Cat\barwedge, $$$$ (-)^\circ:  {\wedge}\Cat\barwedge \simeq \barwedge\Cat\wedge, $$$$ (-)^\circ:  \barwedge\Cat\barwedge \simeq \barwedge\Cat\barwedge$$ be the opposite enriched category involutions.	
	
\end{notation}

\begin{notation} The equivalences of \cref{raras} give rise to the following equivalences
$$(-)^\co:= (-)^\op_!: \barwedge\Cat \simeq {\Cat\barwedge}, $$$$ (-)^\co:= (-)^\op_!: {\Cat\barwedge} \simeq \barwedge\Cat$$
$$(-)^\co:= ((-)^\op, (-)^\op)_!: \barwedge\Cat\barwedge \simeq \barwedge\Cat\barwedge,$$
$$(-)^\co:= ((-)^\op, (-)^\op)_!: \barwedge\Cat\wedge \simeq {\wedge}\Cat\barwedge, $$$$ (-)^\co:= ((-)^\op, (-)^\op)_!: {\wedge}\Cat\barwedge \simeq \barwedge\Cat\wedge,$$

$$(-)^\op:= (-)^\circ\circ (-)^\co_!: \barwedge\Cat \simeq \barwedge\Cat , $$$$ (-)^\op:= (-)^\circ\circ (-)^\co_!: {\Cat\barwedge}  \simeq {\Cat\barwedge} ,$$
$$ (-)^\op :=(-)^\circ \circ ((-)^\co, (-)^\co)_!: \barwedge\Cat\barwedge \simeq \barwedge\Cat\barwedge, $$
$$ (-)^\op :=(-)^\circ \circ ((-)^\co, (-)^\co)_!: \barwedge\Cat\wedge \simeq \barwedge\Cat\wedge, $$
$$ (-)^\op :=(-)^\circ \circ ((-)^\co, (-)^\co)_!: {\wedge}\Cat\barwedge \simeq {\wedge}\Cat\barwedge. $$

\end{notation}

The equivalences of \cref{dua} gives rise to the following equivalences of bioriented categories:
\begin{corollary}\label{duae2} There are canonical equivalences of bispectral bioriented categories:
$$(-)^\op: \Cat\Sp^\co \simeq \Cat\Sp, \ (-)^\co: \Cat\Sp^\op \simeq \Sp^\circ. $$	
\end{corollary}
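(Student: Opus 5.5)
We derive the two equivalences in \cref{duae2} from the analogous ones for $\infty\Cat$ in \cref{duae}, transporting them along the monoidal equivalences of \cref{raras}.

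\textbf{The equivalence $(-)^\op: \Cat\Sp^\co \simeq \Cat\Sp$.} First, $\Cat\Sp$ is a bispectral bioriented category: by \cref{unicity} it is monoidal, so it is enriched in $(\Cat\Sp,\wedge)\ot(\Cat\Sp,\wedge^\rev)$ via left and right internal homs. Unwinding the definition $(-)^\co := ((-)^\op,(-)^\op)_!$, the category $\Cat\Sp^\co$ has the same underlying category as $\Cat\Sp$. Its categorical morphism spectra are obtained by applying $(-)^\op$ to the left and right morphism spectra of $\Cat\Sp$.

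By \cref{raras}, $(-)^\op: \Cat\Sp^\rev \simeq \Cat\Sp$ is a monoidal equivalence. A monoidal autoequivalence of a closed monoidal category is canonically compatible with the internal homs. This gives natural equivalences
$$\L\Mor(X,Y)^\op \simeq \R\Mor(X^\op,Y^\op)$$
and similarly with left and right exchanged. These are exactly the data saying that the underlying functor $(-)^\op$ refines to a bispectral bioriented functor $\Cat\Sp^\co \to \Cat\Sp$. This is the same mechanism that produces \cref{duae} from \cref{dua}.

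Concretely, I will use the general fact about enriched categories: if $\phi:\mV\simeq\mV'$ is a monoidal equivalence, then $\mV$ regarded as self-enriched, pushed forward along $\phi$, is equivalent to $\mV'$ self-enriched. Applied to the bimodule structure, the resulting enriched functor is an equivalence, because it is an equivalence on underlying categories and on morphism objects.

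\textbf{The equivalence $(-)^\co: \Cat\Sp^\op \simeq \Cat\Sp^\circ$.} I argue the same way, using that $\mC^\op = (\mC^\circ)^{\co}$ up to the $(-)^\co$-twist. Here I use the monoidal equivalence $(-)^\co$ of \cref{raras} in place of $(-)^\op$. I also need that the two involutions $(-)^\op$ and $(-)^\co$ on $\Cat\Sp$ commute. This follows from the uniqueness in \cref{unicity}: both composites $(-)^\op\circ(-)^\co$ and $(-)^\co\circ(-)^\op$ are monoidal equivalences compatible with $\Sigma^\infty$, and they agree after precomposing with $\Sigma^\infty$ because the corresponding statement holds on $\infty\Cat_*$.

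\textbf{Main obstacle.} The delicate step is the coherence of the enrichment: the pushforward along a monoidal equivalence must identify the self-enrichment as a bimodule, with both the $\wedge$ and $\wedge^\rev$ actions. My plan is to pass through presentable bimodules, using \cref{adjuiti}. Under that equivalence, the self-enrichment of $\Cat\Sp$ corresponds to $\Cat\Sp$ as a $(\Cat\Sp,\Cat\Sp)$-bimodule in $\Pr^L$. Restricting scalars along $(-)^\op$ on both sides, which swaps left and right because of the $\rev$, gives $\Cat\Sp^\co$. The map $(-)^\op$ is then a bimodule equivalence, by functoriality of the monoidal structure, and this completes the argument.
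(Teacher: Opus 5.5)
Your proposal is correct and matches the paper's justification, which is essentially one line: the corollary comes from transporting the bi-enriched self-enrichment of $\Cat\Sp$ along the monoidal equivalences $(-)^\op, (-)^\co: \Cat\Sp^\rev \simeq \Cat\Sp$ of \cref{raras}, exactly as \cref{duae} comes from \cref{dua}, and the second equivalence follows by applying $(-)^\circ$ to the $(-)^\co$-analogue of the first. The commutation of $(-)^\op$ and $(-)^\co$ on $\Cat\Sp$ that you invoke is harmless but unnecessary, since $\Cat\Sp^\op$ is by definition $(-)^\circ$ applied to the pushforward along $((-)^\co,(-)^\co)$, so only $(-)^\co$ enters.
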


\cref{stten} gives the following corollary:

\begin{corollary}\label{unistab}
\begin{enumerate}
\item The forgetful functor $${\barwedge}\Pr^L\barwedge= {_{\Cat\Sp} \Mod_{\Cat\Sp}(\Pr^L)}  \to {_{\infty\Cat_*} \Mod_{\infty\Cat_*}(\Pr^L)} = {\wedge \Pr^L \wedge}$$ induces an equivalence of 2-categories
$${\barwedge}\Pr^L\barwedge \to {\wedge}\Pr^L_{\mathrm{bist}}\wedge.$$

\item The forgetful functor $$ {\barwedge}\Pr^L\wedge= {_{\Cat\Sp} \Mod_{\infty\Cat_*}(\Pr^L)}  \to {_{\infty\Cat_*} \Mod_{\infty\Cat_*}(\Pr^L)} = {\wedge \Pr^L \wedge}$$ induces an equivalence of 2-categories
$$ {\barwedge}\Pr^L\wedge \to {\wedge}\Pr^L_{\mathrm{anst}}\wedge.$$

\item The forgetful functor $$\wedge\Pr^L\barwedge= {_{\infty\Cat_*}\Mod_{\Cat\Sp}(\mathrm{Pr}^{L})}  \to {_{\infty\Cat_*} \Mod_{\infty\Cat_*}(\Pr^L)} = {\wedge \Pr^L \wedge}$$ induces an equivalence of 2-categories
$$ {\wedge\Pr^L}\barwedge  \to {\wedge}\Pr^L_{\mathrm{st}}\wedge.$$

\item The forgetful functor $$ {\barwedge}\Pr^L= {_{\Cat\Sp} \Mod(\Pr^L)}  \to {_{\infty\Cat_*} \Mod(\Pr^L)} = {\wedge \Pr^L}$$ induces an equivalence of 2-categories
$$ {\barwedge}\Pr^L\to {\wedge}\Pr_{\mathrm{anst}}^L.$$

\item The forgetful functor $$ \Pr^L\barwedge= { \Mod_{\Cat\Sp}(\Pr^L)}  \to {\Mod_{\infty\Cat_*}(\Pr^L)} = { \Pr^L \wedge}$$ induces an equivalence of 2-categories
$$ \Pr^L\barwedge \to {\Pr^L_{\mathrm{st}} \wedge}.$$

\end{enumerate}
\end{corollary}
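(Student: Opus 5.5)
The plan is to deduce \cref{unistab} from \cref{stten} and \cref{unicity}, following Lurie's argument that $\Sp$-modules in $\Pr^L$ are exactly the stable presentable categories. I treat (1) in detail; (2)--(5) are obtained the same way, replacing the two-sided stabilization by the one-sided ones of \cref{qqq}.

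\textbf{Step 1: $\Cat\Sp$ is an idempotent algebra.} By the proof of \cref{stten} there is a localization
\[L = \Cat\Sp \ot_{\infty\Cat_*} (-) \ot_{\infty\Cat_*} \Cat\Sp\]
of $\wedge\Pr^L\wedge$ whose local objects are the bistable presentable bioriented categories. This localization is compatible with the relative tensor product. Since $\Cat\Sp$ is itself bistable by \cref{stab}, the unit maps
\[\Cat\Sp \simeq \infty\Cat_* \ot_{\infty\Cat_*} \Cat\Sp \to \Cat\Sp \ot_{\infty\Cat_*} \Cat\Sp\]
are equivalences; this is the last step in the proof of \cref{stten}. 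Hence the algebra $\Sigma^\infty: \infty\Cat_* \to \Cat\Sp$ of \cref{unicity} is an idempotent algebra in $\wedge\Pr^L\wedge$, on both sides. Here I view $\wedge\Pr^L\wedge$ as $\infty\Cat_*\ot\infty\Cat_*^{\rev}$-modules via \cref{adjuiti}, so that the two sides can be handled separately.

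\textbf{Step 2: modules over an idempotent algebra.} For an idempotent algebra $E$ under a unit $\tu$, Lurie's argument (HA 4.8.2.9) shows that the forgetful functor $\Mod_E \to \Mod_\tu$ is fully faithful. Its essential image consists of the modules $M$ for which $\tu\ot M \to E\ot M$ is an equivalence; in particular, being an $E$-module is a property rather than extra structure.

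Applying this to the bimodule situation, $\Cat\Sp$-$\Cat\Sp$-bimodules in $\Pr^L$ embed fully faithfully into $\infty\Cat_*$-$\infty\Cat_*$-bimodules. The image consists of those $\mC$ for which both maps
\[\mC \to \Cat\Sp\ot_{\infty\Cat_*}\mC, \qquad \mC \to \mC\ot_{\infty\Cat_*}\Cat\Sp\]
are equivalences. By \cref{sptens} these maps are the stabilization functors $\mC\to\overline{\Sp}(\mC)$ and $\mC\to\Sp(\mC)$.

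\textbf{Step 3: identify the essential image.} The remaining task is to show that $\mC\to\Sp(\mC)$ is an equivalence exactly when $\mC$ is stable, and dually $\mC\to\overline{\Sp}(\mC)$ is an equivalence exactly when $\mC$ is antistable.

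For the forward direction: if $\mC\simeq\Sp(\mC)$, then $\mC$ is quasi-stable by \cref{rstab}. It carries a $\Cat\Sp$-enrichment, so \cref{specstab} gives stability. Fibers and cofibers exist by presentability, as weighted (co)limits.

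For the converse: if $\mC$ is stable then $\Omega$ is an equivalence, so the tower defining $\Sp(\mC)$ is constant up to equivalence. Hence $\Omega^\infty$ is an equivalence, and so is its left adjoint. The argument is the same as for \cref{qqq}(1).

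I expect the main obstacle to be Step 2 in the two-sided, non-symmetric setting. One has to check that idempotency on each side separately yields full faithfulness on bimodule $2$-categories, and not merely on underlying categories. To handle this, I would first apply the one-sided statement to $\infty\Cat_*\ot\infty\Cat_*^{\rev}$ with the idempotent algebra $\Cat\Sp\ot\Cat\Sp^{\rev}$. The enhancement to $2$-categories then follows because mapping categories of left adjoint linear functors are computed in the underlying module categories.
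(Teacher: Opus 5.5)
Your proposal is correct and follows essentially the same route as the paper. The paper also uses the free--forgetful adjunction $\mC \mapsto \Cat\Sp \ot_{\infty\Cat_*} \mC \ot_{\infty\Cat_*} \Cat\Sp$ and the fact that its unit is the stabilization functor, which is an equivalence exactly on bistable (resp.\ stable, antistable) objects. The only difference is packaging: you get the counit from idempotency of $\Cat\Sp$ via Lurie's criterion and then identify the essential image, which makes explicit two points the paper leaves tacit, namely that the forgetful functor is fully faithful and that every $\Cat\Sp$-(bi)module is in fact (bi)stable.
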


\begin{proof}
(1) : The forgetful functor $ {_{\Cat\Sp} \Mod_\Sp(\Pr^L)}  \to {\wedge \Pr^L \wedge} $ induces a functor
$$ {_{\Cat\Sp} \Mod_{\Cat\Sp}(\Pr^L)}  \to{\wedge}\Pr^L_{\mathrm{bist}}\wedge$$ that is right adjoint to the restricted free
functor $ \mC \mapsto {\Cat\Sp} \ot_{\infty\Cat_*} \mC \ot_{\infty\Cat_*} {\Cat\Sp} $. The unit at $\mC \in {\wedge}\Pr^L_{\mathrm{bist}}\wedge$
is the reduced bioriented functor $$\eta: \mC \simeq  \infty\Cat \ot_{\infty\Cat_*} \mC \ot_{\infty\Cat_*} \infty\Cat \to {\Cat\Sp} \ot_{\infty\Cat_*} \mC \ot_{\infty\Cat_*} {\Cat\Sp},$$
which is an equivalence by bistability of $\mC.$ 
The proofs of (2), (3), (4), (5) are similar.

\end{proof}

We immediately obtain the following universal characterization of spectrum objects:

\begin{theorem}\label{specunive}
\begin{enumerate}

\item The embeddings of 2-categories $$ {\Pr^L_{\mathrm{st}} \wedge} \subset {\Pr^L \wedge},  \ {\wedge\Pr^L_{\mathrm{st}} \wedge} \subset {\wedge\Pr^L \wedge}$$ admit a left adjoint sending $\mC$ to $\Sp(\mC).$

\item The embeddings of 2-categories
$$ {\wedge}\Pr_\mathrm{anst}^L \subset {\wedge \Pr^L }, \   {\wedge}\Pr_\mathrm{anst}^L\wedge \subset {\wedge \Pr^L\wedge} $$
admit a left adjoint sending $\mC$ to $\overline{\Sp}(\mC).$

\item The embedding of 2-categories $${\wedge}\Pr^L_{\mathrm{bist}}\wedge \subset {\wedge\Pr^L \wedge}$$ admits a left adjoint sending $\mC$ to $\overline{\Sp}(\Sp(\mC)). $ 

\end{enumerate}
\end{theorem}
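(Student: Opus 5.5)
The plan is to deduce all three parts from \cref{unistab}, which identifies the stable, antistable and bistable presentable (bi)oriented categories with the corresponding module 2-categories over $\Cat\Sp$. The left adjoints then become free-module functors, and \cref{sptens} identifies these with $\Sp(-)$, $\overline{\Sp}(-)$ and $\overline{\Sp}(\Sp(-))$.

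\textbf{Part (1), bioriented case.} By \cref{unistab} (3), ${\wedge}\Pr^L_{\mathrm{st}}\wedge \simeq {_{\infty\Cat_*}\Mod_{\Cat\Sp}(\Pr^L)}$, and under this equivalence the embedding into ${\wedge\Pr^L\wedge}$ is restriction of scalars along the monoidal functor $\Sigma^\infty: \infty\Cat_* \to \Cat\Sp$ from \cref{unicity}. Restriction of scalars between bimodule 2-categories in $\Pr^L$ has a left adjoint, extension of scalars $\mC \mapsto \mC \ot_{\infty\Cat_*} \Cat\Sp$, by the relative tensor product formalism of \cite[\S 4.4, \S 4.6]{lurie.higheralgebra}. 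This adjunction is 2-categorical because both functors are $\Pr^L$-enriched, and restriction of scalars is conservative on mapping categories. By \cref{sptens}, $\mC \ot_{\infty\Cat_*} \Cat\Sp \simeq \Sp(\mC)$. Alternatively, one can argue directly: \cref{qqq} (1) gives ${\wedge\L\Fun\wedge}(\Sp(\mC),\mD)\simeq {\wedge\L\Fun\wedge}(\mC,\mD)$ for stable $\mD$, and $\Sp(\mC)$ is stable by \cref{stab} transported through \cref{sptens}, or via \cref{rstab} together with \cref{specstab}. These mapping-category equivalences are exactly the data of a 2-categorical left adjoint.

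\textbf{Part (1), oriented case.} The same argument applies using \cref{unistab} (5), ${\Pr^L_{\mathrm{st}}\wedge}\simeq \Mod_{\Cat\Sp}(\Pr^L)$. The left adjoint is $\mC \mapsto \mC\ot_{\infty\Cat_*}\Cat\Sp$. To identify this with $\Sp(\mC)$, where $\mC$ is regarded as bioriented via \cref{ember}, I use that $\mC\ot_{\infty\Cat_*}-$ commutes with the sequential colimit in \cref{nnmt}, as in \cref{sptens}.

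\textbf{Parts (2) and (3).} Part (2) is the conjugate of (1): apply $(-)^\co$ as in \cref{qqq} (2), or directly use \cref{unistab} (2), (4) with extension of scalars $\Cat\Sp\ot_{\infty\Cat_*}\mC\simeq\overline{\Sp}(\mC)$. Part (3) composes the two adjunctions, using \cref{unistab} (1) and the identification $\Cat\Sp\ot_{\infty\Cat_*}\mC\ot_{\infty\Cat_*}\Cat\Sp\simeq\overline{\Sp}(\Sp(\mC))$.

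\textbf{Main obstacle.} The delicate step is the 2-categorical nature of the adjunction. One must check that the equivalences of \cref{qqq} are natural in $\mD$ and compatible with composition, rather than merely objectwise. I would handle this through the module description, where extension and restriction of scalars form a $\Pr^L$-enriched adjunction. The remaining check is that the unit $\mC\to\Sp(\mC)$ produced there agrees with $\Sigma^\infty$.
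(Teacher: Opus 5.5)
Your proposal is correct and follows essentially the paper's route. The paper states \cref{specunive} as an immediate consequence of \cref{unistab}, whose proof exhibits the forgetful functors from $\Cat\Sp$-(bi)modules as right adjoint to extension of scalars, with unit an equivalence on stable, antistable and bistable objects; \cref{sptens} then identifies these extensions with $\Sp(\mC)$, $\overline{\Sp}(\mC)$ and $\overline{\Sp}(\Sp(\mC))$, exactly as you do.
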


\begin{corollary}\label{corp} Let $\mC$ be a presentable reduced bioriented category.
There is a monoidal functor $$ {{\wedge \L\Fun\wedge}(\mC, \mC)} \to {{\wedge\L\Fun\wedge}(\Sp(\overline{\Sp}(\mC)), \Sp(\overline{\Sp}(\mC)))}.$$

\end{corollary}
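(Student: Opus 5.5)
The natural approach is to transport the functoriality of stabilization along the universal property in \cref{specunive}~(3). Write $L(-):=\overline{\Sp}(\Sp(-))$ for the left adjoint of the embedding ${\wedge}\Pr^L_{\mathrm{bist}}\wedge \subset {\wedge\Pr^L\wedge}$. By the proof of \cref{stten}, $L(\mC)\simeq \Cat\Sp \ot_{\infty\Cat_*} \mC \ot_{\infty\Cat_*} \Cat\Sp$. The plan is to construct the monoidal functor as the composite
\begin{equation*}
{\wedge\L\Fun\wedge}(\mC,\mC) \xrightarrow{\ \Cat\Sp \ot_{\infty\Cat_*} (-) \ot_{\infty\Cat_*} \Cat\Sp\ } {\wedge\L\Fun\wedge}(L(\mC),L(\mC)).
\end{equation*}
The point is that ${\wedge\L\Fun\wedge}(\mC,\mC)$ is the endomorphism monoidal category of $\mC$ in the $2$-category $\wedge\Pr^L\wedge = {_{\infty\Cat_*}\Mod_{\infty\Cat_*}(\Pr^L)}$, with composition as tensor product. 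Any $2$-functor carries endomorphism monoidal categories to endomorphism monoidal categories monoidally. It therefore suffices to exhibit the stabilization as a $2$-functor $\wedge\Pr^L\wedge \to \wedge\Pr^L\wedge$.

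For this we use that the relative tensor product ${_{\infty\Cat_*}\Mod_{\infty\Cat_*}(\Pr^L)} \to {_{\Cat\Sp}\Mod_{\Cat\Sp}(\Pr^L)}$, $\mC \mapsto \Cat\Sp \ot_{\infty\Cat_*} \mC \ot_{\infty\Cat_*} \Cat\Sp$, is extension of scalars along the bimodule map $\infty\Cat_* \to \Cat\Sp$ coming from the monoidal functor $\Sigma^\infty$ of \cref{unicity}. This extension of scalars is a functor of $(\infty,2)$-categories, since $\Pr^L$ is enriched in itself and the relative tensor product is functorial in $\Pr^L$-enriched bimodule categories. Composing with the forgetful $2$-functor ${_{\Cat\Sp}\Mod_{\Cat\Sp}(\Pr^L)} \to \wedge\Pr^L\wedge$, which by \cref{unistab}~(1) is an equivalence onto the bistable part, gives a $2$-functor whose underlying functor is $L$.

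Equivalently, and more cheaply, one can avoid $2$-functoriality. The category ${\wedge\L\Fun\wedge}(\mC,\mC)$ is the endomorphism object of $\mC$ for the action of the monoidal category $\wedge\Pr^L\wedge$ on itself, so $\mC$ is a left module over the algebra $\mathrm{End}(\mC)$ in $\Pr^L$. Tensoring this module structure with $\Cat\Sp$ on both sides over $\infty\Cat_*$ makes $L(\mC)$ an $\mathrm{End}(\mC)$-module in $\wedge\Pr^L\wedge$. By the universal property of endomorphism algebras \cite[\S 4.7.1]{lurie.higheralgebra}, such a module structure is the same as a monoidal functor $\mathrm{End}(\mC)\to \mathrm{End}(L(\mC))={\wedge\L\Fun\wedge}(L(\mC),L(\mC))$. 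Finally, we identify $L(\mC)\simeq \Sp(\overline{\Sp}(\mC))$ using \cref{sptens} and the associativity of relative tensor products.

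The main obstacle is the bookkeeping: we must check that the module action of $\mathrm{End}(\mC)$ on $\mC$ commutes with the outer $\infty\Cat_*$-actions, so that it survives the relative tensor product with $\Cat\Sp$ on both sides. This holds because endomorphisms in $\wedge\Pr^L\wedge$ are by definition bimodule maps, so $\mathrm{End}(\mC)$ acts through $(\infty\Cat_*,\infty\Cat_*)$-bimodule endomorphisms. I would verify it by realizing $\mC$ as a module over $\infty\Cat_* \ot \mathrm{End}(\mC) \ot \infty\Cat_*^{\rev}$ and applying base change along the first and last factors.
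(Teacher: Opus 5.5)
Your proposal is correct and follows the route the paper intends. The paper states this corollary as an immediate consequence of \cref{specunive}(3): the left adjoint $\mC \mapsto \overline{\Sp}(\Sp(\mC)) \simeq \Cat\Sp \ot_{\infty\Cat_*} \mC \ot_{\infty\Cat_*} \Cat\Sp$ is a $2$-functor, obtained as in your argument from extension of scalars along $\infty\Cat_* \to \Cat\Sp$ together with \cref{unistab}(1), and so it induces a monoidal functor on endomorphism categories. Your alternative via the universal property of endomorphism algebras is an equivalent repackaging of the same $2$-functoriality.
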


\cref{corp} specializes to the following corollary:

\begin{corollary}
There is a monoidal functor $$ {{\wedge \L\Fun\wedge}(\infty\Cat_*, \infty\Cat_*)} \to {{\wedge \L\Fun\wedge}(\Cat\Sp, \Cat\Sp)}.$$

\end{corollary}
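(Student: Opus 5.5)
The statement to prove is the last corollary: there is a monoidal functor ${\wedge \L\Fun\wedge}(\infty\Cat_*, \infty\Cat_*) \to {\wedge \L\Fun\wedge}(\Cat\Sp, \Cat\Sp)$. We obtain it by specializing \cref{corp} to $\mC = \infty\Cat_*$, so the work is to identify the target. It helps to recall how \cref{corp} itself arises, since that shows which identifications are needed.

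\textbf{Step 1: the functor from \cref{corp}.} By \cref{specunive}(3), the bistabilization $L := \Sp(\overline{\Sp}(-))$ is left adjoint to the $2$-categorical embedding ${\wedge}\Pr^L_{\mathrm{bist}}\wedge \subset {\wedge\Pr^L\wedge}$. Being a $2$-functor, $L$ acts on endomorphism categories of objects of ${\wedge\Pr^L\wedge}$. Composition of endomorphisms is preserved by $L$, so for every presentable reduced bioriented $\mC$ the induced functor
\[
{\wedge\L\Fun\wedge}(\mC,\mC) \to {\wedge\L\Fun\wedge}(L\mC, L\mC)
\]
is monoidal for composition. Concretely, it sends $F$ to $\Cat\Sp \ot_{\infty\Cat_*} F \ot_{\infty\Cat_*} \Cat\Sp$.

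\textbf{Step 2: identify $L(\infty\Cat_*)$ with $\Cat\Sp$.} By \cref{sptens},
\[
\Sp(\overline{\Sp}(\infty\Cat_*)) \simeq \Cat\Sp \ot_{\infty\Cat_*} \infty\Cat_* \ot_{\infty\Cat_*} \Cat\Sp \simeq \Cat\Sp \ot_{\infty\Cat_*} \Cat\Sp .
\]
This equals $\Sp(\Cat\Sp)$. By \cref{stab}, $\Cat\Sp$ is bistable, so its stabilization map is an equivalence (\cref{qqq}, or equivalently the unit computation in \cref{unistab}). Hence $L(\infty\Cat_*) \simeq \Cat\Sp$ as presentable bioriented categories.

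\textbf{Step 3: transport.} Conjugating the target of the monoidal functor from Step 1 by the equivalence of Step 2 gives a monoidal equivalence
\[
{\wedge\L\Fun\wedge}(L\infty\Cat_*, L\infty\Cat_*) \simeq {\wedge\L\Fun\wedge}(\Cat\Sp,\Cat\Sp)
\]
of endomorphism monoidal categories. Composing it with the functor of Step 1 yields the required monoidal functor.

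\textbf{Main obstacle.} The only delicate point is Step 2: the equivalence must be an equivalence of bioriented categories, not merely of underlying categories, so that conjugation in Step 3 is legitimate. This holds because the unit $\Cat\Sp \to \Cat\Sp \ot_{\infty\Cat_*}\Cat\Sp$ is a map in ${\wedge\Pr^L\wedge}$, and we have just shown it is an equivalence by bistability.
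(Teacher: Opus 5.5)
Your proposal is correct and is essentially how the paper obtains this corollary: the paper states it as the specialization of \cref{corp} to $\mC=\infty\Cat_*$. The identification you need, $\Sp(\overline{\Sp}(\infty\Cat_*))\simeq \Cat\Sp\ot_{\infty\Cat_*}\Cat\Sp\simeq\Sp(\Cat\Sp)\simeq\Cat\Sp$ as bioriented categories, is the same one the paper uses in the $n=0$ case of the proof of \cref{stten}; transporting the composition monoidal structure along it, as in your Step 3, completes the argument.
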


\vspace{1mm}

The following is a higher-categorical analogue of the key property of spectral functors to preserve pushouts.

\begin{theorem}\label{preserv}\emph{}

\begin{enumerate}

\item Let $\mC, \mD$ be spectral oriented categories and $\phi: \mC \to \mD$ a spectral oriented functor. Then $\phi$ preserves oriented pushouts and oriented pullbacks.

\item Let $\mC, \mD$ be antispectral antioriented categories and $\phi: \mC \to \mD$ an antispectral antioriented functor. Then $\phi$ preserves antioriented pushouts and antioriented pullbacks.

\item Let $\mC, \mD$ be spectral bioriented categories and $\phi: \mC \to \mD$ a spectral bioriented functor. Then $\phi$ preserves oriented pushouts and oriented pullbacks. 

\item Let $\mC, \mD$ be antispectral bioriented categories and $\phi: \mC \to \mD$ an antispectral bioriented functor. Then $\phi$ preserves antioriented pushouts and antioriented pullbacks.

\item Let $\mC, \mD$ be bispectral bioriented categories and $\phi: \mC \to \mD$ a bispectral bioriented functor. Then $\phi$ preserves antioriented pushouts, antioriented pullbacks, oriented pushouts and oriented pullbacks. 

\end{enumerate}
\end{theorem}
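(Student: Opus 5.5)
We reduce to the universal case of the Yoneda embedding and enriched functoriality of weighted limits. By definition, the oriented pullback of $X \to Z \leftarrow Y$ in a spectral oriented category $\mC$ is characterized by the map (\ref{indmapl}) on categorical morphism spectra being an equivalence. So an oriented pullback is a weighted limit for the $\Cat\Sp$-enrichment, with a weight $W$ on the cospan category. Concretely, $W$ is the spectral weight freely generated from the cospan weight $\ast \to \bD^1 \leftarrow \ast$ via $\Sigma^\infty_+$ (using the monoidal functor $\Sigma^\infty$ of \cref{unicity}). Dually, oriented pushouts are weighted colimits. The aim is then to show that in spectral oriented categories these particular weighted (co)limits are \emph{absolute}: every spectral oriented functor preserves them.

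The plan is as follows.

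First, by \cref{ttut} and \cref{specstab}, in a spectral oriented category the oriented pushout $A \overset{\to}{+}_C B$ is the pullback of $0 \overset{\to}{+}_C B$ and $A \overset{\to}{+}_C 0$ over $\Sigma C$. Dually, the oriented pullback decomposes into oriented left and right fibers and a pushout over $\Omega$. Combined with stability, this reduces the problem to showing two things: that $\phi$ preserves oriented fibers and cofibers, and that it preserves the finite (co)limits entering these decompositions.

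Second, for oriented fibers I would use the description of \cref{redfib}: $0 \overset{\to}{\times}_C A \simeq C^{\bD^1}_* \times_{C^{S^0}_*} A$. In the spectral setting, $\Sigma^\infty \bD^1 \simeq 0$ and $\Sigma^\infty S^0$ is the unit, so the cotensor with $\bD^1$ degenerates, since $\bD^1$ is contractible (it has an initial object). The oriented left fiber then becomes an expression built only from cotensors with invertible objects ($S^0$ and $S^{\pm1}$) and ordinary pullbacks. Cotensors with tensor-invertible objects of $\Cat\Sp$ are absolute: they are characterized by $\Mor(T, X^{S}) \simeq \Mor(T,X) \wedge S^{-1}$, and any enriched functor preserves cotensors with invertible objects, because the cotensor is the tensor with the inverse and duals are preserved. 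The same argument applies to $\Omega \simeq (-)^{S^1}_*$, cf. the proof of \cref{specstab}.

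Third, I must show that ordinary finite pullbacks and pushouts are preserved. Here I would imitate the classical proof that exact functors between stable categories preserve finite limits. A spectral functor is enriched in a preadditive (\cref{preadd}) stable $\Cat\Sp$, so it preserves finite biproducts, which are absolute. It remains to show that a pullback square in a reduced spectral oriented category can be written using biproducts and oriented fibers. I would use \cref{ttut} once more: a pullback $X\times_Z Y$ should be expressible as an oriented fiber of $X \oplus Y \to Z$, possibly after a twist via $\Omega\Sigma$, since in the spectral setting a square is a pullback iff it is an oriented pullback of a suitable modified cospan.

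This third step is the main obstacle. It is not obvious that ordinary (non-oriented) pullbacks are expressible through oriented ones without extra hypotheses, and the morphism objects here are categorical spectra rather than spectra. If that route fails, I would instead argue via the spectral Yoneda embedding $\mC \hookrightarrow \mP_{\Cat\Sp}(\mC)$, which preserves weighted limits by \cref{weiadj}. One would then show that the relevant weight $W$ is a finite absolute weight in $\Cat\Sp$, meaning that $W$-weighted limits in $\Cat\Sp$ commute with all spectral weighted colimits. This follows once the categorical morphism spectra appearing in (\ref{indmapl}) are computed by finite limits in the stable $\Cat\Sp$, which are also finite colimits there.

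Finally, the remaining cases follow by duality. Statement (2) is (1) applied to $(-)^\co$, using \cref{duae2}. Statements (3)–(5) follow from (1) and (2) applied to the underlying spectral oriented and antispectral antioriented functors.
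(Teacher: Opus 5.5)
Your duality reductions of (2)--(5) to (1) are fine, but both load-bearing steps of your argument for (1) rest on false claims.

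In your second step, $\Sigma^\infty\bD^1$ is not zero in $\Cat\Sp$. The classifying space of $\bD^1$ is contractible, but $\bD^1$ is not trivial in $\infty\Cat_*$: $\Omega^\infty\Sigma^\infty(\bD^1)$ is the reduced free symmetric monoidal $\infty$-category on a morphism $\tu\to x$. The paper relies on this object being nontrivial. In the proof of \cref{laxfib} it is the oriented left cofiber of $\id_{S^0}$, and \cref{basiccomp} extracts $(\bN,\leq)$ from it. If it were zero, oriented fibers would coincide with ordinary fibers in every spectral category.

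In your third step, the goal itself is false: spectral functors need not preserve ordinary pullbacks. Hom objects in $\Cat\Sp$ are non-grouplike (e.g.\ $\Omega^\infty\Sigma^\infty(S^0)\simeq\coprod_{n\geq 0}B\Sigma_n$), so there is no $f-g$ trick, and pullbacks are not pushouts. Concretely, limits in $\Cat\Sp$ are levelwise. The pullback $*\times_{B^n\bN}*$ is the space of automorphisms of the base point, which is trivial because $0$ is the only invertible element of $\bN$. So $0\times_{H(\bN)}0\simeq 0$, and the square with corners $0,0,0,H(\bN)$ is a pullback but not a pushout. This also refutes the justification in your fallback that finite limits in $\Cat\Sp$ are finite colimits. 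Now take the left adjoint $\tau_0\colon\Cat\Sp\to\Sp(\infty\Grp)$, which is spectral by \cref{unistab}. It sends $H(\bN)$ to the classical $H\bZ$, and it does not preserve this pullback, since $0\times_{H\bZ}0\simeq\Sigma^{-1}H\bZ\neq 0$. So only the composite oriented (co)limit can be absolute, never its constituents. The decomposition of \cref{ttut} therefore cannot be pushed through piece by piece; its auxiliary objects need not even exist in an arbitrary $\mC$.

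Your fallback has the right architecture, and it is the paper's: Yoneda embeddings preserve weighted limits, you extend $\phi$ to a colimit-preserving functor, and you show the oriented (co)limit functor on $\Cat\Sp$ commutes with the relevant adjoints. The missing input is \cref{repp}. Since $\Sigma$ is invertible on $\Cat\Sp$, the map $\kappa$ has a left inverse. Hence $\overset{\to}{+}\colon\Cat\Sp^{\Lambda_0^2}\to\Cat\Sp$ is a retract of the limit-preserving functor $((\overset{\to}{\times}0)\times_{\ev_0}(0\overset{\to}{\times}))\circ\Sigma$, and so admits an antioriented left adjoint $\alpha$ by the adjoint functor theorem. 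Tensoring $\alpha$ with any cocomplete spectral $\mA$ shows that right adjoint spectral functors commute with oriented pushouts. Dually, left adjoints preserve antioriented pullbacks. Yoneda extension (\cref{Yonedaext}) together with \cref{weiadj} then handles an arbitrary $\phi$.
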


\begin{proof}(5) follows from (3) and (4). (3) follows from (1). (4) follows from (2). (1) is dual to (2) by replacing $\phi$ by $\phi^\co.$

The second half of (2) is dual to the first half of (2): it follows by replacing $\phi$ by $\phi^\op.$
So it suffices to show that every antispectral antioriented functor $\phi:\mC \to \mD$ preserves antioriented pullbacks.

By \cref{repp} and the enriched adjoint functor theorem \cite[Theorem 3.73.]{heine2024higheralgebraweightedcolimits} the antispectral antioriented functor $\overset{\to}{+} : \Cat\Sp^{\Lambda_0^2} \to \Cat\Sp$ admits an antioriented left adjoint $\alpha$
since $\overset{\to}{\times} \circ \Sigma$ admits an antioriented left adjoint.
For every spectral oriented category $\mA$ that admits small weighted colimits the antioriented adjunction $$\alpha: \Cat\Sp \rightleftarrows \Cat\Sp^{\Lambda_0^2}:\overset{\to}{+}$$ induces an adjunction $$\alpha_\mA:= \mA \ot_{\Cat\Sp} \alpha: \mA \simeq \mA \ot_{\Cat\Sp} \Cat\Sp \rightleftarrows \mA \ot_{\Cat\Sp} \Cat\Sp^{\Lambda_0^2} \simeq \mA^{\Lambda_0^2}: \overset{\to}{+}.$$
For every left adjoint spectral oriented functor $\F: \mA \rightleftarrows \mB: \G$ between spectral oriented categories having small weighted colimits there is a canonical equivalence of functors $$ \alpha_\mB \circ \F \simeq \mB \ot_{\Cat\Sp} \alpha \circ \F \ot_{\Cat\Sp} \Cat\Sp \simeq  \F \ot_{\Cat\Sp} \alpha \simeq \F \ot_{\Cat\Sp} \Cat\Sp \circ \mA \ot_{\Cat\Sp} \alpha \simeq 
\F \circ \alpha_\mA$$
providing an equivalence of right adjoints $ \G \circ\overset{\to}{+} \simeq\overset{\to}{+} \circ \G.$

Consequently, every right adjoint spectral oriented functor between spectral oriented categories 
having small weighted colimits preserves oriented pushouts.
So dually, every left adjoint antispectral antioriented functor between antispectral antioriented categories 
having small weighted limits preserves antioriented pullbacks. 

By antispectral antioriented Yoneda extension (\cref{Yonedaext}) every antispectral antioriented functor $\phi:\mC \to \mD$ extends to a left adjoint antispectral antioriented functor $\phi':\mC' \to \mD'$ between presentable antispectral antioriented categories, which in particular admit small weighted limits. Moreover this extension is along the antispectral antioriented Yoneda embeddings, which preserve weighted limits by \cref{weiadj} and in particular antioriented pullbacks. Thus $\phi:\mC \to \mD$ preserves antioriented pullbacks.

\end{proof}

\begin{remark}
In his thesis \cite{masuda2026algebra} Masuda constructs a tensor product of categorical spectra and proves that the categorical spectrum $\Sigma^\infty(\bD^1) $ is dualizable \cite[Proposition 5.2.1.]{masuda2026algebra}.
Masuda uses this result to give an alternative proof \cite[Theorem 5.2.7., Corollary 5.2.10.]{masuda2026algebra} of \cref{preserv}
for $\Cat\Sp.$
	
\end{remark}




\begin{proposition}\label{laxfib}
Let $\mC$ be a reduced spectral oriented category that admits oriented left fibers and suspensions.
Then $\mC$ admits oriented left cofibers and the map $$\xi: 0\overset{\to}{+}  \to (0 \overset{\to}{\times}) \circ \Sigma $$
of functors $\mC^{\bD^1} \to \mC$ is an equivalence.

\end{proposition}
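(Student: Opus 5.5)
We plan to reduce everything to the categorical spectra case, using the linearity arguments behind \cref{preserv} and \cref{specstab}. First we identify the oriented left cofiber. By \cref{specstab}, $\mC$ is stable as a reduced oriented category once it has endomorphisms. Endomorphisms are oriented left fibers of $X \to 0$, so they exist here, and they are right cotensors with $S^1$. Since $S^1$ is tensor-invertible in $\Cat\Sp$, the right cotensor with $S^1$ is an equivalence $\Omega$ whose inverse is the right tensor with $S^1$, namely $\Sigma$. So $\Sigma$ is an equivalence.

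For a morphism $f: A \to B$, we then take as candidate left cofiber the object $P := 0 \overset{\to}{\times}_{\Sigma A} \Sigma B$, which exists because $\mC$ admits oriented left fibers. We check its universal property on categorical morphism spectra. For every $T$ we must show that
$$\R\Mor_\mC(P,T) \simeq 0 \times_{\R\Mor_\mC(A,T)} \Fun^\oplax(\bD^1,\R\Mor_\mC(A,T)) \times_{\R\Mor_\mC(A,T)} \R\Mor_\mC(B,T),$$
interpreted in $\Cat\Sp$. To do this we rewrite $\R\Mor_\mC(P,T) \simeq \R\Mor_\mC(\Sigma P, \Sigma T)$ using that $\Sigma$ is a spectral equivalence. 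We then express $\Sigma P$ through morphisms out of $\Sigma A,\Sigma B$. The cleaner route is dual: use $\R\Mor_\mC(T',-)$ and the defining property of $P$ as an oriented fiber. This reduces the claim to a statement internal to $\Cat\Sp$: for a morphism $M \to N$ of categorical spectra, the oriented left cofiber of $\Omega M \to \Omega N$ agrees with the oriented left fiber of $M \to N$, compatibly with $\xi$.

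Having reduced to $\Cat\Sp$, we use the Yoneda embedding $\mC \hookrightarrow \mP_{\Cat\Sp}(\mC)$, which preserves weighted limits and hence oriented left fibers and suspensions. This lets us assume $\mC$ is presentable spectral, so $\mC \simeq \mC \ot_{\Cat\Sp} \Cat\Sp$, and by the tensoring argument in the proof of \cref{preserv} it suffices to treat $\mC = \Cat\Sp$. In $\Cat\Sp$ both functors are $\Cat\Sp$-linear left adjoints $\Cat\Sp^{\bD^1} \to \Cat\Sp$. The functor $(0\overset{\to}{\times}) \circ \Sigma$ is a left adjoint because oriented fibers are weighted limits over the finite weight $S^0 \to \bD^1$, and these commute with colimits in the stable setting: \cref{ttut} shows that oriented pushout squares are pullback squares. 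Since $\Cat\Sp^{\bD^1}$ is generated under colimits and tensors by the arrows $0 \to \mathbb{S}$ and $\mathbb{S} \xrightarrow{\id} \mathbb{S}$, it suffices to check $\xi$ on these two generators, where $\mathbb{S}=\Sigma^\infty S^0$.

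On $0 \to \mathbb{S}$, both sides are $\mathbb{S}$ and $\xi$ is the unit. On $\mathbb{S} \to \mathbb{S}$, both sides vanish, since the cofiber of $\id$ is $\bD^1 \wedge$-contractible, by the computation with $\kappa$ in the proof of \cref{compa}. We then verify that $\xi$ is the identity using \cref{repp}: there $\Sigma\kappa$ has a left inverse whose components are units $X \to \Omega\Sigma X$, and these are equivalences in the stable setting.

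The main obstacle is the second step. We must show that the oriented left fiber is a colimit-preserving, linear functor, so that the check on generators is legitimate. Equivalently, we must show that $\Sigma^{-1}$ interchanges oriented fibers and cofibers, which needs the $\Cat\Sp$-enrichment. I would try first to deduce this from \cref{repp}, which already provides a retraction of $\Sigma\kappa$, combined with the invertibility of $\Sigma$: a left-invertible map between equivalence-preserving images whose generator components are equivalences is itself an equivalence.
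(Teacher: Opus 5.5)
There is a genuine gap, and it sits exactly at the step you treat as trivial: the component of $\xi$ at the generator $\id\colon S^0\to S^0$. You assert that both sides vanish, but in the oriented setting the oriented left cofiber of an identity is not zero.

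By \cref{redfib}, $0\overset{\to}{+}_{S^0}S^0\simeq S^0\wedge\bD^1+_{S^0}S^0\simeq\Sigma^\infty(\bD^1)$, with $\bD^1$ pointed at its initial object. This object corepresents a coslice under the zero morphism, not a contractible path object. It is nonzero: for instance \cref{basiccomp} gives $\Omega^\infty(H(\bN)\wedge\bD^1)\simeq(\bN,\leq)$. Likewise the target $0\overset{\to}{\times}_{S^1}S^1\simeq\L\Mor_{\Cat\Sp}(\Sigma^\infty(\bD^1),S^1)$ is nonzero. The component is the map $\alpha$ adjoint to $\Sigma^\infty$ of $\bD^1\wedge\bD^1\simeq\bD^2\to\bD^1\to S^1$.

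Proving that $\alpha$ is an equivalence is the real content of the proposition, and it is precisely where the oriented statement departs from the classical one. Your proposal has no argument for it. The paper first gets a left inverse of $\alpha$ from \cref{repp}. It then produces a section as follows:
\begin{itemize}
\item It regards $\xi$ and its left inverse $\lambda$ as maps of right adjoint functors $\Cat\Sp^{\bD^1}\to\Cat\Sp$ and passes to their mates.
\item Mates reverse composition, so $\lambda\xi\simeq\id$ becomes $\xi^L\lambda^L\simeq\id$.
\item Using that $(-)\wedge A\dashv(-)\wedge B$ forces $A\simeq\R\Mor_{\Cat\Sp}(B,S^0)$, and that $\ev_1$ has the right adjoint $X\mapsto\id_X$, it identifies $\alpha$ with $(\Sigma\ev_1(\xi^L(S^0)))^\co$.
\item Hence $(\Sigma\ev_1(\lambda^L(S^0)))^\co$ is a section of $\alpha$.
\end{itemize}

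Your reduction step is also not justified as written. \cref{ttut} says that oriented pushouts are pullbacks of the one-sided oriented cofibers over $\Sigma$. It does not show that $0\overset{\to}{\times}$ commutes with colimits, which you need both for tensoring down to $\Cat\Sp$ and for the check on generators.

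The paper's mechanism is the one you only gesture at in your last paragraph:
\begin{itemize}
\item The left inverse from \cref{repp}, restricted along $(Y\leftarrow X)\mapsto(Y\leftarrow X\to0)$, exhibits $0\overset{\to}{+}$ as a retract of the limit-preserving functor $(0\overset{\to}{\times})\circ\Sigma$. So $0\overset{\to}{+}$ preserves limits and has a left adjoint.
\item Applying the same argument to $\mC^\op$ gives $0\overset{\to}{\times}$ a right adjoint.
\item Only then does the adjunction $L\dashv 0\overset{\to}{\times}$ on $\Cat\Sp$ base-change along $\mC\ot_{\Cat\Sp}(-)$, giving $\xi_\mC\simeq\mC\ot_{\Cat\Sp}\xi_{\Cat\Sp}$.
\end{itemize}
Your closing principle (``a left-invertible map whose generator components are equivalences is an equivalence'') does not help. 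Once both sides preserve colimits and tensors, equivalence on generators suffices without left-invertibility, and the open problem is precisely the generator $\id_{S^0}$.

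A small slip: the candidate cofiber of $f\colon A\to B$ should be $0\overset{\to}{\times}_{\Sigma B}\Sigma A$, the oriented left fiber of $\Sigma f$, not $0\overset{\to}{\times}_{\Sigma A}\Sigma B$.
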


\begin{proof}
By \cref{Yonedaext} there is a spectral oriented embedding of $\mC$ into a presentable spectral oriented category that preserves oriented left fibers and suspensions. So we can assume that $\mC$ is presentable.

Restricting $\kappa$ along the spectral oriented functor $$\mC^{\bD^1} \to \mC^{\Lambda_0^2}, \ Y \leftarrow X  \mapsto Y \leftarrow X \to 0 $$ gives $\xi$ so that $\xi$ admits a left inverse by \cref{repp}.
By the adjoint functor theorem 
the functor $0\overset{\to}{+}$ admits a left adjoint since $  (0 \overset{\to}{\times}) \circ \Sigma$ does. Replacing $\mC$ by $\mC^\op$ 
the functor $ 0 \overset{\to}{\times}$ admits a right adjoint.
Hence in the adjunction $$ L: \Cat\Sp \rightleftarrows \Cat\Sp^{\bD^1}:  0 \overset{\to}{\times}$$ both adjoints admit a right adjoint so that the latter adjunction induces an adjunction $$L: \mC \simeq \mC \otimes_{\Cat\Sp}\Cat\Sp \rightleftarrows \mC^{\bD^1} \simeq \mC \otimes_{\Cat\Sp}\Cat\Sp^{\bD^1}:  0 \overset{\to}{\times}.$$
This implies that $$ \mC \ot_{\Cat\Sp} \xi_\Cat\Sp: \mC \ot_{\Cat\Sp} 0\overset{\to}{+} \to \mC \ot_{\Cat\Sp} ( (0 \overset{\to}{\times}) \circ \Sigma)$$ is $\xi_\mC.$ 
Hence we can assume that $\mC =\Cat\Sp$. 
In this case $\xi$ is a map of antispectral antioriented functors $\Cat\Sp^{\bD^1}\to \Cat\Sp$.
Since $\Cat\Sp^{\bD^1} $ is generated under small colimits and left tensors by
the morphisms $ 0 \to S^0, \id: S^0 \to S^0$, and source and target of $\xi$ preserve small colimits and left tensors, it suffices to show that the components of $\xi$ at $ 0 \to S^0, \id: S^0 \to S^0$
are equivalences.
The component of $\xi$ at $0 \to S^0$ is the canonical morphism
$$ 0 \overset{\to}{+}_0 S^0 \simeq S^0 \to 0 \overset{\to}{+}_{S^1} 0 \simeq \Omega (S^1),$$
which is an equivalence by stability. 
It remains to see that the component of $\xi$ at the identity of $S^0$ is an equivalence.
This component is the morphism
$$ \alpha: 0 \overset{\to}{+}_{S^0} S^0 \simeq \Sigma^\infty(\bD^1) \to 0 \overset{\to}{+}_{S^1} S^1 \simeq \L\Mor_{\Cat\Sp}(\Sigma^\infty(\bD^1), S^1)$$ corresponding to the map
$ \Sigma^\infty(\beta): \Sigma^\infty(\bD^1) \wedge \Sigma^\infty(\bD^1) \to S^1$, where
$\beta$ is the composition $$\bD^1 \wedge \bD^1 \simeq \bD^2 \to \bD^1 \to S^1$$ of 1-truncation and the quotient functor.
By \cref{repp} the morphism $\alpha$ admits a left inverse.  So it suffices to show that $\alpha$ admits a section. We rewrite $\alpha.$

For any $A, B \in {\Cat\Sp}$ if $(-) \wedge A : {\Cat\Sp} \to {\Cat\Sp}$ is left adjoint to $(-)\wedge B$, then 
$B \wedge (-)$ is left adjoint to $A \wedge (-) $ and so by adjointness there is an equivalence $$ A \simeq \R\Mor_{\Cat\Sp}(B,S^0).$$

Moreover the bioriented functor $\ev_1: {\Cat\Sp}^{\bD^1} \to {\Cat\Sp}$ evaluating at the target admits a right adjoints sending $X $ to the identity of $X.$
Hence the map $(\Sigma\ev_1(\xi^L)(S^0))^\co$ is induced by $$ \xi(\id_{S^0}): \Sigma^\infty(\bD^1) \to \L\Mor_{\Cat\Sp}(\Sigma^\infty(\bD^1), S^1)$$ and factors as 
$$\Sigma^\infty(\bD^1)\simeq \Sigma^\infty(\bD^1)^\co \to \R\Mor_{\Cat\Sp}(\L\Mor_{\Cat\Sp}(\Sigma^\infty(\bD^1), S^1), S^1)^\co  \to \R\Mor_{\Cat\Sp}(\Sigma^\infty(\bD^1), S^1)^\co $$$$ \simeq \L\Mor_{\Cat\Sp}(\Sigma^\infty(\bD^1), S^1),$$ which corresponds to $\Sigma^\infty(\beta),$ and so identifies with $\alpha.$
Let $\lambda$ be the left inverse of the map $$ \xi: 0\overset{\to}{+}\to  (0 \overset{\to}{\times})\circ \Sigma$$ of right adjoint functors ${\Cat\Sp}^{\bD^1} \to {\Cat\Sp}$.
We obtain an equivalence $$\id \simeq (\Sigma \ev_1((\lambda \xi)^L(S^0)))^\co \simeq (\Sigma\ev_1(\xi^L(S^0)))^\co \circ (\Sigma\ev_1(\lambda^L(S^0)))^\co $$ that exhibits $(\Sigma\ev_1(\lambda^L(S^0)))^\co $ as a section of $$\alpha\simeq (\Sigma\ev_1(\xi^L(S^0)))^\co: \Sigma^\infty(\bD^1)^\co \simeq \Sigma^\infty(\bD^1) \to \R\Mor_{\Cat\Sp}(\Sigma^\infty(\bD^1),S^1)^\co \simeq \L\Mor_{\Cat\Sp}(\Sigma^\infty(\bD^1),S^1).$$

\end{proof}



\subsection{Categorical rig spectra}

Next we define categorical rig spectra.

\begin{definition}\emph{}
Let $\mC$ be a presentably monoidal category.
A rig in $\mC$ is an associative algebra in $ \Mon_{\bE_\infty}(\mC).$

\end{definition}

\begin{definition}

A rig is a rig in $\Set.$
A rig space is a rig in $\infty\Grp.$

\end{definition}

For the next definition we use that $\Cat\Sp$ carries a monoidal structure (\cref{unicity}).

\begin{definition}A categorical rig spectrum is an associative algebra in $\Cat\Sp.$

\end{definition}




\begin{notation}Let $\mC$ be a presentably monoidal category.
Let $$ \Rig(\mC):= \Alg(\Mon_{\bE_\infty}(\mC)).$$ 

\end{notation}

\begin{example}\emph{}
\begin{itemize}

\item The natural numbers $(\bN,+,\bullet)$ form a rig.	

\item The free symmetric monoidal category $\coprod_{\n \geq 0} B\Sigma_\n$ generated by one object is a rig space.

\end{itemize}

\end{example}

\begin{example}
	
By adjunction (\ref{adjass}) the canonical symmetric monoidal localizations $$ \infty\Cat \rightleftarrows \infty\Grp, \ \pi_0: \infty\Grp \rightleftarrows \Set$$ give rise to monoidal localizations $$ \Mon_{\bE_\infty}(\infty\Cat) \rightleftarrows \Mon_{\bE_\infty}(\infty\Grp), \ \Mon_{\bE_\infty}(\infty\Grp) \rightleftarrows \Mon_{\bE_\infty}(\Set), $$
which give rise to localizations $$ \Rig(\infty\Cat) \rightleftarrows \Rig(\infty\Grp), \ \Rig(\infty\Grp) \rightleftarrows \Rig(\Set). $$

\end{example}

\cref{symsp0} implies the following:

\begin{corollary}\label{symsp}

The left adjoint monoidal functor $$ \Mon_{\bE_\infty}(\Sigma^\infty): \Mon_{\bE_\infty}(\infty\Cat) \to \Mon_{\bE_\infty}(\Cat\Sp) \simeq \Cat\Sp$$
induced via (\ref{loo}) by the left adjoint monoidal functor $\Sigma^\infty: \infty\Cat \to \Cat\Sp$ of \cref{unicity}
sends a symmetric monoidal $\infty$-category $\mC$ to $B^\infty(\mC)$ and induces a monoidal equivalence
$$ \Mon_{\bE_\infty}(\infty\Cat) \simeq \Cat\Sp_{\geq0}.$$

\end{corollary}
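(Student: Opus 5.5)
The plan is to deduce the statement from \cref{symsp0}, using the functoriality of the preadditive localization (\ref{loo}) for the identification and the monoidality of $\Mon_{\bE_\infty}$ on algebras, (\ref{adjass}), for the monoidal refinement.

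First I identify $\Mon_{\bE_\infty}(\Sigma^\infty)$ with the left adjoint produced in \cref{symsp0}.
\begin{itemize}
\item The localization (\ref{loo}) is a functor on $\Pr^L$, and one can write $\Mon_{\bE_\infty}(\mD)\simeq \mD \ot \Mon_{\bE_\infty}(\infty\Grp)$. Hence a left adjoint $F:\mD\to\mE$ is sent to $F\ot\Mon_{\bE_\infty}(\infty\Grp)$.
\item The right adjoint of this functor is computed levelwise on commutative monoid objects. So the adjunction $\Sigma^\infty:\infty\Cat\rightleftarrows\Cat\Sp:\Omega^\infty$ induces an adjunction $\Mon_{\bE_\infty}(\Sigma^\infty)\dashv\Mon_{\bE_\infty}(\Omega^\infty)$.
\item I would also check that passing through $\infty\Cat_*$ is harmless. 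Commutative monoids in $\infty\Cat$ and in $\infty\Cat_*$ agree, because $\Mon_{\bE_\infty}$ of a category with final object only sees the pointed objects. Thus $\Sigma^\infty$ versus $\Sigma^\infty_+$ makes no difference after applying $\Mon_{\bE_\infty}$.
\item By \cref{hhoo} applied to the bistable presentable category $\Cat\Sp$ (\cref{stab}), the forgetful functor $\Mon_{\bE_\infty}(\Cat\Sp)\to\Cat\Sp$ is an equivalence. The right adjoint above is then exactly the functor (\ref{eqojj}).
\item By uniqueness of left adjoints, $\Mon_{\bE_\infty}(\Sigma^\infty)$ agrees with the left adjoint of \cref{symsp0}. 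That result gives both that it sends $\mC$ to $B^\infty(\mC)$ and that it is fully faithful with essential image $\Cat\Sp_{\geq0}$.
\end{itemize}

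Second, I establish monoidality.
\begin{itemize}
\item By \cref{unicity}, $\Sigma^\infty:\infty\Cat\to\Cat\Sp$ is a left adjoint monoidal functor, i.e.\ a morphism in $\Alg(\Pr^L)$.
\item Since (\ref{loo}) is a symmetric monoidal localization, it induces the localization (\ref{adjass}) on associative algebras. Applying it to $\Sigma^\infty$ yields a left adjoint monoidal functor $\Mon_{\bE_\infty}(\infty\Cat)\to\Mon_{\bE_\infty}(\Cat\Sp)$.
\item Here the algebra structures are induced from the Gray tensor product and from the tensor product of \cref{unicity}. Only the associative structure of the source is needed, so the non-symmetry of $\boxtimes$ does no harm.
\item The equivalence $\Mon_{\bE_\infty}(\Cat\Sp)\simeq\Cat\Sp$ is the unit of the localization (\ref{adjass}) at a local object, hence it is monoidal.
\end{itemize}

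Finally I conclude that the equivalence onto connective spectra is monoidal.
\begin{itemize}
\item A fully faithful monoidal functor has essential image closed under the tensor product and containing the unit.
\item Therefore $\Cat\Sp_{\geq0}$ inherits a monoidal structure from $\Cat\Sp$.
\item With this structure, the corestriction $\Mon_{\bE_\infty}(\infty\Cat)\simeq\Cat\Sp_{\geq0}$ is a monoidal equivalence.
\end{itemize}

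The main obstacle is the first step, specifically matching the two descriptions. The abstract $\Mon_{\bE_\infty}(\Sigma^\infty)$ is defined through the $\Pr^L$-tensor product, while \cref{symsp0} describes its left adjoint via the limit presentation $\lim(\cdots\xrightarrow{\Omega}\Mon_{\bE_\infty}(\infty\Cat))$. I would handle this by arguing purely on right adjoints, which are levelwise and visibly agree, and then invoking uniqueness of left adjoints.
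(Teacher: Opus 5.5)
Your proposal is correct and follows essentially the paper's route: the paper deduces this corollary directly from \cref{symsp0}, with the monoidal structure coming from \cref{unicity} and the localization (\ref{adjass}). Your write-up just makes explicit two points the paper leaves implicit: identifying $\Mon_{\bE_\infty}(\Sigma^\infty)$ with the left adjoint of (\ref{eqojj}) via uniqueness of adjoints, and restricting the monoidal structure to the essential image $\Cat\Sp_{\geq0}$.
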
	

\cref{symsp} implies the following corollary:

\begin{corollary}

For every $\infty$-category $X$ the unit $$X \to  \Omega^\infty(\Sigma^\infty_+(X))$$
exhibits the symmetric monoidal $\infty$-category $\Omega^\infty(\Sigma^\infty_+(X))$ as the free
symmetric monoidal $\infty$-category generated by $X.$
In particular, there is a canonical equivalence
$$ \Omega^\infty(\Sigma^\infty_+(X)) \simeq \coprod_{\n \geq 0} (X^{\times \n})_{\Sigma_\n}.$$


\end{corollary}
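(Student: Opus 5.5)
We deduce the statement from \cref{symsp}, applied to the connective categorical spectrum $\Sigma^\infty_+(X)$.

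First, $\Sigma^\infty_+(X)$ is connective. By \cref{symsp} the functor $\Mon_{\bE_\infty}(\Sigma^\infty)$ is an equivalence onto $\Cat\Sp_{\geq0}$. By construction $\Sigma^\infty$ factors through it: $\Sigma^\infty \simeq \Mon_{\bE_\infty}(\Sigma^\infty) \circ \Free_{\bE_\infty}$, by uniqueness of left adjoints, since $\Omega^\infty$ factors as $\Mon_{\bE_\infty}(\Omega^\infty)$ followed by the forgetful functor. Hence $\Sigma^\infty_+(X) \simeq \Mon_{\bE_\infty}(\Sigma^\infty)(\Free_{\bE_\infty}(X))$ lies in $\Cat\Sp_{\geq0}$.

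Next, restrict the adjunction $\Mon_{\bE_\infty}(\Sigma^\infty) \dashv \Mon_{\bE_\infty}(\Omega^\infty)$ of \cref{symsp0} to connective objects. There the left adjoint is an equivalence with inverse $\Mon_{\bE_\infty}(\Omega^\infty)$, so the unit $\mathcal{M} \to \Mon_{\bE_\infty}(\Omega^\infty)\Mon_{\bE_\infty}(\Sigma^\infty)(\mathcal{M})$ is an equivalence of symmetric monoidal $\infty$-categories for every $\mathcal{M} \in \Mon_{\bE_\infty}(\infty\Cat)$. Take $\mathcal{M} = \Free_{\bE_\infty}(X)$. The unit $X \to \Omega^\infty\Sigma^\infty_+(X)$ is the composite of the free unit $X \to \Free_{\bE_\infty}(X)$ with this equivalence, because the units of composed adjunctions compose. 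Therefore it exhibits $\Omega^\infty\Sigma^\infty_+(X)$ as the free symmetric monoidal $\infty$-category on $X$. The only point needing care is to check that the adjunction in \cref{symsp} really is the composite $\Free_{\bE_\infty}$ followed by the connective equivalence, compatibly with the forgetful functors.

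The explicit formula then comes from the standard description of free $\bE_\infty$-algebras in a presentable category whose cartesian product preserves colimits in each variable, here $\infty\Cat$ \cite[Proposition 3.1.3.13]{lurie.higheralgebra}:
$$\Free_{\bE_\infty}(X) \simeq \coprod_{\n \geq 0} (X^{\times \n})_{\Sigma_\n}.$$
Here $(-)_{\Sigma_\n}$ is the colimit over $B\Sigma_\n$ in $\infty\Cat$. The cartesian structure on $\infty\Cat$ is presentably symmetric monoidal because $\infty\Cat$ is cartesian closed, so this formula applies.
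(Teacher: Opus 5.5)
Your argument is correct and follows the paper's route: the paper just says the corollary follows from \cref{symsp}, and you fill in the details. Those details are that $\Sigma^\infty_+ \simeq B^\infty \circ \Free_{\bE_\infty}$ by uniqueness of left adjoints; that the unit of $B^\infty \dashv \Mon_{\bE_\infty}(\Omega^\infty)$ is an equivalence because $B^\infty$ is an equivalence onto connective categorical spectra, so the composite unit is the free unit; and that the standard free $\bE_\infty$-algebra formula holds in the cartesian closed $\infty\Cat$ (the only slip is notational: in your factorization you write $\Sigma^\infty$ where you mean $\Sigma^\infty_+$).
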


We obtain the following pre-group complete version of Barratt-Priddy-Quillen's theorem \cite{barratt1972homology}: 

\begin{corollary}
	
There is a canonical symmetric monoidal equivalence $$ \Omega^\infty(\Sigma^\infty(S^0))= \Omega^\infty(\Sigma_+^\infty(*)) \simeq \coprod_{\n \geq 0} B\Sigma_\n.$$
\end{corollary}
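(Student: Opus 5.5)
We deduce the equivalence $\Omega^\infty(\Sigma^\infty_+(*)) \simeq \coprod_{\n \geq 0} B\Sigma_\n$ from the preceding corollary, specialized to $X = *$, and then identify the symmetric monoidal structure.

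First, $\Sigma^\infty(S^0) = \Sigma^\infty((*)_+)$ holds by definition of $\Sigma^\infty_+ = \Sigma^\infty \circ (-)_+$, since $(*)_+ = * \coprod * = S^0$ with the added point as base point. This gives the first equality.

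Next, we apply the previous corollary with $X = *$. It says that the unit $* \to \Omega^\infty(\Sigma^\infty_+(*))$ exhibits the target as the free symmetric monoidal $\infty$-category on one object. Concretely, \cref{symsp} identifies $\Omega^\infty \Sigma^\infty$ on connective objects with the monad of the free--forgetful adjunction $\infty\Cat \rightleftarrows \Mon_{\bE_\infty}(\infty\Cat)$. So $\Omega^\infty(\Sigma^\infty_+(*)) \simeq \Free_{\bE_\infty}(*)$, with the symmetric monoidal structure coming from the lift of $\Omega^\infty$ through $\Mon_{\bE_\infty}$ in \cref{symsp0}.

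It remains to compute $\Free_{\bE_\infty}(*)$ in $\infty\Cat$. The inclusion $\infty\Grp \subset \infty\Cat$ is a left adjoint preserving finite products (\cref{grayspace}) and small limits. Hence it induces a left adjoint $\Mon_{\bE_\infty}(\infty\Grp) \to \Mon_{\bE_\infty}(\infty\Cat)$ that commutes with the forgetful functors, and its right adjoint is induced by $\iota_0$. Therefore free objects on spaces are computed in spaces: $\Free_{\bE_\infty}(*)$ is the free $\bE_\infty$-space on a point, which is $\coprod_{\n \geq 0} (*^{\times \n})_{h\Sigma_\n} = \coprod_{\n\geq 0} B\Sigma_\n$ by the standard formula for free $\bE_\infty$-algebras in a cartesian closed presentable category. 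The same formula also follows by setting $X = *$ in the displayed formula of the preceding corollary.

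The only delicate point is the compatibility of the free-algebra formula with the inclusion $\infty\Grp \subset \infty\Cat$. That is, the left adjoint on monoids has to agree with the one computed underlying-wise. This holds because the inclusion preserves colimits and finite products, so it commutes with the monad $\coprod_\n (-)^{\times \n}_{h\Sigma_\n}$. The symmetric monoidal structure on $\coprod_\n B\Sigma_\n$ is the canonical one, namely disjoint union of finite sets, and it is matched by the free universal property.
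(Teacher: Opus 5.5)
Your proposal is correct and follows the paper. The paper obtains this corollary by setting $X = *$ in the preceding corollary, where the unit $X \to \Omega^\infty(\Sigma^\infty_+(X))$ exhibits the free symmetric monoidal $\infty$-category $\coprod_{n \geq 0}(X^{\times n})_{\Sigma_n}$ (via \cref{symsp}), and $(*^{\times n})_{\Sigma_n} \simeq B\Sigma_n$. Your extra check that $\Free_{\bE_\infty}(*)$ is computed in spaces via the colocalization $\infty\Grp \rightleftarrows \infty\Cat : \iota_0$ is valid but not needed; note also that the inclusion preserves finite products because it is right adjoint to $\tau_0$, not because of \cref{grayspace}.
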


\begin{remark}

The left adjoint monoidal embedding $$B^\infty: \Mon_{\bE_\infty}(\infty\Cat) \simeq \Mon_{\bE_\infty}(\infty\Cat_*) \to \Mon_{\bE_\infty}(\Cat\Sp) \simeq \Cat\Sp$$
of \cref{symsp} induces a left adjoint embedding
$$B^\infty: \Rig(\infty\Cat) \to \Rig(\Cat\Sp).$$

\end{remark}

	
	


\begin{definition}\label{rigit}
The categorical Eilenberg-MacLane spectrum functor $H$ is the following composition of lax monoidal embeddings:
$$\Mon_{\bE_\infty}(\Set) \subset \Mon_{\bE_\infty}(\infty\Grp) \xrightarrow{B^\infty} \Cat\Sp,$$
which gives rise to an embedding
$$\Rig(\Set) \subset \Rig(\Cat\Sp).$$

\end{definition}

\begin{definition}Let $R$ be a rig.

The derived oriented category of $R$ is
$$\mD_R :={_{H(R)} \Mod(\Cat\Sp)}.$$

The derived antioriented category of $R$ is
$$\bar{\mD}_{R} :={\Mod_{H(R)}(\Cat\Sp)}.$$
    
\end{definition}

\begin{remark}

The bioriented category $\Cat\Sp$ is presentable and bistable \cref{stab}. 
\begin{itemize}
\item Hence
$\mD_R $ is a presentable stable oriented category and the forgetful functor $\mD_R \to \Cat\Sp$ is a left and right adjoint oriented functor. 
\item Hence
$\bar{\mD}_R $ is a presentable antistable antioriented category and the forgetful functor $\bar{\mD}_R \to \Cat\Sp$ is a left and right adjoint antioriented functor. 

\end{itemize}

\end{remark}

\subsection{A categorical Freudenthal suspension theorem}

In the following we prove a categorical version of the Freudenthal suspension theorem (\cref{F}) that governs the passage from unstable to stable homotopy theory of higher categories.

\begin{definition} Let $ \n \geq 0.$

\begin{itemize}
\item A functor is $-1$-connected if it is essentially surjective.

\item A functor is $\n$-connected if it is $-1$-connected and induces $\n-1$-connected functors on morphism $\infty$-categories. 

\end{itemize}

\end{definition}

\begin{definition} Let $ \n \geq 0.$
A functor is $ \n$-connective if it is $\n-1$-connected.

\end{definition}

\begin{lemma}\label{cateqst}

Let $\n \geq -1$ and $X$ a space.
A functor $\phi: A \to B$ over $X$ is $\n$-connected
if it induces fiberwise $\n$-connected functors.

\end{lemma}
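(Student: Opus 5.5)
Our plan is to argue by induction on $\n \geq -1$, reducing everything to the behaviour of essential surjectivity and morphism $\infty$-categories of a functor over a space $X$. Let $\phi: A \to B$ be a functor over $X$, with structure maps $p: A \to X$ and $q: B \to X$, $q \circ \phi \simeq p$. For $x \in X$ write $A_x, B_x$ for the fibers and $\phi_x: A_x \to B_x$ for the induced functor. Since $X$ is a space, every morphism of $B$ lies over a path in $X$, and the fibers are taken in $\infty\Cat$ over the space $X$.

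\textbf{Morphism $\infty$-categories are fiberwise.} The first step is to describe the morphism $\infty$-categories of $A$ over $X$: for $a, a' \in A$ there is a pullback
$$\Mor_A(a,a') \to \Mor_X(p(a),p(a')),$$
and $\Mor_X(p(a),p(a'))$ is the path space of $X$, hence a space. For a path $\gamma$ from $p(a)$ to $p(a')$, the fiber over $\gamma$ is $\Mor_{A_{p(a')}}(\gamma_!a,a')$, where $\gamma_!$ is transport along $\gamma$. Transport exists because $A \to X$ over a space is a fibration in every sense: by the straightening $\infty\Cat_{/X} \simeq \Fun(X,\infty\Cat)$, which I would justify via descent in the presentable category $\infty\Cat$ along $X \simeq \colim_X *$, every functor over $X$ corresponds to a family. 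So $\Mor_A(a,a')$ is itself an $\infty$-category over the space $\Omega$-type path space $P:=\Mor_X(p(a),p(a'))$, with fibers morphism $\infty$-categories of fibers $A_x$. The functor $\phi$ induces a functor over $P$ which on the fiber over $\gamma$ is
$$\Mor_{A_{x'}}(\gamma_!a,a') \to \Mor_{B_{x'}}(\gamma_!\phi a,\phi a'),$$
using that $\phi$ commutes with transport. This is exactly a morphism-$\infty$-category functor of $\phi_{x'}$.

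\textbf{Base case.} For $\n=-1$, we must show that $\phi$ is essentially surjective when each $\phi_x$ is. Given $b \in B$, put $x=q(b)$; then $b \in B_x$ and $\phi_x$ hits it up to equivalence in $B_x$, hence in $B$.

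\textbf{Inductive step.} Assume the statement for $\n-1$ and all spaces. If each $\phi_x$ is $\n$-connected, then $\phi$ is $-1$-connected by the base case. By the first step, $\Mor_A(a,a') \to \Mor_B(\phi a,\phi a')$ is a functor over the space $P$ whose fibers are morphism functors of the $\phi_{x'}$. These fibers are $\n-1$-connected by hypothesis, so the induction hypothesis applied over $P$ shows that $\phi$ is $\n$-connected.

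The main obstacle is the first step: identifying the morphism $\infty$-categories of a total $\infty$-category over a space as an $\infty$-category over the path space, with the correct fibers. I would obtain this from the equivalence $\infty\Cat_{/X} \simeq \Fun(X,\infty\Cat)$ together with the description of colimits in $\infty\Cat \simeq \infty\Cat\mathrm{-}\Cat$ indexed by spaces, namely that the morphism objects of a space-indexed colimit are computed as a space-indexed colimit of morphism objects. The latter holds because $\Mor$ preserves colimits that are universal over $\partial\bD^1$, as in \cref{homfil}, applied to groupoid-indexed colimits, which are van Kampen.
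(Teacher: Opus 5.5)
Your proposal is correct and is essentially the paper's proof: induction on $n$ with the base case $n=-1$ immediate, and the inductive step resting on the identification of the fiber of $\Mor_A(U,V) \to \Mor_X(U',V')$ over a path $\alpha$ with $\Mor_{A_{V'}}(\alpha_!(U),V)$, compatibly with $\phi$. The paper takes this fiberwise description and the transport $\alpha_!$ for granted, so your extra justification is not needed; note only that \cref{homfil} as stated covers limits and filtered colimits, not groupoid-indexed colimits, so it does not by itself supply that step.
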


\begin{proof}

The case of $n=-1$ is clear.
The general case follows by induction on $n \geq -1$. The induction step follows from the fact that for every
$U,V \in A $ lying over $U', V' \in X$
the induced functor $\Mor_A(U,V) \to \Mor_B(\phi(U), \phi(V))$
induces on the fiber over every $\alpha \in \Mor_X(U',V')$
the induced functor $$ \Mor_{A_{V'}}(\alpha_!(U),V) \to \Mor_{B_{V'}}(\phi(\alpha_!(U)), \phi(V)).$$

\end{proof}

\begin{notation}
	
Let $ \n, \bk \geq 0.$
The $\bk$-th ordered configuration space $ \mathrm{Conf}_\bk(\bR^\n)$ in $\bR^\n$ 
is the subspace $$\{  \{1,...,\bk\}  \hookrightarrow \bR^\n \}  \subset (\bR^\n)^{\times \bk}$$
of injective maps $  \{1,...,\bk\}  \to \bR^\n$ endowed with the subspace topology of the product topology of the Euclidean topology on $\bR^\n.$
		
\end{notation}

\begin{lemma}\label{confi}
	
Let $ \n, \bk \geq 0.$
The space $ \mathrm{Conf}_\bk(\bR^\n)$ is $\n$-2 connected.
	
\end{lemma}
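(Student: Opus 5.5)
We prove that $\mathrm{Conf}_\bk(\bR^\n)$ is $(\n-2)$-connected by induction on $\bk$, using the Fadell--Neuwirth fibrations. For $\bk=0$ and $\bk=1$ the space is a point or $\bR^\n$, hence contractible, and there is nothing to show. For $\n \le 1$ the claim is vacuous or says only that the space is nonempty, which holds for $\n=1$ (and for $\n=0$ the condition "$-2$-connected" is empty). So assume $\n \geq 2$ and $\bk \geq 1$.

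For the induction step consider the projection
$$p: \mathrm{Conf}_{\bk+1}(\bR^\n) \to \mathrm{Conf}_\bk(\bR^\n),$$
which forgets the last point. By the Fadell--Neuwirth theorem $p$ is a locally trivial fibration. Its fiber over a configuration $(x_1,\dots,x_\bk)$ is $\bR^\n \setminus \{x_1,\dots,x_\bk\}$, which is homotopy equivalent to a wedge $\bigvee_{\bk} S^{\n-1}$ of $\bk$ spheres of dimension $\n-1$. Such a wedge is $(\n-2)$-connected, because it is a CW complex with one $0$-cell and cells of dimension $\n-1$ otherwise.

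The long exact sequence of homotopy groups of $p$ now gives the result. For $i \le \n-2$ the homotopy groups $\pi_i$ of the fiber vanish, and by induction so do those of the base. Exactness therefore forces $\pi_i(\mathrm{Conf}_{\bk+1}(\bR^\n))=0$ for $i \le \n-2$. Path-connectedness needs separate care at $i=0$: the fiber is path-connected since $\n \geq 2$, the base is path-connected by induction, and the total space is nonempty. The $\pi_0$ part of the sequence then shows the total space is path-connected.

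The only non-formal input is local triviality of $p$, which is the Fadell--Neuwirth theorem. Since it is classical we cite it rather than reprove it; equivalently one can pick a local section moving the $\bk$ points via a compactly supported isotopy. An alternative route would be the $\n$-connected map from the unordered configuration model via $\bE_\n$ operads, but the fibration argument is more direct.
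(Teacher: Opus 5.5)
Your proof is correct and follows the paper's own argument: induction on $k$ via the forgetful fibration $\mathrm{Conf}_{k+1}(\mathbb{R}^n)\to\mathrm{Conf}_k(\mathbb{R}^n)$, whose fibers $\mathbb{R}^n\setminus\{x_1,\dots,x_k\}$ are $(n-2)$-connected, combined with the long exact sequence of homotopy groups. Your identification of the fiber as $\bigvee_k S^{n-1}$ is the correct one; the paper writes $\bigvee_k B^{n-1}(\mathbb{Z})$, which is accurate only for $n=2$ but has the same connectivity, so its conclusion is unaffected. Your separate treatment of $\pi_0$ and of the cases $n\le 1$ fills in details the paper leaves implicit.
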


\begin{proof}
We proceed by induction on $\bk \geq 0.$	
For $\bk=0$ there is nothing to prove. For $\bk=1$ we find that 
$ \mathrm{Conf}_\bk(\bR^\n)= \bR^\n$, which is contractible.
We prove the induction step.
We consider the map $$\xi:  \mathrm{Conf}_{\bk+1}(\bR^\n) \to  \mathrm{Conf}_\bk(\bR^\n)$$
that sends a $\bk$+1-tuple $(Y_1,...,Y_{\bk+1})$ to $(Y_1,..., Y_\bk).$
Then $\xi$  is a fibration and the (homotopy) fiber of $\xi$ over any point
$(X_1,..., X_\bk) \in \mathrm{Conf}_\bk(\bR^\n)$ is the space
$\bR^\n \setminus \{X_1,...,X_\bk\}.$ 
The latter space is equivalent to the space $ \bigvee_{i=1}^\bk B^{\n-1}(\bZ)$.
Since $B^{\n-1}(\bZ)$ is $\n$-2-connected, the space  $ \bigvee_{i=1}^{\bk} B^{\n-1}(\bZ)$ is  $\n$-2-connected. Thus all homotopy fibers of $\xi$ are $\n$-2-connected.
Via the long exact sequence on homotopy groups this implies that 
$\mathrm{Conf}_{\bk+1}(\bR^\n)$ is  $\n$-2-connected if $\mathrm{Conf}_\bk(\bR^\n)$ is  $\n$-2-connected. This proves the induction step.

\end{proof}

\begin{theorem}\label{F} Let $\n \geq 0$ and $X \in \infty\Cat_*$ be $n$-connected.
The canonical functor $$ X \to \Omega\Sigma(X)$$
is $2 \n+1$-connective.

\end{theorem}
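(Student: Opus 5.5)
The plan is to reduce to the case where $X$ is a free delooping, make that case explicit with configuration spaces, and then pass to a general $n$-connected $X$ by a colimit argument. By \cref{deloop} and \cref{connel}, every $n$-connected $X\in\infty\Cat_*$ is of the form $B^{n+1}M$ for an $\bE_{n+1}$-monoid $M$ in $\infty\Cat$. The category $\Mon_{\bE_{n+1}}(\infty\Cat)$ is generated by free objects via the bar resolution. So $X$ is the geometric realization of a simplicial object whose terms are
$$B^{n+1}\Free_{\bE_{n+1}}(Y)\simeq \Sigma^{n+1}(Y_+)$$
for $\infty$-categories $Y$, using \cref{freespar}. I would first prove the claim for $X=\Sigma^{n+1}(Y_+)$ and afterwards extend it to the realization.

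\emph{Free case.} The unit $\Sigma^{n+1}(Y_+)\to \Omega\Sigma^{n+2}(Y_+)$ is obtained by applying $B^{n+1}$ to the canonical map of $\bE_{n+1}$-monoids
$$\Free_{\bE_{n+1}}(Y)\simeq \Omega^{n+1}\Sigma^{n+1}(Y_+)\to \Omega^{n+2}\Sigma^{n+2}(Y_+)\simeq \Free_{\bE_{n+2}}(Y).$$
Both identifications use that $B^{n+1}$ and $B^{n+2}$ are fully faithful (\cref{deloop}). I would then use the standard configuration-space model of free $\bE_m$-monoids in a cartesian closed presentable category,
$$\Free_{\bE_m}(Y)\simeq \coprod_{k\geq 0}\mathrm{Conf}_k(\bR^m)\times_{\Sigma_k} Y^{\times k}.$$
Under this model the map above is, summand by summand, induced by the inclusion $\mathrm{Conf}_k(\bR^{n+1})\to\mathrm{Conf}_k(\bR^{n+2})$.

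Both sides lie over the space $B\Sigma_k$, so by \cref{cateqst} it suffices to check connectivity fiberwise. Fiberwise the map is
$$\mathrm{Conf}_k(\bR^{n+1})\times Y^{\times k}\to \mathrm{Conf}_k(\bR^{n+2})\times Y^{\times k},$$
that is, a map of spaces times the identity of $Y^{\times k}$. \cref{confi} gives that $\mathrm{Conf}_k(\bR^{n+2})$ is $n$-connected and $\mathrm{Conf}_k(\bR^{n+1})$ is $(n-1)$-connected. Hence the map of spaces is $n$-connected in the classical sense: it induces an iso on $\pi_i$ for $i<n$ and a surjection on $\pi_n$, the latter because the target's $\pi_n$ vanishes. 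Translating this into the categorical connectivity of the definitions gives an $n$-connected functor of $\bE_{n+1}$-monoids.

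Next I would show that $B$ raises connectivity of maps of monoids by one, via the description $\Omega B M\simeq M$ of \cref{hhnl}. Since $B$ is always essentially surjective on connected objects, a $k$-connected map $M\to N$ induces a $(k+1)$-connected functor $BM\to BN$. Iterating $n+1$ times shows that $\Sigma^{n+1}(Y_+)\to\Omega\Sigma^{n+2}(Y_+)$ is $2n+1$-connected. This is at least the claimed $2n+1$-connectivity, since $(2n+1)$-connectivity is the stronger condition. A more careful accounting of the $\pi_n$-surjectivity may also be needed to match the indices exactly.

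\emph{General case.} For a simplicial resolution $X\simeq|\Sigma^{n+1}(Y_{\bullet,+})|$, the target $\Omega\Sigma$ does not obviously commute with geometric realization, so there are two points to establish. First, $\Sigma$ commutes with colimits, so $\Sigma X\simeq|\Sigma^{n+2}(Y_{\bullet,+})|$. Second, $\Omega$ commutes with this particular realization, because it is a sifted colimit of $(n+1)$-connected objects and $B$ preserves sifted colimits in the connected range (\cref{deloop}). With these in hand, it remains to show that a levelwise $2n+1$-connective map of simplicial objects stays $2n+1$-connective after realization.

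I expect this last step to be the main obstacle: connectivity of functors is defined inductively through morphism $\infty$-categories, and stability of connectivity under colimits is not formal. I would try to identify the $2n+1$-connective functors as the left class of a factorization system on $\infty\Cat$, generated by the boundary inclusions $\partial\bD^{m}\subset\bD^m$ for $m\leq 2n+1$ together with the inclusions $\bD^{2n+1}\to\bD^{2n+2}$. Such a left class is closed under all colimits in the arrow category, which would make the extension from the free case to general $n$-connected $X$ immediate.
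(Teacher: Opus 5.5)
Your skeleton is the paper's. You identify the unit with $B^{n+1}$ applied to $\Free_{\bE_{n+1}}(Y)\to\Free_{\bE_{n+2}}(Y)$, reduce to free monoids using that connected functors form the left class of a factorization system (hence are closed under colimits in the arrow category), and finish with the configuration-space model, \cref{cateqst} and \cref{confi}. The paper runs the colimit reduction before delooping: it works in $\Mon_{\bE_{n+1}}(\infty\Cat)$ with the unit $M\to\Free_{\bE_{n+1},\bE_{n+2}}(M)$, where sifted colimits are computed on underlying $\infty$-categories. That makes your separate check that $\Omega$ commutes with the realization unnecessary, although your check is fine.

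\textbf{Off-by-one error.} There is a genuine error in the connectivity bookkeeping. A functor is $(-1)$-connected iff it is essentially surjective. So a map of spaces is categorically $k$-connected iff it is bijective on $\pi_i$ for $i\le k$ and surjective on $\pi_{k+1}$, i.e.\ classically $(k+1)$-connected.
\begin{itemize}
\item The map $\mathrm{Conf}_k(\bR^{n+1})\to\mathrm{Conf}_k(\bR^{n+2})$ is classically $n$-connected, hence only categorically $(n-1)$-connected, not $n$-connected as you translate it. Already for $n=0$, $k=2$, the map $\mathrm{Conf}_2(\bR)\simeq S^0\to\mathrm{Conf}_2(\bR^2)\simeq S^1$ is not injective on $\pi_0$.
\item Consequently your intermediate claim that $\Sigma^{n+1}(Y_+)\to\Omega\Sigma^{n+2}(Y_+)$ is $(2n+1)$-connected is false. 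For $n=0$, $Y=*$ this map is $B$ applied to $\bN\simeq\Free_{\bE_1}(*)\to\Free_{\bE_2}(*)\simeq\coprod_k B\mathrm{Br}_k$. That map is not surjective on $\pi_1$, so $S^1\to\Omega S^2$ is not $1$-connected.
\item The overshoot you noticed is therefore a symptom of the mistranslation, not slack. With the correct dictionary the count is $(n-1)+(n+1)=2n$-connected, i.e.\ exactly $(2n+1)$-connective. This is precisely the paper's reduction to $\Free_{\bE_{n+1}}(Y)\to\Free_{\bE_{n+2}}(Y)$ being $n$-connective. Your general-case step already targets the correct class.
\end{itemize}

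\textbf{Factorization-system generators.} The input you need is exactly what the paper imports from \cite[Theorem 3.3.19.]{gepner2026homotopy}. Your guessed generators would not produce this class.
\begin{itemize}
\item A boundary inclusion $\partial\bD^m\subset\bD^m$ is $(m-2)$-connected and no better. So the ones with $m\le 2n+1$ are not $(2n+1)$-connective; for instance $\partial\bD^1\subset\bD^1$ is not even $0$-connected.
\item A face inclusion $\bD^{2n+1}\to\bD^{2n+2}$ is only $(2n-1)$-connected.
\end{itemize}
As with relative CW complexes, one expects such a highly connected class to be generated by high-dimensional cells, here $\partial\bD^m\subset\bD^m$ with $m\ge 2n+2$.
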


\begin{proof}
	
For every $\n \geq 0$ let $ \infty\Cat^{\leq \n} \subset \infty\Cat $ be the full subcategory of $\n$-connected $\infty$-categories.
By \cref{deloop} there is an adjunction
\begin{equation}\label{adki} B^{\n+1}: \Mon_{\bE_{\n+1}}(\infty\Cat)  \rightleftarrows \infty\Cat_*: \Omega^{\n+1}\end{equation}
whose left adjoint preserves finite products and induces an equivalence to $ \infty\Cat_*^{\leq \n} \subset \infty\Cat_*.$
The latter induces an embedding $$\Mon( \infty\Cat^{\leq \n})  \simeq \Mon( \infty\Cat_*^{\leq \n}) \subset \Mon(\infty\Cat_* )\simeq \Mon(\infty\Cat),$$ which by adjointness preserves the free reduced associative monoid functors.
By \cref{hhnl} the functor $\Sigma$ factors as the free reduced associative monoid $\infty\Cat_* \to \Mon(\infty\Cat) $ followed by $$B: \Mon(\infty\Cat) \to  \infty\Cat_*.$$
Thus the adjunction $$\Sigma: \infty\Cat_* \rightleftarrows  \infty\Cat_* : \Omega$$
restricts to an adjunction $$\Sigma: \infty\Cat^{\leq \n}_* \rightleftarrows  \infty\Cat^{\leq \n+1}_* : \Omega.$$

Hence for every $ \n \geq 0$ there is a commutative square 
$$\begin{xy}
\xymatrix{
\Mon_{\bE_{\n+2}}(\infty\Cat)  \ar[d] \ar[rr]^\simeq
&& \infty\Cat ^{\leq \n+1}_* \ar[d]^{\Omega}
\\ 
\Mon_{\bE_{\n+1}}(\infty\Cat)  \ar[rr]^\simeq && \infty\Cat^{\leq \n}_*.
}
\end{xy}$$	

So for every $\n$-connected $\infty$-category $X$ the canonical functor $X \to \Omega\Sigma(X)$ identifies with the canonical functor
$$ B^{\n+1}(\Omega^{\n+1}(X)) \to B^{\n+1}(\Free_{\bE_{\n+1}, \bE_{n+2}}\Omega^{\n+1}(X)),$$
where $ \Free_{\bE_{\n+1}, \bE_{n+2}}$ is the left adjoint of the left vertical functor in the latter commutative square.


Consequently, it suffices to see that for every $\infty$-category $X$ the canonical functor 
\begin{equation}\label{tgrrr}
X \to \Free_{\bE_{\n+1}, \bE_{n+2}}(X) 
\end{equation}
is $\n$-connective, i.e. $\n$-1-connected.
By \cite[Theorem 3.3.19.]{gepner2026homotopy} the class of $\n$-1-connected functors is the left class of a factorization system on $\infty\Cat$. Thus the full subcategory of the arrow $\infty$-category of $\infty\Cat$ spanned by the $\n-1$-connected functors is closed under small colimits.
This guarantees that the full subcategory of 
$\Mon_{\bE_{\n+1}}(\infty\Cat)$ spanned by those $X$ such that the canonical functor (\ref{tgrrr}) is $\n$-1-connected, is closed under small sifted colimits.
Since $\Mon_{\bE_{\n+1}}(\infty\Cat)$ is generated under small sifted colimits by the free $\bE_{\n+1}$-monoids, we can assume that $X$ is free.
Consequently, it suffices to see that for every $\infty$-category $X$ the canonical functor $\Free_{\bE_{\n+1}}(X) \to \Free_{\bE_{\n+2}}(X) $ is $\n$-1-connected.

For every $\bk \geq 0$ let $\bE^\bk_\n$ be the space of $\bk$-ary operations of the $\bE_\n$-operad,
which is equivalent to the space $\mathrm{Conf}_\bk(\bR^\n)$ and so $\n-2$-connected by \cref{confi}.



For every $\infty$-category $X$ the canonical functor 
$ \Free_{\bE_{\n+1}}(X) \to \Free_{\bE_{\n+2}}(X) $ 
identifies with the induced functor
$$ \coprod_{\bk \geq 0}(\bE^\bk_{\n+1} \times X^{\times \bk})_{\Sigma_\bk} \to \coprod_{\bk \geq 0}(\bE^\bk_{\n+2} \times X^{\times \bk})_{\Sigma_\bk}. $$
The latter functor is $\n$-1-connected if for every $\bk \geq 0$ the canonical functor
$$ (\bE^\bk_{\n+1}  \times X^{\times \bk})_{\Sigma_\bk} \to (\bE^\bk_{\n+2} \times X^{\times \bk})_{\Sigma_\bk} $$
is $\n$-1-connected.
To see this, by \cref{cateqst} it is enough to verify that the canonical functor
$$ \bE^\bk_{\n+1}  \times X^{\times \bk} \to \bE^\bk_{\n+2} \times X^{\times \bk} $$
is $\n$-1-connected. Since $\n$-1-connected functors are stable under product, it suffices to show that the canonical map $ \bE^\bk_{\n+1} \to \bE^\bk_{\n+2} $
is $\n$-1-connected. 
Since it is a map of spaces, this means that this map induces an isomorphism on homotopy groups in degrees lower than $n$ and a surjection in degree $n.$ By \cref{confi} the space
$ \bE^\bk_{\n+1} $ is $n$-1-connected and the space 
$ \bE^\bk_{\n+2} $ is $n$-connected. Hence the result follows.

\end{proof}






\begin{corollary} Let $\n \geq 0$ and $X$ an $\infty$-category with distinguished object. 
The canonical functor $$\Omega^{\n+1} \Sigma^{\n+1}(X) \to \Omega^\infty\Sigma^\infty(X) $$ is $\n$-connective.

\end{corollary}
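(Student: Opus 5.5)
We would write $\Omega^\infty\Sigma^\infty(X)$ as the sequential colimit of the $\Omega^{\bk}\Sigma^{\bk}(X)$, using \cref{infsuso} applied to the compactly generated bioriented category $\infty\Cat_*$. The canonical functor of the statement is then the transfinite composite of the stages
$$\Omega^{\bk}\Sigma^{\bk}(X) \to \Omega^{\bk+1}\Sigma^{\bk+1}(X), \qquad \bk \geq \n+1,$$
where the $\bk$-th stage is $\Omega^{\bk}$ applied to the unit $\Sigma^{\bk}(X) \to \Omega\Sigma(\Sigma^{\bk}(X))$.

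We first bound the connectivity of a single stage. By \cref{connel}, $\Sigma^{\bk}(X)$ is $\bk-1$-connected. Applying \cref{F} with $\bk-1$ in place of $\n$ shows that the unit $\Sigma^{\bk}(X) \to \Omega\Sigma\Sigma^{\bk}(X)$ is $2\bk-1$-connective, that is, $2\bk-2$-connected.

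Next we observe that $\Omega$ lowers connectedness of a based functor by exactly one. By definition, a based functor $f: A \to B$ that is $m$-connected induces an $m-1$-connected functor $\Mor_A(*,*) \to \Mor_B(*,*)$, and this is precisely $\Omega(f)$. Hence $\Omega^{\bk}$ of the unit is $\bk-2$-connected. For $\bk \geq \n+1$ this is at least $\n-1$-connected, i.e.\ $\n$-connective.

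It remains to pass to the colimit. By \cite[Theorem 3.3.19.]{gepner2026homotopy}, $\n-1$-connected functors form the left class of a factorization system. Left classes are closed under composition and under small colimits in the arrow category, so transfinite composites of $\n-1$-connected functors are again $\n-1$-connected. Therefore the map from the $\n+1$-st stage into the sequential colimit is $\n-1$-connected, which is the claim.

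The main obstacle is the colimit step. We must make sure that the colimit in \cref{infsuso} is the sequential colimit of exactly these unit maps; the identification $\Omega^{\bk}\Omega\Sigma\Sigma^{\bk} \simeq \Omega^{\bk+1}\Sigma^{\bk+1}$ is fine. We must also confirm that closure of the left class under colimits of arrows yields closure under transfinite composition. This holds because $\id_{A_0}$ lies in the left class and the composite $A_0 \to \colim A_i$ is the colimit of the arrows $A_0 \to A_i$.
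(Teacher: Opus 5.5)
Your proposal is correct and is essentially the paper's proof. The paper also bounds each stage $\Omega^{\n+1+\bk}\Sigma^{\n+1+\bk}(X)\to\Omega^{\n+\bk+2}\Sigma^{\n+\bk+2}(X)$ as $(\n+\bk-1)$-connected via \cref{F}, which is your bound $\bk-2$ after reindexing. It then concludes from the fact that $(\n-1)$-connected functors, being the left class of the factorization system of \cite[Theorem 3.3.19.]{gepner2026homotopy}, are closed under sequential colimits. Your explicit justifications fill in steps the paper leaves implicit: that $\Omega$ sends an $m$-connected based functor (with $m\geq 0$) to an $(m-1)$-connected one, and that the map into the colimit is the colimit of the arrows $A_0\to A_i$.
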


\begin{proof} Let $\bk \geq 0.$
By \cref{F} the canonical functor $$ \Omega^{\n+1+\bk} \Sigma^{\n+1+\bk}(X) \to \Omega^{\n+\bk+2} \Sigma^{\n+\bk+2}(X) $$
is $\n+\bk$-1-connected and so in particular $\n$-1-connected.

Since $\n-1$-connected functors are stable under filtered colimits, the canonical functor $$\Omega^{\n+1} \Sigma^{\n+1}(X) \to \Omega^\infty\Sigma^\infty(X) \simeq \colim_{\bk \geq 0}\Omega^{\n+1+\bk} \Sigma^{\n+1+\bk}(X) $$ is $\n$-1-connected, which is $n$-connective.

\end{proof}

We also recover the classical Freudenthal suspension theorem:

\begin{corollary}

Let $\n \geq 0$ and $X \in \infty\Grp_*$ be $n$-connected.
The canonical map $ X \to \Omega\Sigma(X)$
is $2 \n+1$-connective.

\end{corollary}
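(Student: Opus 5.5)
We deduce the statement from \cref{F} by restricting to the full subcategory $\infty\Grp_* \subset \infty\Cat_*$. The aim is to show that $\Sigma$ and $\Omega$ on $\infty\Cat_*$ restrict to the classical reduced suspension and loop space functors on pointed spaces, and that for maps of spaces the categorical notion of $2\n+1$-connectivity agrees with the classical one.

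First, let $X \in \infty\Grp_*$. The categorical suspension $\Sigma(X)=S(X)+_{\bD^1}\bD^0$ is in general not a space: by \cref{freespar}, $\Sigma(\ast_+)=S^1 \simeq B\bN$. So $\Omega\Sigma(X)$ computed in $\infty\Cat_*$ need not agree with the classical $\Omega\Sigma(X)$. Under the identification of \cref{hhnl}, $\Omega\Sigma(X)$ is the free associative monoid in $\infty\Cat$ on the pointed space $X$. This is a space, since free associative monoids are built from $X^{\times k}$ by colimits and $\infty\Grp$ is closed under these. It is the classical James construction $J(X)$, i.e. the free reduced $\bA_\infty$-space. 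Classically, $J(X)\simeq \Omega\Sigma X$ whenever $X$ is connected, by the group completion theorem: $J(X)$ is already grouplike when $\pi_0 X = *$. So for $\n \geq 0$ the categorical map $X \to \Omega\Sigma(X)$ is identified with the classical map $X \to \Omega\Sigma X$.

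Second, for a map of spaces $f$, the categorical notion says that $f$ is $\n$-connected if it is surjective on $\pi_0$ and induces $\n-1$-connected maps on path spaces. By induction this is exactly the classical notion: $\pi_i$ is surjective for $i \leq \n+1$ and bijective for $i\le \n$, up to the paper's indexing convention. The same induction shows that an $\n$-connected $\infty$-category which is a space is an $\n$-connected space in the classical sense. Applying \cref{F} then gives $2\n+1$-connectivity of $X\to\Omega\Sigma(X)$, which by the two identifications is the classical statement.

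The main obstacle is the identification $\Omega\Sigma(X)\simeq J(X)\simeq\Omega\Sigma X$, i.e. that categorical and classical suspension followed by loops agree on connected spaces. I would argue it via \cref{deloop}: connected $\infty$-categories correspond to monoids, and a connected space corresponds to a grouplike monoid in $\infty\Grp$. It remains to check that the free monoid on a connected pointed space is grouplike and is preserved by the inclusion $\infty\Grp\subset\infty\Cat$. This holds because the inclusion is a left adjoint preserving finite products (\cref{grayspace}), so it commutes with free monoids. Alternatively, one could bypass this identification and rerun the proof of \cref{F} verbatim inside $\infty\Grp$ using configuration spaces.
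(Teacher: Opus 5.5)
Your proof is correct. Its first half matches the paper's. For a pointed space $X$, the categorical $\Omega\Sigma(X)$ is the free reduced $\bE_1$-space $\Free_{\bE_1}(X)$ (the James construction), which is again a space. \cref{F} then makes $X \to \Free_{\bE_1}(X)$ $2\n+1$-connective.

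The routes differ in how this is compared with the classical map $X \to \Omega\Sigma X$, whose target is the group completion $\Free_{\bE_1}(X)^{\mathrm{gp}}$.

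\begin{itemize}
\item You observe that for connected $X$ the monoid $\pi_0\Free_{\bE_1}(X)$ is trivial. So $\Free_{\bE_1}(X)$ is already grouplike, and the categorical and classical maps literally coincide. This needs only that a grouplike monoid is its own group completion, not the group completion theorem.
\item The paper instead shows that group completion $\Mon_{\bE_1}(\infty\Grp) \to \Grp_{\bE_1}(\infty\Grp)$ preserves $m$-connected maps for all $m \geq -1$. It uses that $m$-truncation is a finite-product-preserving localization whose local equivalences are the $m$-connected maps, and that grouplike monoids are closed under $m$-truncation.
\end{itemize}

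The paper's lemma is more general. However, as written it applies group completion to $X \to \Free_{\bE_1}(X)$, which is not a map of monoids. Your observation that $\Free_{\bE_1}(X) \to \Free_{\bE_1}(X)^{\mathrm{gp}}$ is an equivalence for connected $X$ is the more direct way to close this step.

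You also check explicitly that the categorical connectedness and connectivity notions restrict to the classical ones on spaces. A categorically $\n$-connected map of spaces is bijective on $\pi_i$ for $i \leq \n$ and surjective on $\pi_{\n+1}$. The paper uses this identification tacitly.
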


\begin{proof}

For every space $X$ the functor of \cref{F} is a map of spaces,
which identifies with the canonical map $X \to \Free_{\bE_1}(X)$
to the free $\bE_1$-space. The latter is $2 \n+1$-connective by \cref{F}.
The canonical map $ X \to \Omega\Sigma(X)$
is the image of the map $X \to \Free_{\bE_1}(X)$ under the left adjoint $\Mon_{\bE_1}(\infty\Grp) \to \Grp_{\bE_1}(\infty\Grp) $
of the embedding $\Grp_{\bE_1}(\infty\Grp)  \subset \Mon_{\bE_1}(\infty\Grp).$
So it suffices to see that the left adjoint $\Mon_{\bE_1}(\infty\Grp) \to \Grp_{\bE_1}(\infty\Grp) $ preserves $m$-connected maps for every $m \geq -1.$

For every $m \geq -1$ there is a factorization system on $\infty\Grp$, where the left class is the class of $m$-connected maps and the right class is the class of $m$-truncated maps \cite[Example 5.2.8.16.]{lurie.HTT}. This implies by \cite[Lemma 5.2.8.19.]{lurie.HTT} that the full subcategory of $m$-types is a localization of $\infty\Grp$ where the local equivalences are precisely the $m$-connected maps. Thus the localization functor preserves finite products and so induces localizations
on $\Mon_{\bE_1}(\infty\Grp), \Grp_{\bE_1}(\infty\Grp)$ whose local objects are the
(grouplike) $\bE_1$-spaces whose underlying space is an $m$-type, and whose local equivalences are precisely the maps of 
(grouplike) $\bE_1$-spaces whose underlying map of spaces is $m$-connected. 
So the embedding $\Grp_{\bE_1}(\infty\Grp)  \subset \Mon_{\bE_1}(\infty\Grp)$
restricts to an embedding between $m$-types. This implies that the left adjoint $\Mon_{\bE_1}(\infty\Grp) \to \Grp_{\bE_1}(\infty\Grp) $ preserves local equivalences, which are $m$-connected maps.

\end{proof}

\section{Categorical Brown representability}

In this section we prove a categorical Brown representability theorem (\cref{brown}), which represents every categorical homology theory by a unique categorical spectrum, the categorical coefficient spectrum of the categorical homology theory.



We reconstruct a categorical homology theory from its coefficients via the following two methods:
\begin{enumerate}
\item a theory of categorical prespectra which give rise to categorical spectra via spectrification. 

\item a theory of excisive approximation of oriented functors and of bioriented functors.

\end{enumerate}

The following subsections are devoted to the study of categorical prespectra, categorical homology theories and excisive approximations of oriented and bioriented functors.





\subsection{Spectrification}

In this subsection we introduce prespectrum objects in any reduced bioriented category. We show a formula for the spectrification (\cref{spectri}) and use this formula to prove that negative dimensional spheres are dense in categorical spectra (\cref{spgen}).

\begin{notation}

Let $\mC$ be a reduced bioriented category that admits endomorphisms.
Let $$\mC^\Omega:= \mC^{\bD^1}\times_{\mC^{\{1\}}}\mC^\cop$$ be the pullback in $\wedge\Cat\wedge$ of evaluation at the target along $\Omega: \mC^\cop \to \mC.$

\end{notation}

\begin{remark}

There are canonical bioriented functors $\mC^\Omega \to \mC^{\bD^1} \to \mC^{\{0\}}$ and $ \mC^\Omega \to \mC^\cop$.

\end{remark}

\begin{definition}\label{prespectraa}
The bioriented category of prespectrum objects in $\mC$ is the following limit in $\wedge\Cat\wedge$:
$$\Pre\Sp(\mC):= ... \times_{\mC^\cop} \mC^\Omega \times_{\mC}(\mC^\cop)^\Omega \times_{\mC^\cop} \mC^\Omega. $$

\end{definition}

\begin{remark}
A prespectrum in $\mC$ consists of a sequence $(X_0,X_1,X_2,...)$ of objects in $\mC$ and bonding morphisms $X_\n \to \Omega(X_{\n+1})$ in $\mC$ for every $\n \geq 0$.

\end{remark}

\begin{remark}

By \cref{ember} every weakly reduced oriented category gives rise to a weakly reduced bioriented category, which is reduced and 
admits endomorphisms if the original oriented category is reduced and admits endomorphisms (\cref{indubi}).
Hence we can apply \cref{prespectraa} to every reduced oriented category that admits endomorphisms to obtain an oriented category of prespectrum objects.

\end{remark}

\begin{example}
	
The bioriented category $\Pre\Sp(\infty\Grp)$ is the usual category of prespectra viewed as a bioriented category.	
	
\end{example}

\begin{remark}
	
Since $\mC$ admits a zero object and endomorphisms, by \cref{colimenrfun} and \cref{weiadj} also $\mC^\Omega$ admits a zero object and endomorphisms. Thus by \cref{weiadj} also the limit $\Pre\Sp(\mC)$ admits a zero object and endomorphisms.	
	
\end{remark}

\begin{definition}
The bioriented category of categorical prespectra is $$\Cat\Pre\Sp:=\Pre\Sp(\infty\Cat_*).$$

\end{definition}

\begin{lemma}\label{comp}
Let $\mC$ be a reduced bioriented category that admits endomorphisms
and $X,Y \in \Pre\Sp(\mC).$ There is a canonical equivalence of $\infty$-categories:	
$$\Mor_{\Pre\Sp(\mC)}(X,Y) \simeq \Mor_{\mC}(X_0,Y_0)\times_{\Mor_{\mC}(X_{0},\Omega(Y_1))} 
\Mor_{\mC}(X_1,Y_1)\times_{\Mor_{\mC}(X_1,\Omega(Y_2))} ... $$ 
\end{lemma}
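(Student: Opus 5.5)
We compute the morphism $\infty$-category in the limit directly from the definition of $\Pre\Sp(\mC)$ as an iterated pullback in $\wedge\Cat\wedge$. The first input is how morphism objects behave in limits of enriched categories. For a pullback $\mA \times_\mC \mB$ of (reduced) bioriented categories, the morphism object between $(A,B)$ and $(A',B')$ is the pullback of $\Mor_\mA(A,A')$ and $\Mor_\mB(B,B')$ over $\Mor_\mC$ of the images. The same holds for sequential limits, computed levelwise. This is formal: the forgetful functor from enriched categories to pairs of a space of objects and morphism objects creates limits, since $\mV\mathrm{-}\Cat$ is presentable and morphism objects are obtained by mapping out of the enriched suspension/free enriched category on one morphism, a functor commuting with limits.

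Applying this to the tower defining $\Pre\Sp(\mC)$ reduces everything to one building block, namely the morphism object of $\mC^\Omega=\mC^{\bD^1}\times_{\mC^{\{1\}}}\mC^\cop$. Between objects $(X_n\to \Omega Y', Y')$ and $(X'_n \to \Omega Y'', Y'')$, this object is
$$\Mor_{\mC^{\bD^1}}(X_n\to\Omega Y', X'_n\to \Omega Y'')\times_{\Mor_\mC(\Omega Y',\Omega Y'')}\Mor_{\mC^\cop}(Y',Y'').$$
For the arrow category we use the standard description of morphism objects in a cotensor $\mC^{\bD^1}$ by an (ordinary, strictly commuting) arrow category. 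By \cref{colimenrfun}, or directly by the weighted-limit/end formula for enriched functor categories out of the ordinary category $\bD^1$, this is
$$\Mor_\mC(X_n,X'_n)\times_{\Mor_\mC(X_n,\Omega Y'')}\Mor_\mC(\Omega Y',\Omega Y'').$$
Substituting, the factor $\Mor_\mC(\Omega Y',\Omega Y'')$ cancels by pasting of pullbacks. Using $\Mor_{\mC^\cop}(Y',Y'')\simeq\Mor_\mC(Y',Y'')$ on underlying $\infty$-categories up to the involution, which is harmless because the statement only concerns the underlying object after forgetting enrichment orientation, we obtain
$$\Mor_\mC(X_n,X'_n)\times_{\Mor_\mC(X_n,\Omega Y'')}\Mor_\mC(Y',Y''),$$
with the map $\Mor_\mC(Y',Y'')\to\Mor_\mC(X_n,\Omega Y'')$ given by $\Omega$ followed by precomposition with the bonding map.

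Finally we assemble the tower. Gluing consecutive copies along $\Mor_\mC(X_{n+1},Y_{n+1})$ and pasting pullbacks gives exactly the iterated pullback in the statement, with $n$-th stage
$$\Mor_\mC(X_0,Y_0)\times_{\Mor_\mC(X_0,\Omega Y_1)}\cdots\times_{\Mor_\mC(X_{n-1},\Omega Y_n)}\Mor_\mC(X_n,Y_n),$$
and we then pass to the sequential limit. The main obstacle is the $\cop$-twist. The functor $\Omega:\mC^\cop\to\mC$ is only bioriented after twisting, so the map $\Mor_\mC(Y_{n+1},Y'_{n+1})\to\Mor_\mC(\Omega Y_{n+1},\Omega Y'_{n+1})$ on morphism $\infty$-categories involves the duality $(-)^{\co\op}$ on every other level. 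We have to check that after forgetting to the underlying $\infty$-category (say via $\R\Mor$) the identifications are canonical equivalences and not merely equivalences up to $(-)^{\co\op}$. We handle this by tracking the equivalence $\iota(\mC^\cop)\simeq\iota(\mC)$ and the equivalence $\Mor_{\mC^\cop}\simeq\Mor_\mC^{\co\op}$, and noting that the map in the statement is by definition the one $\Omega$ induces in this twisted sense.
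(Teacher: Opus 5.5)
Your proposal is correct and follows essentially the paper's argument: you compute the morphism objects of the defining limit levelwise, identify $\Mor_{\mC^\Omega}$ with $\Mor_\mC(X_n,Y_n)\times_{\Mor_\mC(X_n,\Omega(Y_{n+1}))}\Mor_\mC(X_{n+1},Y_{n+1})$ (spelling out the arrow-category step that the paper merely asserts), and paste the resulting pullbacks. On the $\cop$ issue there is in fact nothing to verify, because $(-)^\cop$ only twists the right enrichment: for $\L\Mor$ the odd-level factors are literally $\L\Mor_\mC(X_{2k+1},Y_{2k+1})$, whereas for $\R\Mor$ they are $\R\Mor_\mC(X_{2k+1},Y_{2k+1})^{\co\op}$, which is in general not equivalent to the untwisted $\infty$-category, so the statement (like the paper's proof) should simply be read with $\Mor_{\mC^\cop}$ in odd degrees rather than made canonically equivalent to the untwisted version.
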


\begin{proof}

By the limit definition of $\Pre\Sp(\mC)$ the $\infty$-category 
$\Mor_{\Pre\Sp(\mC)}(X,Y)$ is the limit:
$$ \Mor_{\mC^\Omega}(X_0 \to \Omega(X_1), Y_0 \to \Omega(Y_1)) \times_{\Mor_\mC(X_1,Y_1)} \Mor_{\mC^\Omega}(X_1 \to \Omega(X_2), Y_2 \to \Omega(Y_2))  \times_{\Mor_\mC(X_2,Y_2)} ....$$
For every $\n \geq 0$ there is a canonical equivalence 
$$ \Mor_{\mC^\Omega}(X_\n \to \Omega(X_{\n+1}), Y_\n \to \Omega(Y_{\n+1})) \simeq \Mor_\mC(X_\n,Y_\n)\times_{\Mor_\mC(X_{n},\Omega(Y_{\n+1}))} \Mor_\mC(X_{\n+1},Y_{\n+1}).$$
Hence by the pasting law there is a canonical equivalence
$$\Mor_{\Pre\Sp(\mC)}(X,Y) \simeq \Mor_\mC(X_0,Y_0)\times_{\Mor_\mC(X_{0},\Omega(Y_{1}))} 
\Mor_\mC(X_1,Y_1)\times_{\Mor_\mC(X_{1},\Omega(Y_{2}))}...$$

\end{proof}

\begin{corollary}\label{compo}
Let $\mC$ be a reduced bioriented category that admits endomorphisms
and $X \in \Pre\Sp(\mC), Y \in \Sp(\mC).$ There is a canonical equivalence of $\infty$-categories:	
$$\Mor_{\Pre\Sp(\mC)}(X,Y) \simeq \lim(... \to \Mor_{\mC}(X_2,Y_2) \rightarrow
\Mor_{\mC}(X_1,Y_1) \rightarrow \Mor_{\mC}(X_0,Y_0)).$$	

\end{corollary}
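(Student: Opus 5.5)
We derive this from \cref{comp}: a spectrum object in $\mC$ is a prespectrum whose bonding morphisms $Y_\n \to \Omega(Y_{\n+1})$ are all equivalences. With $X \in \Pre\Sp(\mC)$ and $Y \in \Sp(\mC)$ (viewed as a prespectrum via the evident functor $\Sp(\mC) \to \Pre\Sp(\mC)$), \cref{comp} expresses $\Mor_{\Pre\Sp(\mC)}(X,Y)$ as the iterated pullback
$$\Mor_{\mC}(X_0,Y_0)\times_{\Mor_{\mC}(X_{0},\Omega(Y_1))} \Mor_{\mC}(X_1,Y_1)\times_{\Mor_{\mC}(X_1,\Omega(Y_2))} \cdots .$$
The task is to rewrite this iterated pullback as a sequential limit.

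First, for each $\n \geq 0$ the map $\Mor_\mC(X_\n,Y_\n) \to \Mor_\mC(X_\n,\Omega(Y_{\n+1}))$ occurring in the pullback is postcomposition with the bonding map $Y_\n \to \Omega(Y_{\n+1})$. Since $Y$ is a spectrum, this map is an equivalence of $\infty$-categories. Second, the other leg $\Mor_\mC(X_{\n+1},Y_{\n+1}) \to \Mor_\mC(X_\n,\Omega(Y_{\n+1}))$ is the composite
$$\Mor_\mC(X_{\n+1},Y_{\n+1}) \xrightarrow{\Omega} \Mor_\mC(\Omega X_{\n+1},\Omega Y_{\n+1}) \to \Mor_\mC(X_\n,\Omega Y_{\n+1}),$$
where the second map is precomposition with the bonding map of $X$. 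Here $\Omega$ is the enriched functor from \cref{reduc}, applied to the $\mC^\cop$-factor where necessary. Inverting the equivalences from the first step gives maps $\Mor_\mC(X_{\n+1},Y_{\n+1}) \to \Mor_\mC(X_\n,Y_\n)$.

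A pullback along an equivalence is equivalent to its other factor. Applying this inductively, using the pasting law for the finite stages, the $\n$-th finite truncation of the iterated pullback is identified with $\Mor_\mC(X_\n,Y_\n)$. The transition maps between consecutive truncations become the maps just constructed. Passing to the limit over $\n$, the iterated pullback becomes
$$\lim(\cdots \to \Mor_\mC(X_2,Y_2) \to \Mor_\mC(X_1,Y_1) \to \Mor_\mC(X_0,Y_0)),$$
as claimed.

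The main point requiring care is bookkeeping rather than a genuine obstacle. One must check that the alternation between $\mC$ and $\mC^\cop$ in the definition of $\Pre\Sp(\mC)$ is harmless. This holds because $\mC^\cop$ has the same underlying morphism $\infty$-categories up to the involution $(-)^{\co\op}$ of $\infty\Cat$. Since that involution is an equivalence, it preserves both pullbacks and the property of being an equivalence. With this in hand, the identification follows as stated.
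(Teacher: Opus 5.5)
Your proposal is correct and follows the argument the paper intends: the paper states this as an immediate consequence of \cref{comp} without a separate proof, and the point is that the legs $\Mor_{\mC}(X_\n,Y_\n) \to \Mor_{\mC}(X_\n,\Omega(Y_{\n+1}))$ of the iterated pullback are equivalences because $Y$ is a spectrum, so the pullback collapses to the sequential limit. Your caveat about $\mC^\cop$ matches the paper's own convention: it also writes $\Mor_{\mC}$ in every degree, although in odd degrees the oriented morphism $\infty$-categories $\R\Mor$ are twisted by $(-)^{\co\op}$ (the antioriented ones $\L\Mor$ are unaffected).
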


\begin{notation}Let $\mC$ be a reduced bioriented category that admits endomorphisms.
Let $$\gamma: \Pre\Sp(\mC) \to \mC$$
be the composition of projection to the final object of the diagram $\Pre\Sp(\mC) \to \mC^\Omega $
followed by the composition $$\mC^\Omega \to \mC^{\bD^1} \to \mC^{\{0\}}.$$	
\end{notation}

\begin{example}Let $\mC$ be a reduced bioriented category that admits endomorphisms and suspensions. Let $X \in \mC.$ The infinite suspension prespectrum of $X$ is the prespectrum $$\Sigma_{\mathrm{pre}}^\infty(X):= (X, \Sigma(X),\Sigma^2(X),...)$$ in $\mC$, where the bonding morphisms are the unit $ \Sigma^\n(X) \to \Omega(\Sigma(\Sigma^\n(X))).$
Hence $\gamma(\Sigma_{\mathrm{pre}}^\infty(X)) \simeq X$.
\end{example}

\begin{lemma}\label{embes}
Let $\mC$ be a reduced bioriented category that admits endomorphisms.
There is an embedding of bioriented categories 
$$\Sp(\mC) \subset \Pre\Sp(\mC)$$
whose essential image precisely consists of the prespectra whose bonding morphisms are equivalences.

\end{lemma}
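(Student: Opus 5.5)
The plan is to build the embedding as a map of limit diagrams in $\wedge\Cat\wedge$ and then identify its essential image by a mapping-object computation. Both $\Sp(\mC)$ and $\Pre\Sp(\mC)$ are sequential limits of reduced bioriented categories. $\Sp(\mC)$ is the limit of $\ldots \xrightarrow{\Omega} \mC^\cop \xrightarrow{\Omega} \mC$, while $\Pre\Sp(\mC)$ is the iterated pullback $\ldots \times_{\mC^\cop} \mC^\Omega \times_{\mC}(\mC^\cop)^\Omega \times_{\mC^\cop} \mC^\Omega$.

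\textbf{Step 1: the comparison functor.} For each $\n$ there is a bioriented functor
$$\mC^\cop \to \mC^\Omega=\mC^{\bD^1}\times_{\mC^{\{1\}}}\mC^\cop, \qquad Y \mapsto (\Omega(Y)\xrightarrow{\id}\Omega(Y),\, Y).$$
It is obtained from the bioriented functor $\Omega: \mC^\cop \to \mC$ of \cref{reduc} (2), composed with the diagonal $\mC \to \mC^{\bD^1}$, which is restriction along $\bD^1 \to \bD^0$. Together with the identity on the second factor this lands in the pullback. These functors are compatible with the projections to $\mC$ and $\mC^\cop$, so they assemble into a map of limit diagrams and hence into a bioriented functor $\Sp(\mC) \to \Pre\Sp(\mC)$. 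On objects it sends $(X_\n)$ with equivalences $X_\n \simeq \Omega(X_{\n+1})$ to the prespectrum whose bonding morphisms are these equivalences.

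\textbf{Step 2: the functor is an embedding.} Take $X,Y \in \Sp(\mC)$. \cref{compo} identifies $\Mor_{\Pre\Sp(\mC)}(X,Y)$ with $\lim_\n \Mor_\mC(X_\n,Y_\n)$, where the transition maps are given by applying $\Omega$ and the bonding equivalences. Since $\Sp(\mC)$ is a limit in $\wedge\Cat\wedge$, its morphism objects are computed as the same limit. Hence the comparison functor induces equivalences on morphism $\infty$-categories for both the oriented and the antioriented enrichment, i.e. it is bioriented fully faithful.

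\textbf{Step 3: the essential image.} Objects in the image clearly have invertible bonding morphisms. Conversely, a prespectrum whose bonding maps are equivalences determines a point of each factor $\mC^\Omega$ lying in the image of $\mC^\cop \to \mC^\Omega$. That image is the full subcategory of $\mC^\Omega$ of pairs whose arrow is an equivalence, and it is equivalent to $\mC^\cop$. Such a compatible family of points gives an object of the limit $\Sp(\mC)$.

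\textbf{Main obstacle.} The step I expect to be hardest is showing that $\mC^\cop \to \mC^\Omega$ is an equivalence onto the full subcategory of equivalences in a way that is compatible with the bioriented enrichment, and that limits of such full embeddings remain full embeddings. I would handle this by arguing levelwise: $\mC \to \mC^{\bD^1}$ is an enriched embedding onto the equivalences, and pullbacks and limits in $\wedge\Cat\wedge$ of fully faithful functors are fully faithful. The latter follows because morphism objects in limits are computed as limits of morphism objects, as in \cref{comp}.
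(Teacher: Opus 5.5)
Your proposal is correct and follows essentially the paper's route. The paper likewise identifies $\mC^\cop$ with the full subcategory of $\mC^\Omega$ on objects whose arrow $X \to \Omega(Y)$ is an equivalence, using that $\mC^{\bD^1} \to \mC^{\{1\}}$ restricts to an equivalence on the full subcategory of equivalences, and then takes the induced embedding of the iterated pullbacks over $\mC \times \mC^\cop$; your additional check of full faithfulness via \cref{compo} in Step 2 is redundant given this levelwise argument, but harmless.
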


\begin{proof}

Let $(\mC^\Omega)' \subset \mC^\Omega$ be the full subcategory of objects
$(X,Y, \alpha: X \to \Omega(Y))$ such that $\alpha$ is an equivalence.
The bioriented functor $\mC^{\bD^1}\to \mC^{\{1\}}$ restricts to an equivalence to the full subcategory of equivalences. So the restricted projection $(\mC^\Omega)' \subset \mC^\Omega \to \mC^\cop$ is an equivalence.
The resulting embedding $\mC^\cop\simeq (\mC^\Omega)' \subset \mC^\Omega$ is a bioriented functor over $\mC \times \mC^\cop$ when we view $\mC^\cop$ over $\mC \times \mC^\cop$ via the bioriented functors $\Omega, \id$, and so induces an embedding of bioriented categories 
$$\Sp(\mC) \simeq \lim(... \xrightarrow{\Omega} \mC \xrightarrow{\Omega} \mC^\cop \xrightarrow{\Omega}\mC) \simeq ... \times_{\mC} \mC \times_{\mC^\cop} \mC^\cop \subset \Pre\Sp(\mC)=... \times_{\mC^\cop} \mC^\Omega \times_{\mC}(\mC^\cop)^\Omega \times_{\mC^\cop} \mC^\Omega.$$

\end{proof}

\begin{lemma}\label{infs} Let $\mC$ be a reduced bioriented category that admits endomorphisms and suspensions. Let $X \in \mC, Y \in \Pre\Sp(\mC).$ The bioriented functor $\gamma: \Pre\Sp(\mC) \to \mC $ induces an equivalence of $\infty$-categories:
$$\Mor_{\Pre\Sp(\mC)}(\Sigma_{\mathrm{pre}}^\infty(X),Y) \to \Mor_{\mC}(\gamma(\Sigma_{\mathrm{pre}}^\infty(X)),\gamma(Y)) \simeq \Mor_{\mC}(X,\gamma(Y)). $$

So the bioriented functor $\gamma: \Pre\Sp(\mC) \to \mC $ admits a fully faithful left adjoint $\Sigma_{\mathrm{pre}}^\infty$ that sends $X$ to $\Sigma_{\mathrm{pre}}^\infty(X).$ 

\end{lemma}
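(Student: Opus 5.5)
We compute the morphism $\infty$-category out of $\Sigma_{\mathrm{pre}}^\infty(X)$ with \cref{comp}. By that lemma,
$$\Mor_{\Pre\Sp(\mC)}(\Sigma_{\mathrm{pre}}^\infty(X),Y) \simeq \Mor_{\mC}(X,Y_0)\times_{\Mor_{\mC}(X,\Omega(Y_1))} \Mor_{\mC}(\Sigma(X),Y_1)\times_{\Mor_{\mC}(\Sigma(X),\Omega(Y_2))} \Mor_{\mC}(\Sigma^2(X),Y_2)\times \cdots$$
The map to $\Mor_\mC(X,Y_0)$ is the projection to the first factor, and it is induced by $\gamma$.

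The key step is to check that each map
$$\Mor_{\mC}(\Sigma^{\n+1}(X),Y_{\n+1}) \to \Mor_{\mC}(\Sigma^\n(X),\Omega(Y_{\n+1}))$$
in the iterated pullback is an equivalence. By construction, this map is precomposition with the bonding morphism $\Sigma^\n(X)\to \Omega\Sigma(\Sigma^\n(X))$ of $\Sigma_{\mathrm{pre}}^\infty(X)$, which is the unit, followed by applying $\Omega$. That composite is exactly the adjunction equivalence for $\Sigma \dashv \Omega$. This holds at the level of morphism $\infty$-categories, not just mapping spaces, because \cref{reduc}(3) provides a bioriented adjunction $\Sigma:\mC^\cop \rightleftarrows \mC:\Omega$. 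The $\cop$ twist only changes the enrichment by $(-)^{\co\op}$ on one side, so the underlying functor on morphism $\infty$-categories is still an equivalence. If needed, we first reduce to the case where $\mC$ admits all tensors and cotensors by the Yoneda embedding argument used in the proof of \cref{reduc}.

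With these maps identified as equivalences, we can collapse the limit. In each pullback
$$\Mor_\mC(\Sigma^\n X,Y_\n)\times_{\Mor_\mC(\Sigma^\n X,\Omega Y_{\n+1})}\Mor_\mC(\Sigma^{\n+1}X,Y_{\n+1}),$$
the right leg is an equivalence, so projection to the left factor is an equivalence. Writing the infinite pullback as a sequential limit of its finite stages, every transition map in the tower is an equivalence. Hence the projection to the $0$-th stage $\Mor_\mC(X,Y_0)=\Mor_\mC(X,\gamma(Y))$ is an equivalence.

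This gives the first claim. The second claim follows formally. Since $\gamma(\Sigma_{\mathrm{pre}}^\infty(X))\simeq X$, the equivalence shows that the enriched functor $\Mor_\mC(X,\gamma(-))$ is corepresented by $\Sigma_{\mathrm{pre}}^\infty(X)$. By the enriched adjoint criterion recalled before \cref{adj}, $\gamma$ therefore admits a bioriented left adjoint with value $\Sigma_{\mathrm{pre}}^\infty(X)$ at $X$. The unit $X\to\gamma\Sigma_{\mathrm{pre}}^\infty(X)\simeq X$ is an equivalence, so this left adjoint is fully faithful.

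The main obstacle is the first step: verifying that the transition maps in \cref{comp} really agree with the enriched adjunction equivalence, with the $\cop$ twists handled coherently. I would handle this by embedding $\mC$ into a presentable bioriented category and using \cref{impol}(5) for naturality.
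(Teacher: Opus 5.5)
Your proposal is correct and is essentially the paper's proof. The paper also applies \cref{comp} and uses the $\Sigma \dashv \Omega$ adjunction to rewrite each factor $\Mor_{\mC}(\Sigma^{n}(X),Y_{n})$ as $\Mor_{\mC}(X,\Omega^{n}(Y_{n}))$; every structure map in the iterated pullback then becomes an equivalence, and the limit collapses to $\Mor_{\mC}(X,Y_0)$. Your extra remarks on the $(-)^{\co\op}$ twist (an equivalence twisted by an involution is still an equivalence) and on obtaining the fully faithful left adjoint from corepresentability and the unit are details the paper leaves implicit.
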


\begin{proof}
Using \cref{comp} the bioriented functor $\gamma: \Pre\Sp(\mC) \to \mC $ induces the following functor:
$$\Mor_{\Pre\Sp(\mC)}(\Sigma_{\mathrm{pre}}^\infty(X),Y) \simeq \Mor_{\mC}(X,Y_0)\times_{\Mor_{\mC}(X,\Omega(Y_1))} 
\Mor_{\mC}(X,\Omega(Y_1))\times_{\Mor_{\mC}(X,\Omega^2(Y_2))} $$$$ \Mor_{\mC}(X,\Omega^2(Y_2))\times_{\Mor_{\mC}(X,\Omega^3(Y_{3}))} ... \simeq 
\Mor_{\mC}(X,Y_0). $$	

\end{proof}

\begin{definition}\label{hiul}
Let $\mC$ be a reduced bioriented category that admits endomorphisms and sequential colimits and $X \in \Pre\Sp(\mC)$. The associated spectrum of $X$ is the following spectrum $X'$:
for every $\n \geq 0$ let $$X'_\n := \colim(X_\n \to \Omega(X_{\n+1}) \to \Omega^2(X_{\n+2}) \to ...).$$
The bonding morphism $ X'_\n \to \Omega(X'_{\n+1})$ is the canonical equivalence
$$  \colim(X_\n \to \Omega(X_{\n+1}) \to \Omega^2(X_{\n+2}) \to ...) \simeq \colim(\Omega(X_{\n+1}) \to \Omega^2(X_{\n+2}) \to \Omega^3(X_{\n+3}) \to ...) $$$$\simeq \Omega(\colim(X_{\n+1} \to \Omega(X_{\n+2}) \to \Omega^2(X_{\n+3}) \to ...)).$$

\end{definition}

\begin{remark}

There is a morphism $X \to X'$ in $\Pre\Sp(\mC)$
such that for any $\n \geq 0$ the morphism $X_\n \to X'_\n$ is the morphism to the colimit using the canonical factorization $$X_\n \to \Omega^2(X_{\n+2}) \to \Omega^2(X'_{\n+2}) \simeq X'_\n $$ of the morphism $X_\n \to X'_\n.$

\end{remark}

\begin{proposition}\label{spectri}

Let $\mC$ be a reduced bioriented category that admits endomorphisms and sequential colimits and let $X \in \Pre\Sp(\mC)$ and $Y \in \Sp(\mC).$
The following induced functor is an equivalence: $$\Mor_{\Pre\Sp(\mC)}(X',Y) \to \Mor_{\Pre\Sp(\mC)}(X,Y).$$

The bioriented embedding $\Sp(\mC)\subset \Pre\Sp(\mC)$ admits a left adjoint sending $X \in \Pre\Sp(\mC)$ to $X'$.

\end{proposition}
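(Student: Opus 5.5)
The plan is to compute both morphism $\infty$-categories using \cref{compo}, which applies because $Y$ is a spectrum, and then to compare the two resulting towers.

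By \cref{compo} we have
$$\Mor_{\Pre\Sp(\mC)}(X,Y) \simeq \lim_\n \Mor_\mC(X_\n,Y_\n).$$
Here the transition maps send $f_{\n+1}$ to the composite $X_\n \to \Omega(X_{\n+1}) \xrightarrow{\Omega f_{\n+1}} \Omega(Y_{\n+1}) \simeq Y_\n$. The same formula holds with $X'$ in place of $X$.

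Since $\Mor_\mC(-,Y_\n)$ sends colimits to limits, we get
$$\Mor_\mC(X'_\n,Y_\n) \simeq \lim_\bk \Mor_\mC(\Omega^\bk X_{\n+\bk}, Y_\n).$$
For this step, "sequential colimits" must be understood in the enriched (conical) sense, so that the colimit is detected on morphism $\infty$-categories.

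Next we use the bioriented functor $\Omega^\bk$ of \cref{reduc}, together with the equivalences $Y_\n \simeq \Omega^\bk Y_{\n+\bk}$. These give maps
$$\Mor_\mC(X_{\n+\bk},Y_{\n+\bk}) \to \Mor_\mC(\Omega^\bk X_{\n+\bk},\Omega^\bk Y_{\n+\bk}) \simeq \Mor_\mC(\Omega^\bk X_{\n+\bk},Y_\n),$$
and hence a map from the tower for $X$ into the doubly indexed system computing $\Mor(X',Y)$. The induced functor $\Mor(X',Y)\to \Mor(X,Y)$ is restriction along the canonical morphism $X \to X'$. We identify this restriction with the projection of the double limit $\lim_\n \lim_\bk \Mor_\mC(\Omega^\bk X_{\n+\bk},Y_\n)$ onto its diagonal-type subsystem.

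The key point is a cofinality argument. Write $D(\n,\bk) = \Mor_\mC(\Omega^\bk X_{\n+\bk}, Y_\n)$. Because $Y$ has equivalences as bonding maps, this depends, up to equivalence, only on $m=\n+\bk$ through $\Mor_\mC(X_m,Y_m)$, via the equivalence $\Omega^\bk$ induced on morphisms into $\Omega^\bk Y_m$. More precisely, the map $\Mor_\mC(X_m,Y_m) \to \Mor_\mC(\Omega^\bk X_m, \Omega^\bk Y_m)$ is not itself an equivalence. Instead, one shows that the limit over the poset $\{(\n,\bk)\}$ can be computed by restricting to the cofinal subposet $\{\bk=0\}$, in the sense appropriate to limits. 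Using the bonding equivalences of $Y$, the transition maps in the $\bk$-direction, from $D(\n,\bk+1)$ to $D(\n,\bk)$ induced by $X_{\n+\bk}\to\Omega X_{\n+\bk+1}$, match the transition maps in the $\n$-direction. Concretely, I would rewrite the double limit as $\lim_m \lim_{\n \le m} D(\n, m-\n)$ and check that for fixed $m$ the inner diagram is governed by its value at $\n=m$. The composite is then $\lim_m \Mor_\mC(X_m,Y_m) = \Mor(X,Y)$.

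The main obstacle is making this cofinality precise coherently, since the maps in the double system come from both the bonding maps of $X$ and the functor $\Omega$. The cleanest route is probably to replace the computation by a formal argument. The functor $X \mapsto X'$ is the colimit of the shifted prespectra $\Omega^\bk X[\bk]$. For a spectrum $Y$, the maps $\Mor(\Omega^{\bk+1}X[\bk+1],Y) \to \Mor(\Omega^\bk X[\bk],Y)$ are equivalences, because $Y\simeq \Omega Y[1]$ and $\Omega$ is an equivalence on $\Sp(\mC)$ by \cref{rstab}. The limit of this constant tower then yields the claim.

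The adjunction statement follows directly from the equivalence together with \cref{embes}: $X\to X'$ exhibits $X'$ as a reflection of $X$ into $\Sp(\mC)$, and by \cref{adj} it is compatible with the bioriented enrichment since it is an equivalence on morphism $\infty$-categories.
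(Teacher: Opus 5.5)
Your first route is the paper's proof. You expand $\Mor_{\Pre\Sp(\mC)}(X',Y)$ by \cref{compo}, write $\Mor_\mC(X'_\n,Y_\n)\simeq\lim_\bk\Mor_\mC(\Omega^\bk X_{\n+\bk},\Omega^\bk Y_{\n+\bk})$, and collapse the double limit onto the diagonal; the paper calls this last step ``commuting limits with limits''. This route already closes, and it needs no more coherence than the paper itself uses. Put $E(\n,m)=\Mor_\mC(\Omega^{m-\n}X_m,Y_\n)$ for $\n\le m$. The transition maps are:
\begin{itemize}
\item $E(\n,m+1)\to E(\n,m)$, given by precomposing with $\Omega^{m-\n}$ of a bonding map of $X$;
\item $E(\n+1,m)\to E(\n,m)$, given by applying $\Omega$ and then using $\Omega Y_{\n+1}\simeq Y_\n$.
\end{itemize}
So the double limit is a limit over $P^{\op}$, where $P=\{(\n,m)\mid \n\le m\}$ carries the componentwise order. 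The diagonal $m\mapsto(m,m)$ is cofinal in $P$, because $\{m'\mid(\n,m)\le(m',m')\}=\{m'\ge m\}$. The restricted tower $E(m+1,m+1)\to E(m,m+1)\to E(m,m)$ is exactly the tower of \cref{compo} for $X$. Your fiberwise version $\lim_m\lim_{\n\le m}$ expresses the same cofinality.

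The genuine gap is in the ``cleaner'' formal route you then prefer. The claim that $\Mor(\Omega^{\bk+1}X[\bk+1],Y)\to\Mor(\Omega^\bk X[\bk],Y)$ is an equivalence does not follow from $Y\simeq\Omega Y[1]$ and \cref{rstab}. \cref{rstab} concerns $\Omega$ on $\Sp(\mC)$, whereas $\Omega^\bk X[\bk]$ is only a prespectrum. Write $T=\Omega\circ[1]$ and let $\eta_Z\colon Z\to TZ$ be given by the bonding maps. Then $f\mapsto\eta_Y^{-1}\circ Tf$ is only a section of $\eta_Z^*\colon\Mor(TZ,Y)\to\Mor(Z,Y)$, and it is an inverse only if $T\eta_Z\simeq\eta_{TZ}$. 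That identity needs the interchange of the two loop coordinates in $\Omega\Omega$ to be trivial. So does the more basic fact that $\eta_Z$ is a morphism of prespectra at all, which you need for $X\to\Omega X[1]\to\Omega^2X[2]\to\cdots$ to be a diagram in $\Pre\Sp(\mC)$. The triviality holds here by \cref{autoeq}, exactly as in the proof of \cref{rstab}. But it is a real input (for pointed spaces the interchange is the sign $-1$), and your argument never mentions it. The only other way to get the equivalence is to interleave the two towers, which is the cofinality argument again.

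Two smaller points on the adjunction. First, \cref{adj} requires you to check that $\Sp(\mC)\subset\Pre\Sp(\mC)$ is closed under left and right cotensors; the equivalence on morphism $\infty$-categories alone is not the criterion. The paper does this check by writing the cotensors levelwise, with a twist by $(-)^{\op}$ in odd degrees for right cotensors. Second, as in the paper's definition of $X'$, the claim $X'\in\Sp(\mC)$ tacitly uses that $\Omega$ commutes with sequential colimits.
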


\begin{proof}
Using \cref{comp} the induced functor $$\Mor_{\Pre\Sp(\mC)}(X',Y) \to \Mor_{\Pre\Sp(\mC)}(X,Y)$$
factors as 
$$\theta: \lim(\Mor_{\mC}(X'_0,Y_0) \leftarrow
\Mor_{\mC}(X'_1,Y_1) \leftarrow \Mor_{\mC}(X'_2,Y_2)\leftarrow ...) ... \to $$$$ \lim(\Mor_{\mC}(X_0,Y_0) \leftarrow
\Mor_{\mC}(X_1,Y_1) \leftarrow \Mor_{\mC}(X_2,Y_2)\leftarrow ...). $$
For every $\n \geq 0$ there is a canonical equivalence
$$ \Mor_{\mC}(X'_\n,Y_\n) \simeq \lim(\Mor_{\mC}(X_\n,Y_\n) \leftarrow
\Mor_{\mC}(\Omega(X_{\n+1}),Y_\n) \leftarrow \Mor_{\mC}(\Omega^2(X_{\n+2}),Y_\n)\leftarrow ...)
\simeq $$$$\lim(\Mor_{\mC}(X_\n,Y_\n) \leftarrow
\Mor_{\mC}(\Omega(X_{\n+1}),\Omega(Y_{\n+1})) \leftarrow \Mor_{\mC}(\Omega^2(X_{\n+2}),\Omega^2(Y_{\n+2})) \leftarrow ...) \simeq $$
$$ \lim(\Mor_{\mC}(X_0,Y_0) \leftarrow ... \leftarrow \Mor_{\mC}(X_\n,Y_\n) \leftarrow
\Mor_{\mC}(\Omega(X_{\n+1}),\Omega(Y_{\n+1})) \leftarrow \Mor_{\mC}(\Omega^2(X_{\n+2}),\Omega^2(Y_{\n+2})) \leftarrow ...).$$
Commuting limits with limits we see that $\theta$ is an equivalence.

The second part follows from \cref{adj} and that $\Sp(\mC)$ is closed in $\Pre\Sp(\mC)$ under left and right cotensors:
the left cotensor is formed objectwise. The right cotensor of any $X \in \infty\Cat_* $ and $\E \in \Pre\Sp(\mC)$ is in even degree $\n \geq 0$ the right cotensor $\E_\n^X$ and in odd degree
$\n \geq 0$ the right cotensor $\E_\n^{X^\op}$ and for even $\n \geq 0$ the bonding morphism
$$\E_\n^X \to \Omega(\E_{\n+1}^{X^\op}) \simeq \Omega(\E_{\n+1})^X$$ is induced by the bonding morphism $\E_\n \to \Omega(\E_{\n+1})$ and  for odd $\n \geq 0$ the bonding morphism
$$\E_\n^{X^\op} \to \Omega(\E_{\n+1}^{X}) \simeq \Omega(\E_{\n+1})^{X^\op}$$ is induced by the bonding morphism $\E_\n \to \Omega(\E_{\n+1})$.

\end{proof}
	
\begin{corollary}\label{spectrii}
	
Let $\mC$ be a reduced compactly generated bioriented category.
The left adjoint of the bioriented embedding $\Sp(\mC)\subset \Pre\Sp(\mC)$ preserves finite limits and left and right cotensors with finite $\infty$-categories with distinguished object.
	
\end{corollary}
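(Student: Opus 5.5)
The left adjoint $X \mapsto X'$ from \cref{spectri} is computed degreewise as the sequential colimit
$$X'_\n = \colim(X_\n \to \Omega(X_{\n+1}) \to \Omega^2(X_{\n+2}) \to \cdots).$$
I will reduce the statement to two commutation facts: sequential colimits commute with the relevant finite constructions in $\mC$, and $\Omega$ commutes with them as well. Limits and cotensors in $\Pre\Sp(\mC)$ and in $\Sp(\mC)$ are computed levelwise, by the limit definitions, \cref{colimenrfun} and \cref{weiadj}. For right cotensors one uses the twisted description in the proof of \cref{spectri}: $\E_\n^X$ in even degrees and $\E_\n^{X^\op}$ in odd degrees. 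So it suffices to check, for each $\n$, that the functor $X \mapsto X'_\n$ from $\Pre\Sp(\mC)$ to $\mC$ preserves finite limits and left and right cotensors with finite $K \in \infty\Cat_*$.

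The first step concerns $\Omega$. The bioriented functor $\Omega: \mC \to \mC^\cop$ is a right adjoint by \cref{reduc}, so it preserves all limits. It also commutes with left and right cotensors, with the $\cop$-twist producing the $X^\op$ in odd degrees. Hence the functor $X \mapsto \Omega^\bk(X_{\n+\bk})$ preserves finite limits and cotensors for every $\bk$, and the levelwise limit or cotensor diagram is carried to the corresponding diagram term by term.

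The second step, which I expect to be the main obstacle, is that in a compactly generated (reduced bioriented) $\mC$, sequential colimits commute with finite limits and with cotensors by finite $K$. I would use the following argument.
\begin{enumerate}
\item For compact $C \in \mC$, the functor $\Map_\mC(C,-)$ preserves filtered colimits. In $\infty\Grp$, filtered colimits commute with finite limits, so sequential colimits commute with finite limits in $\mC$, these being detected by the compact generators.
\item For cotensors, the enriched $\R\Mor_\mC(C,-)$ and $\L\Mor_\mC(C,-)$ into $\infty\Cat$ preserve filtered colimits for $C$ compact. This should follow since $\mC$ is generated by compact objects tensored appropriately, or more directly: $C \ot K$ and $K \ot C$ are compact whenever $C$ is compact and $K$ is finite.
\item Mapping into a cotensor gives $\Map(C, Y^K) \simeq \Map(C \ot K, Y)$. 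Each such mapping functor commutes with filtered colimits, so $(\colim Y_i)^K \simeq \colim Y_i^K$.
\end{enumerate}
This uses that finite $K$ are built from disks by finite colimits, so it reduces to the case $K = \bD^\m$ together with finite limits. Compactness of $C \ot \bD^\m$ follows from \cref{homfil}, which says $\Mor$ preserves filtered colimits, applied inductively through $\Fun^\oplax(\bD^\m,-)$, or equivalently through \cref{hom}.

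Combining the two steps, the diagram defining $(\lim_i X^i)'_\n$ is a sequential colimit of finite limits. Exchanging the order gives $\lim_i (X^i)'_\n$, and the same exchange works for cotensors. The resulting equivalences are compatible with the canonical comparison maps, which proves that the left adjoint preserves finite limits and left and right cotensors with finite $\infty$-categories with distinguished object.
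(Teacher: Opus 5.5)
Your proposal is correct and is essentially the paper's proof. The paper likewise combines the degreewise formula $X'_{\n}=\colim_{\bk}\Omega^{\bk}(X_{\n+\bk})$ of \cref{hiul} and \cref{spectri} with the levelwise computation of limits and cotensors, and with the fact that in a compactly generated $\mC$ filtered colimits commute with finite limits and with cotensors by finite $\infty$-categories with distinguished object.

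The one point to tighten is your justification of the cotensor step. \cref{homfil} only shows that $\bD^{\m}$ is compact in $\infty\Cat$, so deducing compactness of $C\ot\bD^{\m}$ from it already presupposes that $\R\Mor_\mC(C,-)$ preserves filtered colimits, which is the claim you set out to prove. For a general $\mC$ this must instead be taken as part of what compact generation of a bioriented category means, namely that compact objects are closed under left and right tensors with finite $K$; the paper does exactly this when it attributes the step to compact generation.
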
	
	
\begin{proof}
	
The statement follows from the description of the left adjoint \ref{hiul}, \cref{spectri} and that small filtered colimits in $\mC$ commute with finite limits and cotensors with finite $\infty$-categories with distinguished object since $\mC$ is compactly generated.
	
\end{proof}

\begin{corollary}\label{infsus} Let $\mC$ be a reduced bioriented category that admits endomorphisms, suspensions and sequential colimits such that endomorphisms preserve sequential colimits.
There is a bioriented adjunction $\Sigma^\infty: \mC \rightleftarrows \Sp(\mC) : \Omega^\infty$.
\end{corollary}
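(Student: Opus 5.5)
The plan is to obtain the adjunction by composing two adjunctions already available: the one from \cref{infs} between $\mC$ and $\Pre\Sp(\mC)$, and the one from \cref{spectri} between $\Pre\Sp(\mC)$ and $\Sp(\mC)$. The candidate left adjoint is
$$\Sigma^\infty := (-)' \circ \Sigma^\infty_{\mathrm{pre}} : \mC \to \Pre\Sp(\mC) \to \Sp(\mC),$$
sending $X$ to the spectrification of $(X,\Sigma X,\Sigma^2X,\dots)$. The candidate right adjoint is $\gamma|_{\Sp(\mC)}$. Under the embedding of \cref{embes}, $\gamma$ evaluates at level $0$, so this restriction is the projection $\Omega^\infty$.

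First I check the hypotheses of \cref{infs}: $\mC$ is reduced and admits endomorphisms and suspensions. \cref{infs} then says that $\Sigma^\infty_{\mathrm{pre}}$ is left adjoint to $\gamma$ as a bioriented functor. On morphism $\infty$-categories this is the equivalence $\Mor_{\Pre\Sp(\mC)}(\Sigma^\infty_{\mathrm{pre}}X,Y)\simeq\Mor_\mC(X,\gamma Y)$.

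Second, \cref{spectri} needs sequential colimits in $\mC$, and it gives a bioriented left adjoint $X\mapsto X'$ to the embedding $\Sp(\mC)\subset\Pre\Sp(\mC)$. For $X'$ to actually be a spectrum, the bonding map $X'_\n\to\Omega(X'_{\n+1})$ in \cref{hiul} has to be an equivalence. That is exactly where the assumption that $\Omega$ preserves sequential colimits enters, since the construction commutes $\Omega$ past $\colim$. The bioriented refinement of this left adjoint comes from \cref{adj}, using that $\Sp(\mC)$ is closed under left and right cotensors as recorded in the proof of \cref{spectri}.

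Composing the two bioriented adjunctions (adjunctions in the 2-category $\wedge\Cat\wedge$ compose) gives $\Sigma^\infty\dashv \gamma\circ\iota$. On morphism objects this reads
$$\Mor_{\Sp(\mC)}(\Sigma^\infty X,Y)\simeq\Mor_{\Pre\Sp(\mC)}(\Sigma^\infty_{\mathrm{pre}}X,Y)\simeq\Mor_\mC(X,Y_0)=\Mor_\mC(X,\Omega^\infty Y).$$
Finally I identify $\gamma\circ\iota$ with $\Omega^\infty$ as bioriented functors; both are projection to the last term of the limit diagram. The main point needing care is the one in the second step: that \cref{spectri} goes through under exactly these hypotheses, without presentability, using only that $\Omega$ commutes with sequential colimits.
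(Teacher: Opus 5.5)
Your proposal is correct and matches the paper's argument, which the paper leaves implicit by placing the corollary right after \cref{spectri}: the bioriented adjunction $\Sigma^\infty_{\mathrm{pre}} \dashv \gamma$ of \cref{infs} is composed with the spectrification adjunction $(-)' \dashv \iota$ of \cref{spectri}. The hypothesis that $\Omega$ preserves sequential colimits is what makes the bonding maps of $X'$ in \cref{hiul} equivalences, and $\gamma$ restricted to $\Sp(\mC)$ is the projection $\Omega^\infty$, exactly as you say.
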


\begin{remark}

The bioriented localization $\tau_0: \infty\Cat \rightleftarrows \infty\Grp$ of \cref{Grayspaces}
gives rise to a bioriented localization $$ \Cat\Pre\Sp \rightleftarrows \Pre\Sp(\infty\Grp)$$
that descends to a bioriented localization $\tau_0: \Cat\Sp \rightleftarrows \Sp(\infty\Grp)$ using \cref{spectri},
where the left adjoint takes the spectrification of the degreewise classifying space.

\end{remark}


Next we prove that negative categorical spheres are dense in the antioriented category of categorical spectra (\cref{spgen}).

\begin{lemma}\label{Geh} Let $\mC$ be a reduced bioriented category that admits endomorphisms. There is a canonical equivalence of antioriented categories
$$\Pre\Sp(\mC) \simeq {\Fun\wedge}(S(S^1)+_{\bD^0} S(S^1) +_{\bD^0} ...,\mC).$$
\end{lemma}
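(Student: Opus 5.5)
The plan is to corepresent the category of prespectrum objects by an explicit sequential colimit of pointed $\infty$-categories. Write $P := S(S^1)+_{\bD^0} S(S^1) +_{\bD^0} \cdots$ for the sequential colimit in $\infty\Cat$ (or $\infty\Cat_*$) of the chain of copies of $S(S^1)$. Here the target object $1$ of the $\n$-th copy is glued to the source object $0$ of the $(\n+1)$-th copy.

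The first step is to understand a single piece. An oriented functor out of the free oriented category on $S(S^1)$ should be the same as a pair of objects $X_\n, X_{\n+1}$ together with a based functor $S^1 \to \R\Mor_\mC(X_\n,X_{\n+1})$, equivalently a morphism of the appropriate shape. To make this precise, view $S(S^1)$ as the $\infty\Cat$-enriched category with objects $0,1$ and $\Mor(0,1)=S^1$. By \cref{susk} and the enriched Yoneda extension (\cref{Yonedaext}), the category of enriched functors $S(K)\to \mC$ is $\mC[K]$: pairs $(Y, X \to Y^K)$.

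Taking $K=S^1$ in the reduced setting, $Y^{S^1}_*$ is the right reduced cotensor with $S^1$, which is $\Omega(Y)$ in the underlying oriented category. So ${\Fun\wedge}(S(S^1),\mC) \simeq \mC^{\bD^1}\times_{\mC}\mC = \mC^\Omega$. The orientation of $\Omega$ here is the oriented one, and passing through $\mC^\cop$ at alternate stages accounts for the twist in the definition of $\Pre\Sp(\mC)$.

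The second step is to assemble the pieces. Each pushout $+_{\bD^0}$ glues along a single object, and the sequential colimit is formed in (enriched) categories. ${\Fun\wedge}(-,\mC)$ sends these colimits to limits, because it is the internal hom of a presentable 2-category. We therefore obtain
$${\Fun\wedge}(P,\mC)\simeq \cdots\times_{\mC}\mC^\Omega\times_{\mC}\mC^\Omega,$$
which matches \cref{prespectraa} stage by stage. The antioriented enrichment on the left comes from \cref{funor}, and the identification is one of antioriented categories because \cref{susk} is natural in an arbitrary test $\mV$-enriched category $\mB$.

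The main obstacle is bookkeeping the alternation $\mC$ versus $\mC^\cop$. In $\Pre\Sp(\mC)$ the odd stages use $(\mC^\cop)^\Omega$, whereas the left side uses the same $S(S^1)$ throughout. I would resolve this with \cref{autoeq} and \cref{ahoit}: $S^1\simeq (S^1)^{\co\op}$ canonically, and the relevant automorphism of $S^1$ is unique. Consequently the underlying antioriented categories of $(\mC^\cop)^\Omega$ and $\mC^\Omega$ agree. This is exactly why the claim is only asserted as an equivalence of antioriented, not bioriented, categories.
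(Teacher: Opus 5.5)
Your proposal is correct and follows the paper's proof. Like the paper, you identify $\mC^\Omega\simeq{\Fun\wedge}(S(S^1),\mC)$ over $\mC\times\mC$ via \cref{susk}, and then turn the sequential gluing $S(S^1)+_{\bD^0}S(S^1)+_{\bD^0}\cdots$ into the iterated pullback defining $\Pre\Sp(\mC)$.

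The one difference is your handling of the $\mC$ versus $\mC^\cop$ alternation, which the paper passes over silently and which has a simpler resolution. Since $(-)^\cop=(\id,(-)^{\co\op})_!$ only modifies the oriented enrichment, the underlying antioriented categories and functors of $\mC^\cop$, $(\mC^\cop)^\Omega$ and $\Omega$ are already those of $\mC$, $\mC^\Omega$ and $\Omega$. So the appeal to $S^1\simeq(S^1)^{\co\op}$ and \cref{autoeq} is unnecessary; it would only be needed if you wrote the odd bonding maps as $\Omega_{\mC^\cop}$.
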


\begin{proof}

By \cref{susk} there is an equivalence $$ \mC^\Omega \simeq {\Fun\wedge}(S(S^1),\mC)$$ of (underlying) antioriented categories over $\mC \times \mC$
sending $(Y,X \to \Omega(Y))$ to $(X,Y, S^1 \to \R\Mor_\mC(X,Y)).$
Hence there is an equivalence of (underlying) antioriented categories
$$\Pre\Sp(\mC) \simeq ... \times_\mC \mC^\Omega \times_\mC \mC^\Omega \simeq ... \times_\mC {\Fun\wedge}(S(S^1),\mC) \times_{\mC} {\Fun\wedge}(S(S^1),\mC) \simeq {\Fun\wedge}(S(S^1)+_{\bD^0} S(S^1) +_{\bD^0} ..., \mC).$$
	
\end{proof}

\begin{proposition}\label{spgen} Let $\mC$ be a reduced presentable bioriented category.
The bioriented category $\Sp(\mC)$ is generated under small colimits and left tensors by the following spectra in $\mC$ for $\n \geq 0$ and $Y \in \mC$:
$$(Y \wedge (-))_!( \Omega^\n(S^0)).$$

The full subcategory $ \{\Omega^\n(S^0) \mid \n \geq 0 \} \subset \Cat\Sp$ of negative categorical spheres
is dense in the antioriented category $\Cat\Sp$.

\end{proposition}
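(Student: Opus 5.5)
The plan is to realize every spectrum as a colimit of shifted suspension spectra. First I would interpret the object $(Y \wedge (-))_!(\Omega^\n(S^0))$. The reduced bioriented functor $Y \wedge (-): \infty\Cat_* \to \mC$ induces a functor $\Cat\Sp \to \Sp(\mC)$ on spectrum objects, via \cref{sptens}: $\Sp(\mC) \simeq \mC \ot_{\infty\Cat_*} \Cat\Sp$. Under this identification the object in question is $Y \ot \Omega^\n(S^0)$, where $\Omega^\n(S^0) \in \Cat\Sp$ is the $\n$-fold desuspension of $\Sigma^\infty(S^0)$, that is, $\Omega^\n\Sigma^\infty(Y)$ up to the twisting by $(-)^\cop$ in odd degrees.

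The key steps, in order, are as follows.

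\textbf{(a) Corepresentability.} The spectrum $\Omega^\n\Sigma^\infty(Y)$ corepresents $E \mapsto \Mor_\mC(Y, E_\n)$. This follows from \cref{rstab}: $\Omega$ on $\Sp(\mC)$ is the shift, so $\Sigma^\n$ is its inverse. Combined with the adjunction $\Sigma^\infty \dashv \Omega^\infty$ of \cref{infsuspi}, this gives
\[
\Mor_{\Sp(\mC)}(\Omega^\n\Sigma^\infty Y, E) \simeq \Mor_{\Sp(\mC)}(\Sigma^\infty Y, \Sigma^\n E) \simeq \Mor_\mC(Y, E_\n).
\]

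\textbf{(b) Colimit presentation.} Every $E \in \Sp(\mC)$ is a sequential colimit of these objects. Via \cref{spectri}, regard $E$ as a prespectrum. The truncations $E^{\le k}$ — with $E_j$ for $j \le k$ and $\Sigma^{j-k}E_k$ above, each with unit bondings — form a sequential diagram in $\Pre\Sp(\mC)$ with colimit $E$, because colimits in $\Pre\Sp(\mC)$ are computed levelwise. Each $E^{\le k}$ spectrifies to $\Omega^k\Sigma^\infty(E_k)$, which I would check directly against (a) and \cref{infs}. Since the spectrification left adjoint preserves colimits and fixes $E$, we get $E \simeq \colim_k \Omega^k\Sigma^\infty(E_k)$.

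\textbf{(c) Reducing to $Y$.} Then I would use that $\mC$ is generated under colimits by itself. Since $\mC \ot_{\infty\Cat_*} \Cat\Sp$ is generated under colimits and left tensors by pure tensors $Y \ot Z$, it suffices to write $Z \in \Cat\Sp$ via (b) with $\mC=\infty\Cat_*$. In that case $Z_k$ is an $\infty$-category with distinguished object, and $\Sigma^\infty(Z_k) \simeq Z_k \wedge \Sigma^\infty(S^0)$ is a left tensor of $S^0$. This yields the first claim.

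For the density statement, take $\mC = \infty\Cat_*$. By (a), the antioriented restricted Yoneda functor $\Cat\Sp \to \mP(\{\Omega^\n S^0\})$ sends $E$ to the sequence $(E_\n)$ together with its structure maps $E_\n \simeq \Omega E_{\n+1}$, the maps $\Omega^{\n+1}S^0 \to \Omega^\n S^0$ being encoded in the enriched presheaf structure via $\Mor(\Omega^{\n+1}S^0,\Omega^\n S^0)$ containing $S^1$. Full faithfulness amounts to the enriched co-Yoneda formula
\[
E \simeq \colim^{W_E}\bigl(\Omega^\bullet(S^0)\bigr).
\]
I would deduce this from (b) by expressing each $\Omega^k\Sigma^\infty(E_k)$ as the left tensor $E_k \wedge \Omega^k(S^0)$, which is precisely a weighted colimit over the single object $\Omega^k(S^0)$ with weight $E_k$. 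The sequential colimit then assembles these into a weighted colimit over the full subcategory, matching $\Mor_{\Cat\Sp}(\Omega^\bullet S^0, E)$ by \cref{compo}.

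I expect the main obstacle to be step (b) in the oriented setting. One must check that the $\cop$ twists in odd degrees and the Gray-enriched (not cartesian) tensors are compatible, so that $E^{\le k}$ really spectrifies to a left tensor $E_k \wedge \Omega^k S^0$ rather than a right tensor. This is where the bioriented refinement of $\Sigma$ from \cref{reduc} and the left versus right cotensor description in the proof of \cref{spectri} must be used carefully.
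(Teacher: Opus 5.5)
Your argument for the generation statement works, by a different route from the paper's. The paper uses \cref{Geh} to identify $\Pre\Sp(\mC)\simeq\Fun\wedge(\Xi,\mC)$ with $\Xi=S(S^1)+_{\bD^0}S(S^1)+_{\bD^0}\cdots$. Prespectra are then generated by the free prespectra $(Y\wedge(-))_!(T(\n))$, which it spectrifies to $(Y\wedge(-))_!(\Omega^\n(S^0))$. You instead prove $E\simeq\colim_k\Omega^k\Sigma^\infty(E_k)$ directly and identify the spectrification of $E^{\leq k}$ by corepresentability, which is more elementary. Step (c) is superfluous, since (b) already uses $Y=E_k\in\mC$. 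Moreover, $Y\otimes(Z_k\wedge\Omega^k(S^0))\simeq(Y\wedge Z_k)\otimes\Omega^k(S^0)$ involves a right tensor in $\mC$, not a left tensor in $\Sp(\mC)$.

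The density argument has a genuine gap. Your sequential colimit exhibits $E$ as the $W$-weighted colimit of the inclusion of $\mA=\{\Omega^\n(S^0)\}$, with $W=\colim_k E_k\wedge\Mor_{\Cat\Sp}(-,\Omega^k(S^0))$. But $W$ is not the canonical weight $W_E=\Mor_{\Cat\Sp}(-,E)$. Its value at $\Omega^\n(S^0)$ is $\colim_k E_k\wedge\Mor_{\Cat\Sp}(\Omega^\n(S^0),\Omega^k(S^0))$. For $k\geq\n$ these morphism objects are $\Omega^{k-\n}\Omega^\infty\Sigma^\infty(S^0)$ (up to $(-)^{\co\op}$-twists), far larger than the spheres $S^{k-\n}$ that are the morphism objects of $\Xi$. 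Concretely, for $E=\Sigma^\infty(S^0)$ and $\n=0$, every term with $k\geq 1$ is connected because $E_k$ is, whereas $E_0=\Omega^\infty\Sigma^\infty(S^0)$ is not.

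What (a), (b) and \cref{compo} actually give is $\Map(W,W_F)\simeq\lim_k\Map(E_k,F_k)\simeq\Map(E,F)$. So the composite $\Map(E,F)\to\Map(W_E,W_F)\to\Map(W,W_F)$ is an equivalence. This means the restricted Yoneda functor admits retractions on mapping spaces, but nothing shows it is full. Exhibiting every object as \emph{some} weighted colimit of $\mA$ is just the generation statement again, which is strictly weaker than density.

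The missing input is the presentation the paper's proof supplies. By \cref{Geh}, \cref{embes} and \cref{spectri}, $\Cat\Sp$ is a reflective localization of $\Cat\Pre\Sp\simeq\Fun\wedge(\Xi,\infty\Cat_*)$ that sends the corepresentables $T(\n)$ to $\Omega^\n(S^0)$. Hence restricted Yoneda along the non-full functor $\n\mapsto\Omega^\n(S^0)$ out of $\Xi$ is the fully faithful inclusion $\Cat\Sp\subset\Cat\Pre\Sp$; your (a) is the objectwise form of this.

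To pass to the full image $\mA$, note that restriction of presheaves along $\n\mapsto\Omega^\n(S^0)$ preserves colimits. It is also an equivalence on maps from a representable $\Mor_{\mA}(-,a)$ into $W_F$, since both sides are $\Map(a,F)$. Writing $W_E$ as a colimit of representables then shows that $\Map(W_E,W_F)\to\Map(E,F)$ is an equivalence. Hence $\Map(E,F)\to\Map(W_E,W_F)$ is one as well.
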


\begin{proof}Let $$\Xi:= S(S^1)+_{\bD^0} S(S^1) +_{\bD^0} ....$$
By \cref{Geh} there is an equivalence $\Pre\Sp(\mC) \simeq {\Fun\wedge}(\Xi, \mC)$ of antioriented categories.
The right hand side is generated under small colimits and left tensors by the objects 
$$(Y \wedge (-)) \circ \R\Mor_\Xi(S^\n,-)$$ for $\n \geq 0$ and $Y \in \mC$.
Thus $\Pre\Sp(\mC)$ is generated under small colimits and left tensors by the corresponding prespectra $(Y \wedge (-))_!(T(\n))$ in $\mC$ for $\n \geq 0$ and $Y \in \mC$,
where $ T(\n)$ is the prespectrum corresponding to 
$\R\Mor_\Xi(S^\n,-) \in  {\Fun\wedge}(\Xi, \infty\Cat_*).$ 
Then $ T(\n)$ is the prespectrum $ (\emptyset, ..., \emptyset, S^0, S^1, S^2,...) $ in $\mC$ for $\n \geq 0$ whose $\n$-th term is $S^0$. 
By \cref{spectri} the full subcategory $\Sp(\mC) \subset \Pre\Sp(\mC)$ is a localization.
Moreover the associated spectrum of $ T(\n) $ is $\Omega^\n(S^0)$ since for every $\bk \geq 0$ there is a canonical equivalence respecting the bonding maps: $$ T(\n)'_\bk=  \colim_{\ell \geq 0}(\Omega^\ell(T(\n)_{\bk+\ell})) \simeq \colim_{\ell \geq n}(\Omega^\ell(T(\n)_{\bk+\ell})) \simeq $$$$\colim_{\ell \geq 0}(\Omega^{\ell+n}(T(\n)_{\bk+\ell+n})) \simeq \Omega^\n(\colim_{\ell \geq 0}(\Omega^{\ell}(S^{\bk+\ell}))) \simeq \Omega^\n(S^0)_\bk.$$
		
\end{proof}

\subsection{Categorical homology theories}


We introduce a categorical analogue of excision.


\begin{definition}Let $\mC, \mD$ be reduced oriented categories. 
A reduced oriented functor $\F: \mC \to \mD$ is excisive if for every oriented pushout square
\[
\begin{tikzcd}
X \ar{r}{} \ar{d}[swap]{} & 0 \ar[double]{dl}{} \ar{d}{} \\
0 \ar{r}[swap]{} & Y
\end{tikzcd}
\]
in $\mC$ the induced oriented square 
\[
\begin{tikzcd}
F(X) \ar{r}{} \ar{d}[swap]{} & 0 \ar[double]{dl}{} \ar{d}{} \\
0 \ar{r}[swap]{} & F(Y)
\end{tikzcd}
\]
in $\mD$ is an oriented pullback square.

\end{definition}

\begin{definition}
Let $\mC, \mD$ be reduced antioriented categories. A reduced antioriented functor $\F: \mC \to \mD$ is antiexcisive if the reduced oriented functor $\F^\co: \mC^\co \to \mD^\co$ is excisive.
	
\end{definition}

\begin{definition}
Let $\mC, \mD$ be reduced bioriented categories.  

\begin{enumerate}
\item A reduced bioriented functor $\F: \mC \to \mD$ is excisive if its underlying oriented functor is excisive.

\item A reduced bioriented functor $\F: \mC \to \mD$ is antiexcisive if its underlying antioriented functor is antiexcisive.

\item A reduced bioriented functor $\F: \mC \to \mD$ is biexcisive if it is excisive and antiexcisive.	
	
\end{enumerate}
	
\end{definition}

\begin{notation}
	
Let $\mC, \mD$ be reduced oriented categories. 
Let $${\Exc\wedge}(\mC,\mD) \subset {\Fun\wedge}(\mC,\mD)$$
be the full subcategory of excisive oriented functors.

Let $\mC, \mD$ be reduced antioriented categories. 
Let $${\wedge \overline{\Exc}}(\mC,\mD) \subset {\wedge\Fun}(\mC,\mD)$$
be the full subcategory of antiexcisive antioriented functors.

\end{notation}
\begin{notation}

Let $\mC, \mD$ be reduced bioriented categories. 
Let $${\wedge\Exc\wedge}(\mC,\mD),\ {\wedge\overline{\Exc}\wedge}(\mC,\mD), \  {\wedge\Bi\Exc\wedge}(\mC,\mD) \subset {\wedge\Fun\wedge}(\mC,\mD)$$
be the full subcategories of excisive, antiexcisive, biexcisive reduced bioriented functors.

\end{notation}

\begin{lemma}\label{coext}Let $\mD$ be a reduced bioriented category and $\E\in \overline{\Sp}(\mD).$  The oriented functor $$\L\Mor_{\overline{\Sp}(\mD)}(-, \E): \overline{\Sp}(\mD)^\circ \to \infty\Cat_*$$ is excisive. 
\end{lemma}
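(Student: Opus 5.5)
The plan is to reduce excisiveness of the represented functor to the fact that oriented pushouts in $\overline{\Sp}(\mD)^\circ$ are oriented pullbacks, together with the observation that a representable enriched functor sends weighted colimits to weighted limits. Write $\mA:=\overline{\Sp}(\mD)$. An oriented pushout square $X \to 0, X\to 0$ with target $Y$ in the oriented category $\mA^\circ$ is, by definition, an oriented pullback square in $(\mA^\circ)^\op$. Unwinding the conventions $(-)^\op=(-)^\circ\circ(-)^\co_!$ and using that $\L\Mor$ refers to the antioriented enrichment, this is an antioriented pushout square in $\mA$, i.e. $Y$ is the antioriented left (or right) cofiber of $X \to 0$ in $\mA$. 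By \cref{bdesc}, or directly by the defining universal property of antioriented pushouts, this means that for every $\E\in\mA$ the induced functor
$$\L\Mor_{\mA}(Y,\E)\to 0\times_{\L\Mor_\mA(X,\E)}\Fun^\oplax(\bD^1,\L\Mor_\mA(X,\E))\times_{\L\Mor_\mA(X,\E)} 0$$
is an equivalence (possibly with $\Fun^\lax$ instead, depending on the orientation bookkeeping).

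The right-hand side is exactly the oriented pullback $0\overset{\to}{\times}_{\L\Mor_\mA(X,\E)}0$ in $\infty\Cat_*$, computed via \cref{0desc}(2) with cotensor $Z^{\bD^1}=\Fun^\oplax(\bD^1,Z)$. So the image square under $\L\Mor_\mA(-,\E)$ is an oriented pullback square, which is the excision condition. The zero object is sent to $\L\Mor(0,\E)\simeq *$, so the functor is reduced, and functoriality as an oriented functor $\mA^\circ\to\infty\Cat_*$ comes from the enriched Yoneda embedding (\cref{Yonedaext}, \cref{enryo}), which preserves weighted limits by \cref{weiadj}. In other words, representables turn weighted colimits in $\mA$ into weighted limits.

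The main obstacle is matching orientations. I have to check that the oriented pushout in $\mA^\circ$ corresponds to the antioriented (and not the oriented) pushout in $\mA$, and that the induced oriented square in $\infty\Cat_*$ carries the 2-cell with the direction required in the definition of excisive. If the directions come out flipped, I would use \cref{agos} and the equivalence $\Cat\Sp^\co\simeq\Cat\Sp$ of \cref{duae2} to pass between oriented and antioriented forms. I would also use \cref{crstab}, which says $\mA$ is quasi-antistable so these antioriented cofibers exist in the relevant cases. Alternatively, I would reduce to $X=S^0$-type generators via \cref{spgen}, though this reduction should not be needed.
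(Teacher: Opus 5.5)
\textbf{There is a genuine gap: the square has the wrong variance.} The problem is not the 2-cell orientation you flag but the direction of the 1-morphisms. Since $(-)^\circ$ reverses morphisms, a square $X\to 0\to Y$ in $\mA^\circ$ is a square $Y\to 0\to X$ in $\mA$. The universal property of $0\overset{\to}{+}_X 0$ in $\mA^\circ$ then says that morphisms $T\to Y$ in $\mA$ are loops on morphisms $T\to X$. So $Y\simeq\bar\Omega_{\mA}(X)$ is a weighted \emph{limit} in $\mA$: the suspension of $\mA^\circ$ is the antiendomorphism functor of $\mA$, not an antioriented cofiber. A represented functor $\L\Mor_\mA(-,\E)$ turns colimits in $\mA$ into limits, but it says nothing about limits in $\mA$.

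Correspondingly, your displayed equivalence $\L\Mor_\mA(Y,\E)\simeq\Omega\L\Mor_\mA(X,\E)$ is not the excision condition. The image of $X\to 0\to Y$ under the covariant functor on $\mA^\circ$ has $\L\Mor_\mA(X,\E)$ at the apex, so excision asks for $\L\Mor_\mA(X,\E)\simeq\Omega\L\Mor_\mA(\bar\Omega X,\E)$. Once the directions are fixed, what your argument actually proves is only that the functor preserves endomorphisms of $\mA^\circ$, i.e. $\L\Mor_\mA(\bar\Sigma Z,\E)\simeq\Omega\L\Mor_\mA(Z,\E)$.

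\textbf{A sanity check shows an essential input is missing.} As written, your argument uses nothing about $\overline{\Sp}(\mD)$ beyond the existence of the relevant (co)fibers. So it would apply verbatim to $\mB=\infty\Cat_*$, where the conclusion fails. There $\bar\Omega(S^0)\simeq 0$, so excision at $X=S^0$ would force $\E\simeq\L\Mor(S^0,\E)\simeq\Omega\L\Mor(0,\E)\simeq *$ for every $\E$.

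\textbf{The missing idea is quasi-antistability (\cref{crstab}).} It is needed not to guarantee existence, but to turn the loop square into a suspension square: $Z\simeq\bar\Sigma\bar\Omega Z$. Combining this with preservation of endomorphisms gives
$\L\Mor_\mA(Z,\E)\simeq\L\Mor_\mA(\bar\Sigma\bar\Omega Z,\E)\simeq\Omega\L\Mor_\mA(\bar\Omega Z,\E)$.
Since $\bar\Omega_\mA=\Sigma_{\mA^\circ}$, this is exactly the excision condition. This two-step argument, preservation of endomorphisms followed by $\bar\Sigma\bar\Omega\simeq\mathrm{id}$, is the paper's proof.
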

\begin{proof}
	
The right adjoint oriented functor $$\L\Mor_{\overline{\Sp}(\mD)}(-, \E): \overline{\Sp}(\mD)^\circ \xrightarrow{} \infty\Cat_*$$ preserves endomorphisms. So for every $Z \in \overline{\Sp}(\mD) $ the canonical functor $$ \L\Mor_{\overline{\Sp}(\mD)}(\bar{\Sigma}(Z), \E) \to \Omega(\L\Mor_{\overline{\Sp}(\mD)}(Z, \E))$$
is an equivalence. Since $\overline{\Sp}(\mD)$ is quasi-antistable by \cref{crstab}, the canonical functor $$ \L\Mor_{\overline{\Sp}(\mD)}(Z, \E) \to \Omega(\L\Mor_{\overline{\Sp}(\mD)}(\bar{\Omega}(Z), \E))$$ is an equivalence. Note that the antiendomorphisms $\bar{\Omega}$ of $\overline{\Sp}(\mD) $ are the suspension of $\overline{\Sp}(\mD)^\circ.$

\end{proof}

\begin{remark}\label{Homt}
Let $\mB, \mC, \mD, \mE$ be reduced oriented categories, $\F: \mC \to \mD$ a reduced and excisive oriented functor, $\phi: \mB \to \mC$ a reduced oriented functor that preserves suspensions and $\rho: \mD \to \mE$ a reduced oriented functor that preserves endomorphisms. The reduced oriented functor $\rho \circ \F\circ \phi: \mB \to \mE$ is excisive.
	
\end{remark}

\begin{remark}
Let  $\mC$ be a reduced oriented category that admits suspensions and $\mD$ a reduced oriented category that admits small filtered colimits and endomorphisms such that both commute with each other. By the definition of excision the full subcategory ${\Exc\wedge}(\mC,\mD)$
is  closed in ${\Fun\wedge}(\mC, \mD) $ under small filtered colimits.
	
\end{remark}

Now we are ready to state the categorical Eilenberg-Steenrod axioms.

\begin{construction}
	
Let $\mD$ be a reduced bioriented category that admits endomorphisms, $F: \infty\Cat_* \to \mD$ a reduced excisive oriented functor and $X \in \infty\Cat_*.$ 
For every even $\ell \geq 0$ the canonical morphism
$$ F(S^\ell)\wedge X \to F(S^\ell \wedge X)$$ gives rise to a canonical morphism
$$ \Omega^\ell(F(S^\ell)\wedge X) \to \Omega^\ell(F(S^\ell \wedge X)) \simeq \Omega^\ell(F(X \wedge S^\ell)) \simeq F(X).$$
The family of such morphisms for even $\ell \geq 0$ induces a canonical morphism:
\begin{equation}\label{eqzz}
\Omega^\infty(F(S^\bullet)\wedge X) \simeq \colim(F(S^0) \wedge X \to \Omega(F(S^1)\wedge X) \to \Omega^2(F(S^2)\wedge X) \to...)  \simeq \end{equation}	$$ \colim(F(S^0) \wedge X \to \Omega^2(F(S^2)\wedge X) \to \Omega^4(F(S^4)\wedge X) \to ...) \to F(X),$$
where the first equivalence is by \cref{spectri} and the second equivalence is by cofinality.
	
\end{construction}

\begin{definition}Let $\mD$ be a reduced bioriented category that admits endomorphisms.
A categorical homology theory is a reduced oriented functor $E: \infty\Cat_* \to \mD$ 
that satisfies the following axioms:
\begin{enumerate}
\item $E$ preserves small filtered colimits.

\vspace{1mm}

\item $E$ is excisive.

\vspace{1mm}

\item $E$ is spherical, i.e. for every $X \in \infty\Cat_*$ the morphism (\ref{eqzz}) is an equivalence.

\end{enumerate}
\end{definition}

	



\begin{example}\label{homol}Let $\mD$ be a reduced presentable bioriented category and $\E$ a spectrum in $\mD$. The oriented functor $$\infty\Cat_* \to \mD, X \mapsto \Omega^\infty(\E \wedge X)$$
is a categorical homology theory since it is reduced and excisive by stability (\cref{stab}), preserves small filtered colimits, and the morphism (\ref{eqzz}) is an equivalence, where we use that $\Omega^\infty(\E \wedge S^\n)\simeq \E_\n$ compatible with the bonding maps. We call $ \Omega^\infty(\E \wedge (-))$ the categorical homology theory represented by $\E.$

\end{example}









For the next definition we use \cref{rigit}.

\begin{definition} Let $R$ be a commutative monoid.
Categorical $R$-homology, denoted by $ H(-; R)$, is the categorical homology theory associated to the categorical Eilenberg-MacLane spectrum $H(R).$	
For $R=\bN$ we skip $R.$
	
\end{definition}

\begin{remark}

By definition, categorical $R$-homology factors as the oriented functor
$H(R) \wedge (-): \infty\Cat_* \to D_{R}= {_{H(R)}\Mod(\Cat\Sp)} $
followed by the oriented forgetful functor $D_{R} \to \Cat\Sp $
followed by the oriented functor $\Omega^\infty: \Cat\Sp \to \infty\Cat_*.$

\end{remark}

\begin{example}\label{homosph} Let $R$ be a commutative monoid and $\n \geq 0.$ The categorical $R$-homology of $S^\n$ is 
$$ \Omega^\infty(H(R)\wedge  S^\n) \simeq \Omega^\infty(H(R)\wedge \Sigma^\n(S^0)) \simeq  \Omega^\infty(\Sigma^\n(H(R)\wedge S^0)) \simeq \Omega^\infty(\Sigma^\n(H(R))) \simeq B^\n(R).$$

\end{example}




\subsection{Excisive approximation of bioriented functors}

In the following we study excisive approximations of bioriented functors, which is the easier case, and after that turn to excisive approximations of oriented functors.

We construct the excisive approximation of bioriented functors (\cref{{excista}}).
We characterize spectrum objects by a universal property in terms of excisive bioriented functors (\cref{lifti}), which is a crucial ingredient in the proof of the categorical Brown representability theorem. 


\begin{definition}

A bioriented category is differentiable if it is reduced,
admits endomorphisms and sequential colimits and forming endomorphisms preserves sequential colimits.

\end{definition}

\begin{notation}

Let $\mC,\mD$ be reduced bioriented categories such that $\mC$ admits suspensions and $\mD$ is differentiable.
Let $\F:\mC \to \mD$ be a reduced bioriented functor.
For every $\ell \geq 0$ the unit $\id \to \Omega\circ \Sigma$ and the canonical map $\Sigma \circ \F \to \F\circ \Sigma$ of reduced bioriented functors $\mC \to \mD$ give rise to a map of reduced bioriented functors $\mC \to \mD$:
$$ \Omega^\ell \circ \F \circ \Sigma^\ell \to \Omega^\ell \circ \Omega \circ \Sigma \circ \F \circ \Sigma^\ell \to
\Omega^{\ell} \circ \Omega \circ \F \circ \Sigma \circ \Sigma^{\ell} = \Omega^{\ell+1} \circ \F \circ \Sigma^{\ell+1}.$$	

Let $$ L(\F):= \colim(\F \to \Omega \circ \F \circ \Sigma \to ... \to \Omega^\ell \circ \F \circ \Sigma^\ell \to \Omega^{\ell+1} \circ \F \circ \Sigma^{\ell+1}\to ...)$$ be the sequential colimit of bioriented functors $\mC \to \mD$
and $\alpha_\F: \F \to L(\F)$ the map to the zero-th term.

\end{notation}

\begin{remark}
The canonical map $ L(\F) \to \Omega \circ L(\F) \circ \Sigma$ identifies with the canonical equivalence
$$ \colim_{\ell \geq 0}(\Omega^\ell \circ \F \circ \Sigma^\ell) \to \colim_{\ell \geq 0}(\Omega^{\ell+1} \circ \F \circ \Sigma^{\ell+1})\simeq \colim_{\ell \geq 1}(\Omega^\ell \circ \F \circ \Sigma^\ell).$$
Hence $L(\F)$ is reduced and excisive.	
So sending $\F$ to $L(\F)$ defines a functor $${\wedge\Fun\wedge}(\mC,\mD) \to {\wedge\Exc\wedge}(\mC,\mD)$$ and the canonical map $\F \to L(\F)$ defines a natural transformation $\alpha: \id \to L$.

If $\F:\mC \to \mD$ is antiexcisive, $ \Omega^\ell \circ \F \circ \Sigma^\ell$ is antiexcisive for every $\ell \geq 0$ by \cref{Homt} . So if sequential colimits in $\mD$ commute with antiendomorphisms, the sequential colimit $L(\F)$ is antiexcisive and so biexcisive.

\end{remark}

\begin{proposition}\label{excista}
Let $\mC,\mD$ be reduced bioriented categories such that $\mC$ admits suspensions and $\mD$ is differentiable.
Let $\F:\mC \to \mD$ be a reduced bioriented functor.
The map $\alpha_\F: \F \to L(\F)$ induces for every reduced excisive bioriented functor $\G:\mC \to \mD$ an equivalence $$ {\wedge\Fun\wedge}(\mC,\mD)(L(\F),\G) \to {\wedge\Fun\wedge}(\mC,\mD)(\F,\G).$$	

In other words, the natural transformation $\alpha: \id \to L$ exhibits $L$ as left adjoint of the embedding $$ {\wedge\Exc\wedge}(\mC,\mD) \subset {\wedge\Fun\wedge}(\mC,\mD).$$

If sequential colimits in $\mD$ commute with antiendomorphisms, the localization
$$L:  {\wedge\Fun\wedge}(\mC,\mD) \rightleftarrows  {\wedge\Exc\wedge}(\mC,\mD)$$
restricts to a localization
$L:  {\wedge\Exc\wedge}(\mC,\mD) \rightleftarrows  {\wedge\Bi\Exc\wedge}(\mC,\mD)$.

\end{proposition}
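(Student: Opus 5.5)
The plan is the classical Goodwillie-style argument that $L$ is an idempotent localization. It rests on two facts about $\alpha$: for excisive $\G$ the map $\alpha_\G$ is an equivalence, and the two maps $L(\alpha_\F), \alpha_{L(\F)}: L(\F) \to L(L(\F))$ are equivalent equivalences. Given these, a standard criterion (e.g. \cite[Proposition 5.2.7.4]{lurie.HTT}) shows that $L$, viewed as a functor into ${\wedge\Exc\wedge}(\mC,\mD)$, is left adjoint to the embedding with unit $\alpha$. That is exactly the statement that $\alpha_\F^*: {\wedge\Fun\wedge}(\mC,\mD)(L(\F),\G) \to {\wedge\Fun\wedge}(\mC,\mD)(\F,\G)$ is an equivalence for excisive $\G$. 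The preceding remark already gives that $L(\F)$ is reduced and excisive, and that $L$ is functorial with $\alpha: \id \to L$ natural.

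\emph{Step 1: $\alpha_\G$ is an equivalence for excisive $\G$.} Excision of $\G$ applied to the oriented pushout square $X \to 0, 0 \to \Sigma X$ says that $\G(X) \to \Omega\G(\Sigma X)$ is an equivalence. I would check that this map is the component of the transition map $\G \to \Omega\circ\G\circ\Sigma$. This uses the construction of the map in \cref{impol}(5), together with the identification of $\Sigma$, $\Omega$ with the oriented cofiber and oriented fiber of maps to $0$ from \cref{redfib}. It follows that every transition map $\Omega^\ell\G\Sigma^\ell \to \Omega^{\ell+1}\G\Sigma^{\ell+1}$ is an equivalence, so the colimit inclusion $\alpha_\G$ is one. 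Colimits of bioriented functors are computed pointwise by \cref{colimenrfun}, since $\mD$ admits sequential colimits.

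\emph{Step 2: $L(\alpha_\F) \simeq \alpha_{L(\F)}$.} By Step 1 applied to $\G = L(\F)$, $\alpha_{L(\F)}$ is an equivalence. Both $L(\alpha_\F)$ and $\alpha_{L(\F)}$ are maps
$$\colim_\ell \Omega^\ell\F\Sigma^\ell \to \colim_{\ell,m}\Omega^{\ell+m}\F\Sigma^{\ell+m},$$
and they are induced by the two inclusions of a one-parameter colimit into a diagonal two-parameter colimit. Commuting $\Omega$ past sequential colimits, which holds because $\mD$ is differentiable, identifies both with the diagonal map, up to the flip $\Omega\Omega\simeq\Omega\Omega$. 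In the proof of \cref{rstab} this flip is shown to be the identity via \cref{autoeq}; that argument is for $\Omega$ on $\mD$, while here the flip also involves $\Sigma\Sigma\simeq\Sigma\Sigma$ on $\mC$, so it must be checked again in this setting.

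\emph{Main obstacle.} I expect this coherence in Step 2, and the bookkeeping in Step 1, to be the hardest part. Concretely, the transformations from \cref{impol} must be matched with the excision maps, and every step must be bioriented, hence compatible with the ${}^\cop$-twists. The first approach I would try is to reduce via the bioriented Yoneda embedding $\gamma(\mD)$ used in \cref{impol}, where $\Sigma$ and $\Omega$ are given by tensoring with $S^1$. There the comparison becomes a statement about automorphisms of $S^1\wedge S^1$, which are trivial by \cref{autoeq}.

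\emph{Final claim.} If sequential colimits in $\mD$ commute with antiendomorphisms, the preceding remark shows that $L$ preserves antiexcisive functors. Hence $L$ sends ${\wedge\Exc\wedge}$-adjacent antiexcisive functors to biexcisive ones, and the localization restricts to the subcategories of antiexcisive functors, landing in ${\wedge\Bi\Exc\wedge}(\mC,\mD)$.
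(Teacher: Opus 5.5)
Your proposal follows the paper's proof. Both arguments invoke \cite[Proposition 5.2.7.4]{lurie.HTT}. Both note that $\alpha_\G$ is an equivalence for excisive $\G$, since every map $\G \to \Omega^\ell \circ \G \circ \Sigma^\ell$ is one. Both then show that $L(\alpha_\F)$ is an equivalence by writing $L(L(\F))$ as a colimit over $\bN \times \bN$ and passing to the diagonal via siftedness and cofinality.

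One correction: that criterion only requires $L(\alpha_\F)$ and $\alpha_{L(\F)}$ each to be an equivalence, not to agree with each other, so you can drop the comparison of the two maps. The coordinate-swap coherence you flag is still needed to identify $\colim_\ell \Omega^\ell \circ L(\F) \circ \Sigma^\ell$ with $\colim_{\ell,k}\Omega^{\ell+k}\circ\F\circ\Sigma^{\ell+k}$, and the paper uses this identification without comment.
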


\begin{proof}

If $\F$ is excisive, the canonical map $\F \to \Omega^\ell \circ \F \circ \Sigma^\ell$
is an equivalence so that all maps under $\F$ in the sequential diagram for $L(\F)$ are equivalences. Hence the map $\alpha_\F: \F \to L(\F)$ is an equivalence. 
By \cite[Proposition 5.2.7.4]{lurie.HTT} it suffices to show that the map $L(\alpha_\F): L(\F) \to L(L(\F))$ is an equivalence. It identifies with the following identity:
$$ \colim_{\ell \geq 0}(\Omega^\ell \circ \F \circ \Sigma^\ell) \to \colim_{\ell \geq 0}(\Omega^\ell \circ \colim_{\bk \geq 0}(\Omega^\bk \circ \F \circ \Sigma^\bk) \circ \Sigma^\ell) \simeq$$$$ \colim_{\ell,\bk \geq 0}(\Omega^{\ell+\bk} \circ \F \circ \Sigma^{\ell+\bk}) \simeq \colim_{\ell \geq 0}(\Omega^{2\ell} \circ \F \circ \Sigma^{2\ell}) \simeq \colim_{\ell \geq 0}(\Omega^\ell \circ \F \circ \Sigma^\ell).$$
The second equivalence holds since sequential diagrams are sifted. The last equivalence is by cofinality.

\end{proof}	

\begin{corollary}\label{excis}
Let $\mC$ be a bioriented category that admits suspensions and endomorphisms and $\n \geq 0$. 
The unit $\Sigma^\n \to \Omega^\infty\Sigma^\infty\Sigma^\n$
induces for every reduced excisive bioriented functor $\G:\mC \to \mD$ an equivalence $$ {\wedge\Fun\wedge}(\mC,\mD)(\Omega^\infty\Sigma^\infty\Sigma^\n,\G) \to {\wedge\Fun\wedge}(\mC,\mD)(\Sigma^\n,\G).$$	
The unit $\Omega^\n \to \Omega^\n\Omega^\infty\Sigma^\infty$
induces for any reduced excisive bioriented functor $\G:\mC \to \mD$ an equivalence $$ {\wedge\Fun\wedge}(\mC,\mD)(\Omega^\n\Omega^\infty\Sigma^\infty,\G) \to {\wedge\Fun\wedge}(\mC,\mD)(\Omega^\n,\G).$$		

\end{corollary}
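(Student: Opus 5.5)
The plan is to deduce \cref{excis} directly from \cref{excista}, by showing that $\Omega^\infty\Sigma^\infty\Sigma^\n$ (respectively $\Omega^\n\Omega^\infty\Sigma^\infty$) is the excisive approximation $L(\Sigma^\n)$ (respectively $L(\Omega^\n)$). Concretely, I aim to identify the unit maps with $\alpha_{\Sigma^\n}$ and $\alpha_{\Omega^\n}$ respectively, after which the claimed equivalences on mapping spaces are exactly the universal property of \cref{excista}. I assume, as implicitly in the statement, that $\mD$ is differentiable; the natural case is $\mD=\mC$ with $\mC$ differentiable, and then $\Omega^\infty\Sigma^\infty$ exists by \cref{infsus}.

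First I compute $L(\Sigma^\n)$. By definition,
$$L(\Sigma^\n)=\colim_{\ell}\,\Omega^\ell\circ\Sigma^\n\circ\Sigma^\ell .$$
The transition maps come from the unit $\id\to\Omega\Sigma$ together with the canonical map $\Sigma\circ\Sigma^\n\to\Sigma^\n\circ\Sigma$ of \cref{impol}(3). For the functor $\Sigma^\n$ this latter map should be the tautological identification $\Sigma^{\n+1}\simeq\Sigma^{\n+1}$. Since $\Sigma$ is defined as $\mC\ot_{\infty\Cat_*}\Sigma$, the interchange $\Sigma\Sigma\simeq\Sigma\Sigma$ is induced by the swap on $S^1\wedge S^1$, and this is the identity by \cref{autoeq}, exactly as in the proof of \cref{rstab}.

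With this identification, the colimit is reindexed to
$$\colim_\ell\,\Omega^\ell\Sigma^{\ell}\circ\Sigma^\n\simeq\colim_\ell\,\Omega^\ell\Sigma^\ell\,(\Sigma^\n(-)).$$
By the spectrification formula (\cref{hiul}, \cref{spectri}, applied to the suspension prespectrum of \cref{infs}), this is $\Omega^\infty\Sigma^\infty\Sigma^\n$, in the same way as \cref{infsuso}. Under this identification the map $\alpha_{\Sigma^\n}$ is the inclusion of the zeroth term, which is the unit.

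Second, for $\Omega^\n$ I argue the same way. Here
$$L(\Omega^\n)=\colim_\ell\,\Omega^\ell\Omega^\n\Sigma^\ell,$$
and because $\Omega$ commutes with sequential colimits this equals $\Omega^\n\colim_\ell\Omega^\ell\Sigma^\ell\simeq\Omega^\n\Omega^\infty\Sigma^\infty$. Again the transition maps should be the units, after using the interchange $\Omega\Omega\simeq\Omega\Omega$, which is the identity by \cref{autoeq}. The map $\alpha_{\Omega^\n}$ becomes $\Omega^\n$ applied to the unit.

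The main obstacle is checking, coherently as bioriented transformations, that the canonical map $\Sigma_\mD\circ\F\to\F\circ\Sigma_\mC$ of \cref{impol} specializes for $\F=\Sigma^\n$ (or $\Omega^\n$) to the identification coming from the interchange of suspensions. This is needed so that the sequential diagram defining $L$ matches the one computing $\Omega^\infty\Sigma^\infty$. I would handle it by embedding into the presentable case via the Yoneda embedding $\gamma(\mC)$, as in \cref{impol}. There $\Sigma=\mC\ot_{\infty\Cat_*}\Sigma$, so the comparison map $\lambda$ is built from the linearity equivalence, and on $\Sigma^\n$ it reduces to an automorphism of $(S^1)^{\wedge(\n+1)}$. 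That automorphism is trivial by \cref{autoeq}, since $\pi_0$ of the endomorphisms of a sphere is $\bN$ and the map is invertible.
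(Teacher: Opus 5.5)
Your proposal is correct and is essentially the argument the paper has in mind. The corollary is stated without proof right after \cref{excista}, and the intended deduction is exactly to identify $L(\Sigma^n)$ and $L(\Omega^n)$ with $\Omega^\infty\Sigma^\infty\Sigma^n$ and $\Omega^n\Omega^\infty\Sigma^\infty$ through the sequential-colimit formula $\Omega^\infty\Sigma^\infty\simeq\colim_\ell\Omega^\ell\Sigma^\ell$, with $\alpha$ becoming the unit; the paper uses the same step, via \cref{appro}, in the proof of \cref{brown}.

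Two small remarks on your last paragraph. For $\Sigma^n$ you only need the comparison $\Sigma\circ\Sigma^n\to\Sigma^n\circ\Sigma$ to be invertible: naturality of the unit then gives an isomorphism of sequential diagrams that is the identity at stage $0$, so \cref{autoeq} is not needed. For $\Omega^n$ the comparison $\Sigma\Omega^n\to\Omega^n\Sigma$ is not invertible, so the automorphism argument does not apply there. What you are really using is that this comparison is the mate of $\Omega\Omega^n\simeq\Omega^n\Omega$, which makes the transition maps equal to $\Omega^n$ applied to the units.
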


\begin{proposition} Let $\mC,\mD$ be reduced bioriented categories such that $\mD$ admits left and right tensors.
	
\begin{enumerate}

\item A reduced bioriented functor $\mC \to \mD$ is excisive if and only if it is local with respect to the following canonical maps in $ {\wedge\Fun\wedge}(\mC,\mD)$ for $X \in \mC$ and $Y \in \mD:$ $$((-)\wedge Y\wedge(-)) \circ \Sigma \circ \Omega \circ \Mor_\mC(X,-) \to ((-)\wedge Y\wedge(-)) \circ \Mor_\mC(X,-). $$

\item A reduced bioriented functor $\mC \to \mD$ is antiexcisive if and only if it is local with respect to the following canonical maps in $ {\wedge\Fun\wedge}(\mC,\mD)$ for $X \in \mC$ and $Y \in \mD: $$$((-)\wedge Y\wedge(-)) \circ \bar{\Sigma} \circ \bar{\Omega} \circ \Mor_\mC(X,-) \to ((-)\wedge Y\wedge(-)) \circ \Mor_\mC(X,-). $$
\end{enumerate}
	
\end{proposition}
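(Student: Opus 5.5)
The plan is to reduce excisiveness to a statement about morphism $\infty$-categories and then recognize that statement as locality with respect to the displayed maps. I do (1) in detail; (2) follows by applying (1) to the conjugate categories $\mC^\co,\mD^\co$. There $\bar\Sigma,\bar\Omega$ become $\Sigma,\Omega$ and antiexcisiveness becomes excisiveness, by \cref{tttj} and the definitions.

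\textbf{Step 1: excisiveness as a pointwise condition.} By definition, $\F$ is excisive if for every oriented pushout square $X \to 0$, $0 \to Y$ (so $Y\simeq\Sigma X$ when the pushout exists) the induced square is an oriented pullback. Equivalently, the comparison map $\F(X) \to \Omega\F(\Sigma X)$ is an equivalence. Since $\mD$ admits left and right tensors, I will detect this after mapping into $\mD$: for each $Z\in\mD$, ask that
\[
\Mor_\mD(Z,\F(X))\to \Mor_\mD(Z,\Omega\F(\Sigma X))
\]
be an equivalence. This is the form that matches the maps appearing in the statement.

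\textbf{Step 2: Yoneda identification.} The functor $((-)\wedge Y\wedge(-))\circ\Mor_\mC(X,-)\colon \mC\to\mD$ is the bioriented analogue of a representable tensored with $Y$. By the enriched Yoneda lemma (\cref{enryo}) together with the tensor adjunction, maps from it into $\F$ in ${\wedge\Fun\wedge}(\mC,\mD)$ are computed by $\Mor_\mD(Y,\F(X))$, at the level of the mapping spaces and enrichment used in \cref{Yonedaext}.

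Precomposing with $\Sigma\circ\Omega$ handles the other side. By the bioriented adjunction $\Sigma\dashv\Omega$ of \cref{reduc}(3), taken in presheaves (the bioriented Yoneda embedding into ${\wedge\Fun\wedge}(\mC^\circ,\infty\Cat_*\boxplus\infty\Cat_*)$ preserves endomorphisms, as in the proof of \cref{reduc}), the representable twisted by $\Sigma\Omega$ corepresents $Z\mapsto \Mor_\mD(Y,\Omega\F(\Sigma X))$. The idea is that $\Mor_\mC(X,-)$ composed with $\Omega$ then $\Sigma$ corresponds, under the adjunctions, to evaluating $\F$ at a suspension and taking endomorphisms. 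Under these identifications, restriction along the canonical map (the counit $\Sigma\Omega\to\id$) becomes exactly the comparison map of Step 1.

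\textbf{Step 3: conclude.} Locality of $\F$ with respect to all these maps, for all $X\in\mC$ and $Y\in\mD$, is then equivalent to the comparison map being an equivalence after applying $\Mor_\mD(Y,-)$ for all $Y$. By Yoneda, this is equivalent to $\F(X)\simeq\Omega\F(\Sigma X)$ for all $X$, i.e. to excisiveness.

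\textbf{Main obstacle.} The delicate point is Step 2: checking that the counit-induced map really corresponds to the excision comparison map with the correct orientations. In particular, $\Sigma$ is a bioriented functor $\mC^\cop\to\mC$, so the twist by $(-)^{\co\op}$ has to be tracked. I would verify this first on $\mC=\infty\Cat_*$ using the identification $\Sigma\simeq(-)\wedge S^1$ (\cref{smarem}), and then transport the result along the Yoneda embedding.
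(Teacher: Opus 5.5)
Your proposal is correct and follows essentially the same route as the paper: you adjoin away the tensor with $Y$ and the suspension, and then use the enriched Yoneda lemma to identify the locality map with $\Map_\mD(Y,\F(X))\to\Map_\mD(Y,\Omega\F(\Sigma X))$. The step you flag as the main obstacle is handled in the paper directly by the equivalence $\Omega\circ\Mor_\mC(X,-)\simeq\Mor_\mC(X,-)\circ\Omega\simeq\Mor_\mC(\Sigma X,-)$, which holds because representables commute with $\Omega$ (\cref{impol}) and $\Sigma\dashv\Omega$ as bioriented functors, so no separate check on $\infty\Cat_*$ is needed.
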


\begin{proof}
	
(1) is dual to (2). We prove (1). There is a canonical equivalence $$\Omega \circ \Mor_\mC(X,-) \simeq 
\Mor_\mC(X,-) \circ \Omega \simeq \Mor_\mC(\Sigma(X),-)$$ of reduced bioriented functors 
$\mC \to \infty\Cat_* \boxplus \infty\Cat_*$.
The first equivalence holds since by \cref{impol} right adjoint reduced bioriented functors commute with $\Omega.$ The second equivalence holds since $\Sigma$ is left adjoint to $\Omega$ as reduced bioriented functors. 
For every reduced bioriented functor $\F:\mC \to \mD$ the induced map $$ \Map_{{\wedge\Fun\wedge}(\mC,\mD)}(((-)\wedge Y\wedge(-)) \circ \Mor_\mC(X,-) ,\F) \to \Map_{{\wedge\Fun\wedge}(\mC,\mD)}(((-)\wedge Y\wedge(-)) \circ \Sigma \circ \Mor_\mC(\Sigma(X),-) ,\F) $$ identifies by adjointness with the map
$$ \Map_{{\wedge\Fun\wedge}(\mC,\infty\Cat_*\boxplus \infty\Cat_*)}(\Mor_{\mC}(X,-), \Mor_{\mD}(Y,-) \circ \F)\to$$$$ \Map_{{\wedge\Fun\wedge}(\mC,\infty\Cat_*\boxplus \infty\Cat_*)}(\Mor_{\mC}(\Sigma(X),-), \Omega \circ \Mor_{\mD}(Y,-) \circ \F)$$
which identifies by the enriched Yoneda lemma with the map
$$ \Map_\mD(Y, \F(X)) \to \Map_\mD(Y, \Omega(\F(\Sigma(X))))$$  induced by the canonical morphism
$\F(X) \to \Omega(\F(\Sigma(X))).$
	
\end{proof}

\begin{corollary}\label{laxoplax2}
	
Let $\mC$ be a small reduced bioriented category and $\mE$ a reduced bioriented category
that admits left and right tensors and small colimits.
The following induced functors are equivalences:
\begin{enumerate}
\item $${\wedge\L\Fun\wedge}({\wedge\Exc\wedge}(\mC^\circ,\infty\Cat_*\boxplus \infty\Cat_*) ,\mE) \to {\wedge\Exc\wedge}(\mC,\mE), $$ 
\item $${\wedge\L\Fun\wedge}({\wedge\overline{\Exc}\wedge}(\mC^\circ,\infty\Cat_*\boxplus \infty\Cat_*) ,\mE) \to {\wedge\overline{\Exc}\wedge}(\mC,\mE), $$ 
\item $${\wedge\L\Fun\wedge}({\wedge\Bi\Exc\wedge}(\mC^\circ,\infty\Cat_*\boxplus \infty\Cat_*) ,\mE) \to {\wedge\Bi\Exc\wedge}(\mC,\mE).$$ 
	
\end{enumerate}	
\end{corollary}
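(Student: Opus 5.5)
Write $\mP:={\wedge\Fun\wedge}(\mC^\circ,\infty\Cat_*\boxplus \infty\Cat_*)$ for the reduced bioriented presheaf category of $\mC$, and $\iota_\mC:\mC\to\mP$ for the bioriented Yoneda embedding. The plan is to combine three ingredients: the enriched Yoneda extension (\cref{Yonedaext}), the excisive localization (\cref{excista}), and the characterization of excisive functors as local objects from the preceding proposition.

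\textbf{Step 1 (Yoneda extension).} By \cref{Yonedaext}, restriction along $\iota_\mC$ gives an equivalence
$${\wedge\L\Fun\wedge}(\mP,\mE)\simeq {\wedge\Fun\wedge}(\mC,\mE).$$
The inverse sends $\F$ to its left Kan extension $\bar\F$, which is computed as a weighted colimit using the tensors and colimits of $\mE$. Its right adjoint is $Y\mapsto \Mor_\mE(\F(-),Y)$.

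\textbf{Step 2 (identify the excisive subcategory as a localization).} By the preceding proposition, applied to $\mC^\circ$ with target $\infty\Cat_*\boxplus\infty\Cat_*$, the subcategory ${\wedge\Exc\wedge}(\mC^\circ,\infty\Cat_*\boxplus \infty\Cat_*)\subset\mP$ consists of the objects local with respect to a set $W$ of maps. These maps are of the form
$$((-)\wedge Y\wedge(-))\circ\Sigma\Omega\Mor(X,-)\to((-)\wedge Y\wedge(-))\circ\Mor(X,-).$$
Since $W$ is closed under left and right tensors, this subcategory is a bioriented localization $L:\mP\rightleftarrows{\wedge\Exc\wedge}(\mC^\circ,\ldots)$; alternatively \cref{excista} gives $L$ directly. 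Since $\mP$ is generated by representables, I will reduce $W$ to the maps $\Sigma\Omega\,\iota_\mC(X)\to\iota_\mC(X)$ in $\mP$ up to tensoring, using that the unit map $Y$ and the $\Mor$ functors are linear.

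\textbf{Step 3 (translate the universal property).} By the universal property of a localization, ${\wedge\L\Fun\wedge}(L\mP,\mE)$ is the full subcategory of ${\wedge\L\Fun\wedge}(\mP,\mE)$ of left adjoints that invert $W$. For $\bar\F$ this condition says that $\bar\F$ inverts the maps $\Sigma\Omega\,\iota(X)\to\iota(X)$. Passing to the right adjoint $\Mor_\mE(\F(-),Y)$, this is equivalent to asking that the canonical morphism $\F(X)\to\Omega\F(\Sigma X)$ (with the variance dictated by $\mC^\circ$) is an equivalence after applying $\Mor_\mE(-,Y)$ for all $Y$. By Yoneda, that in turn is exactly the excision of $\F$; this is the same adjunction computation as in the proof of the preceding proposition. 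Composing with Step 1 yields (1). Statement (2) is the conjugate version, obtained by replacing $\mC$ with $\mC^\co$, and (3) is the intersection of the two localizations, since biexcisive means local for $W\cup\bar W$.

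\textbf{Main obstacle.} The delicate point is the variance in Step 3. Excisiveness of a presheaf on $\mC^\circ$ is a condition on $\Omega$, while we need the corresponding condition on the covariant functor $\F:\mC\to\mE$. Matching the two requires checking that the bioriented Yoneda embedding sends suspensions to the relevant $\Sigma$ in $\mP$ (via \cref{impol}) and that $\bar\F$ commutes with $\Sigma$. I would first verify this on representables, then extend by colimits and tensors.
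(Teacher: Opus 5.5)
Your strategy is the one the paper intends. The corollary is stated without further argument right after the proposition identifying excisive functors as the objects local for the maps $((-)\wedge Y\wedge(-))\circ\Sigma\circ\Omega\circ\Mor(X,-)\to((-)\wedge Y\wedge(-))\circ\Mor(X,-)$. So it is meant to follow from that proposition together with \cref{Yonedaext}, as in your Steps~1--2.

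\textbf{The gap is Step~3.} The variance issue you flag is a real obstruction, not bookkeeping. Unwound correctly, $\bar\F$ inverts $\Sigma\Omega\,\iota_\mC(X)\to\iota_\mC(X)$ (and its tensors) iff every presheaf $\Mor_\mE(\F(-),Y)$ is local, i.e.\ excisive on $\mC^\circ$. Excision on $\mC^\circ$ is phrased through the suspension $\Sigma_{\mC^\circ}X$. This is a tensor in $\mC^\circ$, hence a \emph{cotensor} of $X$ with $S^1$ in $\mC$, i.e.\ an endomorphism object $\Omega'X$. It sits on the other side, since $(-)^\circ$ exchanges the two enrichments. Using $\Omega\,\Mor_\mE(A,Y)\simeq\Mor_\mE(\Sigma'A,Y)$, the condition becomes: the canonical morphism $\Sigma'\F(\Omega'X)\to\F(X)$ is an equivalence for all $X$.

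That is a condition on $\F$ at endomorphism objects. It is not the excision condition $\F(X)\simeq\Omega\F(\Sigma X)$ at suspensions, and nothing in your argument converts one into the other. Your proposed repair also fails: the Yoneda embedding preserves weighted limits (which is exactly what gives $\Omega\,\iota_\mC(X)\simeq\iota_\mC(\Omega'X)$), but it does not preserve suspensions.

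\textbf{The two conditions genuinely differ, so the statement needs extra hypotheses.} Let $\mC$ be a small full bioriented subcategory of $\infty\Cat_*$ containing $S^0$ and closed under $\Sigma$ and $\bar\Sigma$. Let $\mE=\Cat\Sp$ and $\F=\Sigma^\infty|_\mC$, which is biexcisive.
\begin{itemize}
\item Since $\Omega S^0\simeq\bar\Omega S^0\simeq 0$, every (anti)excisive presheaf $G$ on $\mC^\circ$ has $G(S^0)\simeq\Omega G(0)\simeq 0$ (resp.\ with $\bar\Omega$).
\item So $S^0$ becomes $0$ in the localization, and every functor in the image of (1)--(3) vanishes on $S^0$.
\item Since $\Sigma^\infty S^0\neq 0$, the functor $\F$ is not in the image.
\end{itemize}

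What closes the argument is stability. If $\Sigma'$ and $\Omega'$ are inverse equivalences on both $\mC$ and $\mE$, then $\Sigma'\F\Omega'\simeq\F$ iff $\F\Sigma'\simeq\Sigma'\F$ iff $\F\simeq\Omega'\F\Sigma'$. This is the only setting in which the paper uses the corollary: the proof of \cref{specenr}, with $\mC$ quasi-(anti/bi)stable and $\mE$ presentable (anti/bi)spectral. There, consistent with the side swap, ${\wedge\Exc\wedge}(\mC^\circ,\infty\Cat_*\boxplus\infty\Cat_*)$ is matched with ${\wedge\overline{\Exc}\wedge}(\mC,\mE)$ and vice versa. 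So you should add these hypotheses and the correct pairing, and run Step~3 with the condition $\Sigma'\F\Omega'\simeq\F$ instead of excision.
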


\begin{theorem}\label{lifti} Let $\mC,\mD$ be reduced generalized bioriented categories.

\begin{enumerate}\item 
Assume that $\mC$ admits endomorphisms and
$\mD$ admits suspensions.
The bioriented functor $\Omega^\infty: \Sp(\mC) \to \mC $ induces an equivalence:
\begin{equation*}\label{ravelo}
{\wedge\Exc\wedge}(\mD, \Sp(\mC)) \to {\wedge\Exc\wedge}(\mD, \mC).	
\end{equation*}
The inverse sends $\F$ to $\{\F \circ \Sigma^\n, \F \circ \Sigma^\n \simeq \Omega \circ \F \circ \Sigma^{\n+1} \}.$
	
\vspace{1mm}

\item Assume that $\mC$ admits antiendomorphisms and
$\mD$ admits antisuspensions.
The bioriented functor $\Omega^\infty: \overline{\Sp}(\mC) \to \mC $ induces an equivalence:
\begin{equation*}\label{ravelo2}
{\wedge\overline{\Exc}\wedge}(\mD, \overline{\Sp}(\mC)) \to {\wedge\overline{\Exc}\wedge}(\mD, \mC).	
\end{equation*}
The inverse sends $\F$ to $\{\F \circ \bar{\Sigma}^\n, \F \circ \bar{\Sigma}^\n \simeq \bar{\Omega} \circ \F \circ \bar{\Sigma}^{\n+1} \}.$

\vspace{1mm}
	
\item Assume that $\mC$ admits endomorphisms, antiendomorphisms and $\mD$ has suspensions, antisuspensions. The bioriented functor $\Omega^\infty: \overline{\Sp}(\Sp(\mC)) \simeq \Sp(\overline{\Sp}(\mC)) \to \mC $ induces an equivalence:
\begin{equation*}\label{ravelo3}
{\wedge\Bi\Exc\wedge}(\mD, \overline{\Sp}(\Sp(\mC))) \to {\wedge\Bi\Exc\wedge}(\mD, \mC).\end{equation*}

\end{enumerate}

\end{theorem}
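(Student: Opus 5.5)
We prove (1) directly; (2) follows by applying (1) to $\mC^\co,\mD^\co$ via $\overline{\Sp}(\mC)=\Sp(\mC^\co)^\co$ and the fact that $\F$ is antiexcisive iff $\F^\co$ is excisive. (3) is obtained by composing (1) and (2): apply (2) to $\Sp(\mC)$, which admits antiendomorphisms because these are computed levelwise in the limit, and then (1) to $\mC$. One checks that both equivalences restrict to biexcisive functors. The reason is that $\Omega^\infty$ preserves endomorphisms and antiendomorphisms, and a functor into a limit is (anti)excisive iff each component is.

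For (1), the first step uses the universal property of the limit $\Sp(\mC)=\lim(\dots\xrightarrow{\Omega}\mC^\cop\xrightarrow{\Omega}\mC)$ in bioriented categories. It gives an equivalence
\[{\wedge\Fun\wedge}(\mD,\Sp(\mC))\simeq \lim\bigl(\dots\xrightarrow{\Omega_*}{\wedge\Fun\wedge}(\mD,\mC^\cop)\xrightarrow{\Omega_*}{\wedge\Fun\wedge}(\mD,\mC)\bigr),\]
under which $\Omega^\infty_*$ is projection to the last term. A bioriented functor into $\Sp(\mC)$ is excisive iff every component $\F_\n$ is. The reason is that oriented pullbacks in the limit are computed componentwise, as $\Omega$ is a right adjoint bioriented functor (\cref{reduc}) and so preserves oriented pullbacks. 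Postcomposition with $\Omega$ preserves excisive functors by \cref{Homt}. So the left side of (1) is the limit of the tower $\Omega_*$ acting on ${\wedge\Exc\wedge}(\mD,\mC^{(\cop)})$, and the question is whether the projection from this limit to the last term is an equivalence.

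The second step, which is the crux, shows that on excisive functors $\Omega_*$ is an equivalence with inverse $\Sigma^*:\F\mapsto\F\circ\Sigma_\mD$. The first point to settle is that $\F\circ\Sigma$ is excisive and lands in the correct variance ${\wedge\Exc\wedge}(\mD,\mC^\cop)$. For this we use \cref{reduc}, which says that $\Sigma:\mD^\cop\to\mD$ is bioriented; hence $\Sigma^*$ maps functors out of $\mD$ to functors out of $\mD^\cop$, and one identifies ${\wedge\Fun\wedge}(\mD^\cop,\mC)\simeq{\wedge\Fun\wedge}(\mD,\mC^\cop)$. Excisiveness then follows from \cref{Homt}, since $\Sigma$ preserves suspensions. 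The unit and counit come from \cref{impol}(5) together with the transformation $\Omega\circ\F\to\F\circ\Omega$ of \cref{impol}(1)/(3). Specifically, excisiveness says exactly that $\F\to\Omega\circ\F\circ\Sigma$ is an equivalence, and this gives $\id\simeq\Omega_*\Sigma^*$ on excisive functors.

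For the other composite $\Sigma^*\Omega_*$, one must show $\Omega\circ\F\circ\Sigma\simeq\F$ naturally. This is the same natural equivalence $\F\to\Omega\F\Sigma$ read in the other direction, so $\Omega_*$ is an equivalence with inverse $\Sigma^*$. The main obstacle I expect is exactly this bookkeeping of the $\cop$-twists and the coherence of these transformations as natural transformations of functors of functor categories. \cref{impol} is designed to supply this, and I would also check that it still applies in the generalized setting, as the preceding remark allows.

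Given this, every map in the tower is an equivalence. The limit projection to the last term is therefore an equivalence, and its inverse sends $\F$ to the sequence $\F\circ\Sigma^\n$ with the excision equivalences $\F\circ\Sigma^\n\simeq\Omega\circ\F\circ\Sigma^{\n+1}$ as bonding maps, which is the stated formula.
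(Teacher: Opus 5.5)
Your proposal is correct and follows the paper's proof essentially step by step. The paper likewise uses \cref{Homt} and \cref{impol} to show that $\Omega_*$ and $\Sigma^*$ are mutually inverse on excisive functors, relying on $\Omega_*\Sigma^*\simeq\Sigma^*\Omega_*$ exactly as you observe, and then concludes that the projection from the limit tower is an equivalence. It gets (2) by passing to $\mC^\co$, and obtains (3) by restricting both equivalences to biexcisive functors and composing through $\Sp(\mC)$.

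One small fix: you justify componentwise excisiveness by citing the bioriented adjunction of \cref{reduc}, but that needs suspensions in $\mC$, which are not assumed. It suffices instead that endomorphisms in $\Sp(\mC)$ are computed levelwise (\cref{rstab}) and that equivalences in the limit are detected levelwise.
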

\begin{proof}
	
(1): By \cref{Homt} the bioriented functors $\Omega_*, \Sigma^*: {\wedge\Fun\wedge}(\mD, \mC) \to {\wedge\Fun\wedge}(\mD, \mC)$ postcomposing with $\Omega:\mC \to \mC$ and precomposing with $\Sigma: \mD \to \mD $ restrict to functors $$\Omega_*, \Sigma^*: {\wedge\Exc\wedge}(\mD, \mC) \to {\wedge\Exc\wedge}(\mD, \mC).$$
By \cref{impol} there is a natural transformation $ \sigma: \id \to \Omega_* \circ \Sigma^* \simeq \Sigma* \circ \Omega_*$ of endofunctors of ${\wedge\Fun\wedge}(\mD, \mC)$
whose component at any $\F \in {\wedge\Fun\wedge}(\mD, \mC)$ is the canonical morphism $\F \to \Omega \circ \F \circ \Sigma.$ The latter is an equivalence if $\F \in {\wedge\Exc\wedge}(\mD, \mC)$. Hence $\sigma$ restricts to an equivalence $\id \to \Omega_* \circ \Sigma^* \simeq \Sigma^* \circ \Omega_*$ of endofunctors of ${\wedge\Exc\wedge}(\mD, \mC)$.
So the functors $$\Omega_*, \Sigma^*: {\wedge\Exc\wedge}(\mD, \mC) \to {\wedge\Exc\wedge}(\mD, \mC)$$ are inverse to each other.
The functor of the statement factors as
$${\wedge\Exc\wedge}(\mD, \Sp(\mC)) \simeq \lim(... \xrightarrow{\Omega_*} {\wedge\Exc\wedge}(\mD, \mC)) \xrightarrow{\gamma} {\wedge\Exc\wedge}(\mD, \mC),$$
where $\gamma$ is the projection to the rightmost object.
Since $\Omega_*: {\wedge\Exc\wedge}(\mD, \mC) \to {\wedge\Exc\wedge}(\mD, \mC)$ is an equivalence, $\gamma$ is an equivalence.
(2) follows from (1) by replacing $\mC$ by $\mC^\co.$

(3): By the description of the inverse the equivalence of (1) restricts to an equivalence $${\wedge\Bi\Exc\wedge}(\mD, \Sp(\mC)) \to {\wedge\Bi\Exc\wedge}(\mD, \mC)$$
and the equivalence of (2) restricts to an equivalence $${\wedge\Bi\Exc\wedge}(\mD, \overline{\Sp}(\mC)) \to {\wedge\Bi\Exc\wedge}(\mD, \mC).$$ Hence the latter equivalence gives rise to an equivalence $${\wedge\Bi\Exc\wedge}(\mD, \overline{\Sp}(\Sp(\mC))) \to {\wedge\Bi\Exc\wedge}(\mD, \Sp(\mC))$$ since $\Sp(\mC)$ admits antiendomorphisms if $\mC$ does. 
The functor of (3) factors as $${\wedge\Bi\Exc\wedge}(\mD, \overline{\Sp}(\Sp(\mC))) \to {\wedge\Bi\Exc\wedge}(\mD, \Sp(\mC)) \to {\wedge\Bi\Exc\wedge}(\mD, \mC).$$

\end{proof}

\begin{lemma}\label{collif}
Let $\mC$ be a reduced generalized bioriented category that admits endomorphisms and
$\mD$ a reduced generalized bioriented category that admits suspensions.
An excisive reduced bioriented functor $\F:\mD \to \Sp(\mC)$ preserves small colimits and left and right tensors if the excisive reduced bioriented functor $\Omega^\infty \circ \F: \mD \to \mC$ 
preserves small colimits and left and right tensors.	
	
\end{lemma}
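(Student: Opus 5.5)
By \cref{lifti}(1), $\F$ is determined by the excisive functors $\F_\n := \Omega^\infty \circ \Sigma^\n \circ \F \simeq \Omega^\infty \circ \F \circ \Sigma^\n$. Here we use that an excisive $\F$ commutes with suspension up to the canonical equivalences $\F\circ \Sigma^\n \simeq \Sigma^\n \circ \F$ in the quasi-stable target $\Sp(\mC)$ (\cref{rstab}). The plan is to check the colimit and tensor preservation levelwise.

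First we reduce to testing after the projections $\Omega^\infty \circ \Sigma^\n: \Sp(\mC) \to \mC$. Colimits in the limit $\Sp(\mC)$ are harder to test than limits, so instead of projecting we use mapping objects. For a diagram or tensor $K$ in $\mD$ with colimit (tensor) $c$, we must show that $\colim \F(K) \to \F(c)$ is an equivalence in $\Sp(\mC)$, or equivalently that $\Mor_{\Sp(\mC)}(\F(c), Y) \to \Mor(\colim \F(K), Y)$ is an equivalence for all $Y$. This is awkward because $\Sp(\mC)$ need not admit colimits. A cleaner route works with the weighted colimit cocone directly: a cocone $\F(K) \to \F(c)$ is a colimit cocone in $\Sp(\mC)$ when, for every $Y \in \Sp(\mC)$, the induced functor on morphism $\infty$-categories is an equivalence. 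Using \cref{compo}, $\Mor_{\Sp(\mC)}(A,Y) \simeq \lim_\n \Mor_\mC(A_\n, Y_\n)$, so it suffices to show that each level $\F(c)_\n = \Omega^\infty \F(\Sigma^\n c)$ is the colimit (tensor) of the levels $\F(K)_\n$. The limit of levelwise equivalences is then an equivalence, compatibly with the bonding maps, since these are natural.

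Second, levelwise we have $\F(c)_\n \simeq \Omega^\infty\F(\Sigma^\n c)$. Since $\Sigma^\n$ is a left adjoint bioriented functor on $\mD$, it preserves colimits and, by \cref{reduc}(1), left and right tensors, up to the conjugation $(-)^\cop$ on odd levels. So $\Sigma^\n c$ is the corresponding colimit (respectively tensor, with the tensoring object possibly replaced by its $\co\op$) of $\Sigma^\n K$. The hypothesis on $\Omega^\infty \circ \F$ then gives that $\Omega^\infty\F(\Sigma^\n c)$ is the colimit or tensor of the $\Omega^\infty\F(\Sigma^\n K)$, as needed.

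The main obstacle is the first step. One must verify that a levelwise colimit cocone of spectra, whose levels are all colimit cocones in $\mC$, is a colimit cocone in $\Sp(\mC)$ in the enriched sense, including for right tensors. On odd levels the right tensor in $\Sp(\mC)$ is computed with $X^{\op}$, as in the description of cotensors in the proof of \cref{spectri}. I would handle this by writing $\Mor_{\Sp(\mC)}(-,Y)$ as the sequential limit of \cref{compo} and checking that the weights match level by level using \cref{reduc}(7)--(9), which track how $\Sigma$ intertwines left and right tensors.
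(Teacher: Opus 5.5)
Your argument is correct, but it is organized differently from the paper's.

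\textbf{The paper's route.} It computes the colimit (resp.\ tensor) of $\F\circ\phi$ in $\Sp(\mC)$ explicitly as the spectrification of the levelwise colimit. Using \cref{hiul} and \cref{spectri}, it writes $\Omega^\infty\Sigma^\n(\colim(\F\circ\phi))\simeq\colim_{\bk}\Omega^\bk(\colim(\Omega^\infty\circ\F\circ\Sigma^{\n+\bk}\circ\phi))$. It then uses three inputs to collapse the sequential colimit to $\Omega^\infty\F(\Sigma^\n(\colim\phi))$: the hypothesis on $\Omega^\infty\circ\F$, the fact that $\Sigma$ commutes with colimits, and excisiveness.

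\textbf{Your route.} You never form a colimit in $\Sp(\mC)$. You observe that the image cocone is already levelwise a colimit (resp.\ tensor) cocone. Such a cocone has the universal property in $\Sp(\mC)$, because morphism objects there are the sequential limits of \cref{compo} and limits commute with limits. In effect you notice that spectrification is unnecessary here, since the levelwise colimit is already a spectrum.

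\textbf{Trade-offs.} The paper's route leans on the spectrification formalism of \cref{spectri}, which is set up assuming $\mC$ admits sequential colimits commuting with $\Omega$. Yours uses only the cocones that are given, so it works verbatim under the hypotheses as stated. The price is that you must carry out the $\co\op$-bookkeeping for right tensors that the paper dismisses as ``similar.'' Odd levels live in $\mC^\cop$, where the right tensor with $K$ is the right tensor with $K^{\co\op}$ in $\mC$. This matches $\Sigma(X\wedge K)\simeq\Sigma(X)\wedge K^{\co\op}$, which is encoded in $\Sigma\colon\mD^\cop\to\mD$ being bioriented (\cref{reduc}(1)). Your plan of checking this level by level against the limit formula for $\Mor_{\Sp(\mC)}(-,Y)$ is the right one.

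\textbf{One correction.} $\mD$ is only assumed to admit suspensions, so $\Sigma^\n$ need not be a left adjoint. Argue instead that $\Sigma\simeq(-)\wedge S^1$ is itself a weighted colimit, and hence commutes with the given colimits and tensors.
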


\begin{proof}
	
Let $K \in \infty\Cat_*, X \in \mC$. The canonical morphism $$\Omega^\infty\Sigma^\n(K \wedge \F(X)) \to \Omega^\infty\Sigma^\n(\F(K \wedge X)) $$ 
identifies with the morphism
$$   \colim_{\bk \geq 0} \Omega^\bk(K \wedge \Omega^\infty(\F(\Sigma^{n+\bk}(X)))) \to 
\colim_{\bk \geq 0} \Omega^\bk(\Omega^\infty(\F(K \wedge\Sigma^{n+\bk}(X)))) \simeq $$$$ \colim_{\bk \geq 0} \Omega^\bk(\Omega^\infty(\F(\Sigma^{n+\bk}(K \wedge X)))) \simeq \Omega^\infty(\F(\Sigma^\n(K \wedge X))).$$ 
The case of right tensors is similar.
Let $\mJ$ be a category and $\phi: \mJ \to \mD$ a functor.
The canonical morphism $$\Omega^\infty\Sigma^\n(\colim(\F \circ \phi)) \to \Omega^\infty\Sigma^\n(\F(\colim(\phi)) $$ 
identifies with the morphism
$$ \colim_{\bk \geq 0} \Omega^\bk(\colim(\Omega^\infty \circ \F \circ \Sigma^{n+\bk} \circ \phi))  \to 
\colim_{\bk \geq 0} \Omega^\bk(\Omega^\infty(\F(\colim(\Sigma^{n+\bk} \circ \phi)))) $$$$ \simeq \colim_{\bk \geq 0} \Omega^\bk(\Omega^\infty(\F(\Sigma^{n+\bk}(\colim(\phi))))) \simeq \Omega^\infty(\F(\Sigma^\n(\colim(\phi)))).$$ 
	
\end{proof}

We will see next that the theory of excisive approximations implies that every stable bioriented category bispectral.

\begin{notation}Let $\mH$ be a set of reduced bioriented weights and $\mC, \mD$ reduced bioriented categories.
\begin{enumerate}
\item Let $${\wedge\Exc^\mH\wedge}(\mC, \mD) \subset {\wedge\Fun\wedge}(\mC,\mD)$$ be the full subcategory of excisive bioriented functors preserving $\mH$-weighted limits.

\item Let $${\wedge\overline{\Exc}^\mH\wedge}(\mC, \mD) \subset {\wedge\Fun\wedge}(\mC,\mD)$$ be the full subcategory of antiexcisive bioriented functors preserving $\mH$-weighted limits.

\item Let $${\wedge\Bi\Exc^\mH\wedge}(\mC, \mD) \subset {\wedge\Fun\wedge}(\mC,\mD)$$ be the full subcategory of biexcisive bioriented functors preserving $\mH$-weighted limits.

\end{enumerate}

\end{notation}

\begin{proposition}Let $\mH$ be a set of reduced bioriented weights and $\mD, \mE$ presentable reduced bioriented categories.

\begin{enumerate}
\item Let $\mC$ be a reduced bioriented category that admits $\mH$-weighted colimits and is quasi-stable if $\mH$ is not empty.
The reduced bioriented category $ {\wedge\Exc^\mH \wedge}(\mC, \mD\boxplus \mE)$ is presentable and antistable.	
	
\item Let $\mC$ be a reduced bioriented category that admits $\mH$-weighted colimits and is quasi-antistable if $\mH$ is not empty.
The reduced bioriented category $ {\wedge\overline{\Exc}^\mH \wedge}(\mC, \mD\boxplus \mE)$ is presentable and stable.

\item Let $\mC$ be a reduced bioriented category that admits $\mH$-weighted colimits and is quasi-bistable if $\mH$ is not empty.
The reduced bioriented category $ {\wedge\Bi\Exc^\mH \wedge}(\mC, \mD\boxplus \mE)$ is presentable and bistable.

\end{enumerate}	

\end{proposition}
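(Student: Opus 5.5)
We treat (1) in detail; (2) and (3) follow by the same argument with $\mC$ replaced by $\mC^\co$, respectively by combining (1) and (2). For (3) one checks that the excisive and antiexcisive conditions, together with quasi-bistability of $\mC$, survive the argument for both orientations simultaneously.

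\textbf{Presentability.} Write $\mF := {\wedge\Fun\wedge}(\mC, \mD\boxplus\mE)$. By the remarks following the definition of $\mD\boxplus\mE$, this is a presentable reduced bioriented category, with left and right tensors computed via \cref{tencot}. The full subcategory ${\wedge\Exc^\mH\wedge}(\mC,\mD\boxplus\mE)$ is cut out by two conditions. Excisiveness is locality with respect to the set of maps in the proposition preceding \cref{laxoplax2}. Preservation of $\mH$-weighted limits is again locality with respect to a small set of maps, obtained from enriched Yoneda exactly as in \cref{weiloc}: for each weighted colimit cocone in $\mC$ one compares the corepresentables. Hence the subcategory is an accessible localization of $\mF$, closed under limits, and it is closed under left and right cotensors by the enriched version of the same locality. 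It is therefore presentable and inherits both tensorings. When $\mC$ is small this is literally \cref{weiloc}; in general we reduce by the usual accessibility argument, or we observe that only a small generating set of test objects is needed.

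\textbf{Antistability.} Since the subcategory is presentable, it admits all weighted limits and colimits, so by definition it suffices to show that antiendomorphisms $\bar\Omega$ act invertibly. By \cref{tencot}, the left cotensor with $S^1$ (that is, $\bar\Omega$) is computed pointwise as $\bar\Omega_{\mD\boxplus\mE}\circ\F$. Combining \cref{impol}(2) with the reduction of \cref{lifti}, we aim to identify this postcomposition, on excisive functors, with precomposition by some functor of $\mC$.

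\begin{itemize}
\item If $\mH$ is empty, we expect to use that $\bar\Omega$ on excisive functors in a bioriented target can be traded via \cref{reduc}(8),(9) for $\Omega\circ(-)^{\co\op}$ on the target. For the target $\infty\Cat_*\boxplus\infty\Cat_*$, \cref{spantisp} and \cref{reduc}(8) give an equivalence $\Omega^2\simeq\bar\Omega^2$.
\item If $\mH$ is nonempty and $\mC$ is quasi-stable, we instead use that an excisive $\F$ satisfies $\F\simeq\Omega\circ\F\circ\Sigma$. We then show that $\bar\Omega\circ\F$ is equivalent to precomposition with an autoequivalence of $\mC$ built from $\Sigma_\mC$ and its inverse, which is invertible on the nose.
\end{itemize}

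Concretely, the plan is to prove that $\bar\Omega^2_*$ restricted to excisive functors agrees with $\Omega^2_*$. Since $\Omega_*$ is inverse to $\Sigma^*$ on excisive functors (as in the proof of \cref{lifti}), $\Omega^2_*$ is an equivalence, hence so is $\bar\Omega^2_*$, and therefore $\bar\Omega$ is invertible.

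\textbf{Main obstacle.} The hard step is the comparison $\bar\Omega^2\simeq\Omega^2$ in the target $\mD\boxplus\mE$. It holds in $\infty\Cat_*$ by \cref{reduc}(8), but we must transport it to $\mD\boxplus\mE$, which carries independent left and right actions. We would first try factoring through \cref{reduc}(7): write $\Omega^2 = (\mD\boxplus\mE)\otimes\Omega^2$ and similarly for $\bar\Omega^2$ on the other side, reduce to $\infty\Cat_*\boxplus\infty\Cat_*$, and there invoke \cref{spantisp}. The assumption that $\mC$ is quasi-stable when $\mH\neq\emptyset$ then guarantees that precomposition by $\Sigma_\mC$ preserves $\mH$-weighted limits, so the inverse equivalence stays inside ${\wedge\Exc^\mH\wedge}$.
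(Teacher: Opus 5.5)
\textbf{Presentability.} Your argument here is essentially the paper's. Excisiveness and preservation of $\mH$-weighted limits each cut out accessible localizations (\cref{excista}, \cref{weiloc}). These are closed under left and right cotensors by \cref{tencot}.

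\textbf{Antistability: a genuine gap.} Your argument rests on the comparison $\bar{\Omega}^2\simeq\Omega^2$ on the target $\mD\boxplus\mE=\mD\otimes\mE$, and this fails in general. Excisiveness controls the right action on the factor $\mE$. There are two ways to read your $\bar{\Omega}$, and neither works.
\begin{enumerate}
\item If $\bar{\Omega}$ is the target's own antiendomorphism functor, it comes from the left action on the other factor $\mD$. By \cref{reduc}(7), $\Sigma$ and $\bar{\Sigma}$ then act through different tensor factors and are unrelated. Already in $\infty\Cat_*\boxplus\infty\Cat_*$ one has $\bar{\Sigma}^2(S^0\otimes S^0)\simeq S^2\otimes S^0$ but $\Sigma^2(S^0\otimes S^0)\simeq S^0\otimes S^2$. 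These differ: the colimit-preserving functor $\infty\Cat_*\otimes\infty\Cat_*\to\infty\Cat_*$ induced by $(A,B)\mapsto A\wedge\tau_0(B)$ sends the first to the categorical $2$-sphere and the second to a space.
\item If $\bar{\Omega}$ is the left cotensor of the functor category, then by \cref{tencot} it is postcomposition with the left action on $\mE$. You would then need $\bar{\Omega}^2_\mE\simeq\Omega^2_\mE$ on an arbitrary presentable bioriented $\mE$. But $\Sigma_\mE=\mE\otimes_{\infty\Cat_*}\Sigma$ and $\bar{\Sigma}_\mE=\bar{\Sigma}\otimes_{\infty\Cat_*}\mE$ sit on opposite sides of an arbitrary bimodule, so they are unrelated in general.
\end{enumerate}
In either reading, \cref{reduc}(8),(9) and \cref{spantisp} do not help. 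They are statements about $\infty\Cat_*$ itself, where both actions come from the one Gray smash product and are linked by $(-)^{\co\op}$. Nothing reduces a general $\mD\boxplus\mE$ to that case.

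Your bullet for $\mH\neq\emptyset$ also has no mechanism. Excisiveness converts postcomposition with the oriented-side $\Omega$ into precomposition with $\Sigma_\mC^{-1}$, but nothing converts postcomposition with $\bar{\Omega}$.

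\textbf{The missing idea.} The paper never compares cotensors on $\mD\boxplus\mE$; it moves the whole problem into a stabilized target. The (anti)stabilization adjunctions $\Sigma^\infty\dashv\Omega^\infty$ of $\mD\boxplus\mE$ are identified via \cref{sptens} with $\overline{\Sp}(\mD)\boxplus\mE$, $\mD\boxplus\Sp(\mE)$ and $\overline{\Sp}(\mD)\boxplus\Sp(\mE)$. They induce adjunctions between the corresponding excisive functor categories, whose right adjoints are equivalences by \cref{lifti}.

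The inverse in \cref{lifti} is $F\mapsto(F\circ\Sigma^n)_n$. Quasi-stability of $\mC$ is exactly what makes these equivalences restrict to the $\mH$-versions, which is the role you correctly assign to it in your last sentence.

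(Anti)stability is then read off from the (anti)stable target (\cref{stab}). This uses the pointwise description of tensors in \cref{tencot}, together with closure of the $\mH$-excisive subcategory under (anti)suspensions and (anti)endomorphisms. At no point is $\bar{\Omega}^2$ compared with $\Omega^2$ on $\mD\boxplus\mE$.
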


\begin{proof}
By \cref{weiloc} the full subcategory of ${\wedge\Fun\wedge}(\mC, \mD  \boxplus \mE)$ of reduced bioriented functors preserving $\mH$-weighted limits is an accessible localization.
By \cref{excista} the respective full subcategories of ${\wedge\Fun\wedge}(\mC, \mD  \boxplus \mE)$ of reduced excisive, antiexcisive, biexcisive bioriented functors are accessible localizations.
Thus the intersections $$ {\wedge\Exc^\mH\wedge}(\mC,\mD  \boxplus \mE), {\wedge\overline{\Exc}^\mH\wedge}(\mC,\mD  \boxplus \mE), {\wedge\Bi\Exc^\mH\wedge}(\mC,\mD  \boxplus \mE) \subset {\wedge\Fun\wedge}(\mC, \mD  \boxplus \mE)$$ are accessible localizations.
Moreover these full subcategories are closed in $ {\wedge\Fun\wedge}(\mC, \mD  \boxplus \mE)$ under left and right cotensors by the description of cotensors \ref{tencot} and so by \cref{adj} accessible bioriented localizations and presentable reduced bioriented categories.

The respective reduced bioriented adjunctions $$\hspace{9mm} \Sigma^\infty : \mD \boxplus \mE \rightleftarrows \overline{\Sp}(\mD \boxplus \mE) \simeq \Cat\Sp \ot_{\infty\Cat_*} \mD \ot \mE \simeq \overline{\Sp}(\mD) \boxplus \mE: \Omega^\infty, $$
$$\Sigma^\infty : \mD \boxplus \mE \rightleftarrows \Sp(\mD \boxplus \mE) \simeq \mD \ot \mE \ot_{\infty\Cat_*} \Cat\Sp \simeq \mD \boxplus \Sp(\mE): \Omega^\infty, $$
$$\hspace{31mm}\Sigma^\infty : \mD \boxplus \mE \rightleftarrows \overline{\Sp}(\Sp(\mD \boxplus \mE)) \simeq \Cat\Sp \ot_{\infty\Cat_*} \mD \ot \mE \ot_{\infty\Cat_*} \Cat\Sp \simeq \overline{\Sp}(\mD) \boxplus \Sp(\mE): \Omega^\infty $$ give rise to reduced bioriented adjunctions
$$  {\wedge\Exc\wedge}(\mC, \mD \boxplus\mE) \rightleftarrows  {\wedge\Exc\wedge}(\mC, \overline{\Sp}(\mD) \boxplus \mE), $$	
$$  {\wedge\overline{\Exc}\wedge}(\mC, \mD \boxplus\mE) \rightleftarrows  {\wedge\overline{\Exc}\wedge}(\mC, \mD \boxplus \Sp(\mE)), $$	
$$  {\wedge\Bi\Exc\wedge}(\mC, \mD \boxplus\mE) \rightleftarrows  {\wedge\Bi\Exc\wedge}(\mC, \overline{\Sp}(\mD) \boxplus \Sp(\mE)), $$	
whose right adjoints induce an equivalence on underlying categories by \cref{lifti}
and so are equivalences of bioriented categories.
If $\mC$ is quasi-stable, quasi-antistable, quasi-bistable, respectively, by the description of the inverse of \cref{lifti} the latter equivalences restrict to equivalences
$$  {\wedge\Exc^\mH\wedge}(\mC, \mD \boxplus\mE) \simeq  {\wedge\Exc^\mH\wedge}(\mC, \overline{\Sp}(\mD) \boxplus \mE), $$	
$$  {\wedge\overline{\Exc}^\mH\wedge}(\mC, \mD \boxplus\mE) \simeq  {\wedge\overline{\Exc}^\mH\wedge}(\mC, \mD \boxplus \Sp(\mE)), $$	
$$  {\wedge\Bi\Exc^\mH\wedge}(\mC, \mD \boxplus\mE) \simeq  {\wedge\Bi\Exc^\mH\wedge}(\mC, \overline{\Sp}(\mD) \boxplus \Sp(\mE)). $$	

By \cref{stab} the presentable bioriented category $ \mD \boxplus \Sp(\mE) \simeq \Sp(\mD \boxplus \mE)$ is stable, $ \overline{\Sp}(\mD) \boxplus \mE \simeq \overline{\Sp}(\mD \boxplus \mE)$ is antistable and $\overline{\Sp}(\mD) \boxplus \Sp(\mE)\simeq \overline{\Sp}(\Sp(\mD \boxplus \mE)) $ is bistable.
The description of left and right tensors (\cref{tencot}) guarantees that the presentable bioriented category ${\wedge\Fun\wedge}(\mC, \mD \boxplus \Sp(\mE))$ is stable, ${\wedge\Fun\wedge}(\mC, \overline{\Sp}(\mD) \boxplus \mE)$ is antistable and ${\wedge\Fun\wedge}(\mC, \overline{\Sp}(\mD) \boxplus \Sp(\mE))$ is bistable and the full subcategories $${\wedge\Exc^\mH\wedge}(\mC,\overline{\Sp}(\mD) \boxplus \mE) \subset {\wedge\Fun\wedge}(\mC, \overline{\Sp}(\mD) \boxplus \mE), $$$$
{\wedge\overline{\Exc}^\mH\wedge}(\mC, \mD \boxplus \Sp(\mE)) \subset {\wedge\Fun\wedge}(\mC, \mD \boxplus \Sp(\mE)), $$$$ {\wedge\Bi\Exc^\mH\wedge}(\mC,\overline{\Sp}(\mD) \boxplus \Sp(\mE)) \subset {\wedge\Fun\wedge}(\mC, \overline{\Sp}(\mD) \boxplus \Sp(\mE))$$ are closed under endomorphisms, antiendomorphisms, suspensions and antisuspensions, and so are stable, antistable, bistable, respectively.

\end{proof}

\begin{notation}
By \cite[Notation 5.36., Theorem 4.89.]{heine2024bienrichedinftycategories} for every small reduced bioriented category $\mC$ there is a reduced bioriented embedding $$\mC \hookrightarrow {\wedge\Fun\wedge}(\mC^\circ, \infty\Cat_*\boxplus \infty\Cat_*),$$
which we call bioriented Yoneda embedding, that preserves weighted limits by \cref{weiadj}.

\end{notation}

\begin{remark}Let $\mC$ be a small reduced bioriented category.

\begin{enumerate}

\item If $\mC$ is quasi-stable, the full subcategory $ {\wedge\Exc^\mH\wedge}(\mC, \mD) \subset {\wedge\Fun\wedge}(\mC, \mD) $ precisely consists of the reduced bioriented functors preserving endomorphisms and $\mH$-weighted limits. 

Thus if $\mC$ is quasi-antistable (so that $\mC^\circ$ is quasi-stable), the bioriented Yoneda embedding induces a bioriented embedding 
$$\mC \hookrightarrow {\wedge\Exc^\mH \wedge}(\mC^\circ, \infty\Cat_*\boxplus \infty\Cat_*) $$
into an antistable presentable bioriented category
that preserves weighted limits and $\mH$-weighted colimits by \cref{weiloc}.

\item If $\mC$ is quasi-antistable, the full subcategory $ {\wedge\overline{\Exc}^\mH\wedge}(\mC, \mD) \subset {\wedge\Fun\wedge}(\mC, \mD) $ precisely consists of the reduced bioriented functors preserving antiendomorphisms and $\mH$-weighted limits. 

Thus if $\mC$ is quasi-stable (so that $\mC^\circ$ is quasi-antistable), the bioriented Yoneda embedding induces a bioriented embedding 
$$\mC \hookrightarrow {\wedge\overline{\Exc}^\mH \wedge}(\mC^\circ, \infty\Cat_*\boxplus \infty\Cat_*) $$
into a stable presentable bioriented category
that preserves weighted limits and $\mH$-weighted colimits.

\item If $\mC$ is quasi-bistable, the full subcategory $ {\wedge\Bi\Exc^\mH\wedge}(\mC, \mD) \subset {\wedge\Fun\wedge}(\mC, \mD) $ precisely consists of the reduced bioriented functors preserving endomorphisms, antiendomorphisms and $\mH$-weighted limits. 
Thus if $\mC$ is quasi-bistable (so that $\mC^\circ$ is quasi-bistable), the bioriented Yoneda embedding induces a bioriented embedding
$$\mC \hookrightarrow {\wedge\Bi\Exc^\mH \wedge}(\mC^\circ, \infty\Cat_*\boxplus \infty\Cat_*) $$ 
into a bistable presentable bioriented category
that preserves weighted limits and $\mH$-weighted colimits.

\end{enumerate}

\end{remark}

\begin{corollary}
Let $\mH$ be a set of reduced bioriented weights and $\mC$ a reduced bioriented category that admits $\mH$-weighted colimits.
If $\mC$ is quasi-stable, quasi-antistable, quasi-bistable, respectively, there is a canonical functorial bioriented embedding $\mC \hookrightarrow \mD$ preserving $\mH$-weighted colimits and weighted limits into a stable, antistable, bistable presentable bioriented category, respectively.

\end{corollary}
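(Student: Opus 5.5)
The plan is to assemble the corollary directly from the preceding remark together with \cref{weiloc}, treating the three cases in parallel. We do the quasi-stable case; the quasi-antistable case is identical after applying $(-)^\co$, and the quasi-bistable case follows by intersecting the two. First reduce to $\mC$ small. If $\mC$ is large, pass to the next universe: the theory of weighted colimits and the bioriented Yoneda embedding apply verbatim to large enriched categories regarded as small relative to a larger universe. Presentability is then meant in that enlarged sense, as is customary in the paper when writing $\widehat{(-)}$. So assume $\mC$ is small and quasi-stable. Then $\mC^\circ$ is quasi-antistable, since $\Sigma$ and $\Omega$ are exchanged under opposition, as in \cref{agos}.

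Set $\mD := {\wedge\overline{\Exc}^\mH\wedge}(\mC^\circ,\infty\Cat_*\boxplus\infty\Cat_*)$. By the preceding proposition, part (2), applied to $\mC^\circ$ (which admits $\mH$-weighted colimits after dualizing the weights), $\mD$ is presentable and stable. By the preceding remark, part (2), a reduced bioriented functor out of a quasi-antistable category is antiexcisive exactly when it preserves antiendomorphisms. Since representables preserve all weighted limits, the bioriented Yoneda embedding $\mC\hookrightarrow{\wedge\Fun\wedge}(\mC^\circ,\infty\Cat_*\boxplus\infty\Cat_*)$ therefore lands in $\mD$. It is fully faithful, and it preserves weighted limits by \cref{weiadj}.

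Preservation of $\mH$-weighted colimits follows from \cref{weiloc}(3). The subcategory $\mD$ is cut out by requiring presheaves to send a set $\Lambda$ of weighted colimit cocones in $\mC$ to weighted limits; these are the $\mH$-weighted colimit cocones, together with the cocones exhibiting antiendomorphisms/suspension data. Hence $\mD=\mP^\Lambda(\mC)$, and \cref{weiloc} says the restricted localization $\mC\to\mD$ coincides with the Yoneda embedding and sends cocones in $\Lambda$ to weighted colimits. Weighted limits are preserved because the localized Yoneda embedding preserves them, as stated in \cref{weiloc}(3).

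Functoriality is where I expect the main obstacle. We need the assignment $\mC\mapsto\mD$ to be functorial in bioriented functors preserving $\mH$-weighted colimits, and the embeddings to form a natural transformation. I would use the functorial model $\gamma(\mC)$ of \cite[Proposition 4.40.]{heine2024bienrichedinftycategories}, exactly as in the proof of \cref{impol}. For a functor $f:\mC\to\mC'$, left Kan extension gives $\gamma(\mC)\to\gamma(\mC')$; composing with the localization onto the $\Lambda$-local objects of $\mC'$ gives $\mD\to\mD'$. This is compatible with the embeddings because $f$ sends $\Lambda$ into $\Lambda'$, since it preserves the relevant colimits. In the bistable case, take $\mD={\wedge\Bi\Exc^\mH\wedge}(\mC^\circ,\dots)$ and use parts (3) of the proposition and of the remark in the same way.
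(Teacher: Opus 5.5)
Your proposal is correct and is essentially the paper's argument. The paper reads the corollary off the preceding remark: the Yoneda embedding of $\mC$ into ${\wedge\overline{\Exc}^\mH\wedge}(\mC^\circ,\infty\Cat_*\boxplus\infty\Cat_*)$, or its excisive and biexcisive variants. Presentability and (anti/bi)stability come from the preceding proposition, and preservation of weighted limits and $\mH$-weighted colimits comes from \cref{weiloc}. Your reduction to small $\mC$ and your sketch of functoriality via $\gamma(\mC)$, left Kan extension and relocalization fill in points the paper leaves implicit, and they are consistent with how it handles the same issues in the proof of \cref{impol}.
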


We obtain the following corollary:

\begin{corollary}\label{spectral}
\begin{enumerate}
	
\item Every quasi-stable bioriented category $\mC$ refines to a spectral bioriented category. 

\item Every quasi-antistable bioriented category $\mC$ refines to an antispectral bioriented category. 

\item Every quasi-bistable bioriented category $\mC$ refines to a bispectral bioriented category. 
\end{enumerate}		
\end{corollary}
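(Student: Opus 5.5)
The corollary follows from the preceding corollary, which gives a functorial bioriented embedding of $\mC$ into a presentable stable, antistable or bistable bioriented category, combined with \cref{unistab}, which says presentable stable bioriented categories are exactly presentable spectral bioriented categories. I treat (1) in detail; (2) follows by passing to $\mC^\co$, and (3) runs the same argument with the bistable embedding and \cref{unistab} (1).

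\textbf{Step 1 (embedding).} For quasi-stable $\mC$ (taking $\mH$ empty, or the weights $\mC$ happens to admit), the Remark before the corollary gives a reduced bioriented embedding
$$\iota: \mC \hookrightarrow \mD := {\wedge\overline{\Exc}\wedge}(\mC^\circ,\infty\Cat_*\boxplus\infty\Cat_*).$$
The preceding proposition shows that $\mD$ is presentable and stable. Here $\mC$ is small; if it is large, I enlarge the universe.

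\textbf{Step 2 (spectral enrichment of $\mD$).} By \cref{unistab} (3), the forgetful functor $\wedge\Pr^L\barwedge \to \wedge\Pr^L_{\mathrm{st}}\wedge$ is an equivalence. So $\mD$ lifts uniquely to a presentable spectral bioriented category: a right $\Cat\Sp$-module in $\Pr^L$ compatible with the left $\infty\Cat_*$-action. Its right morphism objects $\R\Mor_\mD(X,Y)\in\Cat\Sp$ have $\Omega^\infty\R\Mor_\mD(X,Y)\simeq \R\Mor_\mD(X,Y)$ computed in $\infty\Cat_*$.

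\textbf{Step 3 (restriction).} I define the spectral bioriented structure on $\mC$ as the full spectral bioriented subcategory of $\mD$ spanned by the essential image of $\iota$. This means taking the pullback of enriched categories along $\iota(\mC)\subset\iota(\mD)$, which exists because full subcategories are defined for any enrichment. Applying the lax monoidal functor $\Omega^\infty:\Cat\Sp\to\infty\Cat_*$ (the right adjoint of the monoidal $\Sigma^\infty$ from \cref{unicity}) then recovers the original bioriented structure. This holds because transfer along $\Omega^\infty$ commutes with forming full subcategories, and $\iota$ is fully faithful as a bioriented functor.

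\textbf{Main obstacle.} The hardest point is checking the compatibility in Step 3. One must show that the transferred enrichment $(\Omega^\infty)_!$ of the spectral lift of $\mD$ is the original bioriented $\mD$, with its left $\infty\Cat_*$-enrichment unchanged. I would derive this from the module-theoretic description: the right $\infty\Cat_*$-action on $\mD$ is the restriction of the $\Cat\Sp$-action along $\Sigma^\infty$, and part (2) of the transfer proposition identifies restriction of actions along a monoidal left adjoint with $\gamma_!$ for its right adjoint $\gamma=\Omega^\infty$. Functoriality of the embedding then makes the refinement canonical.
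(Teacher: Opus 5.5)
Your proposal is correct and takes the paper's route. The paper obtains this corollary directly from the preceding one: a functorial bioriented embedding of $\mC$ into a presentable stable, antistable or bistable bioriented category, namely the relevant category of (anti/bi)excisive functors on $\mC^\circ$. It then uses \cref{unistab}, which identifies such presentable categories with presentable spectral, antispectral and bispectral ones. Your Step 3 restricts the spectral enrichment to the full subcategory on $\mC$ and uses the transfer proposition to check that $\Omega^\infty$ recovers the original bioriented structure; this spells out a step the paper leaves implicit.
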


We obtain the following important corollaries:

\begin{corollary}\label{Stab}
Let $\mC$ be a quasi-stable bioriented category. 
The following are equivalent:
	
\begin{enumerate}
\item $\mC$ is stable.	
		
\item $\mC$ admits oriented fibers.
		
\item $\mC$ admits oriented cofibers.
		
\end{enumerate}
	
\end{corollary}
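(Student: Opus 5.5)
The implications (1)$\Rightarrow$(2) and (1)$\Rightarrow$(3) hold by definition, since a stable oriented category is a quasi-stable one admitting oriented fibers and cofibers. The plan is to prove (2)$\Rightarrow$(3) and (3)$\Rightarrow$(2). For the first, I would use \cref{spectral} to refine $\mC$ to a spectral bioriented category, so that its underlying oriented category is spectral in the sense needed for \cref{laxfib}. Quasi-stability gives suspensions, and (2) gives oriented left fibers. Hence \cref{laxfib} shows that $\mC$ admits oriented left cofibers, with
$$0\overset{\to}{+}_A B \simeq 0\overset{\to}{\times}_{\Sigma A}\Sigma B.$$
For oriented right cofibers I would apply the same argument to ${^\cop\mC^\cop}$, equivalently use \cref{compacor} together with the dual of \cref{laxfib}. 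This converts right fibers into right cofibers, and it is legitimate because quasi-stability is preserved under these dualities by \cref{agos}.

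For (3)$\Rightarrow$(2), I would pass to $\mC^\op$. By \cref{agos}, $\mC^\op$ is again quasi-stable, and oriented cofibers in $\mC$ are exactly oriented fibers in $\mC^\op$. So (3) for $\mC$ is (2) for $\mC^\op$, which by the first step yields oriented cofibers in $\mC^\op$, that is, oriented fibers in $\mC$. Combining the two directions, a quasi-stable $\mC$ with either property has both, and is therefore stable.

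The main obstacle is making sure \cref{laxfib} genuinely applies. One point is that the spectral refinement from \cref{spectral} must be compatible with the given oriented fibers: an oriented fiber in the underlying oriented category should also be one in the spectral sense, meaning the induced map on categorical morphism spectra is an equivalence. I expect this to follow from how the refinement is built, via the embedding into a stable presentable bioriented category of excisive functors that preserves weighted limits; there the morphism spectra are recovered levelwise, with $\Omega$ inverted, from the morphism $\infty$-categories. The other point is that the duality used for right (co)fibers must respect the spectral enrichment; I would check this through the involutions of \cref{raras}.
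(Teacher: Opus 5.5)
Your proposal is correct and follows essentially the paper's own argument: the paper's proof simply refines $\mC$ to a spectral oriented category (citing \cref{spectral0}; your use of \cref{spectral} is equivalent here) and applies \cref{laxfib} together with its dual. This is exactly your combination of left/right duality and passage to $\mC^\op$. The compatibility point you flag, that the given oriented fibers and suspensions must also be such for the spectral enrichment, is left implicit in the paper's one-line proof.
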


\begin{proof}

This follows immediately from \cref{spectral0} combined with
\cref{laxfib} and its dual.

\end{proof}

\begin{corollary}\label{sstab}
Let $\mC$ be reduced bioriented category that has oriented fibers.
Then $\Sp(\mC)$ is stable.
	
\end{corollary}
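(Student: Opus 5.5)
By \cref{Stab}, it suffices to show two things: that $\Sp(\mC)$ is quasi-stable, and that it admits oriented fibers. Quasi-stability is \cref{rstab}, which applies because $\mC$ is reduced and admits endomorphisms. Endomorphisms are a special case of oriented fibers: $\Omega X$ is the oriented fiber of $0 \to X$, computed as $0 \overset{\to}{\times}_X 0$. Hence $\Sp(\mC)$ is defined, and the only remaining task is to construct oriented (left and right) fibers in $\Sp(\mC)$.

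To construct them, I use the limit presentation of $\Sp(\mC)$ from \cref{spectraa} as the limit of
$$\cdots \xrightarrow{\Omega} \mC^\cop \xrightarrow{\Omega} \mC \xrightarrow{\Omega} \mC$$
in reduced bioriented categories. The plan has two steps.

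\begin{enumerate}
\item \emph{Transitions preserve oriented fibers.} The bioriented functors $\Omega \colon \mC \to \mC^\cop$ are right adjoint to $\Sigma$ when $\Sigma$ exists. In general, $\Omega$ is the weighted limit with weight $S^1$, so it commutes with the oriented-fiber weighted limits (weight $S^0 \to \bD^1$, resp.\ $S^0 \to (\bD^1)^\op$, by \cref{redfib}), since weighted limits commute with weighted limits. Passing to $\mC^\cop$ swaps left and right oriented fibers via $(-)^{\co\op}$. This swap is harmless because $\mC$ is assumed to have both kinds.

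\item \emph{Limits of such diagrams have oriented fibers, computed levelwise.} Given a morphism $B \to A$ in $\Sp(\mC)$, i.e.\ compatible levelwise morphisms $B_n \to A_n$, set $F_n$ to be the levelwise oriented fiber. Step 1 supplies equivalences $F_n \simeq \Omega F_{n+1}$, so $(F_n)$ is a spectrum.
\end{enumerate}

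To verify the universal property of \eqref{indmapl}, I compute morphism $\infty$-categories in the limit as limits of levelwise morphism $\infty$-categories, as in \cref{compo}. Since $\Fun^\oplax(\bD^1,-)$ and pullbacks commute with limits in $\infty\Cat$, the levelwise equivalences assemble into an equivalence. Equivalently, this is the general fact that a limit of enriched categories along functors preserving $\mH$-weighted limits admits $\mH$-weighted limits preserved by the projections (the same reasoning as in \cref{colimenrfun} and \cref{weiadj} in the earlier remark on $\Pre\Sp(\mC)$).

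The main obstacle is the bookkeeping in Step 1. The alternation between $\mC$ and $\mC^\cop$ interchanges left and right oriented fibers and also twists the oriented-fiber weights. I would handle it by proving that $\Omega \colon \mC \to \mC^\cop$ carries the oriented left fiber weight to the oriented right fiber weight in $\mC^\cop$. This would use \cref{reduc}(2) together with the identification $\Sigma \simeq \bar\Sigma \circ (-)^{\co\op}$ from \cref{reduc}(8). Once that is in place, oriented fibers exist in $\Sp(\mC)$, and \cref{Stab} yields stability.
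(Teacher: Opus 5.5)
Your proposal is correct and is essentially the paper's proof. The paper likewise combines \cref{rstab} with \cref{Stab}, and it merely asserts that $\Sp(\mC)$ inherits oriented fibers from $\mC$; your levelwise construction, with left and right oriented fibers interchanged at the $\mC^\cop$ levels, makes that assertion explicit. One caution: in this non-symmetric Gray-enriched setting the slogan ``weighted limits commute with weighted limits'' only holds up to the $(-)^{\co\op}$-twist on the weight, so the cleanest justification of Step 1 is the one you also mention, namely that $\Omega \colon \mC \to \mC^\cop$ is the restriction of a right adjoint bioriented functor along a bioriented Yoneda embedding that preserves existing weighted limits, exactly as in the proof of \cref{reduc}(2).
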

\begin{proof}
If $\mC$ has oriented fibers, $\Sp(\mC)$ has oriented fibers.
We apply \cref{rstab} and \cref{Stab}.	
	
\end{proof}

\cref{preserv} implies the following corollary:

\begin{corollary}\label{preserv2}

Let $\mC, \mD$ be bioriented categories and $\phi: \mC \to \mD$ a bioriented functor.

\begin{enumerate}
\item If $\mC,\mD$ are stable and $ \phi$ preserves endomorphisms, $\phi$ preserves oriented pushouts and oriented pullbacks. 

\item If $\mC,\mD$ are antistable and $\phi$ preserves antiendomorphisms, $\phi$ preserves antioriented pushouts and antioriented pullbacks.


\end{enumerate}

\end{corollary}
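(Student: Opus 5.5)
Both parts will follow the pattern the paper has already used for \cref{Stab}: reduce to spectral enrichment, then apply \cref{preserv}.

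For part (1), I will first enrich the situation. By \cref{spectral} (1) the quasi-stable bioriented categories $\mC$ and $\mD$ refine to spectral bioriented categories. These refinements are built from the bioriented Yoneda embedding
\[
\mC \hookrightarrow {\wedge\overline{\Exc}\wedge}(\mC^\circ,\infty\Cat_*\boxplus\infty\Cat_*),
\]
whose target is a stable presentable bioriented category. By \cref{unistab} (3) such a category is the same as a presentable spectral bioriented category. The key point is that the categorical morphism spectrum between $X$ and $Y$ is the spectrum $\{\R\Mor(X,\Omega^{-\n}Y)\}_\n$, formed using the inverse of $\Omega$, with bonding equivalences supplied by the fact that $\Omega$ is an equivalence.

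Next I show that $\phi$ refines to a spectral bioriented functor. Since $\phi$ preserves endomorphisms and $\Omega$ is invertible on both sides, $\phi$ commutes with $\Omega^{\pm 1}$. Hence $\phi$ induces compatible functors
\[
\R\Mor_\mC(X,\Omega^{-\n}Y)\to \R\Mor_\mD(\phi X,\Omega^{-\n}\phi Y)
\]
that respect the bonding maps, and so a map of categorical spectra. To make this coherent rather than objectwise, I use \cref{lifti} (1). The functor $\R\Mor_\mD(\phi(X),\phi(-))$ is excisive in the variable $-$ because $\phi$ preserves endomorphisms. It therefore lifts uniquely to $\Sp(\infty\Cat_*)=\Cat\Sp$, which yields the $\Cat\Sp$-enrichment of $\phi$ compatibly with \cref{impol}.

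It remains to check that an oriented pushout or pullback in $\mC$ is also one in the spectral sense. By definition this means the comparison map (\ref{indmapl}) is an equivalence on categorical morphism spectra. Levelwise the comparison is the $\infty\Cat_*$-level comparison with $Y$ replaced by $\Omega^{-\n}Y$, which is again a pullback condition in $\mC$. With this in hand, \cref{preserv} (3) gives that $\phi$ preserves oriented pushouts and oriented pullbacks.

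Part (2) follows by applying part (1) to $\phi^\co:\mC^\co\to\mD^\co$. By \cref{agos}, $\mC^\co$ and $\mD^\co$ are stable precisely when $\mC$ and $\mD$ are antistable. Antiendomorphisms of $\mC$ correspond to endomorphisms of $\mC^\co$, and antioriented pushouts and pullbacks correspond to oriented ones. Alternatively, one can argue directly via \cref{spectral} (2) and \cref{preserv} (4).

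I expect the main obstacle to be showing that $\phi$ is coherently, not merely pointwise, a spectral functor. I would first try the lift through \cref{lifti}; if that is not functorial enough, I would instead pass to the functorial presentable envelopes given by the corollary preceding \cref{spectral}, where spectral structure is unique by \cref{unistab}.
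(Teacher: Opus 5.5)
Your proposal is the paper's argument: the paper derives this corollary directly from \cref{preserv}, using that stable bioriented categories, and endomorphism-preserving functors between them, refine to spectral ones (\cref{spectral}, made precise in \cref{specenr}). Your check that underlying oriented pushouts and pullbacks are spectral ones, and your conjugation argument for (2), spell out what the paper leaves implicit.

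To refine $\phi$, use your fallback directly. Lifting each $\R\Mor_\mD(\phi(X),\phi(-))$ through \cref{lifti} only gives hom-wise maps of spectra, with no compatibility with composition. Instead, extend $\phi$ (which preserves suspensions because $\Omega$ is invertible) to a left adjoint between the stable presentable envelopes and apply \cref{unistab}(3); this gives the spectral structure coherently.
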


\subsection{Excisive approximation of oriented functors}

In the following we study excisive approximations of oriented functors.

We prove existence of excisive approximation of oriented functors (\cref{excloc}) and prove a formula for the excisive approximation (\cref{appro}).

\begin{lemma}\label{cloten}
	
\begin{enumerate}
\item Let $\mC$ be a reduced bioriented category that admits left tensors such that the left tensor with any small $\infty$-category with distinguished object preserves suspensions, and $ \mD$ a reduced oriented category.
The full subcategory $${\Exc\wedge}(\mC,\mD) \subset {\Fun\wedge}(\mC,\mD) $$ is closed under right tensors.

\vspace{2mm}

\item Let  $\mC$ be a reduced oriented category and $\mD$ a reduced bioriented category that admits left cotensors.
The full subcategory $$ {\Exc\wedge}(\mC, \mD) \subset {\Fun\wedge}(\mC, \mD) $$ is closed under left cotensors.
\end{enumerate}

\end{lemma}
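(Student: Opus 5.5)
Both parts reduce to showing that the excisivity condition, for a fixed oriented pushout square $X \to 0 \Leftarrow 0 \to Y$ in $\mC$, is stable under the relevant operation on $\F$. The tool is the objectwise description of right tensors and left cotensors in ${\Fun\wedge}(\mC,\mD)$ from the proposition on generalized bioriented functor categories and \cref{tencot}. For the oriented structure on ${\Fun\wedge}(\mC,\mD)$ (which is antioriented as an enriched category over $\infty\Cat_*$ by \cref{funor}), the relevant tensors and cotensors are computed by precomposition, respectively postcomposition.

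\emph{Part (2).} The left cotensor of $\F$ with $K \in \infty\Cat_*$ is the functor ${^K\F(-)_*}$, formed objectwise in $\mD$. The functor ${^K(-)_*}: \mD \to \mD$ is a right adjoint reduced oriented functor: it is the left cotensor in the bioriented category $\mD$, hence right adjoint to $K\wedge(-)$. Such a functor preserves oriented pullbacks, since it preserves all weighted limits, and oriented pullbacks are weighted limits by \cref{0desc} and \cref{redfib}. It therefore also preserves endomorphisms. So applying ${^K(-)_*}$ to the oriented pullback square $\F(X)\to 0 \Leftarrow 0 \to \F(Y)$ yields an oriented pullback square. Equivalently, \cref{Homt} applies with $\rho={^K(-)_*}$. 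Closure under left cotensors follows.

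\emph{Part (1).} The right tensor of $\F$ with $K$ in ${\Fun\wedge}(\mC,\mD)$ is $\F\circ(K\wedge(-))$, that is, precomposition with the left tensor in $\mC$. (I would double-check the handedness against \cref{tencot}: the oriented structure of $\mC$ is absorbed by $\F$, so the remaining antioriented action must come from $\mC$'s left action.) By \cref{Homt} with $\phi = K\wedge(-)$, it suffices that $\phi$ is a reduced oriented functor preserving suspensions, which is exactly the hypothesis. Here it also matters that the defining squares for excisivity are the suspension squares, since $Y \simeq \Sigma X$ by the corollary identifying $0\overset{\to}{+}_X 0$ with $X\wedge S^1$. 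Then $\phi$ sends the oriented pushout square for $X$ to the one for $\phi X$, and $\F$ sends the latter to an oriented pullback.

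\emph{Main obstacle.} The expected difficulty is the enriched bookkeeping: checking that $K\wedge(-):\mC\to\mC$ is genuinely a reduced \emph{oriented} functor, rather than merely an underlying functor, so that it is compatible with the oriented enrichment that $\F$ uses. This needs $\mC$ to be bioriented, so that the left $\infty\Cat_*$-action commutes with the right enrichment via the bitensoring. I would verify this compatibility first, using the bioriented structure and \cref{adj}.
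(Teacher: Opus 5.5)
Your proposal is correct and is essentially the paper's proof. The paper identifies the right tensor with $Y$ in ${\Fun\wedge}(\mC,\mD)$ as precomposition $(Y\wedge(-))^*$ and the left cotensor as postcomposition with ${^Y(-)}_*$, then deduces both parts directly from \cref{Homt}. One small slip in (2): $\mD$ is not assumed to admit left tensors, so ${^K(-)_*}$ need not be a right adjoint. It still commutes with the right cotensor $(-)^{S^1}_*$, i.e.\ with $\Omega$, because left and right cotensors in a bioriented category commute, and that is all \cref{Homt} requires.
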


\begin{proof}
(1): For every $Y \in \infty\Cat_*$ the right tensor with $Y$ is the functor $$(Y \wedge (-))^*: {\Fun\wedge}(\mC, \mD) \to {\Fun\wedge}(\mC, \mD).$$
So (1) follows from \cref{Homt} .
(2): For every $Y \in \infty\Cat_*$ the left cotensor with $Y$ is the functor $$ {^Y(-)}_*: {\Fun\wedge}(\mC, \mD) \to {\Fun\wedge}(\mC, \mD). $$ 
So (2) follows from \cref{Homt} .

\end{proof}

\begin{theorem}\label{excloc} Let $\mC$ be a small reduced oriented category and $\mD$ a reduced presentable oriented category. The embedding $${\Exc\wedge}(\mC,\mD) \subset {\Fun\wedge}(\mC,\mD)$$ is accessible and admits a left adjoint.
\end{theorem}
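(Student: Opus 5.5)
The plan is to exhibit ${\Exc\wedge}(\mC,\mD)$ as the full subcategory of objects local with respect to a small set of morphisms in the presentable category ${\Fun\wedge}(\mC,\mD)$, and then to invoke the theory of accessible localizations \cite[Proposition 5.5.4.15]{lurie.HTT}. First note that ${\Fun\wedge}(\mC,\mD)$ is presentable. Since $\mC$ is small and $\mD$ is presentable, this follows from \cref{psinho} (3) together with the enriched presheaf description: ${\Fun\wedge}(\mC,\mD)$ is a localization of $\mD$-valued presheaves on $\mC$.

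Next we rewrite the excision condition as a locality condition. Fix an oriented pushout square in $\mC$ with corners $X,0,0,Y$; there is only a small set of these because $\mC$ is small. The induced square $\F(X)\to 0,\,0\to\F(Y)$ is an oriented pullback in $\mD$ exactly when the canonical morphism $\F(X)\to \Omega(\F(Y))$ is an equivalence. Here we use that the oriented pullback of $0\to \F(Y)\leftarrow 0$ is $0\overset{\to}{\times}_{\F(Y)}0\simeq \Omega\F(Y)$, which holds by \cref{redfib}. Since $\mD$ is presentable, it has a set of generators $D$ under colimits and tensors. The condition is therefore that the map $\Map_\mD(D,\F(X))\to\Map_\mD(D,\Omega\F(Y))$ is an equivalence for each generator $D$. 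We translate this into mapping spaces out of explicit objects of ${\Fun\wedge}(\mC,\mD)$, following the proof of the preceding proposition on locality for bioriented functors. Left Kan extension along the enriched Yoneda embedding (\cref{Yonedaext}, \cref{enryo}) gives representing objects $D\wedge \R\Mor_\mC(X,-)$ with $\Map(D\wedge\R\Mor_\mC(X,-),\F)\simeq\Map_\mD(D,\F(X))$. For the other side, $\Map_\mD(D,\Omega\F(Y))\simeq\Map_\mD(\Sigma D,\F(Y))$, so the corresponding object is $\Sigma D\wedge\R\Mor_\mC(Y,-)$. The oriented pushout structure then yields a morphism
$$\Sigma D\wedge \R\Mor_\mC(Y,-)\to D\wedge\R\Mor_\mC(X,-),$$
and $\F$ is local for it precisely when $\F(X)\simeq\Omega\F(Y)$ after mapping in from $D$. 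Ranging over the small set of squares and generators gives a small set $S$ of morphisms, and ${\Exc\wedge}(\mC,\mD)$ is the class of $S$-local objects. This class is then accessible and reflective. Presentability of the localization follows as well, because the presentable category is generated under colimits by a small set.

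The main obstacle is producing the comparison morphism naturally. We must check that under the identifications above the induced map on mapping spaces really is the canonical map $\F(X)\to\Omega\F(Y)$ coming from the square, rather than some twisted variant. The twist matters because $\Omega$ acts through the Gray structure and suspension is oriented. We would build the map by applying $\F$ to the structure map of the oriented square, which is a morphism $S^1\to\R\Mor_\mC(X,Y)$ obtained by combining the two null 2-cells. We would then use the enriched Yoneda lemma to see that precomposing with the morphism above recovers that structure map. Here only right tensors are available on $\mD$ as an oriented category, so we must be careful to take $D$ with right tensors. If this bookkeeping causes trouble, a fallback is to realize ${\Exc\wedge}(\mC,\mD)$ as a pullback of presentable categories along limit-preserving accessible functors, namely the equalizer of $\F\mapsto \F(X)$ and $\F\mapsto\Omega\F(Y)$ over all squares. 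By \cite[Proposition 5.5.3.13]{lurie.HTT} such a pullback is presentable, and its inclusion into ${\Fun\wedge}(\mC,\mD)$ preserves limits and filtered colimits. The adjoint functor theorem then gives the left adjoint.
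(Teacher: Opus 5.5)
Your proposal is correct and is essentially the paper's argument. The paper also exhibits excisive functors as the objects local for a small set of maps: $Y\wedge(-)$ applied to $\Sigma\circ\R\Mor_\mC(\Sigma X,-)\to\R\Mor_\mC(X,-)$, which comes from the adjunction map $\Sigma\Omega\to\id$ and the identification $\R\Mor_\mC(\Sigma X,-)\simeq\Omega\R\Mor_\mC(X,-)$, with $Y$ ranging over $\kappa$-compact objects of $\mD$. It then checks locality via adjunction and the enriched Yoneda lemma exactly as you outline; your variant merely puts the suspension on the generator $D$ instead of on the representable.

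One small fix: let $D$ range over a set generating $\mD$ under colimits alone (e.g.\ $\mD^\kappa$, as the paper does). Plain mapping spaces out of a set that only generates under colimits and tensors need not detect equivalences; for example, $\{S^0\}$ generates $\infty\Cat_*$ that way, yet $\Map(S^0,-)$ only sees the space of objects.
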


\begin{proof}

The unit $ \Sigma \circ \Omega \to \id$ of the bioriented adjunction $ \Sigma: \infty\Cat^\cop_* \rightleftarrows \infty\Cat_* :\Omega$ is a map of bioriented functors $\infty\Cat_* \to \infty\Cat_*$
and so for every $X \in \mC$ gives rise to a map in ${\Fun\wedge}(\mC,\infty\Cat_*)$:
$$ \zeta_{X} : \Sigma \circ \Omega \circ \R\Mor_{\mC}(X,-) \to \R\Mor_{\mC}(X,-).$$

Since the oriented functor $\R\Mor_{\mC}(X,-): \mC \to \infty\Cat_*$ preserves endomorphisms, there is a canonical equivalence $$ \R\Mor_{\mC}(X,\Omega(\Sigma(X))) \simeq \Omega(\R\Mor_{\mC}(X,\Sigma(X))).$$ Under this equivalence 
the unit $X \to \Omega(\Sigma(X))$ corresponds to an object of $  \Omega(\R\Mor_{\mC}(X,\Sigma(X))) $ that corresponds by the enriched Yoneda lemma to an oriented natural transformation
$$ \R\Mor_{\mC}(\Sigma(X),-) \to \Omega(\R\Mor_{\mC}(X,-))$$ of oriented functors
$\mC \to \infty\Cat_*$. The latter induces at any $Y \in \mC$ the canonical equivalence
$$ \R\Mor_{\mC}(\Sigma(X),Y) \simeq \R\Mor_{\mC}(X,\Omega(Y)) \simeq \Omega(\R\Mor_{\mC}(X,Y))$$ and so is an equivalence.
So we can view $\zeta_X$ likewise as a map
$ \Sigma \circ \R\Mor_{\mC}(\Sigma(X),-) \to \R\Mor_{\mC}(X,-).$

Let $\kappa$ be a strongly inaccessible cardinal such that the underlying category of $\mD$ is $\kappa$-compactly generated. Since $\mD^\kappa$ is small, the following is a set:
$$\Theta:= \{ ( Y \wedge (-)) \circ \zeta_{X} \mid X \in \mC, Y \in \mD^\kappa\}.$$

For every reduced oriented functor $\F: \mC  \to \mD$ and $Y \in \mD^\kappa$ and $X \in \mC$ 
the induced map $$\Map_{{\Fun\wedge}(\mC,\mD)}(( Y \wedge (-)) \circ \R\Mor_{\mC}(X,-) ,\F) \to \Map_{{\Fun\wedge}(\mC,\mD)}(( Y \wedge (-)) \circ \Sigma \circ \R\Mor_{\mC}(\Sigma(X),-) ,\F) $$ identifies by adjointness with the map
$$ \Map_{{\Fun\wedge}(\mC,\infty\Cat_*)}(\R\Mor_{\mC}(X,-), \R\Mor_\mC(Y,-) \circ  \F) \to $$$$ \Map_{{\Fun\wedge}(\mC,\infty\Cat_*)}(\Sigma \circ \R\Mor_{\mC}(\Sigma(X),-),  \R\Mor_\mC(Y,-)\circ \F) \simeq $$$$ \Map_{{\Fun\wedge}(\mC,\infty\Cat_*)}(\R\Mor_{\mC}(\Sigma(X),-),  \Omega \circ \R\Mor_\mC(Y,-)\circ \F) \simeq $$$$ \Map_{{\Fun\wedge}(\mC,\infty\Cat_*)}(\R\Mor_{\mC}(\Sigma(X),-),  \R\Mor_\mC(Y,-)\circ \Omega \circ \F),$$
which identifies by the enriched Yoneda lemma with the map
$$ \Map_\mC(Y, \F(X)) \to \Map_\mC(Y, \Omega(\F(\Sigma(X)))).$$ 
Consequently, by the enriched Yoneda lemma a reduced oriented functor $\mC  \to \mD$ is excisive if and only if it is $\Theta$-local.

\end{proof}

\begin{corollary}\label{excloci}\label{excprese}
Let $\mC$ be a small reduced oriented category and $\mD$ a reduced right presentable bioriented category that admits left cotensors.
The antioriented embedding ${\Exc\wedge}(\mC,\mD) \subset {\Fun\wedge}(\mC,\mD)$ is accessible and admits a left adjoint.
In particular, the reduced antioriented category ${\Exc\wedge}(\mC,\mD)$ is presentable.

\end{corollary}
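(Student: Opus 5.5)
The plan is to deduce \cref{excloci} from \cref{excloc} by upgrading the underlying localization to an antioriented one. First, I will record that the underlying category of ${\Fun\wedge}(\mC,\mD)$ is the same as in \cref{excloc}. This uses the fact that the reduced oriented category underlying $\mD$ is presentable, since $\mD$ is right presentable. \cref{excloc} then shows that the embedding ${\Exc\wedge}(\mC,\mD) \subset {\Fun\wedge}(\mC,\mD)$ of underlying categories is accessible and admits a left adjoint $L$. The same argument also gives that ${\Exc\wedge}(\mC,\mD)$ is the full subcategory of $\Theta$-local objects for a small set $\Theta$ of morphisms. Moreover, ${\Fun\wedge}(\mC,\mD)$ is presentable, because it is enriched presheaf-like: for small $\mC$ and presentable $\mD$, \cref{psinho} applies.

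Next, I will check the enriched structure. By \cref{funor}, ${\Fun\wedge}(\mC,\mD)$ is a weakly reduced antioriented category, and its left cotensors are formed objectwise, because $\mD$ admits left cotensors (by the proposition following \cref{funor}). By \cref{cloten}(2), the full subcategory ${\Exc\wedge}(\mC,\mD)$ is closed under these left cotensors. Hence the inclusion is an antioriented functor that preserves cotensors, and its underlying functor admits a left adjoint. By \cref{adj}(2), the inclusion therefore admits an antioriented left adjoint refining $L$. This is the claimed antioriented localization.

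Finally, I will establish presentability. ${\Exc\wedge}(\mC,\mD)$ is an accessible localization of a presentable category, so its underlying category is presentable. Admitting tensors follows formally, since tensors in a reflective subcategory are obtained by applying the antioriented left adjoint to tensors in the ambient category, and the ambient category admits left tensors objectwise via $\mD$. So ${\Exc\wedge}(\mC,\mD)$ is presentable in the sense of \cref{present}.

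The main obstacle I expect is the first step. It must be verified that ``right presentable'' gives enough for \cref{excloc}, namely that the underlying oriented category of $\mD$ is presentable and that the set $\Theta$ constructed there still detects excisiveness. That argument used right tensors $Y \wedge (-)$ for $Y \in \mD^\kappa$ and the enriched Yoneda lemma. I would reread it with $\mD$ regarded only through its oriented structure, and check that compact generation at a regular cardinal $\kappa$ suffices in place of the strongly inaccessible cardinal used there.
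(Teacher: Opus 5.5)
Your derivation of the localization is correct and follows the intended route. \cref{excloc} applies to the reduced presentable oriented category underlying $\mD$. \cref{cloten}(2) gives closure of ${\Exc\wedge}(\mC,\mD)$ under the objectwise left cotensors, and \cref{adj}(2) upgrades the underlying left adjoint to an antioriented one. The obstacle you anticipate is not one: the proof of \cref{excloc} only uses right tensors $Y\wedge(-)$, the functors $\R\Mor_\mD(Y,-)$ and the enriched Yoneda lemma, and any regular cardinal $\kappa$ for which $\mD$ is $\kappa$-compactly generated will do. One citation is wrong. The objectwise left cotensors in ${\Fun\wedge}(\mC,\mD)$ come from \cref{colimenrfun}, as used in the proof of \cref{cloten}(2). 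The proposition after \cref{funor} that you cite is about left \emph{tensors} and assumes $\mC$ is bioriented.

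The genuine gap is in the presentability clause. You assert that ${\Fun\wedge}(\mC,\mD)$ admits left tensors ``objectwise via $\mD$''. The hypotheses only give $\mD$ left \emph{cotensors} and presentability of its right (oriented) structure, not left tensors. So neither the ambient left tensors nor their reflections $L(K\wedge F)$ are available, and \cref{present} requires tensors. What you actually need is that each left cotensor functor ${^K(-)_*}$ on ${\Exc\wedge}(\mC,\mD)$ has a left adjoint on underlying categories. The enriched universal property of the resulting left tensor then follows by testing against further left cotensors. The stated hypotheses do not guarantee such left adjoints; for instance, accessibility of the left cotensor functors of $\mD$ is not given. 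It does hold when $\mD$ admits left tensors, as in the paper's applications of this corollary (e.g.\ $\mD=\infty\Cat_*$), and the paper states this clause without further argument. You should therefore either add left tensors on $\mD$ to the hypotheses or supply the missing accessibility argument.
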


The description of local equivalences in the proof of \cref{excloc} gives the following corollary:
\begin{corollary}\label{laxoplax}
	
Let $\mC$ be a small reduced antioriented category and $\mE$ a reduced antioriented category
that admits left tensors and small colimits.
The following induced functor is an equivalence: $${\wedge\L\Fun({\Exc\wedge}(\mC^\circ,\infty\Cat_*) ,\mE)} \to {\wedge\overline{\Exc}}(\mC,\mE) .$$ 
	
\end{corollary}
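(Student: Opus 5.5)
The statement is \cref{laxoplax}: for a small reduced antioriented category $\mC$ and $\mE$ admitting left tensors and small colimits, restriction along a Yoneda-type embedding induces
$${\wedge\L\Fun({\Exc\wedge}(\mC^\circ,\infty\Cat_*) ,\mE)} \to {\wedge\overline{\Exc}}(\mC,\mE).$$
I would deduce it from the enriched universal property of presheaves (\cref{Yonedaext}) combined with the explicit description of ${\Exc\wedge}(\mC^\circ,\infty\Cat_*)$ as a localization of the presheaf category obtained in the proof of \cref{excloc}.

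The antioriented presheaf category of $\mC$ is ${\Fun\wedge}(\mC^\circ,\infty\Cat_*)$, with antioriented Yoneda embedding $\mC \to {\Fun\wedge}(\mC^\circ,\infty\Cat_*)$. Since $\mE$ is an antioriented category with left tensors and small colimits, it admits all small weighted colimits. Hence \cref{Yonedaext} gives an equivalence
$${\wedge\L\Fun}({\Fun\wedge}(\mC^\circ,\infty\Cat_*),\mE) \simeq {\wedge\Fun}(\mC,\mE)$$
by restriction. Strictly, that theorem is stated for presentable targets; for $\mE$ merely cocomplete with tensors, left Kan extension along Yoneda still works, since it only requires weighted colimits.

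By \cref{excloc}, applied to the reduced oriented category $\mC^\circ$ and $\mD=\infty\Cat_*$, the excisive functors form the $\Theta$-local objects, where $\Theta$ consists of the maps
$$\zeta_X: \Sigma\circ \R\Mor_{\mC^\circ}(\Sigma X,-) \to \R\Mor_{\mC^\circ}(X,-),$$
tensored with objects of $\infty\Cat_*$. Because left tensors are generated by $S^0$, it suffices to take $Y=S^0$ after closing under tensors. So ${\Exc\wedge}(\mC^\circ,\infty\Cat_*)$ is the localization of the presheaf category at the maps $\zeta_X$ for $X \in \mC^\circ$. Consequently, left adjoints out of the localization correspond to left adjoints out of the presheaf category that invert $\Theta$ (and its tensors, automatically by linearity).

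It remains to translate the condition "the extension $\bar F$ of $F:\mC\to\mE$ inverts $\zeta_X$" into antiexcisiveness of $F$. The following steps carry this out.

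\textbf{Source of $\zeta_X$.} The Yoneda image of an object of $\mC$ is $\R\Mor_{\mC^\circ}(X,-)$, and $\R\Mor_{\mC^\circ}(\Sigma X,-)$ is the representable at the object that is the antiendomorphism object $\bar\Omega X$ in $\mC$. Opposite-taking swaps $\Sigma$ with $\Omega$, and orientation swaps with antiorientation (\cref{agos}, \cref{tttj}).

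\textbf{Effect of $\bar F$.} The suspension $\Sigma$ applied in $\infty\Cat_*$ is the left tensor with $S^1$ in the presheaf category, that is, the antisuspension $\bar\Sigma$. The left adjoint $\bar F$ preserves it.

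\textbf{Resulting condition.} $\bar F(\zeta_X)$ becomes the counit-type map $\bar\Sigma F(\bar\Omega X) \to F(X)$, up to the evident identifications. Requiring these maps to be inverted, by adjunction in $\mE$ (maps into $T$ versus maps into $\bar\Omega T$, as in the mapping-space computation in the proof of \cref{excloc}), is exactly the condition that $F$ sends the antioriented pushout squares defining (anti)suspensions to antioriented pullback squares. That is antiexcisiveness of $F$ in the sense defined via $\F^\co$.

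The main obstacle is the bookkeeping in this last translation. One must verify that the oriented/antioriented and $\op/\co$ conventions line up, so that the maps $\zeta_X$ correspond precisely to the antiexcisiveness comparison maps and not to their excisive counterparts. I would check this first on $\mC$ a single representable, comparing with the dual statement \cref{laxoplax2}(1), which has the same shape in the bioriented setting.
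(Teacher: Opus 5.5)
Your route is the one the paper intends: its proof is only the remark that the corollary follows from the description of local equivalences in the proof of \cref{excloc}. Your first steps are fine. The left-hand side is identified with the antioriented functors $F\colon \mC\to\mE$ whose extension $\bar F$ inverts the maps $\zeta_X$, and $\bar F(\zeta_X)$ is the canonical map $\bar\Sigma_\mE F(\bar\Omega_\mC X)\to F(X)$.

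\textbf{The gap is the final translation, and it is not a matter of $\co/\op$ bookkeeping.} Inverting these maps says that $F$ carries the squares exhibiting $\bar\Omega_\mC X$ to squares exhibiting antisuspensions in $\mE$; note that oriented pushouts in $\mC^\circ$ are limits in $\mC$. Antiexcisiveness says something different: $F$ carries the antisuspension squares of $\mC$ to antiendomorphism squares of $\mE$, i.e.\ $F(Z)\simeq\bar\Omega_\mE F(\bar\Sigma_\mC Z)$.

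Your adjunction step needs $\bar\Omega_\mE$, which the hypotheses on $\mE$ do not provide. Even when $\bar\Omega_\mE$ exists, it only turns the first condition into $\Map(F(X),T)\simeq\Map(F(\bar\Omega_\mC X),\bar\Omega_\mE T)$, where $\bar\Sigma_\mC$ never appears. The two conditions do coincide if $\bar\Omega_\mC$ is inverse to $\bar\Sigma_\mC$ ($\mC$ quasi-antistable) and $\bar\Omega_\mE$ is inverse to $\bar\Sigma_\mE$ ($\mE$ antistable). In that case you substitute $X=\bar\Sigma_\mC Z$ and apply $\bar\Omega_\mE$.

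Without such hypotheses the identification fails, and so does the statement itself. Take $\mE=\infty\Cat_*$ and let $\mC\subset\infty\Cat_*$ be the full subcategory on $0,S^0,S^1$.
\begin{itemize}
\item All morphism $\infty$-categories into $S^0$ and out of $S^1$ are discrete; for instance $\L\Mor_\mC(S^1,S^1)$ is the set $\bN$.
\item Hence in $\mC$ one has $\bar\Omega_\mC S^0=0$, $\bar\Sigma_\mC S^0=S^1$ and $\bar\Sigma_\mC S^1=0$. The object $\bar\Omega_\mC S^1$ does not exist, since it would have to be $\bN$.
\item So ${\Exc\wedge}(\mC^\circ,\infty\Cat_*)$ consists of the presheaves with $P(S^0)=0$. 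This subcategory is closed under colimits and left tensors, so evaluation at $S^1$ is an antioriented left adjoint on it.
\item Its restriction is an $F$ with $F(S^0)=0$ and $F(S^1)\simeq\bN\neq 0$.
\item But antiexcisiveness forces $F(S^1)\simeq\bar\Omega_\mE F(0)=0$, and then $F(S^0)\simeq\bar\Omega_\mE F(S^1)=0$, so ${\wedge\overline{\Exc}}(\mC,\mE)=0$.
\end{itemize}

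Thus your argument proves the corollary only for $\mC$ quasi-antistable and $\mE$ antistable. The paper's one-line proof makes the same identification tacitly. This restricted case is exactly where the corollary is used, in the proof of \cref{specenr}. There $\mE$ is a presentable antispectral antioriented category, hence antistable by \cref{specstab}.
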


\begin{construction}Let $\mC$ be a reduced oriented category that admits suspensions.
For every $\n \geq 0$ and $X \in \mC$ the canonical equivalence $X \wedge S^\n \wedge S^1 \simeq X \wedge S^{\n+1} $
corresponds to a functor $$S^1 \to \R\Mor_{\mC}(X \wedge S^\n, X \wedge S^{\n+1})$$ that
corresponds to an oriented functor $\alpha^X_\n: S(S^1) \to \mC$.
The family of oriented functors $\alpha^X_\n: S(S^1) \to \mC$ for $\n \geq 0$
determines an oriented functor $$\alpha^X: S(S^1)+_{\bD^0} S(S^1) +_{\bD^0} ... \to \mC .$$

\end{construction}

\begin{notation}\label{forgetu} Let $\mD$ be a reduced bioriented category.
Let $$\nu^X: {\Fun\wedge}(\mC,\mD) \to {\Fun\wedge}(S(S^1)+_{\bD^0} S(S^1) +_{\bD^0} ..., \mD) \simeq \Pre\Sp(\mD)$$ be the reduced antioriented functor
precomposing along $\alpha^X,$ where the last equivalence of antioriented categories is by \cref{Geh}. 
If $\mC=\infty\Cat_*$ and $X=S^0$, we remove $X$ from the notation.

\end{notation}

\begin{remark}\label{restr}
The antioriented functor $\nu^X$ sends a reduced oriented functor $\F: \mC \to \mD $ to $$\{\F(X \wedge S^\n), \F(X \wedge S^\n) \to \Omega(\F(X \wedge S^{\n+1}))\}. $$  In particular, $\nu^X $ 
restricts to an antioriented functor 
$ \nu^X : {\Exc\wedge}(\mC,\mD) \to \Sp(\mD). $
\end{remark}

\begin{remark}\label{lefffa}If $\mC$ is small and $\mD$ admits small colimits and left tensors, 
the reduced antioriented functor $$\nu^X: {\Fun\wedge}(\mC,\mD) \to \Pre\Sp(\mD)$$ 
has an antioriented right adjoint \cite[Proposition 5.20.]{heine2024bienrichedinftycategories} since $\nu^X$ is precomposition. 
If $\mC$ is small and $\mD$ admits small limits and left cotensors, $\nu^X$ admits an antioriented left adjoint \cite[Proposition 5.20.]{heine2024bienrichedinftycategories}.

\end{remark}

\begin{lemma}\label{locpre} Let $\mC$ be a small reduced oriented category that admits suspensions, $\mD$ a reduced right presentable bioriented category, $X \in \mC$.
The following antioriented functor preserves local equivalences: $${\nu_X: \Fun\wedge}(\mC,\mD) \to \Pre\Sp(\mD). $$
	
\end{lemma}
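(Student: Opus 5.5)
The plan is to show that $\nu_X$ carries local equivalences in ${\Fun\wedge}(\mC,\mD)$, i.e. those morphisms inverted by the excisive localization of \cref{excloc}/\cref{excloci}, to local equivalences in $\Pre\Sp(\mD)$, i.e. those inverted by the spectrification of \cref{spectri}. A morphism $f:\F\to\G$ is a local equivalence for ${\Exc\wedge}(\mC,\mD)$ iff for every excisive $\H$ the map $\Map(\G,\H)\to\Map(\F,\H)$ is an equivalence. A morphism in $\Pre\Sp(\mD)$ is a spectrification equivalence iff it is inverted after mapping into every spectrum $\E\in\Sp(\mD)$.

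The cleanest route uses adjoints. By \cref{lefffa}, since $\mC$ is small and $\mD$ admits small limits and left cotensors, $\nu_X$ admits an antioriented right adjoint $\rho$ (right Kan extension along $\alpha^X$). It then suffices to show that $\rho$ sends spectra to excisive functors, for then
$$\Map(\nu_X\G,\E)\simeq\Map(\G,\rho\E)\to\Map(\F,\rho\E)\simeq\Map(\nu_X\F,\E)$$
is an equivalence whenever $f$ is a local equivalence. This is the standard argument that a left adjoint preserves local equivalences when its right adjoint preserves local objects.

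The key step is therefore: for $\E\in\Sp(\mD)$, the reduced oriented functor $\rho(\E):\mC\to\mD$ is excisive, meaning $\rho(\E)(Z)\to\Omega\rho(\E)(\Sigma Z)$ is an equivalence for all $Z\in\mC$. I would compute $\rho(\E)$ pointwise as a weighted limit. Using \cref{Geh}, the value $\rho(\E)(Z)$ is an end over $\Xi=S(S^1)+_{\bD^0}S(S^1)+_{\bD^0}\cdots$, roughly the limit over $n$ of cotensors ${}^{\R\Mor_\mC(X\wedge S^n,Z)}(\E_n)_*$, glued along the bonding maps. One concrete choice is to express this via \cref{compo}-type formulas as a sequential limit of $\E_n$ cotensored with $\R\Mor_\mC(X\wedge S^n,Z)$. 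Two facts then combine to give the excisive property:
\begin{itemize}
\item left cotensors commute with $\Omega$, since $\Omega$ is a right cotensor and the two commute in a bioriented category;
\item $\R\Mor_\mC(X\wedge S^n,\Sigma Z)$ is related to $\R\Mor_\mC(X\wedge S^{n-1},Z)$ through the unit $Z\to\Omega\Sigma Z$.
\end{itemize}
Reindexing $n\mapsto n+1$ and using $\E_n\simeq\Omega\E_{n+1}$ should then identify $\Omega\rho(\E)(\Sigma Z)$ with $\rho(\E)(Z)$, by cofinality in the limit.

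I expect this middle step to be the main obstacle, because the identification is not naive: $\R\Mor(X\wedge S^{n+1},\Sigma Z)$ is not $\R\Mor(X\wedge S^n,Z)$ in general. If the direct computation fails, the fallback is to use the explicit localization from the proof of \cref{excloc}. Local equivalences are generated under colimits, left tensors and 2-out-of-3 by the maps $(Y\wedge(-))\circ\zeta_{Z}$. Since $\nu_X$ preserves colimits and left tensors (it is a left adjoint antioriented functor), it suffices to check that each generator is sent to a spectrification equivalence. This reduces to checking that $\nu_X$ applied to $\Sigma\circ\R\Mor_\mC(\Sigma Z,-)\to\R\Mor_\mC(Z,-)$ becomes an equivalence after applying the formula of \cref{hiul}. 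Termwise the levels differ by the shift $\Sigma$ versus $\Omega$ in the colimit $\colim_k\Omega^k(-)_{n+k}$, which is absorbed by one step of cofinality.
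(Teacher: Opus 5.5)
\textbf{There is a genuine gap: the one step that carries the content is not established.}

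Both of your routes come down to the same claim. The right-adjoint route is just the adjoint form of the generator route: $\rho(\E)$ is excisive for every spectrum $\E$ if and only if $\nu_X$ sends the generators $(Y\wedge(-))\circ\zeta_Z$ from the proof of \cref{excloc} to spectrification equivalences. Neither route proves this.

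In the first route, the weight has the wrong variance. The right Kan extension cotensors $\E_n$ with $\R\Mor_\mC(Z,X\wedge S^n)$, not with $\R\Mor_\mC(X\wedge S^n,Z)$. Your worry about $\R\Mor_\mC(X\wedge S^{n+1},\Sigma Z)$ comes from this error. With the correct variance, $\R\Mor_\mC(\Sigma Z,X\wedge S^n)\simeq\Omega\R\Mor_\mC(Z,X\wedge S^n)$ holds on the nose. Excisiveness of $\rho(\E)$ then says exactly that $\E$ is local for the maps $(Y\wedge(-))_!(\Sigma\Omega T\to T)$, where $T:=\R\Mor_\mC(Z,X\wedge S^\bullet)$. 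That is the unproven claim again.

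In the fallback, the assertion that the discrepancy is ``absorbed by one step of cofinality'' termwise is not correct. $\Sigma$ on $\infty\Cat_*$ does not commute with $\Omega^k$, and $\Omega^k\Sigma\Omega T_{n+k}$ is not a reindexing of the terms $\Omega^jT_m$. So $\colim_k\Omega^k\Sigma\Omega T_{n+k}\to\colim_k\Omega^kT_{n+k}$ cannot be identified by cofinality alone. You also drop the factor $Y\wedge(-)$ from the generators without justification.

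\textbf{What is missing is the following.}

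First remove $Y$. The adjunction between $Y\wedge(-)$ and $\R\Mor_\mD(Y,-)$ induces an adjunction between $(Y\wedge(-))_!:\Pre\Sp(\infty\Cat_*)\to\Pre\Sp(\mD)$ and $\R\Mor_\mD(Y,-)_!$. The right adjoint preserves spectra because $\R\Mor_\mD(Y,-)$ preserves endomorphisms. Hence $(Y\wedge(-))_!$ preserves local equivalences, and it suffices to treat $\nu_X(\zeta_Z)$. This is the levelwise counit $\Sigma\Omega T\to T$ of the categorical prespectrum $T$.

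Then pass to spectra instead of working termwise. The spectrification $L$ commutes with levelwise $\Omega$, by the formula of \cref{hiul} and because $\Omega$ preserves sequential colimits in $\infty\Cat_*$ (\cref{endfil}). It commutes with $\Sigma$ because it is an enriched left adjoint. So $L(\Sigma\Omega T)\to L(T)$ identifies with the counit $\Sigma\Omega L(T)\to L(T)$. This counit is an equivalence because $\Omega$ is an equivalence on categorical spectra (\cref{rstab}). Stability of spectra is the essential input, and your sketch never uses it.
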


\begin{proof}
	
We prove that $\nu_X$ sends generating local equivalences to local equivalences.
Let $\kappa$ be a strongly inaccessible cardinal such that the underlying $\infty$-category of $\mD$ is $\kappa$-compactly generated. We use the notation of the proof of \cref{excloc}.
We show that $\nu_X$ sends the generating local equivalence $(Y \wedge (-)) \circ \zeta_{Z} $ for $Z \in \mC , Y \in \mD^\kappa$ to a local equivalence.
It sends $(Y \wedge (-)) \circ \zeta_{Z} $ to $(Y \wedge (-)) \circ \nu_X(\zeta_{Z}) $.

The oriented adjunction $$ Y \wedge (-) : \infty\Cat_* \rightleftarrows \mD: \R\Mor_\mD(Y,-)$$ gives rise to an oriented adjunction $$(Y \wedge (-))_!: \Pre\Cat\Sp \rightleftarrows \Pre\Sp(\mD): \R\Mor_\mD(Y,-)_!.$$ The right adjoint of the second adjunction preserves spectra since the right adjoint of the first adjunction preserves endomorphisms. Thus the left adjoint of the
second adjunction preserves local equivalences.
So it suffices to show that $\nu_X$ sends $\zeta_{Z} $ to a local equivalence. The morphism $\nu_X(\zeta_{Z})$ identifies with the morphism
$$ \Sigma(\Omega( \R\Mor_{\mC}(Z,X \wedge S^\bullet))) \to \R\Mor_{\mC}(Z, X \wedge S^\bullet).$$

Let $T:= \R\Mor_{\mC}(Z, X \wedge S^\bullet)\in \Pre\Sp(\mD)$ and $L$ the left adjoint of the embedding $\Sp(\mD) \subset \Pre\Sp(\mD).$
We would like to see that the following canonical morphism in $\Sp(\mC)$ 
is an equivalence:
$$\Sigma(L(\Omega(T))) \simeq L(\Sigma(\Omega(T))) \to L(T). $$
The latter factors as $$\Sigma(L(\Omega(T))) \to \Sigma(\Omega(L(T)))  \to L(T).$$
The first morphism is an equivalence since $L$ is an oriented left adjoint and so preserves suspensions and moreover endomorphisms by \cref{spectri}. The second one is an equivalence since $\Sp(\mC)$ is stable by \cref{rstab}.

\end{proof}

\begin{corollary}\label{locpre2} Let $\mC$ be a small reduced oriented category that admits suspensions, $\mD$ a reduced presentable bioriented category and $X \in \mC$.
The antioriented functor $${\nu_X: \Exc\wedge}(\mC,\mD) \to \Sp(\mD)$$
admits an antioriented left and right adjoint.
	
\end{corollary}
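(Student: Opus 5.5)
The plan is to deduce both adjoints formally: first show that the excisive functors form a presentable reduced antioriented category, then produce the left adjoint from \cref{lefffa} and \cref{locpre}, and the right adjoint from the adjoint functor theorem.

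\emph{Presentability of the source.} By \cref{excprese}, applied to the presentable (hence right presentable, admitting left cotensors) bioriented category $\mD$, the embedding ${\Exc\wedge}(\mC,\mD) \subset {\Fun\wedge}(\mC,\mD)$ is an accessible antioriented localization. Write $L_{\Exc}$ for its left adjoint. In particular ${\Exc\wedge}(\mC,\mD)$ is a presentable reduced antioriented category. By \cref{nnm} the category $\Sp(\mD)$ is presentable, and by \cref{spectri} it is an antioriented localization of $\Pre\Sp(\mD)$ with left adjoint $L=(-)'$. By \cref{restr}, the restricted $\nu_X$ is the composite of the inclusion ${\Exc\wedge}(\mC,\mD) \subset {\Fun\wedge}(\mC,\mD)$ with $\nu_X: {\Fun\wedge}(\mC,\mD) \to \Pre\Sp(\mD)$, and this composite lands in $\Sp(\mD)$.

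\emph{Left adjoint.} By \cref{lefffa}, since $\mC$ is small and $\mD$ has small limits and left cotensors, the unrestricted $\nu_X$ has an antioriented left adjoint $\lambda$. I would take $L_{\Exc}\circ\lambda$, restricted to $\Sp(\mD)$, as the candidate left adjoint. For $E \in \Sp(\mD)$ and $F$ excisive,
$$\R\Mor(L_{\Exc}\lambda E, F) \simeq \R\Mor(\lambda E, F) \simeq \R\Mor_{\Pre\Sp(\mD)}(E, \nu_X F),$$
and $\nu_X F$ lies in $\Sp(\mD)$. This establishes the adjunction with no need for \cref{locpre} at this stage.

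\emph{Right adjoint.} By \cref{lefffa}, the unrestricted $\nu_X$ also has an antioriented right adjoint $\rho$. The restricted functor is $\nu_X\circ \iota$, where $\iota$ is the excisive inclusion, viewed into $\Sp(\mD)$. \cref{locpre} says $\nu_X$ preserves local equivalences, i.e.\ $L\circ\nu_X \simeq L\circ\nu_X\circ L_{\Exc}$. Hence for excisive $F$ and $E \in \Sp(\mD)$, $\nu_X\iota F \simeq L\nu_X \iota F$. The candidate right adjoint is $E \mapsto \rho(E)$; I would check that $\rho(E)$ is excisive. This follows because, for $\Theta$-local equivalences $f$ (the generators from \cref{excloc}), the map $\Map(f,\rho E) \simeq \Map(\nu_X f, E)$ is an equivalence: $\nu_X f$ is an $L$-local equivalence by \cref{locpre}, and $E$ is a spectrum, hence $L$-local. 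So $\rho E \in {\Exc\wedge}(\mC,\mD)$, and $\rho$ restricts to a right adjoint of the restricted $\nu_X$. Antioriented enrichment of these adjunctions follows from \cref{adj}, since both categories have left tensors and cotensors and the functors involved preserve them.

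\emph{Main obstacle.} The main obstacle is the right adjoint step: one must check that the enriched mapping objects, not only the mapping spaces, detect locality. Concretely, the generating set $\Theta$ must be closed under left tensors, which holds because $\Theta$ already includes the twists by $Y\wedge(-)$.
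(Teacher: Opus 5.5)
Your proposal is correct and is the argument the paper intends; the corollary is stated without proof directly after \cref{locpre}. Both adjoints of the unrestricted precomposition functor exist by \cref{lefffa}; the left adjoint descends because $\nu_X$ preserves local objects (\cref{restr}), and the right adjoint descends because $\nu_X$ preserves local equivalences (\cref{locpre}). Your closing worry about enriched mapping objects detecting locality is unnecessary: once you know the right adjoint lands in excisive functors, the enriched adjunction restricts automatically along the full enriched subcategories ${\Exc\wedge}(\mC,\mD)\subset{\Fun\wedge}(\mC,\mD)$ and $\Sp(\mD)\subset\Pre\Sp(\mD)$.
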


\begin{proposition}\label{appro}
Let $\mC$ be a small reduced oriented category that admits suspensions,	
$\mD$ a reduced right presentable bioriented category and $X \in \infty\Cat_*.$
Let $\F \in {\Fun\wedge}(\mC,\mD) $ and $\F'$ the excisive approximation of
$\F$.
The following canonical map is an equivalence: $$ \colim_{\n \geq 0} \Omega^\n(\F(X \wedge S^\n))\to\colim_{\n \geq 0} \Omega^\n(\F'(X \wedge S^\n))\simeq \F'(X).$$

\end{proposition}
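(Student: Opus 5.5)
The plan is to reduce the statement to the spectrum-level machinery of \cref{locpre} and \cref{spectri}. I would work through the antioriented functor $\nu^X$ of \cref{forgetu}, which sends $\F$ to the prespectrum $\nu^X(\F)=\{\F(X\wedge S^\n)\}$. By \cref{spectri} the left-hand colimit $\colim_{\n}\Omega^\n(\F(X\wedge S^\n))$ is the degree-zero term of the spectrification $L(\nu^X(\F))$. For excisive $\F'$, \cref{restr} says $\nu^X(\F')$ is already a spectrum with bonding maps equivalences, so the right-hand colimit is $\F'(X)$, which is the stated identification. It then suffices to show that $\nu^X(\F)\to\nu^X(\F')$ becomes an equivalence after applying $L$. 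Equivalently, $\nu^X$ should send the unit $\F\to\F'$ of the localization of \cref{excloc} (in its antioriented form, \cref{excloci}) to a local equivalence in $\Pre\Sp(\mD)$.

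For this I would invoke \cref{locpre}: $\nu^X$ sends local equivalences to local equivalences. The unit $\F\to\F'$ is a local equivalence for the excisive localization, i.e.\ it lies in the saturated class generated by the set $\Theta$ from the proof of \cref{excloc}. \cref{locpre} checks exactly the generators $(Y\wedge(-))\circ\zeta_Z$. To pass from generators to the whole strongly saturated class, I would use that $\nu^X$ is precomposition and hence has an antioriented right adjoint by \cref{lefffa}. So $\nu^X$ preserves colimits and left tensors, and the class of maps it sends to $L$-equivalences is strongly saturated and closed under left tensors. That closure is enough to contain the unit.

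One caveat concerns the hypothesis $X\in\infty\Cat_*$. If $X$ is meant as a parameter tensored onto objects of $\mC$, then $X\wedge S^\n$ makes sense since $\mC$ admits suspensions, but one needs the right tensor with $X$ to exist; I would reduce to that case by embedding $\mC$ via the Yoneda embedding as in earlier proofs.

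The main obstacle is the saturation step: verifying that the left adjoint of ${\Exc\wedge}(\mC,\mD)\subset{\Fun\wedge}(\mC,\mD)$ has units generated, as an \emph{enriched} localization, by $\Theta$ closed under left tensors and colimits. I would try to extract this from the $\Theta$-locality characterization and the standard small object argument, using \cref{weiloc}.
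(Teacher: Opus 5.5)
Your proposal is correct and matches the paper's proof: by \cref{locpre} and \cref{restr}, $\nu^X$ sends the unit $\F\to\F'$ to an $L$-equivalence whose target is already a spectrum, and the degree-zero formula for spectrification from \cref{spectri} then gives the colimit. The saturation step you flag as the main obstacle is routine. The local equivalences are the strongly saturated class generated by $\Theta$, and $\nu^X$ preserves colimits, so no extra closure under left tensors is needed. Likewise $X$ should simply be read as an object of $\mC$, with $X\wedge S^\n$ its iterated suspension, so no Yoneda reduction is required.
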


\begin{proof}
	
The reduced antioriented functor $\nu_X: {\Fun\wedge}(\mC,\mD) \to \Pre\Sp(\mD)$
preserves local objects and local equivalences by \cref{locpre}, and so sends the universal local equivalence $\F \to \F'$ to a local equivalence $\F(X \wedge S^\bullet) \to \F'(X \wedge S^\bullet)$ that corresponds to an equivalence $ L(\F(X \wedge S^\bullet))\simeq \F'(X \wedge S^\bullet) $ of spectra in $\mD,$ where $L$ is the spectrification.
Hence by \cref{spectri} evaluating at the zeroth term of a prespectrum in $\mD$ we obtain the following equivalence
in $\mD:$ 
$$\F'(X) \simeq \Omega^\infty(L(\F(X \wedge S^\bullet))) \simeq \colim_{n\geq 0} \Omega^\n(\F(X \wedge S^\n)).$$	
	
\end{proof}

\begin{corollary}\label{excend} Let $\mB$ be a reduced oriented category that admits suspensions,
$\mC,\mD$ reduced presentable bioriented categories and $\F: \mC \to \mD$ a reduced bioriented functor.
Assume that in $\mD$ sequential colimits commute with finite limits and left cotensors with finite $\infty$-categories with distinguished object.
The reduced antioriented functor $$\F_!: {\Exc\wedge}(\mB,\mC) \to {\Exc\wedge}(\mB,\mD)$$
preserves finite limits and left cotensors with finite $\infty$-categories with distinguished object.
	
\end{corollary}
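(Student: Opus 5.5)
We use \cref{appro}, which identifies the excisive approximation objectwise as the colimit $\F'(X) \simeq \colim_{\n} \Omega^\n(\F(X \wedge S^\n))$. This reduces the claim to two commutation facts in $\mD$: finite limits and left cotensors with finite objects commute with sequential colimits (the hypothesis on $\mD$), and they commute with $\Omega^\n$ and with $\F$.

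First, for a reduced oriented functor $\G: \mB \to \mC$, the functor $\F_!$ sends $\G$ to $\F \circ \G$. Since $\F$ is a bioriented functor, it commutes with $\Omega$ only up to the comparison map of \cref{impol}. So $\F \circ \G$ need not be excisive, and the functor $\F_!$ on excisive functors must be interpreted as postcomposition followed by excisive approximation $L$. (If $\F$ preserves endomorphisms, \cref{Homt} shows $L$ is unnecessary.) I take $\F_!(\G) := L(\F\circ\G)$, which by \cref{appro} is
$$ \F_!(\G)(X) \simeq \colim_{\n\geq 0} \Omega^\n\big(\F(\G(X\wedge S^\n))\big).$$

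Second, limits and left cotensors in ${\Exc\wedge}(\mB,\mC)$ need to be identified. By \cref{cloten}(2) and \cref{excloci}, left cotensors in ${\Exc\wedge}(\mB,\mC)$ are formed objectwise. Finite limits of excisive functors are also formed objectwise, since $\Omega$ commutes with limits, so the excisive functors are closed under limits in ${\Fun\wedge}(\mB,\mC)$. Hence for a finite diagram $\G_\bullet$ and $K$ a finite based $\infty$-category,
$$\F_!(\lim \G_\bullet)(X) \simeq \colim_\n \Omega^\n \F(\lim \G_\bullet(X\wedge S^\n)).$$
The argument then requires $\F$ to preserve finite limits and left cotensors by finite objects. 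The statement does not literally assume this. I would read $\F$ as a right adjoint, or restrict to that case, e.g.\ for $\F = \Omega^\infty$ or a forgetful functor, which is how the corollary will be used. If $\F$ is a general bioriented functor, I would instead check that the comparison maps commute; this is the step I expect to be delicate.

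Given that, $\Omega^\n$ commutes with finite limits and left cotensors, since $\Omega^\n$ is itself a cotensor with $S^\n$ and cotensors commute. The sequential colimit then commutes with them by the hypothesis on $\mD$. Therefore the canonical map $\F_!(\lim\G_\bullet) \to \lim \F_!(\G_\bullet)$ is an objectwise equivalence, and likewise $\F_!({^K\G}_*)\simeq {^K\F_!(\G)}_*$. Objectwise equivalences are equivalences in ${\Exc\wedge}(\mB,\mD)$, which proves the claim. The main obstacle is making the naturality of the identification in \cref{appro} compatible with the comparison maps, i.e.\ checking that the map produced by the colimit formula is the canonical comparison map. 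I would handle this by using that $\nu_X$ is antioriented (\cref{locpre}) and that spectrification commutes with these limits by \cref{spectrii}.
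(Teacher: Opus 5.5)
Your skeleton matches the paper's proof:

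\begin{itemize}
\item closure of ${\Exc\wedge}(\mB,\mD)$ under limits and left cotensors;
\item reduction to an objectwise statement (the paper uses the jointly conservative family $\nu^X$ into $\Sp(\mD)$);
\item the formula of \cref{appro}, $\F_!(\G)(X)\simeq\colim_n\Omega^n\F(\G(X\wedge S^n))$;
\item commuting the sequential colimit past finite limits and left cotensors.
\end{itemize}

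As you note yourself, the degree-$n$ terms $\G\mapsto\Omega^n\F(\G(X\wedge S^n))$ commute with finite limits and left cotensors only if $\F$ does. The statement, however, allows an arbitrary reduced bioriented functor $\F$. You resolve this by restricting to right adjoints, on the grounds that this is how the corollary will be used. That is not so. In the proof of \cref{brown} the corollary is applied with $\F=Y\wedge(-)\colon\infty\Cat_*\to\mD$, which is a left adjoint. It is used there precisely to get $(Y\wedge(-))_!(\Omega^n\circ\G)\simeq\Omega^n\circ(Y\wedge(-))_!(\G)$, although $Y\wedge(-)$ does not in general commute with $\Omega^n$ before stabilization. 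Moreover, your restricted case is trivial. A bioriented right adjoint preserves endomorphisms, so $\F\circ\G$ is already excisive by \cref{Homt}. Then $\F_!$ is plain postcomposition and the claim is immediate, with no need for \cref{appro} or for the hypothesis on sequential colimits in $\mD$. The content of the corollary lies exactly in the non-exact case.

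What is missing is the following step. The comparison maps $\F(\lim_i\G_i(Z))\to\lim_i\F(\G_i(Z))$, and their analogues for left cotensors with finite $K$, must become equivalences after applying $\colim_n\Omega^n$ along $Z=X\wedge S^n$. Equivalently, the stabilization of $\F$ (apply $\F$ levelwise to spectra in $\mC$, then spectrify in $\mD$) must preserve finite limits and finite left cotensors. This is a higher-categorical analogue of the classical exactness of the linearization of an arbitrary reduced functor. It does not follow formally from the hypothesis on $\mD$.

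Your suggested remedy does not reach this step. \cref{spectrii} moves spectrification past finite limits and cotensors, not $\F$. \cref{locpre} only concerns preservation of local equivalences. The paper's own proof describes the degree-$n$ functor as $\Omega^n\circ\nu^{\Sigma^n(X)}$ and concludes directly, so it does not isolate this point either. Still, as written your proposal proves only the special case where $\F$ already commutes with finite limits and left cotensors, not the corollary as stated.
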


\begin{proof}

By definition of excisiveness the full subcategory ${\Exc\wedge}(\mB,\mD) \subset {\Fun\wedge}(\mB,\mD)$
is closed under left cotensors and small limits. 
Hence for any $X \in \mB$ the antioriented functor $$\nu^X: {\Exc\wedge}(\mB,\mD) \to \Sp(\mD) , \F \mapsto \F(X \wedge S^\bullet)$$ preserves small limits and left cotensors. 
So it is enough to see that for every $ X \in \mB$
the reduced antioriented functor $$ {\Exc\wedge}(\mB, \mC) \xrightarrow{\F_!} {\Exc\wedge}(\mB, \mD) \xrightarrow{\nu^X} \Sp(\mD)$$ preserves finite limits and left cotensors with finite $\infty$-categories with distinguished object.

By \cref{appro} the latter factors as reduced antioriented functors
$$ 	{\Exc\wedge}(\mB,\mC) \xrightarrow{\rho} 
\Sp(\mD)^{\bN} \xrightarrow{\colim} \Sp(\mD),$$
where $\rho$ induces in degree $\n \geq 0$ the reduced antioriented functor $ \Omega^\n \circ \nu^{\Sigma^\n(X)}$ and $\colim$ assigns the sequential colimit.
So the result follows from the fact that in $\Sp(\mD)$ sequential colimits commute with finite limits and left cotensors with finite $\infty$-categories if this holds in $\mD$.

\end{proof}












We will see next that the theory of excisive approximations implies that every stable oriented category is spectral.

\begin{proposition}

\begin{enumerate}
\item Let $\mC$ be a quasi-stable oriented category. The reduced presentable antioriented category $$ {\Exc\wedge}(\mC, \infty\Cat_*)$$ is antistable.
	
\item Let $\mC$ be a quasi-antistable antioriented category. The reduced presentable oriented category $$ {\wedge\overline{\Exc}}(\mC, \infty\Cat_*) $$ is stable.

\end{enumerate}	

\end{proposition}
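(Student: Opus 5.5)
The plan is to show that in both cases the reduced presentable (anti)oriented category of excisive functors is canonically equivalent to one of the form $\overline{\Sp}(-)$, respectively $\Sp(-)$. Antistability, respectively stability, then follows from \cref{crstab} and \cref{sstab}. Part (2) is the conjugate of part (1): replacing $\mC$ by $\mC^\co$ turns antiexcisive antioriented functors into excisive oriented functors, and by \cref{agos} conjugation exchanges stability and antistability. So I will only prove (1).

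For (1), let $\mC$ be quasi-stable. By \cref{excprese} the antioriented category $\mE:={\Exc\wedge}(\mC,\infty\Cat_*)$ is presentable and reduced, and by \cref{cloten}(2) it is closed under left cotensors in ${\Fun\wedge}(\mC,\infty\Cat_*)$. Since $\mC$ is quasi-stable, $\Sigma_\mC$ is an equivalence. So precomposition $\Sigma^*$ is an autoequivalence of $\mE$, and excisiveness gives the equivalence $\F\simeq \Omega\circ\F\circ\Sigma$. The key observation is that antiendomorphisms in $\mE$ are computed objectwise as left cotensors with $S^1$. Because $\infty\Cat_*$ is bioriented, this means $\bar{\Omega}_\mE(\F)\simeq {^{S^1}\F}_*$. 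Using \cref{reduc}(8), the objectwise $\bar{\Omega}$ on $\infty\Cat_*$ is identified with $(-)^{\co\op}\circ\Omega\circ(-)^{\co\op}$, so $\bar{\Omega}_\mE(\F)\simeq (-)^{\co\op}\circ\Omega\circ(-)^{\co\op}\circ\F$.

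The next step is to show that $\bar{\Omega}_\mE$ is an equivalence, and this is the step I expect to be the main obstacle. I will write down an explicit inverse, $\F\mapsto (-)^{\co\op}\circ\F'$ with $\F'$ built from $(-)^{\co\op}\circ\F\circ\Sigma_\mC$, and use $\F\simeq\Omega\circ\F\circ\Sigma$ together with the compatibility of the $\co\op$-twist from \cref{reduc}(9).

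The difficulty is that $(-)^{\co\op}$ is not oriented, so the composites above need not be reduced oriented functors. To get around this I would work with $\bar{\Omega}^2$ instead, using the equivalence $\Omega^2\simeq\bar{\Omega}^2$ of bioriented functors from the proof of \cref{spantisp}. Then
\[
\bar{\Omega}_\mE^2(\F)\simeq \Omega^2\circ\F\simeq \F\circ\Sigma_\mC^{-2},
\]
which is an equivalence because $\Sigma_\mC$ is. A functor whose square is an equivalence is itself an equivalence, so $\bar{\Omega}_\mE$ is an equivalence and $\mE$ is quasi-antistable.

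It remains to upgrade quasi-antistability to antistability, that is, to produce antioriented fibers and cofibers. Since $\mE$ is presentable, it admits small weighted limits and colimits, so both exist directly. Alternatively, one can use the dual of \cref{Stab} through the antioriented analogue of \cref{laxfib}: once $\mE$ has antioriented fibers, which are cotensor limits computed pointwise, quasi-antistability suffices. Either route shows that $\mE$ is antistable.
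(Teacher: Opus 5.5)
Your argument is correct, but it takes a different route from the paper.

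\textbf{The paper's route.} The paper proves (1) by transporting along the family of antioriented functors $\nu_X\colon {\Exc\wedge}(\mC,\infty\Cat_*)\to\Cat\Sp$, $\F\mapsto\F(X\wedge S^\bullet)$, for $X\in\mC$. By \cref{locpre2} each $\nu_X$ preserves small (co)limits and left (co)tensors, hence antisuspensions and antiendomorphisms. The family is jointly conservative, since the $0$-th term of $\nu_X(\F)$ is $\F(X)$. So the unit and counit of $\bar{\Sigma}\dashv\bar{\Omega}$ on $\mE={\Exc\wedge}(\mC,\infty\Cat_*)$ are equivalences because they are equivalences in $\Cat\Sp$ (\cref{stab}).

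\textbf{Your route.} You stay inside $\mE$:
\begin{itemize}
\item antiendomorphisms are computed objectwise, since $\mE$ is closed under left cotensors (\cref{cloten});
\item $\bar{\Omega}^2\simeq\Omega^2$ on $\infty\Cat_*$, together with the natural equivalence $\F\simeq\Omega^2\circ\F\circ\Sigma_\mC^2$ for excisive $\F$ (\cref{impol}), identifies $\bar{\Omega}_\mE^2$ with precomposition by the oriented autoequivalence $\Sigma_\mC^{-2}$;
\item a functor whose square is an equivalence has a left and a right inverse, so $\bar{\Omega}_\mE$ is an equivalence;
\item antioriented fibers and cofibers exist by presentability.
\end{itemize}

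\textbf{What each buys.} Your route is more elementary. It needs neither spectrification and the local-equivalence analysis behind \cref{locpre2}, nor the antistability of $\Cat\Sp$. It uses directly the identity $\Omega^2\simeq\bar{\Omega}^2$ on which \cref{spantisp}, and hence \cref{stab}, ultimately rests. The paper's argument, as written, uses only that $\mC$ admits suspensions and never that $\Sigma_\mC$ is invertible, whereas your squaring trick does. So the paper's route gives a more general statement, and it also builds the conservative, (co)limit-preserving functors $\nu_X$ that are reused later, e.g.\ in the proof of \cref{specenr}.

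\textbf{Corrections to the write-up.} None of these affect your final argument.
\begin{itemize}
\item Your announced plan, identifying $\mE$ with some $\overline{\Sp}(-)$ and invoking \cref{crstab}, is never carried out and can be dropped.
\item The formula $\bar{\Omega}\simeq(-)^{\co\op}\circ\Omega\circ(-)^{\co\op}$ is wrong. Since $\Omega$ commutes with $(-)^{\co\op}$, it would give $\bar{\Omega}\simeq\Omega$. Passing to right adjoints in \cref{reduc}(8) gives $\bar{\Omega}\simeq(-)^{\co\op}\circ\Omega$ instead. This is harmless because you only use $\bar{\Omega}^2\simeq\Omega^2$.
\item By \cref{reduc}(1), $\Sigma_\mC$ is an oriented functor $\mC^\cop\to\mC$, not $\mC\to\mC$. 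So precomposition with $\Sigma_\mC$ alone is not an endofunctor of $\mE$; only even powers are. That is exactly why passing to $\bar{\Omega}^2$ works.
\item For (2), after conjugating you also need the pointed version of $(-)^\op\colon\infty\Cat^\co\simeq\infty\Cat$ (\cref{duae}) to bring the target back to $\infty\Cat_*$.
\end{itemize}
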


\begin{proof}The proof of (2) is similar to that of (1).
We prove (1). 
By \cref{forgetu} for every $X \in \mC$ there is a reduced antioriented functor $\nu_X: {\Exc\wedge}(\mC, \infty\Cat_*) \to \Cat\Sp$ sending $\F$ to $\F(X \wedge S^\bullet)$ that preserves small limits, left cotensors, small colimits and left tensors by \cref{locpre2}. In particular, $\nu_X$ preserves antisuspensions and antiendomorphisms.
Since the family $(\nu_X)_{X \in \mC}$ is jointly conservative, ${\Exc\wedge}(\mC, \infty\Cat_*)$ is antistable since $\Cat\Sp$ is antistable by \cref{stab}.

\end{proof}

\begin{remark}
Let $\mC$ be a small quasi-stable oriented category. By quasi-stability the full subcategory $$ {\Exc\wedge}(\mC, \infty\Cat_*) \subset {\Fun\wedge}(\mC, \infty\Cat_*) $$ precisely consists of the reduced oriented functors preserving endomorphisms.

In particular, for every quasi-antistable antioriented category $\mC$ the antioriented Yoneda embedding induces an antioriented embedding $$\mC \hookrightarrow {\Exc \wedge}(\mC^\circ, \infty\Cat_*)$$
into a presentable antistable antioriented category (\ref{excprese}) that preserves weighted limits by \cref{weiadj}.

Dually, let $\mC$ be a small quasi-antistable antioriented category. By quasi-antistability the full subcategory 
$$ {\wedge\overline{\Exc}}(\mC, \infty\Cat_*) \subset {\wedge\Fun}(\mC, \infty\Cat_*) $$ precisely consists of the reduced antioriented functors preserving antiendomorphisms.

In particular, for every small quasi-stable oriented category $\mC$ the oriented Yoneda embedding induces an oriented embedding $$\mC \hookrightarrow {\wedge\overline{\Exc}}(\mC^\circ, \infty\Cat_*)$$ into a presentable stable oriented category that preserves weighted limits.

\end{remark}

\begin{corollary}
\begin{enumerate}
\item Let $\mC$ be a quasi-stable oriented category. There is a canonical functorial oriented embedding $\mC \hookrightarrow \mD$ preserving weighted limits into a stable presentable oriented category.
	
\item Let $\mC$ be a quasi-antistable antioriented category.  There is a canonical functorial antioriented embedding $\mC \hookrightarrow \mD$ preserving weighted limits into an antistable presentable antioriented category.
	
\end{enumerate}

\end{corollary}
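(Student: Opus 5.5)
The plan is to obtain the embedding by restricting an enriched Yoneda embedding to a subcategory of excisive presheaves, using the remark immediately preceding the statement. I prove (1); (2) follows by the same argument with the conjugations $(-)^\co$ and $(-)^\circ$ exchanged.

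\emph{Step 1 (size).} Let $\mC$ be a quasi-stable oriented category. If $\mC$ is not small, I pass to a larger universe, in which $\mC$ is small. The construction below then yields a presentable category in that larger universe, and nothing else in the argument depends on size.

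\emph{Step 2 ($\mC^\circ$ is quasi-antistable).} The opposite enriched category $\mC^\circ$ is antioriented. Its antiendomorphism functor is identified with the suspension functor of $\mC$, by the same observation used at the end of the proof of \cref{coext}. Since $\mC$ is quasi-stable, $\Sigma_\mC \simeq \Omega_\mC^{-1}$ exists and is an equivalence. Moreover $\mC^\circ$ has a zero object because $\mC$ does. Hence $\mC^\circ$ is quasi-antistable.

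\emph{Step 3 (the target and the factorization).} Set $\mD := {\wedge\overline{\Exc}}(\mC^\circ, \infty\Cat_*)$. By part (2) of the preceding proposition, applied to $\mC^\circ$, this is a presentable stable oriented category. The oriented Yoneda embedding $\mC \hookrightarrow {\wedge\Fun}(\mC^\circ,\infty\Cat_*)$ sends $X$ to the representable functor $\R\Mor_\mC(-,X)$. I need each such functor to be antiexcisive. By quasi-antistability of $\mC^\circ$, antiexcisiveness of a reduced antioriented functor is equivalent to preserving antiendomorphisms; this is the characterization stated in the preceding remark. A representable functor sends $\Sigma_\mC(Z)$ to $\R\Mor_\mC(\Sigma Z, X) \simeq \Omega\,\R\Mor_\mC(Z,X)$ by the $\Sigma \dashv \Omega$ adjunction, so it does preserve antiendomorphisms. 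Therefore the Yoneda embedding factors as a fully faithful oriented functor $\mC \hookrightarrow \mD$.

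\emph{Step 4 (limits and functoriality).} The enriched Yoneda embedding into all presheaves preserves weighted limits by \cref{weiadj}. The inclusion $\mD \subset {\wedge\Fun}(\mC^\circ,\infty\Cat_*)$ is a reflective localization whose local objects are cut out by a limit condition, namely preservation of antiendomorphisms. Such a subcategory is closed under all weighted limits, so limits computed in $\mD$ agree with those in the ambient presheaf category. Hence the restricted embedding $\mC \hookrightarrow \mD$ still preserves weighted limits; this is exactly the mechanism of \cref{weiloc}(3). For functoriality I would imitate the construction of $\gamma$ and $\beta$ in the proof of \cref{impol}. Left Kan extension along oriented functors between quasi-stable categories induces left adjoints between the presheaf categories. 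Applying the localization then gives oriented left adjoints between the categories of antiexcisive presheaves, compatible with the Yoneda embeddings. This makes $\mC \mapsto \mD$ functorial with a natural embedding.

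\emph{Main obstacle.} I expect Step 3 to be the delicate point. It relies on the identification of antiexcisive functors out of the quasi-antistable category $\mC^\circ$ with those preserving antiendomorphisms. It also requires that the antiendomorphisms of $\mC^\circ$ really are the suspensions of $\mC$, with the correct (non-conjugated) orientation. I would verify this directly from the definition of oriented pullbacks in $\mC^\circ$ and from \cref{redfib}.
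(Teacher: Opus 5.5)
Your argument is the paper's. The corollary is read off from the remark just before it: the oriented Yoneda embedding of $\mC$ lands in ${\wedge\overline{\Exc}}(\mC^\circ,\infty\Cat_*)$, which is presentable and stable by the preceding proposition applied to the quasi-antistable $\mC^\circ$, and the embedding preserves weighted limits by \cref{weiadj}.

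There is one correction to Step 3. The enriched $\Sigma\dashv\Omega$ is the bioriented adjunction $\mC^\cop\rightleftarrows\mC$ of \cref{reduc}. Equivalently, use the universal property of $\Sigma Z\simeq Z\wedge S^1$ as a right tensor. Either way one gets $\R\Mor_\mC(\Sigma Z,X)\simeq\Fun^\lax_*(S^1,\R\Mor_\mC(Z,X))\simeq\bar{\Omega}\,\R\Mor_\mC(Z,X)$, not $\Omega\,\R\Mor_\mC(Z,X)$. This is the dual of the identity used in \cref{coext}, and it is this $\bar{\Omega}$ that expresses preservation of antiendomorphisms.

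Your functoriality sketch also only works along suspension-preserving functors, since only then does restriction preserve the local objects.
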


\begin{corollary}\label{spectral0}
\begin{enumerate}
	
\item Every quasi-stable oriented category $\mC$ refines to a spectral oriented category. 
	
\item Every quasi-antistable antioriented category $\mC$ refines to an antispectral antioriented category. 
	
\end{enumerate}		

\end{corollary}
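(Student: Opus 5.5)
The plan mirrors the bioriented case (\cref{spectral}), now with the oriented embedding into stable presentable categories from the preceding Corollary. For (1), let $\mC$ be a quasi-stable oriented category. After passing to a larger universe if $\mC$ is not small, the preceding Remark and Corollary provide an oriented embedding
$$\mC \hookrightarrow \mD:={\wedge\overline{\Exc}}(\mC^\circ, \infty\Cat_*)$$
which preserves weighted limits. The target $\mD$ is a presentable stable oriented category, since $\mC^\circ$ is quasi-antistable and part (2) of the preceding Proposition applies.

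By \cref{unistab} (5), the forgetful functor $\Pr^L\barwedge \to \Pr^L_{\mathrm{st}}\wedge$ is an equivalence. Hence $\mD$ carries a unique right $\Cat\Sp$-module structure in $\Pr^L$ extending its right $\infty\Cat_*$-action. By the enriched presentability theorem \cite[Theorem 1.2.]{Heine2023AnEB}, $\mD$ is therefore a presentable spectral oriented category. Its underlying oriented category is recovered by transferring enrichment along the lax monoidal right adjoint $\Omega^\infty:\Cat\Sp \to \infty\Cat_*$, since $\Sigma^\infty$ is monoidal by \cref{unicity}.

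We then restrict to $\mC$. Define the spectral refinement of $\mC$ as the full spectral oriented subcategory of $\mD$ on the objects of $\mC$. The morphism categorical spectra are $\R\Mor^{\Cat\Sp}_\mD(X,Y)$ for $X,Y\in\mC$. Applying $\Omega^\infty$ gives back $\R\Mor_\mD(X,Y)\simeq \R\Mor_\mC(X,Y)$ because the embedding is fully faithful. So this is a spectral refinement of $\mC$, i.e.\ a preimage of $\mC$ under the transfer $(\Omega^\infty)_!$.

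The main point to check is that the construction does not depend on choices. The embedding is functorial (preceding Corollary), and the $\Cat\Sp$-module structure on a stable presentable category is unique by \cref{unistab}. This is what gives a canonical refinement rather than just some refinement. I also need to check that full subcategories of enriched categories behave well under transfer along $(\Omega^\infty)_!$, which should follow from \cref{indubi}-type arguments. Statement (2) follows from (1) applied to $\mC^\co$, which is quasi-stable, using the conjugation $(-)^\co$ between spectral oriented and antispectral antioriented categories from \cref{raras}.
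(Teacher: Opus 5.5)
Your argument is correct and is exactly the one the paper leaves implicit. The paper embeds $\mC$ via the oriented Yoneda embedding into the presentable stable oriented category ${\wedge\overline{\Exc}}(\mC^\circ,\infty\Cat_*)$ (preceding remark and corollary), uses \cref{unistab} to identify stable presentable oriented categories with presentable spectral oriented categories, and takes the full spectral subcategory on the objects of $\mC$. Your derivation of (2) from (1) via the conjugation $(-)^\co$, rather than via the dual embedding into ${\Exc\wedge}(\mC^\circ,\infty\Cat_*)$ together with \cref{unistab} (4), is only an inessential variation.
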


We obtain the following important corollaries:

\begin{corollary}\label{Stab2}
Let $\mC$ be a quasi-stable oriented category. 
The following are equivalent:
	
\begin{enumerate}
\item $\mC$ is stable.	
		
\item $\mC$ admits oriented fibers.
		
\item $\mC$ admits oriented cofibers.
		
\end{enumerate}
	
\end{corollary}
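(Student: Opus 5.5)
The plan mirrors the proof of \cref{Stab} for bioriented categories, with \cref{spectral0} in place of \cref{spectral}. The implication (1)$\Rightarrow$(2),(3) is immediate, since a stable oriented category is by definition quasi-stable and admits oriented fibers and oriented cofibers. Hence it suffices to show that (2) implies (3) and that (3) implies (2).

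For (2)$\Rightarrow$(3): by \cref{spectral0} (1) the quasi-stable oriented category $\mC$ refines to a spectral oriented category. Quasi-stability also gives a zero object and suspensions, with $\Sigma \simeq \Omega^{-1}$. Assuming (2), $\mC$ admits oriented left fibers, so \cref{laxfib} applies and shows that $\mC$ admits oriented left cofibers, via $0 \overset{\to}{+}_A B \simeq 0 \overset{\to}{\times}_{\Sigma A} \Sigma B$. For oriented right cofibers I will use the dual statement. Replacing $\mC$ by ${^\cop}\mC^\cop$ (or $\mC^{\co\op}$, transported through the conjugation equivalences) exchanges left and right oriented (co)fibers and preserves quasi-stability and spectrality by \cref{agos}. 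Running \cref{laxfib} there, as in \cref{compacor}, gives the right cofibers.

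For (3)$\Rightarrow$(2) I pass to $\mC^\op$. It is again quasi-stable by \cref{agos}, and oriented cofibers of $\mC$ are exactly oriented fibers of $\mC^\op$, and vice versa. So if $\mC$ admits oriented cofibers, then $\mC^\op$ admits oriented fibers. The previous step then shows that $\mC^\op$ admits oriented cofibers, which means $\mC$ admits oriented fibers. Combining the two directions with quasi-stability yields stability.

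The main obstacle is the dualization for right (co)fibers. One has to check that the conjugation sending $\mC$ to its co-op/cop dual remains an \emph{oriented} (not antioriented) spectral category. One also has to check that it swaps left and right oriented fibers. I would verify this using \cref{oppo}, \cref{dua} and the monoidal involutions of \cref{raras} on $\Cat\Sp$, exactly as done for \cref{compacor}. Apart from this, the argument is a direct assembly of \cref{spectral0} and \cref{laxfib}.
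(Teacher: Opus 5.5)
Your argument is correct and matches the paper, which proves the corollary in one line from \cref{spectral0} together with \cref{laxfib} and its dual. That dual covers both the left/right swap via conjugation and the fiber/cofiber swap via $\mC^\op$, exactly as you organize it. One small slip: the identification coming from \cref{laxfib} should read $0 \overset{\to}{+}_A B \simeq 0 \overset{\to}{\times}_{\Sigma B} \Sigma A$, since this is the oriented left fiber of $\Sigma A \to \Sigma B$; you have the indices swapped.
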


\begin{proof}

This follows immediately from \cref{spectral0} combined with
\cref{laxfib} and its dual.

\end{proof}

\begin{corollary}\label{sstab}
Let $\mC$ be reduced oriented category that has oriented fibers.
Then $\Sp(\mC)$ is stable.
	
\end{corollary}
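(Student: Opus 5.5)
We argue exactly as in the bioriented case, \cref{sstab} for bioriented categories. The work splits into two steps: first that $\Sp(\mC)$ is quasi-stable, then that it admits oriented fibers. Once both hold, \cref{Stab2} upgrades quasi-stability to stability.

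\emph{Quasi-stability.} View the reduced oriented category $\mC$ as a reduced bioriented category via \cref{ember}; it admits endomorphisms because oriented fibers do. Indeed, $\Omega(X)$ is the oriented fiber $0 \overset{\to}{\times}_X 0$ of $0 \to X$, which by \cref{redfib} is the right cotensor with $S^1$. Hence $\Sp(\mC)$ is defined. By \cref{rstab}, $\Omega$ on $\Sp(\mC)$ is the inverse of the shift functor, so $\Sp(\mC)$ is quasi-stable. Its zero object is the constant sequence at $0$.

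\emph{Oriented fibers.} $\Sp(\mC)$ is the limit of the tower $\cdots \xrightarrow{\Omega} \mC \xrightarrow{\Omega} \mC$ in $\Cat\wedge$. An oriented fiber is a weighted limit (\cref{redfib}, with weight $S^0 \to \bD^1$). To compute it in the limit, it suffices to show the following:
\begin{enumerate}
\item each $\mC$ admits oriented fibers;
\item the transition functors $\Omega$ preserve oriented fibers;
\item such limits are then computed levelwise.
\end{enumerate}
For (2), $\Omega$ is itself the weighted limit $(-)^{S^1}_*$ as a right cotensor. Weighted limits commute with weighted limits by \cref{colimenrfun} and \cref{weiadj}, so $\Omega(0 \overset{\to}{\times}_A B) \simeq 0 \overset{\to}{\times}_{\Omega A} \Omega B$. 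This needs some care, since $\Omega$ lands in $\mC^\cop$ and the orientation of the cotensor flips. Under $(-)^\cop$, oriented left fibers are exchanged with oriented left or right fibers, but $\mC$ has both kinds, so this is harmless.

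For (3), given a map $f:(B_n) \to (A_n)$ of spectra, set $F_n := 0 \overset{\to}{\times}_{A_n} B_n$. The bonding equivalences $B_n \simeq \Omega B_{n+1}$ and $A_n \simeq \Omega A_{n+1}$ then give $F_n \simeq \Omega F_{n+1}$, so $(F_n)$ is a spectrum. Its universal property follows from the limit formula for morphism $\infty$-categories (\cref{compo}), which commutes with the fiber products in (\ref{indmapl}). The same argument gives oriented right fibers. Finally, apply \cref{Stab2}: a quasi-stable oriented category with oriented fibers is stable.

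The main obstacle is the bookkeeping of orientations in step (2): checking that $\Omega \colon \mC \to \mC^\cop$ sends oriented left fibers to the correct kind of oriented fiber in $\mC^\cop$. If this is delicate, I would instead embed $\mC$ via \cref{weiloc} into a presentable reduced oriented category, preserving oriented fibers and endomorphisms, where these commutations hold because $\Omega$ is a right adjoint.
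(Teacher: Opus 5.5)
Your proposal is correct and follows the paper's proof: quasi-stability of $\Sp(\mC)$ from \cref{rstab}, the fact that $\Sp(\mC)$ inherits oriented fibers from $\mC$ (which the paper simply asserts and you justify levelwise), and then \cref{Stab2}. One bookkeeping point in your step (3): $F_n$ must be the oriented fiber computed in the $n$-th level of the tower, which is $\mC$ or $\mC^\cop$ depending on $n$. In terms of $\mC$ this means alternating between left and right oriented fibers, exactly as your own remark about $\Omega\colon \mC \to \mC^\cop$ anticipates.
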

\begin{proof}
If $\mC$ admits oriented fibers, $\Sp(\mC)$ admits oriented fibers.
We apply \cref{rstab} and \cref{Stab2}.	
	
\end{proof}

\cref{preserv} implies the following corollary:

\begin{corollary}\label{preserv1}

\begin{enumerate}

\item Let $\mC, \mD$ be stable oriented categories and $\phi: \mC \to \mD$ a spectral oriented functor preserving endomorphisms. Then $\phi$ preserves oriented pushouts and oriented pullbacks.

\item Let $\mC, \mD$ be antistable antioriented categories and $\phi: \mC \to \mD$ an antioriented functor preserving antiendomorphisms. Then $\phi$ preserves antioriented pushouts and antioriented pullbacks.

\end{enumerate}
\end{corollary}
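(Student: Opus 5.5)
The plan is to reduce to \cref{preserv}, the statement that spectral oriented (resp.\ antispectral antioriented) functors preserve oriented (resp.\ antioriented) pushouts and pullbacks. The work is to show that an endomorphism-preserving functor between stable oriented categories is automatically spectral, i.e.\ compatible with the canonical refinements of \cref{spectral0}. I treat (1); (2) follows by applying (1) to $\phi^\co: \mC^\co \to \mD^\co$, which interchanges antioriented and oriented notions and endomorphisms with antiendomorphisms.

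First I recall how the spectral enrichment of a quasi-stable oriented category $\mC$ arises. For $X,Y \in \mC$, quasi-stability gives
$$\R\Mor_\mC(X,Y) \simeq \Omega^\n(\R\Mor_\mC(X,\Sigma^\n Y)) \simeq \Omega^\n\R\Mor_\mC(X,\Omega^{-\n}Y),$$
so the sequence $\{\R\Mor_\mC(X,\Omega^{-\n}Y)\}_\n$ is a categorical spectrum. It is the morphism spectrum, and its $\Omega^\infty$ is $\R\Mor_\mC(X,Y)$. This is the content of the embedding into ${\wedge\overline{\Exc}}(\mC^\circ,\infty\Cat_*)$ together with \cref{lifti}: a reduced excisive functor to $\infty\Cat_*$ lifts uniquely to $\Cat\Sp$.

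Next, since $\phi$ preserves endomorphisms and $\Omega$ is invertible in both categories, $\phi$ commutes with $\Omega^{-1}=\Sigma$. Hence $\phi$ induces compatible maps
$$\R\Mor_\mC(X,\Omega^{-\n}Y) \to \R\Mor_\mD(\phi X,\Omega^{-\n}\phi Y)$$
respecting the bonding equivalences, that is, maps of morphism spectra. To make this coherent rather than objectwise, I would phrase it via \cref{lifti}(1). The composite $\R\Mor_\mD(\phi(-),\phi(-))$ and the functor $\R\Mor_\mC(-,-)$ are both excisive in each variable, since $\phi$ preserves endomorphisms (\cref{Homt}). The oriented structure of $\phi$ gives a natural transformation between them, and this transformation lifts uniquely to the excisive functors valued in $\Cat\Sp$. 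That lift is exactly the data making $\phi$ a spectral oriented functor. Equivalently, $\phi$ lifts along the embeddings of $\mC$ and $\mD$ into presentable stable oriented categories, which are spectral by \cref{unistab}(5), using functoriality of that embedding for endomorphism-preserving functors.

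With $\phi$ spectral, \cref{preserv}(1) gives preservation of oriented pushouts and oriented pullbacks. The main obstacle is the coherence step: upgrading the levelwise compatibility with $\Omega$ to a genuine $(\Cat\Sp,\wedge^\rev)$-enriched functor, including compatibility with composition. I would handle this through the universal property of \cref{lifti}, applied to functor categories in two variables, rather than by checking composition by hand.
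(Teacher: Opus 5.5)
Your proposal is correct and is essentially the paper's argument: the paper deduces the corollary from \cref{preserv} once $\mC,\mD$ have been refined to spectral oriented categories via \cref{spectral0}. Note that in (1) $\phi$ is already assumed spectral, so your spectrality argument is only really needed for (2), where it is the content of \cref{specenr}(2). Of your two routes to the coherence, the one that works is the functorial embedding into ${\wedge\overline{\Exc}}(\mC^\circ,\infty\Cat_*)$ (extending $\mC\to\mD\hookrightarrow{\wedge\overline{\Exc}}(\mD^\circ,\infty\Cat_*)$ by the dual of \cref{laxoplax}) combined with \cref{unistab}(5); lifting the hom-objects via \cref{lifti} does not by itself encode compatibility with composition.
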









\subsection{The representability theorem}

\begin{notation}
Let $\mC, \mD$ be reduced oriented categories. 

\begin{itemize}
\item Let $$ {\mathrm{Fil}\Fun\wedge}(\mC,\mD) \subset {\Fun\wedge}(\mC,\mD)$$ 	be the full subcategory of reduced oriented functors preserving small filtered colimits.

\item Let $$ {\mathrm{Fil}\Exc\wedge}(\mC,\mD) \subset {\Fun\wedge}(\mC,\mD)$$ 	be the full subcategory of reduced excisive oriented functors preserving small filtered colimits.

\item Let $$\mH(\mD) \subset {\Fil\Exc\wedge}(\infty\Cat_*,\mD)$$
be the full subcategory of categorical homology theories.

\end{itemize}
\end{notation}

\begin{lemma}\label{rioko}

Let $\mD$ be a reduced bioriented category whose underlying reduced antioriented category is presentable.
The full subcategory $$\mH(\mD) \subset {\Fil\Exc\wedge}(\infty\Cat_*,\mD)$$
is closed under small colimits and left tensors. 

\end{lemma}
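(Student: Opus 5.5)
We have to show that $\mH(\mD)$, the full subcategory of categorical homology theories, is closed in ${\Fil\Exc\wedge}(\infty\Cat_*,\mD)$ under small colimits and left tensors. The plan is to check the three axioms separately, each time reducing to the fact that every construction involved is computed pointwise and commutes with colimits and left tensors.

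\textbf{Step 1: filtered colimits and excision.} First we check that ${\Fil\Exc\wedge}(\infty\Cat_*,\mD)$ is itself closed in ${\Fun\wedge}(\infty\Cat_*,\mD)$ under small colimits and left tensors, and that these are formed objectwise. Colimits in the functor category are pointwise because the underlying antioriented category of $\mD$ is presentable. Left tensors are given by postcomposition with $K\wedge(-)$, as in \cref{tencot}. Preservation of filtered colimits is therefore inherited, since $K\wedge(-)$ preserves colimits.

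For excisiveness, observe that an oriented functor $F$ is excisive exactly when the canonical map $F(X)\to\Omega F(\Sigma X)$ is an equivalence for all $X$. This is because the relevant oriented pushout is $\Sigma X$ and the oriented pullback is $\Omega$. So we need that $\Omega$ in $\mD$ commutes with colimits and left tensors. Here we would invoke the stability of the target. The argument in \cref{rioko}'s setting presumably uses that $\Omega:\mD\to\mD$ is an antioriented functor, so it commutes with left tensors, together with \cref{Homt}-type reasoning. For colimits we expect that $\mD$ is antistable or stable in the intended application. If this is not available, we would instead use that filtered colimits and finite limits commute in the compactly generated situation, and handle pushouts via the description of excisive objects as $\Theta$-local objects from \cref{excloc}. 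That description shows closure under colimits only after localization, so this step is delicate.

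\textbf{Step 2: sphericality.} For $E=\colim_i E_i$, the map (\ref{eqzz}) is
$$\colim_\ell \Omega^{2\ell}\bigl(E(S^{2\ell})\wedge X\bigr)\to E(X).$$
The smash $(-)\wedge X$ commutes with colimits. It then suffices that $\Omega^{2\ell}$ commutes with the colimit in $\mD$, after which we exchange the two colimits and use that each $E_i$ is spherical. For the left tensor $K\wedge E$, associativity of the bitensoring gives $(K\wedge E(S^\ell))\wedge X\simeq K\wedge(E(S^\ell)\wedge X)$. We also need $\Omega^\ell$ to commute with $K\wedge(-)$. This holds because $\Omega$ is the right cotensor with $S^1$ and left tensors commute with right cotensors by the bioriented structure, or alternatively because $\Omega^\ell(K\wedge-)$ is computed via \cref{Homt}.

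\textbf{Main obstacle.} The crux in both steps is the commutation of $\Omega$ in $\mD$ with arbitrary small colimits, not only filtered ones. Our first attempt would be to reduce to $\Omega^2$ and use (\ref{eqzz}) to rewrite any homology theory $E$ as $\Omega^\infty$ of the spectrum $E(S^\bullet)\wedge X$, which lies in $\Sp(\mD)$. In $\Sp(\mD)$, $\Omega$ is an equivalence by \cref{rstab}, hence commutes with all colimits. The colimit can then be taken in $\Sp(\mD)$, and we would finish by applying \cref{lifti} to identify homology theories with excisive functors into $\Sp(\mD)$.
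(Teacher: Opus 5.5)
Your proposal has a genuine gap. Steps 1 and 2 both rely on two claims: that colimits and left tensors in ${\Fil\Exc\wedge}(\infty\Cat_*,\mD)$ are formed objectwise, and that $\Omega$ in $\mD$ commutes with small colimits and left tensors. For non-stable $\mD$ neither holds in general, and the decisive case $\mD=\infty\Cat_*$ of \cref{3} is not stable.

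\begin{itemize}
\item \emph{Colimits are not objectwise.} The objectwise pushout of $0\leftarrow E\to 0$ is $\Sigma\circ E$. This is in general not excisive, since that would require $\Sigma E(X)\simeq\Omega\Sigma E(\Sigma X)$. Colimits and left tensors in ${\Exc\wedge}(\infty\Cat_*,\mD)$ are obtained from the objectwise ones by the excisive localization of \cref{excloc}, namely $F\mapsto\colim_n\Omega^n F((-)\wedge S^n)$ as in \cref{appro}. So you cannot compute $(\colim_i E_i)(S^{2\ell})$ as $\colim_i E_i(S^{2\ell})$, and exchanging the two colimits has nothing to act on.
\item \emph{$\Omega$ does not commute with left tensors.} Left tensors commute with right tensors, not with right cotensors. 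The map $K\wedge\Omega(Y)\to\Omega(K\wedge Y)$ is only the lax structure map of the antioriented functor $\Omega$.
\item \emph{The fallback does not repair this.} \cref{lifti} is about bioriented excisive functors, not the oriented functors in ${\Exc\wedge}$. The identification of homology theories with their lifts to $\Sp(\mD)$ is \cref{lifta}. That is deduced from \cref{brown}, whose proof uses the present lemma, so invoking it is circular.
\end{itemize}

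The missing idea is to move the whole sphericality condition into $\Sp(\mD)$ and to control how ${\Exc\wedge}$-colimits are seen there. The paper does this in three steps.
\begin{itemize}
\item \emph{Sphericality as a spectrum-level condition.} Using excisiveness, $E$ is spherical if and only if, for every $X$, the map of spectra $E(S^\bullet)\wedge X\to E(S^\bullet\wedge X)$ is an equivalence. This map is the spectrification of the prespectrum map $((-)\wedge X)_!(E(S^\bullet))\to E(S^\bullet\wedge X)$. Its $\Omega^\infty$ is (\ref{eqzz}). The map for $\Sigma^\ell X$ is $\Sigma^\ell$ of the map for $X$, so (\ref{eqzz}) at $\Sigma^\ell X$ is its $\ell$-th level.
\item \emph{Both sides preserve colimits and left tensors.} The source $E\mapsto\nu(E)\wedge X$ and target $E\mapsto\nu^X(E)$ are restrictions of precomposition functors on ${\Fun\wedge}(\infty\Cat_*,\mD)$, followed by spectrification. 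These preserve colimits and left tensors. By \cref{locpre} they send the generating local equivalences of the excisive localization to local equivalences of prespectra. Hence they still preserve colimits and left tensors as functors ${\Fil\Exc\wedge}(\infty\Cat_*,\mD)\to\Sp(\mD)$.
\item \emph{Conclusion.} The locus where a natural transformation between such functors is an equivalence is closed under colimits and left tensors.
\end{itemize}

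This way $\Omega_\mD$ never has to commute with anything. All the exchanging of colimits happens in $\Sp(\mD)$, where $\Omega$ is invertible, and \cref{locpre} is exactly what connects ${\Exc\wedge}$-colimits to colimits in $\Sp(\mD)$.
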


\begin{proof}

Let $E: \infty\Cat_* \to \mD$ be a reduced and excisive oriented functor and $X \in \infty\Cat_*.$
\cref{spectri} implies that the right tensor $E(S^\bullet) \wedge X$ in $\Sp(\mD)$ is the spectrum associated to the prespectrum $((-)\wedge X)_! (E(S^\bullet)).$
The canonical map of prespectra \begin{equation}\label{ijjmn}
((-)\wedge X)_! (E(S^\bullet)) \to E(S^\bullet \wedge X) \end{equation} 
in $\mD$ induces a map of spectra 
\begin{equation}\label{ijjm}
E(S^\bullet) \wedge X \to E(S^\bullet \wedge X).\end{equation} 
The image under $\Omega^\infty$ of \ref{ijjm} is the functor \ref{eqzz}. Since $E$ is excisive, for any $\ell \geq 0 $ the map $$ E(S^\bullet) \wedge \Sigma^\ell(X) \to E(S^\bullet \wedge \Sigma^\ell(X)) \simeq \Sigma^\ell(\Omega^\ell(E(S^\bullet \wedge \Sigma^\ell(X))))$$
of spectra identifies with the following map of spectra
$$ \Sigma^\ell(E(S^\bullet) \wedge X) \to \Sigma^\ell(E(S^\bullet \wedge X)).$$
So the functor \ref{eqzz} is an equivalence for every $X \in \infty\Cat_*$ if and only if the map of spectra \ref{ijjm} is an equivalence for every $X \in \infty\Cat_*$.
So a reduced excisive oriented functor $E:\infty\Cat_* \to \mD$ is a 
homology theory if and only if for every $X \in \infty\Cat_*$ the map of spectra \ref{ijjm} is an equivalence.

For every $X \in \infty\Cat_* $ there is a canonical natural transformation $\rho: ((-) \wedge X) \circ \nu \to \nu^X $
of reduced antioriented functors $ {\Fun\wedge}(\infty\Cat_*,\mD) \to \Pre\Sp(\mD)$ whose component at any
$E \in {\Fun\wedge}(\infty\Cat_*,\mD)$ is the morphism \ref{ijjmn}.

Let $\iota: {\Fil\Exc\wedge}(\infty\Cat_*,\mD) \subset {\Fun\wedge}(\infty\Cat_*,\mD)$ be the canonical embedding.
The natural transformation $\rho$ of reduced antioriented functors $ {\Fun\wedge}(\infty\Cat_*,\mD) \to \Pre\Sp(\mD)$ gives rise to a natural transformation $$L \circ \rho \circ \iota:  ((-) \wedge X) \circ \nu  \to \nu^X $$
of functors $ {\Fil\Exc\wedge}(\infty\Cat_*,\mD) \to \Sp(\mD)$
whose component at any $E \in {\Fil\Exc\wedge}(\infty\Cat_*,\mD) $ is the morphism \ref{ijjm}. 
In the first part of the proof we showed that $\mH(\mD) \subset {\Fil\Exc\wedge}(\infty\Cat_*,\mD)$ is the full subcategory spanned by those $E$ such that $\rho_E$ is an equivalence.
Source and target of $\rho$ preserve small colimits and left tensors.
By \cref{locpre} source and target of $L \circ \rho \circ \iota$ preserve local equivalences and therefore also preserve small colimits and left tensors. 
Consequently, the full subcategory $\mH(\mD) \subset {\Fil\Exc\wedge}(\infty\Cat_*,\mD)$
is closed under small colimits and left tensors.

\end{proof}

\begin{theorem}\label{brown}
Let $\mD$ be a presentable differentiable bioriented category.
The antioriented functor $$\nu: {\Fil\Exc\wedge}(\infty\Cat_*,\mD) \to \Sp(\mD)$$
refines to a bioriented functor that admits a fully faithful bioriented left adjoint sending $\E \in \Sp(\mD)$ to $ \Omega^\infty(\E \wedge (-)).$
The left adjoint induces an equivalence of bioriented categories $$\Sp(\mD) \simeq \mH(\mD). $$


\end{theorem}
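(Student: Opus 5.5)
The plan is to construct the left adjoint explicitly and then check the unit and counit on the appropriate subcategories.

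\textbf{Step 1: the left adjoint is well defined.} For $\E \in \Sp(\mD)$, put $\Phi(\E):=\Omega^\infty(\E\wedge(-))$. By \cref{homol}, $\Phi(\E)$ lies in $\mH(\mD)\subset{\Fil\Exc\wedge}(\infty\Cat_*,\mD)$. The construction $\E\mapsto \E\wedge(-)$ uses the right tensor of $\Sp(\mD)$ over $\infty\Cat_*$. Since $\Sp(\mD)$ is a presentable bioriented category (\cref{nnm}), $\Phi$ refines to a bioriented functor. The left tensors on the functor category are formed objectwise by the description in \cref{tencot}. The bioriented refinement of $\nu$ then comes from exhibiting it as the right adjoint of $\Phi$.

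\textbf{Step 2: the adjunction.} I would produce the counit $\Phi\nu(F)\to F$ as the morphism (\ref{eqzz}). In spectrum form this is (\ref{ijjm}), $F(S^\bullet)\wedge X\to F(S^\bullet\wedge X)$, followed by $\Omega^\infty$. The unit $\E\to\nu\Phi(\E)=\Omega^\infty(\E\wedge S^\bullet)$ is the equivalence $\Omega^\infty(\E\wedge S^\n)\simeq\E_\n$ already used in \cref{homol}. The triangle identities hold on generators. By \cref{spgen}, $\Sp(\mD)$ is generated under colimits and left tensors by the $(Y\wedge(-))_!\Omega^\n(S^0)$. Both $\Phi$ and $\nu$ preserve colimits and left tensors: for $\nu$ this is \cref{locpre2} together with the fact that filtered colimits and left tensors are computed pointwise. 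So the identities reduce to a check on these generators. Alternatively, I would deduce the adjunction from \cref{lifti}(1) applied to $\Sp(\mD)$ with \cref{laxoplax}, identifying maps $\Phi(\E)\to F$ with maps $\E\to\nu F$.

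\textbf{Step 3: fully faithfulness and essential image.} Since the unit is an equivalence, $\Phi$ is fully faithful. Its essential image lies in $\mH(\mD)$ by Step 1. Conversely, if $F\in\mH(\mD)$, the counit at $F$ is exactly (\ref{eqzz}), which is an equivalence by the spherical axiom. Hence $F\simeq\Phi\nu F$, and $\Phi$ restricts to an equivalence $\Sp(\mD)\simeq\mH(\mD)$. \cref{rioko}, which says $\mH(\mD)$ is closed under colimits and left tensors, ensures that the restricted bioriented structure is compatible.

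\textbf{Main obstacle.} I expect the hardest part to be Step 2: building the counit coherently as an enriched natural transformation and verifying a triangle identity. This requires comparing the right tensor $\E\wedge X$ in $\Sp(\mD)$, which is the spectrification of the degreewise tensor by \cref{spectri}, with the colimit formula (\ref{eqzz}). That comparison uses the cofinality of even degrees and the twist equivalence $S^\ell\wedge X\simeq X\wedge S^\ell$ for even $\ell$. I would first try to do everything at the level of prespectra, using the transformation $\rho$ from the proof of \cref{rioko}, and then spectrify.
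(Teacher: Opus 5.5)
There is a genuine gap in Step 2. Your unit $\E\to\nu\Phi(\E)$ is an equivalence. So the adjunction $\Phi\dashv\nu$ on ${\Fil\Exc\wedge}(\infty\Cat_*,\mD)$ amounts to this: for every $\E\in\Sp(\mD)$ and every reduced excisive filtered-colimit-preserving $H$, not necessarily spherical, the map $\L\Mor(\Phi(\E),H)\to\L\Mor(\nu\Phi(\E),\nu(H))\simeq\L\Mor(\E,\nu(H))$ must be an equivalence. Nothing in your proposal proves this.

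Your reduction of the triangle identities to the generators of \cref{spgen} only reaches $\epsilon\Phi\circ\Phi\eta\simeq\id_\Phi$, whose variable ranges over $\Sp(\mD)$. The other identity, $\nu\epsilon\circ\eta\nu\simeq\id_\nu$, is a transformation of functors of $H$. The category ${\Fil\Exc\wedge}(\infty\Cat_*,\mD)$ is not generated by the image of $\Phi$: that image is $\mH(\mD)$, and the spherical axiom is not automatic because $\infty\Cat_*$ is not generated by spheres. Knowing that the counit (\ref{eqzz}) is an equivalence on $\mH(\mD)$ gives you, once $\epsilon$ is made natural, the equivalence $\Sp(\mD)\simeq\mH(\mD)$. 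It does not give a left adjoint of $\nu$ on the larger category, which is the first assertion of the theorem.

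The missing idea is to compute $\Phi$ on the generators.
\begin{itemize}
\item For $\E=(Y\wedge(-))_!(\Omega^n(S))$ one has $\L\Mor(\E,\nu(H))\simeq\R\Mor_\mD(Y,H(S^n))$.
\item By enriched Yoneda, this functor of $H$ on ${\Fil\wedge}(\infty\Cat_*,\mD)$ is corepresented by $(Y\wedge(-))\circ\Omega^n$.
\item So you must show that $(Y\wedge(-))\circ\Omega^n\to\Phi(\E)\simeq\Omega^n\circ\Omega^\infty\circ\Sigma^\infty\circ(Y\wedge(-))$ exhibits its target as the excisive approximation.
\end{itemize}
The paper gets this from three ingredients:
\begin{itemize}
\item the approximation formula \cref{appro}, which identifies the excisive approximation of $\Omega^n$ with $\Omega^n\Omega^\infty\Sigma^\infty$;
\item the fact that $(Y\wedge(-))_!$ preserves local equivalences, since its right adjoint preserves excisive functors;
\item \cref{excend}, which moves $(Y\wedge(-))_!$ past $\Omega^n$.
\end{itemize}
It also uses that $\Phi$ preserves colimits and left tensors. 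You assert this without proof; the paper deduces it from \cref{rioko} and the conservativity of $\nu$ on $\mH(\mD)$. With these, the adjunction follows from the unit alone, so the coherent counit you single out as the main obstacle is never needed.

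Your alternative via \cref{lifti} and \cref{laxoplax} does not supply the missing step either. \cref{lifti} concerns excisive bioriented functors into $\Sp(\mC)$, not maps out of $\Phi(\E)$ into merely oriented functors. Finally, the bioriented refinement is asserted rather than proved: presentability of $\Sp(\mD)$ does not make $\Phi$ bioriented. The paper needs a separate argument for this:
\begin{itemize}
\item closure of $\mH(\mD)$ under right tensors;
\item stability of $\mH(\mD)$;
\item lifting evaluation at $S^0$ through \cref{lifti} to a bioriented $\bar{\beta}$ whose underlying antioriented functor is $\nu$.
\end{itemize}
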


\begin{proof} 

By \cite[Proposition 5.7.]{heine2024higheralgebraweightedcolimits} evaluation at $S^0$ induces an equivalence of antioriented categories
$${\L\Fun\wedge}(\infty\Cat_*,\Sp(\mD)) \to \Sp(\mD),$$
whose inverse sends $\E$ to $\E \wedge (-).$
The bioriented functor $\Omega^\infty:\Sp(\mD) \to \mD$ induces an antioriented functor
$$ {\Fil\Fun\wedge}(\infty\Cat_*,\Sp(\mD)) \to {\Fil\Fun\wedge}(\infty\Cat_*,\mD).$$
We obtain an antioriented functor $$\Psi: \Sp(\mD) \simeq {\L\Fun\wedge}(\infty\Cat_*,\Sp(\mD)) \subset {\Fun\wedge}(\infty\Cat_*,\Sp(\mD))\to {\Fun\wedge}(\infty\Cat_*,\mD)$$
that sends $\E$ to $\Omega^\infty(\E \wedge(-))$ and so by \cref{homol} lands in $\mH(\mD) \subset {\Fil\Exc\wedge}(\infty\Cat_*,\mD).$

We prove next that $\Psi: \Sp(\mD) \to {\Fil\Exc\wedge}(\infty\Cat_*,\mD)$
preserves small colimits and left tensors. By \cref{rioko} the embedding $\mH(\mD) \subset {\Fil\Exc\wedge}(\infty\Cat_*,\mD)$ preserves small colimits and left tensors.
So it is enough to see that $\Psi: \Sp(\mD) \to \mH(\mD)$ preserves small colimits and left tensors.
The antioriented functor $\nu: {\Fil\Fun\wedge}(\infty\Cat_*,\mD) \to \Pre\Sp(\mD)$ preserves small colimits and left tensors, preserves local objects and by \cref{locpre} also preserves local equivalences.
Thus the restriction $$\nu: {\Fil\Exc\wedge}(\infty\Cat_*,\mD) \to \Sp(\mD) $$
preserves small colimits and left tensors.
The full subcategory $\mH(\mD) \subset {\Fil\Exc\wedge}(\infty\Cat_*,\mD)$ is closed under small colimits and left tensors by \cref{rioko}. So the restriction $\nu: \mH(\mD) \to \Sp(\mD)$ is an antioriented functor that preserves small colimits and left tensors between antioriented categories that admit small colimits and left tensors.
By the definition of homology theory $\nu: \mH(\mD) \to \Sp(\mD)$ is conservative and so also detects small colimits and left tensors. Thus $\Psi: \Sp(\mD) \to \mH(\mD)$ preserves small colimits
and left tensors if and only if the composition $\nu \circ \Psi$ does.
There is a canonical equivalence of antioriented functors $\Sp(\mD) \to \Sp(\mD)$: $$ \nu \circ \Psi \simeq \Omega^\infty((-)\wedge S^\bullet) \simeq (-) \simeq S^0 \simeq \id. $$
Hence $\Psi: \Sp(\mD) \to \mH(\mD)$ and so also $\Psi: \Sp(\mD) \to \mH(\mD) \subset  {\Fil\Exc\wedge}(\infty\Cat_*,\mD) $ preserve small colimits and left tensors.

We prove next that the equivalence $\lambda: \id \simeq \nu \circ \Psi$ exhibits $\Psi$ as left adjoint to $\nu.$
Since $\lambda$ is an equivalence, this will imply that $\Phi$ is fully faithful.
So we verify that for every $\E \in \Sp(\mD)$ and $H \in  {\Fil\Exc\wedge}(\infty\Cat_*,\mD)$ the induced functor
$$\theta: \L\Mor_{{\Fil\Exc\wedge}(\infty\Cat_*,\mD)}(\Psi(\E), H) \to \L\Mor_{\Sp(\mD)} (\E,\nu(H)) $$ is an equivalence.
By \cref{spgen} the antioriented category $\Sp(\mD) $ is generated under small colimits by the images of $\Omega^\n(S)$ for even $\n \geq 0$ under the antioriented functors $(Y \wedge (-))_!: \Cat\Sp \to \Sp(\mD)$
induced by the left adjoint antioriented functors $Y \wedge (-): \infty\Cat_* \to \mD$ for every $Y \in \mD$.
Since $\Psi: \Sp(\mD) \to {\Fil\Exc\wedge}(\infty\Cat_*,\mD) $ preserves small colimits, 
we can assume that $\E= (Y \wedge (-))_!(\Omega^\n(S))$ for $\n \geq 0$ even and $Y \in \mD.$
In this case the functor  $\theta$ factors as
$$ \L\Mor_{{\Fil\Exc\wedge}(\infty\Cat_*,\mD)}(\Psi(\E), H) \to $$$$  \L\Mor_{{\Fil\wedge}(\infty\Cat_*,\mD)}((Y \wedge (-)) \circ \Omega^\n, H) \simeq $$
$$\L\Mor_{{\Fil\wedge}(\infty\Cat_*,\infty\Cat_*)}(\Omega^\n, \R\Mor_\mD(Y,-) \circ H)\simeq $$$$
\L\Mor_{{\Fil\wedge}(\infty\Cat_*,\infty\Cat_*)}(\L\Mor_{\infty\Cat_*}(S^\n,-), \R\Mor_\mD(Y,-) \circ H)
\simeq$$$$ \R\Mor_\mD(Y,H(S^\n)) \simeq$$$$ \nu(\R\Mor_\mD(Y,-) \circ H)_\n \simeq $$$$
\Omega^\infty(\Sigma^\n(\nu(\R\Mor_\mD(Y,-) \circ H))) \simeq $$
$$\L\Mor_{\Cat\Sp}(S, \Sigma^\n(\nu(\R\Mor_\mD(Y,-)\circ H))) 
\simeq $$$$ \L\Mor_{\Cat\Sp}(\Omega^\n(S), \nu(\R\Mor_\mD(Y,-) \circ H)) \simeq $$$$ \simeq \L\Mor_{\Cat\Sp} (\Omega^\n(S),\R\Mor_\mD(Y,-) \circ \nu(H)) \simeq $$$$ \L\Mor_{\Sp(\mD)} (\E,\nu(H)).$$ 

Consequently, we need to see that the canonical morphism $$ (Y \wedge (-)) \circ \Omega^\n \to \Psi(\E)= \Omega^\infty(  (Y \wedge (-))_!(\Omega^\n(S)) \wedge(-) )  \simeq $$$$ \Omega^\infty( \Omega^\n( (Y \wedge (-))_!(S)) \wedge(-) )  \simeq $$$$  \Omega^\n(\Omega^\infty(\Sigma^\infty(Y) \wedge(-) ))  \simeq \Omega^\n \circ  \Omega^\infty \circ \Sigma^\infty \circ (Y \wedge(-) )  $$ exhibits the target as excisive approximation.
By \cref{appro} the canonical morphism $ \Omega^\n \to \Omega^\n \circ \Omega^\infty \circ \Sigma^\infty $ exhibits the target as excisive approximation.
The reduced antioriented functor $$(Y \wedge (-))_!: {\Fil\wedge}(\infty\Cat_*,\infty\Cat_*) \to {\Fil\wedge}(\infty\Cat_*,\mD)$$ preserves local equivalences since its right adjoint
$$\R\Mor_\mD(Y,-)_!: {\Fil\wedge}(\infty\Cat_*,\mD) \to {\Fil\wedge}(\infty\Cat_*,\infty\Cat_*)$$ preserves local objects. Hence the canonical morphism \begin{equation}\label{lal}
 (Y \wedge (-)) \circ \Omega^\n \to (Y \wedge (-)) \circ  \Omega^\n \circ \Omega^\infty \circ \Sigma^\infty \end{equation} is a local equivalence. 
So we need to see that the canonical morphism
$$ (Y \wedge (-)) \circ  \Omega^\n \circ \Omega^\infty \circ \Sigma^\infty  \to \Omega^\n \circ  \Omega^\infty \circ \Sigma^\infty \circ (Y \wedge(-) )$$ 
exhibits the target as excisive approximation, or equivalently that
the induced morphism $$ (Y \wedge (-)) _! ( \Omega^\n \circ \Omega^\infty \circ \Sigma^\infty)  \to \Omega^\n \circ  \Omega^\infty \circ \Sigma^\infty \circ (Y \wedge(-) )$$  
is an equivalence.
The latter factors as $$ (Y \wedge (-)) _! ( \Omega^\n \circ \Omega^\infty \circ \Sigma^\infty)  \simeq \Omega^\n \circ (Y \wedge (-)) _! (\Omega^\infty \circ \Sigma^\infty)  \to \Omega^\n \circ  \Omega^\infty \circ \Sigma^\infty \circ (Y \wedge(-)), $$ 
where the first equivalence is by \cref{excend}.
So it suffices to show that the canonical morphism $$ (Y \wedge (-)) _! (\Omega^\infty \circ \Sigma^\infty)  \to \Omega^\infty \circ \Sigma^\infty \circ (Y \wedge(-) )$$ is an equivalence.
The latter is equivalent to saying that the canonical morphism $$ (Y \wedge (-)) \circ (\Omega^\infty \circ \Sigma^\infty)  \to \Omega^\infty \circ \Sigma^\infty \circ (Y \wedge(-) )$$ is a local equivalence.
Since the canonical morphism $(Y \wedge (-)) \to (Y \wedge (-)) \circ (\Omega^\infty \circ \Sigma^\infty) $ of (\ref{lal}) for $\n=0$ is a local equivalence, it is enough to see that the following composition is a local equivalence:
$$(Y \wedge (-)) \to (Y \wedge (-)) \circ (\Omega^\infty \circ \Sigma^\infty)  \to \Omega^\infty \circ \Sigma^\infty \circ (Y \wedge(-) ). $$
This follows from \cref{appro}. Thus we obtain an antioriented adjunction \begin{equation}\label{lokyz} 
\Psi : \Sp(\mD) \rightleftarrows {\Fil\Exc\wedge}(\infty\Cat_*,\mD) :\nu,\end{equation} where $\Psi$ is an embedding that lands in $\mH(\mD).$
Hence (\ref{lokyz}) restricts to an antioriented adjunction $$ \Psi : \Sp(\mD) \rightleftarrows \mH(\mD) :\nu,$$ where the left adjoint is an embedding and the right adjoint is conservative.
Consequently, $$\Psi: \Sp(\mD) \to \mH(\mD)$$ is an equivalence inverse to
$\nu: \mH(\mD) \to \Sp(\mD).$ In particular, the underlying antioriented functor of the embedding $$\mH(\mD) \subset \Fil{\Exc\wedge}(\infty\Cat_*,\mD)$$ of bioriented categories admits a right adjoint.

We complete the proof by enhancing the antioriented adjunction (\ref{lokyz}) to a bioriented adjunction.
The full subcategory $\mH(\mD)$ is closed in ${\Fun\wedge}(\infty\Cat_*,\mD)$ under right tensors: for every $Y \in \infty\Cat_*$ and spectrum $\E$
there is a canonical equivalence $$ \Omega^\infty(\E\wedge(-)) \wedge Y = \Omega^\infty(\E\wedge(-)) \circ (Y \wedge(-)) \simeq \Omega^\infty((\E\wedge Y) \wedge(-)).$$

By \cref{cloten} the full subcategories $$ \Fil{\Exc\wedge}(\infty\Cat_*,\mD), \ {\Exc\wedge}(\infty\Cat_*,\mD)$$
are closed in ${\Fun\wedge}(\infty\Cat_*,\mD) $ under right tensors. 
Consequently, the full subcategory $\mH(\mD)$ is closed under right tensors in $\Fil{\Exc\wedge}(\infty\Cat_*,\mD)$ and ${\Exc\wedge}(\infty\Cat_*,\mD)$. Since the underlying antioriented functor of the embedding $$\mH(\mD) \subset \Fil{\Exc\wedge}(\infty\Cat_*,\mD)$$ of bioriented categories admits a right adjoint, the first statement implies by \cref{adj}
that the embedding $\mH(\mD) \subset \Fil{\Exc\wedge}(\infty\Cat_*,\mD)$ of bioriented categories admits a right adjoint $$ R: \Fil{\Exc\wedge}(\infty\Cat_*,\mD) \to \mH(\mD).$$

By \cite[Proposition 5.108.]{heine2024bienrichedinftycategories} there is a left linear reduced bioriented functor ${\Fun\wedge}(\infty\Cat_*,\mD) \to \mD $ evaluating at $S^0.$
The restriction $$\beta: {\Exc\wedge}(\infty\Cat_*,\mD) \subset {\Fun\wedge}(\infty\Cat_*,\mD) \to \mD $$
is excisive because for every $\F \in {\Exc\wedge}(\infty\Cat_*,\mD)$ the canonical morphism $$ \beta(\F)=\F(S^0) \to  \Omega^2(\beta(\Sigma^2(\F))) \simeq \Omega^2(\beta(\F \circ \Sigma^2)) =  \Omega^2(\F(\Sigma^2(S^0)))$$ is an equivalence.
By \cref{lifti} the excisive bioriented functor $\beta: {\Exc\wedge}(\infty\Cat_*,\mD) \to \mD$ uniquely lifts to an excisive bioriented functor $\bar{\beta}: {\Exc\wedge}(\infty\Cat_*,\mD)\to \Sp(\mD).$

Since $\mH(\mD)$ is closed in ${\Exc\wedge}(\infty\Cat_*,\mD)$ under right tensors,
the restriction $$\gamma: \mH(\mD) \subset {\Fun\wedge}(\infty\Cat_*,\mD) \to \mD$$ is excisive
and so uniquely lifts to an excisive bioriented functor $\bar{\gamma}: \mH(\mD)\to \Sp(\mD),$
which by uniqueness is the restriction of $\bar{\beta}.$
We prove next that $\bar{\gamma}$ induces $\nu$ on underlying antioriented categories.

For that we prove that $\mH(\mD)$ is stable. We show first that $\mH(\mD)$ is a presentable bioriented category.
The equivalence of antioriented categories $\nu: \mH(\mD) \to \Sp(\mD)$ guarantees that the underlying antioriented category of $\mH(\mD)$ is presentable.
Since $\mH(\mD)$ is closed in ${\Fun\wedge}(\infty\Cat_*,\mD)$ under right tensors,
the bioriented category $\mH(\mD)$ has left and right tensors.
We prove next that the resulting biaction preserves small colimits componentwise.
This holds for the left action since the underlying antioriented category of $\mH(\mD)$ is presentable.
The right tensor $$\mH(\mD)\times \infty\Cat_* \to \mH(\mD)$$ identifies with the functor
\begin{equation}\label{kolp}
\Sp(\mD) \times \infty\Cat_* \simeq \mH(\mD)\times \infty\Cat_* \to \mH(\mD) \simeq \Sp(\mD), \end{equation}$$ (\E, Y) \mapsto  \nu(\Omega^\infty(\E \wedge (-)) \circ (Y \wedge (-))) \simeq \nu(\Omega^\infty(\E \wedge Y \wedge (-))) \simeq  \E \wedge Y  $$
and so preserves small colimits componentwise. This proves that $\mH(\mD)$ is a presentable bioriented category. To see that $\mH(\mD)$ is stable, it remains to see that the right tensor 
$\Sigma \simeq (-) \wedge S^1 :  \mH(\mD) \to \mH(\mD)$ is an equivalence.
By equivalence (\ref{kolp}) the latter identifies with the functor $$(-) \wedge S^1:  \mH(\mD) \simeq \Sp(\mD) \xrightarrow{ } \Sp(\mD)\simeq \mH(\mD)$$ and so is an equivalence since $\Sp(\mD)$ is stable by \cref{rstab}.

Since $\mH(\mD) $ is stable, the excisive bioriented functor $\gamma: \mH(\mD) \to \mD$ preserves endomorphisms and so 
induces a bioriented functor $$\mH(\mD) \simeq \Sp(\mH(\mD)) \to \Sp(\mD)$$ that preserves endomorphisms and so is excisive and lifts $\gamma$. By the uniqueness statement in \cref{lifti}, it identifies with the bioriented functor $\bar{\gamma}.$
The underlying antioriented functor of $\gamma$ evaluating at $S^0$ factors as antioriented functors
$$\mH(\mD) \xrightarrow{\nu}\Sp(\mD) \xrightarrow{\Omega^\infty} \mD.$$
Thus the underlying antioriented functor of $\bar{\gamma}$,
which  factors as $$\mH(\mD) \simeq \Sp(\mH(\mD)) \to \Sp(\mD),$$ 
factors as $$ \mH(\mD) \simeq \Sp(\mH(\mD)) \to \Sp(\Sp(\mD)) \simeq \Sp(\mD)$$ and so identifies with $\nu: \mH(\mD) \to \Sp(\mD).$
Hence $\bar{\gamma}$ induces the equivalence $\nu$ on underlying antioriented categories and 
so by \cref{adj}  is an equivalence of bioriented categories if and only if it preserves right tensors.
For that we need to verify that for every $Y \in \infty\Cat_*$ and $\F \in \mH(\mD) $ the canonical morphism $$\theta: \nu(\F) \wedge Y \to \nu(\F \wedge Y)$$ is an equivalence.
Since $\F \in \mH(\mD)$, there is an $\E \in \Sp(\mD)$ such that 
$\F \simeq \Omega^\infty(\E \wedge (-))$ and $\theta$ identifies with the canonical equivalence
$$\nu(\F) \wedge Y \simeq \E \wedge Y \simeq \nu(\Omega^\infty(\E \wedge Y \wedge (-))) \simeq \nu(\Omega^\infty(\E \wedge (-)) \circ (Y\wedge(-))) \simeq  \nu(\F \wedge Y).$$ 

\vspace{1mm}

We prove next that the bioriented functor $\bar{\beta}: \Fil{\Exc\wedge}(\infty\Cat_*,\mD) \to \Sp(\mD)$ has a left adjoint.
Let $$\zeta:=\bar{\gamma} \circ R: \Fil{\Exc\wedge}(\infty\Cat_*,\mD) \to \mH(\mD)\to \Sp(\mD)$$ be the composition of right adjoint bioriented functors.
By definition the underlying antioriented functor of $\zeta$ is right adjoint to $\Phi$
and so identifies with $\nu.$
By definition the bioriented functor $\zeta$ factors as $$ \Fil{\Exc\wedge}(\infty\Cat_*,\mD) \xrightarrow{R} \mH(\mD)\subset  \Fil{\Exc\wedge}(\infty\Cat_*,\mD) \xrightarrow{\bar{\beta}} \Sp(\mD).$$
So the counit $\epsilon: R \to \id$ of the bioriented adjunction $$\mH(\mD) \rightleftarrows \Fil{\Exc\wedge}(\infty\Cat_*,\mD):R$$ yields a map of bioriented functors
$\bar{\beta} \epsilon:  \zeta \to \bar{\beta}$. 
By the triangle identities the counit $\epsilon: R \to \id$ is inverted by $R$ and so inverted by $\nu$,
and so inverted by $\bar{\beta}$ that induces $\nu$ on underlying antioriented categories.
Hence the map $ \bar{\beta} \epsilon$ is an equivalence. So the bioriented functor $$\bar{\beta}: \Fil{\Exc\wedge}(\infty\Cat_*,\mD) \to \Sp(\mD)$$ admits a left adjoint.

\end{proof}

\begin{corollary}\label{lifta} Let $\mD$ be a presentable differentiable bioriented category.
Let $H: \infty\Cat_* \to \mD$ be a categorical homology theory.
There is a unique categorical homology theory  $\bar{H}: \infty\Cat_* \to \Sp(\mD)$ lifting $H.$

More precisely, the functor $\Omega^\infty: \Sp(\mD) \to \mD$ induces an equivalence $\mH(\Sp(\mD)) \to \mH(\mD).$
	
\end{corollary}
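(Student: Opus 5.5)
The plan is to apply \cref{brown} twice, once to $\mD$ and once to $\Sp(\mD)$, and compare the two resulting equivalences through $\Omega^\infty$.

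First check that $\Sp(\mD)$ satisfies the hypotheses of \cref{brown}. It is presentable by \cref{nnm}. It is reduced and admits endomorphisms by \cref{rstab}; in fact $\Omega$ is an equivalence there, inverse to the shift. Sequential colimits in $\Sp(\mD)$ commute with $\Omega$ because $\Omega$ is an equivalence. So $\Sp(\mD)$ is a presentable differentiable bioriented category, and \cref{brown} gives an equivalence $\Sp(\Sp(\mD)) \simeq \mH(\Sp(\mD))$, $\E \mapsto \Omega^\infty(\E\wedge(-))$. Since $\Sp(\mD)$ is quasi-stable, $\Omega^\infty: \Sp(\Sp(\mD)) \to \Sp(\mD)$ is an equivalence. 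This is the same limit-of-an-equivalence argument used in \cref{qqq} and \cref{stten}, where $\Sp(\Cat\Sp)\simeq\Cat\Sp$.

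Next check that postcomposition with $\Omega^\infty$ sends $\mH(\Sp(\mD))$ into $\mH(\mD)$:
\begin{itemize}
\item $\Omega^\infty$ preserves filtered colimits: levelwise it is evaluation at degree $0$ of the spectrification formula \cref{hiul}, and $\Omega$ commutes with sequential and filtered colimits by differentiability. I will take this as the presentable analogue of \cref{infsuso}.
\item $\Omega^\infty$ preserves endomorphisms and oriented pullbacks, being a right adjoint, so \cref{Homt} gives excisiveness.
\item $\Omega^\infty$ commutes with the colimit in (\ref{eqzz}), which gives sphericity.
\end{itemize}

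Then show the triangle commutes: for $\E \in \Sp(\Sp(\mD))$ with image $\E_0 := \Omega^\infty\E \in \Sp(\mD)$ there is an equivalence
$$\Omega^\infty(\E\wedge X)\simeq \E_0\wedge X, \qquad X \in \infty\Cat_*,$$
that is, $\Omega^\infty:\Sp(\Sp(\mD))\to\Sp(\mD)$ preserves right tensors. This holds because it is an equivalence of bioriented categories, by \cref{sptens} applied to the presentable $\Sp(\mD)$. Composing with $\Omega^\infty_{\mD}$ gives
$$\Omega^\infty(\Omega^\infty(\E\wedge X))\simeq \Omega^\infty(\E_0\wedge X).$$

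The map $\mH(\Sp(\mD))\to\mH(\mD)$ is therefore identified with the composite
$$\mH(\Sp(\mD))\simeq\Sp(\Sp(\mD))\xrightarrow{\ \Omega^\infty\ }\Sp(\mD)\simeq\mH(\mD),$$
a composite of equivalences. Existence and uniqueness of the lift $\bar H$ follow immediately.

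I expect the main obstacle to be the compatibility in the third step. The equivalence $\Sp(\Sp(\mD))\to\Sp(\mD)$ has to be \emph{bioriented}, not just antioriented, so that right tensors by $X$ are respected, and the identification has to be natural. If \cref{sptens} does not settle this directly, I would fall back on comparing coefficient spectra via $\nu$. For $\bar H\in\mH(\Sp(\mD))$ one has $\nu(\Omega^\infty\circ\bar H)=\Omega^\infty_*\nu(\bar H)$, because $\nu$ is evaluation on spheres. Then \cref{brown} for both categories, together with conservativity of $\nu$ on homology theories, reduces everything to $\Omega^\infty_*$ being an equivalence $\Sp(\Sp(\mD))\to\Sp(\mD)$.
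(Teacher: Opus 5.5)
Your proposal is correct and follows the paper's argument: apply \cref{brown} to both $\mD$ and $\Sp(\mD)$, so that postcomposition with $\Omega^\infty$ becomes $\Omega^\infty\colon\Sp(\Sp(\mD))\to\Sp(\mD)$, which is an equivalence because $\Sp(\mD)$ is quasi-stable; you just spell out the compatibility of the triangle that the paper leaves implicit. Your bulleted checks are unnecessary, and the sphericity bullet is not justified as written, since $\Omega^\infty$ does not commute with right tensors; in any case the triangle identification $\Omega^\infty\circ\Omega^\infty(\E\wedge -)\simeq\Omega^\infty(\E_0\wedge -)$ already places the image in $\mH(\mD)$ by \cref{homol}.
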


\begin{proof}
By \cref{brown} the functor $\mH(\Sp(\mD)) \to \mH(\mD)$
identifies with the functor $\Sp(\Sp(\mD)) \to \Sp(\mD),$
which is an equivalence since $\Sp(\mD)$ is stable by \cref{stab}.
	
\end{proof}

We also obtain the following relationship between reduced and unreduced categorical homology:

\begin{corollary} Let $\mD$ be a presentable differentiable bioriented category. 
Let $\nu:  \infty\Cat_* \to \infty\Cat $ be the forgetful oriented functor.
Sending a categorical homology theory $ \widetilde{H}: \infty\Cat_* \to \Sp(\mD) $ to $ \widetilde{H} \circ (-)_+: \infty\Cat \to \Sp(\mD) $
gives an equivalence between the following:

\begin{itemize}
\item Categorical homology theories $\infty\Cat_* \to \Sp(\mD)$.

\item Oriented functors $H: \infty\Cat \to \Sp(\mD)$
that preserve finite coproducts and such that the cofiber of the map $H(0) \to H \circ \nu $
in the category of oriented functors $ \infty\Cat_* \to \Sp(\mD)$, which is a reduced oriented functor, is a categorical homology theory. 
\end{itemize}

The inverse sends $H: \infty\Cat \to \Sp(\mD) $ to the cofiber of the map $H(0) \to H \circ \nu $.




\end{corollary}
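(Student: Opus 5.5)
The plan is to construct the two assignments and check that they are mutually inverse. For the forward direction, let $\widetilde{H}$ be a categorical homology theory and set $H:=\widetilde{H}\circ(-)_+$. Then $H$ preserves finite coproducts, because $(-)_+:\infty\Cat\to\infty\Cat_*$ is a left adjoint and so sends coproducts to coproducts (wedges), and because $\widetilde{H}$, being an excisive reduced oriented functor into the stable category $\Sp(\mD)$, preserves finite coproducts. For the latter point I would use \cref{brown} and \cref{lifta}: they identify $\widetilde{H}$ with $\E\wedge(-)$ for some $\E\in\Sp(\mD)$, since $\Omega^\infty$ on $\Sp(\Sp(\mD))\simeq\Sp(\mD)$ is the identity. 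Hence $\widetilde{H}$ is a left adjoint and preserves all small colimits.

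Next I identify the cofiber. For $X\in\infty\Cat_*$ with base point $*\to X$, the cofiber sequence $S^0\to X_+\to X$ in $\infty\Cat_*$ splits, because $X_+\simeq X\vee S^0$ via the base point. Applying $\widetilde{H}$ gives $H(\nu X)\simeq\widetilde{H}(X)\vee\widetilde{H}(S^0)$, naturally in $X$ as oriented functors. Here I read $H(0)$ as $H(\emptyset)\simeq\widetilde{H}(S^0)$, or more precisely as the constant value $H(*)=\widetilde{H}(S^0)$, mapping in through $*\to\nu X$. The cofiber of $H(*)\to H\circ\nu$ is therefore $\widetilde{H}$, which is a homology theory. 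This also shows that the inverse recovers $\widetilde{H}$.

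For the converse, start from $H$ as in the second item and let $\widetilde{H}$ be the cofiber of $H(*)\to H\circ\nu$. I must show $H\simeq\widetilde{H}\circ(-)_+$. For $C\in\infty\Cat$ we have $\nu(C_+)=C\sqcup *$, so $H(\nu(C_+))\simeq H(C)\oplus H(*)$ because $H$ preserves finite coproducts, and cofibers in $\Sp(\mD)$ split the summand. This gives $\widetilde{H}(C_+)\simeq H(C)$ naturally in $C$. The main obstacle is making both identifications natural as oriented (enriched) functors and not only objectwise. I would handle this by working in the stable presentable category of oriented functors into $\Sp(\mD)$, where cofibers are computed pointwise (\cref{colimenrfun}), and by writing the splittings as natural maps: the base point gives the retraction $\nu(X)\to *$, which makes $H(*)$ a natural retract of $H\circ\nu$. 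The cofiber of a split inclusion is the complementary summand, so the comparison maps are equivalences of oriented functors.
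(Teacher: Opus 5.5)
Your converse direction is correct and is the paper's argument: coproduct preservation gives $H\circ\nu\circ(-)_+\simeq H\sqcup H(*)$, and the cofiber of a coproduct inclusion is the other summand in any pointed category with pushouts. Using \cref{brown} and \cref{lifta} to see that $\widetilde{H}\simeq E\wedge(-)$ is a left adjoint is also what the paper does.

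The gap is in how you identify the cofiber in the forward direction. The splitting $X_+\simeq X\vee S^0$ is false in $\infty\Cat_*$, and already for pointed spaces. For $X=\bD^1$ pointed at $0$, the component of the base point is $\{+\}$ in $\bD^1_+$ but $\bD^1$ in $\bD^1\vee S^0$. You also cannot split after applying $\widetilde{H}$. Your closing principle, that the cofiber of a map admitting a retraction is the complementary summand, needs additivity. But $\Sp(\mD)$ is stable only in the oriented sense, and its underlying category is merely preadditive (\cref{preadd}). For instance, $\Map_{\Cat\Sp}(\Sigma^\infty S^0,H(\bN))\simeq\bN$ is not a group.

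Concretely, take the homology theory $\widetilde{H}=\Sigma^\infty:\infty\Cat_*\to\Cat\Sp$ and $X=\bD^1$. Then $\Omega^\infty\Sigma^\infty(\bD^1_+)$ is the free symmetric monoidal $\infty$-category on an arrow $x\to y$. Each of its components has only finitely many equivalence classes of objects. On the other side, $\Omega^\infty(\Sigma^\infty\bD^1\oplus\Sigma^\infty S^0)\simeq\Omega^\infty\Sigma^\infty\bD^1\times\Omega^\infty\Sigma^\infty S^0$. Here $\Omega^\infty\Sigma^\infty\bD^1$ is free on a morphism $\tu\to y$, so each component contains the pairwise inequivalent objects $y^{\otimes m}$ for all $m\geq 0$. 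Hence $H\circ\nu\simeq\widetilde{H}\oplus H(*)$ fails in general.

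The repair needs no splitting. Since $\widetilde{H}$ preserves pushouts and the zero object, postcompose it with the natural cofiber sequence $0_+\to(-)_+\circ\nu\to\id$ of oriented endofunctors of $\infty\Cat_*$; objectwise this is $S^0\to X_+\to X$. Cofibers of oriented functors are computed pointwise, so you get a cofiber sequence $H(*)\to H\circ\nu\to\widetilde{H}$ directly and naturally. This is exactly the paper's argument, and it also resolves the naturality issue you flagged.

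In the statement, $0$ denotes the point, so $0_+=S^0$. Your first reading $H(\emptyset)$ would be wrong, since $H(\emptyset)\simeq\widetilde{H}(\emptyset_+)\simeq 0$. Your switch to $H(*)$ is the right one.
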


\begin{proof}

Let $ \widetilde{H}: \infty\Cat_* \to \Sp(\mD) $ be a categorical homology theory. By \cref{brown} the oriented functor $\widetilde{H}$ is an oriented left adjoint. Let $H:= \widetilde{H} \circ (-)_+ : \infty\Cat \to \Sp(\mD) $ which is an oriented left adjoint, too. 
The cofiber sequence
$ 0_+ \to (-)_+ \circ \nu \to \id $ in the category of oriented functors $\infty\Cat_* \to \infty\Cat_*$ is sent by $\widetilde{H}$ to a cofiber sequence $H(0) \to H \circ \nu \to \widetilde{H} $
in the category of oriented functors $ \infty\Cat_* \to \Sp(\mD).$ 

Conversely, let $H : \infty\Cat \to \Sp(\mD) $ be an oriented functor that preserves finite coproducts. Let $ \widetilde{H}: \infty\Cat_* \to \Sp(\mD) $ be the cofiber of the map $H(0) \to H \circ \nu $
in the category of oriented functors $ \infty\Cat_* \to \Sp(\mD).$ 
The cofiber sequence
$$ H(0) \to H \circ \nu \circ (-)_+ \simeq H \coprod H(0) \to H $$ in the category of oriented functors $ \infty\Cat_* \to \Sp(\mD)$ 
identifies $H $ with $\widetilde{H} \circ (-)_+.$

\end{proof}






\section{Oriented exact sequences}

In the following we show structural consequences of the categorical Brown representability theorem. We introduce oriented exact sequences, a higher-categorical analogue of long exact sequences.

We prove that oriented cofiber sequences
give rise to oriented exact sequences in any spectral oriented category that admits oriented cofibers and oriented fibers (\cref{oplaxexacto}).
We prove that oriented fibers detect equivalences in any spectral oriented category (\cref{Fibb}). This makes oriented exact sequences a useful tool to inductively detect equivalences (\cref{homolog}).

We use the categorical Brown representability theorem to prove that oriented cofiber sequences give rise to oriented exact sequences on categorical homology theories (\cref{homologic}).


\begin{definition}\label{oplaxexact} Let $\mC$ be a reduced oriented category.
A sequence
$$... \xrightarrow{\gamma_{\n-1}} A_\n \xrightarrow{\alpha_\n} B_\n \xrightarrow{\beta_\n} C_\n \xrightarrow{\gamma_\n} A_{\n+1} \xrightarrow{\alpha_{\n+1}} B_{\n+1}  \xrightarrow{\beta_{\n+1}}  ... $$
in $\mC$ is oriented exact if 
\begin{itemize}
\item for every even $n \in \bZ$ the morphism $\beta_\n $ is the oriented left cofiber of $\alpha_\n$ and fiber of $\gamma_\n$,
\item for every even $n \in \bZ$ the morphism $ \gamma_\n$ is the cofiber of $ \beta_\n$ and oriented left fiber of $ \alpha_{\n+1}$,
\item for every odd $n \in \bZ$ the morphism $\beta_\n $ is the oriented right cofiber of $\alpha_\n$ and fiber of $\gamma_\n$,
\item for every odd $n \in \bZ$ the morphism $ \gamma_\n$ is the cofiber of $ \beta_\n$ and oriented right fiber of $ \alpha_{\n+1}.$
\end{itemize}

Dually, we define antioriented exact sequences in reduced antioriented categories.

\end{definition}

\begin{corollary}\label{oplaxexacto} Let $\mC$ be a reduced spectral oriented category that admits oriented cofibers and oriented fibers.
	
\begin{enumerate}
\item There is an oriented exact sequence in $\mC:$
$$... \xrightarrow{\gamma_{\n-1}} A_\n \xrightarrow{\alpha_\n} B_\n \xrightarrow{\beta_\n} C_\n \xrightarrow{\gamma_\n} A_{\n+1} \xrightarrow{\alpha_{\n+1}} B_{\n+1}  \xrightarrow{\beta_{\n+1}}  ...$$

\item Let $\mD$ be a reduced spectral oriented category that admits oriented cofibers and oriented fibers.
The oriented exact sequence of (1) is preserved by every spectral oriented functor $\phi :\mC \to \mD$.
\end{enumerate}	
\end{corollary}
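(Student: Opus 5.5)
The statement is slightly under-specified: the sequence in (1) should be generated by a morphism $\alpha_0: A_0 \to B_0$ of $\mC$. The plan is to build the sequence inductively from $\alpha_0$ by alternately taking oriented left and right cofibers, and then to check the fiber conditions using the spectral structure.

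\textbf{Construction.} For even $\n$, set $\beta_\n: B_\n \to C_\n := 0 \overset{\to}{+}_{A_\n} B_\n$, the oriented left cofiber of $\alpha_\n$. Then let $\gamma_\n: C_\n \to A_{\n+1}$ be the ordinary cofiber of $\beta_\n$. By the pasting law (\cref{pasting}, in its dual pushout form), the composite square identifies $A_{\n+1}$ with $0 \overset{\to}{+}_{A_\n} 0 \simeq \Sigma(A_\n)$. For odd $\n$, use oriented right cofibers instead, and continue in the same way. Negative indices are produced dually by oriented fibers, or equivalently by $\Omega$, since $\Sigma$ is an equivalence.

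\textbf{Verification.} By \cref{spectral0} and \cref{Stab2}, $\mC$ is stable, so $\Sigma$ and $\Omega$ are inverse equivalences. Ordinary pushout squares involving a zero object are then also pullbacks: this is \cref{ttut}, applied to the span with one leg $0$. Hence $\gamma_\n$ is both the cofiber of $\beta_\n$ and $\beta_\n$ is the fiber of $\gamma_\n$.

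The main point is that $\gamma_\n$ is the oriented left fiber of $\alpha_{\n+1}$. Here $\alpha_{\n+1}$ is built from $\Sigma(\alpha_\n)$ up to the parity twist. I would derive the claim from \cref{laxfib}, which says $\xi: 0\overset{\to}{+} \to (0\overset{\to}{\times})\circ\Sigma$ is an equivalence. Applied to $\alpha_\n$, this identifies $C_\n$ with the oriented left fiber of $\Sigma(\alpha_\n)$. For the odd case, with right versions, use \cref{compacor} together with the dual of \cref{laxfib} obtained by passing to ${^\cop\mC^\cop}$.

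\textbf{Main obstacle.} The hardest step is bookkeeping the orientation flip. In \cref{reduc}, $\Sigma$ is a bioriented functor $\mC^\cop \to \mC$, so suspension exchanges the left and right versions. This is why the parity alternates. I must check that the map produced by $\xi$ agrees with $\gamma_\n$ as constructed through the pasting law. I would verify this by unwinding \cref{compa}: there $\xi$ is built from the same span square \ref{sqlo}, restricted along $Y \leftarrow X \mapsto Y \leftarrow X \to 0$.

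\textbf{Part (2).} This follows from \cref{preserv}(1): spectral oriented functors preserve oriented pushouts and pullbacks, hence all the (oriented) fibers and cofibers defining exactness. They also preserve ordinary fibers and cofibers with zero, which are the special case of oriented ones along a zero object.
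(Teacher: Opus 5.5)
Your outline follows the paper's proof. You take the oriented left cofiber $C_n$ of $\alpha_n$, identify the ordinary cofiber of $\beta_n$ with $\Sigma(A_n)$ by pasting, and use \cref{laxfib} to get $C_n\simeq 0\overset{\to}{\times}_{\Sigma(B_n)}\Sigma(A_n)$. You then flip orientations in odd degrees, which the paper does in one stroke by applying $\Sigma:\mC^\cop\to\mC$ of \cref{reduc} to the whole diagram rather than redoing it with \cref{compacor}. Finally you invoke \cref{preserv} for (2).

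The gap is your argument that $\beta_n$ is the \emph{ordinary} fiber of $\gamma_n$. The underlying category of a spectral oriented category is only preadditive, not stable, so ordinary pushout squares with zero corners are in general not pullbacks. In $\Cat\Sp$, $\Omega^\infty(0\times_E 0)\simeq *\times_{\Omega^\infty E}*$ is the $\infty$-groupoid of automorphisms of the base point, hence a loop space. By contrast, $\Omega^\infty(\Sigma^\infty S^0)\simeq\coprod_k B\Sigma_k$ is not a loop space, since its components are not all equivalent. So the pushout square defining $0+_{\Sigma^\infty S^0}0$ is never a pullback. \cref{ttut} does not help either: for a span with a zero leg it reduces to a tautology or to preadditivity.

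The fix is to invoke \cref{laxfib} first and then paste vertically. The cofiber square ($B_n\to 0$ over $C_n\to\Sigma(A_n)$) sits on top of the oriented pullback square ($C_n\to\Sigma(A_n)$ over $0\Rightarrow\Sigma(B_n)$). The outer square is $0\overset{\to}{\times}_{\Sigma(B_n)}0\simeq\Omega\Sigma(B_n)\simeq B_n$, an oriented pullback by stability. So \cref{pasting} shows the cofiber square is a pullback.

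Part (2) has the same flaw. Ordinary fibers are not special cases of oriented ones: an oriented pullback over $0$ is a product, and $C\overset{\to}{\times}_D 0$ is not $C\times_D 0$. \cref{preserv} only concerns oriented (co)limits. The correct argument is that $\phi$ preserves the oriented pushout, the oriented pullback, and the two outer oriented squares ($0\overset{\to}{+}_{A_n}0$ and $0\overset{\to}{\times}_{\Sigma(B_n)}0$) by \cref{preserv}. The pasting law in $\mD$ and its dual then show that the image of the middle square is again a pushout and a pullback; this is how the paper argues.

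A minor point: \cref{spectral0} goes the wrong way (quasi-stable implies spectral). Stability of $\mC$ comes from \cref{specstab}, since oriented fibers provide endomorphisms.
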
	

\begin{proof}
	
Let $\phi: A \to B$ be a morphism in $\mC.$ By \cref{laxfib} there is the following diagram in $\mC:$
\[
\begin{tikzcd}
A \ar{d} \ar{r} & B \ar{r}{} \ar{d}[swap]{} & 0  \ar{d}{} \\
0 \ar[double]{ur}{} \ar{r} & 0 \overset{\to}{+}_A B \simeq {0 \overset{\to}{\times}_{\Sigma(B)} \Sigma(A)} \ar[d]  \ar{r}[swap]{} & {0 \overset{\to}{+}_A 0} \simeq \Sigma(A) \ar[d] \\
& 0 \ar[double]{ur}{} \ar{r}[swap]{} & \Sigma(B),
\end{tikzcd}
\]
The upper-left square is an oriented pushout square and lower-right square is an oriented pullback square. By \cref{specstab} (2) the underlying reduced oriented category of $\mC$ is stable.
So by the pasting law (\cref{pasting}) and its dual version the upper-right square is a pushout and pullback square.  By \cref{preserv} and the pasting law the oriented pushout, pushout, pullback and oriented pullback in the latter diagram are preserved by any spectral oriented functor.
The reduced oriented functor $\Sigma: \mC^\cop \to \mC$ of \cref{reduc} sends the latter diagram to a diagram 
\[
\begin{tikzcd}
\Sigma(A) \ar{d} \ar{r} & \Sigma(B)  \ar[double]{ld}{}  \ar{r}{} \ar{d}[swap]{} & 0  \ar{d}{} \\
0 \ar{r} &  \Sigma(B) \overset{\to}{+}_{\Sigma(A)} 0 \simeq {\Sigma^2(A) \overset{\to}{\times}_{\Sigma^2(B)} 0} \ar[d]  \ar{r}[swap]{} & \Sigma^2(A)  \ar[double]{ld}{} \ar[d] \\
& 0 \ar{r}[swap]{} & \Sigma^2(B),
\end{tikzcd}
\]
in which the upper-left square is an oriented pushout square and lower-right square is an oriented pullback square and the upper-right square is a pushout and pullback square.
\end{proof}

	



\cref{brown} implies the following corollary:

\begin{corollary}\label{homologic} Let $\mD$ be a presentable differentiable bioriented category and $H: \infty\Cat_* \to \mD$ a categorical homology theory.
For every oriented cofiber sequence $A \to B \to C$ in $\infty\Cat_*$ there is an induced oriented exact sequence in $\Sp(\mD)$:
$$... \to \bar{H}(A)[-1] \to \bar{H}(B)[-1] \to \bar{H}(C)[-1] \to \bar{H}(A) \to \bar{H}(B) \to \bar{H}(C) \to \bar{H}(A)[1] \to .... $$
	
\end{corollary}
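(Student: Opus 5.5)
The plan is to reduce the statement to \cref{oplaxexacto} applied to the spectral oriented category $\Sp(\mD)$, together with the fact from \cref{brown} that the lift $\bar{H}$ is a spectral oriented functor. First, by \cref{lifta} the categorical homology theory $H$ lifts uniquely to a categorical homology theory $\bar{H}: \infty\Cat_* \to \Sp(\mD)$. By \cref{brown}, applied to $\Sp(\mD)$ (which is again presentable and differentiable), $\bar{H}$ is represented as $\bar{H}\simeq \Omega^\infty(\E \wedge (-))$ for some $\E \in \Sp(\Sp(\mD))\simeq \Sp(\mD)$. Since $\Sp(\mD)$ is stable (\cref{rstab}, \cref{sstab}), $\Omega^\infty$ is an equivalence there, so $\bar{H}\simeq \E\wedge(-)$ is a left adjoint reduced oriented functor.

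Next I would use that $\Sp(\mD)\simeq \mD\ot_{\infty\Cat_*}\Cat\Sp$ is a presentable spectral bioriented category (\cref{sptens} and the subsequent example), and that it admits oriented fibers and cofibers, being presentable and stable. The oriented functor $\E\wedge(-): \infty\Cat_* \to \Sp(\mD)$ factors through the stabilization $\Sigma^\infty: \infty\Cat_* \to \Cat\Sp$ followed by a left adjoint oriented functor $\Cat\Sp \to \Sp(\mD)$ which, by \cref{unistab}, is a spectral oriented functor. Since $\Sigma^\infty$ is a left adjoint oriented functor it preserves oriented pushouts (these are weighted colimits), so an oriented cofiber sequence $A \to B \to C$ in $\infty\Cat_*$ is sent to an oriented cofiber sequence $\bar{H}(A)\to\bar{H}(B)\to\bar{H}(C)$ in $\Sp(\mD)$.

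Finally, I would apply \cref{oplaxexacto}(1) in $\Sp(\mD)$ to the morphism $\bar{H}(A)\to\bar{H}(B)$. This extends it to an oriented exact sequence whose terms are $\bar{H}(A)[k]$, $\bar{H}(B)[k]$, $\bar{H}(C)[k]$, with $[k]$ denoting $\Sigma^k$. Here the third term is identified with $\bar{H}(C)$ because the oriented left cofiber is preserved as just explained, and the shifts come from $\Sigma \simeq (-)\wedge S^1$ commuting with $\bar{H}$, using excisiveness. The main obstacle I expect is bookkeeping of the alternating left/right orientations in \cref{oplaxexact}, since $\Sigma: \mC^\cop \to \mC$ flips orientation. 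I would handle it exactly as in the proof of \cref{oplaxexacto}, applying $\Sigma$ to the two-step diagram to pass from even to odd degrees and using \cref{preserv} to transport everything through $\bar{H}$.
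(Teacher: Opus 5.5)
Your proposal is correct and follows essentially the paper's argument. You identify $\bar{H}$ with $E \wedge (-)$ via \cref{brown}, observe that it is an oriented left adjoint and so preserves oriented cofiber sequences, and conclude with \cref{oplaxexacto}. The paper gets $\bar{H} \simeq E\wedge(-)$ by applying \cref{brown} to $\mD$ and invoking uniqueness of the lift, rather than re-applying it to $\Sp(\mD)$; your detour through $\Sigma^\infty$ and \cref{preserv} is harmless but unnecessary, since $E\wedge(-)$ is already an oriented left adjoint and the rest of the sequence is built inside $\Sp(\mD)$.
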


\begin{proof}

By \cref{brown} there is a unique $E \in \Sp(\mD)$ such that $H \simeq \Omega^\infty(E \wedge(-))$ as reduced oriented functors. Moreover by the uniqueness statement in \cref{brown} there is a canonical equivalence $\bar{H} \simeq E \wedge (-) $ of reduced oriented functors 
$\infty\Cat_* \to \Sp(\mD).$
Hence $\bar{H}$ is an oriented left adjoint and so preserves oriented cofiber sequences, which by \cref{oplaxexacto} give rise to oriented exact sequences.

\end{proof}

\begin{lemma}\label{redcofi} Let $\mC$ be an oriented category that admits oriented pushouts and a final object, and $X \to Y $ a morphism in $\mC$.
There is a canonical oriented pushout square in $\mC_*:$
\[
\begin{tikzcd}
X_+ \ar{r}{} \ar{d} & 0 \ar[double]{dl}{} \ar{d}{} \\
Y_+ \ar{r} & Y/X
\end{tikzcd}
\]

\end{lemma}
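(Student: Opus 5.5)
We first reduce to an oriented pushout of unpointed objects and then pass to based objects. Write $\ast$ for the final object of $\mC$. The forgetful functor $\mC_* \to \mC$ has left adjoint $(-)_+$, sending $Z$ to $Z \coprod \ast$. By the lemma preceding the notation for $\Sigma^\infty_+$, this adjunction is an oriented adjunction. Hence $(-)_+$ preserves all weighted colimits that exist, in particular oriented pushouts. We define
$$Y/X := Y_+ \overset{\to}{+}_{X_+} 0$$
in $\mC_*$. Here $0 = \ast_+ \coprod$-collapsed is the zero object $(\ast,\ast)$ of $\mC_*$, and the oriented pushout is taken in the oriented slice $\mC_{\ast/}$, using the enriched slice description of \cref{slices}. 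Our task is then to show that this oriented pushout exists and to identify it concretely.

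To do this we use the formula for oriented pushouts in slices. By \cref{bien}, morphism $\infty$-categories in $\mC_{\ast/}$ are fibers of those in $\mC$. A cocone in $\mC_*$ on the span $Y_+ \leftarrow X_+ \to 0$, with an oplax $2$-cell, consists of an object $T \in \mC_*$ together with the following data: a based map $Y_+ \to T$, which is the same as a map $Y \to T$ in $\mC$; and an element of $\Fun^\oplax(\bD^1, \R\Mor_\mC(X,T))$ connecting the composite $X \to Y \to T$ to the constant map $X \to \ast \to T$ at the base point. By adjunction, $X_+ \to T$ corresponds to $X \to T$, so this is exactly the data of a cocone in $\mC$ on the span $Y \leftarrow X \to \ast$, lying under $\ast$ via the base point. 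The universal property (\ref{indmapl}) in its pushout form then shows that the oriented pushout of $Y_+ \leftarrow X_+ \to 0$ in $\mC_*$ is the oriented pushout $Y \overset{\to}{+}_X \ast$ in $\mC$, equipped with the base point coming from $\ast$. This object exists because $\mC$ admits oriented pushouts. We record it as $Y/X$.

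It remains to arrange the square in the stated orientation and to check naturality. The $2$-cell in the square must point from the $0$-corner towards $Y_+$, matching the convention in the definition of oriented right cofiber. So we must take the right cofiber $Y_+ \overset{\to}{+}_{X_+} 0$ rather than the left one. If the orientation convention turns out to be reversed, we use $0 \overset{\to}{+}_{X_+} Y_+$ instead; the argument is symmetric. Naturality in $X \to Y$ holds because every step in the construction is functorial.

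The step we expect to be the main obstacle is the middle one: verifying that the oriented pushout computed in the enriched slice $\mC_*$ agrees with the unpointed oriented pushout $Y \overset{\to}{+}_X \ast$. This requires checking that the oplax functor $\infty$-category $\Fun^\oplax(\bD^1,-)$ commutes with the fiber over the base point. We would argue this by noting that $\Fun^\oplax(\bD^1,-)$ preserves limits, and fibers are limits.
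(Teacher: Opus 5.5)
Your argument is correct, but it takes a different route from the paper. The paper never computes hom-$\infty$-categories in $\mC_*$. It starts from the unpointed oriented pushout square of $\ast \leftarrow X \to Y$ in $\mC$ and applies the oriented left adjoint $(-)_+$, which preserves oriented pushouts. This gives an oriented pushout square in $\mC_*$ whose upper-right corner is $\ast_+$ instead of $0$. It then pastes this with the ordinary pushout square $\ast_+ \to 0$, $(Y/X)_+ \to Y/X$ (an instance of $Z_+ +_{\ast_+} 0 \simeq Z$), using the pasting law \cref{pasting}.

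You instead check the defining equivalence directly in the slice. You compute $\R\Mor_{\mC_*}(Y/X,T)$ by \cref{bien}, and $\R\Mor_{\mC_*}(Z_+,T)\simeq \R\Mor_\mC(Z,T)$ by the enriched adjunction. The paper's proof is shorter and purely formal, reusing preservation of weighted colimits and the pasting law. Yours is more self-contained: it makes the cocone explicit and shows in one step that the pointed oriented cofiber is the unpointed one, pointed by its $\ast$-leg. In particular you never use that $(-)_+$ preserves oriented pushouts, so that remark in your first paragraph can be dropped. Both arguments depend in the same way on $(-)_+$ existing as an oriented left adjoint, which the statement presupposes through $X_+$ and $Y_+$.

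Two small points. First, the step you flag as the main obstacle is not one. The fiber over the base point $t$ is taken in the last factor $\R\Mor_\mC(\ast,T)$ of the unpointed decomposition of $\R\Mor_\mC(Y/X,T)$, and it simply replaces that factor by the point $t\circ(X\to\ast)$. So the comparison is just pasting of iterated fiber products, and the (op)lax arrow-category factor is never touched. Second, the paper's convention is that $A \overset{\to}{+}_C B$ has its $2$-cell from the $A$-leg to the $B$-leg (cf.\ \cref{0desc}). A $2$-cell from the $0$-leg to the $Y_+$-leg therefore makes the displayed square the oriented left cofiber $0 \overset{\to}{+}_{X_+} Y_+$, with unpointed counterpart $\ast \overset{\to}{+}_X Y$, as in the paper's proof. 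Your hedge covers this, and the argument is indeed symmetric.
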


\begin{proof}

There is a canonical oriented pushout square
\[
\begin{tikzcd}
X \ar{r}{} \ar{d} & * \ar[double]{dl}{} \ar{d}{} \\
Y \ar{r} & Y/X
\end{tikzcd}
\]
in $\mC,$ which is sent by the left adjoint oriented functor $(-)_+: \mC \to \mC_*$ to an oriented pushout square
\[
\begin{tikzcd}
X_+ \ar{r}{} \ar{d} & 0_+ \ar[double]{dl}{} \ar{d}{} \\
Y_+ \ar{r} & (Y/X)_+
\end{tikzcd}
\]
in $\mC_*$.
For every $Z \in \mC_*$ the canonical commutative square
\[
\begin{tikzcd}
0_+ \ar{r}{} \ar{d} & 0 \ar{d}{} \\
Z_+ \ar{r} & Z
\end{tikzcd}
\]
in $\mC_*$
is a pushout square. 
Hence the canonical commutative square
\[
\begin{tikzcd}
0_+ \ar{r}{} \ar{d} & 0 \ar{d}{} \\
(Y/X)_+ \ar{r} & Y/X
\end{tikzcd}
\]
in $\mC_*$
is a pushout square. 
The pasting law (\cref{pasting}) implies the result.

\end{proof}

\cref{redcofi} and \cref{homologic} imply the following:

\begin{corollary}\label{orienex} Let $\mD$ be a presentable differentiable bioriented category, $E: \infty\Cat_* \to \mD$ a categorical homology theory and $X \to Y$ a functor.
There is an induced oriented exact sequence in $\Sp(\mD)$:
$$... \to  \bar{E}(X_+)[-1] \to  \bar{E}(Y_+)[-1] \to \bar{E}(Y/X)[-1] \to  \bar{E}(X_+) \to  \bar{E}(Y_+) \to \bar{E}(Y/X) \to  \bar{E}(X_+)[1] \to .... $$

\end{corollary}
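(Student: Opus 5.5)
The plan is to apply \cref{homologic} to the oriented cofiber sequence provided by \cref{redcofi}. First, $\infty\Cat$ admits oriented pushouts (it is presentable and has right tensors with $\bD^1$, so \cref{0desc} applies) and has the final object $*$. Hence \cref{redcofi}, applied to $\mC=\infty\Cat$ and the given functor $X \to Y$, yields an oriented pushout square in $\infty\Cat_*$
\[
\begin{tikzcd}
X_+ \ar{r}{} \ar{d} & 0 \ar[double]{dl}{} \ar{d}{} \\
Y_+ \ar{r} & Y/X
\end{tikzcd}
\]
that is, $Y/X$ is the oriented cofiber of $X_+ \to Y_+$. It remains to observe that this is an oriented cofiber sequence $X_+ \to Y_+ \to Y/X$ in the sense used in \cref{homologic}. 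Concretely, $Y/X$ must be the oriented left or right cofiber of $X_+\to Y_+$, matching the parity convention of \cref{oplaxexact}. The square above has the $2$-cell oriented in the direction of the left cofiber $0 \overset{\to}{+}_{X_+} Y_+$, so this identification should be direct.

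Next, I invoke \cref{homologic} for the categorical homology theory $E$ and the sequence $X_+ \to Y_+ \to Y/X$. This produces an oriented exact sequence in $\Sp(\mD)$ of the stated form, with $\bar{E}$ the unique lift of \cref{lifta}. The shifts $[\pm1]$ denote $\Sigma^{\pm1}$ in the stable bioriented category $\Sp(\mD)$, which is stable by \cref{rstab} and \cref{sstab}.

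The only potential obstacle is checking that the orientation and handedness of the square produced by \cref{redcofi} match the input expected by \cref{homologic}. If they did not, I would replace the sequence by its image under $(-)^{\co\op}$ and use \cref{ahoit}. I expect, however, that \cref{redcofi} was formulated precisely to produce the required left oriented cofiber, so the corollary follows by direct composition of the two results.
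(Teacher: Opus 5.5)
Your proposal is correct and is exactly the paper's argument: the corollary is obtained by applying \cref{homologic} to the oriented cofiber sequence $X_+ \to Y_+ \to Y/X$ furnished by \cref{redcofi} for $\mC=\infty\Cat$. Your orientation check is also right: the $2$-cell in \cref{redcofi} runs from the $0$-corner to $Y_+$, exactly as in the upper-left square of \cref{oplaxexacto}, so $Y/X \simeq 0 \overset{\to}{+}_{X_+} Y_+$ is the oriented left cofiber and the fallback via $(-)^{\co\op}$ is not needed.
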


We consider a basic computation.

    

 

    







\begin{example}\label{basiccomp}

Let $E: \infty\Cat_* \to \infty\Cat_*$ be categorical homology.
By \cref{homosph} we have $$E(\bD^{0}_+) = E(S^0) \simeq H(\bN).$$

By \cref{orienex} there is an induced oriented exact sequence of categorical spectra:
$$ ... \to H(\bN) \to \bar{E}(\bD^1) \to H(\bN)[1] \to H(\bN)[1] \to .... $$

In particular, there is an oriented left fiber sequence of categorical spectra:
$$ \bar{E}(\bD^1) \to H(\bN)[1] \to H(\bN)[1] $$
which gives rise to an oriented left fiber sequence of symmetric monoidal $\infty$-categories:
$$ E(\bD^1) \to B \bN \to B\bN.$$


Let $(\bN,\leq)$ denote $\bN$ with its natural total order, viewed as a poset and so as 1-category.
The commutative monoid structure of $\bN$ is compatible with the natural order and endows $(\bN,\leq)$ with the structure of a symmetric monoidal category.

There is a canonical oriented left fiber sequence of symmetric monoidal $\infty$-categories:
$$ (\bN, \leq) \to B\bN \to B\bN.$$

Indeed, the oriented left fiber of the identity of the symmetric monoidal category $B\bN$
is the symmetric monoidal category $B\bN_{\tu/}$ of objects under the
tensor unit,
where the symmetric monoidal structure is lifted from $B\bN$ along the forgetful functor.
The symmetric monoidal category $B\bN_{\tu/}$ is precisely $(\bN, \leq).$


We obtain a symmetric monoidal equivalence $H(\bD^1; \bN) = E(\bD^1) \simeq (\bN, \leq).$


\end{example}

\begin{remark}

In \cite{heine2026categorification} we develop further computational techniques, which we use to compute the categorical homology of globes, and to describe models for the categorical homology of oriented simplices.

\end{remark}

\vspace{2mm}

Next we prove a higher-categorical analogue of the property
that equivalences in stable categories are detected by fibers:
in spectral oriented categories equivalences are detected by oriented fibers.

\begin{theorem}\label{Fibb}

Let $\mC$ be a reduced spectral oriented category and
$$\begin{xy}
\xymatrix{
A  \ar[rd]_\alpha \ar[rr]^\phi
&& B \ar[ld]^\beta
\\ 
& C
}
\end{xy}$$
a commutative triangle in $\mC$ such that $\alpha, \beta$ admit oriented left fibers. Then $\phi$ is an equivalence if and only if the following induced morphism on oriented left fibers is an equivalence: 
$$0 \overset{\to}{\times}_\mE \phi: 0 \overset{\to}{\times}_\mE \mC \to 0 \overset{\to}{\times}_\mE \mD. $$ 

\end{theorem}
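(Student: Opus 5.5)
The statement is garbled in its notation: the induced map is meant to be $0 \overset{\to}{\times}_C A \to 0 \overset{\to}{\times}_C B$. The ``only if'' direction is formal, since an equivalence $\phi$ over $C$ induces an equivalence of oriented left fibers by functoriality of oriented pullbacks. For the converse I will reduce to the classical fact that in a stable setting a map is an equivalence when its fiber vanishes. The plan has three steps: compare the two oriented fibers via the pasting law, identify the comparison with an ordinary fiber, and conclude by stability.

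First I embed $\mC$ by the spectral oriented Yoneda embedding (\cref{Yonedaext}) into a presentable spectral oriented category. This embedding preserves weighted limits, so it preserves the given oriented left fibers. It is also fully faithful, so it reflects equivalences. We may therefore assume $\mC$ is presentable. By \cref{specstab} its underlying oriented category is then stable, and it admits pullbacks.

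Next I consider the commutative square with top row $F_A:=0 \overset{\to}{\times}_C A \to A$, bottom row $F_B:=0\overset{\to}{\times}_C B \to B$, vertical maps induced by $\phi$, and the right-hand square the oriented pullback for $\beta$. The outer rectangle is the oriented pullback defining $F_A$, because $\alpha \simeq \beta\phi$. By the pasting law (\cref{pasting}), the left square is therefore an honest pullback. So if $F_A \to F_B$ is an equivalence, then the base change of $\phi$ along $F_B \to B$ is an equivalence, and hence the ordinary fiber of $\phi$ over $0$ is trivial. Here I use that $0 \to B$ factors through $F_B$, which holds because $F_B$ is a fiber of $B \to \Sigma$-type data and the zero map lifts. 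Concretely, pasting the pullback square defining $\mathrm{fib}(\phi)$ onto $F_B\to B$ gives $\mathrm{fib}(\phi)\simeq \mathrm{fib}(F_A\to F_B)\simeq 0$.

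Finally I use that $\mC$ is stable and spectral. By \cref{laxfib} and \cref{oplaxexacto}, ordinary fiber and cofiber sequences agree in the stable underlying category (the upper-right squares there are both pushouts and pullbacks). Hence $\mathrm{cofib}(\phi)\simeq\Sigma\,\mathrm{fib}(\phi)\simeq 0$. A map whose cofiber vanishes is an equivalence. One way to see this is to apply $\R\Mor(T,-)$, where the spectral enrichment turns fiber sequences into fiber sequences of categorical spectra, and use that $\Cat\Sp$ is stable (\cref{stab}), so that $\R\Mor(T,\phi)$ is an equivalence for all $T$.

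The main obstacle is this last step. Preadditivity (\cref{preadd}) alone does not give that a map with zero fiber is an equivalence, so I need the stability of the categorical morphism spectra. The first thing I would try is to show that in $\Cat\Sp$ a map $f$ with $\mathrm{fib}(f)=0$ is an equivalence. In the limit description of $\Cat\Sp$, $\Omega^\infty\Sigma^n$ preserves fibers and the fiber of a map of based $\infty$-categories is computed levelwise. Since every level is a delooping $\Omega(X_{n+1})$, i.e.\ a monoidal $\infty$-category with grouplike-after-shift structure, the vanishing of fibers at all levels should force equivalence on every morphism category. I would check this last point via \cref{homt} by induction on the level $n$.
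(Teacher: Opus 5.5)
Your reduction to presentable $\mC$ and your pasting step are fine: $0\overset{\to}{\times}_C A\simeq A\times_B(0\overset{\to}{\times}_C B)$, so the hypothesis does give $\mathrm{fib}(\phi)\simeq 0$. The proof breaks at the last step, and it cannot be repaired within your strategy.

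In a stable oriented category, and already in $\Cat\Sp$, a map with vanishing ordinary fiber need not be an equivalence. Ordinary fibers and cofibers are also not related by $\Sigma$. Take $\phi\colon 0\to H(\bN)$.
\begin{itemize}
\item Ordinary limits in $\Cat\Sp$ are computed levelwise.
\item A pullback $\{*\}\times_X\{*\}$ in $\infty\Cat$ is the space of self-equivalences of the base point, i.e.\ the loop space of $\iota_0(X)$.
\item Since $\bN$ has no nonzero units, $\iota_0(B^n\bN)\simeq *$ for $n\geq 1$, and $\iota_0(\bN)=\bN$ is discrete. Hence $\mathrm{fib}(\phi)\simeq 0$.
\item Yet $\phi$ is not an equivalence, and $\mathrm{cofib}(\phi)\simeq H(\bN)\not\simeq\Sigma(0)$.
\end{itemize}
The squares in \cref{laxfib} and \cref{oplaxexacto} that are both pushouts and pullbacks are special squares built from oriented (co)fibers. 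They do not say that ordinary fiber sequences are cofiber sequences. Your heuristic that the levels are ``grouplike after a shift'' is exactly what fails. The levels $\Omega(X_{n+1})$ are monoidal but need not be grouplike, so strict fibers only see invertible data.

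Any argument that extracts only $\mathrm{fib}(\phi)\simeq 0$ from the hypothesis therefore fails. You must keep the lax information in $0\overset{\to}{\times}_C A$, whose objects carry non-invertible maps $0\to\alpha(x)$.

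The paper's argument runs as follows.
\begin{itemize}
\item Apply $\R\Mor_\mC(T,-)$, which preserves oriented fibers and is jointly conservative by the enriched Yoneda lemma. This reduces the statement to $\mC=\Cat\Sp$, where oriented fibers are computed levelwise.
\item Apply \cref{opost}, which comes from \cref{riol}. Its key input is \cref{hom}: morphism $\infty$-categories in the comma $\infty$-category $\{Z\}\overset{\to}{\times}_\mE\mC$ are oriented fibers of morphism $\infty$-categories. It yields that an $(n+1)$-equivalence on oriented left fibers at level $k+1$ gives an $n$-equivalence on $\Omega$ of that level, i.e.\ on level $k$.
\item The hypothesis gives equivalences at every level. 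So each level of $\phi$ is an $n$-equivalence for all $n$, hence an equivalence by \cref{homt}.
\end{itemize}
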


\begin{proposition}\label{riol}

Let $\n \geq 0$ and
\begin{equation}\label{tria}
\begin{xy}
\xymatrix{
\mC  \ar[d]_\alpha \ar[r]^\phi
& \mD \ar[d]^\beta
\\ 
\mE \ar[r]^\psi & \mF}
\end{xy}\end{equation}
a commutative square of $\infty$-categories such that $\psi$ is an $\n$+1-equivalence. 

\begin{enumerate}
\item The functor $\phi$ is an $\n$-equivalence if for every $Z \in \mE$ the following induced functor on oriented left fibers is an $\n$-equivalence:
\begin{equation}\label{refs} \{Z\} \overset{\to}{\times}_\mE \mC \to \{\psi(Z)\}\overset{\to}{\times}_\mF \mD. \end{equation} 

\item The functor $\phi$ is an $\n$-equivalence if for every $Z \in \mE$ the following induced functor on oriented right fibers is an $\n$-equivalence:
\begin{equation}\label{refs2}\mC \overset{\to}{\times}_\mE \{Z\} \to \mD \overset{\to}{\times}_\mE \{\psi(Z)\}. \end{equation} 

\end{enumerate}

\end{proposition}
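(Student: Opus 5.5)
The plan is an induction on $\n \geq 0$ that unwinds the definition of $\n$-equivalence, using \cref{hom} to identify mapping $\infty$-categories of $\mC$ with fibers of $\Fun^\oplax(\bD^1,\mC)$. I treat (1); (2) is dual, obtained by replacing every $\infty$-category by its $(-)^\op$ via \cref{oppo}. The oriented left fiber $\{Z\}\overset{\to}{\times}_\mE \mC$ is computed in the oriented category $\infty\Cat$. By \cref{adesc} it is $\{Z\}\times_\mE \Fun^\oplax(\bD^1,\mE)\times_\mE \mC$. Its objects are pairs $(X, Z\to \alpha(X))$, and by \cref{hom} together with \cref{bien} its morphism $\infty$-categories from $(X,f)$ to $(X',f')$ are the fibers over $f'$ of
$$\Mor_\mC(X,X') \to \Mor_\mE(Z,\alpha(X')),\quad g\mapsto \alpha(g)\circ f,$$
up to a lax-$2$-cell; more precisely, they are the oriented left fiber of this map at $f'$.

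\emph{Case $\n=0$.} The hypothesis says $\psi$ is a $1$-equivalence. For essential surjectivity on equivalence classes, take $Y\in\mD$ and choose $Z$ with $\psi(Z)\simeq\beta(Y)$, using that $\psi$ is a $0$-equivalence. Then $(Y,\id)$ is an object of the oriented fiber over $\psi(Z)$, so by hypothesis it lifts to some $(X,f)$ with $\phi(X)\simeq Y$. For injectivity, suppose $\phi(X)\simeq\phi(X')$. The images $\alpha(X),\alpha(X')$ become equivalent after $\psi$. Since $\psi$ is a $1$-equivalence, in particular bijective on equivalence classes, we get $\alpha(X)\simeq\alpha(X')=:Z$. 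Then $(X,\simeq)$ and $(X',\id)$ are objects over $Z$ whose images in the fiber of $\mD$ are equivalent, compatibly with the structure map. The delicate point is that this equivalence must lie over $\id_{\psi(Z)}$. I would fix this by adjusting the structure map, using surjectivity of $\psi$ on $\pi_0$ of $\mathrm{Aut}$, which is part of being a $1$-equivalence. Bijectivity on fibers then gives $X\simeq X'$.

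\emph{Inductive step.} Assume the statement for $\n-1$, with $\psi$ an $(\n+1)$-equivalence, and fix $X,X'\in\mC$. Apply the induction hypothesis to the square of mapping $\infty$-categories
$$\begin{array}{ccc}\Mor_\mC(X,X')&\to&\Mor_\mD(\phi X,\phi X')\\ \downarrow&&\downarrow\\ \Mor_\mE(\alpha X,\alpha X')&\to&\Mor_\mF(\psi\alpha X,\psi\alpha X').\end{array}$$
Its bottom map is an $\n$-equivalence because $\psi$ is an $(\n+1)$-equivalence, so the hypothesis of the proposition holds one level down. It remains to check that for every $h\in\Mor_\mE(\alpha X,\alpha X')$ the induced map on oriented left fibers at $h$ is an $(\n-1)$-equivalence. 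Using the morphism description above, the oriented left fiber at $h$ of $\Mor_\mC(X,X')\to\Mor_\mE(\alpha X,\alpha X')$ is $\Mor$ in $\{\alpha X\}\overset{\to}{\times}_\mE\mC$ from $(X,\id)$ to $(X',h)$. The same identification holds on the $\mD$ side, with $\psi(\alpha X)$ in place of $\alpha X$. By hypothesis (1) at $Z=\alpha X$, the map of fibers is an $\n$-equivalence, so it induces $(\n-1)$-equivalences on morphism $\infty$-categories. This is exactly what is required, and the induction hypothesis then concludes.

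I expect the main obstacle to be the identification of morphism $\infty$-categories in the oriented left fiber of $\infty\Cat$ with oriented left fibers of the maps on $\Mor$. This requires carefully unwinding the oplax arrow category via \cref{hom} and \cref{thas2}, including the direction of the $2$-cell, which is why "left" must go to "left" and not to "right". A second, smaller obstacle is the $\pi_0$-injectivity bookkeeping in the base case.
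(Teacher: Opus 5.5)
Your architecture is the paper's: induction on $\n$, a base case via essential surjectivity and injectivity using that $\psi$ is a $1$-equivalence, and an inductive step on the square of morphism $\infty$-categories. Your adjustment of the structure map is equivalent to the paper's lift of the specific equivalence $\psi\alpha(X)\simeq\beta\phi(X)\simeq\beta\phi(X')\simeq\psi\alpha(X')$.

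The gap is in the identification you flagged as the crux: it has the wrong orientation. By \cref{hom}, a morphism $u\to u'$ in $\Fun^\oplax(\bD^1,\mE)$ with components $a,b$ carries a $2$-cell $b\circ u\Rightarrow u'\circ a$. This direction is forced: when $a,b$ are identities, these morphisms must be the morphisms $u\to u'$ of $\Mor_\mE$. Hence a morphism $(X,f)\to(X',f')$ in $\{Z\}\overset{\to}{\times}_\mE\mC$ is a pair $(g,\ \alpha(g)\circ f\Rightarrow f')$. So its morphism $\infty$-category is the oriented \emph{right} fiber of $g\mapsto\alpha(g)\circ f$ at $f'$, not the left one. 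In particular
$$\Mor_{\{\alpha X\}\overset{\to}{\times}_\mE \mC}((X,\id),(X',h))\simeq \Mor_\mC(X,X')\overset{\to}{\times}_{\Mor_\mE(\alpha X,\alpha X')}\{h\}.$$
The oriented left fiber $\{h\}\overset{\to}{\times}_{\Mor_\mE(\alpha X,\alpha X')}\Mor_\mC(X,X')$ consists of pairs $(g,\ h\Rightarrow\alpha(g))$. It is the morphism $\infty$-category from $(X,h)$ to $(X',\id)$ in the oriented right fiber $\mC\overset{\to}{\times}_\mE\{\alpha X'\}$, which is the pairing the paper's inductive step uses. Consequently hypothesis (1) at level $\n$ supplies the hypothesis of (2), not of (1), for the square of morphism $\infty$-categories. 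Your appeal to the induction hypothesis in the form (1) therefore does not go through as written.

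The repair is small. Carry (1) and (2) together through the induction, and apply (2) at level $\n-1$ to the square of morphism $\infty$-categories using the corrected identification. Statement (2) at level $\n$ then follows symmetrically, or by duality.

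That duality is $(-)^{\co\op}$, as in the paper, not $(-)^\op$. By \cref{grayhoms}, $(-)^\op$ turns $\Fun^\oplax(\bD^1,-)$ into $\Fun^\lax(\bD^1,-)$. So it sends oriented left fibers to lax right fibers rather than to the oriented right fibers appearing in (2).

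The paper's own write-up pairs the fibers correctly, but it then cites the left-fiber hypothesis of (1). So it too tacitly needs (1) and (2) handled simultaneously.
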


\begin{proof}
(1) is equivalent to (2) by taking the dual $(-)^{\co\op}.$	
	
We prove (1) by induction on $\n \geq 0.$ We start with $\n=0.$
We would like to see that $\phi$ is a $0$-equivalence, i.e. induces a bijection on equivalence classes. We first prove  that $\phi$ is essentially surjective: 
let $Y \in \mD$. Then $\beta(Y) \simeq \psi(Z)$ for some $Z \in \mE.$
Hence $(Y, \psi(Z)\to \beta(Y)) \in \{\psi(Z)\}\overset{\to}{\times}_\mF \mD $. So by assumption there is an object $(X, \sigma:  Z \to \alpha(X)) \in \{Z\}\overset{\to}{\times}_\mE \mC$ such that
$\phi(X)\simeq Y$ and such that $\psi(\sigma)$ factors as $$ \psi(Z) \simeq \beta(Y) \simeq \beta(\phi(X)) \simeq \psi(\alpha(X)).$$
So $\phi$ is essentially surjective.
We prove next that $\phi$ is essentially injective: let $Y,Z \in \mC$ and $\kappa: \phi(Y)\simeq \phi(Z)$ an equivalence. 
Since $\psi$ is a 1-equivalence, the equivalence $$\psi(\alpha(Y)) \simeq \beta(\phi(Y))\simeq \beta(\phi(Z)) \simeq \psi(\alpha(Z))$$ is the image of an equivalence $\alpha(Y) \simeq \alpha(Z).$

The objects $ (\phi(Y),\psi( \alpha(Y)) \simeq \beta(\phi(Y)))$ and $ (\phi(Z), \psi(\alpha(Y)) \simeq \beta(\phi(Y))\simeq \beta(\phi(Z)) )$ are equivalent in $ \{\psi(\alpha(Y))\}\overset{\to}{\times}_\mE \mD $ via $\kappa.$
The first object is the image of $ (Y, \id: \alpha(Y) \to \alpha(Y))$ under (\ref{refs}) and the second object is the image of $ (Z,  \alpha(Y) \simeq \alpha(Z))$ under (\ref{refs}).
So by assumption there is an equivalence between $ (Y, \id: \alpha(Y) \to \alpha(Y))$ and $ (Z,  \alpha(Y) \simeq \alpha(Z))$ in $\{\alpha(Y)\}\overset{\to}{\times}_\mE \mD$.
Thus $Y \simeq Z.$ 
So $\phi$ is a $0$-equivalence.

We prove next that $\phi$ is an $\n+1$-equivalence assuming  the statement for $n$
and that for every $Z \in \mE$ the functor (\ref{refs}) is an $\n+1$-equivalence and that $\psi$ is an $n+2$-equivalence.
By the case $\n=0$ the functor $\phi$ is a $0$-equivalence.
Hence $\phi$ is an $\n+1$-equivalence if for every $X,Y \in \mC$ 
the induced functor $$\Mor_{\mC}(X,Y)\to \Mor_{\mD}(\phi(X),\phi(Y)) $$ is an $\n$-equivalence.
By induction hypothesis this holds if for every morphism $\sigma: \alpha(X) \to \alpha(Y)$ in $\mE$ the induced functor on oriented left fibers $$
\{\sigma\} \overset{\to}{\times}_{\Mor_{\mE}(\alpha(X),\alpha(Y))} \Mor_{\mC}(X,Y)
\to \{\sigma\} \overset{\to}{\times}_{\Mor_{\mF}(\psi\alpha(X),\psi\alpha(Y))}\Mor_{\mD}(\phi(X),\phi(Y)) $$ is an $\n$-equivalence.
By \cref{hom} the latter functor identifies with the induced functor $$\Mor_{ \mC \overset{\to}{\times}_\mE \{\alpha(X)\}}((X, \sigma), (Y, \id)) \to \Mor_{\mD \overset{\to}{\times}_\mF \{\psi\alpha(X)\}}((\phi(X), \sigma), (\phi(Y), \id)),$$ 
which is an $\n$-equivalence since the functor (\ref{refs}) is an $\n$+1-equivalence by assumption.

\end{proof}

We prove the following refinement of \cref{riol}:

\begin{proposition}\label{riok}
Let $ \n \geq 0.$
Consider a commutative triangle \ref{tria} of $\infty$-categories and let $Z \in \mE$. If the induced functor on oriented left fibers 
$$ \{Z\} \overset{\to}{\times}_\mE \phi: \{Z\} \overset{\to}{\times}_\mE \mC \to \{Z\}\overset{\to}{\times}_\mE \mD $$ 
induces on morphism $\infty$-categories $\n$-equivalences,
for every $Y \in \mC$ lying over $Z$ and $X \in \mC$ the following induced functor is an $\n$-equivalence:
$$ \Mor_{\mC}(X,Y)\to \Mor_{\mD}(\phi(X),\phi(Y)).$$

\end{proposition}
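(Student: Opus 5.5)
The plan is to reduce to \cref{riol}, applied one categorical level down, exactly as in the induction step of its proof. Fix $Y \in \mC$ with $\alpha(Y) \simeq Z$ and an arbitrary $X \in \mC$. The triangle (\ref{tria}), with $\psi$ the identity of $\mE$, induces a commutative square of $\infty$-categories
\[
\begin{tikzcd}
\Mor_\mC(X,Y) \ar{r} \ar{d} & \Mor_\mD(\phi(X),\phi(Y)) \ar{d} \\
\Mor_\mE(\alpha(X),Z) \ar{r}{\id} & \Mor_\mE(\alpha(X),Z).
\end{tikzcd}
\]
Its bottom functor is an identity and hence an $\n+1$-equivalence. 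So by \cref{riol}, in the left fiber version (1) or the right fiber version (2), whichever matches the orientation of the comparison below, it suffices to show the following. For every $\sigma: \alpha(X) \to Z$, the induced functor on oriented fibers over $\sigma$ of the two vertical functors must be an $\n$-equivalence.

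The key step is to identify these oriented fibers with morphism $\infty$-categories of the oriented left fiber $\{Z\}\overset{\to}{\times}_\mE \mC$, compatibly with $\phi$. Concretely, I would pair $X$ with $\sigma$ to form an object of the relevant comma $\infty$-category over $Z$. I would then use \cref{hom}, i.e.\ the description $\Mor_\mC(X,Y) \simeq \{X\}\times_\mC \Fun^\oplax(\bD^1,\mC)\times_\mC\{Y\}$, together with the limit description of oriented pullbacks of \cref{adesc}, to compute morphism $\infty$-categories there. A morphism in the oriented left fiber from the object built from $(X,\sigma)$ to $(Y,\id_Z)$ is a morphism $f: X \to Y$ in $\mC$ together with a $2$-cell relating $\alpha(f)$ and $\sigma$. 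This should be precisely the oriented fiber of $\Mor_\mC(X,Y) \to \Mor_\mE(\alpha(X),Z)$ over $\sigma$, and the same holds on the $\mD$ side, naturally in $\phi$. The hypothesis then says exactly that the comparison functor between these fibers is an $\n$-equivalence, and \cref{riol} yields the claim.

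I expect the identification in the previous paragraph to be the main obstacle. The object $X$ need not lie over $Z$, so I must decide carefully whether to work in the left fiber $\{Z\}\overset{\to}{\times}_\mE\mC$ with objects $(W, Z \to \alpha(W))$ or in a right-fiber comma. I must then check that the direction of the $2$-cell coming from \cref{hom} matches the orientation required by \cref{riol}(1) or (2). I would first try to build from $\sigma$ a morphism in the left fiber from $(X,\text{-})$ to $(Y,\id_Z)$ by transporting along $\sigma$ using the pasting law \cref{pasting}. If the orientations do not line up, I would pass to the dual $(-)^{\co\op}$, which exchanges left and right oriented fibers as in the proof of \cref{riol}, and use version (2) of \cref{riol} instead.
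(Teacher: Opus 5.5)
Your strategy is the paper's: for $Y$ over $Z$ and arbitrary $X$, use \cref{hom} to identify the oriented fiber over each $\sigma\colon\alpha(X)\to Z$ of $\Mor_\mC(X,Y)\to\Mor_\mE(\alpha(X),Z)$ with a morphism $\infty$-category in a comma $\infty$-category over $Z$. Then apply \cref{riol} to the square of morphism $\infty$-categories whose bottom functor is the identity.

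However, the step you single out as the main obstacle is a genuine gap, and neither repair you propose closes it. In the left fiber $\{Z\}\overset{\to}{\times}_\mE\mC$ as the paper uses it (objects $(W, Z\to\alpha(W))$, see the proof of \cref{riol}), the pair $(X,\sigma)$ is not an object. No pasting argument turns $\sigma\colon\alpha(X)\to Z$ into a structure map $Z\to\alpha(X)$. Passing to $(-)^{\co\op}$ does not help either, since it dualizes hypothesis and conclusion at the same time. It only interchanges the two consistent statements ``left fibers over $Z$ control morphisms \emph{out of} objects over $Z$'' and ``right fibers over $Z$ control morphisms \emph{into} objects over $Z$''.

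The mixed statement, read literally, is in fact false. Take $\mE=\bD^1=S(*)$, $Z=1$, $\mC=S(A)$, $\mD=S(B)$ and $\phi=S(f)$ with $f\colon A\to B$ not an $\n$-equivalence. Both left fibers over $1$ are a point, because there is no morphism $1\to 0$ in $\bD^1$ and $\Mor(1,1)\simeq *$. Yet $\Mor_\mC(0,1)=A\to B=\Mor_\mD(0,1)$ is not an $\n$-equivalence.

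The paper's proof gets around this by working, without comment, in a comma over $Z$ whose objects are pairs $(W,\alpha(W)\to Z)$ (it writes $\{Z\}\overset{\overline{\to}}{\prod}_\mE\mC$). There $(X,\sigma)$ and $(Y,\id)$ are objects, and \cref{hom} identifies the morphism $\infty$-category between them with the oriented fiber over $\sigma$. So you have to commit to one consistent reading.

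\begin{enumerate}
\item Take the hypothesis on that fiber. Then your argument is exactly the paper's.
\item Keep the left fiber and put $Y$ in the source. For $\tau\colon Z\to\alpha(X)$, the morphism $\infty$-category from $(Y,\id_Z)$ to $(X,\tau)$ is the oriented fiber over $\tau$ of $\Mor_\mC(Y,X)\to\Mor_\mE(Z,\alpha(X))$. Then \cref{riol} yields that $\Mor_\mC(Y,X)\to\Mor_\mD(\phi(Y),\phi(X))$ is an $\n$-equivalence.
\end{enumerate}

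For \cref{opost}, where the proposition is applied with $X=Y$ both lying over $Z$, either version suffices.
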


\begin{proof}
Let $Y \in \mC$ lying over $Z$ and $X \in \mC$ and $\sigma: \alpha(X) \to \alpha(Y)\simeq Z$ a morphism in $\mE$.
By assumption the induced functor $$\Mor_{(\{Z\} \overset{\overline{\to}}{\prod}_\mE \mD)}((X, \sigma), (Y, \id)) \to \Mor_{(\{Z\} \overset{\overline{\to}}{\prod}_\mE \mD)}((\phi(X), \sigma), (\phi(Y), \id)) $$ is an $\n$-equivalence.
By \cref{hom} the latter functor identifies with the functor
$$\{\sigma\} \overset{\overline{\to}}{\prod}_{\Mor_{\mE}(\alpha(X),\alpha(Y))} \Mor_{\mC}(X,Y) \to \{ \sigma\}\overset{\overline{\to}}{\prod}_{\Mor_{\mE}(\alpha(X),\alpha(Y))} \Mor_{\mD}(\phi(X),\phi(Y)).$$
Hence by \cref{riol} the functor $ \Mor_{\mC}(X,Y)\to \Mor_{\mD}(\phi(X),\phi(Y))$ is an $\n$-equivalence.

\end{proof}

\begin{corollary}\label{opost}
	
Let $\n \geq 0$. Consider a commutative triangle (\ref{tria}) of $\infty$-categories with distinguished object.
If the induced functor on oriented left fibers $$ 0 \overset{\to}{\times}_\mE \phi: 0 \overset{\to}{\times}_\mE \mC \to 0 \overset{\to}{\times}_\mE \mD $$ is an $\n+1$-equivalence, the following induced functor on endomorphism $\infty$-categories is an $\n$-equivalence: $$\Omega(\phi): \Omega(\mC) \to \Omega(\mD).$$ 

\end{corollary}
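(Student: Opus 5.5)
This is a direct application of \cref{riok} at the distinguished object, restricted to the endomorphism $\infty$-category of the base point. Write $0$ for the distinguished objects of $\mC,\mD,\mE$. The base point of $\mC$ lies over the base point of $\mE$, since $\alpha$ is a based functor, so $\Omega(\mC)=\Mor_\mC(0,0)$ with $Y=0$ lying over $Z=0\in\mE$. The morphism $\Omega(\phi)$ is exactly the induced functor $\Mor_\mC(0,0)\to\Mor_\mD(\phi(0),\phi(0))$ composed with the identification $\phi(0)\simeq 0$. It therefore suffices to produce this functor as an $\n$-equivalence from the hypothesis.

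First, by definition an $(\n+1)$-equivalence induces $\n$-equivalences on all morphism $\infty$-categories. Hence the hypothesis that $0\overset{\to}{\times}_\mE\phi$ is an $(\n+1)$-equivalence gives exactly the hypothesis of \cref{riok} for $Z=0$, namely that
\[ \{0\}\overset{\to}{\times}_\mE\phi:\ \{0\}\overset{\to}{\times}_\mE\mC\to\{0\}\overset{\to}{\times}_\mE\mD \]
induces $\n$-equivalences on morphism $\infty$-categories.

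Second, apply \cref{riok} with $Y=0$ (which lies over $Z=0$) and $X=0$. This yields that
\[ \Mor_\mC(0,0)\to\Mor_\mD(\phi(0),\phi(0)) \]
is an $\n$-equivalence. Under $\phi(0)\simeq0$ this functor is $\Omega(\phi)$, which gives the claim.

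The only point needing care is bookkeeping. One must check that the oriented left fiber over the distinguished object used in \cref{riok} agrees with the one in the statement. One must also check that the triangle is one of based functors, so that the base point of $\mD$ lies over $0\in\mE$ compatibly with $\beta$ and the identification $\beta\phi\simeq\alpha$. I expect no real obstacle beyond this, since \cref{riok} already contains the inductive content via \cref{riol}.
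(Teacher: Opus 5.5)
Your argument is correct and matches the paper, which states this corollary without a separate proof as the specialization of \cref{riok} to $Z=0$ and $X=Y=0$. It rests on the same observation you make, namely that an $(\n+1)$-equivalence induces $\n$-equivalences on morphism $\infty$-categories. Since both $X$ and $Y$ are the base point lying over $0\in\mE$, the left-versus-right fiber bookkeeping you flag causes no trouble here.
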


\begin{proof}[Proof of \cref{Fibb}]

For every $T \in \mC$ the commutative triangle of the statement induces a commutative triangle of spectra:
$$\begin{xy}
\xymatrix{
\R\Mor_\mC(T,A) \ar[rd]_{\alpha_*} \ar[rr]^{\phi_*}
&& \R\Mor_\mC(T,B) \ar[ld]^{\beta_*}
\\ 
& \R\Mor_\mC(T,C).
}
\end{xy}$$
By \cref{weiloc} the reduced oriented functor $\R\Mor_\mC(T,-):\mC \to {\Cat\Sp}$
preserves oriented fibers. Hence the induced map of categorical spectra $\R\Mor_\mC(T,0 \overset{\to}{\times}_C \phi)$ identifies with the map $ 0 \overset{\to}{\times}_{ \R\Mor_\mC(T,C)} \phi_*.$
So by the enriched Yoneda lemma (\cref{enryo}) we can assume that $\mC={\Cat\Sp}.$
In this case the statement follows from \cref{opost} and  \cref{homt}.

\end{proof}

\cref{laxfib} and \cref{Fibb} imply the following:

\begin{corollary}\label{detect} Let $\mC$ be a reduced spectral bioriented category that admits oriented fibers.
The antioriented functor $0 \overset{\to}{\times}: \mC^{\bD^1} \to \mC$ detects weighted colimits.

\end{corollary}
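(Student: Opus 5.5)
The plan is to show two things: that $0 \overset{\to}{\times}: \mC^{\bD^1} \to \mC$ preserves every weighted colimit that exists in $\mC^{\bD^1}$, and that it is conservative. Detection then follows formally. Given a weighted cocone in $\mC^{\bD^1}$ whose image is a colimit cocone, I first embed $\mC$ into a presentable spectral bioriented category $\mC'$ by \cref{Yonedaext}/\cref{weiloc}, so that all weighted colimits exist. The embedding preserves oriented fibers and weighted limits, and the question can be tested after it. In $\mC'^{\bD^1}$ I compare the genuine colimit with the given vertex via the canonical morphism. Its image under $0 \overset{\to}{\times}$ is an equivalence by preservation, so by conservativity the morphism itself is an equivalence, and the given cocone is a colimit cocone.

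For preservation I would use \cref{laxfib}. It gives an equivalence $\xi: 0\overset{\to}{+} \simeq (0\overset{\to}{\times})\circ\Sigma$ of functors $\mC^{\bD^1}\to\mC$, and $\mC$ is stable by \cref{specstab}, so $\Sigma$ is invertible. Hence $0\overset{\to}{\times} \simeq \Sigma^{-1}\circ (0\overset{\to}{+})$, where $\Sigma^{-1}$ is applied to the output. The functor $0\overset{\to}{+}$ is itself a weighted colimit, namely conical pushouts together with the right tensor with $\bD^1$ by \cref{redfib}. Weighted colimits commute with weighted colimits, and the equivalence $\Sigma^{-1}$ preserves everything. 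Therefore $0\overset{\to}{\times}$ preserves all weighted colimits in $\mC'^{\bD^1}$. Equivalently, in the presentable case it is a left adjoint by the argument already used in the proof of \cref{laxfib}.

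For conservativity I would run the Yoneda reduction from the proof of \cref{Fibb}. For each $T$ the functor $\R\Mor_{\mC'}(T,-)$ into $\Cat\Sp$ preserves oriented fibers and jointly reflects equivalences, so it suffices to treat $\mC=\Cat\Sp$. There, \cref{compa} gives a left adjoint $L=(-)\wedge(S^0\to\bD^1)$ of $0\overset{\to}{\times}$. A right adjoint is conservative as soon as the image of $L$ generates $\Cat\Sp^{\bD^1}$ under small colimits and left tensors. As in the proof of \cref{laxfib}, $\Cat\Sp^{\bD^1}$ is generated by $0\to S^0$ and $\id_{S^0}$. So the remaining task is to build these two objects from $L(S^0)=(S^0\to\Sigma^\infty\bD^1)$ using colimits and (left and right) tensors. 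I would try to do this with the cofiber sequences $S^0\to\bD^1\to S^1$ and $(\bD^1)^\op$, which yield $0\to S^1$ and hence $0\to S^0$ after desuspension. The identity would then come from an oriented pushout of $L(S^0)$ against $0\to S^0$. This generation statement is the step I expect to be the main obstacle.

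If the generation argument fails, I would fall back on proving conservativity directly. Suppose $(f_A,f_C)$ is a map of arrows inducing an equivalence on oriented left fibers. By $\xi$ it also induces an equivalence on oriented left cofibers. The pasting diagram in the proof of \cref{oplaxexacto} places $A$ and $C$ in oriented exact sequences together with these (co)fibers. I would then apply \cref{Fibb} twice, once over $C$ and once over $\Sigma(A)$, to conclude that $f_C$ and $f_A$ are both equivalences.
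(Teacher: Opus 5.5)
The preservation half of your plan is fine, and it is what the paper takes from \cref{laxfib}. One small correction: $\xi$ gives $0\overset{\to}{\times}\simeq 0\overset{\to}{+}\circ\Sigma^{-1}$ with $\Sigma^{-1}$ applied to the input arrow, not to the output. As the proof of \cref{oplaxexacto} shows, $\Sigma$ exchanges left and right oriented cofibers; this does not affect preservation.

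The genuine gap is conservativity of $0\overset{\to}{\times}$ on all of $\mC^{\bD^1}$. Both of your routes need it, and it is false. Take $\mC=\Cat\Sp$ and $A=H(\bZ)=B^\infty(\bZ)$, a nonzero categorical spectrum whose levels $B^n\bZ$ are all $\infty$-groupoids. Since $\Cat\Sp$ is a limit along the right adjoints $\Omega$, the pullbacks and cotensors of \cref{redfib} are computed levelwise. The oriented left fiber of the identity of an $\infty$-groupoid is a based path space, hence contractible; contrast \cref{basiccomp}, where the non-groupoidal $B\bN$ gives $B\bN_{\tu/}\simeq(\bN,\leq)$. Hence $0\overset{\to}{\times}(\id_A)\simeq 0$. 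So the morphism $(0\to 0)\to(\id_A)$ in $\Cat\Sp^{\bD^1}$ is sent to an equivalence without being one. Likewise the empty cocone with vertex $\id_A$ is sent to an initial object, although $\id_A$ is not initial. Thus $0\overset{\to}{\times}$ is neither conservative nor reflects weighted colimits. Read as reflection over all of $\mC^{\bD^1}$, the statement fails, and your argument cannot be completed.

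Here is where each of your routes breaks.

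\emph{First route.} For every $X$ and every categorical spectrum $A'$ with groupoidal levels, $\Map(L(X),\id_{A'})\simeq\Map(X,0\overset{\to}{\times}(\id_{A'}))\simeq *$. This class of $A'$ is stable under left and right cotensors with based $\infty$-categories. So everything built from the image of $L$ by colimits and tensors is left orthogonal to $\id_A$. But $\Map((0\to S^0),\id_A)\simeq\Map(S^0,A)\simeq\bZ$, so neither $0\to S^0$ nor $\id_{S^0}$ is generated. Your constructions of $0\to S^1$ and of $\id_{S^0}$ from $L(S^0)$ implicitly use $\id_{S^0}$ as an input.

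\emph{Fallback.} \cref{Fibb} applies only to a triangle over a fixed object, that is, only once the map on targets is already an equivalence. Knowing just the fiber comparison gives you one of the three terms in the oriented exact sequences, while \cref{homolog} needs two.

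This is also exactly what the paper's justification by \cref{laxfib} and \cref{Fibb} reaches. Those two results show that $0\overset{\to}{\times}$ preserves weighted colimits, and that together with $\ev_1$ it jointly detects them: if $\ev_1$ of the comparison map to the actual colimit is an equivalence, \cref{Fibb} handles the source component. That corrected statement is the one you can prove.
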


\begin{corollary} \label{homolog}
We consider the following commutative diagram of categorical spectra of whose top and bottom sequences are oriented exact:
$$	
\begin{tikzcd}
... \ar{r}{\gamma_{\n-1}} & A_\n \ar{r}{\alpha_\n} \ar{d}{\psi_\n^A} & B_\n \ar{d}{\psi^B_\n} \ar{r}{\beta_\n} & C_\n \ar{d}{\psi^C_\n}  \ar{r}{\gamma_\n} & A_{\n+1} \ar{d}{\psi^A_{\n+1}} \ar{r}{\alpha_{\n+1}} & B_{\n+1} \ar{d}{\psi^B_{\n+1}} \ar{r}{\beta_{\n+1}} & ... \\ 
... \ar{r}{\gamma'_{\n-1}} & A'_\n \ar{r}{\alpha'_\n} & B'_\n \ar{r}{\beta'_\n} & C'_\n \ar{r}{\gamma'_\n} & A'_{\n+1} \ar{r}{\alpha'_{\n+1}} & B'_{\n+1} \ar{r}{\beta'_{\n+1}} & ...
\end{tikzcd}
$$	
If any two of $ \psi^A_*, \psi^B_*, \psi^C_*$ are equivalences, then so is the third.

\end{corollary}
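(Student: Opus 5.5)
The statement is the five-lemma analogue: given a map between two oriented exact sequences of categorical spectra, if two of the three families $\psi^A_\bullet,\psi^B_\bullet,\psi^C_\bullet$ consist of equivalences, then so does the third. The plan is to reduce everything to \cref{Fibb}. That result says a map over a common base, between objects whose structure maps admit oriented left fibers, is an equivalence exactly when the induced map on oriented left fibers is. I will use the definition of oriented exactness (\cref{oplaxexact}), which identifies each term as an oriented fiber or an ordinary fiber of the following map. Throughout I work in $\Cat\Sp$, which is bistable (\cref{stab}) and spectral; in particular it admits oriented fibers and finite limits.

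\textbf{Case: $\psi^A_\bullet$ and $\psi^B_\bullet$ are equivalences, show $\psi^C_\n$ is one.} By exactness, for $\n$ even $\gamma_\n$ exhibits $C_\n$ as the oriented left fiber of $\alpha_{\n+1}: A_{\n+1}\to B_{\n+1}$, that is, $C_\n\simeq 0\overset{\to}{\times}_{B_{\n+1}} A_{\n+1}$. For $\n$ odd it is the oriented right fiber, $C_\n \simeq A_{\n+1}\overset{\to}{\times}_{B_{\n+1}} 0$. The same holds for the primed sequence, and the vertical maps are compatible because the diagram commutes. Oriented fibers are functorial in the cospan, so $\psi^C_\n$ is the map induced on oriented (left or right) fibers by the equivalences $\psi^A_{\n+1},\psi^B_{\n+1}$, and is therefore an equivalence.

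\textbf{Case: $\psi^B_\bullet$ and $\psi^C_\bullet$ are equivalences, show $\psi^A_\n$ is one.} Here I use that $\gamma_{\n-1}$ is the cofiber of $\beta_{\n-1}$, or dually that $\beta_\n$ is the ordinary fiber of $\gamma_\n$. Since $\Cat\Sp$ is stable, the upper-right square in the diagram of \cref{oplaxexacto} is both a pushout and a pullback. So $A_{\n+1}\simeq \mathrm{cofib}(\beta_\n)$, which is functorial in the commutative square given by $\psi^B_\n,\psi^C_\n$. Hence $\psi^A_{\n+1}$ is an equivalence for all $\n$.

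\textbf{Case: $\psi^A_\bullet$ and $\psi^C_\bullet$ are equivalences, show $\psi^B_\n$ is one.} This is the genuine five-lemma case and the one I expect to be the main obstacle. By exactness $\beta_\n$ is the ordinary fiber of $\gamma_\n: C_\n\to A_{\n+1}$. I would form the composite $\psi' := (\psi^A_{\n+1})^{-1}$-adjusted comparison: using $\psi^A_{\n+1}$ and $\psi^C_\n$ as equivalences, replace the bottom map $\gamma'_\n$ by an equivalent map with source $C_\n$ and target $A_{\n+1}$. The map $\psi^B_\n$ then becomes a map over $C_\n$ between $\beta_\n$ and the transported $\beta'_\n$. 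Its fibers are compared via the ordinary fiber sequences $A_\n\to B_\n\to C_\n$, and $\psi^A_\n$ is an equivalence, so the ordinary stable five-lemma argument (fibers detect equivalences in a stable category) would finish if the fiber sequence is ordinary. If instead only oriented fibers are available at that spot, for example $A_\n$ is an oriented fiber of $\beta_{\n}$ shifted, I will instead apply \cref{Fibb} directly to the triangle $B_\n\to B'_\n\to C'_\n$ (with $C'_\n\simeq C_\n$). Its oriented left fibers identify, via oriented exactness and \cref{laxfib}, with shifts of $A$ and $A'$ on which $\psi^A$ acts as an equivalence. The delicate point is checking parity bookkeeping (left versus right oriented fibers for even/odd $\n$, using $(-)^{\co\op}$ duality as in \cref{riol}) and that the identifications are compatible with the vertical maps.
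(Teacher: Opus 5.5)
Your first two cases are correct. For $\psi^C$ you describe $C_n$ as the oriented left (resp.\ right) fiber of $\alpha_{n+1}$, whereas the paper describes it as the oriented cofiber of $\alpha_n$; either works. For $\psi^A$ you use that $A_{n+1}$ is the ordinary cofiber of $\beta_n$. This is more elementary than the paper's appeal to \cref{Fibb}.

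The third case has a genuine gap. Both arguments you sketch rely on identifying a fiber of $\beta_n$ with $A_n$ or a shift of it, and that identification is false here. Oriented exactness relates $\alpha_n$ and $\beta_n$ only through an oriented cofiber.

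\emph{The ordinary fiber.} The ordinary fiber of $\beta_n$ is $0\times_{C_n}B_n\simeq 0\times_{A_{n+1}}0$, an ordinary loop object. In $\Cat\Sp$ it is computed levelwise from loops of maximal subspaces, and it is not $A_n$. The underlying category of $\Cat\Sp$ is not stable: for instance $\pi_0\Map_{\Cat\Sp}(\Sigma^\infty S^0,\Sigma^\infty S^0)\cong\bN$. So the classical argument that fibers detect equivalences is unavailable.

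\emph{The oriented left fiber.} It is not a shift of $A_n$ either. Take an oriented exact sequence with $n$ even, $A_n=0$, $B_n=C_n=B$ and $\beta_n=\id_B$. By \cref{laxfib} and \cref{redfib},
$0\overset{\to}{\times}_B B\simeq 0\overset{\to}{+}_{\Omega B}\Omega B\simeq \Omega B\wedge\bD^1$.
For $B=\Sigma^\infty S^1$ this is $\Sigma^\infty(\bD^1)\neq 0$ (compare \cref{basiccomp}). So applying \cref{Fibb} to $B_n\to B'_n\to C'_n$ does not reduce to $\psi^A$ at all.

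The repair uses a fact you already wrote down. For every $n$, $\beta_n$ is the ordinary fiber of $\gamma_n\colon C_n\to A_{n+1}$. Hence $\psi^B_n$ is the map induced on fibers by the equivalences $\psi^C_n$ and $\psi^A_{n+1}$. This is exactly dual to your second case and needs no left/right parity bookkeeping.

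The paper instead uses the dual of \cref{Fibb}. Since $C_n$ is the oriented cofiber of $\alpha_n$, the triangle $A_n\to B_n\to B'_n$ under $A_n\simeq A'_n$ induces $\psi^C_n$ on oriented cofibers, which forces $\psi^B_n$ to be an equivalence. With the fix, your proof needs only functoriality of ordinary and oriented (co)fibers and never invokes \cref{Fibb}.
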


\begin{proof}
If $\psi^A_*, \psi^B_*$ are equivalences, then trivially also $\psi^C_*$ is an equivalence by functoriality of taking oriented cofibers.
If $\psi^B_*, \psi^C_*$ are equivalences, then $\psi^A_*$ is an equivalence by Theorem \ref{Fibb}.
If $\psi^A_*, \psi^C_*$ are equivalences, then $\psi^B_*$ is an equivalence by the dual statement to Theorem \ref{Fibb}.

\end{proof}

\section{Categorical homology}

\subsection{A categorical Hurewicz theorem}

In this subsection we prove a categorical Hurewicz theorem (\cref{H}).

\begin{definition}Let $\mC \in \infty\Cat.$
The poset of categorical components of $\mC$ is the set $\tau(\mC)$ of equivalence classes of objects of $\mC$ modulo the equivalence relation $A \sim B$
if and only if there are morphisms $A \to B$ and $B \to A$ in $\mC.$
We endow $\tau(\mC)$ with the partial order $A \leq B$
if and only if there is a morphism $A \to B$ in $\mC.$
	
\end{definition}

\begin{remark}
The canonical embedding $\Poset \subset \infty\Cat$ 
of the full subcategory of posets admits a left adjoint sending an $\infty$-category $\mC$ to $\tau(\mC)$.
	
\end{remark}

\begin{lemma}\label{deoo}
	
The endomorphisms functor $$\Omega: \Mon_{\bE_\infty}(\infty\Cat)^\cop \to \Mon_{\bE_\infty}(\infty\Cat) $$ admits a left adjoint reduced bioriented functor
$$B: \Mon_{\bE_\infty}(\infty\Cat) \to \Mon_{\bE_\infty}(\infty\Cat)^\cop$$
that preserves finite products and induces an equivalence to the full subcategory of connected $\infty$-categories with distinguished object.
The underlying functor of $B$ is induced by the finite products preserving functor
$B: \Mon(\infty\Cat) \to \infty\Cat $ of \cref{hhnl}.
	
\end{lemma}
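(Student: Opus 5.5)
We plan to obtain the statement by transporting the adjunction $B \dashv \Omega$ of \cref{hhnl} to commutative monoids, and then reading off the bioriented enrichment from \cref{reduc}.

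\textbf{Step 1: the underlying adjunction.} \cref{hhnl} gives an adjunction $B: \Mon(\infty\Cat) \rightleftarrows \infty\Cat_*: \Omega$. Here $B$ is fully faithful, preserves finite products, and has essential image the connected pointed $\infty$-categories. Since both adjoints preserve finite products ($\Omega$ as a right adjoint), applying $\Mon_{\bE_\infty}(-)$ yields an adjunction
$$\Mon_{\bE_\infty}(\Mon(\infty\Cat)) \rightleftarrows \Mon_{\bE_\infty}(\infty\Cat_*).$$
We then use two identifications. By Dunn additivity, $\Mon_{\bE_\infty}(\Mon(\infty\Cat)) \simeq \Mon_{\bE_\infty}(\infty\Cat)$. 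Forgetting the base point gives $\Mon_{\bE_\infty}(\infty\Cat_*) \simeq \Mon_{\bE_\infty}(\infty\Cat)$, as in \cref{symsp}. Under these identifications the right adjoint is the endomorphism functor $\Omega$ of $\Mon_{\bE_\infty}(\infty\Cat)$. Full faithfulness and the description of the essential image pass to $\bE_\infty$-objects, because being connected is a property of the underlying pointed $\infty$-category. Finite products are preserved because they are computed underlying.

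\textbf{Step 2: the bioriented refinement.} The category $\Mon_{\bE_\infty}(\infty\Cat)$ is the preadditive localization of $\infty\Cat_*$ (via (\ref{loo}) and the bioriented variant stated there). It is therefore a presentable reduced bioriented category admitting endomorphisms and suspensions. \cref{reduc}(3) then gives a bioriented adjunction $\Sigma: \mC^\cop \rightleftarrows \mC: \Omega$ for $\mC = \Mon_{\bE_\infty}(\infty\Cat)$. We must match this $\Sigma$ with $B$ from Step 1, and we do so by uniqueness of left adjoints. The endomorphism functor of the bioriented category $\Mon_{\bE_\infty}(\infty\Cat)$ is computed underlying, since the forgetful functor is a right adjoint bioriented functor and commutes with $\Omega$ by \cref{impol}. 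Hence its bioriented left adjoint has underlying functor equal to the left adjoint $B$ constructed in Step 1.

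\textbf{Step 3: orientation conventions.} The remaining point is to place $\cop$ on the correct side and to check that $B$ is reduced. Reducedness holds because $B$ preserves the zero object, being a left adjoint between preadditive categories. For the $\cop$ twist we reuse the argument of \cref{reduc}(1), run inside $\Mon_{\bE_\infty}(-)$.

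\textbf{Main obstacle.} We expect the hardest step to be Step 2: verifying that the bioriented structure on $\Mon_{\bE_\infty}(\infty\Cat)$ is compatible with the identification of $\Omega$, so that the $\cop$ appears exactly as stated. Our first approach is to write $\Mon_{\bE_\infty}(\infty\Cat) \simeq \infty\Cat_* \ot \Mon_{\bE_\infty}(\infty\Grp)$ and apply \cref{reduc}(7), which factors $\Sigma$ as $\mC \ot_{\infty\Cat_*}\Sigma$. This reduces the compatibility question to the case $\mC = \infty\Cat_*$, where \cref{reduc} already gives the bioriented statement.
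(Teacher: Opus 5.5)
Your proposal is correct and follows the paper's proof. You transport the adjunction of \cref{hhnl} to $\bE_\infty$-objects via $\Mon_{\bE_\infty}(\Mon(\infty\Cat))\simeq\Mon_{\bE_\infty}(\infty\Cat)\simeq\Mon_{\bE_\infty}(\infty\Cat_*)$, note that the right adjoint underlies the bioriented endomorphism functor, and conclude by uniqueness of left adjoints that the bioriented left adjoint from \cref{reduc} lifts $B$. Your Step 3 and the detour through \cref{reduc}(7) are superfluous: \cref{reduc}(3) applies directly to the presentable reduced bioriented category $\Mon_{\bE_\infty}(\infty\Cat)$, and the placement of $\cop$ is only a matter of applying the involution $(-)^\cop$.
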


\begin{proof}
	
By \cref{hhnl} there is an adjunction \begin{equation}\label{eqioy}
B: \Mon(\infty\Cat) \rightleftarrows \infty\Cat_*, \end{equation} whose right adjoint lifts the endomorphisms functor $\Omega: \infty\Cat_* \to \infty\Cat_*$ along the forgetful functor and whose left adjoint is an embedding that preserves finite products. Moreover the essential image of the left adjoint precisely consists of the connected $\infty$-categories with distinguished object.
Thus the adjunction (\ref{eqioy}) gives rise to an adjunction
$$ B: \Mon_{\bE_\infty}(\infty\Cat) \simeq \Mon_{\bE_\infty}(\Mon(\infty\Cat)) \rightleftarrows\Mon_{\bE_\infty}(\infty\Cat_*) \simeq \Mon_{\bE_\infty}(\infty\Cat)$$
whose left adjoint is an embedding and preserves finite products.
The right adjoint underlies the endomorphisms bioriented functor $$ \Omega: \Mon_{\bE_\infty}(\infty\Cat)^\cop \to \Mon_{\bE_\infty}(\infty\Cat)$$ of $ \Mon_{\bE_\infty}(\infty\Cat), $ which admits a left adjoint reduced bioriented functor that by uniqueness of left adjoints lifts $B$ and therefore is a reduced bioriented embedding.
 
\end{proof}

\begin{lemma}\label{hujt} Let $\n,\m \geq 0$ and $X \in \Mon_{\bE_\infty}(\infty\Cat)$ be $\n$-connected 
and $Y \in \Mon_{\bE_\infty}(\infty\Cat) $ be $\m$-connected.  Then $X \wedge Y$ is $\n+\m+1$-connected.
	
\end{lemma}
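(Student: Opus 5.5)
The statement to prove is \cref{hujt}: for $X$ $\n$-connected and $Y$ $\m$-connected in $\Mon_{\bE_\infty}(\infty\Cat)$, the smash product $X \wedge Y$ is $\n+\m+1$-connected. I would model the argument on \cref{conesto}, replacing $\Sigma$ on $\infty\Cat_*$ by the delooping $B$ of \cref{deoo} on $\Mon_{\bE_\infty}(\infty\Cat)$. I read $\wedge$ as the smash product on $\Mon_{\bE_\infty}(\infty\Cat)$ induced, via the localization (\ref{loo}), by the Gray smash product. I read connectivity as connectivity of the underlying $\infty$-category.

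The first step is a generation statement analogous to \cref{connel}. The full subcategory of $\n$-connected objects of $\Mon_{\bE_\infty}(\infty\Cat)$ is generated under small colimits by the essential image of $B^{\n+1}$. Iterating \cref{deoo}, $B^{\n+1}$ is a fully faithful left adjoint whose image is exactly the $\n$-connected commutative monoids. Its right adjoint $\Omega^{\n+1}$ is a localization onto its image, so that image is closed under colimits. Every object of that image is $B^{\n+1}$ of something, so it is trivially generated. I therefore need to check two things:
\begin{itemize}
\item the essential image of $B^{\n+1}$ is precisely the $\n$-connected objects, which follows by induction from \cref{deoo} together with \cref{deloop};
\item $\n$-connected objects are closed under colimits in $\Mon_{\bE_\infty}(\infty\Cat)$, which follows from the first item.
\end{itemize}

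The second step uses that $X \wedge (-)$ and $(-)\wedge Y$ preserve small colimits, since the monoidal structure is presentable. Since connected objects are closed under colimits, this reduces the claim to $X = B^{\n+1}X'$ and $Y = B^{\m+1}Y'$. It then suffices to show that $X \wedge B(Y') \simeq B(X \wedge Y'')$ and $B(X') \wedge Y \simeq B(X''\wedge Y)$ for suitable $Y''$ and $X''$. Iterating these gives $X\wedge Y \simeq B^{\n+\m+2}(\cdot)$, which is $\n+\m+1$-connected.

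I would obtain this compatibility from \cref{reduc}(7) and its $\cop$-twisted right linearity. Stabilizing to categorical spectra via \cref{symsp}, $B$ corresponds to the suspension $\Sigma = (-)\wedge S^1$, and $\Mon_{\bE_\infty}(\infty\Cat)\simeq\Cat\Sp_{\geq0}$ is a monoidal equivalence. Associativity then gives $X\wedge (Y'\wedge S^1)\simeq (X\wedge Y')\wedge S^1$. For the other side, \cref{ahoit} and \cref{dua2} give $(X'\wedge S^1)\wedge Y \simeq \ldots$; more precisely, $S^1\wedge Z\simeq \bar\Sigma Z\simeq \Sigma(Z^{\co\op})$. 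This produces a twist by $(-)^{\co\op}$, exactly as in the proof of \cref{conesto}, and $(-)^{\co\op}$ preserves connectivity.

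I expect the main obstacle to be the compatibility between $B$ and $\Sigma$ on $\Cat\Sp_{\geq0}$. Concretely, I need $B$ on commutative monoids to correspond to $\Sigma$ on connective categorical spectra. I would verify this from $B^\infty(\mC)=(\mC,B\mC,\dots)$ in \cref{connex}: applying $\Sigma$ shifts the sequence, and $B^\infty(B\mC)$ is the same shifted sequence.
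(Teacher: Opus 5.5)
Your route is genuinely different from the paper's. It can be made to work, but one step is not supported by what you cite.

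\textbf{How the two arguments differ.} The paper never deloops. It observes that the $n$-connected objects form a colimit-closed full subcategory of $\Mon_{\bE_\infty}(\infty\Cat)$, generated under small colimits by the free monoids $\Free(A)$ on $n$-connected $A\in\infty\Cat_*$. Since $\wedge$ preserves colimits in each variable and $\Free\colon\infty\Cat_*\to\Mon_{\bE_\infty}(\infty\Cat)$ is monoidal, this reduces the lemma to $\Free(A)\wedge\Free(A')\simeq\Free(A\wedge A')$. It then concludes by \cref{conesto}, where the twisted commutation $\Sigma(A)\wedge A'\simeq\Sigma(A\wedge A'^{\co\op})$ is already proven. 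Your approach instead writes $X\wedge Y$ explicitly as an $(n+m+2)$-fold delooping.

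Several of your steps are fine as they stand:
\begin{itemize}
\item the identification of $B$ with $(-)\wedge S^1$ is correct, and this step, which you flag as the main obstacle, is in fact the easy one;
\item the right-hand rule $X\wedge B(Y')\simeq B(X\wedge Y')$ is just associativity;
\item your colimit-generation step is harmless but vacuous, since every $n$-connected $X$ already equals $B^{n+1}\Omega^{n+1}X$.
\end{itemize}

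\textbf{The gap.} You need the left-hand rule $S^1\wedge Z\simeq Z^{\co\op}\wedge S^1$ for arbitrary $Z\in\Cat\Sp_{\geq 0}$. It is used with $Z=Y'=\Omega^{m+1}Y$, which is in general not a suspension spectrum. Your sources do not reach this case:
\begin{itemize}
\item \cref{smarem}, \cref{ahoit} and \cref{dua2} are statements in $\infty\Cat_*$;
\item the twisted linearity in \cref{reduc} only concerns tensoring with objects of $\infty\Cat_*$.
\end{itemize}
Together they give the equivalence only for $Z=\Sigma^\infty K$. Extending it to all connective $Z$ would require a natural equivalence $\bar\Sigma\simeq\Sigma\circ(-)^{\co\op}$ on $\Cat\Sp_{\geq 0}$, with $(-)^{\co\op}$ as in \cref{raras}. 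Naturality in $K\in\infty\Cat_*$ alone does not obviously propagate: the bar resolution of $Z$ by free objects has face maps, namely the multiplication maps, that are not of the form $\Free(f)$.

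\textbf{The repair.} For a connectivity statement you do not need naturality. Write $Y'\simeq\colim_i\Free(K_i)$. Using monoidality of $\Free$ and \cref{ahoit}, you get $B^{n+1}X'\wedge\Free(K_i)\simeq X'\wedge\Free(S^{n+1}\wedge K_i)\simeq B^{n+1}(X'\wedge\Free(K_i'))$, where $K_i'$ is $K_i$ or $K_i^{\co\op}$ according to the parity of $n+1$. Each such term is $n$-connected. Since $n$-connected objects are closed under colimits, $B^{n+1}X'\wedge Y'$ is $n$-connected, and your right-hand rule finishes the argument. With this repair the proof is complete, but its key step is then the paper's reduction to free objects.
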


\begin{proof}Let $\infty\Cat^{\leq \n} \subset \infty\Cat$ be the full subcategory of $\n$-connected $\infty$-categories.
The embedding $\infty\Cat^{\leq \n} \subset \infty\Cat$ preserves finite products and admits a right adjoint and so induces a left adjoint embedding $$ \Mon_{\bE_\infty}(\infty\Cat^{\leq \n}) \subset  \Mon_{\bE_\infty}(\infty\Cat)$$ whose essential image precisely consists of the symmetric monoidal $\infty$-categories whose underlying $\infty$-category is $\n$-connected.
The category $ \Mon_{\bE_\infty}(\infty\Cat^{\leq \n})$ is generated under small colimits by the essential image of the free functor $ \infty\Cat^{\leq \n}_* \to  \Mon_{\bE_\infty}(\infty\Cat^{\leq \n}).$
Hence the full subcategory $$ \Mon_{\bE_\infty}(\infty\Cat^{\leq \n}) \subset \Mon_{\bE_\infty}(\infty\Cat)$$ is generated under small colimits by the essential image of the restricted free functor $$ \infty\Cat^{\leq \n}_* \subset \infty\Cat_* \to  \Mon_{\bE_\infty}(\infty\Cat).$$
Since the free functor $ \infty\Cat_* \to  \Mon_{\bE_\infty}(\infty\Cat)$ is monoidal, the result follows from \cref{conesto}.
\end{proof}

\begin{theorem}\label{H}
Let $\n \geq 0$ and $\mC \in \infty\Cat_*$ be $\n$-connected and $R$ a rig.

\begin{enumerate}

\item The $\infty$-category $H(\mC;R)$ is $\n$-connected.

\item The functor $$\Omega^{\n+1}(\mC) \to \Omega^{\n+1}(H(\mC;R))$$ induces on categorical components
the universal map to the free left $R$-module in $\C\mon(\Poset)$ (on the abelianization).

\end{enumerate}

\end{theorem}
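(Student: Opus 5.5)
We plan to reduce the statement to a computation with symmetric monoidal $\infty$-categories, using \cref{symsp}, which identifies connective categorical spectra with $\Mon_{\bE_\infty}(\infty\Cat)$ via $B^\infty$.

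\textbf{Step 1: connectivity.} Write $H(\mC;R)=\Omega^\infty(H(R)\wedge\mC)$ with $\mC$ $\n$-connected. By \cref{connel}, $\mC$ lies in the subcategory generated under small colimits by the image of $\Sigma^{\n+1}$. Each $H(R)\wedge\Sigma^{\n+1}(Z)\simeq\Sigma^{\n+1}(H(R)\wedge Z)$ is an $(\n+1)$-fold suspension of a connective categorical spectrum. The categorical spectrum $H(R)=B^\infty(R)$ is connective by \cref{connex}, and $H(R)\wedge(-)$ preserves colimits. Under \cref{symsp}, the connective part corresponds to $\Mon_{\bE_\infty}(\infty\Cat)$. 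There \cref{hujt} applies to $R$ (which is $(-1)$-connected) tensored with the free symmetric monoidal $\infty$-category on $\mC$ (which is $\n$-connected), giving that $H(R)\wedge\mC$ corresponds to an $\n$-connected symmetric monoidal $\infty$-category. Its underlying $\infty$-category is $H(\mC;R)$, which proves (1).

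\textbf{Step 2: reduce (2) to a statement about $\bE_{\n+1}$-monoids.} Since $\mC$ is $\n$-connected, \cref{deloop} gives $\mC\simeq B^{\n+1}M$ with $M=\Omega^{\n+1}(\mC)\in\Mon_{\bE_{\n+1}}(\infty\Cat)$. Similarly, by (1), $H(\mC;R)\simeq B^{\n+1}\Omega^{\n+1}H(\mC;R)$. The Hurewicz map is then the $\bE_{\n+1}$-map $M\to\Omega^{\n+1}H(\mC;R)$. The key identification to establish is
$$\Omega^{\n+1}H(\mC;R)\simeq\Omega^\infty\bigl(H(R)\wedge\Sigma^\infty B^{\n+1}M\bigr)[-\n-1],$$
and we claim this is the free $H(R)$-module generated by the $\bE_{\n+1}$-monoid $M$, formed in $\bE_\infty$-monoids. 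The argument is:
\begin{itemize}
\item By \cref{freespar}, $\Sigma^{\n+1}\circ(-)_+\simeq B^{\n+1}\Free_{\bE_{\n+1}}$.
\item By the Freudenthal theorem \cref{F} and its corollary, $\Omega^\infty\Sigma^\infty$ agrees with $\Omega^{k}\Sigma^{k}$ in a range that grows with the connectivity.
\end{itemize}
It follows that $\Omega^{\n+1}\Omega^\infty\Sigma^\infty B^{\n+1}M$ is, up to the relevant truncation, the $\bE_\infty$-symmetrization of $M$, with a group-free correction.

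\textbf{Step 3: pass to categorical components.} Apply the left adjoint $\tau:\infty\Cat\to\Poset$. It preserves finite products, so it induces a left adjoint $\Mon_{\bE_k}(\infty\Cat)\to\Mon_{\bE_k}(\Poset)$ compatible with free constructions. For $k\geq 2$, and already for $\bE_1$ after forcing commutativity in posets, $\bE_k$-monoids in $\Poset$ coincide with commutative monoids in $\Poset$. So $\tau(\Omega^{\n+1}H(\mC;R))$ is obtained as follows:
\begin{itemize}
\item take $\tau(M)$;
\item abelianize it in $\C\mon(\Poset)$;
\item form $R\otimes(-)$, the free left $R$-module in $\C\mon(\Poset)$.
\end{itemize}
The last step uses that $\tau$ sends $H(R)\wedge(-)$ to $R\otimes(-)$ on connective objects, since $\tau$ of $B^\infty(R)$ is $R$. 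One then checks that the induced map from $\tau(M)$ is the unit of this adjunction, by tracing the unit $\mC\to\Omega^\infty\Sigma^\infty\mC$ followed by $S^0\to H(R)$.

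\textbf{Main obstacle.} We expect the hardest point to be the identification in Step 2. We have to show that the high-degree information lost in the Freudenthal range does not affect $\tau$, and that $\tau$ commutes with $\Omega^{\n+1}$ and with the relative tensor product over $H(R)$ on $\n$-connected objects. We would first verify the whole statement for free $M=\Free_{\bE_{\n+1}}(X)$. In that case $H(R)\wedge\Sigma^{\n+1}X_+$ is explicit: its $\Omega^\infty$ is $B^{\n+1}$ of $R\otimes\coprod_k (X^{k})_{\Sigma_k}$. We would then extend to general $M$ by sifted colimits, since both sides preserve sifted colimits and $\tau$ is a left adjoint.
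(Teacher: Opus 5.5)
\textbf{Step 1.} Your Step 1 is fine, and more direct than the paper: under \cref{symsp}, $H(R)\wedge\mC$ corresponds to $R\otimes\Free(\mC)$ in $\Mon_{\bE_\infty}(\infty\Cat)$, and you conclude with \cref{hujt}. You do use that lemma with one factor only $(-1)$-connected, which it does not state. The same proof covers that case, since $\Sigma^{\n+1}(X)\wedge Y\simeq\Sigma^{\n+1}(X\wedge Y^{\co\op})$ for arbitrary $Y$.

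\textbf{The gap is Step 2, which carries all of (2).} You need the exact statement that $\Omega^{\n+1}(\mC)\to\Omega^{\n+1}(H(\mC;R))$ is the unit of the free functor from $\Mon_{\bE_{\n+1}}(\infty\Cat)$ to left $R$-modules in $\Mon_{\bE_\infty}(\infty\Cat)$. The argument you give cannot produce this. \cref{F} and its corollary only say that $X\to\Omega\Sigma(X)$ and $\Omega^{k}\Sigma^{k}(X)\to\Omega^\infty\Sigma^\infty(X)$ are highly connective. That is a comparison in a range, not a universal property. In fact nothing is lost in a range here: the identification holds on the nose. So the ``relevant truncation'' and ``group-free correction'' are not things you need to control, and you never need $\tau$ to commute with $\Omega^{\n+1}$ or with $H(R)\wedge(-)$.

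Your fallback is viable: check $M=\Free_{\bE_{\n+1}}(X)$ via \cref{freespar}, then pass to sifted colimits. But it works only after two further steps. First, you must construct the natural comparison map by adjunction. Second, you must verify that $M\mapsto\Omega^{\n+1}H(B^{\n+1}M;R)$ preserves sifted colimits. It does, because the spectra involved are $(\n+1)$-fold shifts of connective ones, and $\Omega^\infty$ on $\Cat\Sp_{\geq0}$ is the forgetful functor of $\Mon_{\bE_\infty}(\infty\Cat)$. As written, this part is a plan rather than an argument.

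\textbf{The missing idea is purely adjunction-theoretic.} The paper lifts the delooping adjunction $B^{\n+1}\dashv\Omega^{\n+1}$ of \cref{deloop} to left $R$-modules in $\Mon_{\bE_\infty}(\infty\Cat)$. This is possible because there $B^{\n+1}$ is right tensoring with the free $\bE_\infty$-monoid on $S^{\n+1}$, hence linear. The lifted left adjoint is an equivalence onto modules whose underlying $\infty$-category is $\n$-connected. Consequently the $\n$-connected covers $B^{\n+1}\Omega^{\n+1}$ commute with forgetting the module structure. Passing to left adjoints gives two conclusions:
\begin{enumerate}
\item the free $R$-module on an $\n$-connected $\mC$ is $\n$-connected, which is (1);
\item transporting along $B^{\n+1}$, the map $\Omega^{\n+1}(\mC)\to\Omega^{\n+1}(H(\mC;R))$ exhibits the free $R$-module on the $\bE_{\n+1}$-monoid $\Omega^{\n+1}(\mC)$.
\end{enumerate}

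There is also a shortcut. The composite $\Sigma^\infty\circ B^{\n+1}$ is left adjoint to $\Omega^{\n+1}\circ\Omega^\infty$. Let $F$ be left adjoint to the forgetful functor $\Mon_{\bE_\infty}(\infty\Cat)\to\Mon_{\bE_{\n+1}}(\infty\Cat)$. Then \cref{symsp0} gives $\Sigma^\infty B^{\n+1}M\simeq\Sigma^{\n+1}B^\infty(FM)$, and monoidality in \cref{symsp} yields $H(B^{\n+1}M;R)\simeq B^{\n+1}(R\otimes FM)$.

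Finally, $\tau$ is applied as the left adjoint of the induced adjunction between left $R$-modules in $\Mon_{\bE_\infty}(\infty\Cat)$ and in $\Mon_{\bE_\infty}(\Poset)$. Since the right adjoints commute with the forgetful functors, $\tau$ carries free objects to free objects. This is the precise form of your Step 3.
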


\begin{proof}

For every $\n \geq 0$ let $ \infty\Cat^{\leq \n} \subset \infty\Cat $ be the full subcategory of $\n$-connected $\infty$-categories. 
We prove first that for every $\n \geq 0$ the free functor $$\infty\Cat_* \to {R\mathrm{-}\Mod(\Mon_{\bE_\infty}(\infty\Cat))}$$ sends $\n$-connected $\infty$-categories with distinguished object to left $R$-modules whose underlying $\infty$-category is $\n$-connected. 
Since the full subcategory $ \infty\Cat^{\leq \n} \subset \infty\Cat $ is closed under finite products,
there is an induced embedding 
$$ \Mon_{\bE_\infty}(\infty\Cat^{\leq \n}) \subset \Mon_{\bE_\infty}(\infty\Cat)$$ whose essential image precisely consists of the symmetric monoidal $\infty$-categories whose underlying $\infty$-category is $\n$-connected. By \cref{hujt} the full subcategory $$ \Mon_{\bE_\infty}(\infty\Cat^{\leq \n}) \subset  \Mon_{\bE_\infty}(\infty\Cat)$$
is closed under the monoidal structure induced by the Gray-tensor product.
So there is an induced embedding 
$$ {R\mathrm{-}\Mod(\Mon_{\bE_\infty}(\infty\Cat^{\leq \n}))} \subset {R\mathrm{-}\Mod(\Mon_{\bE_\infty}(\infty\Cat))}$$ whose essential image precisely consists of the $R$-modules whose underlying $\infty$-category is $\n$-connected. 

By \cref{deloop} there is an adjunction
\begin{equation}\label{adki} B^{\n+1}: \Mon_{\bE_{\n+1}}(\infty\Cat)  \rightleftarrows \infty\Cat_*: \Omega^{\n+1}\end{equation}
whose left adjoint preserves finite products and induces an equivalence to $ \infty\Cat_*^{\leq \n} \subset \infty\Cat_* .$
Hence we obtain an induced adjunction

\begin{equation}\label{ulurti}
B^{\n+1}: \Mon_{\bE_\infty}(\infty\Cat) \simeq \Mon_{\bE_\infty}(\Mon_{\bE_{\n+1}}(\infty\Cat)) \rightleftarrows \Mon_{\bE_\infty}(\infty\Cat_*) \simeq \Mon_{\bE_\infty}(\infty\Cat): \Omega^{\n+1}
\end{equation}
whose left adjoint is an embedding, and which by \cref{deoo} underlies a reduced antioriented adjunction.
Moreover the left adjoint $B^{\n+1}$ is a linear left $\Mon_{\bE_\infty}(\infty\Cat)$-enriched functor that tensors from the right with the free reduced $\bE_\infty$-algebra on $S^{\n+1}.$
Hence adjunction (\ref{ulurti}) gives rise to an adjunction on left $R$-modules
\begin{equation}\label{adio}
B^{\n+1}: {R\mathrm{-}\Mod(\Mon_{\bE_\infty}(\infty\Cat))} \rightleftarrows  {R\mathrm{-}\Mod(\Mon_{\bE_\infty}(\infty\Cat))}  : \Omega^{\n+1}\end{equation}
whose left adjoint induces an equivalence to the full subcategory $$ {R\mathrm{-}\Mod(\Mon_{\bE_\infty}(\infty\Cat^{\leq \n}))}$$ and which covers adjunction (\ref{adki}).
Hence there is a commutative square of right adjoints
$$\begin{xy}
\xymatrix{
{R\mathrm{-}\Mod(\Mon_{\bE_\infty}(\infty\Cat))} \ar[d] \ar[rr]^{B^{\n+1} \circ \Omega^{\n+1}}
&& {R\mathrm{-}\Mod(\Mon_{\bE_\infty}(\infty\Cat ^{\leq \n}))} \ar[d]
\\ 
\infty\Cat_*  \ar[rr]^{B^{\n+1} \circ \Omega^{\n+1}} && \infty\Cat^{\leq \n}_*.
}
\end{xy}$$	

Thus there is a commutative square of left adjoints 
$$\begin{xy}
\xymatrix{
\infty\Cat^{\leq \n}_* \ar[d] \ar[rr]
&& \infty\Cat_* \ar[d]^{H(-;R)}
\\ 
{R\mathrm{-}\Mod(\Mon_{\bE_\infty}(\infty\Cat^{\leq \n}))}  \ar[rr] && {R\mathrm{-}\Mod(\Mon_{\bE_\infty}(\infty\Cat))}.
}
\end{xy}$$	

We prove the second part. By the first part for every $\n$-connected $\mC \in \infty\Cat_*$ the functor $$ \mC \to H(\mC;R) $$
exhibits the target as free on $\mC$ in the category $R\mathrm{-}\Mod(\Mon_{\bE_\infty}(\infty\Cat^{\leq \n})).$
Using adjunction (\ref{adio}) the functor $$ \Omega^{\n+1}(\mC) \to \Omega^{\n+1}(H(\mC;R)) $$
exhibits the target as free $R$-modules in $\Mon_{\bE_\infty}(\infty\Cat) $ on $  \Omega^{\n+1}(\mC) \in \Mon_{\bE_{\n+1}}(\infty\Cat) $.
The embedding $\Poset \subset \infty\Cat$ admits a left adjoint $\tau$ that preserves finite products.
The adjunction $$ \tau: \infty\Cat \rightleftarrows \Poset$$ induces an adjunction
$$R\mathrm{-}\Mod(\Mon_{\bE_\infty}(\infty\Cat)) \rightleftarrows {R\mathrm{-}\Mod(\Mon_{\bE_\infty}(\Poset))}$$
that covers the former adjunction. Since the right adjoints agree via the forgetful functors,
the left adjoint preserves the free functors. Hence the functor 
$$ \tau(\Omega^{\n+1}(\mC))) \to \tau(\Omega^{\n+1}(H(\mC;R)))) $$
exhibits the target as free $R$-module in $\Mon_{\bE_\infty}(\Set) $ on $ \tau(\Omega^{\n+1}(\mC))) \in \Mon_{\bE_{\n+1}}(\Set) $.

\end{proof}

\begin{corollary} Let $\mC \in \infty\Cat_*$ be connected.
	
\begin{enumerate}
\item Then $H(\mC)$ is connected and the functor
$$\Omega(\mC) \to \Omega(H(\mC)) $$ induces on weak equivalence classes the universal map to the abelianization.
		
\item Let $ n > 0$ and $\mC$ be $\n$-connected. Then $H(\mC)$ is $\n$-connected
and the functor $$\Omega^{\n+1}(\mC) \to \Omega^{\n+1}(H(\mC))$$ induces an isomorphism on weak equivalence classes.
\end{enumerate}
	
\end{corollary}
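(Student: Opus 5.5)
The plan is to specialize \cref{H} to the rig $R=\bN$, since by definition $H(\mC)=H(\mC;\bN)$, and then identify the free left $\bN$-module in $\C\mon(\Poset)$ explicitly. Throughout I read ``weak equivalence classes'' as the categorical components $\tau(-)$ that appear in \cref{H} (2).

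\textbf{Connectivity.} For (1) apply \cref{H} (1) with $\n=0$; for (2) apply it with the given $\n>0$. This shows directly that $H(\mC)$ is $\n$-connected in each case.

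\textbf{Reducing the components statement.} By \cref{H} (2), the map $\tau(\Omega^{\n+1}(\mC))\to\tau(\Omega^{\n+1}(H(\mC)))$ is the composite of two universal maps:
\begin{itemize}
\item the abelianization of the $\bE_{\n+1}$-monoid $\tau(\Omega^{\n+1}(\mC))$ in $\Poset$ to a commutative monoid $M$ in $\Poset$;
\item the unit $M\to\bN\ot M$ into the free left $\bN$-module in $\C\mon(\Poset)$.
\end{itemize}
So I only need to show that the second map is an isomorphism, and in case (2) that the first one is too.

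\textbf{The free $\bN$-module step.} I would argue that $\bN$ is the tensor unit of $\C\mon(\Poset)=\Mon_{\bE_\infty}(\Poset)$ with the induced tensor product. This is because $\bN$ is the free commutative monoid on the point, and $\Mon_{\bE_\infty}$ is a symmetric monoidal localization by (\ref{loo}). Hence left $\bN$-modules are just the objects of $\C\mon(\Poset)$, the free functor is the identity, and $M\to\bN\ot M\simeq M$ is an isomorphism. This gives (1): the map on components is exactly the universal map to the abelianization.

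\textbf{Abelianization is trivial in case (2).} Since $\n\ge1$, the monoid $\tau(\Omega^{\n+1}(\mC))$ is an $\bE_{\n+1}$-monoid with $\n+1\ge2$, in the $(1,1)$-category $\Poset$, whose mapping spaces are discrete. By an Eckmann--Hilton argument, every $\bE_2$-monoid in such a $1$-category is canonically commutative, so $\bE_{\n+1}\to\bE_\infty$ induces an equivalence on monoids in $\Poset$. Therefore the abelianization map is an isomorphism. Combined with the previous step, the induced map on categorical components is an isomorphism, proving (2).

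The main point requiring care is this Eckmann--Hilton step: one must justify that $\Mon_{\bE_{k}}(\Poset)\to\Mon_{\bE_\infty}(\Poset)$ is an equivalence for $k\ge2$. I would deduce it from the fact that $\Poset$ is a $1$-category together with the additivity $\Mon_{\bE_k}\simeq\Mon_{\bE_{k-1}}(\Mon)$ recalled before \cref{deloop}.
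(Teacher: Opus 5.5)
Your proposal is correct and follows the intended route. The paper gives no separate proof of this corollary. It is meant as the specialization $R=\bN$ of \cref{H}, where the free $\bN$-module step is trivial because $\bN$ is the unit of $\C\mon(\Poset)$, and for $\n\geq 1$ the abelianization step is trivial by Eckmann--Hilton, since $\Poset$ has discrete mapping spaces. This is exactly your argument.
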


\subsection{A categorical Eilenberg-Zilber theorem}

Although the Gray smash product is not symmetric, we show that 
$B^\infty(R)$ is central for the Gray-smash product for every rig $R$, and that the resulting functor 
$$ Z \mapsto H(Z;R) = H(R) \wedge Z \simeq Z \wedge H(R)$$
is monoidal (\cref{liftos}). This is a categorical Eilenberg--Zilber theorem for categorical $R$-homology.


\begin{theorem}\label{bimo}

Let $R$ be a rig space. There is a monoidal functor $$\Cat\Sp \to {_{B^\infty(R)}\Mod_{B^\infty(R)}(\Cat\Sp)}, Z \mapsto B^\infty(R) \wedge Z \simeq Z \wedge B^\infty(R).$$

\end{theorem}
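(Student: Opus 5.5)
The strategy is to show that $B^\infty(R)$ lies in the centre of $(\Cat\Sp,\wedge)$, meaning that left and right smashing with it agree compatibly with the algebra structure. Once that is established, the functor $Z\mapsto B^\infty(R)\wedge Z$ becomes monoidal into bimodules, by the usual argument for central algebras. To get centrality, I would work entirely inside connective categorical spectra, where \cref{symsp} identifies $\Cat\Sp_{\geq 0}$ monoidally with $\Mon_{\bE_\infty}(\infty\Cat)$ equipped with the Gray-induced tensor product. The key point is that $R$ is a rig \emph{space}, so its underlying $\infty$-category is a groupoid.

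The first step compares $B^\infty(R)\wedge Z$ with $Z\wedge B^\infty(R)$. Since both sides preserve small colimits and (suitable) tensors in $Z$, it suffices by \cref{spgen} to treat generators. Concretely, by \cref{sptens}, \cref{unicity} and \cref{symsp} one reduces to $Z=\Sigma^\infty X$ for $X\in\infty\Cat_*$ together with the desuspensions $\Omega^n(S^0)$. Desuspensions commute past both sides because $\Sigma$ is central up to the twist $(-)^{\co\op}$ (\cref{reduc}(8),(9), \cref{ahoit}). This leaves the unstable claim: for a space $K$ and any $X\in\infty\Cat_*$ there is a natural equivalence $K\wedge X\simeq X\wedge K$. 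By \cref{grayspace} the Gray tensor product restricts to the cartesian product on spaces, and more generally $K\boxtimes \mC\simeq K\times\mC\simeq \mC\boxtimes K$ whenever $K$ is a space. This should follow from density of cubes (\cref{dense}): $K$ is a colimit of points, both functors preserve colimits in $K$, and $\ast\boxtimes\mC\simeq\mC\simeq\mC\boxtimes\ast$. Passing to the smash product then gives $K\wedge X\simeq X\wedge K$ naturally, i.e. $\Sigma^\infty K$ is central. It remains to extend from $\Sigma^\infty K$ to $B^\infty(R)$. For this I would write $B^\infty(R)$ as a colimit (for instance a bar or sifted resolution by free $\bE_\infty$-objects on spaces) inside $\Mon_{\bE_\infty}(\infty\Grp)\subset\Mon_{\bE_\infty}(\infty\Cat)$, using that the localization $\Mon_{\bE_\infty}$ of \eqref{loo} is symmetric monoidal, and then apply the preceding equivalence termwise.

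The second step upgrades the pointwise equivalence to a half-braiding, that is, a central structure on the algebra $B^\infty(R)$ compatible with multiplication. My approach is to observe that the symmetric monoidal subcategory $(\infty\Grp_*,\wedge)$ acts on $(\infty\Cat_*,\wedge)$ centrally, through the cartesian symmetric structure. After stabilization this yields a monoidal functor from $\Mon_{\bE_\infty}(\infty\Grp)$ (or from spectra of spaces) into the Drinfeld centre $\mathcal Z(\Cat\Sp)$. The image of the rig $R$ under this functor is an algebra in the centre. The standard construction then sends $Z$ to the free bimodule $B^\infty(R)\wedge Z$, with right action given by the half-braiding, and this construction is monoidal for the relative tensor product over $B^\infty(R)$.

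I expect the main obstacle to be making the half-braiding coherent. The pointwise equivalence $K\boxtimes\mC\simeq\mC\boxtimes K$ must be promoted to a monoidal natural transformation, i.e. a lift of $\infty\Grp\to\infty\Cat$ to an $\bE_2$/central functor. My first attempt would be to use the universal property of \cref{dense}: monoidal structures on $\infty\Cat$ are determined on cubes, so the action of the cartesian $\infty\Grp$ on $\cube$ by $K\boxtimes\cube^n\simeq\cube^n\boxtimes K$ can be built from the symmetric structure of $\infty\Grp$, and then Kan-extended. Checking compatibility with the $\co\op$-twists that enter through $\Sigma$ when passing to spectra is the delicate part.
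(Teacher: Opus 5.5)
\paragraph{Where the gap is.} Your outer architecture agrees with the paper's. You exhibit $B^\infty(R)$ as an algebra in the centre $\Cat\Sp\ot\Cat\Sp^\rev\mathrm{-}\L\Fun(\Cat\Sp,\Cat\Sp)$, and then use that a central algebra $A$ in $\mV$ gives a monoidal functor $V\mapsto A\ot V\simeq V\ot A$ into ${}_{A}\Mod_{A}(\mV)$. The latter is \cref{ohas}, which the paper proves via Eilenberg--Moore objects. However, the step that carries all the content, the \emph{coherent} central structure on $B^\infty(R)$, is left open, and the mechanism you sketch does not supply it.

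\begin{itemize}
\item In Step 1 you never specify a comparison map $B^\infty(R)\wedge Z\to Z\wedge B^\infty(R)$ natural in $Z$ whose invertibility could be checked on generators. Pointwise equivalences on generators, or on a resolution of $B^\infty(R)$, do not assemble into a half-braiding compatible with the multiplication of $R$.
\item Density of cubes (\cref{dense}) would at best let you extend a natural transformation defined on cubes. It does not produce the infinitely many coherences a lift to the Drinfeld centre needs: compatibility with $\boxtimes$ in the other variable, and monoidality in the central variable.
\item Drinfeld centres are not functorial along monoidal functors such as $\Sigma^\infty$. So ``after stabilization this yields a monoidal functor into $\mathcal Z(\Cat\Sp)$'' is an unjustified step, not a consequence.
\end{itemize}

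\paragraph{The missing idea.} Centrality here is formal, and the paper obtains it in three steps.
\begin{enumerate}
\item If $\mC$ is presentably symmetric monoidal and $\mA$ is an algebra in ${}_{\mC}\Mod(\Pr^L)$, the unit $\mC\to\mA$ factors monoidally through $\mA\ot\mA^\rev\mathrm{-}\L\Fun(\mA,\mA)$ (\cref{leull}, \cref{cort}). For $\mC=\infty\Grp$, the unit of $\Pr^L$, this is exactly the coherent form of your observation $K\boxtimes X\simeq\colim_K X\simeq X\boxtimes K$. No cubes are needed: every presentably monoidal category is canonically an algebra in $\Pr^L={}_{\infty\Grp}\Mod(\Pr^L)$.
\item The lax symmetric monoidal functor $\Mon_{\bE_\infty}\colon\Pr^L\to\Pr^L$ of Gepner--Groth--Nikolaus makes $\Mon_{\bE_\infty}(\mB)$ an algebra over $\Mon_{\bE_\infty}(\infty\Grp)$ for any presentably monoidal $\mB$. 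This gives \cref{corril}.
\item Centrality is transported to $\Cat\Sp$ not through any functoriality of centres. Instead, stabilization $\mC\mapsto\Sp(\overline{\Sp}(\mC))$ is a 2-functorial localization of $\wedge\Pr^L\wedge$, so it induces a monoidal functor on categories of bilinear left adjoint endofunctors (\cref{corp}, \cref{corpy}). Combine this with $\Mon_{\bE_\infty}(\Cat\Sp)\simeq\Cat\Sp$ and with \cref{unistab}, which identifies $\infty\Cat_*$-bilinear with $\Cat\Sp$-bilinear endofunctors. This yields \cref{corrilll}, and passing to algebras gives \cref{copli}.
\end{enumerate}
Along this route the $\co\op$-twists coming from $\Sigma$, and any resolution of $B^\infty(R)$, never have to be handled by hand.
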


We obtain the following immediate corollary:

\begin{corollary}\label{bimo2} 
Let $R$ be a rig space. There is a lax monoidal functor 
$$ \infty\Cat_* \to {_{R}\Mod_{R}(\infty\Cat_*)}, Z \mapsto \Omega^\infty(B^\infty(R) \wedge Z) \simeq \Omega^\infty(Z \wedge B^\infty(R)).$$	
	
\end{corollary}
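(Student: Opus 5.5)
The plan is to reduce the statement to the universal property of $\Cat\Sp$ as the tensor unit of the monoidal structure on presentable bistable bioriented categories (\cref{stten}, \cref{unicity}). The aim is to show that $B^\infty(R)$ is a \emph{central} algebra, that is, that it lifts to an $\bE_2$-type structure (an algebra in the Drinfeld center) in $(\Cat\Sp,\wedge)$. Then $Z \mapsto B^\infty(R)\wedge Z$ becomes a monoidal functor into bimodules, by the standard construction for central algebras in a monoidal category.

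First, by \cref{symsp} the functor $B^\infty=\Mon_{\bE_\infty}(\Sigma^\infty)\colon \Mon_{\bE_\infty}(\infty\Cat)\to \Cat\Sp_{\geq 0}$ is a monoidal equivalence. Here $\Mon_{\bE_\infty}(\infty\Cat)$ carries the Gray tensor product, transported via the localization (\ref{loo}). A rig space $R$ is an associative algebra in $\Mon_{\bE_\infty}(\infty\Grp)$, and $\infty\Grp\subset\infty\Cat$ is a monoidal embedding on which the Gray product restricts to the cartesian product (\cref{grayspace}). Consequently $R$, viewed in $\Mon_{\bE_\infty}(\infty\Cat)$, lies in the image of the symmetric monoidal category $(\Mon_{\bE_\infty}(\infty\Grp),\ot)$. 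The key point I want to establish is that for every space $K$ and every $X\in\infty\Cat$ there are natural equivalences $K\boxtimes X\simeq K\times X\simeq X\boxtimes K$. These exhibit $\infty\Grp\to\infty\Cat$ as lifting to a braided monoidal functor into the Drinfeld center $\mathcal Z(\infty\Cat,\boxtimes)$.

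To prove this, I will use that $K\boxtimes(-)$ and $(-)\boxtimes K$ are both colimit-preserving in each variable and agree with $K\times(-)$ on oriented cubes. This reduces to $K=*$, since spaces are colimits of points and $\boxtimes$ preserves colimits in each variable. The coherent naturality follows from the density of cubes (\cref{dense}). Passing to $\Mon_{\bE_\infty}$ and then to $\Cat\Sp$ via the monoidal functors of \cref{symsp} and \cref{unicity}, I obtain a braided monoidal functor $\Mon_{\bE_\infty}(\infty\Grp)\to\mathcal Z(\Cat\Sp,\wedge)$. The rig $R$ is an $\bE_1$-algebra in the source, so it is sent to an $\bE_1$-algebra in the center. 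Equivalently, $B^\infty(R)$ becomes an algebra with a half-braiding $B^\infty(R)\wedge Z\simeq Z\wedge B^\infty(R)$ compatible with multiplication.

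Given a central algebra $A$ in a monoidal category $\mathcal M$, the functor $Z\mapsto A\wedge Z$, with left action by multiplication and right action through the half-braiding, is monoidal into ${_A\Mod_A}$ with the relative tensor product. This holds because $(A\wedge Z)\ot_A(A\wedge W)\simeq A\wedge Z\wedge W$. I will invoke this general statement, for instance by viewing $\mathcal Z(\mathcal M)$ as acting on $\mathcal M$ and using Lurie's relative tensor product of \cite[\S 4.4]{lurie.higheralgebra}.

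The main obstacle is the second step. Making the half-braiding $K\boxtimes X\simeq X\boxtimes K$ fully coherent, natural in $X$ and monoidal in $K$, as a genuine map to the Drinfeld center, requires more than objectwise equivalences. My first attempt will use that $\infty\Grp$ is the free presentable category on a point. Then $(\infty\Grp,\times)$ is the initial presentably monoidal category, so any presentably monoidal $\mathcal M$ receives a unique monoidal left adjoint from it, and this functor automatically factors centrally through $\mathcal Z(\mathcal M)$ by uniqueness, because $\Pr^L$ is symmetric monoidal with unit $\infty\Grp$. Then I will check that this factorization is compatible with the localization to $\bE_\infty$-monoids and to $\Cat\Sp$.
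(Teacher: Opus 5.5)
Your core mechanism matches the paper's proof of \cref{bimo}. The paper shows (\cref{cort}, \cref{corril}, \cref{copli}) that $B^\infty(R)$ is an algebra in the monoidal category of $\Cat\Sp$-bilinear endofunctors of $\Cat\Sp$, i.e.\ in the Drinfeld center, and then applies \cref{ohas}, which is your ``central algebra gives a monoidal functor into bimodules''.

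However, your proposal stops at the monoidal functor $\Cat\Sp\to{_{B^\infty(R)}\Mod_{B^\infty(R)}(\Cat\Sp)}$, and the passage to the actual statement is missing. Three steps remain:
\begin{enumerate}
\item Precompose with the monoidal functor $\Sigma^\infty\colon\infty\Cat_*\to\Cat\Sp$ of \cref{unicity}. In the statement, $B^\infty(R)\wedge Z$ is a right tensor, i.e.\ $B^\infty(R)\wedge\Sigma^\infty Z$.
\item Postcompose with $\Omega^\infty$. As the right adjoint of a monoidal left adjoint it is only lax monoidal. It therefore induces a lax monoidal functor ${_A\Mod_A(\Cat\Sp)}\to{_{\Omega^\infty A}\Mod_{\Omega^\infty A}(\infty\Cat_*)}$ via the comparison maps $\Omega^\infty M\wedge_{\Omega^\infty A}\Omega^\infty N\to\Omega^\infty(M\wedge_A N)$.
\item Identify $\Omega^\infty B^\infty(R)\simeq R$ as algebras in $(\infty\Cat_*,\wedge)$. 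This holds because $B^\infty$ is a monoidal embedding (\cref{symsp}), so the unit of the lax monoidal adjunction is an equivalence at $R$.
\end{enumerate}
Step 2 is exactly where the word ``lax'' in the statement comes from, and your argument never accounts for it.

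Second, the step ``passing to $\Mon_{\bE_\infty}$ and then to $\Cat\Sp$ via the monoidal functors of \cref{symsp} and \cref{unicity}'' does not work as stated. Drinfeld centers are not functorial along monoidal functors such as $\Sigma^\infty$. A half-braiding of $F(k)$ against objects $F(x)$ does not by itself give half-braidings against objects of $\Cat\Sp$ outside the image, such as the desuspensions $\Omega^n(S^0)$. So the objectwise computation $K\boxtimes X\simeq K\times X\simeq X\boxtimes K$, though true, is not the relevant input.

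What works is your fallback, applied directly to the target and in the preadditive setting. Using $\infty\Grp$ alone is not enough, since $R$ is an algebra in $\Mon_{\bE_\infty}(\infty\Grp)$. The argument runs as follows. $\Cat\Sp$ is preadditive (\cref{preadd}), hence an associative algebra in $\Pr^L_{\mathrm{preadd}}$, whose unit is $\Mon_{\bE_\infty}(\infty\Grp)$. Initiality of the unit among $\bE_2$-algebras gives a monoidal left adjoint $\Mon_{\bE_\infty}(\infty\Grp)\to\mathcal Z(\Cat\Sp)$ over $\Cat\Sp$. Uniqueness of monoidal left adjoints out of $\Mon_{\bE_\infty}(\infty\Grp)$ then identifies the composite with the restriction of $B^\infty$. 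Only a monoidal, not braided, factorization is needed to transport $R$.

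This amounts to \cref{corril} with $\mB=\Cat\Sp$ and $\Mon_{\bE_\infty}(\Cat\Sp)\simeq\Cat\Sp$ (\cref{hhoo}). The paper instead routes through $\Mon_{\bE_\infty}(\infty\Cat)$ and the stabilization of bilinear endofunctors (\cref{corpy}).
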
 

\begin{corollary}\label{liftos} Let $R$ be a rig. Categorical $R$-homology $ \infty\Cat_* \to \infty\Cat_*$ lifts to a left adjoint monoidal functor 
$$ \infty\Cat_* \to {_{H(R)}\Mod_{H(R)}(\Cat\Sp)}.$$
\end{corollary}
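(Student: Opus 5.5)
The plan is to exhibit $B^\infty(R)$ as a commutative-type object for the Gray smash product by routing everything through the preadditive localization $\Mon_{\bE_\infty}$. By \cref{symsp}, the functor $\Mon_{\bE_\infty}(\Sigma^\infty)$ gives a monoidal equivalence $\Mon_{\bE_\infty}(\infty\Cat) \simeq \Cat\Sp_{\geq0}$, and $B^\infty(R)$ is the image of the rig space $R$, viewed as an associative algebra in $\Mon_{\bE_\infty}(\infty\Grp) \subset \Mon_{\bE_\infty}(\infty\Cat)$.

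The key observation is the following. By \cref{grayspace}, the inclusion $\infty\Grp \subset \infty\Cat$ is monoidal for Gray and cartesian structures. Moreover, for a space $K$ and any $\infty$-category $\mC$, the Gray tensor product $K \boxtimes \mC$ agrees with $K\times\mC$; equally $\mC \boxtimes K \simeq \mC \times K$. I would check this on generators: $\cube^0 \boxtimes \cube^n$, using that spaces are colimits of points and $\boxtimes$ preserves colimits in each variable. Hence spaces are central for $\boxtimes$, with a canonical braiding $K\boxtimes \mC\simeq \mC\boxtimes K$ coming from the symmetry of the cartesian product. Passing to pointed objects and then to $\Mon_{\bE_\infty}$ via the symmetric monoidal localization (\ref{loo}), $\Mon_{\bE_\infty}(\infty\Grp)$ lands in the Drinfeld center of $(\Cat\Sp, \wedge)$. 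I would try first to construct this as a monoidal functor $\Mon_{\bE_\infty}(\infty\Grp)\to \mathcal{Z}(\Cat\Sp)$ by tensoring $\Pr^L$-modules: $\Cat\Sp \simeq \Cat\Sp \ot_{\Sp}\Sp$, where $\Sp(\infty\Grp)$ is $\bE_\infty$ and acts compatibly from both sides. The main obstacle is making this centrality coherent, namely an $\bE_2$-map into the center rather than objectwise equivalences. I expect to handle it by exhibiting $\Cat\Sp$ as an algebra in $\Sp$-modules in $\Pr^L$, with $\Sp$ acting through its symmetric monoidal structure. Then $\Sp \to \Cat\Sp$, the left adjoint of $\mathrm{pic}$, is central by general nonsense (Lurie HA 5.3 on centralizers of $\bE_\infty$-algebra maps). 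The connective part extends this to $\Mon_{\bE_\infty}(\infty\Grp)$ by the same argument applied to $\Mon_{\bE_\infty}(\infty\Grp)\to\Mon_{\bE_\infty}(\infty\Cat)$.

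Granting an $\bE_1$-algebra $A=B^\infty(R)$ in the center, with compatible half-braiding, the assignment $Z\mapsto A\wedge Z$ lifts to $A$-bimodules. The left action is multiplication, and the right action uses $A\wedge Z\wedge A\simeq A\wedge A\wedge Z\to A\wedge Z$ via the half-braiding. The equivalence $A\wedge Z\simeq Z\wedge A$ is the half-braiding itself.

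Monoidality uses $(A\wedge Z)\otimes_A(A\wedge Z')\simeq A\wedge Z\wedge Z'$, which holds by moving $A$ past $Z$ and using the bar construction. Unitality is immediate. Formally, this is the standard fact that for $A$ an algebra in the center $\mathcal Z(\mC)$, the free functor $\mC\to{}_A\Mod_A(\mC)$ is monoidal. I would prove this by identifying ${}_A\Mod_A$ with right modules over $A\otimes A^{\rev}$ in the center, or cite it.
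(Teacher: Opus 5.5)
Your architecture is the paper's. You exhibit $H(R)=B^\infty(R)$ as an algebra in ${\Cat\Sp\ot\Cat\Sp^\rev}\mathrm{-}\L\Fun(\Cat\Sp,\Cat\Sp)$, use that the free-bimodule functor over such an algebra is monoidal (\cref{ohas}), and precompose with the monoidal left adjoint $\Sigma^\infty$ of \cref{unicity}. You omit that last step, though the statement is about $\infty\Cat_*$.

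The gap is your coherence step: it would fail. $\Cat\Sp$ is not an algebra in modules over classical spectra $\Sp(\infty\Grp)$ in $\Pr^L$, and the embedding $\Sp(\infty\Grp)\to\Cat\Sp$ (left adjoint of $\mathrm{pic}$) is not monoidal, so HA 5.3 has nothing to act on. Since $\Sp(\infty\Grp)$ is an idempotent algebra in $\Pr^L$, anything receiving a monoidal left adjoint from it becomes an $\Sp(\infty\Grp)$-module. It is then stable in the classical sense, with grouplike mapping spaces. But by the paper's Barratt--Priddy--Quillen corollary, $\pi_0\Map_{\Cat\Sp}(\Sigma^\infty S^0,\Sigma^\infty S^0)\cong\pi_0(\coprod_{n\geq0}B\Sigma_n)\cong\bN$, which is not a group. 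Concretely, the embedding sends the sphere spectrum to an object with $\Omega^\infty\simeq QS^0$, not to the unit $\Sigma^\infty S^0$. The only monoidal left adjoint between the two goes the other way, $\tau_0\colon\Cat\Sp\to\Sp(\infty\Grp)$.

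The repair is the argument in your last sentence, but with $\Mon_{\bE_\infty}(\infty\Grp)$ as the commutative base from the outset, applied to $\Cat\Sp$ itself.
\begin{itemize}
\item The localization (\ref{loo}) is smashing with unit $\Mon_{\bE_\infty}(\infty\Grp)$.
\item Hence the preadditive presentably monoidal category $\Cat\Sp$ (\cref{preadd}, \cref{hhoo}) is an algebra in $\Mon_{\bE_\infty}(\infty\Grp)$-modules in $\Pr^L$, with unit the monoidal left adjoint $H$.
\item The unit of an algebra over a symmetric monoidal base factors monoidally through the bimodule endomorphisms. This is \cref{leull}, \cref{cort} and \cref{corril}; take $\mB=\Cat\Sp$ and use $\Mon_{\bE_\infty}(\Cat\Sp)\simeq\Cat\Sp$.
\item This gives \cref{corrilll}, and on algebras \cref{copli}.
\end{itemize}
Running the argument for $\Mon_{\bE_\infty}(\infty\Cat)$ and then ``extending'' does not by itself give centrality in $\Cat\Sp$. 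Drinfeld centers are not functorial along monoidal functors such as $\Cat\Sp_{\geq0}\subset\Cat\Sp$.

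For this corollary you could instead stay in $\Mon_{\bE_\infty}(\infty\Cat)\simeq\Cat\Sp_{\geq0}$, through which $\Sigma^\infty$ factors monoidally by (\ref{adjass}) and \cref{symsp}. You would then push the bimodule category forward along the monoidal left adjoint $\Mon_{\bE_\infty}(\Sigma^\infty)$, but that step has to be stated and justified.

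Your objectwise centrality of spaces for $\boxtimes$ is true but plays no role once this base is used.
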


\cref{bimo} follows immediately from \cref{copli} and \cref{ohas}.

\begin{lemma}\label{leull}

Let $\mV, \mW$ be presentably symmetric monoidal categories, $\phi: \mV \to \mW$ a lax symmetric monoidal functor and $A$ an associative algebra in $\mV.$
The image $\phi(\tu) \to \phi(A)$ under $\phi$ of the unique map of associative algebras $\tu \to A$ in $\mV$ factors in $\Alg(\mW)$ as $$\phi(\tu) \to \L\Mor_{{_{\phi(A)}\Mod_{\phi(A)}(\mW)}}(\phi(A),\phi(A))\to \phi(A).$$

\end{lemma}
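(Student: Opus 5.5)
This is essentially formal, using the bimodule structure of $A$ over itself. I would work in $\mW$ and view $\phi(A)$ as an associative algebra, which is possible because $\phi$ is lax (symmetric) monoidal. Then $\phi(A)$ is a $(\phi(A),\phi(A))$-bimodule in $\mW$, so the endomorphism object
$$E:=\L\Mor_{{_{\phi(A)}\Mod_{\phi(A)}(\mW)}}(\phi(A),\phi(A))$$
is an associative algebra in $\mW$, the endomorphism algebra. Here I use that ${_{\phi(A)}\Mod_{\phi(A)}(\mW)}$ is presentably left tensored over $\mW$, which holds because $\mW$ is symmetric monoidal, so the morphism object exists.

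For the second map $E \to \phi(A)$, I would take evaluation at the unit: the composite $E \simeq E\ot \tu \to E \ot \phi(A) \to \phi(A)$, using the unit $\tu \to \phi(A)$ and the action. To see it is an algebra map, I would identify $E$ with the center, i.e. the Hochschild cohomology object $\mathrm{HH}(\phi(A))$. The forgetful map $E \to \L\Mor_{{_{\phi(A)}\Mod(\mW)}}(\phi(A),\phi(A)) \simeq \phi(A)^{\mathrm{rev}}$ is then an algebra map; more precisely it is the map $Z(\phi(A)) \to \phi(A)$ of \cite[\S 5.3]{lurie.higheralgebra} (centralizer of $\id$). Because $E$ is $\bE_2$, the reversal does not matter up to the canonical identification. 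So I would cite Lurie's centralizer theory: the centre $Z(B)\to B$ is a map of $\bE_1$-algebras.

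For the first map $\phi(\tu) \to E$, I would use that $\tu \to A$ is the initial algebra map in $\mV$, so $A$ is a $\tu$-algebra and the identity exhibits $\tu$ as central in the trivial way. More concretely, the lax structure provides an algebra map $\tu_\mW \to \phi(\tu)$, and a map from $\phi(\tu)$ to $E$ should correspond, by the universal property of the centralizer, to an algebra map $\phi(\tu)\ot\phi(A)\to\phi(A)$ extending the identity. I would obtain this map from $\phi(\tu \ot A) \simeq \phi(A)$ together with the lax structure map $\phi(\tu)\ot\phi(A)\to\phi(\tu\ot A)$, which is an algebra map because $\phi$ is lax symmetric monoidal, so it preserves tensor products of algebras laxly. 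The composite with $E\to\phi(A)$ is then the restriction along $\phi(\tu)\to\phi(\tu)\ot\phi(A)$, which is $\phi(\tu\to A)$.

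The main obstacle is justifying that the lax structure map $\phi(\tu)\ot\phi(A)\to\phi(A)$ is a map of associative algebras (symmetry of $\phi$ is essential here) and that it restricts to the identity on $\phi(A)$ up to the unit identification $\tu_\mW\to\phi(\tu)$. Once that is in place, the factorization follows from the universal property of the centralizer.
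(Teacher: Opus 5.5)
Your route differs from the paper's and is sound in outline, but the step carrying the weight is not the one you flagged. You build both maps directly in $\mW$. The second map is the center map $Z(\phi(A))\to\phi(A)$. The first comes from the centralizer universal property applied to $\phi(\tu)\ot\phi(A)\to\phi(\tu\ot A)\simeq\phi(A)$.

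That this lax structure map is an algebra map restricting to the identity on $\phi(A)$ is standard. A lax symmetric monoidal functor induces a lax symmetric monoidal functor $\Alg(\mV)\to\Alg(\mW)$ for the pointwise tensor product of algebras. Your computation of the composite is also right.

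What you never justify is that $E$, with its \emph{composition} algebra structure, is equivalent to the centralizer $Z(\phi(A))$ as an associative algebra over $\phi(A)$. The lemma needs precisely the composition structure; it is what makes the middle term in \cref{cort} a monoidal category of endofunctors. Knowing that the underlying object of the center is the Hochschild cohomology object is not enough. You would need an additional Eckmann--Hilton type comparison between the centralizer algebra structure and composition.

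Separately, the appeal to the $\bE_2$-structure to undo the reversal is unnecessary. Forget the left action instead of the right one; this gives an algebra map $E\to\L\Mor_{\Mod_{\phi(A)}(\mW)}(\phi(A),\phi(A))\simeq\phi(A)$ directly.

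The paper avoids centers altogether. Since $\tu$ is initial in $\Alg(\mV)$, the unit $\tu\to A$ factors in $\Alg(\mV)$, for free, as
$\tu\to\L\Mor_{{_A\Mod_A(\mV)}}(A,A)\to\L\Mor_{\Mod_A(\mV)}(A,A)\simeq A$.
The second map is induced by the linear forgetful functor. The functor $\phi$ then induces a commutative square of (lax) linear functors relating ${_A\Mod_A(\mV)}\to\Mod_A(\mV)$ and ${_{\phi(A)}\Mod_{\phi(A)}(\mW)}\to\Mod_{\phi(A)}(\mW)$. This gives a commutative square of algebra maps of the form $\phi(\L\Mor(M,M))\to\L\Mor(\phi(M),\phi(M))$. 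Applying $\phi$ to the factorization in $\mV$ and composing yields the claim.

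So the paper only uses functoriality of endomorphism algebras, while your version additionally needs existence of centralizers and the comparison above. If you want to keep your framing, use the universal property of $E$ itself instead of that of the centralizer: $E$ is terminal among algebras acting on $\phi(A)$ in ${_{\phi(A)}\Mod_{\phi(A)}(\mW)}$. The required $\phi(\tu)$-action is what $\phi$ produces from the unique $\tu$-action on $A$ in ${_A\Mod_A(\mV)}$, and this is essentially the paper's argument.
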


\begin{proof}
The $\mV$-linear forgetful functor ${_{A}\Mod_A(\mV)} \to {\Mod_{A}(\mV)}$
induces a map in $\Alg(\mV)$: $$\L\Mor_{{_A\Mod_{A}(\mV)}}(A,A) \to \L\Mor_{\Mod_{A}(\mV)}(A,A) \simeq A.$$ 	
The lax symmetric monoidal functor $\phi$ induces a commutative square of left tensored categories and left linear functors:

$$\begin{xy}
\xymatrix{
{_A\Mod_{A}(\mV)} \ar[d] \ar[r]
& {_{\phi(A)}\Mod_{\phi(A)}(\mW)} \ar[d]
\\ 
{\Mod_{A}(\mV)} \ar[r] & {\Mod_{\phi(A)}(\mW)}.
}
\end{xy}$$
The latter induces a commutative square of associative algebras in $\mW$, which implies the claim:
$$\begin{xy}
\xymatrix{
\phi(\L\Mor_{{_A\Mod_{A}(\mV)}}(A,A)) \ar[d] \ar[r]
& \L\Mor_{{_{\phi(A)}\Mod_{\phi(A)}(\mW)}}(\phi(A),\phi(A)) \ar[d]
\\ 
\phi(\L\Mor_{{\Mod_{A}(\mV)}}(A,A)) \ar[r] \ar[d]^\simeq & \L\Mor_{{\Mod_{\phi(A)}(\mW)}}(\phi(A),\phi(A))\ar[d]^\simeq
\\ 
\phi(A) \ar[r]^= & \phi(A).
}
\end{xy}$$

\end{proof}

\begin{corollary}\label{cort}

Let $\mC$ be a presentably symmetric monoidal category and $\mA$ an associative algebra in the monoidal category $_\mC\Mod(\Pr^L).$	
The canonical left adjoint monoidal functor $\mC \to \mA$, the unit of $\mA$, factors as left adjoint monoidal functors $$\mC \to {{\mA \ot \mA^\rev}\mathrm{-}\L\Fun(\mA,\mA)}\to \mA.$$

\end{corollary}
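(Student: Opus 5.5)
We deduce \cref{cort} from \cref{leull} by taking $\mV := {_\mC\Mod(\Pr^L)}$ and $\mW := \Pr^L$. The comparison functor is the forgetful functor $\phi: {_\mC\Mod(\Pr^L)} \to \Pr^L$, which is lax symmetric monoidal. Its lax structure is the canonical map $\mM \ot \mN \to \mM \ot_\mC \mN$. Under this choice $\mA$ is an associative algebra in $\mV$, and the unit $\tu_\mV = \mC \to \mA$ is the unique algebra map from the tensor unit. \cref{leull} then factors $\phi(\mC) \to \phi(\mA)$ in $\Alg(\Pr^L)$ as
$$\mC \to \L\Mor_{{_{\mA}\Mod_{\mA}(\Pr^L)}}(\mA,\mA) \to \mA.$$
Maps in $\Alg(\Pr^L)$ are exactly left adjoint monoidal functors, so this is a factorization of the required kind.

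It remains to identify the middle term with ${\mA \ot \mA^\rev}\mathrm{-}\L\Fun(\mA,\mA)$.

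\begin{itemize}
\item By \cref{adjuiti}, $(\mA,\mA)$-bimodules in $\Pr^L$ are the same as left $\mA \ot \mA^\rev$-modules.
\item By the enriched presentability equivalence (the theorem identifying ${_\mV\Mod(\Pr^L)}$ with $\mV\mathrm{-}\Pr^L$), these are the same as presentable $\mA \ot \mA^\rev$-enriched categories.
\item The internal morphism object $\L\Mor(\mA,\mA)$ for the $\Pr^L$-enrichment is the presentable category of left adjoint $\mA\ot\mA^\rev$-linear endofunctors of $\mA$.
\item Its algebra structure under this identification is composition, which matches the monoidal structure on ${\mA \ot \mA^\rev}\mathrm{-}\L\Fun(\mA,\mA)$.
\end{itemize}

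The main obstacle is compatibility. We must check that the map $\mC \to {\mA\ot\mA^\rev}\mathrm{-}\L\Fun(\mA,\mA)$ produced abstractly by \cref{leull} agrees with the expected one, namely $c \mapsto c \ot (-)$, using the $\mC$-linear structure. We must also check that the second map is evaluation at the unit of $\mA$. Since \cref{cort} only asserts existence of some factorization through left adjoint monoidal functors, we only need the identification of the middle object as a monoidal category. The first check can therefore be dropped.

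For the second map we use the commutative square of associative algebras in the proof of \cref{leull}. Its bottom row is the equivalence $\L\Mor_{\Mod_\mA}(\mA,\mA)\simeq \mA$ given by evaluation at the unit. So the second map is the forgetful map to right-module endomorphisms followed by evaluation at $\tu_\mA$, which is a left adjoint monoidal functor.
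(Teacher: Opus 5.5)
Your proposal is correct and is exactly the paper's argument: the paper proves \cref{cort} in one line, by applying \cref{leull} to the lax symmetric monoidal forgetful functor ${_\mC\Mod(\Pr^L)} \to \Pr^L$ with $A = \mA$. Your identification of $\L\Mor_{{_\mA\Mod_\mA(\Pr^L)}}(\mA,\mA)$ with ${\mA \ot \mA^\rev}\mathrm{-}\L\Fun(\mA,\mA)$ via \cref{adjuiti} and the equivalence ${_\mV\Mod(\Pr^L)} \simeq \mV\mathrm{-}\Pr^L$ is a step the paper leaves implicit, and your final paragraph identifying the second map is harmless but not needed for the statement.
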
 

\begin{proof}

We apply \cref{leull}, where $\phi: \mV \to \mW$ is the lax symmetric monoidal forgetful functor $_\mC\Mod(\Pr^L)\to \Pr^L$ and $A$ is $\mA.$

\end{proof}

\begin{corollary}\label{corril}

Let $\mB$ be a presentably monoidal category.
The left adjoint monoidal functor $$\Mon_{\bE_\infty}(\infty\Grp) \to \Mon_{\bE_\infty}(\mB)$$
induced by the unique left adjoint monoidal functor $\infty\Grp \to \mB$
factors as left adjoint monoidal functors $$\Mon_{\bE_\infty}(\infty\Grp) \to {{\Mon_{\bE_\infty}(\mB) \ot \Mon_{\bE_\infty}(\mB)^\rev}\mathrm{-}\L\Fun(\Mon_{\bE_\infty}(\mB), \Mon_{\bE_\infty}(\mB))} \to \Mon_{\bE_\infty}(\mB).$$

\end{corollary}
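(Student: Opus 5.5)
We obtain \cref{corril} from \cref{cort}, applied with $\mC:=\Mon_{\bE_\infty}(\infty\Grp)$ and $\mA:=\Mon_{\bE_\infty}(\mB)$. What has to be checked is that $\Mon_{\bE_\infty}(\mB)$ is an associative algebra in ${_{\Mon_{\bE_\infty}(\infty\Grp)}\Mod(\Pr^L)}$, and that the unit of this algebra is the functor induced by $\infty\Grp \to \mB$.

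The first step places everything in the preadditive world. The symmetric monoidal localization (\ref{loo}) gives the localization (\ref{adjass}) on associative algebras. Since $\mB$ is presentably monoidal, $\Mon_{\bE_\infty}(\mB)\simeq \Mon_{\bE_\infty}(\infty\Grp)\ot \mB$ is a preadditive presentably monoidal category, and $\Mon_{\bE_\infty}(\infty\Grp)$ is a presentably symmetric monoidal category, namely the unit of $\Pr^L_{\mathrm{preadd}}$.

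The second step exhibits $\Mon_{\bE_\infty}(\mB)$ as an algebra over $\mC$. The unique left adjoint monoidal functor $\infty\Grp\to\mB$ is an algebra map from the unit of $\Pr^L$. Applying the monoidal functor $\Mon_{\bE_\infty}$ gives a left adjoint monoidal functor $\Mon_{\bE_\infty}(\infty\Grp)\to \Mon_{\bE_\infty}(\mB)$. This is the unit of $\Mon_{\bE_\infty}(\mB)$ as an algebra in $\Pr^L_{\mathrm{preadd}}$, because $\Mon_{\bE_\infty}(\infty\Grp)$ is the unit there.

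The key input is that every associative algebra in a symmetric monoidal category is canonically an algebra under the unit. Combined with \cite[Corollary 3.4.1.7.]{lurie.higheralgebra}, this gives that algebras in ${_\mC\Mod(\Pr^L)}$ are algebras in $\Pr^L$ under $\mC$. Hence $\mA:=\Mon_{\bE_\infty}(\mB)$ lifts to an associative algebra in ${_\mC\Mod(\Pr^L)}$, and its unit is the map above. The one thing to verify here is that the module structure is the one coming from the unit map, which holds because the $\mC$-action on $\mA$ is restriction along that unit.

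With this in place, \cref{cort} gives the factorization
$$\Mon_{\bE_\infty}(\infty\Grp)\to {{\mA\ot\mA^\rev}\mathrm{-}\L\Fun(\mA,\mA)}\to \mA$$
into left adjoint monoidal functors, which is the claim. The main point to handle carefully is identifying the tensor product $\mA\ot\mA^\rev$ in \cref{cort}: it is taken over $\mC$, not in $\Pr^L$. For preadditive $\mA$ the two agree, since $\Mon_{\bE_\infty}(\infty\Grp)$ is an idempotent algebra, so $\mA\ot_\mC\mA^\rev\simeq\mA\ot\mA^\rev$. This justifies the notation in the statement.
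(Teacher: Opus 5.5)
Your proposal is correct and is essentially the paper's argument: both apply \cref{cort} with $\mC=\Mon_{\bE_\infty}(\infty\Grp)$ and $\mA=\Mon_{\bE_\infty}(\mB)$, taking the algebra structure on $\mA$ in ${_\mC\Mod(\Pr^L)}$ from \cite[Theorem 4.6.]{gepner2016universality}; the paper lifts the lax symmetric monoidal functor $\Mon_{\bE_\infty}$ directly to ${_\mC\Mod(\Pr^L)}$, while you pass through $\Pr^L_{\mathrm{preadd}}$. One correction to your justification: associative algebras in left $\mC$-modules are not in general the same as associative algebras under $\mC$, because the unit map must be central, and that centrality is exactly what the factorization through $\mA\ot\mA^\rev\mathrm{-}\L\Fun(\mA,\mA)$ records; so \cite[Corollary 3.4.1.7.]{lurie.higheralgebra} is not why the lift exists, and the real reason is the idempotence of $\mC$ you invoke at the end, which gives a symmetric monoidal equivalence $\Pr^L_{\mathrm{preadd}}\simeq{_\mC\Mod(\Pr^L)}$, so an algebra in $\Pr^L_{\mathrm{preadd}}$ is already an algebra in ${_\mC\Mod(\Pr^L)}$.
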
 

\begin{proof}
By \cite[Theorem 4.6.]{gepner2016universality} the functor $\Mon_{\bE_\infty}: \Pr^L \to \Pr^L$ is lax symmetric monoidal and so lifts to a lax symmetric monoidal functor 
$$\Pr^L \simeq {_{\infty\Grp}\Mod(\Pr^L)} \to {_{\Mon_{\bE_\infty}(\infty\Grp)}\Mod(\Pr^L)}$$
that sends $\mB$ to an associative algebra structure on $\Mon_{\bE_\infty}(\mB)$ in $_{\Mon_{\bE_\infty}(\infty\Grp)}\Mod(\Pr^L)$.
We apply \cref{cort} to $\mC:=\Mon_{\bE_\infty}(\infty\Grp)$ and $\mA:=\Mon_{\bE_\infty}(\mB)$.

\end{proof}

\begin{corollary}\label{corrill}

The identity factors as left adjoint monoidal functors $$\Mon_{\bE_\infty}(\infty\Grp) \to {{\Mon_{\bE_\infty}(\infty\Grp) \ot \Mon_{\bE_\infty}(\infty\Grp)^\rev}\mathrm{-}\L\Fun(\Mon_{\bE_\infty}(\infty\Grp), \Mon_{\bE_\infty}(\infty\Grp))} $$$$ \to \Mon_{\bE_\infty}(\infty\Grp).$$

\end{corollary}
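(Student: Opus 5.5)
We apply \cref{corril} in the special case $\mB:=\infty\Grp$, equipped with its cartesian presentably monoidal structure. The strategy is to identify the resulting composite of left adjoint monoidal functors with the identity.

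First, we make precise the unique left adjoint monoidal functor $\infty\Grp \to \mB$ appearing in \cref{corril}. When $\mB=\infty\Grp$, this is the identity: $\infty\Grp$ is the unit of $\Pr^L$, so the space of left adjoint monoidal functors $\infty\Grp \to \infty\Grp$ is contractible. Applying the lax symmetric monoidal functor $\Mon_{\bE_\infty}\colon \Pr^L \to \Pr^L$ of \cite[Theorem 4.6.]{gepner2016universality} to the identity yields the identity of $\Mon_{\bE_\infty}(\infty\Grp)$. Under this identification, the induced left adjoint monoidal functor $\Mon_{\bE_\infty}(\infty\Grp) \to \Mon_{\bE_\infty}(\mB)$ of \cref{corril} becomes the identity.

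Second, we check that the associative algebra structure on $\mA=\Mon_{\bE_\infty}(\infty\Grp)$ in ${_{\Mon_{\bE_\infty}(\infty\Grp)}\Mod(\Pr^L)}$ used in the proof of \cref{corril} is the canonical one. Here $\mA$ is the unit algebra, namely the image of the unit $\infty\Grp$ under the lax symmetric monoidal lift
\[
\Pr^L \to {_{\Mon_{\bE_\infty}(\infty\Grp)}\Mod(\Pr^L)}.
\]
Its unit map is the identity functor $\mC \to \mA$. The factorization of \cref{cort}, obtained from \cref{leull}, then gives precisely
\[
\Mon_{\bE_\infty}(\infty\Grp) \to {\mA \ot \mA^\rev}\mathrm{-}\L\Fun(\mA,\mA) \to \Mon_{\bE_\infty}(\infty\Grp),
\]
and the composite is the unit of $\mA$, which we have just identified with the identity.

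The only point that needs care is the coherence assertion that the composite is the identity as a monoidal functor, and not merely as an underlying functor. We handle this through contractibility. By \cite[Theorem 4.6.]{gepner2016universality}, $\Mon_{\bE_\infty}(\infty\Grp)$ is an idempotent algebra in $\Pr^L$, being the unit of the localization $\Pr^L_{\mathrm{preadd}}$. Hence the space of left adjoint monoidal endofunctors of $\Mon_{\bE_\infty}(\infty\Grp)$ is contractible, since they are maps of algebras out of the initial algebra of ${_{\Mon_{\bE_\infty}(\infty\Grp)}\Mod(\Pr^L)}$. It follows that any left adjoint monoidal composite of this form is canonically equivalent to the identity, which proves the statement.
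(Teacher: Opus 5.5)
Your proposal is correct and matches the paper. The paper gives no separate argument: it obtains the statement directly as the case $\mB=\infty\Grp$ of \cref{corril}, where the induced functor $\Mon_{\bE_\infty}(\infty\Grp)\to\Mon_{\bE_\infty}(\infty\Grp)$ is the identity.

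Your additional contractibility argument is not in the paper, but it is sound once phrased via the localization (\ref{adjass}). There, $\Mon_{\bE_\infty}(\infty\Grp)$ is the image of the initial algebra $\infty\Grp$, hence initial in the full subcategory of preadditive presentably monoidal categories. Consequently its space of left adjoint monoidal endofunctors, which are maps in $\Alg(\Pr^L)$ and not a priori maps of algebras in modules over it, is contractible.
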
 

\begin{lemma}\label{corpy} 
There is a monoidal functor $$ {{\wedge\L\Fun\wedge}(\Mon_{\bE_\infty}(\infty\Cat), \Mon_{\bE_\infty}(\infty\Cat))} \to {{\wedge\L\Fun\wedge}(\Cat\Sp, \Cat\Sp)}.$$

\end{lemma}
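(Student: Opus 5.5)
The plan is to deduce this from \cref{corp}, which gives for every presentable reduced bioriented category $\mC$ a monoidal functor
$$\wedge\L\Fun\wedge(\mC,\mC) \to \wedge\L\Fun\wedge(\Sp(\overline{\Sp}(\mC)),\Sp(\overline{\Sp}(\mC))).$$
I apply it with $\mC := \Mon_{\bE_\infty}(\infty\Cat)$, which is a presentable reduced bioriented category. Presentability and reducedness come from the localization $\Mon_{\bE_\infty}: \wedge\Pr^L\wedge \rightleftarrows \wedge\Pr^L_{\mathrm{preadd}}\wedge$ applied to $\infty\Cat_*$, since $\Mon_{\bE_\infty}(\infty\Cat_*) \simeq \Mon_{\bE_\infty}(\infty\Cat)$ is preadditive, hence pointed. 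The lemma then reduces to identifying $\Sp(\overline{\Sp}(\Mon_{\bE_\infty}(\infty\Cat)))$ with $\Cat\Sp$ as a bioriented category.

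For this identification I use \cref{sptens}:
$$\Sp(\overline{\Sp}(\mC)) \simeq \Cat\Sp \ot_{\infty\Cat_*} \mC \ot_{\infty\Cat_*} \Cat\Sp.$$
I then note that $\Mon_{\bE_\infty}(\infty\Cat) \simeq \infty\Cat_* \ot \Mon_{\bE_\infty}(\infty\Grp)$, because $\Mon_{\bE_\infty}(-)$ is given by tensoring in $\Pr^L$ with $\Mon_{\bE_\infty}(\infty\Grp)$ by \cite[Theorem 4.6.]{gepner2016universality}. Combining these, the relative tensor product becomes $\Cat\Sp \ot_{\infty\Cat_*} \Cat\Sp \ot \Mon_{\bE_\infty}(\infty\Grp)$. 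By \cref{stten} and its proof, $\Cat\Sp \ot_{\infty\Cat_*} \Cat\Sp \simeq \Cat\Sp$, so this is $\Mon_{\bE_\infty}(\Cat\Sp)$. By \cref{hhoo}, using that $\Cat\Sp$ is presentable and bistable (\cref{stab}), this is $\Cat\Sp$. All equivalences are of bioriented categories under $\infty\Cat_*$, because each step is a tensor product of $\infty\Cat_*$-bimodules in $\Pr^L$.

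Equivalently, and perhaps more cleanly, I use the universal property of \cref{specunive}(3). The composite left adjoint $\infty\Cat_* \to \Mon_{\bE_\infty}(\infty\Cat) \to \Cat\Sp$ of the free functor followed by $\Mon_{\bE_\infty}(\Sigma^\infty)$ exhibits $\Cat\Sp$ as the bistabilization of $\Mon_{\bE_\infty}(\infty\Cat)$. This holds because any left adjoint bioriented functor from $\Mon_{\bE_\infty}(\infty\Cat)$ into a bistable $\mD$ is the same as one from $\infty\Cat_*$, since such $\mD$ is preadditive by \cref{preadd}. Transporting the monoidal functor of \cref{corp} along this equivalence, conjugating endofunctors, gives the desired monoidal functor.

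The main obstacle is ensuring that the equivalence $\Sp(\overline{\Sp}(\Mon_{\bE_\infty}(\infty\Cat))) \simeq \Cat\Sp$ is an equivalence of \emph{bioriented} categories and not merely of underlying categories. Conjugation by it must respect the monoidal structure on $\wedge\L\Fun\wedge$. I would handle this by working throughout in $\wedge\Pr^L\wedge$ and checking that $\Mon_{\bE_\infty}(\infty\Cat)$ is preadditive in that $2$-category, so that the localizations commute: $L_{\mathrm{bist}} \circ \Mon_{\bE_\infty} \simeq L_{\mathrm{bist}}$. Composition of left adjoints is preserved by the $2$-functor $\mC \mapsto \Sp(\overline{\Sp}(\mC))$ on endomorphism objects, and this is exactly where the monoidality comes from.
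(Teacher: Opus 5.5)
Your proposal is correct and follows the paper's proof. You apply \cref{corp} to $\Mon_{\bE_\infty}(\infty\Cat)$ and identify $\Sp(\overline{\Sp}(\Mon_{\bE_\infty}(\infty\Cat)))$ with $\Cat\Sp$ as bioriented categories, which the paper does by commuting $\Mon_{\bE_\infty}$ past the bistabilization and then invoking \cref{hhoo}; your computation via \cref{sptens} and $\Mon_{\bE_\infty}(\mD)\simeq \mD\otimes\Mon_{\bE_\infty}(\infty\Grp)$ just makes explicit the commutation step the paper leaves implicit.
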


\begin{proof}

By \cref{hhoo} there is a canonical equivalence of reduced bioriented categories $$ \Sp(\overline{\Sp}(\Mon_{\bE_\infty}(\infty\Cat))) \simeq \Mon_{\bE_\infty}(\Sp(\overline{\Sp}(\infty\Cat))) \simeq \Mon_{\bE_\infty}(\Cat\Sp) \simeq \Cat\Sp.$$ 

We apply \cref{corp}. This implies the result.

\end{proof}	

\begin{proposition}\label{corrilll}

The canonical left adjoint monoidal embedding $ H: \Mon_{\bE_\infty}(\infty\Grp) \subset \Cat\Sp $ factors as left adjoint monoidal functors $$\Mon_{\bE_\infty}(\infty\Grp) \to {\Cat\Sp \ot \Cat\Sp^\rev\mathrm{-}\L\Fun(\Cat\Sp, \Cat\Sp)} \to \Cat\Sp.$$

\end{proposition}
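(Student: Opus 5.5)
The plan is to obtain the factorization from \cref{corrill} and transport it along the stabilization machinery, in three steps.

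First, \cref{corrill} gives a factorization of the identity of $\Mon_{\bE_\infty}(\infty\Grp)$ through the bimodule-endomorphism category. More usefully, I would apply \cref{corril} with $\mB=\infty\Cat$ and the Gray tensor product. This yields a factorization of the left adjoint monoidal functor $\Mon_{\bE_\infty}(\infty\Grp)\to \Mon_{\bE_\infty}(\infty\Cat)$ as
$$\Mon_{\bE_\infty}(\infty\Grp) \to {\Mon_{\bE_\infty}(\infty\Cat) \ot \Mon_{\bE_\infty}(\infty\Cat)^\rev}\mathrm{-}\L\Fun(\Mon_{\bE_\infty}(\infty\Cat), \Mon_{\bE_\infty}(\infty\Cat)) \to \Mon_{\bE_\infty}(\infty\Cat).$$

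Second, I would identify the middle term with the category ${\wedge\L\Fun\wedge}(\Mon_{\bE_\infty}(\infty\Cat),\Mon_{\bE_\infty}(\infty\Cat))$ of left adjoint reduced bioriented endofunctors that are moreover linear on both sides over $\Mon_{\bE_\infty}(\infty\Cat)$, i.e.\ central elements. The monoidal functor of \cref{corpy} sends this to ${\wedge\L\Fun\wedge}(\Cat\Sp,\Cat\Sp)$. I would argue it lands in ${\Cat\Sp \ot \Cat\Sp^\rev}\mathrm{-}\L\Fun(\Cat\Sp,\Cat\Sp)$: by \cref{unistab}(1), a left adjoint bioriented endofunctor of the bistable category $\Cat\Sp$ is automatically $\Cat\Sp$-bilinear once it is $\infty\Cat_*$-bilinear. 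The bilinearity over $\Mon_{\bE_\infty}(\infty\Cat)$ should be carried along by the construction of \cref{corp}, which is $\Cat\Sp\ot_{\infty\Cat_*}(-)\ot_{\infty\Cat_*}\Cat\Sp$, using $\Cat\Sp\simeq \Mon_{\bE_\infty}(\Cat\Sp)$ from \cref{hhoo}. Composing with \cref{symsp}, that is, with the monoidal embedding $B^\infty\colon\Mon_{\bE_\infty}(\infty\Cat)\to\Cat\Sp$, should show that the evaluation-at-unit functor ${\Cat\Sp \ot \Cat\Sp^\rev}\mathrm{-}\L\Fun(\Cat\Sp,\Cat\Sp)\to\Cat\Sp$ is compatible with the evaluation on the $\Mon_{\bE_\infty}(\infty\Cat)$ side.

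Third, I would check that the resulting composite $\Mon_{\bE_\infty}(\infty\Grp)\to\Cat\Sp$ is $H$. This follows because both are left adjoint monoidal functors extending the unit, and by \cref{rigit} $H$ is the composite of $\Mon_{\bE_\infty}(\infty\Grp)\subset\Mon_{\bE_\infty}(\infty\Cat)$ with $B^\infty$.

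The main obstacle is the second step: showing that the stabilization functor of \cref{corp}/\cref{corpy} preserves the two-sided linearity, and commutes with evaluation at the unit as monoidal functors. To handle this, I would first show that evaluation at the unit is natural with respect to the base change $\Cat\Sp\ot_{\infty\Cat_*}(-)\ot_{\infty\Cat_*}\Cat\Sp$. I would do this by realizing both sides as endomorphism objects of the unit bimodule, as in \cref{leull}, applied to the lax monoidal functor $\Mon_{\bE_\infty}(\infty\Cat)$-bimodules $\to$ $\Cat\Sp$-bimodules.
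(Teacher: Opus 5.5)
Your proposal is correct and has the same skeleton as the paper's proof: factor through the bimodule endomorphisms of $\Mon_{\bE_\infty}(\infty\Cat)$, then push forward along the stabilization functor of \cref{corpy}.

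The difference is in how you reach $\Mon_{\bE_\infty}(\infty\Cat)$. The paper starts from \cref{corrill}, a factorization of the identity of $\Mon_{\bE_\infty}(\infty\Grp)$, and then base-changes along $\Mon_{\bE_\infty}(\infty\Grp)\subset\Mon_{\bE_\infty}(\infty\Cat)$, a step it only sketches. You instead apply \cref{corril} with $\mB=\infty\Cat$, which gives the left adjoint monoidal functor into ${\Mon_{\bE_\infty}(\infty\Cat)\ot\Mon_{\bE_\infty}(\infty\Cat)^\rev}\mathrm{-}\L\Fun(\Mon_{\bE_\infty}(\infty\Cat),\Mon_{\bE_\infty}(\infty\Cat))$ in one step. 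You also spell out two things the paper leaves implicit: the output of \cref{corpy} consists of $\Cat\Sp$-bilinear functors by \cref{unistab}(1), and the final composite is $H$.

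What you call the main obstacle is unnecessary. Your third step settles it on its own once you justify the uniqueness you invoke there. $\Cat\Sp$ is preadditive by \cref{preadd} and \cref{stab}. $\Mon_{\bE_\infty}(\infty\Grp)$ is the image of the initial algebra $\infty\Grp$ under the localization (\ref{adjass}), so it is initial in $\Alg(\Pr^L_{\mathrm{preadd}})$. Hence every left adjoint monoidal functor $\Mon_{\bE_\infty}(\infty\Grp)\to\Cat\Sp$ is $H$, and you never need evaluation at the unit to commute with stabilization.

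That is fortunate, because your proposed \cref{leull}-style argument would not apply verbatim. That lemma requires a lax symmetric monoidal functor between symmetric monoidal categories, while bimodules over the non-symmetric $\Mon_{\bE_\infty}(\infty\Cat)$ form only a monoidal category.

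The same uniqueness argument gives an even shorter proof that avoids \cref{corpy} entirely. Apply \cref{corril} directly with $\mB=\Cat\Sp$, and use the equivalence $\Mon_{\bE_\infty}(\Cat\Sp)\simeq\Cat\Sp$ of \cref{hhoo}, whose inverse is the monoidal free functor.
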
 

\begin{proof}

By \cref{corrill} the identity factors as left adjoint monoidal functors $$\Mon_{\bE_\infty}(\infty\Grp) \to {{\Mon_{\bE_\infty}(\infty\Grp)} \ot {\Mon_{\bE_\infty}(\infty\Grp)^\rev} \mathrm{-}\L\Fun(\Mon_{\bE_\infty}(\infty\Grp), \Mon_{\bE_\infty}(\infty\Grp))} $$$$ \to \Mon_{\bE_\infty}(\infty\Grp).$$

The canonical left adjoint monoidal embedding $\Mon_{\bE_\infty}(\infty\Grp) \subset \Mon_{\bE_\infty}(\infty\Cat)$
induces a functor $${_{\Mon_{\bE_\infty}(\infty\Grp) } \Mod_{\Mon_{\bE_\infty}(\infty\Cat) }} \to {_{\Mon_{\bE_\infty}(\infty\Cat) } \Mod_{\Mon_{\bE_\infty}(\infty\Cat) }} ,$$ which gives rise to a monoidal functor $$ {{\Mon_{\bE_\infty}(\infty\Grp)} \ot {\Mon_{\bE_\infty}(\infty\Grp)^\rev} \mathrm{-}\L\Fun(\Mon_{\bE_\infty}(\infty\Grp), \Mon_{\bE_\infty}(\infty\Grp))} \to $$$$ {{\Mon_{\bE_\infty}(\infty\Cat)} \ot {\Mon_{\bE_\infty}(\infty\Cat)^\rev} \mathrm{-}\L\Fun(\Mon_{\bE_\infty}(\infty\Cat), \Mon_{\bE_\infty}(\infty\Cat))}.$$
\cref{corpy} gives a monoidal functor
$${{\Mon_{\bE_\infty}(\infty\Cat)} \ot {\Mon_{\bE_\infty}(\infty\Cat)^\rev} \mathrm{-}\L\Fun(\Mon_{\bE_\infty}(\infty\Cat), \Mon_{\bE_\infty}(\infty\Cat))} \to $$$$ {\Cat\Sp \ot \Cat\Sp^\rev\mathrm{-}\L\Fun(\Cat\Sp, \Cat\Sp)}.$$

\end{proof}

Passing to associative algebras we obtain the following:

\begin{corollary}\label{copli}
The embedding $ B^\infty: \Rig(\infty\Grp)= \Alg(\Mon_{\bE_\infty}(\infty\Grp)) \subset \Alg(\Cat\Sp) $ factors as left adjoints $$\Rig(\infty\Grp) \to \Alg({\Cat\Sp \ot \Cat\Sp^\rev\mathrm{-}\L\Fun(\Cat\Sp, \Cat\Sp)}) \to \Alg(\Cat\Sp).$$

\end{corollary}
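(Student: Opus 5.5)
The factorization is inherited from the one for $H$ itself: apply $\Alg(-)$ to \cref{corrilll}. That proposition gives a factorization of the left adjoint monoidal embedding $H\colon \Mon_{\bE_\infty}(\infty\Grp) \subset \Cat\Sp$ as left adjoint monoidal functors
$$\Mon_{\bE_\infty}(\infty\Grp) \xrightarrow{\ \Theta\ } \Cat\Sp \ot \Cat\Sp^\rev\mathrm{-}\L\Fun(\Cat\Sp,\Cat\Sp) \xrightarrow{\ \ev\ } \Cat\Sp.$$
The second functor is, by \cref{cort}, the one evaluating a bilinear left adjoint endofunctor at the tensor unit. Monoidal functors induce functors on associative algebras, and $\Alg(-)$ is functorial with respect to composition of monoidal functors. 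So $\Alg(\ev)\circ \Alg(\Theta)\simeq \Alg(H)$, and by definition $\Rig(\infty\Grp)=\Alg(\Mon_{\bE_\infty}(\infty\Grp))$. It then remains to identify $\Alg(H)$ with $B^\infty$ and to check that the induced functors are left adjoints.

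\textbf{Identifying $\Alg(H)$ with $B^\infty$.} By \cref{symsp} and \cref{rigit}, the embedding $B^\infty$ on rigs is obtained by applying $\Alg$ to the monoidal functor $\Mon_{\bE_\infty}(\Sigma^\infty)$ restricted along $\Mon_{\bE_\infty}(\infty\Grp)\subset \Mon_{\bE_\infty}(\infty\Cat)$. \cref{corrilll} was built from exactly this composite, via \cref{corpy}, the identification $\Mon_{\bE_\infty}(\Cat\Sp)\simeq\Cat\Sp$ of \cref{hhoo}, and \cref{corp}. I will trace that chain to confirm the same monoidal functor appears, so the composite is $B^\infty$ and not merely abstractly equivalent to it.

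\textbf{Left adjointness.} The three categories of algebras are presentable, and each monoidal functor involved is a left adjoint between presentably monoidal categories. The forgetful functors from algebras are conservative and preserve sifted colimits. A left adjoint monoidal functor between presentably monoidal categories therefore induces a functor on $\Alg$ that preserves sifted colimits, since these are computed underlying, and that preserves the initial object. This functor is accessible and preserves small colimits, because algebras are generated under sifted colimits from free algebras and the functor commutes with free algebras. Hence it has a right adjoint by the adjoint functor theorem; alternatively, one can lift the lax monoidal right adjoint directly to algebras.

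\textbf{Main obstacle.} The likely obstacle is the left-adjointness of $\Alg(\Theta)$. The target $\Cat\Sp\ot\Cat\Sp^\rev\mathrm{-}\L\Fun(\Cat\Sp,\Cat\Sp)$ must be presentably monoidal, with composition preserving colimits in each variable. I would first try to prove this by identifying it, via \cref{adjuiti} and \cref{unistab}, with a category of bimodules over $\Cat\Sp$, which is presentable with a composition tensor product that is cocontinuous in each variable.
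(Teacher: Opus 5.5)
Your proposal is correct and follows the paper's proof, which consists solely of applying $\Alg(-)$ to the factorization of \cref{corrilll}. The extra points you check are left implicit in the paper: that $\Alg(H)$ is $B^\infty$ on rigs, and that $\Alg$ of a left adjoint monoidal functor between presentably monoidal categories is again a left adjoint, most simply by lifting the lax monoidal right adjoint. The tracing step is not really needed, since \cref{corrilll} already states that the composite is $H$.
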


\begin{proposition}\label{ohas}

Let $\mV$ be a presentably monoidal $\infty$-category and $A$ an associative algebra in the monoidal category $\mV \ot \mV^\rev\mathrm{-}\L\Fun(\mV,\mV).$	
The following functor is monoidal $$\mV \to {_A\Mod_{A}(\mV)}, \ V \mapsto A \ot V \simeq V \ot A.$$

\end{proposition}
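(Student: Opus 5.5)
The plan is to unwind what an algebra $A$ in $\mE:=\mV \ot \mV^\rev\mathrm{-}\L\Fun(\mV,\mV)$ is, and then build the monoidal functor as a composite of two monoidal functors whose existence is formal.

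\textbf{Step 1: identify $\mE$.} Left adjoint $\mV$-bilinear endofunctors of $\mV$ are determined by their value at $\tu$, via $F \mapsto F(\tu)$, with inverse $M \mapsto (V \mapsto V\ot M\simeq M \ot V)$ for suitable data. So $\mE$ is the monoidal category of $\mV$-bimodules in $\mV$ whose left and right actions are identified, i.e. objects of $\mV$ equipped with a coherent half-braiding against all of $\mV$ (the Drinfeld center $\mathcal{Z}(\mV)$), under composition.

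\textbf{Step 2: the functor to $\mE$.} The forgetful functor $\mE \to \mV$, $F \mapsto F(\tu)$, is monoidal. Consequently an algebra $A$ in $\mE$ is an algebra in $\mV$ together with a natural equivalence $\sigma_V: A \ot V \simeq V \ot A$ compatible with the multiplication of $A$ and with $\ot$ in $V$. This equivalence $\sigma$ is precisely the identification $A\ot V \simeq V \ot A$ asserted in the statement.

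\textbf{Step 3: the functor to bimodules.} I will factor the desired functor as
$$\mV \xrightarrow{\;j\;} \mE \xrightarrow{\;A\ot(-)\;} {_A\Mod_A(\mV)}.$$
Here $j$ is the canonical left adjoint monoidal functor sending $V$ to the bilinear endofunctor $V \ot (-)$; it is monoidal because $\mV$ is the unit object of $\Pr^L$-bimodules. Since $A$ is an algebra in $\mE$, the functor $A \ot (-)$ lands in $A$-bimodules: the left action is by multiplication of $A$, and the right action is obtained by transporting along $\sigma$. The key point is that this functor is monoidal, which follows because $A\ot_A -$ is the identity on free objects. Concretely, one has
$$(A\ot V)\ot_A(A\ot W)\simeq (V\ot A)\ot_A (A \ot W)\simeq V \ot A \ot W \simeq A \ot V \ot W,$$
where the last step uses $\sigma$ again. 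The coherences are supplied by $A$ being an algebra in $\mE$, so that $\sigma$ is compatible with the multiplication.

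\textbf{Step 4 (main obstacle).} The main obstacle is making this coherent at the $\infty$-level rather than merely objectwise. To handle it, I would observe that the free bimodule functor $\mV \to {_A\Mod_A(\mV)}$, $V \mapsto A \ot V \ot A$, has an evident monoidal structure coming from relative tensor products. The strategy is then to realize ${_A\Mod_A(\mV)}$ as modules over $A$ inside the monoidal category $\mE$ acting on $\mV$, and to show that the central structure $\sigma$ makes the right $A$-action redundant, so that $V \mapsto A\ot V$ is identified with a monoidal functor of the form $\mV\to {_A\Mod(\mE)}$. This identification reduces the coherence problem to the formal monoidality of induction along the algebra map $\tu \to A$ in $\mE$, which is provided by \cite[\S 4.4]{lurie.higheralgebra}.
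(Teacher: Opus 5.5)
Your proposal has a genuine gap in Step 3, where the desired functor is factored through $\mE$; that factorization does not exist.

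\textbf{The functor $j$ is not defined.} The endofunctor $V\ot(-)$ of $\mV$ is right $\mV$-linear. It is not an object of $\mE=\mV\ot\mV^\rev\mathrm{-}\L\Fun(\mV,\mV)$, because left linearity needs coherent equivalences $V\ot U\ot X\simeq U\ot V\ot X$. Asking for these for all $V$, compatibly with $\ot$ as monoidality of $j$ requires, is exactly a braiding on $\mV$. Under your Step 1, it is the same as a monoidal section of the forgetful functor $\mathcal{Z}(\mV)\to\mV$.

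The hypothesis only makes $\mV$ presentably monoidal. In the paper's application, $\mV=\Cat\Sp$ with the Gray smash product, which is not braided. That $\mV$ is the unit of $\mV$-bimodules only says $\mE$ is its endomorphism object; it gives no map from $\mV$ into $\mE$. The canonical equivalence is $\mV\simeq\mV^\rev\mathrm{-}\L\Fun(\mV,\mV)$, which involves right-linear endofunctors only.

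\textbf{Step 4 does not repair this.} Inducing along $\tu\to A$ in $\mE$ still needs a monoidal map from $\mV$ into $\mE$. In addition, its starting claim is false: the free bimodule functor $V\mapsto A\ot V\ot A$ is not monoidal. Indeed $(A\ot V\ot A)\ot_A(A\ot W\ot A)\simeq A\ot V\ot A\ot W\ot A$, and $\tu$ goes to $A\ot A$ rather than $A$.

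\textbf{What the construction needs.} The functor $V\mapsto A\ot V$ only uses the half-braiding $\sigma_V$ of $A$ against arbitrary $V\in\mV$. So its coherent monoidal structure has to be built with $\mV$ itself as the source, not through the center. The paper does this by transporting the left $\mV$-action to left $A$-modules:
\begin{itemize}
\item Since $A$ is a monad on $\mV$ in ${_\mV\Mod_\mV}(\widehat{\Cat})$, its Eilenberg--Moore object ${_A\Mod}(\mV)$ inherits a left $\mV$-action commuting with the right one.
\item That action is a monoidal functor $\mV\to\mV^\rev\mathrm{-}\L\Fun({_A\Mod}(\mV),{_A\Mod}(\mV))$.
\item The target is identified with ${_A\Mod_A(\mV)}$, with composition corresponding to $\ot_A$, by \cite[Theorem 4.8.4.1.]{lurie.higheralgebra}.
\item Here $V$ acts by $M\mapsto V\ot M$, with $A$-action through $\sigma_V$. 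This endofunctor is $(V\ot A)\ot_A(-)$.
\end{itemize}
Your objectwise computation $(A\ot V)\ot_A(A\ot W)\simeq A\ot V\ot W$ is a shadow of this. The coherence has to come from such an action on modules, not from a map $\mV\to\mE$.
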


\begin{proof}

A monad on $\mV$ in the 2-category $\Mod_\mV(\widehat{\Cat})$
is an associative algebra in the monoidal category $\mV^\rev\mathrm{-}\L\Fun(\mV,\mV)$.

The canonical $\mV,\mV$-biaction on $\mV$ corresponds to a left $\mV$-action
on $\mV \in \Mod_\mV(\widehat{\Cat})$ and so to a monoidal functor
$ \mV \to \mV^\rev\mathrm{-}\L\Fun(\mV,\mV),$ 
which is an equivalence by \cite[Remark 4.10. (3)]{heine2017monadicity}.
By \cite[Corollary 5.35. (2), Remark 4.10. (3)]{heine2017monadicity} the right $\mV$-linear functor ${_A\Mod}(\mV)\to \mV$ is an Eilenberg-Moore object of $A$ in $\Mod_\mV(\widehat{\Cat})$ in the sense of \cite[Definition 5.2.]{heine2017monadicity}. 

A monad on $\mV$ in the 2-category $_{\mV}\Mod_\mV(\widehat{\Cat})$
is an associative algebra in the monoidal category $\mV \ot \mV^\rev\mathrm{-}\L\Fun(\mV,\mV)$.
By \cite[Corollary 5.27.]{heine2017monadicity} the monad $A$ on $\mV$ in ${_{\mV}\Mod_\mV}(\widehat{\Cat})$ admits an Eilenberg-Moore object $\mX \to \mV$ that lies over the Eilenberg-Moore object ${_A\Mod}(\mV)\to \mV $ of $A$ in $\Mod_\mV(\widehat{\Cat})$.

By \cite[Remark 4.83.]{heine2024bienrichedinftycategories} there is a canonical equivalence $${_{\mV}\Mod_\mV}(\widehat{\Cat}) \simeq {_\mV\Mod(\Mod_\mV(\widehat{\Cat}))}.$$ Under this equivalence the $\mV,\mV$-bitensored category $\mX$ corresponds to a left $\mV$-action on ${_A\Mod}(\mV)$ with respect to the left $ \widehat{\Cat}$-action on $\Mod_\mV(\widehat{\Cat}).$ The latter corresponds to a monoidal functor $$\mV \to \mV^\rev\mathrm{-}\L\Fun({_A\Mod}(\mV), {{_A\Mod}(\mV)}) \simeq {{_A}\Mod_A(\mV)}$$
sending $V$ to $A \ot V \simeq V \ot A$, where the last equivalence is \cite[Theorem 4.8.4.1.]{lurie.higheralgebra}, \cite[Remark 4.83.]{heine2024bienrichedinftycategories}.

\end{proof}

\section{An equivalence between stable and spectral oriented categories}

\begin{definition}\emph{}

\begin{itemize}

\item A spectral oriented category is quasi-stable if it is reduced and admits suspensions and endomorphisms.

\item A spectral oriented category is stable if it is quasi-stable and admits oriented fibers and oriented cofibers.

\item An antispectral antioriented category is (quasi-)antistable if the spectral oriented category $\mC^\co$ is (quasi-)stable.

\item A spectral bioriented category is (quasi-)stable if it is reduced and its underlying spectral oriented category is (quasi-)stable.		
	
\item An antispectral bioriented category is (quasi-)antistable if it is reduced and its underlying antispectral antioriented category is (quasi-)antistable.	

\item A bispectral bioriented category is (quasi-)bistable if it is reduced, (quasi-)stable and (quasi-)antistable.

\end{itemize}

\end{definition}


    

\begin{notation}\emph{}
	
\begin{itemize}

\item Let ${\Cat_{\mathrm{qst}}} \barwedge \subset {\Cat\barwedge}$ be the full subcategory of quasi-stable spectral oriented categories.

\item Let $\barwedge\Cat_\mathrm{qanst} \subset \barwedge\Cat$ be the full subcategory of quasi-antistable antispectral antioriented categories.

\item Let $\wedge\Cat_{\mathrm{qst}}\barwedge \subset \wedge\Cat\barwedge$ be the full subcategory of quasi-stable spectral bioriented categories.

\item Let ${\barwedge\Cat_\mathrm{qanst}}{\wedge} \subset \barwedge\Cat\wedge$ be the full subcategory of quasi-antistable antispectral bioriented categories.
		
\item Let $\barwedge\Cat_{\mathrm{qbist}}\barwedge  \subset \barwedge\Cat\barwedge$ be the full subcategory of quasi-bistable bispectral bioriented categories.

\end{itemize}

\end{notation}

\begin{notation}\emph{}
    
 \begin{itemize}
 
 \item Let $\Cat_{\mathrm{qst}}\wedge \subset {\Cat\wedge}$ be the subcategory of quasi-stable oriented categories and oriented functors preserving suspensions.
	
\item Let ${\wedge}\Cat_\mathrm{qanst} \subset \wedge\Cat$ be the subcategory of quasi-antistable antioriented categories and antioriented functors preserving antisuspensions.
	
\item Let ${\wedge\Cat_\mathrm{qst}\wedge} \subset \wedge\Cat\wedge$ be the subcategory of quasi-stable bioriented categories and bioriented functors preserving suspensions.
	
\item Let $\wedge\Cat_{\mathrm{qanst}}\wedge \subset \wedge\Cat\wedge$ be the subcategory of quasi-antistable bioriented categories and bioriented functors preserving antisuspensions.
	
\item Let ${\wedge}\Cat_{\mathrm{qbist}}\wedge \subset \wedge\Cat\wedge$ be the subcategory of quasi-bistable bioriented categories and bioriented functors preserving suspensions and antisuspensions.

\end{itemize}   
    
\end{notation}

\begin{theorem}\label{specenr}\emph{}

\begin{enumerate}
\item The following forgetful 2-functor is an equivalence:
$$\Cat_{\mathrm{qst}}\barwedge \to {\Cat_{\mathrm{qst}}}\wedge, $$	
\item The following forgetful 2-functor is an equivalence: $${\barwedge \Cat_{\mathrm{qanst}}} \to {\wedge \Cat_{\mathrm{qanst}}}. $$

\item The following forgetful 2-functor is an equivalence:
 $$\wedge \Cat_{\mathrm{qst}}\barwedge \to \wedge \Cat_{\mathrm{qst}}\wedge, $$
\item The following forgetful 2-functor is an equivalence: $${\barwedge \Cat_{\mathrm{qanst}} \wedge} \to {{\wedge \Cat_{\mathrm{qanst}} \wedge}}, $$
\item The following forgetful 2-functor is an equivalence: $$ {\barwedge \Cat_{\mathrm{qbist}}\barwedge}\to {{\wedge \Cat_{\mathrm{qbist}} \wedge}}.$$

\end{enumerate}

Under these equivalences stability, antistability, bistability corresponds to each other.

\end{theorem}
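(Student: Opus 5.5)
I will prove (1) in detail; (3) is the bioriented variant of the same argument, (2) and (4) follow from (1) and (3) by applying the conjugation $(-)^\co$, and (5) is obtained by combining (3) and (4). The forgetful 2-functor $\Cat_{\mathrm{qst}}\barwedge \to \Cat_{\mathrm{qst}}\wedge$ comes from the lax monoidal forgetful functor $\Omega^\infty:(\Cat\Sp,\wedge^\rev)\to(\infty\Cat_*,\wedge^\rev)$, which is right adjoint to the monoidal functor $\Sigma^\infty$ of \cref{unicity}. By \cref{specstab}, a quasi-stable spectral oriented category has a quasi-stable underlying oriented category. By \cref{preserv}, every spectral oriented functor preserves oriented pushouts, and hence suspensions $0\overset{\to}{+}_X 0$. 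So the forgetful functor does land in $\Cat_{\mathrm{qst}}\wedge$.

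For essential surjectivity I will use \cref{spectral0}: a quasi-stable oriented category $\mC$ admits a weighted-limit-preserving oriented embedding $\mC\hookrightarrow \mD:={\wedge\overline{\Exc}}(\mC^\circ,\infty\Cat_*)$ (after a universe enlargement), and $\mD$ is presentable and stable. By \cref{unistab}(5), $\mD$ is canonically a presentable spectral oriented category whose underlying oriented category is $\mD$. Restricting this enrichment to the full subcategory $\mC$ gives the desired spectral refinement. The spectral morphism object is $\Sigma^\infty$-compatible, namely $\Omega^\infty\R\Mor^{\Cat\Sp}(X,Y)\simeq\R\Mor(X,Y)$, so the underlying oriented category is indeed $\mC$.

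For full faithfulness on mapping categories I need the following: given quasi-stable spectral oriented $\mC,\mD$ and an oriented functor $F$ preserving suspensions, $F$ admits a unique spectral refinement. My approach is to reduce to the presentable case via the functorial embeddings of the Corollary preceding \cref{spectral0}, applied compatibly with $F$, and then use \cref{unistab}(5). That result gives an equivalence of 2-categories $\Pr^L\barwedge\simeq\Pr^L_{\mathrm{st}}\wedge$, so on left adjoint functors the spectral and oriented structures agree. Concretely, for $\mC$ quasi-stable, a spectral functor $\mC\to\mD$ should correspond to a left adjoint spectral functor out of the spectral presheaf category on $\mC$, localized to functors sending suspensions to suspensions. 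Under \cref{laxoplax} this localization matches ${\wedge\overline{\Exc}}$, i.e.\ oriented functors preserving suspensions. An enriched-presheaf argument (\cref{Yonedaext} together with \cref{weiloc}) then identifies spectral functors $\mC\to\mD$ with suspension-preserving oriented functors.

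The main obstacle is this functor-level comparison, in particular why the suspension-preservation hypothesis is exactly what is needed. The key input is that $S^1$ is tensor-invertible in $\Cat\Sp$: an oriented functor between spectral categories preserves suspensions if and only if it commutes with the right tensor by $S^1$, and hence with the right tensors by all $\Sigma^\infty S^n$ and their inverses. Since $\Cat\Sp$ is generated under colimits and tensors by such desuspended spheres (\cref{spgen}), this forces the enriched action maps $\R\Mor(X,Y)\wedge K\to\R\Mor(FX,FY)\wedge K$ to extend uniquely from $\infty\Cat_*$ to $\Cat\Sp$. I would make this precise via the presentable embedding and the localization equivalence of \cref{unistab}. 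Finally, the last sentence of the theorem, that stability, antistability and bistability correspond under these equivalences, follows because oriented (co)fibers in the spectral sense and in the underlying sense agree, by the definition given before \cref{specstab} combined with \cref{Stab2}.
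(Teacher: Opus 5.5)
\paragraph{The gap: the functor-level comparison.} Your construction of the refinement on objects is fine. It is essentially how the paper builds $\bar{\mC}$: the Yoneda embedding into the stable presentable ${\wedge\overline{\Exc}}(\mC^\circ,\infty\Cat_*)$, followed by \cref{unistab}. The gap is the functor-level comparison, which is the real content of the theorem. You must show that for $\mC$ quasi-stable and $\mD$ spectral, restriction identifies spectral functors $\bar{\mC}\to\mD$ with suspension-preserving oriented functors $\mC\to\mD$. (The paper packages this as: the forgetful 2-functor is conservative and $\mC\mapsto\bar{\mC}$ is a fully faithful left adjoint.)

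After embedding $\mD$ into a presentable spectral $\mE$, the two sides become:
\begin{itemize}
\item on the spectral side, spectral left adjoints out of ${\barwedge\Fun}(\bar{\mC}^\circ,\Cat\Sp)$; no localization is needed, since by \cref{preserv} spectral functors preserve suspensions automatically;
\item on the oriented side, by the dual of \cref{laxoplax} and \cref{unistab}, spectral left adjoints out of ${\wedge\overline{\Exc}}(\mC^\circ,\infty\Cat_*)$.
\end{itemize}
So everything reduces to showing that the comparison functor
\[
{\barwedge\Fun}(\bar{\mC}^\circ,\Cat\Sp)\longrightarrow {\wedge\overline{\Exc}}(\mC^\circ,\infty\Cat_*)
\]
is an equivalence. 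Your sentence that ``under \cref{laxoplax} this localization matches ${\wedge\overline{\Exc}}$'' asserts exactly this. But \cref{laxoplax} only gives the universal property of the right-hand side; it does not compare the two envelopes.

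\paragraph{Why your proposed mechanism does not close it.} Commuting with right tensors by $S^{\pm n}$ gives no control over a spectral structure on a functor that preserves no colimits, between categories that have no tensors by general categorical spectra. Generation of $\Cat\Sp$ by desuspended spheres (\cref{spgen}) only becomes usable in the presentable, left-adjoint setting, that is, after the comparison is already known.

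The paper proves the comparison as follows.
\begin{enumerate}
\item The target is generated under small colimits by left tensors of representables. So it suffices that, for each $X\in\mC$, the $\Cat\Sp$-valued functor corepresented by $X$ on the excisive presheaf category preserves small colimits and left tensors.
\item This is not formal. Colimits there are excisive approximations of objectwise colimits, so evaluation at $X$ does not preserve them in general.
\item Using \cref{spgen} and stability, one reduces to the underlying reduced functor. Its levels are evaluation at iterated (anti)endomorphisms of $X$, i.e.\ it is the functor $\nu_X$ of \cref{forgetu}.
\item $\nu_X$ preserves colimits and left tensors by \cref{locpre2}. That rests on \cref{locpre}: $\nu_X$ sends excisive-local equivalences to spectrification-local equivalences.
\end{enumerate}
Some argument of this kind is needed both for full faithfulness and for uniqueness of spectral refinements of functors.

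\paragraph{A smaller point on (5).} Part (5) is not a formal consequence of (3) and (4). After (3) the right enrichment is already $\Cat\Sp$, whereas (4) is stated with right enrichment $\infty\Cat_*$. The paper instead reruns the same presheaf comparison with biexcisive presheaves, using \cref{laxoplax2} and \cref{collif}.
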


\begin{proof}
The proofs of (1) and (2) are similar. We prove (2) and after that we prove (3), (4), (5). The forgetful functor of (2) is conservative. So it suffices to construct a fully faithful left adjoint.
Let $\mC$ be a small quasi-antistable antioriented category and $\mD$ a small reduced antispectral antioriented category.
We prove that the induced functor $$\rho:  \barwedge\overline{\Exc}(\bar{\mC}, \mD) \to \wedge\overline{\Exc}(\mC, \mD) $$ is an equivalence. The antispectral Yoneda embedding \cite[Notation 5.36., Corollary 5.14., Lemma 2.55.]{heine2024bienrichedinftycategories} is an antispectral antioriented embedding $\mD \hookrightarrow \mE$ into a presentable antispectral antioriented category that preserves antiendomorphisms.
Thus $\rho$ is the pullback of the similarly defined functor for $\mD$ replaced by $\mE,$ which we prove to be an equivalence.
There is a commutative square 
$$\begin{xy}
\xymatrix{
\barwedge\L\Fun({\Fun\wedge}(\bar{\mC}^\circ, \Cat\Sp), \mE) \ar[d] \ar[r]
&  \wedge\L\Fun({\Exc\wedge}(\mC^\circ, \infty\Cat_*), \mE) \ar[d]
\\ 
\barwedge\overline{\Exc}(\bar{\mC}, \mE) \ar[r]  & \wedge\overline{\Exc}(\mC, \mE) .
}
\end{xy}$$	
The left vertical functor is an equivalence by the universal property of enriched presheaves \cite[Theorem 5.49.]{heine2024bienrichedinftycategories} and because $\barwedge\overline{\Exc}(\bar{\mC}, \mE) = \barwedge\Fun(\bar{\mC}, \mE)$ by \cref{preserv}.
The right vertical functor is an equivalence by \cref{laxoplax} and \cref{unistab}. 
So it remains to see that the top functor is an equivalence.
This is equivalent to say that the antispectral antioriented embedding $ \bar{\mC} \subset {\Exc\wedge}(\mC^\circ, \infty\Cat_*)$
induces an equivalence $\theta: {\Fun\barwedge}(\bar{\mC}^\circ, \Cat\Sp) \to {\Exc\wedge}(\mC^\circ, \infty\Cat_*)$ of antispectral antioriented categories.
By \cite[Theorem 5.22.]{heine2024bienrichedinftycategories} and \cref{excloci} the antioriented category $ {\Exc\wedge}(\mC^\circ, \infty\Cat_*)$ is generated under small colimits by left tensors of representables. Hence also the canonical refinement to an antispectral antioriented category is generated under small colimits by left tensors of representables.
\cite[Corollary 4.54.]{heine2024bienrichedinftycategories} guarantees that $\theta$ is an equivalence if for every $X \in \mC$ the antispectral antioriented functor
$\gamma:  {\Exc\wedge}(\mC^\circ, \infty\Cat_*) \to \Cat\Sp$ corepresented by $X$ preserves small colimits and left tensors. By \cref{spgen} the reduced bioriented category $\Cat\Sp$ is generated under small colimits and left tensors by antiendomorphisms of the sphere spectrum. 
Since source and target of $\gamma$ are antistable and presentable and $\gamma$ preserves antiendomorphisms, $\gamma$ preserves small colimits and left tensors if the underlying reduced antioriented functor $\beta$ of $\gamma$ preserves small colimits and left tensors.
Antistability of the source of $\beta$ implies that $\beta$ is antiexcisive and for every $\n \geq 0$ the $\n$-th factor of $\beta$ denoted by $$\beta_\n: {\Exc\wedge}(\mC^\circ, \infty\Cat_*) \to \infty\Cat_*$$ is corepresented as a reduced antioriented functor by $\bar{\Omega}^\n(\L\Mor_\mC(-,X))$. The structure equivalence $$\L\Mor_{  {\Exc\wedge}(\mC^\circ, \infty\Cat_*) }(\bar{\Omega}^\n(\L\Mor_\mC(-,X)),-)  \to \bar{\Omega}\L\Mor_{  {\Exc\wedge}(\mC^\circ, \infty\Cat_*) }(\bar{\Omega}^{\n+1}(\L\Mor_\mC(-,X)),-) $$
corresponds by the enriched Yoneda lemma (\cref{enryo}) to the object of $$  \bar{\Omega}\L\Mor_{{\Exc\wedge}(\mC^\circ, \infty\Cat_*) }(\bar{\Omega}^{\n+1}(\L\Mor_\mC(-,X)),\bar{\Omega}^\n(\L\Mor_\mC(-,X))) \simeq$$$$ \L\Mor_{\mC}(\bar{\Omega}^{\n+1}(\L\Mor_\mC(-,X)),\bar{\Omega}^{\n+1}(\L\Mor_\mC(-,X))), $$ which is the identity.
By the enriched Yoneda lemma $\beta_\n$ evaluates at $\bar{\Omega}^\n(X)$
and identifies with the reduced antioriented functor $$\nu_X:  {\Exc\wedge}(\mC^\circ, \infty\Cat_*) \to \Cat\Sp $$ of \cref{forgetu}, which by \cref{locpre2} preserves small colimits and left tensors. 

\vspace{1mm}

The strategy of proof is analogous to the proof of \cref{specenr}.
The forgetful functors of (3), (4), (5) are conservative. So it suffices to construct fully faithful left adjoints. 
Let $\mC$ be a small quasi-stable, quasi-antistable, quasi-bistable bioriented category and $\mD$ a small reduced spectral, antispectral, bispectral bioriented category, respectively.
We prove that the bottom functors in the following commutative squares are equivalences:
$$\begin{xy}
\xymatrix{
{\barwedge\L\Fun\wedge}({\wedge\Fun\wedge}(\bar{\mC}^\circ, \Cat\Sp \boxplus \infty\Cat_*), \mE) \ar[d] \ar[r]
&  {\wedge\L\Fun\wedge}({\wedge\Exc\wedge}(\mC^\circ, \infty\Cat_*\boxplus\infty\Cat_*), \mE) \ar[d]
\\ 
{\barwedge\overline{\Exc}\wedge}(\bar{\mC}, \mE) \ar[r]  & {\wedge\overline{\Exc}\wedge}(\mC, \mE),
}
\end{xy}$$		
$$\begin{xy}
\xymatrix{
{\wedge\L\Fun\barwedge}({\wedge\Fun\wedge}(\bar{\mC}^\circ, \infty\Cat_* \boxplus \Cat\Sp), \mE) \ar[d] \ar[r]
&  {\wedge\L\Fun\wedge}({\wedge\overline{\Exc}\wedge}(\mC^\circ, \infty\Cat_*\boxplus\infty\Cat_*), \mE) \ar[d]
\\ 
{\wedge\Exc\barwedge}(\bar{\mC}, \mE) \ar[r]  & {\wedge\Exc\wedge}(\mC, \mE), 
}
\end{xy}$$	
$$\begin{xy}
\xymatrix{
{\barwedge\L\Fun\barwedge}({\wedge\Fun\wedge}(\bar{\mC}^\circ, \Cat\Sp \boxplus \Cat\Sp), \mE) \ar[d] \ar[r]
&  {\wedge\L\Fun\wedge}({\wedge\Bi\Exc\wedge}(\mC^\circ, \infty\Cat_*\boxplus\infty\Cat_*), \mE) \ar[d]
\\ 
{\barwedge\Bi\Exc\barwedge}(\bar{\mC}, \mE) \ar[r]  & {\wedge\Bi\Exc\wedge}(\mC, \mE) .
}
\end{xy}$$	
The enriched Yoneda embedding \cite[Notation 5.36., Corollary 5.14., Lemma 2.55.]{heine2024bienrichedinftycategories} is a spectral, antispectral, bispectral embedding of $\mD$ into a presentable spectral, antispectral, bispectral bioriented category, respectively, that preserves endomorphisms and antiendomorphisms. So we can replace $\mD$ by a presentable spectral, antispectral, bispectral bioriented category, respectively.
The left vertical functors are equivalences by the universal property of enriched presheaves \cite[Theorem 5.49.]{heine2024bienrichedinftycategories} and \cref{preserv}, the right vertical functors are equivalences by \cref{laxoplax2} and \cref{unistab}. 
So it remains to see that the top functors are equivalences.
This is equivalent to say that the following induced bioriented functors are equivalences:
$$ {\wedge\Fun\wedge}(\bar{\mC}^\circ, \Cat\Sp \boxplus \infty\Cat_*)\to {\wedge\Exc\wedge}(\mC^\circ, \infty\Cat_*\boxplus\infty\Cat_*),
$$
$${\wedge\Fun\wedge}(\bar{\mC}^\circ, \infty\Cat_* \boxplus \Cat\Sp) \to {\wedge\overline{\Exc}\wedge}(\mC^\circ, \infty\Cat_*\boxplus\infty\Cat_*),$$
$${\wedge\Fun\wedge}(\bar{\mC}^\circ, \Cat\Sp \boxplus \Cat\Sp) \to  {\wedge\Bi\Exc\wedge}(\mC^\circ, \infty\Cat_*\boxplus\infty\Cat_*).$$

By \cite[Theorem 5.22.]{heine2024bienrichedinftycategories} and \cref{excloci} the right hand bioriented categories are generated under small colimits by left and right tensors of representables. Hence also their respective refinements to spectral, antispectral, bispectral bioriented categories are generated under small colimits by left and right tensors of representables.
\cite[Corollary 4.54.]{heine2024bienrichedinftycategories} implies that the latter are equivalences if for every $X \in \mC$ the antispectral bioriented functor
\begin{equation}\label{spe01}
{\wedge\overline{\Exc}\wedge}(\mC^\circ, \infty\Cat_*\boxplus\infty\Cat_*) \to \Cat\Sp \boxplus \infty\Cat_*,\end{equation} 
spectral bioriented functor
\begin{equation}\label{spe02}  {\wedge\Exc\wedge}(\mC^\circ, \infty\Cat_*\boxplus\infty\Cat_*) \to \infty\Cat_* \boxplus \Cat\Sp,\end{equation}
bispectral bioriented functor
\begin{equation}\label{spe03}  {\wedge\Bi\Exc\wedge}(\mC^\circ, \infty\Cat_*\boxplus\infty\Cat_*) \to \Cat\Sp  \boxplus \Cat\Sp\end{equation}
corepresented by $X$ preserve small colimits and left and right tensors. 
The latter preserve endomorphisms and antiendomorphisms and are between antistable, stable, bistable bioriented categories, respectively. Thus the underlying reduced bioriented functors \ref{spe01}, \ref{spe02}, \ref{spe03} are antiexcisive, excisive, biexcisive, respectively. 
The underlying reduced bioriented functors of \ref{spe01}, \ref{spe02}, \ref{spe03} lift the reduced bioriented functors
$$  {\wedge\overline{\Exc}\wedge}(\mC^\circ, \infty\Cat_*\boxplus\infty\Cat_*) \to \infty\Cat_* \boxplus \infty\Cat_*,  $$$$ {\wedge\Exc\wedge}(\mC^\circ, \infty\Cat_*\boxplus\infty\Cat_*) \to \infty\Cat_* \boxplus \infty\Cat_*,$$
$$ {\wedge\Bi\Exc\wedge}(\mC^\circ, \infty\Cat_*\boxplus\infty\Cat_*) \to \infty\Cat_*  \boxplus \infty\Cat_*$$ corepresented by $X$, which preserve small colimits and left and right tensors by \cite[Theorem 3.50.]{heine2024bienrichedinftycategories}.
\cref{collif} implies that the underlying reduced bioriented functors \ref{spe01}, \ref{spe02}, \ref{spe03} preserve small colimits and left and right tensors.
Since the reduced bioriented category $\Cat\Sp$ is generated under small colimits and left tensors by endomorphisms of the categorical sphere spectrum (\cref{spgen}), also the reduced bioriented functors \ref{spe01}, \ref{spe02}, \ref{spe03} preserve small colimits and left and right tensors.

\end{proof}

\bibliographystyle{plain}
\bibliography{ma}

\end{document}